\numberwithin{equation}{section}
\DeclareSymbolFontAlphabet{\mathbb}{AMSb}
\DeclareSymbolFontAlphabet{\mathbbl}{bbold}
\newtheorem{thm}{Theorem}[section]
\newtheorem{lem}[thm]{Lemma}
\newtheorem{cor}[thm]{Corollary}
\theoremstyle{definition}
\newtheorem{nota}[thm]{Notation}
\newtheorem{eg}[thm]{Example}
\newtheorem{rem}[thm]{Remark}
\newtheorem*{rem*}{Remarks}
\newtheoremstyle{case}{}{}{}{}{}{:}{ }{}
\theoremstyle{case}
\newcommand{\F}{\mathbb{F}}
\title[On Terwilliger $\F$-algebras of factorial association schemes]{On Terwilliger $\F$-algebras of factorial association schemes}
\begin{document}
\author{Jiu-Yang He}
\address[J.-Y. He]{School of Mathematical Sciences, Anhui University (Jinzhai Campus), No. 128, Hean Road, Hefei, 230031, China}
\email[J.-Y. He]{a125211051@stu.ahu.edu.cn}
\author{Yu Jiang}
\address[Y. Jiang]{School of Mathematical Sciences, Anhui University (Qingyuan Campus), No. 111, Jiulong Road, Hefei, 230601, China}
\email[Y. Jiang]{jiangyu@ahu.edu.cn}
\begin{abstract}
The Terwilliger algebras of association schemes over an arbitrary field $\F$ were called the Terwilliger $\F$-algebras of association schemes in \cite{J2}. In this paper, we study the Terwilliger $\F$-algebras of factorial association schemes. We determine the centers, the semisimplicity, the Jacobson radicals and their nilpotent indices, the Wedderburn-Artin decompositions of the Terwilliger $\F$-algebras of factorial association schemes. Moreover, we determine all Terwilliger $\F$-algebras of factorial association schemes that are the symmetric $\F$-algebras or the Frobenius $\F$-algebras.
\end{abstract}
\maketitle
\noindent
\textbf{Keywords.} {Terwilliger $\F$-algebra; Center; Semisimplicity; Radical; Decomposition}\\
\textbf{Mathematics Subject Classification 2020.} 05E30 (primary), 05E16 (secondary)
\vspace{-1.5em}
\section{Introduction}
Association schemes on nonempty finite sets, briefly called schemes, have already been extensively studied as important research objects in algebraic combinatorics. In particular, many different tools have been introduced to study the scheme theory.

The subconstituent algebras of commutative schemes, introduced by Terwilliger in \cite{T1}, are new tools for studying schemes. They are finite-dimensional semisimple associative $\mathbb{C}$-algebras and are now known as the Terwilliger algebras of commutative schemes. In \cite{Han}, Hanaki defined the Terwilliger algebras for an arbitrary scheme and an arbitrary commutative unital ring. Following \cite{Han}, we call the Terwilliger algebras of schemes over an arbitrary field $\F$ the Terwilliger $\F$-algebras of schemes (see \cite{J2}). So the Terwilliger algebras of commutative schemes are their Terwilliger $\mathbb{C}$-algebras.

The Terwilliger $\mathbb{C}$-algebras of many commutative schemes have been investigated (for example, see \cite{BST, CD, LMP, LM, LMW, M, T1, T2, T3, TY}). However, the investigation of the Terwilliger $\F$-algebras of schemes is almost open (see \cite{Her}). In \cite{BST}, the authors suggested that studying the Terwilliger algebras of factorial schemes is interesting. In this paper, we study the Terwilliger $\F$-algebras of factorial schemes. We determine the centers, the semisimplicity, the Jacobson radicals and their nilpotent indices, the Wedderburn-Artin decompositions of the Terwilliger $\F$-algebras of factorial schemes (see Theorems \ref{T;Center}, \ref{T;Semisimplicity}, \ref{T;Jacobson}, \ref{T;Decomposition}, respectively). We also determine all Terwilliger $\F$-algebras of factorial schemes that are the symmetric $\F$-algebras or the Frobenius $\F$-algebras (see Theorem \ref{T;Frobenius}). All these results can contribute to understanding the algebraic structures of the Terwilliger $\F$-algebras of the direct products of schemes.

This paper is organized as follows: In Section 2, we collect the basic notation and preliminaries. In Section 3, we present two $\F$-bases of the Terwilliger $\F$-algebras of factorial schemes. We complete the proofs of Theorems \ref{T;Center}, \ref{T;Semisimplicity}, \ref{T;Jacobson} in Sections 4, 5, 6, respectively. Sections 7 and 8 contribute to deducing Theorems \ref{T;Decomposition} and \ref{T;Frobenius}.
\section{Basic notation and preliminaries}
For a general background on association schemes, the reader may refer to \cite{B, Z}.
\subsection{Conventions}
Let $\mathbb{N}$ be the set of all natural numbers and $\mathbb{N}_0=\mathbb{N}\cup\{0\}$. If $g, h\in\mathbb{N}_0$, let $[g, h]=\{a: a\in\mathbb{N}_0, g\leq a\leq h\}$. If $g\in\mathbb{N}$ and $\mathbb{U}_1, \mathbb{U}_2, \ldots, \mathbb{U}_g$ are sets, let $\prod_{h=1}^g\mathbb{U}_h$ be the cartesian product $\mathbb{U}_1\times \mathbb{U}_2\times\cdots\times\mathbb{U}_g$ and $\mathbf{i}_j$ be the $j$th-entry of an element $\mathbf{i}$ in $\prod_{h=1}^g\mathbb{U}_h$ for any $j\in[1, g]$. An association scheme on a nonempty finite set is briefly called a scheme. Fix a field $\F$ of characteristic $p$. Let $\delta_{g, h}$ be the Kronecker delta of the symbols $g$ and $h$ whose values are in $\F$. The addition, the multiplication, and the scalar multiplication of $\F$-matrices displayed in this paper are the usual matrix operations. Let $\F^{\times}$ be the set of all nonzero elements in $\F$. Let $\F_p$ be the prime subfield of $\F$. If $\mathbb{Z}$ is the ring of integers and $g\in\mathbb{Z}$, let $\overline{g}$ be the image of $g$ under the unital ring homomorphism from $\mathbb{Z}$ to $\F_p$. For any a subset $\mathbb{U}$ of an $\F$-linear space $\mathbb{V}$, let $\langle \mathbb{U}\rangle_\mathbb{V}$ be the $\F$-linear subspace of $\mathbb{V}$ spanned by $\mathbb{U}$.
All $\F$-algebras in this paper shall be finite-dimensional associative unital algebras. All modules in this paper shall be finite-dimensional left modules of a given $\F$-algebra.
\subsection{Schemes}
Let $\mathbb{X}$, $\mathbb{E}$ be nonempty finite sets. Let $\{R_a: a\!\in\!\mathbb{E}\}$ be a partition of $\mathbb{X}\times\mathbb{X}$. Call $\mathfrak{S}\!=\!(\mathbb{X}, \{R_a: a\!\in\!\mathbb{E}\})$ a {\em scheme} if the following conditions hold together:
\begin{enumerate}[(S1)]
\item There is a unique $0_{\mathfrak{S}}\in\mathbb{E}$ such that $R_{0_{\mathfrak{S}}}$ equals the diagonal set $\{(a,a): a\!\in\!\mathbb{X}\}$.
\item For any $g\!\in\!\mathbb{E}$, there is a unique $g^*\!\in\!\mathbb{E}$ such that $R_{g^*}$ equals $\{(a, b): (b,a)\in R_g\}$.
\item For any $g, h, i\in\mathbb{E}$ and $(x,y), (\widetilde{x},\widetilde{y})\!\in\! R_i$, there is a constant $p_{g,h}^i\in\mathbb{N}_0$ such that
$$p_{g,h}^i=|\{a: (x,a)\in R_g, (a,y)\in R_h\}|=|\{a: (\widetilde{x},a)\in R_g, (a,\widetilde{y})\in R_h\}|.$$
\end{enumerate}
From now on, let $\mathfrak{S}\!=\!(\mathbb{X}, \!\{R_a\!:\! a\!\in\!\mathbb{E}\})$ be a fixed scheme. Call $\mathfrak{S}$ a {\em symmetric scheme} if $g^*\!=\!g$ for any $g\!\in\!\mathbb{E}$. Call $\mathfrak{S}$ a {\em commutative scheme} if $p_{g,h}^i\!=\!p_{h,g}^i$ for any $g, h, i\!\in\!\mathbb{E}$. According to (S3), notice that the symmetric schemes are also commutative schemes.

Let $x R_g\!=\!\{a: (x,a)\!\in\! R_g\}$ and $k_g\!=\!p_{g,g^*}^{0_{\mathfrak{S}}}$ for any $x\!\in\!\mathbb{X}$ and $g\!\in\!\mathbb{E}$. If $x, y\!\in\!\mathbb{X}$ and $g\!\in\!\mathbb{E}$, (S2) and (S3) give $k_g\!=\!|x R_g|\!=\!|y R_g|\!\!>\!\!0$. If $g\!\in\!\mathbb{N}_0$ and $g\!\nmid\! k_h$ for any $h\in\mathbb{E}$, call $\mathfrak{S}$ a {\em $g'$-valenced scheme}. If $x, y\!\in\! \mathbb{X}$, $g, h, i\!\in\! \mathbb{E}$, and $y\!\in\! x R_i$, (S2) and (S3) give $p_{g,h}^i\!\!=\!\!|x R_g\cap y R_{h^*}|$. Call $\mathfrak{S}$ a {\em triply regular scheme} if, for any $x, y, z\!\in\!\mathbb{X}$, $g, h, i, j, k, \ell\!\in\!\mathbb{E}$, $y\!\in\! x R_j$, $z\!\in\! x R_k\cap y R_\ell$, $|x R_g\cap y R_h\cap z R_i|$ only depends on $g, h, i, j, k, \ell$ and is independent of the choices of $x, y, z$ that satisfy $y\in x R_j$ and $z\in x R_k\cap y R_\ell$. For the case that $\mathfrak{S}$ is a triply regular scheme, there is a constant $p_{g, h, i}^{j, k, \ell}\in\mathbb{N}_0$ such that $p_{g, h, i}^{j, k, \ell}=|x R_g\cap y R_h\cap z R_i|$ if $x, y, z\!\in\!\mathbb{X}$, $g, h, i, j, k, \ell\!\in\!\mathbb{E}$, $y\!\in\! x R_j$, $z\!\in\! x R_k\cap y R_\ell$.

Let $g\in\mathbb{N}$ and $\mathbb{X}_h, \mathbb{E}_h$ be nonempty finite sets for any $h\in[1, g]$. Let $\{R_a^h: a\in\mathbb{E}_h\}$ be a partition of $\mathbb{X}_h\times\mathbb{X}_h$ for any $h\!\in\![1, g]$. Let $\mathfrak{S}_h=(\mathbb{X}_h, \{R_a^h: a\in\mathbb{E}_h\})$ be a scheme for any $h\!\in\![1, g]$. If $\mathbf{x}, \mathbf{y}\!\in\!\prod_{h=1}^g\mathbb{X}_h$ and $\mathbf{i}\!\in\!\prod_{h=1}^g\mathbb{E}_h$, let $\mathbf{x}\!=_\mathbf{i}\!\mathbf{y}$ if $(\mathbf{x}_h, \mathbf{y}_h)\!\in\! R_{\mathbf{i}_h}^h$ for any $h\in[1, g]$. Let $R_\mathbf{i}\!=\! \{(\mathbf{a}, \mathbf{b}): \mathbf{a}, \mathbf{b}\in\prod_{h=1}^g\mathbb{X}_h, \mathbf{a}=_\mathbf{i}\mathbf{b}\}$ for any $\mathbf{i}\in\prod_{h=1}^g\mathbb{E}_h$. So $(\prod_{h=1}^g\mathbb{X}_h, \{R_\mathbf{a}: \mathbf{a}\!\in\! \prod_{h=1}^g\mathbb{E}_h\})$ is a scheme (see \cite{B, Z}).
Call $\mathfrak{S}$ the {\em direct product} of $\mathfrak{S}_1, \mathfrak{S}_2, \ldots, \mathfrak{S}_g$ if $\mathbb{X}=\prod_{h=1}^g\mathbb{X}_h$, $\mathbb{E}\!=\!\prod_{h=1}^g\mathbb{E}_h$, and $\{R_a: a\!\in\!\mathbb{E}\}\!=\!\{R_\mathbf{a}: \mathbf{a}\!\in\!\prod_{h=1}^g\mathbb{E}_h\}$.
If $\mathfrak{S}$ is the direct product of $\mathfrak{S}_1, \mathfrak{S}_2, \ldots, \mathfrak{S}_g$, it is clear that $0_\mathfrak{S}=(0_{\mathfrak{S}_1}, 0_{\mathfrak{S}_2},\ldots, 0_{\mathfrak{S}_g})$.
If $\mathfrak{S}$ is the direct product of $\mathfrak{S}_1, \mathfrak{S}_2, \ldots, \mathfrak{S}_g$, notice that $\mathfrak{S}$ is a symmetric scheme if and only if $\mathfrak{S}_h$ is a symmetric scheme for any $h\in[1, g]$. The next lemma is required.
\begin{lem}\label{L;Lemma2.1}\cite[Theorem 2.6.1 (iv)]{Z}
Assume that $g\in\mathbb{N}$. Assume that $\mathfrak{S}$ is the direct product of the schemes $\mathfrak{S}_1, \mathfrak{S}_2, \ldots, \mathfrak{S}_g$ and $\mathbf{i}, \mathbf{j}, \mathbf{k}\in\mathbb{E}$. Then $p_{\mathbf{i},\mathbf{j}}^\mathbf{k}=\prod_{h=1}^g p_{\mathbf{i}_h,\mathbf{j}_h}^{\mathbf{k}_h}$.
\end{lem}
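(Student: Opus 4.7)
The plan is to compute $p_{\mathbf{i},\mathbf{j}}^\mathbf{k}$ directly from its combinatorial definition and exploit the componentwise nature of the relations $R_\mathbf{a}$ in the direct product scheme. Fix any pair $(\mathbf{x},\mathbf{y})\in R_\mathbf{k}$; such a pair exists because $R_\mathbf{k}$ is one of the blocks of the partition of $\prod_{h=1}^g\mathbb{X}_h\times\prod_{h=1}^g\mathbb{X}_h$, and by (S3) the intersection number $p_{\mathbf{i},\mathbf{j}}^\mathbf{k}$ equals the cardinality of
\[
A=\{\mathbf{a}\in\textstyle\prod_{h=1}^g\mathbb{X}_h:(\mathbf{x},\mathbf{a})\in R_\mathbf{i},\ (\mathbf{a},\mathbf{y})\in R_\mathbf{j}\}.
\]

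The key observation is that, by the definition of $R_\mathbf{i}$ and $R_\mathbf{j}$ via $=_\mathbf{i}$ and $=_\mathbf{j}$, the condition $\mathbf{a}\in A$ decouples across coordinates: $\mathbf{a}\in A$ if and only if, for every $h\in[1,g]$, we have $(\mathbf{x}_h,\mathbf{a}_h)\in R_{\mathbf{i}_h}^h$ and $(\mathbf{a}_h,\mathbf{y}_h)\in R_{\mathbf{j}_h}^h$. Consequently, the natural bijection $\mathbf{a}\mapsto(\mathbf{a}_1,\ldots,\mathbf{a}_g)$ identifies $A$ with the cartesian product $\prod_{h=1}^g A_h$, where
\[
A_h=\{a\in\mathbb{X}_h:(\mathbf{x}_h,a)\in R_{\mathbf{i}_h}^h,\ (a,\mathbf{y}_h)\in R_{\mathbf{j}_h}^h\}.
\]

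Now since $(\mathbf{x},\mathbf{y})\in R_\mathbf{k}$, the definition of the direct product gives $(\mathbf{x}_h,\mathbf{y}_h)\in R_{\mathbf{k}_h}^h$ for each $h$. Applying (S3) inside each factor scheme $\mathfrak{S}_h$ then yields $|A_h|=p_{\mathbf{i}_h,\mathbf{j}_h}^{\mathbf{k}_h}$, and the multiplication principle together with the bijection above gives $p_{\mathbf{i},\mathbf{j}}^\mathbf{k}=|A|=\prod_{h=1}^g|A_h|=\prod_{h=1}^g p_{\mathbf{i}_h,\mathbf{j}_h}^{\mathbf{k}_h}$, as required.

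There is no real obstacle here: the whole argument is a direct unpacking of the definition of the direct product of schemes, and the only thing one must be careful about is the logical step that $(\mathbf{x},\mathbf{a})\in R_\mathbf{i}$ and $(\mathbf{a},\mathbf{y})\in R_\mathbf{j}$ together are equivalent to componentwise versions of these conditions, which is immediate from the definition of $=_\mathbf{i}$ preceding the lemma.
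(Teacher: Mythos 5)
Your proof is correct. The paper does not prove this lemma itself but cites it from Zieschang \cite{Z}; your direct combinatorial argument --- fix a representative pair in $R_\mathbf{k}$, observe that membership in the intermediate set decouples coordinatewise by the definition of $=_\mathbf{i}$ and $=_\mathbf{j}$, identify it with a cartesian product, and multiply --- is the standard and essentially the only natural way to prove the statement, and all the steps check out.
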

Let $\mathbb{U}$ be a set that satisfies the inequality $|\mathbb{U}|>1$. Let $R(\mathbb{U})_0=\{(a, a): a\in\mathbb{U}\}$ and
$R(\mathbb{U})_1=\{(a, b): a, b\in\mathbb{U}, a\neq b\}$. Then $\{R(\mathbb{U})_0, R(\mathbb{U})_1\}$ is a partition of $\mathbb{U}\times \mathbb{U}$. Notice that $(\mathbb{U}, \{R(\mathbb{U})_0, R(\mathbb{U})_1\})$ is a scheme (see \cite[Example 1.2]{B}). Call $\mathfrak{S}$ a {\em trivial scheme} induced from $\mathbb{U}$ if $\mathbb{X}\!=\!\mathbb{U}$, $\mathbb{E}\!=\![0,1]$, and $\{R_a: a\in\mathbb{E}\}\!=\!\{R(\mathbb{U})_a: a\in[0,1]\}$. If $\mathfrak{S}$ is a trivial scheme induced from $\mathbb{U}$, notice that $\mathfrak{S}$ is a symmetric scheme. If $\mathfrak{S}$ is a trivial scheme induced from $\mathbb{U}$, then $\mathfrak{S}$ is a triply regular scheme by the definition of a triply regular scheme and a related computation. The next lemma is required.
\begin{lem}\label{L;Lemma2.2}\cite[Example 1.2]{B}
Assume that $\mathbb{U}$ is a set and $|\mathbb{U}|>1$. Assume that $\mathfrak{S}$ is a trivial scheme induced from $\mathbb{U}$ and
$\mathbb{V}\!=\!\{(0, 0, 0), (0,1, 0), (1,0, 1), (1,1, 0), (1,1,1)\}$. Assume that $i, j, k\in\mathbb{E}$. Then $p_{0,0}^0=p_{0,1}^1=p_{1,0}^1=1, p_{1,1}^0=|\mathbb{U}|-1, p_{1,1}^1=|\mathbb{U}|-2$, and $p_{i, j}^k\!\neq\!0$ only if $(i, j, k)\!\in\!\mathbb{V}$. In particular, $\{(a, b, c): a, b, c\!\in\! [0,1], \ p_{a, b}^c\!\neq\! 0\}$ equals
\[\begin{cases}
\mathbb{U}\setminus\{(1,1,1)\}, &\text{if $|\mathbb{U}|=2$},\\
\mathbb{U}, &\text{if $|\mathbb{U}|>2$}.
\end{cases}\]
\end{lem}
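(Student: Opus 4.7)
The plan is to compute each intersection number $p_{i,j}^k$ with $i,j,k\in[0,1]$ directly from the formula $p_{g,h}^i=|xR_g\cap yR_{h^*}|$ recorded earlier in the excerpt, using the fact that a trivial scheme is symmetric so that $0^*=0$ and $1^*=1$. The underlying geometric input is very simple: for every $x\in\mathbb{U}$, one has $xR_0=\{x\}$ and $xR_1=\mathbb{U}\setminus\{x\}$.

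First I would fix a representative pair $(x,y)\in R_k$: for $k=0$ take any $x\in\mathbb{U}$ and set $y=x$; for $k=1$ take any two distinct $x,y\in\mathbb{U}$ (which is possible because $|\mathbb{U}|>1$). Then I would treat the eight triples $(i,j,k)\in[0,1]^3$ in turn. The five triples listed in the claim yield
\[
xR_0\cap yR_0=\{x\}\ (x=y),\ \ xR_0\cap yR_1=\{x\}\ (x\neq y),\ \ xR_1\cap yR_0=\{y\}\ (x\neq y),
\]
\[
xR_1\cap yR_1=\mathbb{U}\setminus\{x\}\ (x=y),\ \ xR_1\cap yR_1=\mathbb{U}\setminus\{x,y\}\ (x\neq y),
\]
producing the values $1,1,1,|\mathbb{U}|-1,|\mathbb{U}|-2$ stated in the lemma. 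The remaining three triples give $xR_0\cap yR_0=\{x\}\cap\{y\}=\emptyset$ when $x\neq y$, $xR_0\cap yR_1=\{x\}\cap(\mathbb{U}\setminus\{x\})=\emptyset$ when $x=y$, and $xR_1\cap yR_0=(\mathbb{U}\setminus\{x\})\cap\{x\}=\emptyset$ when $x=y$, so the corresponding structure constants vanish; this gives the implication that $p_{i,j}^k\neq 0$ forces $(i,j,k)\in\mathbb{V}$.

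For the final dichotomy, the four entries of $\mathbb{V}$ other than $(1,1,1)$ have structure constants $1,1,1,|\mathbb{U}|-1$, all nonzero as soon as $|\mathbb{U}|\geq 2$, while $p_{1,1}^1=|\mathbb{U}|-2$ vanishes exactly when $|\mathbb{U}|=2$ and is positive when $|\mathbb{U}|>2$; this immediately yields the two displayed cases. There is essentially no obstacle here — the argument is a direct eight-case calculation — and the only thing to be careful about is bookkeeping: matching each $(i,j,k)$ to the right representative pair $(x,y)\in R_k$ so that the diagonal case ($x=y$) and off-diagonal case ($x\neq y$) do not get conflated.
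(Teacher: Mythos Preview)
Your proof is correct and is precisely the standard direct computation; the paper does not supply its own proof but simply cites \cite[Example 1.2]{B}, so there is nothing to compare against beyond noting that your eight-case check is the expected argument. (Incidentally, the displayed dichotomy in the statement has a typo: the $\mathbb{U}$ on the right should be $\mathbb{V}$, and your argument correctly establishes the intended claim.)
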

Fix $n\in\mathbb{N}$ and a set sequence $\mathbb{U}_1, \mathbb{U}_2, \ldots, \mathbb{U}_n$ satisfying $|\mathbb{U}_g|>1$ for any $g\in [1, n]$. Use $n_1$ and $n_2$ to denote $|\{a: a\in [1, n], |\mathbb{U}_a|=2\}|$ and $|\{a: a\in [1, n], |\mathbb{U}_a|>2\}|$, respectively. If $\mathbb{V}\subseteq [1, n]$, set $\mathbb{V}^\circ=\{a: a\in\mathbb{V}, |\mathbb{U}_a|>2\}$. For any sets $\mathbb{V}$ and $\mathbb{W}$, set $\mathbb{V}\triangle\mathbb{W}=(\mathbb{V}\setminus\mathbb{W})\cup(\mathbb{W}\setminus\mathbb{V})$. If $\mathbf{g}$ and $\mathbf{h}$ are $n$-tuples whose entries are in $[0,1]$, set $\mathbbm{g}=\{a: a\in [1, n],\mathbf{g}_a=1\}$ and  $\mathbf{g}\preceq\mathbf{h}$ if $\mathbbm{g}\subseteq\mathbbm{h}$. Let $\mathbf{0}$ be the $n$-tuple with all-zero entries. Let $\mathbf{1}$ be the $n$-tuple with all-one entries. The next lemma's proof is trivial.
\begin{lem}\label{L;Lemma2.3}
Assume that $\mathbf{g}$ and $\mathbf{h}$ are $n$-tuples whose entries are in $[0,1]$. Then $\mathbf{g}=\mathbf{h}$ if and only if $\mathbbm{g}=\mathbbm{h}$. In particular, $\preceq$ is a partial order on the set of all $n$-tuples whose entries are in $[0,1]$. Moreover, if $\mathbb{U}\subseteq [1,n]$, then there must be a unique $n$-tuple $\mathbf{i}$ such that all entries of $\mathbf{i}$ are in $[0,1]$ and the equation $\mathbbm{i}=\mathbb{U}$ holds.
\end{lem}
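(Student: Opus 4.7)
The plan is to handle the three assertions in sequence, each one being a short set-theoretic manipulation that leverages the fact that entries of the tuples live in the two-element set $[0,1]$.

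First, for the equivalence $\mathbf{g}=\mathbf{h}\Longleftrightarrow\mathbbm{g}=\mathbbm{h}$, the forward implication is immediate from the definition of $\mathbbm{g}$. For the converse, I would argue entrywise: given $\mathbbm{g}=\mathbbm{h}$, for each $a\in[1,n]$ the condition $\mathbf{g}_a=1$ is equivalent to $a\in\mathbbm{g}$, hence to $a\in\mathbbm{h}$, hence to $\mathbf{h}_a=1$. Since every entry is either $0$ or $1$, this forces $\mathbf{g}_a=\mathbf{h}_a$ for all $a\in[1,n]$, so $\mathbf{g}=\mathbf{h}$.

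Second, to show $\preceq$ is a partial order on the set of such tuples, I would verify the three axioms by transporting them through the map $\mathbf{g}\mapsto\mathbbm{g}$. Reflexivity and transitivity are inherited directly from the corresponding properties of $\subseteq$ on subsets of $[1,n]$. For antisymmetry, if $\mathbf{g}\preceq\mathbf{h}$ and $\mathbf{h}\preceq\mathbf{g}$, then $\mathbbm{g}\subseteq\mathbbm{h}$ and $\mathbbm{h}\subseteq\mathbbm{g}$, so $\mathbbm{g}=\mathbbm{h}$, and the first part yields $\mathbf{g}=\mathbf{h}$.

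Third, given $\mathbb{U}\subseteq[1,n]$, I would exhibit the required $\mathbf{i}$ by setting $\mathbf{i}_a=1$ when $a\in\mathbb{U}$ and $\mathbf{i}_a=0$ when $a\in[1,n]\setminus\mathbb{U}$; by construction every entry lies in $[0,1]$ and $\mathbbm{i}=\mathbb{U}$. Uniqueness is immediate from the first assertion: any other such tuple $\mathbf{j}$ would satisfy $\mathbbm{j}=\mathbb{U}=\mathbbm{i}$, forcing $\mathbf{j}=\mathbf{i}$. There is no real obstacle here — the only thing to be careful about is to use the first assertion to upgrade equalities of index sets to equalities of tuples, rather than trying to check things entrywise three separate times.
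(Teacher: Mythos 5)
Your proof is correct and is exactly the routine argument the paper has in mind when it states that ``The next lemma's proof is trivial'' and omits it: you verify the bijection between $\{0,1\}$-tuples and subsets of $[1,n]$ entrywise, transport the partial-order axioms of $\subseteq$ through that bijection, and obtain existence and uniqueness in the last assertion by direct construction plus the first assertion. Nothing to add.
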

The trivial scheme induced from $\mathbb{U}_g$ is denoted by $\mathfrak{Tri}(\mathbb{U}_g)$ for any $g\in [1, n]$. Call $\mathfrak{S}$ the {\em factorial scheme} of $\mathfrak{Tri}(\mathbb{U}_1), \mathfrak{Tri}(\mathbb{U}_2), \ldots, \mathfrak{Tri}(\mathbb{U}_n)$ if $\mathfrak{S}$ is the direct product of $\mathfrak{Tri}(\mathbb{U}_1), \mathfrak{Tri}(\mathbb{U}_2), \ldots, \mathfrak{Tri}(\mathbb{U}_n)$ (see \cite[Pages 344, 345, 347]{B}). If $\mathfrak{S}$ is the factorial scheme of $\mathfrak{Tri}(\mathbb{U}_1), \mathfrak{Tri}(\mathbb{U}_2), \ldots, \mathfrak{Tri}(\mathbb{U}_n)$, then each element in $\mathbb{E}$ is a $n$-tuple whose entries are in $[0,1]$. If $\mathfrak{S}$ is the factorial scheme of $\mathfrak{Tri}(\mathbb{U}_1), \mathfrak{Tri}(\mathbb{U}_2), \ldots, \mathfrak{Tri}(\mathbb{U}_n)$, then $0_\mathfrak{S}=\mathbf{0}$ and $k_\mathbf{0}=1$. If $\mathfrak{S}$ is the factorial scheme of $\mathfrak{Tri}(\mathbb{U}_1), \mathfrak{Tri}(\mathbb{U}_2), \ldots, \!\mathfrak{Tri}(\mathbb{U}_n)$ and $\mathbf{g}\!\in\!\mathbb{E}\!\setminus\!\{\mathbf{0}\}$, then $k_\mathbf{g}=\prod_{h\in\mathbbm{g}}(|\mathbb{U}_h|-1)$ by Lemmas \ref{L;Lemma2.1} and \ref{L;Lemma2.2}. If $\mathfrak{S}$ is the factorial scheme of $\mathfrak{Tri}(\mathbb{U}_1), \mathfrak{Tri}(\mathbb{U}_2), \ldots, \mathfrak{Tri}(\mathbb{U}_n)$, notice that $\mathfrak{S}$ is also a symmetric scheme since $\mathfrak{Tri}(\mathbb{U}_g)$ is a symmetric scheme for any $g\in [1, n]$. The next lemmas are required.
\begin{lem}\label{L;Lemma2.4}\cite[Theorem 10]{W}
Assume that $\mathfrak{S}$ is a direct product of some triply regular schemes. Then $\mathfrak{S}$ is also a triply regular scheme. In particular, if $\mathfrak{S}$ is the factorial scheme of $\mathfrak{Tri}(\mathbb{U}_1), \mathfrak{Tri}(\mathbb{U}_2), \ldots, \!\mathfrak{Tri}(\mathbb{U}_n)$, then $\mathfrak{S}$ is a triply regular scheme.
\end{lem}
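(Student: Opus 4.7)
The plan is to reduce to the case of a product of two triply regular schemes and then argue coordinatewise; the general case follows by a straightforward induction on the number of factors. Assume therefore that $\mathfrak{S}$ is the direct product of two triply regular schemes $\mathfrak{S}_1$ and $\mathfrak{S}_2$, where $\mathfrak{S}_r=(\mathbb{X}_r,\{R_a^r:a\in\mathbb{E}_r\})$ for $r\in[1,2]$.

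The key observation is that, by the very definition of the direct product, every relation in $\mathfrak{S}$ decomposes coordinatewise: for any $\mathbf{x},\mathbf{a}\in\mathbb{X}_1\times\mathbb{X}_2$ and any $\mathbf{g}\in\mathbb{E}_1\times\mathbb{E}_2$, we have $\mathbf{a}\in\mathbf{x}R_\mathbf{g}$ if and only if $\mathbf{a}_r\in\mathbf{x}_r R_{\mathbf{g}_r}^r$ for both $r\in[1,2]$. Consequently, for any $\mathbf{x},\mathbf{y},\mathbf{z}\in\mathbb{X}_1\times\mathbb{X}_2$ and any $\mathbf{g},\mathbf{h},\mathbf{i}\in\mathbb{E}_1\times\mathbb{E}_2$, the triple intersection factors as
$$\mathbf{x}R_\mathbf{g}\cap\mathbf{y}R_\mathbf{h}\cap\mathbf{z}R_\mathbf{i}=\prod_{r=1}^{2}\bigl(\mathbf{x}_rR_{\mathbf{g}_r}^r\cap\mathbf{y}_rR_{\mathbf{h}_r}^r\cap\mathbf{z}_rR_{\mathbf{i}_r}^r\bigr),$$
so its cardinality is the product of the cardinalities of the componentwise intersections.

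Now fix a sextuple $\mathbf{g},\mathbf{h},\mathbf{i},\mathbf{j},\mathbf{k},\boldsymbol{\ell}\in\mathbb{E}_1\times\mathbb{E}_2$ and suppose $\mathbf{y}\in\mathbf{x}R_\mathbf{j}$ and $\mathbf{z}\in\mathbf{x}R_\mathbf{k}\cap\mathbf{y}R_{\boldsymbol{\ell}}$. The same coordinatewise principle shows that these hypotheses split into $\mathbf{y}_r\in\mathbf{x}_rR_{\mathbf{j}_r}^r$ and $\mathbf{z}_r\in\mathbf{x}_rR_{\mathbf{k}_r}^r\cap\mathbf{y}_rR_{\boldsymbol{\ell}_r}^r$ for each $r\in[1,2]$. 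Applying the triple regularity of $\mathfrak{S}_r$, each factor $|\mathbf{x}_rR_{\mathbf{g}_r}^r\cap\mathbf{y}_rR_{\mathbf{h}_r}^r\cap\mathbf{z}_rR_{\mathbf{i}_r}^r|$ depends only on $\mathbf{g}_r,\mathbf{h}_r,\mathbf{i}_r,\mathbf{j}_r,\mathbf{k}_r,\boldsymbol{\ell}_r$, and hence the product depends only on the full tuples $\mathbf{g},\mathbf{h},\mathbf{i},\mathbf{j},\mathbf{k},\boldsymbol{\ell}$. This proves $\mathfrak{S}$ is triply regular, and the \emph{in particular} clause then follows since each trivial scheme $\mathfrak{Tri}(\mathbb{U}_g)$ was already noted to be triply regular. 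There is no real obstacle here: once the coordinatewise factorization of the triple intersection is written down, the argument is essentially formal, and the only step meriting any care is verifying that the three constraints $\mathbf{y}\in\mathbf{x}R_\mathbf{j}$, $\mathbf{z}\in\mathbf{x}R_\mathbf{k}$, and $\mathbf{z}\in\mathbf{y}R_{\boldsymbol{\ell}}$ transfer cleanly to each coordinate.
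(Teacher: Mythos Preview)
Your proof is correct. The paper does not actually supply its own proof of this lemma; it simply cites it as \cite[Theorem 10]{W} and uses the result as a black box. Your coordinatewise factorization argument is exactly the natural way to establish the claim, and it matches what one would expect the cited proof to contain.
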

\begin{lem}\label{L;Lemma2.5}
Assume that $\mathfrak{S}$ is the factorial scheme of $\mathfrak{Tri}(\mathbb{U}_1), \mathfrak{Tri}(\mathbb{U}_2), \ldots,\! \mathfrak{Tri}(\mathbb{U}_n)$. Assume that $\mathbf{g}, \mathbf{h}, \mathrm{i}\in\mathbb{E}$. Then $p_{\mathbf{g}, \mathbf{h}}^\mathbf{i}\neq 0$ if and only if $\mathbbm{g}\triangle\mathbbm{h}\subseteq\mathbbm{i}\subseteq(\mathbbm{g}\triangle\mathbbm{h})\cup(\mathbbm{g}\cap\mathbbm{h})^\circ$.
\end{lem}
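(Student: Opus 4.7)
The plan is to apply Lemma \ref{L;Lemma2.1} coordinatewise and then read off the nonvanishing conditions from Lemma \ref{L;Lemma2.2}. Since $\mathfrak{S}$ is the direct product of $\mathfrak{Tri}(\mathbb{U}_1),\ldots,\mathfrak{Tri}(\mathbb{U}_n)$, Lemma \ref{L;Lemma2.1} gives
\[p_{\mathbf{g},\mathbf{h}}^{\mathbf{i}}=\prod_{a=1}^{n}p_{\mathbf{g}_a,\mathbf{h}_a}^{\mathbf{i}_a},\]
with the $a$-th factor computed inside $\mathfrak{Tri}(\mathbb{U}_a)$. As these factors are natural numbers, $p_{\mathbf{g},\mathbf{h}}^{\mathbf{i}}\neq 0$ is equivalent to $p_{\mathbf{g}_a,\mathbf{h}_a}^{\mathbf{i}_a}\neq 0$ for every $a\in[1,n]$, so the problem reduces to a coordinatewise analysis.

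Next, I would apply Lemma \ref{L;Lemma2.2} to $\mathbb{U}_a$ and enumerate when a coordinate structure constant is nonzero. Sorting the outcome by the pair $(\mathbf{g}_a,\mathbf{h}_a)$, the admissible coordinates split into three types: (i) $a\in\mathbbm{g}\triangle\mathbbm{h}$ forces $\mathbf{i}_a=1$, i.e.\ $a\in\mathbbm{i}$; (ii) $a\notin\mathbbm{g}\cup\mathbbm{h}$ forces $\mathbf{i}_a=0$, i.e.\ $a\notin\mathbbm{i}$; and (iii) $a\in\mathbbm{g}\cap\mathbbm{h}$ allows $\mathbf{i}_a=0$ unconditionally, while $\mathbf{i}_a=1$ is allowed precisely when $|\mathbb{U}_a|>2$, i.e.\ when $a\in(\mathbbm{g}\cap\mathbbm{h})^\circ$.

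Finally, I would package the coordinatewise restrictions into the claimed set-theoretic statement. Type (i) gives $\mathbbm{g}\triangle\mathbbm{h}\subseteq\mathbbm{i}$, while types (ii) and (iii) combined say that no element of $\mathbbm{i}$ can lie outside $(\mathbbm{g}\triangle\mathbbm{h})\cup(\mathbbm{g}\cap\mathbbm{h})^\circ$, giving $\mathbbm{i}\subseteq(\mathbbm{g}\triangle\mathbbm{h})\cup(\mathbbm{g}\cap\mathbbm{h})^\circ$. Running the coordinate analysis in reverse shows that these two inclusions are also sufficient for every factor $p_{\mathbf{g}_a,\mathbf{h}_a}^{\mathbf{i}_a}$ to be nonzero, so the equivalence holds. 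There is no serious obstacle; the only point requiring care is to separate the sub-case $|\mathbb{U}_a|=2$ from $|\mathbb{U}_a|>2$ within $\mathbbm{g}\cap\mathbbm{h}$, which is exactly the bookkeeping encoded by the superscript $\circ$.
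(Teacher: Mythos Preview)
Your proof is correct and follows essentially the same approach as the paper, which simply cites Lemmas \ref{L;Lemma2.1}, \ref{L;Lemma2.2}, and \ref{L;Lemma2.3} without spelling out the coordinatewise case analysis. You have made explicit exactly the routine verification the paper leaves to the reader, and your handling of the $|\mathbb{U}_a|=2$ versus $|\mathbb{U}_a|>2$ distinction via the $\circ$ decoration is spot on.
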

\begin{proof}
The desired lemma follows from the combination of Lemmas \ref{L;Lemma2.1}, \ref{L;Lemma2.2}, \ref{L;Lemma2.3}.
\end{proof}
\subsection{Algebras}
Let $\mathbb{A}$ be an $\F$-algebra with the zero element $0_\mathbb{A}$ and the identity element $1_\mathbb{A}$. Let $e\in\mathbb{A}$ and $\mathbb{B}$ be an $\F$-basis of $\mathbb{A}$. Then $e$ can be uniquely written as an $\F$-linear combination of the elements in $\mathbb{B}$. If $f\in \mathbb{B}$, let $c_\mathbb{B}(e, f)$ be the coefficient of $f$ in this $\F$-linear combination that represents $e$. Then $\{a: a\in \mathbb{B}, c_{\mathbb{B}}(e, a)\in\F^\times\}$ is denoted by $\mathrm{Supp}_{\mathbb{B}}(e)$. So $e=0_\mathbb{A}$ if and only if $\mathrm{Supp}_{\mathbb{B}}(e)=\varnothing$. Call $e$ an {\em idempotent} of $\mathbb{A}$ if $e^2=e$. The {\em center} $\langle\{a: a\!\in\! \mathbb{A}, ab=ba\ \forall\ b\in\mathbb{A}\}\rangle_\mathbb{A}$ of $\mathbb{A}$ is written as $\mathrm{Z}(\mathbb{A})$.
So $\mathrm{Z}(\mathbb{A})$ is an $\F$-subalgebra of $\mathbb{A}$ with the identity element $1_\mathbb{A}$.
For any $g\in\mathbb{N}$ and a division $\F$-algebra $\mathbb{K}$, let $\mathrm{M}_g(\mathbb{K})$ be the full matrix $\F$-algebra of $(g\times g)$-matrices whose entries are in $\mathbb{K}$. If $g, h\in\mathbb{N}$ and $\mathbb{K}$ is a division $\F$-algebra, let
$g\mathrm{M}_h(\mathbb{K})$ be the direct sum of $g$ copies of $\mathrm{M}_h(\mathbb{K})$. Then $\mathbb{A}$ is called a {\em semisimple $\F$-algebra} if there are $g, h_1, h_2, \ldots, h_g\in\mathbb{N}$ and division $\F$-algebras $\mathbb{K}_1, \mathbb{K}_2, \ldots, \mathbb{K}_g$ satisfying the condition
$$\mathbb{A}\cong \bigoplus_{i=1}^g\mathrm{M}_{h_i}(\mathbb{K}_i)\ \text{as $\F$-algebras}.$$

For any an $\F$-linear subspace $\mathbb{U}$ of an $\F$-linear space $\mathbb{V}$, let $\mathbb{V}/\mathbb{U}$ be the quotient $\F$-linear space of $\mathbb{V}$ with respect to $\mathbb{U}$. Let $\mathbb{I}$ be a two-sided ideal of $\mathbb{A}$ and $\mathbb{A}/\mathbb{I}$ be the quotient $\F$-algebra of $\mathbb{A}$ with respect to $\mathbb{I}$. Call $\mathbb{A}$ a {\em nilpotent two-sided ideal} of $\mathbb{A}$ if there is $g\in\mathbb{N}$ such that $e_1e_2\cdots e_g=0_\mathbb{A}$ for any $h\in [1, g]$ and $e_h\in\mathbb{I}$. If $\mathbb{I}$ is a nilpotent two-sided ideal of $\mathbb{A}$ and $g\in\mathbb{N}$, call $g$ the {\em nilpotent index} of $\mathbb{I}$ if $g$ is the smallest choice in $\mathbb{N}$ that satisfies the condition $e_1e_2\cdots e_g=0_\mathbb{A}$ for any $h\in [1, g]$ and $e_h\in\mathbb{I}$. The {\em Jacobson radical} of $\mathbb{A}$ is the sum of all nilpotent two-sided ideals of $\mathbb{A}$. Let $\mathrm{Rad}(\mathbb{A})$ be the Jacobson radical of $\mathbb{A}$. So $\mathrm{Rad}(\mathbb{A})$ is also a nilpotent two-sided ideal of $\mathbb{A}$. If $\mathbb{U}\subseteq\mathbb{A}$, let $e\mathbb{U}e=\{eae: a\in\mathbb{U}\}$. If $e$ is an idempotent of $\mathbb{A}$, then $e\mathbb{I}e$ is an $\F$-subalgebra of $\mathbb{A}$ with the identity element $e$. The next lemmas are required.
\begin{lem}\label{L;Lemma2.6}\cite[Theorem 3.1.1, Proposition 3.2.4]{DK}
Assume that $\mathbb{A}$ is an $\F$-algebra. Then $\mathbb{A}$ is a semisimple $\F$-algebra if and only if
$\mathrm{Rad}(\mathbb{A})=\{0_\mathbb{A}\}$. Moreover, assume that $e$ is an idempotent of $\mathbb{A}$.
Then $\mathrm{Rad}(e\mathbb{A}e)=e\mathrm{Rad}(\mathbb{A})e\subseteq\mathrm{Rad}(\mathbb{A})$.
In particular, both $\mathrm{Z}(\mathbb{A})$ and $e\mathbb{A}e$ are semisimple $\mathbb{F}$-subalgebras of $\mathbb{A}$
if $\mathbb{A}$ is a semisimple $\F$-algebra.
\end{lem}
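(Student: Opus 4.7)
The lemma assembles three standard facts about finite-dimensional $\F$-algebras, and my plan is to handle them in the order stated, with each subsequent assertion reduced to the previous one.

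For the first equivalence, I would take the direction $\mathbb{A}$ semisimple $\Rightarrow\mathrm{Rad}(\mathbb{A})=\{0_\mathbb{A}\}$ first: any nilpotent two-sided ideal $\mathbb{I}$ of $\mathbb{A}\cong\bigoplus_{i=1}^g\M_{h_i}(\K_i)$ projects to a nilpotent two-sided ideal of each factor $\M_{h_i}(\K_i)$, and since matrix algebras over division $\F$-algebras are simple (their only two-sided ideals are zero and the whole ring), these projections must vanish, forcing $\mathbb{I}=\{0_\mathbb{A}\}$; summing over all such $\mathbb{I}$ gives $\mathrm{Rad}(\mathbb{A})=\{0_\mathbb{A}\}$. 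For the converse, I would invoke the Wedderburn-Artin theorem applied to the finite-dimensional $\F$-algebra $\mathbb{A}$: since $\mathrm{Rad}(\mathbb{A})=\{0_\mathbb{A}\}$, the algebra is semisimple in the classical sense and therefore decomposes as a direct sum of matrix algebras over finite-dimensional division $\F$-algebras.

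For the second assertion, the inclusion $e\mathrm{Rad}(\mathbb{A})e\subseteq\mathrm{Rad}(\mathbb{A})$ is immediate since $\mathrm{Rad}(\mathbb{A})$ is a two-sided ideal of $\mathbb{A}$. To show $e\mathrm{Rad}(\mathbb{A})e\subseteq\mathrm{Rad}(e\mathbb{A}e)$, I would verify that $e\mathrm{Rad}(\mathbb{A})e$ is a nilpotent two-sided ideal of $e\mathbb{A}e$: it is stable under left and right multiplication by elements of $e\mathbb{A}e$ because $e$ is idempotent, and if $g$ is the nilpotent index of $\mathrm{Rad}(\mathbb{A})$, then any product $(ea_1e)(ea_2e)\cdots(ea_ge)$ with each $a_h\in\mathrm{Rad}(\mathbb{A})$ lies inside $e\cdot\mathrm{Rad}(\mathbb{A})^g\cdot e=\{0_\mathbb{A}\}$. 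For the reverse inclusion $\mathrm{Rad}(e\mathbb{A}e)\subseteq e\mathrm{Rad}(\mathbb{A})e$, I would appeal to the quasi-regularity characterization of the Jacobson radical: for $r\in\mathrm{Rad}(e\mathbb{A}e)$ and any $x,y\in\mathbb{A}$, writing $r=ere$ one shows that $e-(exe)r(eye)$ is invertible in $e\mathbb{A}e$ and deduces that $1_\mathbb{A}-xry$ is invertible in $\mathbb{A}$, forcing $r\in\mathrm{Rad}(\mathbb{A})$ and hence $r=ere\in e\mathrm{Rad}(\mathbb{A})e$.

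The third assertion then follows quickly. Given $\mathbb{A}$ semisimple, the first two parts yield $\mathrm{Rad}(e\mathbb{A}e)=e\mathrm{Rad}(\mathbb{A})e=\{0_\mathbb{A}\}$, and applying the first part to the $\F$-algebra $e\mathbb{A}e$ (whose identity is $e$) shows it is semisimple. For $\mathrm{Z}(\mathbb{A})$, I would use the Wedderburn-Artin decomposition $\mathbb{A}\cong\bigoplus_{i=1}^g\M_{h_i}(\K_i)$ to identify $\mathrm{Z}(\mathbb{A})\cong\bigoplus_{i=1}^g\mathrm{Z}(\M_{h_i}(\K_i))\cong\bigoplus_{i=1}^g\mathrm{Z}(\K_i)$, and each $\mathrm{Z}(\K_i)$ is a field that is a finite-dimensional division $\F$-algebra, so this decomposition is already in Wedderburn-Artin form. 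The main obstacle is the reverse containment in the corner-algebra identity: establishing $\mathrm{Rad}(e\mathbb{A}e)\subseteq e\mathrm{Rad}(\mathbb{A})e$ cannot be obtained purely by manipulation of nilpotent ideals and requires an external characterization of $\mathrm{Rad}(\mathbb{A})$ (via quasi-regularity, or equivalently via annihilation of all simple $\mathbb{A}$-modules), which is why this piece appears as a separate proposition in the reference rather than as a direct consequence of definitions.
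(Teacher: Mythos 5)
The paper does not prove this lemma; it is quoted verbatim from \cite[Theorem 3.1.1, Proposition 3.2.4]{DK}, so there is no in-text argument to compare against. Judged on its own terms, your proof is correct: the deduction that nilpotent two-sided ideals of $\bigoplus_i \mathrm{M}_{h_i}(\mathbb{K}_i)$ project to nilpotent two-sided ideals of simple factors and hence vanish is sound, the converse via Wedderburn--Artin is sound, the verification that $e\mathrm{Rad}(\mathbb{A})e$ is a nilpotent two-sided ideal of $e\mathbb{A}e$ is careful and complete, the quasi-regularity argument for the reverse containment (using $1_\mathbb{A}-arb$ invertible iff $1_\mathbb{A}-rba$ invertible, noting $rba\in e\mathbb{A}e$ when $r=ere$, and decomposing $1_\mathbb{A}-rba=(1_\mathbb{A}-e)+(e-rba)$ under the Peirce decomposition) goes through, and the Wedderburn--Artin identification of $\mathrm{Z}(\mathbb{A})$ with $\bigoplus_i\mathrm{Z}(\mathbb{K}_i)$ is standard and correct.

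Your closing meta-remark, however, is overstated: in the finite-dimensional setting of the paper, the reverse containment $\mathrm{Rad}(e\mathbb{A}e)\subseteq e\mathrm{Rad}(\mathbb{A})e$ \emph{can} be obtained purely by manipulating nilpotent ideals, so the quasi-regularity detour is avoidable. Writing $J=\mathrm{Rad}(e\mathbb{A}e)$, which the paper's own definition makes a nilpotent two-sided ideal of $e\mathbb{A}e$ (say of index $m$), consider the two-sided ideal $\mathbb{A}J\mathbb{A}$ of $\mathbb{A}$. A typical product of $m$ generators $(a_1 j_1 a_1')(a_2 j_2 a_2')\cdots(a_m j_m a_m')$ equals $a_1\,j_1(ea_1'a_2e)\,j_2(ea_2'a_3e)\cdots j_{m-1}(ea_{m-1}'a_me)\,j_m\,a_m'$ after inserting $j_i=ej_ie$; each bracketed factor $j_i(e\cdots e)$ lies in $J$ since $J$ is a right ideal of $e\mathbb{A}e$, so the middle block lies in $J^m=\{0_\mathbb{A}\}$. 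Hence $(\mathbb{A}J\mathbb{A})^m=\{0_\mathbb{A}\}$, so $\mathbb{A}J\mathbb{A}$ is a nilpotent two-sided ideal of $\mathbb{A}$ and therefore $\mathbb{A}J\mathbb{A}\subseteq\mathrm{Rad}(\mathbb{A})$. Intersecting with $e\mathbb{A}e$ (equivalently, conjugating by $e$) gives $J=eJe\subseteq e(\mathbb{A}J\mathbb{A})e\subseteq e\mathrm{Rad}(\mathbb{A})e$. This keeps the entire proof inside the paper's own definition of $\mathrm{Rad}$ as the sum of nilpotent two-sided ideals, without appealing to the quasi-regular or module-annihilator characterizations.
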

\begin{lem}\label{L;Lemma2.7}\cite[Corollary 3.1.14]{DK}
Assume that $\mathbb{A}$ is an $\F$-algebra. Assume that $\mathbb{I}$ is a nilpotent two-sided ideal of $\mathbb{A}$. \!Then $\mathrm{Rad}(\mathbb{A}/\mathbb{I})\!=\!\mathrm{Rad}(\mathbb{A})/\mathbb{I}$. In particular, there are $g, h_1, h_2, \ldots, h_g\in\mathbb{N}$ and division $\F$-algebras $\mathbb{K}_1, \mathbb{K}_2, \ldots, \mathbb{K}_g$ satisfying the condition
$$
\mathbb{A}/\mathrm{Rad}(\mathbb{A})\cong\bigoplus_{i=1}^g\mathrm{M}_{h_i}(\mathbb{K}_i)\text{ as $\F$-algebras}.
$$
\end{lem}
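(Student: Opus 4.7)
The plan is to establish the radical-quotient identity $\mathrm{Rad}(\mathbb{A}/\mathbb{I})=\mathrm{Rad}(\mathbb{A})/\mathbb{I}$ via a two-way inclusion, using only the definition in the excerpt of $\mathrm{Rad}(\mathbb{A})$ as the sum of nilpotent two-sided ideals together with the fact, already stated there, that $\mathrm{Rad}(\mathbb{A})$ is itself nilpotent. The Wedderburn--Artin statement then falls out by specializing to $\mathbb{I}=\mathrm{Rad}(\mathbb{A})$ and invoking Lemma \ref{L;Lemma2.6}.

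First I would observe that since $\mathbb{I}$ is a nilpotent two-sided ideal of $\mathbb{A}$, the definition of $\mathrm{Rad}(\mathbb{A})$ already forces $\mathbb{I}\subseteq\mathrm{Rad}(\mathbb{A})$, so the quotient $\mathrm{Rad}(\mathbb{A})/\mathbb{I}$ is a well-defined two-sided ideal of $\mathbb{A}/\mathbb{I}$. For the inclusion $\mathrm{Rad}(\mathbb{A})/\mathbb{I}\subseteq\mathrm{Rad}(\mathbb{A}/\mathbb{I})$ I would let $m$ denote the nilpotent index of $\mathrm{Rad}(\mathbb{A})$ and note that any product of $m$ elements from $\mathrm{Rad}(\mathbb{A})/\mathbb{I}$ is the image, under the canonical projection $\pi:\mathbb{A}\to\mathbb{A}/\mathbb{I}$, of an $m$-fold product inside $\mathrm{Rad}(\mathbb{A})$; that product is already $0_\mathbb{A}$, hence vanishes modulo $\mathbb{I}$. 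Thus $\mathrm{Rad}(\mathbb{A})/\mathbb{I}$ is a nilpotent two-sided ideal of $\mathbb{A}/\mathbb{I}$ and therefore sits inside $\mathrm{Rad}(\mathbb{A}/\mathbb{I})$ by the defining property of the Jacobson radical.

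For the reverse inclusion, I would set $\mathbb{J}=\pi^{-1}(\mathrm{Rad}(\mathbb{A}/\mathbb{I}))$, a two-sided ideal of $\mathbb{A}$ containing $\mathbb{I}$ with $\mathbb{J}/\mathbb{I}=\mathrm{Rad}(\mathbb{A}/\mathbb{I})$. Let $s$ be the nilpotent index of $\mathrm{Rad}(\mathbb{A}/\mathbb{I})$ and $t$ the nilpotent index of $\mathbb{I}$. Any $s$-fold product of elements of $\mathbb{J}$ already lies in $\mathbb{I}$, so grouping any $st$-fold product of elements of $\mathbb{J}$ into $t$ consecutive blocks of length $s$ exhibits it as a product of $t$ elements of $\mathbb{I}$, and hence equals $0_\mathbb{A}$. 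Therefore $\mathbb{J}$ is a nilpotent two-sided ideal of $\mathbb{A}$, so $\mathbb{J}\subseteq\mathrm{Rad}(\mathbb{A})$, and passing to the quotient yields $\mathrm{Rad}(\mathbb{A}/\mathbb{I})=\mathbb{J}/\mathbb{I}\subseteq\mathrm{Rad}(\mathbb{A})/\mathbb{I}$, completing the first assertion.

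For the ``in particular'' clause I would apply the first part with $\mathbb{I}=\mathrm{Rad}(\mathbb{A})$, which is a nilpotent two-sided ideal of $\mathbb{A}$ as recorded in the excerpt, to get $\mathrm{Rad}(\mathbb{A}/\mathrm{Rad}(\mathbb{A}))=\{0_{\mathbb{A}/\mathrm{Rad}(\mathbb{A})}\}$. By Lemma \ref{L;Lemma2.6} this forces $\mathbb{A}/\mathrm{Rad}(\mathbb{A})$ to be a semisimple $\F$-algebra, and the claimed decomposition is then simply the definition of a semisimple $\F$-algebra fixed in the paper. The only place where care is required is the nilpotent-index bookkeeping in the reverse inclusion, but the multiplicative combination $st$ of the two indices dispatches it cleanly; everything else is formal.
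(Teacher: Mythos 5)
The paper records Lemma~\ref{L;Lemma2.7} purely as a citation to \cite[Corollary~3.1.14]{DK} and offers no proof of its own, so there is no internal argument to compare yours against. Your proof is correct and self-contained: the inclusion $\mathrm{Rad}(\mathbb{A})/\mathbb{I}\subseteq\mathrm{Rad}(\mathbb{A}/\mathbb{I})$ follows from nilpotency of $\mathrm{Rad}(\mathbb{A})$ (which the paper states explicitly), the reverse inclusion via the preimage $\mathbb{J}=\pi^{-1}(\mathrm{Rad}(\mathbb{A}/\mathbb{I}))$ and the $st$-fold block-grouping is sound, and the passage from $\mathrm{Rad}(\mathbb{A}/\mathrm{Rad}(\mathbb{A}))=\{0\}$ to the Wedderburn--Artin form via Lemma~\ref{L;Lemma2.6} and the paper's definition of a semisimple $\F$-algebra is exactly right. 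The one tacit step worth acknowledging is that you treat ``nilpotent index of $\mathrm{Rad}(\mathbb{A}/\mathbb{I})$'' as meaningful, which requires $\mathrm{Rad}(\mathbb{A}/\mathbb{I})$ itself to be a nilpotent ideal; the paper does assert this for the Jacobson radical of any $\F$-algebra, so the appeal is legitimate, but a reader should see you invoke that standing fact rather than have it pass silently.
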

Let $\mathbb{U}$ be an $\mathbb{A}$-module. Call $\mathbb{U}$ an {\em irreducible $\mathbb{A}$-module} if $\mathbb{U}$ has no nonzero proper $\mathbb{A}$-submodule. Let $\mathbb{K}$ be an extension field of $\F$. Notice that $\mathbb{K}$ is an $\F$-algebra via the addition and the multiplication of the elements in $\mathbb{K}$. Hence $\mathbb{K}$ is an $\F$-linear space via the addition and the multiplication of the elements in $\mathbb{K}$. Let $\mathbb{A}_\mathbb{K}$ be the $\F$-tensor product $\mathbb{K}\otimes\mathbb{A}$ of $\F$-algebras. Let $\mathbb{U}_\mathbb{K}$ be the $\F$-tensor product $\mathbb{K}\otimes \mathbb{U}$ of $\F$-linear spaces. Then $\mathbb{U}_\mathbb{K}$ is an $\mathbb{A}_\mathbb{K}$-module via the diagonal action of $\mathbb{A}_\mathbb{K}$ on $\mathbb{U}_\mathbb{K}$. Call $\mathbb{U}$ an {\em absolutely irreducible $\mathbb{A}$-module} if $\mathbb{U}_\mathbb{L}$ is an irreducible $\mathbb{A}_\mathbb{L}$-module for any an extension field $\mathbb{L}$ of $\mathbb{F}$. So an absolutely irreducible $\mathbb{A}$-module is an irreducible $\mathbb{A}$-module. However, the converse may not be true. The next lemmas are required.
\begin{lem}\label{L;Lemma2.8}\cite[Corollary 3.1.7]{DK}
Assume that $\mathbb{A}$ is an $\F$-algebra and there are $g, h_1, h_2, \ldots, h_g\in\mathbb{N}$ and division $\F$-algebras $\mathbb{K}_1, \mathbb{K}_2, \ldots, \mathbb{K}_g$ satisfying the condition
$$
\mathbb{A}/\mathrm{Rad}(\mathbb{A})\cong\bigoplus_{i=1}^g\mathrm{M}_{h_i}(\mathbb{K}_i)\text{ as $\F$-algebras}.
$$
Then $g$ is equal to the number of all pairwise nonisomorphic irreducible $\mathbb{A}$-modules.
\end{lem}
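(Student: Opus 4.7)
The plan is to bridge between the Wedderburn-Artin decomposition of $\mathbb{A}/\mathrm{Rad}(\mathbb{A})$ and the irreducible modules of $\mathbb{A}$ in two steps: a reduction modulo the radical, followed by a direct analysis on the semisimple side.

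First, I would verify that isomorphism classes of irreducible $\mathbb{A}$-modules correspond bijectively to isomorphism classes of irreducible $\mathbb{A}/\mathrm{Rad}(\mathbb{A})$-modules. Given any irreducible $\mathbb{A}$-module $\mathbb{U}$, the submodule $\mathrm{Rad}(\mathbb{A})\cdot\mathbb{U}$ of $\mathbb{U}$ must equal $\{0_\mathbb{U}\}$: otherwise irreducibility forces $\mathrm{Rad}(\mathbb{A})\cdot\mathbb{U}=\mathbb{U}$, so iteration yields $\mathrm{Rad}(\mathbb{A})^m\cdot\mathbb{U}=\mathbb{U}$ for every $m\in\mathbb{N}$, contradicting the nilpotency of $\mathrm{Rad}(\mathbb{A})$ recorded in the discussion preceding Lemma \ref{L;Lemma2.6}. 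Hence $\mathbb{U}$ descends to an $\mathbb{A}/\mathrm{Rad}(\mathbb{A})$-module via the canonical quotient map, and conversely every irreducible module of $\mathbb{A}/\mathrm{Rad}(\mathbb{A})$ inflates to an irreducible $\mathbb{A}$-module. Both operations are mutually inverse and preserve isomorphism.

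Second, I would analyze the semisimple algebra $\mathbb{B}=\bigoplus_{i=1}^g\mathrm{M}_{h_i}(\mathbb{K}_i)$ directly via the pairwise orthogonal central idempotents $e_1,e_2,\ldots,e_g$ corresponding to its summands. For any irreducible $\mathbb{B}$-module $\mathbb{V}$, at least one $e_i\cdot\mathbb{V}$ is nonzero; since each $e_i$ is central, $e_i\cdot\mathbb{V}$ is a submodule and equals $\mathbb{V}$ by irreducibility, while all other $e_j$ annihilate $\mathbb{V}$. Hence $\mathbb{V}$ is an irreducible module over the simple summand $\mathrm{M}_{h_i}(\mathbb{K}_i)$. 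For a matrix algebra over a division $\F$-algebra, the column space $\mathbb{K}_i^{h_i}$, which can be realized as the minimal left ideal generated by the primitive idempotent $E_{1,1}$, is by the classical Artin-Wedderburn argument the unique (up to isomorphism) irreducible module. The $g$ classes produced in this way are pairwise non-isomorphic, as each is annihilated by a different subset of the $e_j$'s.

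Combining the two steps, the number of pairwise non-isomorphic irreducible $\mathbb{A}$-modules is exactly $g$. The main subtlety, given that the $\mathbb{K}_i$ may be noncommutative division $\F$-algebras, is to rely only on Schur's lemma and left-module arguments rather than on $\F$-dimension counts when pinning down the unique irreducible module for each simple summand $\mathrm{M}_{h_i}(\mathbb{K}_i)$; once that is done, the bijection with the full Wedderburn decomposition is formal bookkeeping with the central idempotents.
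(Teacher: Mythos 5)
Your proof is correct. The paper itself gives no proof of Lemma \ref{L;Lemma2.8} but simply cites \cite[Corollary 3.1.7]{DK}, and your two-step argument (reducing modulo $\mathrm{Rad}(\mathbb{A})$ via the nilpotency argument, then counting simple summands via central idempotents and the column-space description of the unique irreducible module of $\mathrm{M}_{h_i}(\mathbb{K}_i)$) is exactly the standard textbook derivation found in such references.
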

\begin{lem}\label{L;Lemma2.9}\cite[Corollary 1.10.4]{K}
Assume that $\mathbb{A}$ is an $\F$-algebra. Then there are $g, h_1, h_2, \ldots, h_g\in\mathbb{N}$ such that the quotient $\F$-algebra $\mathbb{A}/\mathrm{Rad}(\mathbb{A})$ satisfies the formula
$$
\mathbb{A}/\mathrm{Rad}(\mathbb{A})\cong\bigoplus_{i=1}^g\mathrm{M}_{h_i}(\F)\text{ as $\F$-algebras}
$$
if and only if every irreducible $\mathbb{A}$-module is also an absolutely irreducible $\mathbb{A}$-module.
\end{lem}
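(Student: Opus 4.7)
The plan is to start from the Wedderburn-type decomposition supplied by Lemma \ref{L;Lemma2.7}, identify its division $\F$-algebras with the endomorphism rings of the irreducible $\mathbb{A}$-modules, and then characterize absolute irreducibility via these endomorphism rings using Jacobson density.

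First I would fix a decomposition $\mathbb{A}/\mathrm{Rad}(\mathbb{A})\cong\bigoplus_{i=1}^g\mathrm{M}_{h_i}(\mathbb{K}_i)$ for division $\F$-algebras $\mathbb{K}_i$, guaranteed by Lemma \ref{L;Lemma2.7}. Since $\mathrm{Rad}(\mathbb{A})$ annihilates every irreducible $\mathbb{A}$-module, the irreducible $\mathbb{A}$-modules coincide with the irreducible modules of this semisimple quotient, which are the $g$ column modules $V_i\cong\mathbb{K}_i^{h_i}$, and a direct computation with matrix units gives $\mathrm{End}_\mathbb{A}(V_i)\cong\mathbb{K}_i^{\mathrm{op}}$ as $\F$-algebras. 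In particular, every $\mathbb{K}_i$ equals $\F$ if and only if $\mathrm{End}_\mathbb{A}(V)=\F$ for every irreducible $\mathbb{A}$-module $V$.

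It therefore suffices to prove that an irreducible $\mathbb{A}$-module $V$ is absolutely irreducible if and only if $\mathrm{End}_\mathbb{A}(V)=\F$. For the ``if'' direction, Jacobson density forces the structure map $\mathbb{A}\to\mathrm{End}_\F(V)$ to be surjective, because $V$ is finite-dimensional and its commutant is $\F$; surjectivity is preserved by $-\otimes_\F\mathbb{L}$ for every extension $\mathbb{L}/\F$, so $\mathbb{A}_\mathbb{L}\to\mathrm{End}_\mathbb{L}(V_\mathbb{L})$ is also surjective, whence $V_\mathbb{L}$ is simple as an $\mathbb{A}_\mathbb{L}$-module. For the ``only if'' direction, assume $\mathbb{K}:=\mathrm{End}_\mathbb{A}(V)$ properly contains $\F$ and pick a finite extension $\mathbb{L}/\F$ for which $\mathbb{K}\otimes_\F\mathbb{L}$ fails to be a division algebra and hence contains a nontrivial idempotent; that idempotent sits inside $\mathrm{End}_{\mathbb{A}_\mathbb{L}}(V_\mathbb{L})$ and produces a nontrivial direct summand of $V_\mathbb{L}$, contradicting absolute irreducibility.

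The main obstacle is the ``if'' direction just described: one has to verify carefully that Jacobson density really produces surjectivity of $\mathbb{A}\to\mathrm{End}_\F(V)$ under the hypothesis $\mathrm{End}_\mathbb{A}(V)=\F$, and that this surjectivity passes cleanly through the scalar extension $-\otimes_\F\mathbb{L}$ for an arbitrary (possibly infinite or non-algebraic) extension $\mathbb{L}$. Once this is settled, combining the two equivalences established above gives the claimed characterization; the remaining bookkeeping, in particular the identification $\mathrm{End}_\mathbb{A}(V_i)\cong\mathbb{K}_i^{\mathrm{op}}$ and its compatibility with the condition $\mathbb{K}_i=\F$, is routine.
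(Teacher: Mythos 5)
The paper treats this statement as a citation to Karpilovsky and gives no proof of its own, so there is nothing internal to compare against; what follows assesses your proposal on its own terms.

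Your overall route is the standard one and is sound: reduce to the semisimple quotient via Lemma \ref{L;Lemma2.7}, identify the endomorphism rings $\mathrm{End}_\mathbb{A}(V_i)\cong\mathbb{K}_i^{\mathrm{op}}$, and reformulate the hypothesis as ``$\mathrm{End}_\mathbb{A}(V)=\F$ for every irreducible $V$.'' The ``if'' direction (Jacobson density gives surjectivity of $\mathbb{A}\to\mathrm{End}_\F(V)$; surjectivity and the identification $\mathbb{L}\otimes_\F\mathrm{End}_\F(V)\cong\mathrm{End}_\mathbb{L}(V_\mathbb{L})$ both behave well under $-\otimes_\F\mathbb{L}$ because $V$ is finite-dimensional and $\mathbb{L}$ is $\F$-flat) is correct, and the care you flag there does check out routinely.

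The genuine gap is in the ``only if'' direction, in the clause ``$\mathbb{K}\otimes_\F\mathbb{L}$ fails to be a division algebra and hence contains a nontrivial idempotent.'' That ``hence'' is false in general. If $\F$ is imperfect of characteristic $p$ and $\mathbb{L}=\F(\alpha)$ with $\alpha^p\in\F\setminus\F^p$, then $\mathbb{L}\otimes_\F\mathbb{L}\cong\mathbb{L}[x]/((x-\alpha)^p)$ is a local $\mathbb{L}$-algebra: it has nonzero nilpotents but its only idempotents are $0$ and $1$. If $\mathbb{K}$ is a division algebra whose center contains such an $\mathbb{L}$, your idempotent need not exist. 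Fortunately the idempotent is an unnecessary detour. Take any subfield $\mathbb{L}$ of $\mathbb{K}$ with $\mathbb{L}\supsetneq\F$ (one exists because $\dim_\F\mathbb{K}>1$); then $\mathbb{L}\otimes_\F\mathbb{L}$ has zero divisors whenever $[\mathbb{L}:\F]>1$, so $\mathbb{K}\otimes_\F\mathbb{L}$ is not a division algebra. Since $\mathrm{End}_{\mathbb{A}_\mathbb{L}}(V_\mathbb{L})\cong\mathrm{End}_\mathbb{A}(V)\otimes_\F\mathbb{L}\cong\mathbb{K}\otimes_\F\mathbb{L}$ (again by finite-dimensionality and flatness), Schur's lemma forces $V_\mathbb{L}$ to be reducible, which is the contradiction you want. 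You should also record explicitly how you know a suitable finite $\mathbb{L}$ exists; the subfield choice above handles it in one line.
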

Let $\mathrm{Hom}_\F(\mathbb{V}, \mathbb{F})$ be the dual $\F$-linear space of an $\F$-linear space $\mathbb{V}$. So $\mathrm{Hom}_\F(\mathbb{U}, \mathbb{F})$ is defined as
$\mathbb{U}$ is a defined $\mathbb{A}$-module. If there is an algebra anti-homomorphism $\alpha$ from $\mathbb{A}$ to $\mathbb{A}$, let $\mathbb{A}$ act on $\mathrm{Hom}_\F(\mathbb{U}, \mathbb{F})$ by setting
$(f\beta)(g)=\beta(\alpha(f) g)$ for any $f\in\mathbb{A}$, $\beta\in\mathrm{Hom}_\F(\mathbb{U}, \mathbb{F})$, and $g\in\mathbb{U}$. If there is an algebra anti-homomorphism $\alpha$ from $\mathbb{A}$ to $\mathbb{A}$,
then the defined action of $\mathbb{A}$ on $\mathrm{Hom}_\F(\mathbb{U}, \mathbb{F})$ is briefly called the {\em $\alpha$-action} of $\mathbb{A}$. If there is an algebra anti-homomorphism $\alpha$ from $\mathbb{A}$ to $\mathbb{A}$, it is obvious to see that $\mathrm{Hom}_\F(\mathbb{U}, \mathbb{F})$ is an $\mathbb{A}$-module under the $\alpha$-action of $\mathbb{A}$. If there is an algebra anti-homomorphism $\alpha$ from $\mathbb{A}$ to $\mathbb{A}$, use $\mathbb{U}^\alpha$ to denote the $\mathbb{A}$-module $\mathrm{Hom}_\F(\mathbb{U}, \mathbb{F})$ under the $\alpha$-action of $\mathbb{A}$. If there is an algebra anti-homomorphism $\alpha$ from $\mathbb{A}$ to $\mathbb{A}$ and $\mathbb{U}^\alpha\cong\mathbb{U}$ as $\mathbb{A}$-modules, call $\mathbb{U}$ a {\em self-contragredient $\mathbb{A}$-module} with respect to $\alpha$.

Let $\mathbb{J}f=\{af: a\in\mathbb{J}\}$ and $f\mathbb{J}=\{fa: a\in\mathbb{J}\}$ for any $f\in\mathbb{A}$ and $\mathbb{J}\subseteq\mathbb{A}$. If $\mathbb{J}$ is a left ideal of $\mathbb{A}$, let
$\mathrm{r}(\mathbb{J})=\{a: a\in\mathbb{A}, \mathbb{J}a=\{0_\mathbb{A}\}\}$ and notice that $\mathrm{r}(\mathbb{J})$ is a right ideal of $\mathbb{A}$. If $f\in\mathbb{A}$ and $\alpha$ is a map whose domain is $\mathbb{A}$, let $\alpha(\mathbb{J}f)=\{\alpha(af): a\in\mathbb{J}\}$ and $\alpha(f\mathbb{J})=\{\alpha(fa): a\in\mathbb{J}\}$ for any $\mathbb{J}\subseteq\mathbb{A}$. For any $\alpha\in\mathrm{Hom}_\F(\mathbb{A}, \mathbb{F})$, call $\alpha$ a {\em nonsingular $\F$-linear functional} of $\mathbb{A}$ if the equations $\alpha(\mathbb{A}f)=\{\overline{0}\}$, $\alpha(f\mathbb{A})=\{\overline{0}\}$, and $f=0_\mathbb{A}$ are pairwise equivalent for any $f\in\mathbb{A}$. For any $\alpha\in\mathrm{Hom}_\F(\mathbb{A}, \mathbb{F})$, call $\alpha$ a {\em symmetric $\F$-linear functional} of $\mathbb{A}$ if $\alpha(fg)=\alpha(gf)$ for any $g, f\in\mathbb{A}$.
Call $\mathbb{A}$ a {\em Frobenius $\F$-algebra} if there is a nonsingular $\F$-linear functional of $\mathbb{A}$. Call $\mathbb{A}$ a {\em symmetric $\F$-algebra} if $\mathbb{A}$ is a Frobenius $\F$-algebra and a nonsingular $\F$-linear functional of $\mathbb{A}$ is also a symmetric $\F$-linear functional of $\mathbb{A}$. It is obvious to see that a symmetric $\F$-algebra is also a Frobenius $\F$-algebra. The next lemmas are required.
\begin{lem}\label{L;Lemma2.10}\cite[Theorem 3.6 (iii)]{K}
Assume that $\mathbb{A}$ is an $\F$-algebra. Then $\mathbb{A}$ is a Frobenius $\F$-algebra only if, for any a left ideal $\mathbb{I}$ of $\mathbb{A}$, the $\F$-dimension of $\mathbb{A}$ is equal to the sum of the $\F$-dimension of $\mathbb{I}$ and the $\F$-dimension of the right ideal $\mathrm{r}(\mathbb{I})$ of $\mathbb{A}$.
\end{lem}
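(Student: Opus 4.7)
The plan is to convert the existence of a nonsingular $\F$-linear functional into a nondegenerate bilinear pairing on $\mathbb{A}$ and then identify $\mathrm{r}(\mathbb{I})$ with the orthogonal complement of $\mathbb{I}$ under that pairing; the dimension identity will then drop out of the standard codimension theorem for nondegenerate bilinear forms on a finite-dimensional space.

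Concretely, I would first fix a nonsingular $\F$-linear functional $\alpha$ of $\mathbb{A}$ and define the bilinear pairing $B\colon\mathbb{A}\times\mathbb{A}\to\F$ by $B(x,y)=\alpha(xy)$. The two halves of the definition of nonsingularity, namely that $\alpha(f\mathbb{A})=\{\overline{0}\}$ forces $f=0_\mathbb{A}$ and dually for $\alpha(\mathbb{A}f)$, translate directly into the assertion that $B$ is nondegenerate in both arguments.

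Next, writing $\mathbb{I}^\perp=\{y\in\mathbb{A}:\alpha(xy)=\overline{0}\text{ for all }x\in\mathbb{I}\}$, I would establish $\mathbb{I}^\perp=\mathrm{r}(\mathbb{I})$. The inclusion $\mathrm{r}(\mathbb{I})\subseteq\mathbb{I}^\perp$ is immediate from the definitions. For the reverse inclusion, given $y\in\mathbb{I}^\perp$ and $x\in\mathbb{I}$, the left-ideal property ensures $ax\in\mathbb{I}$ for every $a\in\mathbb{A}$, so $\alpha(a\cdot xy)=\alpha((ax)y)=\overline{0}$; hence $\alpha(\mathbb{A}(xy))=\{\overline{0}\}$ and the nonsingularity of $\alpha$ forces $xy=0_\mathbb{A}$. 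As $x\in\mathbb{I}$ was arbitrary, $y\in\mathrm{r}(\mathbb{I})$. With this identification in hand, the standard fact that a subspace and its orthogonal complement under a nondegenerate bilinear form on a finite-dimensional space have complementary dimensions yields
$$
\dim_\F\mathbb{A}=\dim_\F\mathbb{I}+\dim_\F\mathbb{I}^\perp=\dim_\F\mathbb{I}+\dim_\F\mathrm{r}(\mathbb{I}),
$$
as required.

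The only point requiring real care is the nontrivial containment $\mathbb{I}^\perp\subseteq\mathrm{r}(\mathbb{I})$, where one must simultaneously exploit that $\mathbb{I}$ is stable under left multiplication by $\mathbb{A}$ and that $\alpha$ detects the zero element through its action on principal left ideals. Setting up the bilinear pairing and invoking the codimension theorem for nondegenerate forms are otherwise entirely formal; in particular, no structure theory of Frobenius algebras beyond the definition is needed.
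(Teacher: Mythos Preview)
Your argument is correct and is the standard proof of this dimension identity for Frobenius algebras. Note that the paper does not supply its own proof of this lemma: it is simply quoted from the cited reference \cite[Theorem 3.6 (iii)]{K}, so there is nothing in the paper to compare your approach against.
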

\begin{lem}\label{L;Lemma2.11}\cite[Lemma 3.4]{K}
Assume that $g\!\in\!\mathbb{N}$ and $\mathbb{A}_1, \mathbb{A}_2,\ldots, \mathbb{A}_g$ are $\F$-algebras. Then the direct sum of $\mathbb{A}_1, \mathbb{A}_2,\ldots, \mathbb{A}_g$ is a Frobenius $\F$-algebra if and only if $\mathbb{A}_h$ is a Frobenius $\F$-algebra for any $h\in [1, g]$. Moreover, the direct sum of $\mathbb{A}_1, \mathbb{A}_2,\ldots, \mathbb{A}_g$ is a symmetric $\F$-algebra if and only if $\mathbb{A}_h$ is a symmetric $\F$-algebra for any $h\in [1, g]$. In particular, if $\mathbb{A}$ is a semisimple $\F$-algebra, then $\mathbb{A}$ is also a symmetric $\F$-algebra.
\end{lem}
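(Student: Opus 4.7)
The plan is to establish the two biconditionals by directly transferring nonsingular $\F$-linear functionals between $\mathbb{A}=\bigoplus_{h=1}^g\mathbb{A}_h$ and its summands via the central idempotents $e_h=1_{\mathbb{A}_h}$, and then to derive the concluding semisimple statement from the Wedderburn-Artin form provided by Lemma \ref{L;Lemma2.7}.

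For the Frobenius equivalence, observe that the $e_h$ form a complete orthogonal system of central idempotents in $\mathbb{A}$. If each $\mathbb{A}_h$ carries a nonsingular $\alpha_h$, I would set $\alpha(x)=\sum_h\alpha_h(xe_h)$ for $x\in\mathbb{A}$; then $\alpha(\mathbb{A}x)=\{\overline{0}\}$ forces $\alpha_h(\mathbb{A}_h\cdot xe_h)=\{\overline{0}\}$ for each $h$, so $xe_h=0_{\mathbb{A}_h}$ by nonsingularity of $\alpha_h$, whence $x=0_\mathbb{A}$; the right-sided condition is symmetric. Conversely, if $\alpha$ is nonsingular on $\mathbb{A}$, the restriction $\alpha_h=\alpha|_{\mathbb{A}_h}$ is nonsingular because, for $x\in\mathbb{A}_h$, the equality $yx=(ye_h)x$ shows $\mathbb{A}x=\mathbb{A}_hx$. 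The symmetric equivalence uses the same constructions: since the summands are mutually orthogonal under multiplication in $\mathbb{A}$, both $\alpha(xy)$ and $\alpha(yx)$ decompose componentwise into $\sum_h\alpha_h(x_hy_h)$ and $\sum_h\alpha_h(y_hx_h)$, so symmetry on $\mathbb{A}$ is equivalent to symmetry on each $\mathbb{A}_h$.

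For the concluding semisimple claim, combining Lemma \ref{L;Lemma2.6} with Lemma \ref{L;Lemma2.7} yields $\mathbb{A}\cong\bigoplus_{i=1}^g\mathrm{M}_{h_i}(\mathbb{K}_i)$ as $\F$-algebras, for division $\F$-algebras $\mathbb{K}_i$. By the equivalence just established, it suffices to check that each $\mathrm{M}_h(\mathbb{K})$ with $\mathbb{K}$ a finite-dimensional division $\F$-algebra is symmetric. I would fix a symmetric nonsingular $\F$-linear functional $\beta$ on $\mathbb{K}$ and set $\alpha(M)=\beta(\mathrm{Tr}(M))$, where $\mathrm{Tr}$ denotes the ordinary matrix trace. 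Symmetry of $\alpha$ holds because $\mathrm{Tr}(MN)-\mathrm{Tr}(NM)$ lies in the $\F$-span of commutators of $\mathbb{K}$, which $\beta$ annihilates; nonsingularity follows by testing $\alpha(\mathrm{M}_h(\mathbb{K})X)=\{\overline{0}\}$ against elementary matrices $cE_{ji}$ with $c$ ranging over $\mathbb{K}$, which yields $\beta(cX_{ij})=\overline{0}$ for all $c$ and hence $X_{ij}=0$ for all $i,j$.

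The only point requiring an external input is the fact that every finite-dimensional division $\F$-algebra is itself a symmetric $\F$-algebra, so that the functional $\beta$ above can be produced — this is the step I would expect to cost the most effort if developed from scratch. Everything else reduces to routine bookkeeping with central idempotents, direct-sum decompositions, and elementary matrices.
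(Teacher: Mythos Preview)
The paper does not give its own proof of this lemma; it is stated with a bare citation to \cite[Lemma 3.4]{K} and used as a black box. So there is no argument in the paper to compare against.

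Your proposal is a correct and standard direct proof. The transfer of (symmetric) nonsingular functionals between $\mathbb{A}=\bigoplus_h\mathbb{A}_h$ and the summands via the central idempotents $e_h$ is exactly how one proves the two biconditionals, and your bookkeeping is accurate. For the final clause you can streamline slightly: in this paper ``semisimple'' is \emph{defined} to mean $\mathbb{A}\cong\bigoplus_i\mathrm{M}_{h_i}(\mathbb{K}_i)$, so invoking Lemmas \ref{L;Lemma2.6} and \ref{L;Lemma2.7} is unnecessary --- you already have the Wedderburn--Artin form by hypothesis. Your reduction to the single fact that a finite-dimensional division $\F$-algebra $\mathbb{K}$ is symmetric is the right one, and your trace construction $\alpha(M)=\beta(\mathrm{Tr}(M))$ on $\mathrm{M}_h(\mathbb{K})$ is correct, including the commutator argument for symmetry and the elementary-matrix test for nonsingularity. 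The existence of $\beta$ is indeed the only nontrivial input; it follows, for instance, by composing the reduced trace $\mathrm{trd}\colon\mathbb{K}\to Z(\mathbb{K})$ (which is symmetric and nondegenerate over the center) with any nonzero $\F$-linear functional on $Z(\mathbb{K})$, and then checking nondegeneracy over $\F$ by the same ``multiply by a central element'' trick.
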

\subsection{Terwilliger $\F$-algebras of schemes}
Let $\mathbb{U}$ be a nonempty finite set. Let $\mathrm{M}_\mathbb{U}(\F)$ be the full matrix $\F$-algebra of square $\F$-matrices whose rows and columns are labeled by the elements in $\mathbb{U}$. Notice that $\mathrm{M}_\mathbb{U}(\F)\cong \mathrm{M}_{|\mathbb{U}|}(\F)$ as $\F$-algebras. Let $I, O$ be the identity matrix and the zero matrix in $\mathrm{M}_\mathbb{X}(\F)$, respectively. Let $x, y\in\mathbb{X}$. If $M\in\mathrm{M}_\mathbb{X}(\F)$, let $M(x,y)$ be the $(x, y)$-entry of $M$ and $M^T$ be the transpose of $M$.

Let $g, h\in\mathbb{E}$ and $A_g$ be the adjacency $\F$-matrix with respect to $R_g$. Recall that $A_g$ is a $\{\overline{0}, \overline{1}\}$-matrix in $\mathrm{M}_\mathbb{X}(\F)$. Moreover, $A_g(x, y)=\overline{1}$ if and only if $y\in xR_g$. Let
$E_g^*(x)$ be the dual $\F$-idempotent with respect to $x$ and $R_g$. Recall that $E_g^*(x)$ is a diagonal $\{\overline{0}, \overline{1}\}$-matrix in $\mathrm{M}_\mathbb{X}(\F)$. Moreover, $E_g^*(x)(y,y)=\overline{1}$ if and only if $y\in xR_g$.
It is clear that $A_g^T=A_g$ if $\mathfrak{S}$ is a symmetric scheme. In general, recall the equations
\begin{align}\label{Eq;1}
A_g^T=A_{g^*}\ \text{and}\ E_g^*(x)^T=E_g^*(x),
\end{align}
\begin{align}\label{Eq;2}
E_g^*(x)E_h^*(x)=\delta_{g,h}E_g^*(x),
\end{align}
\begin{align}\label{Eq;3}
A_{0_\mathfrak{S}}=I=\sum_{i\in\mathbb{E}}E_i^*(x).
\end{align}

Call an $\F$-subalgebra of $\mathrm{M}_\mathbb{X}(\F)$ the {\em Terwilliger $\F$-algebra} of $\mathfrak{S}$ with respect to $x$
if it is generated by $\{A_a: a\in\mathbb{E}\}\cup\{E_a^*(x): a\in\mathbb{E}\}$. Let $\mathbb{T}(x)$ be the Terwilliger $\F$-algebra of $\mathfrak{S}$ with respect to $x$. So $\mathbb{T}(x)$ is a unital $\F$-subalgebra of $\mathrm{M}_\mathbb{X}(\F)$ by Equation \eqref{Eq;3}. According to Equation \eqref{Eq;1}, the $\F$-linear map $\alpha_T$ that sends $M$ to $M^T$ is an algebra anti-automorphism from $\mathbb{T}(x)$ to $\mathbb{T}(x)$. Let $i\in\mathbb{E}$. Then $E_g^*(x)A_hE_i^*(x)\neq O$ if and only if $p_{g^*, i}^h\!\neq \!0$ (see \cite[Lemma 3.2]{Han}). So $\{E_a^*(x)A_bE_c^*(x): a, b, c\in \mathbb{E}, p_{a^*, c}^b\!\neq\! 0\}$ is an $\F$-linearly independent subset of $\mathbb{T}(x)$ by the definition of $\mathbb{T}(x)$ and Equation \eqref{Eq;2}. In general, the algebraic structures of $\mathbb{T}(x)$ and $\mathrm{Rad}(\mathbb{T}(x))$ may also depend on both $\F$ and $x$ (see \cite[5.1]{Han}). The reader may refer to \cite{CX,Han, J1, J2} for some recent progress on the algebraic structures of $\mathbb{T}(x)$ and $\mathrm{Rad}(\mathbb{T}(x))$. The next lemmas are required.
\begin{lem}\label{L;Lemma2.12}\cite[Theorem 3.4]{Han}
Assume that $x\in \mathbb{X}$. Then $\mathbb{T}(x)$ is a semisimple $\F$-algebra only if $\mathfrak{S}$ is a $p'$-valenced scheme.
\end{lem}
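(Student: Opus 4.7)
The strategy is to prove the contrapositive: assuming there exists $h\in\mathbb{E}$ with $p\mid k_h$, I will exhibit a nonzero nilpotent two-sided ideal of $\mathbb{T}(x)$, so that $\mathrm{Rad}(\mathbb{T}(x))\neq\{O\}$ and $\mathbb{T}(x)$ fails to be semisimple by Lemma \ref{L;Lemma2.6}. The rough plan is to find a rank-one matrix $M=vv^T$ inside $\mathbb{T}(x)$ whose sandwich $MTM$ collapses into a scalar multiple of $M$ for every $T\in\mathbb{T}(x)$, with that scalar always an $\F$-multiple of $\overline{k_h}$; once $\overline{k_h}=\overline{0}$, the two-sided ideal $\mathbb{T}(x)M\mathbb{T}(x)$ has square zero.

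For each $g\in\mathbb{E}$, let $v_g$ be the column $\F$-vector indexed by $\mathbb{X}$ whose $y$-entry is $\overline{1}$ if $y\in xR_g$ and $\overline{0}$ otherwise. Set $M:=A_{h^*}E_{0_\mathfrak{S}}^*(x)A_h\in\mathbb{T}(x)$. Since $E_{0_\mathfrak{S}}^*(x)$ is the matrix unit at $(x,x)$, a direct entry computation gives $M=v_hv_h^T$, so $M\neq O$ because $k_h\geq 1$ forces $xR_h$ to be nonempty. The rank-one factorization yields $MTM=(v_h^TTv_h)\,M$ for every $T\in\mathbb{T}(x)$, reducing the problem to showing $v_h^TTv_h=\overline{0}$ for all $T\in\mathbb{T}(x)$.

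Let $\mathbb{W}$ be the $\F$-linear span of $\{v_a:a\in\mathbb{E}\}$. The identity $E_i^*(x)v_g=\delta_{i,g}v_g$ is immediate, while $(A_iv_g)(y)=|yR_i\cap xR_g|$ depends only on the relation between $x$ and $y$ by axiom (S3); hence the column $A_iv_g$ is constant on each fibre $xR_j$ and therefore lies in $\mathbb{W}$. Thus $\mathbb{W}$ is a $\mathbb{T}(x)$-submodule, and for any $T\in\mathbb{T}(x)$ I may write $Tv_h=\sum_{g\in\mathbb{E}}\lambda_gv_g$. Because $\{xR_a:a\in\mathbb{E}\}$ partitions $\mathbb{X}$, the supports of the $v_g$ are pairwise disjoint and $v_h^Tv_g=\delta_{h,g}\,\overline{k_h}$; hence $v_h^TTv_h=\lambda_h\overline{k_h}=\overline{0}$ by the hypothesis $p\mid k_h$. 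Consequently $M\mathbb{T}(x)M=\{O\}$, and the two-sided ideal $\mathbb{I}:=\mathbb{T}(x)M\mathbb{T}(x)$ satisfies $\mathbb{I}^2=\{O\}$ while containing $M\neq O$, finishing the contrapositive.

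The main obstacle is guessing the rank-one witness $M=A_{h^*}E_{0_\mathfrak{S}}^*(x)A_h$, whose conjugation action collapses onto a single scalar. After that, the stability of the finite-dimensional subspace $\mathbb{W}$ under $\mathbb{T}(x)$ is a routine application of axiom (S3), and the scalar $v_h^TTv_h$ automatically lies in $\overline{k_h}\F$, so the hypothesis $p\mid k_h$ forces it to vanish.
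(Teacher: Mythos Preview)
Your proof is correct. The paper does not prove Lemma~\ref{L;Lemma2.12}; it merely cites it as \cite[Theorem~3.4]{Han}, so there is no in-paper argument to compare against. Your approach---building the rank-one matrix $M=A_{h^*}E_{0_\mathfrak{S}}^*(x)A_h=v_hv_h^T$ and showing that $M\mathbb{T}(x)M=\{O\}$ via the $\mathbb{T}(x)$-stability of the span of the characteristic vectors $v_g$---is a clean and self-contained argument that uses nothing beyond the axioms (S1)--(S3) and Lemma~\ref{L;Lemma2.6}. The key computation $v_h^Tv_g=\delta_{h,g}\overline{k_h}$ and the module structure $A_iv_g=\sum_j\overline{p_{g,i^*}^j}\,v_j$ are both verified correctly.
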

\begin{lem}\label{L;Lemma2.13}\cite[Lemma 4]{MA}
Assume that $x\in \mathbb{X}$ and $\mathfrak{S}$ is a triply regular scheme. Then $\{E_a^*(x)A_bE_c^*(x): a, b, c\in \mathbb{E}, p_{a^*,c}^b\neq0\}$ is an $\F$-basis of $\mathbb{T}(x)$. In particular, the $\F$-dimension of $\mathbb{T}(x)$ only depends on $\mathfrak{S}$ and is independent of the choice of $x$ in $\mathbb{X}$.
\end{lem}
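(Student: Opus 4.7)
The plan is to show that the given set spans $\mathbb{T}(x)$; the $\F$-linear independence has already been recorded earlier in the excerpt, so spanning will complete the proof.

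Let $\mathbb{V}$ be the $\F$-span in $\mathrm{M}_\mathbb{X}(\F)$ of the set $\{E_a^*(x)A_bE_c^*(x): a, b, c\in \mathbb{E}, p_{a^*,c}^b\neq 0\}$. Since $\mathbb{T}(x)$ is generated by $\{A_a: a\in\mathbb{E}\}\cup\{E_a^*(x): a\in\mathbb{E}\}$, the plan is to verify: (i) $\mathbb{V}\subseteq\mathbb{T}(x)$, which is clear from the definition of $\mathbb{T}(x)$; (ii) $\mathbb{V}$ contains every generator; and (iii) $\mathbb{V}$ is closed under multiplication. Then $\mathbb{V}$ is a subalgebra of $\mathbb{T}(x)$ containing the generators, so $\mathbb{V}=\mathbb{T}(x)$. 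For (ii), Equation \eqref{Eq;3} gives $A_b=\sum_{a,c\in\mathbb{E}}E_a^*(x)A_bE_c^*(x)$, each summand lying in $\mathbb{V}$ (or being $O$); moreover, since $k_{a^*}=p_{a^*,a}^{0_\mathfrak{S}}$ is a positive integer, $E_a^*(x)=E_a^*(x)A_{0_\mathfrak{S}}E_a^*(x)\in\mathbb{V}$ for every $a\in\mathbb{E}$.

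The main content is step (iii), and it is the point at which the triply regular hypothesis is used. Given two basis elements $E_a^*(x)A_bE_c^*(x)$ and $E_{c'}^*(x)A_eE_f^*(x)$, Equation \eqref{Eq;2} forces the product to vanish unless $c=c'$. When $c=c'$, I would compute the $(y,z)$-entry of $E_a^*(x)A_bE_c^*(x)A_eE_f^*(x)$ directly: it is $\overline{0}$ unless $y\in xR_a$ and $z\in xR_f$, and otherwise it equals $|xR_c\cap yR_b\cap zR_{e^*}|$ reduced mod the characteristic. Letting $t\in\mathbb{E}$ be the unique index with $z\in yR_t$, the triply regular hypothesis identifies this count with the constant $p_{c,b,e^*}^{a,f,t}$, which depends only on $(a,b,c,e,f,t)$. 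Since $E_a^*(x)A_tE_f^*(x)$ has $(y,z)$-entry $\overline{1}$ exactly on the pairs with $y\in xR_a$, $z\in xR_f$, and $(y,z)\in R_t$, comparing entries yields
\begin{equation*}
E_a^*(x)A_bE_c^*(x)A_eE_f^*(x)=\sum_{t\in\mathbb{E}}\overline{p_{c,b,e^*}^{a,f,t}}\,E_a^*(x)A_tE_f^*(x),
\end{equation*}
and any summand with $p_{a^*,f}^t=0$ vanishes, so the right-hand side lies in $\mathbb{V}$. This establishes multiplicative closure and completes the proof that the set is an $\F$-basis of $\mathbb{T}(x)$.

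For the final assertion, once the basis has been identified, its cardinality is $|\{(a,b,c)\in\mathbb{E}^3: p_{a^*,c}^b\neq 0\}|$, which is manifestly determined by the structure constants of $\mathfrak{S}$ and so is independent of the choice of $x\in\mathbb{X}$. The expected obstacle is purely bookkeeping: matching the six indices of the triple-regularity constant $p_{c,b,e^*}^{a,f,t}$ to the correct combinatorial positions dictated by the convention $p_{g,h,i}^{j,k,\ell}=|xR_g\cap yR_h\cap zR_i|$ for $y\in xR_j$, $z\in xR_k\cap yR_\ell$; once this is done carefully, the rest is routine.
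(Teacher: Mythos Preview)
Your proof is correct. The paper does not supply its own proof of this lemma; it is quoted as \cite[Lemma~4]{MA} and used as a black box. Your argument reconstructs the standard proof: the linear independence is already recorded in the paper just before the lemma, and you supply the spanning half by showing that the span $\mathbb{V}$ is a subalgebra containing the generators, using triple regularity precisely at the multiplicative closure step to express $E_a^*(x)A_bE_c^*(x)A_eE_f^*(x)$ as an $\F$-linear combination of elements $E_a^*(x)A_tE_f^*(x)$. Your index matching for $p_{c,b,e^*}^{a,f,t}$ against the paper's convention is correct.

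One small point of hygiene: in your displayed identity the sum runs over all $t\in\mathbb{E}$, but the constant $p_{c,b,e^*}^{a,f,t}$ is only defined when some triple $(x,y,z)$ with $y\in xR_a$, $z\in xR_f\cap yR_t$ exists, i.e., when $p_{a^*,f}^t\neq 0$. You do observe that the corresponding summands with $p_{a^*,f}^t=0$ vanish because $E_a^*(x)A_tE_f^*(x)=O$, so the issue is purely notational; restricting the sum to those $t$ with $p_{a^*,f}^t\neq 0$ (or declaring the undefined constants to be zero) would make the formula literally correct. This does not affect the substance of the argument.
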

\begin{lem}\label{L;Lemma2.14}
Assume that $x\in \mathbb{X}$ and $\mathfrak{S}$ is a symmetric scheme. Assume that $g\in \mathbb{E}$ and $\mathfrak{S}$
is also a triply regular scheme. Then the $\F$-subalgebra $E_g^*(x)\mathbb{T}(x)E_g^*(x)$ of $\mathbb{T}(x)$ is also a commutative $\F$-algebra with an $\F$-basis $\{E_g^*A_aE_g^*:\ a\in\mathbb{E}, \ p_{g, g}^a\neq 0\}$.
\end{lem}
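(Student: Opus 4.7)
The plan is to establish the two claims—that the given set is an $\F$-basis, and that the resulting subalgebra is commutative—by leveraging Lemma \ref{L;Lemma2.13} and the symmetry hypothesis via the transpose anti-automorphism $\alpha_T$.

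First I would show the spanning/basis assertion by starting from the $\F$-basis $\{E_a^*(x)A_bE_c^*(x):a,b,c\in\mathbb{E},\ p_{a^*,c}^b\neq 0\}$ of $\mathbb{T}(x)$ given by Lemma \ref{L;Lemma2.13}. A general element of $E_g^*(x)\mathbb{T}(x)E_g^*(x)$ is obtained by sandwiching a linear combination of these basis vectors between $E_g^*(x)$ on both sides; using the orthogonality relation \eqref{Eq;2}, each summand $E_g^*(x)E_a^*(x)A_bE_c^*(x)E_g^*(x)$ collapses via the factors $\delta_{g,a}\delta_{g,c}$ to $E_g^*(x)A_bE_g^*(x)$, surviving only when $a=c=g$. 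Because $\mathfrak{S}$ is symmetric we have $g^*=g$, so the defining condition $p_{a^*,c}^b\neq 0$ becomes $p_{g,g}^b\neq 0$. This shows the claimed set spans; linear independence is automatic because this set is a subset of the $\F$-basis of $\mathbb{T}(x)$ supplied by Lemma \ref{L;Lemma2.13}.

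Next I would exploit symmetry to prove commutativity. Using \eqref{Eq;1} together with $A_a^T=A_{a^*}=A_a$ (symmetry) and $E_g^*(x)^T=E_g^*(x)$, each basis vector $E_g^*(x)A_aE_g^*(x)$ is a symmetric matrix, i.e.\ fixed by $\alpha_T$. Hence every element of the $\F$-subalgebra $E_g^*(x)\mathbb{T}(x)E_g^*(x)$—being an $\F$-linear combination of these symmetric matrices—is itself symmetric. Given $u,v\in E_g^*(x)\mathbb{T}(x)E_g^*(x)$, the product $uv$ again lies in this subalgebra, so $(uv)^T=uv$; but $\alpha_T$ is an anti-automorphism, so also $(uv)^T=v^Tu^T=vu$. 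Comparing yields $uv=vu$, giving commutativity.

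There is no real obstacle here: the argument is a clean combination of the Lemma \ref{L;Lemma2.13} basis with the idempotent relations \eqref{Eq;2}, the transpose identities \eqref{Eq;1}, and the symmetric-scheme assumption. The only place to be careful is to record explicitly that the relabelling $p_{g^*,g}^b=p_{g,g}^b$ requires symmetry, and that the key structural fact underlying commutativity is that $\alpha_T$ fixes every basis element of $E_g^*(x)\mathbb{T}(x)E_g^*(x)$ pointwise.
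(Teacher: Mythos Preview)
Your proposal is correct and follows essentially the same approach as the paper, which simply cites Lemma~\ref{L;Lemma2.13} together with Equations~\eqref{Eq;2} and~\eqref{Eq;1}. You have unpacked precisely what those citations entail: the basis reduction via \eqref{Eq;2}, the symmetric-scheme simplification $g^*=g$, and the commutativity argument via the transpose identities in \eqref{Eq;1}.
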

\begin{proof}
The desired lemma is from combining Lemma \ref{L;Lemma2.13}, Equations \eqref{Eq;2}, \eqref{Eq;1}.
\end{proof}
We end this section by simplifying the notation. From now on, assume that $\mathfrak{S}$ is the factorial scheme of $\mathfrak{Tri}(\mathbb{U}_1), \mathfrak{Tri}(\mathbb{U}_2), \ldots, \mathfrak{Tri}(\mathbb{U}_n)$. Recall that all entries of each $n$-tuple in $\mathbb{E}$ are arbitrarily chosen from $[0,1]$. This fact implies that $|\mathbb{E}|=2^n$. We shall quote the fact that $\mathfrak{S}$ is a symmetric scheme without citation. From now on, fix $\mathbf{x}\in\mathbb{X}$. For convenience, we abbreviate $\mathbb{T}=\mathbb{T}(\mathbf{x})$ and $E_\mathbf{g}^*=E_\mathbf{g}^*(\mathbf{x})$ for any $\mathbf{g}\in \mathbb{E}$.
\section{Algebraic structure of $\mathbb{T}$: $\F$-Basis}
In this section, we present two $\F$-bases of $\mathbb{T}$. Moreover, we present the structure constants of these $\F$-bases in $\mathbb{T}$. We first display a notation and a preliminary lemma.
\begin{nota}\label{N;Notation3.1}
Let $\mathbb{P}=\{(\mathbf{a}, \mathbf{b}, \mathbf{c}): \mathbf{a}, \mathbf{b}, \mathbf{c}\in\mathbb{E}, \mathbbm{a}\triangle\mathbbm{c}\subseteq\mathbbm{b}\subseteq(\mathbbm{a}\triangle\mathbbm{c})\cup(\mathbbm{a}\cap\mathbbm{c})^\circ\}$.
For any $\mathbf{g}, \mathbf{h}\in\mathbb{E}$, let $\mathbb{P}_{\mathbf{g}, \mathbf{h}}=\{\mathbf{a}: (\mathbf{g}, \mathbf{a}, \mathbf{h})\in\mathbb{P}\}$. For any $\mathbf{g}, \mathbf{h}, \mathbf{i}\in\mathbb{E}$, Lemma \ref{L;Lemma2.5}  thus implies that the conditions $(\mathbf{g}, \mathbf{h}, \mathbf{i})\!\in\!\mathbb{P}$, $\mathbf{h}\!\in\!\mathbb{P}_{\mathbf{g}, \mathbf{i}}$, and $p_{\mathbf{g}, \mathbf{i}}^\mathbf{h}\!\neq\! 0$ are pairwise equivalent.
\end{nota}
\begin{lem}\label{L;Lemma3.2}
$\mathbb{T}$ has an $\F$-basis $\{E_\mathbf{a}^*A_\mathbf{b}E_\mathbf{c}^*\!: (\mathbf{a}, \mathbf{b}, \mathbf{c})\!\in\!\mathbb{P}\}$ whose cardinality equals $|\mathbb{P}|$.
\end{lem}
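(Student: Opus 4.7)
The plan is to deduce the claim almost immediately from the general triply-regular basis theorem (Lemma \ref{L;Lemma2.13}) by rewriting the non-vanishing condition on intersection numbers in the factorial setting.

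First I would invoke Lemma \ref{L;Lemma2.4} to note that, as the factorial scheme of $\mathfrak{Tri}(\mathbb{U}_1),\ldots,\mathfrak{Tri}(\mathbb{U}_n)$, the scheme $\mathfrak{S}$ is triply regular. Lemma \ref{L;Lemma2.13} then yields that
\[
\{E_\mathbf{a}^*A_\mathbf{b}E_\mathbf{c}^*:\mathbf{a},\mathbf{b},\mathbf{c}\in\mathbb{E},\ p_{\mathbf{a}^*,\mathbf{c}}^{\mathbf{b}}\neq 0\}
\]
is an $\F$-basis of $\mathbb{T}=\mathbb{T}(\mathbf{x})$, and that this basis has a well-defined cardinality (the elements being pairwise distinct by linear independence).

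Next I would use the fact, quoted in the preamble to Section~3, that $\mathfrak{S}$ is a symmetric scheme, so $\mathbf{a}^*=\mathbf{a}$ for every $\mathbf{a}\in\mathbb{E}$. Hence the index set above becomes $\{(\mathbf{a},\mathbf{b},\mathbf{c}):p_{\mathbf{a},\mathbf{c}}^{\mathbf{b}}\neq 0\}$. Now I would apply Lemma \ref{L;Lemma2.5} to translate the non-vanishing condition on $p_{\mathbf{a},\mathbf{c}}^{\mathbf{b}}$ into the set-theoretic condition $\mathbbm{a}\triangle\mathbbm{c}\subseteq\mathbbm{b}\subseteq(\mathbbm{a}\triangle\mathbbm{c})\cup(\mathbbm{a}\cap\mathbbm{c})^\circ$, which by Notation \ref{N;Notation3.1} is exactly $(\mathbf{a},\mathbf{b},\mathbf{c})\in\mathbb{P}$.

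Combining these, the indexing set is precisely $\mathbb{P}$, so $\{E_\mathbf{a}^*A_\mathbf{b}E_\mathbf{c}^*:(\mathbf{a},\mathbf{b},\mathbf{c})\in\mathbb{P}\}$ is an $\F$-basis of $\mathbb{T}$. Since linear independence guarantees that distinct triples $(\mathbf{a},\mathbf{b},\mathbf{c})\in\mathbb{P}$ produce distinct (in particular nonzero) matrices, the cardinality of this basis equals $|\mathbb{P}|$, as required. There is no substantive obstacle here; the lemma is essentially a transcription of the triply-regular basis theorem into the factorial-scheme language provided by Lemmas \ref{L;Lemma2.4} and \ref{L;Lemma2.5}.
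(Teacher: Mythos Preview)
Your proposal is correct and follows essentially the same approach as the paper: invoke Lemma~\ref{L;Lemma2.4} to get triple regularity, then apply Lemma~\ref{L;Lemma2.13}, and use symmetry together with Lemma~\ref{L;Lemma2.5} (as already packaged in Notation~\ref{N;Notation3.1}) to identify the index set with $\mathbb{P}$. The paper's proof simply cites Lemmas~\ref{L;Lemma2.4} and~\ref{L;Lemma2.13} because the equivalence $p_{\mathbf{a},\mathbf{c}}^{\mathbf{b}}\neq 0 \Leftrightarrow (\mathbf{a},\mathbf{b},\mathbf{c})\in\mathbb{P}$ was already recorded in Notation~\ref{N;Notation3.1}.
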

\begin{proof}
The desired lemma follows from an application of Lemmas \ref{L;Lemma2.4} and \ref{L;Lemma2.13}.
\end{proof}
The $\F$-basis of $\mathbb{T}$ in Lemma \ref{L;Lemma3.2} motivates us to present two additional lemmas.
\begin{lem}\label{L;Lemma3.3}
The $\F$-dimension of a Terwilliger $\F$-algebra of $\mathfrak{Tri}(\mathbb{U}_1)$ is equal to
\[\begin{cases}
4, & \text{if $|\mathbb{U}_1|=2$},\\
5, & \text{if $|\mathbb{U}_1|>2$}.
\end{cases}\]
\end{lem}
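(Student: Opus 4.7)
The plan is to reduce the lemma to counting triples via Lemma \ref{L;Lemma2.13}, then read off the count directly from Lemma \ref{L;Lemma2.2}. A trivial scheme is symmetric (so $a^*=a$ for each $a\in[0,1]$) and, as noted just before Lemma \ref{L;Lemma2.2}, triply regular. Fixing any base point $x\in\mathbb{U}_1$, Lemma \ref{L;Lemma2.13} then guarantees that the Terwilliger $\F$-algebra of $\mathfrak{Tri}(\mathbb{U}_1)$ with respect to $x$ admits the $\F$-basis $\{E_a^*(x)A_bE_c^*(x): a,b,c\in[0,1],\ p_{a,c}^b\neq 0\}$ and that its $\F$-dimension is independent of the choice of $x$. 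Hence it suffices to count the triples $(a,b,c)\in[0,1]^3$ for which $p_{a,c}^b\neq 0$.

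Next I would read off these triples from the explicit list in Lemma \ref{L;Lemma2.2}. The nonzero intersection numbers are $p_{0,0}^0=p_{0,1}^1=p_{1,0}^1=1$, $p_{1,1}^0=|\mathbb{U}_1|-1$, and $p_{1,1}^1=|\mathbb{U}_1|-2$, where the last one is nonzero if and only if $|\mathbb{U}_1|>2$. Matching the index ordering $p_{a,c}^b$ to the basis triple $(a,b,c)$, the first four intersection numbers produce the triples
\[
(0,0,0),\quad (0,1,1),\quad (1,1,0),\quad (1,0,1),
\]
which are always in the basis, and $p_{1,1}^1$ additionally contributes the triple $(1,1,1)$ exactly when $|\mathbb{U}_1|>2$. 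This gives a total of $4$ basis elements when $|\mathbb{U}_1|=2$ and $5$ basis elements when $|\mathbb{U}_1|>2$, which is the desired formula.

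There is no real obstacle here: the statement is a one-step consequence of Lemmas \ref{L;Lemma2.13} and \ref{L;Lemma2.2}. The only point to handle with care is the permutation of indices between the intersection number $p_{a,c}^b$ and the basis triple $(a,b,c)=(a,b,c)$ in the product $E_a^*(x)A_bE_c^*(x)$, so as not to miscount the $|\mathbb{U}_1|=2$ case.
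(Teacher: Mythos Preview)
Your proposal is correct and follows essentially the same route as the paper: the paper cites Lemma~\ref{L;Lemma3.2} (which in turn rests on Lemma~\ref{L;Lemma2.13}) together with Lemma~\ref{L;Lemma2.2}, while you invoke Lemma~\ref{L;Lemma2.13} directly using the triple regularity of the trivial scheme and then count via Lemma~\ref{L;Lemma2.2}. Your explicit matching of the index ordering $p_{a,c}^b$ with the basis triples $(a,b,c)$ is accurate and handles the $|\mathbb{U}_1|=2$ case correctly.
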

\begin{proof}
The desired lemma follows from an application of Lemmas \ref{L;Lemma3.2} and \ref{L;Lemma2.2}.
\end{proof}
\begin{lem}\label{L;Lemma3.4}
The $\F$-dimension of $\mathbb{T}$ is equal to $2^{2n_1}5^{n_2}$.
\end{lem}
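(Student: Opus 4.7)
By Lemma \ref{L;Lemma3.2}, it suffices to show that $|\mathbb{P}|=2^{2n_1}5^{n_2}$. The plan is to count $\mathbb{P}$ coordinatewise. Using Lemma \ref{L;Lemma2.3}, a triple $(\mathbf{a},\mathbf{b},\mathbf{c})\in\mathbb{E}^3$ is uniquely determined by the triple of subsets $(\mathbbm{a},\mathbbm{b},\mathbbm{c})$ of $[1,n]$, and the defining condition $\mathbbm{a}\triangle\mathbbm{c}\subseteq\mathbbm{b}\subseteq(\mathbbm{a}\triangle\mathbbm{c})\cup(\mathbbm{a}\cap\mathbbm{c})^\circ$ is local in the sense that it can be tested one index $h\in[1,n]$ at a time. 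So I would identify $\mathbb{P}$ with a cartesian product $\prod_{h=1}^{n}\mathbb{P}_h$, where $\mathbb{P}_h$ records the possible values of the triple $(\mathbf{a}_h,\mathbf{b}_h,\mathbf{c}_h)\in[0,1]^3$.

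Next I would enumerate $\mathbb{P}_h$ by splitting on the value of $(\mathbf{a}_h,\mathbf{c}_h)$. If $(\mathbf{a}_h,\mathbf{c}_h)=(0,0)$, then $h\notin\mathbbm{a}\triangle\mathbbm{c}$ and $h\notin\mathbbm{a}\cap\mathbbm{c}$, so the condition forces $\mathbf{b}_h=0$. If $(\mathbf{a}_h,\mathbf{c}_h)\in\{(0,1),(1,0)\}$, then $h\in\mathbbm{a}\triangle\mathbbm{c}$, forcing $\mathbf{b}_h=1$. If $(\mathbf{a}_h,\mathbf{c}_h)=(1,1)$, then $h\in\mathbbm{a}\cap\mathbbm{c}$ but $h\notin\mathbbm{a}\triangle\mathbbm{c}$, so $\mathbf{b}_h\in\{0,1\}$ is allowed precisely when $h\in(\mathbbm{a}\cap\mathbbm{c})^\circ$, that is, when $|\mathbb{U}_h|>2$; if $|\mathbb{U}_h|=2$, then $\mathbf{b}_h=0$ is the only option.

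Adding up the contributions per position, I get $|\mathbb{P}_h|=1+2+1=4$ when $|\mathbb{U}_h|=2$, and $|\mathbb{P}_h|=1+2+2=5$ when $|\mathbb{U}_h|>2$. Since there are $n_1$ indices of the first kind and $n_2$ of the second, multiplying through gives
\[
|\mathbb{P}|=\prod_{h=1}^{n}|\mathbb{P}_h|=4^{n_1}\cdot 5^{n_2}=2^{2n_1}5^{n_2},
\]
which combined with Lemma \ref{L;Lemma3.2} yields the claim. There is no serious obstacle here; the only thing to be careful about is keeping the direction of the two containments straight when $(\mathbf{a}_h,\mathbf{c}_h)=(1,1)$, since this is the single case that contributes the factor of $5$ rather than $4$.
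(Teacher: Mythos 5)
Your proposal is correct and takes essentially the same route as the paper: both arguments reduce via Lemma \ref{L;Lemma3.2} to counting $|\mathbb{P}|$ and then factor that count coordinatewise, obtaining a local factor of $4$ when $|\mathbb{U}_h|=2$ and $5$ when $|\mathbb{U}_h|>2$. The paper packages the single-coordinate count as Lemma \ref{L;Lemma3.3} and invokes the product formula of Lemma \ref{L;Lemma2.1} to justify the factorization, whereas you re-derive the per-coordinate count by hand directly from the defining condition of $\mathbb{P}$; the underlying combinatorics is identical.
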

\begin{proof}
Recall that $n_1\!=\!|\{a: a\in [1, n], |\mathbb{U}_a|=2\}|$ and $n_2\!=\!|\{a: a\in [1, n], |\mathbb{U}_a|\!>\!2\}|$. Notice that
$n=n_1+n_2$. Let $g_h$ be the $\F$-dimension of a Terwilliger $\F$-algebra of $\mathfrak{Tri}(\mathbb{U}_h)$ for any $h\in [1, n]$.
Notice that the $\F$-dimension of $\mathbb{T}$ is equal to $\prod_{i=1}^n g_i$ by Lemmas \ref{L;Lemma3.2} and \ref{L;Lemma2.1}. Therefore the desired lemma follows from Lemma \ref{L;Lemma3.3}.
\end{proof}
We are now ready to introduce the first $\F$-basis of $\mathbb{T}$ and an additional notation.
\begin{thm}\label{T;Theorem3.5}
$\mathbb{T}$ has an $\F$-basis $\{E_\mathbf{a}^*A_\mathbf{b}E_\mathbf{c}^*: (\mathbf{a}, \mathbf{b}, \mathbf{c})\in\mathbb{P}\}$ with cardinality $2^{2n_1}5^{n_2}$.
\end{thm}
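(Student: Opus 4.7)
The plan is to observe that the theorem is a near-immediate consequence of the preparatory material: Lemma \ref{L;Lemma3.2} already asserts that the set $\{E_\mathbf{a}^*A_\mathbf{b}E_\mathbf{c}^*: (\mathbf{a}, \mathbf{b}, \mathbf{c})\in\mathbb{P}\}$ is an $\F$-basis of $\mathbb{T}$ with cardinality $|\mathbb{P}|$, while Lemma \ref{L;Lemma3.4} asserts that $\dim_\F\mathbb{T}=2^{2n_1}5^{n_2}$. Thus the only substantive thing to do is to identify these two counts, which can be done either by appealing to both lemmas simultaneously (the cardinality of any $\F$-basis equals the $\F$-dimension) or, more informatively, by computing $|\mathbb{P}|$ directly and checking it against Lemma \ref{L;Lemma3.4}.

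I would prefer the direct count because it makes the combinatorial structure of $\mathbb{P}$ transparent. The key observation is that the defining condition of $\mathbb{P}$ in Notation \ref{N;Notation3.1}, namely $\mathbbm{a}\triangle\mathbbm{c}\subseteq\mathbbm{b}\subseteq(\mathbbm{a}\triangle\mathbbm{c})\cup(\mathbbm{a}\cap\mathbbm{c})^\circ$, is a pointwise condition on $[1,n]$, so by Lemma \ref{L;Lemma2.3} the set $\mathbb{P}$ factors as a product over $h\in[1,n]$ of the local sets of admissible triples $(\mathbf{a}_h,\mathbf{b}_h,\mathbf{c}_h)\in[0,1]^3$. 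A case analysis on $(\mathbf{a}_h,\mathbf{c}_h)\in\{0,1\}^2$ gives: if $\mathbf{a}_h\neq\mathbf{c}_h$, then $h\in\mathbbm{a}\triangle\mathbbm{c}$ forces $\mathbf{b}_h=1$; if $\mathbf{a}_h=\mathbf{c}_h=0$, then $h$ lies outside the outer set, forcing $\mathbf{b}_h=0$; and if $\mathbf{a}_h=\mathbf{c}_h=1$, then $\mathbf{b}_h\in\{0,1\}$ is free precisely when $h\in(\mathbbm{a}\cap\mathbbm{c})^\circ$, i.e.\ when $|\mathbb{U}_h|>2$, and otherwise $\mathbf{b}_h=0$. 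Tallying these cases per coordinate yields $4$ admissible local triples when $|\mathbb{U}_h|=2$ and $5$ when $|\mathbb{U}_h|>2$, so $|\mathbb{P}|=4^{n_1}5^{n_2}=2^{2n_1}5^{n_2}$.

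Combining Lemma \ref{L;Lemma3.2} with this count delivers the theorem. There is essentially no obstacle; the only minor subtlety is the pointwise bookkeeping in the case $\mathbf{a}_h=\mathbf{c}_h=1$, where one must carefully distinguish the situations $|\mathbb{U}_h|=2$ and $|\mathbb{U}_h|>2$ using the definition of $(\mathbbm{a}\cap\mathbbm{c})^\circ$. Note that this direct count provides an independent derivation of Lemma \ref{L;Lemma3.4}; alternatively, one could bypass the count entirely and quote Lemmas \ref{L;Lemma3.2} and \ref{L;Lemma3.4} in one line, but the pointwise argument is preferable as it exhibits the basis parametrisation explicitly and will be reused in the later sections dealing with centers, radicals, and decompositions.
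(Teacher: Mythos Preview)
Your proposal is correct and matches the paper's proof, which is the one-line combination of Lemmas \ref{L;Lemma3.2} and \ref{L;Lemma3.4}. The direct coordinate-wise count you prefer is not a different route but simply an inlined version of the proof of Lemma \ref{L;Lemma3.4} (via Lemma \ref{L;Lemma3.3} and the product structure from Lemma \ref{L;Lemma2.1}), so either presentation is fine.
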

\begin{proof}
The desired theorem follows from an application of Lemmas \ref{L;Lemma3.2} and \ref{L;Lemma3.4}.
\end{proof}
\begin{nota}\label{N;Notation3.6}
Let $\mathbb{B}_1=\{E_\mathbf{a}^*A_\mathbf{b}E_\mathbf{c}^*: (\mathbf{a}, \mathbf{b}, \mathbf{c})\in\mathbb{P}\}$. If $M\!\in\!\mathbb{T}$ and $(\mathbf{g}, \mathbf{h}, \mathbf{i})\!\in\!\mathbb{P}$, notice that $\mathrm{Supp}_{\mathbb{B}_1}(M)$ is defined and write $c^{\mathbf{g},\mathbf{h},\mathbf{i}}(M)$ for $c_{\mathbb{B}_1}(M, E_\mathbf{g}^*A_\mathbf{h}E_\mathbf{i}^*)$
by Theorem \ref{T;Theorem3.5}.
\end{nota}
We next investigate the structure constants of $\mathbb{B}_1$ in $\mathbb{T}$. We first present a lemma.
\begin{lem}\label{L;Lemma3.7}
Assume that $\mathbf{g}, \mathbf{h}, \mathbf{i}, \mathbf{j}, \mathbf{k}\in\mathbb{E}$. Then
$$ E_\mathbf{g}^*A_\mathbf{h}E_\mathbf{i}^*A_\mathbf{j}E_\mathbf{k}^*=\sum_{\mathbf{l}\in\mathbb{P}_{\mathbf{g}, \mathbf{k}}}\overline{p_{\mathbf{h},\mathbf{i}, \mathbf{j}}^{\mathbf{g}, \mathbf{l}, \mathbf{k}}}E_\mathbf{g}^*A_\mathbf{l}E_\mathbf{k}^*.$$
Moreover, if there is $\mathbf{l}\!\in\!\mathbb{P}_{\mathbf{g}, \mathbf{k}}$ such that the condition $p_{\mathbf{h},\mathbf{i}, \mathbf{j}}^{\mathbf{g}, \mathbf{l}, \mathbf{k}}\!\neq\!0$ holds, then $\mathbf{l}\!\in\!\mathbb{P}_{\mathbf{g}, \mathbf{k}}\cap\mathbb{P}_{\mathbf{h}, \mathbf{j}}$.
\end{lem}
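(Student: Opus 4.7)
The plan is to check the identity entry-by-entry in $\mathrm{M}_\mathbb{X}(\F)$. First I fix arbitrary $\mathbf{a},\mathbf{b}\in\mathbb{X}$ and expand the left-hand side. Since $E_\mathbf{g}^*$, $E_\mathbf{i}^*$, and $E_\mathbf{k}^*$ are diagonal matrices with $\{\overline{0},\overline{1}\}$-entries, the $(\mathbf{a},\mathbf{b})$-entry of $E_\mathbf{g}^*A_\mathbf{h}E_\mathbf{i}^*A_\mathbf{j}E_\mathbf{k}^*$ vanishes unless $\mathbf{a}\in\mathbf{x}R_\mathbf{g}$ and $\mathbf{b}\in\mathbf{x}R_\mathbf{k}$, in which case it collapses to $\overline{|\mathbf{a}R_\mathbf{h}\cap\mathbf{x}R_\mathbf{i}\cap\mathbf{b}R_\mathbf{j}|}$ (using that $\mathfrak{S}$ is symmetric to replace $\mathbf{b}R_{\mathbf{j}^*}$ by $\mathbf{b}R_\mathbf{j}$).

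The pivotal step is to invoke Lemma \ref{L;Lemma2.4}: $\mathfrak{S}$ is triply regular, so the cardinality just obtained depends only on the mutual relations among $\mathbf{x}$, $\mathbf{a}$, and $\mathbf{b}$. Let $\mathbf{l}\in\mathbb{E}$ be the unique element with $\mathbf{b}\in\mathbf{a}R_\mathbf{l}$. Taking $(x,y,z)=(\mathbf{a},\mathbf{x},\mathbf{b})$ in the definition of the triple intersection number, symmetry of $\mathfrak{S}$ gives $\mathbf{x}\in\mathbf{a}R_\mathbf{g}$ while $\mathbf{b}\in\mathbf{a}R_\mathbf{l}\cap\mathbf{x}R_\mathbf{k}$; reading off the index order then identifies $|\mathbf{a}R_\mathbf{h}\cap\mathbf{x}R_\mathbf{i}\cap\mathbf{b}R_\mathbf{j}|$ as $p_{\mathbf{h},\mathbf{i},\mathbf{j}}^{\mathbf{g},\mathbf{l},\mathbf{k}}$. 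The main obstacle here is purely notational, namely carefully aligning the roles of the six parameters in the definition of $p_{g,h,i}^{j,k,\ell}$ with the data produced by the matrix product.

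Next I compute the $(\mathbf{a},\mathbf{b})$-entry of the right-hand side. For each $\mathbf{l}'\in\mathbb{P}_{\mathbf{g},\mathbf{k}}$, the matrix $E_\mathbf{g}^*A_{\mathbf{l}'}E_\mathbf{k}^*$ has $(\mathbf{a},\mathbf{b})$-entry $\overline{1}$ precisely when $\mathbf{a}\in\mathbf{x}R_\mathbf{g}$, $\mathbf{b}\in\mathbf{x}R_\mathbf{k}$, and $\mathbf{b}\in\mathbf{a}R_{\mathbf{l}'}$; since at most one $\mathbf{l}'$ can satisfy the last condition, the sum collapses to the single term indexed by the $\mathbf{l}$ identified above. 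To confirm that this $\mathbf{l}$ actually lies in $\mathbb{P}_{\mathbf{g},\mathbf{k}}$, I use Notation \ref{N;Notation3.1}: the point $\mathbf{x}$ itself witnesses $\mathbf{x}\in\mathbf{a}R_\mathbf{g}$ and $\mathbf{b}\in\mathbf{x}R_\mathbf{k}$, giving $p_{\mathbf{g},\mathbf{k}}^{\mathbf{l}}\geq 1$. Comparing entries then yields the main identity.

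Finally, for the ``Moreover'' clause, suppose $\mathbf{l}\in\mathbb{P}_{\mathbf{g},\mathbf{k}}$ with $p_{\mathbf{h},\mathbf{i},\mathbf{j}}^{\mathbf{g},\mathbf{l},\mathbf{k}}\neq 0$. Unpacking the definition of the triple intersection number produces points $\mathbf{a},\mathbf{b},\mathbf{c}\in\mathbb{X}$ satisfying $\mathbf{x}\in\mathbf{a}R_\mathbf{g}$, $\mathbf{b}\in\mathbf{a}R_\mathbf{l}\cap\mathbf{x}R_\mathbf{k}$, and in particular $\mathbf{c}\in\mathbf{a}R_\mathbf{h}\cap\mathbf{b}R_\mathbf{j}$. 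This $\mathbf{c}$ witnesses $p_{\mathbf{h},\mathbf{j}}^\mathbf{l}\geq 1$, hence $\mathbf{l}\in\mathbb{P}_{\mathbf{h},\mathbf{j}}$ by Notation \ref{N;Notation3.1}, giving the desired membership in $\mathbb{P}_{\mathbf{g},\mathbf{k}}\cap\mathbb{P}_{\mathbf{h},\mathbf{j}}$.
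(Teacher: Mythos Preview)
Your proof is correct and is precisely the ``direct computation'' the paper's one-line proof alludes to: you compute both sides entry-by-entry, invoke triple regularity (Lemma~\ref{L;Lemma2.4}) to identify the entry with $p_{\mathbf{h},\mathbf{i},\mathbf{j}}^{\mathbf{g},\mathbf{l},\mathbf{k}}$, and use Theorem~\ref{T;Theorem3.5} implicitly by matching against the basis elements $E_\mathbf{g}^*A_\mathbf{l}E_\mathbf{k}^*$. One cosmetic remark: in the ``Moreover'' clause the point you call $\mathbf{x}$ need not be the fixed base point---any witness to $p_{\mathbf{g},\mathbf{k}}^{\mathbf{l}}\neq 0$ will do---but this naming abuse does not affect the argument.
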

\begin{proof}
The desired lemma follows from Theorem \ref{T;Theorem3.5} and a direct computation.
\end{proof}
\begin{thm}\label{T;Theorem3.8}
Assume that $\mathbf{g}, \mathbf{h}, \mathbf{i}, \mathbf{j}, \mathbf{k}, \mathbf{l}, \mathbf{m}\in\mathbb{E}$, $\mathbf{h}\in\mathbb{P}_{\mathbf{g}, \mathbf{i}}$, $\mathbf{k}\in\mathbb{P}_{\mathbf{j}, \mathbf{l}}$, $\mathbf{m}\in\mathbb{P}_{\mathbf{g}, \mathbf{l}}$. Then
$$ c^{\mathbf{g}, \mathbf{m},\mathbf{l}}(E_\mathbf{g}^*A_\mathbf{h}E_\mathbf{i}^*E_\mathbf{j}^*A_\mathbf{k}E_\mathbf{l}^*)=
\delta_{\mathbf{i},\mathbf{j}}\overline{p_{\mathbf{h}, \mathbf{i}, \mathbf{k}}^{\mathbf{g}, \mathbf{m}, \mathbf{l}}}.$$
\end{thm}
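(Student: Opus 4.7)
The plan is to split into two cases based on whether $\mathbf{i}=\mathbf{j}$, reducing the statement to a direct consequence of Equation~\eqref{Eq;2} combined with Lemma~\ref{L;Lemma3.7}.

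First, if $\mathbf{i}\neq\mathbf{j}$, then Equation~\eqref{Eq;2} yields $E_\mathbf{i}^*E_\mathbf{j}^*=O$, so the left-hand side is the zero matrix of $\mathbb{T}$; its coefficient with respect to any element of $\mathbb{B}_1$ is $\overline{0}$, and the right-hand side is also $\overline{0}$ since $\delta_{\mathbf{i},\mathbf{j}}=\overline{0}$. Both sides thus agree in this case without any further work.

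If instead $\mathbf{i}=\mathbf{j}$, then Equation~\eqref{Eq;2} gives $E_\mathbf{i}^*E_\mathbf{j}^*=E_\mathbf{i}^*$, so the left-hand side reduces to $E_\mathbf{g}^*A_\mathbf{h}E_\mathbf{i}^*A_\mathbf{k}E_\mathbf{l}^*$. Applying Lemma~\ref{L;Lemma3.7}, with its dummy summation variable renamed to avoid a clash with the fixed $\mathbf{l}$ of the theorem, produces the expansion
$$E_\mathbf{g}^*A_\mathbf{h}E_\mathbf{i}^*A_\mathbf{k}E_\mathbf{l}^*=\sum_{\mathbf{n}\in\mathbb{P}_{\mathbf{g},\mathbf{l}}}\overline{p_{\mathbf{h},\mathbf{i},\mathbf{k}}^{\mathbf{g},\mathbf{n},\mathbf{l}}}\,E_\mathbf{g}^*A_\mathbf{n}E_\mathbf{l}^*.$$
By Theorem~\ref{T;Theorem3.5}, the summands $E_\mathbf{g}^*A_\mathbf{n}E_\mathbf{l}^*$ with $\mathbf{n}\in\mathbb{P}_{\mathbf{g},\mathbf{l}}$ are distinct elements of $\mathbb{B}_1$. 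The hypothesis $\mathbf{m}\in\mathbb{P}_{\mathbf{g},\mathbf{l}}$ places $E_\mathbf{g}^*A_\mathbf{m}E_\mathbf{l}^*$ among these basis vectors, so the coefficient $c^{\mathbf{g},\mathbf{m},\mathbf{l}}$ is read off immediately as $\overline{p_{\mathbf{h},\mathbf{i},\mathbf{k}}^{\mathbf{g},\mathbf{m},\mathbf{l}}}$, which agrees with the right-hand side (where $\delta_{\mathbf{i},\mathbf{j}}=\overline{1}$).

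I anticipate no genuine obstacle: the theorem is essentially a repackaging of Lemma~\ref{L;Lemma3.7} into the coefficient notation of Notation~\ref{N;Notation3.6}. The only points requiring modest care are the correct invocation of the orthogonality relation in Equation~\eqref{Eq;2} and the renaming of Lemma~\ref{L;Lemma3.7}'s summation variable so that it does not collide with the fixed index $\mathbf{l}$. The remaining hypotheses $\mathbf{h}\in\mathbb{P}_{\mathbf{g},\mathbf{i}}$ and $\mathbf{k}\in\mathbb{P}_{\mathbf{j},\mathbf{l}}$ serve only to ensure that the input factors $E_\mathbf{g}^*A_\mathbf{h}E_\mathbf{i}^*$ and $E_\mathbf{j}^*A_\mathbf{k}E_\mathbf{l}^*$ lie in $\mathbb{B}_1$ in the first place, and need not be used beyond that.
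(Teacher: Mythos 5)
Your proof is correct and follows the same route the paper indicates: apply Equation~\eqref{Eq;2} to handle the $\delta_{\mathbf{i},\mathbf{j}}$ factor, then expand via Lemma~\ref{L;Lemma3.7} and read off the coefficient against the $\F$-basis $\mathbb{B}_1$ from Theorem~\ref{T;Theorem3.5}. The paper states this in a single sentence; your version merely spells out the two cases and the variable renaming explicitly.
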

\begin{proof}
The desired theorem follows from using Equation \eqref{Eq;2} and Lemma \ref{L;Lemma3.7}.
\end{proof}
The next goal is to find another $\F$-basis of $\mathbb{T}$. We list a notation and three lemmas.
\begin{nota}\label{N;Notation3.9}
Assume that $\mathbf{g}, \mathbf{h}, \mathbf{i}\in\mathbb{E}$ and $\mathbf{h}\in\mathbb{P}_{\mathbf{g}, \mathbf{i}}$. Then $\{\mathbf{a}: \mathbf{a}\!\in\!\mathbb{E}, \mathbbm{g}\triangle\mathbbm{i}\!\subseteq\! \mathbbm{a}\subseteq\mathbbm{h}\}$ is denoted by $\mathbb{P}_{\mathbf{g}, \mathbf{h}, \mathbf{i}}$. Notice that $\mathbb{P}_{\mathbf{g}, \mathbf{h},\mathbf{i}}\!\neq\!\varnothing$ as $\{\mathbf{h}\}\subseteq\mathbb{P}_{\mathbf{g}, \mathbf{h},\mathbf{i}}\subseteq \mathbb{P}_{\mathbf{g},\mathbf{i}}$ by Lemma \ref{L;Lemma2.5}. Set
\begin{align}\label{Eq;4}
B_{\mathbf{g}, \mathbf{h}, \mathbf{i}}=\sum_{\mathbf{j}\in\mathbb{P}_{\mathbf{g}, \mathbf{h},\mathbf{i}}}E_\mathbf{g}^*A_\mathbf{j}E_\mathbf{i}^*.
\end{align}
As $\{\mathbf{h}\}\subseteq\mathbb{P}_{\mathbf{i}, \mathbf{h}, \mathbf{g}}=\mathbb{P}_{\mathbf{g}, \mathbf{h}, \mathbf{i}}\subseteq\mathbb{P}_{\mathbf{g}, \mathbf{i}}=\mathbb{P}_{\mathbf{i}, \mathbf{g}}$, notice that $B_{\mathbf{i}, \mathbf{h}, \mathbf{g}}$ is defined by Equation \eqref{Eq;4}.
\end{nota}
\begin{lem}\label{L;Lemma3.10}
Assume that $\mathbf{g}, \mathbf{h}, \mathbf{i}\in\mathbb{E}$ and $\mathbf{h}\in\mathbb{P}_{\mathbf{g}, \mathbf{i}}$. Then $B_{\mathbf{g}, \mathbf{h}, \mathbf{i}}^T\!=\!B_{\mathbf{i}, \mathbf{h}, \mathbf{g}}\neq O$.
\end{lem}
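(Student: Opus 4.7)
The plan is to handle the two assertions $B_{\mathbf{g},\mathbf{h},\mathbf{i}}^T = B_{\mathbf{i},\mathbf{h},\mathbf{g}}$ and $B_{\mathbf{g},\mathbf{h},\mathbf{i}} \neq O$ separately, both by direct inspection of the defining sum in Equation \eqref{Eq;4}.

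For the equation $B_{\mathbf{g},\mathbf{h},\mathbf{i}}^T = B_{\mathbf{i},\mathbf{h},\mathbf{g}}$, I would first observe that the index set is symmetric: by Notation \ref{N;Notation3.9}, $\mathbb{P}_{\mathbf{g},\mathbf{h},\mathbf{i}} = \{\mathbf{a}\in\mathbb{E} : \mathbbm{g}\triangle\mathbbm{i}\subseteq\mathbbm{a}\subseteq\mathbbm{h}\}$, and since $\mathbbm{g}\triangle\mathbbm{i} = \mathbbm{i}\triangle\mathbbm{g}$ this coincides with $\mathbb{P}_{\mathbf{i},\mathbf{h},\mathbf{g}}$. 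Next, taking transposes termwise inside the sum, I would apply Equation \eqref{Eq;1} together with the fact (quoted without citation per the convention at the end of Section 2) that $\mathfrak{S}$ is a symmetric scheme, so $A_\mathbf{j}^T = A_\mathbf{j}$ for every $\mathbf{j}\in\mathbb{E}$. This gives $(E_\mathbf{g}^*A_\mathbf{j}E_\mathbf{i}^*)^T = E_\mathbf{i}^*A_\mathbf{j}E_\mathbf{g}^*$, and summing over $\mathbf{j}\in\mathbb{P}_{\mathbf{g},\mathbf{h},\mathbf{i}} = \mathbb{P}_{\mathbf{i},\mathbf{h},\mathbf{g}}$ yields exactly $B_{\mathbf{i},\mathbf{h},\mathbf{g}}$.

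For the inequality $B_{\mathbf{g},\mathbf{h},\mathbf{i}}\neq O$, I would use Theorem \ref{T;Theorem3.5}: the set $\mathbb{B}_1$ is an $\F$-basis of $\mathbb{T}$. The inclusion $\mathbb{P}_{\mathbf{g},\mathbf{h},\mathbf{i}}\subseteq\mathbb{P}_{\mathbf{g},\mathbf{i}}$ noted in Notation \ref{N;Notation3.9} guarantees that every summand $E_\mathbf{g}^*A_\mathbf{j}E_\mathbf{i}^*$ in \eqref{Eq;4} is itself an element of $\mathbb{B}_1$, and these summands are pairwise distinct elements of $\mathbb{B}_1$ as $\mathbf{j}$ varies. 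Because the sum is nonempty (it contains $E_\mathbf{g}^*A_\mathbf{h}E_\mathbf{i}^*$, corresponding to the choice $\mathbf{a}=\mathbf{h}$, which lies in $\mathbb{P}_{\mathbf{g},\mathbf{h},\mathbf{i}}$ trivially), the $\F$-linear independence of $\mathbb{B}_1$ forces $B_{\mathbf{g},\mathbf{h},\mathbf{i}}\neq O$.

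Neither step presents a real obstacle; the argument is essentially bookkeeping with the symmetry of the symmetric-difference operation and with the fact that $\mathbb{B}_1$ is a basis. The only point requiring a moment's care is verifying that the transpose commutes past the $E^*_\mathbf{a}$ factors correctly, which is immediate from Equation \eqref{Eq;1}, and that $\mathbb{P}_{\mathbf{g},\mathbf{h},\mathbf{i}}$ really is contained in $\mathbb{P}_{\mathbf{g},\mathbf{i}}$, which is already asserted in Notation \ref{N;Notation3.9} via Lemma \ref{L;Lemma2.5}.
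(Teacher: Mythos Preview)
Your proposal is correct and follows essentially the same approach as the paper's proof: the paper likewise uses $\{\mathbf{h}\}\subseteq\mathbb{P}_{\mathbf{g},\mathbf{h},\mathbf{i}}\subseteq\mathbb{P}_{\mathbf{g},\mathbf{i}}$ together with Equation~\eqref{Eq;4} and Theorem~\ref{T;Theorem3.5} for the nonvanishing, and Equations~\eqref{Eq;4} and~\eqref{Eq;1} for the transpose identity. Your version simply spells out the details (the symmetry $\mathbb{P}_{\mathbf{g},\mathbf{h},\mathbf{i}}=\mathbb{P}_{\mathbf{i},\mathbf{h},\mathbf{g}}$ and the termwise transpose computation) that the paper leaves implicit.
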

\begin{proof}
As $\{\mathbf{h}\}\subseteq\mathbb{P}_{\mathbf{g}, \mathbf{h},\mathbf{i}}\subseteq \mathbb{P}_{\mathbf{g},\mathbf{i}}$, Equation \eqref{Eq;4} and Theorem \ref{T;Theorem3.5} yield the inequality $B_{\mathbf{g}, \mathbf{h}, \mathbf{i}}\neq O$. Therefore the desired lemma follows from Equations \eqref{Eq;4} and \eqref{Eq;1}.
\end{proof}
\begin{lem}\label{L;Lemma3.11}
Assume that $\mathbf{g}, \mathbf{h}, \mathbf{i}, \mathbf{j}, \mathbf{k}, \mathbf{l}\in\mathbb{E}$, $\mathbf{h}\in\mathbb{P}_{\mathbf{g}, \mathbf{i}}$, $\mathbf{k}\in\mathbb{P}_{\mathbf{j},\mathbf{l}}$. Then $B_{\mathbf{g}, \mathbf{h}, \mathbf{i}}=B_{\mathbf{j}, \mathbf{k},\mathbf{l}}$ if and only if $\mathbf{g}=\mathbf{j}$, $\mathbf{h}=\mathbf{k}$, and $\mathbf{i}=\mathbf{l}$.
\end{lem}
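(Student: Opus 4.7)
The ``if'' direction is immediate from Equation \eqref{Eq;4}, so the entire content lies in the ``only if'' direction: assuming $B_{\mathbf{g}, \mathbf{h}, \mathbf{i}}=B_{\mathbf{j}, \mathbf{k},\mathbf{l}}$ one must recover $\mathbf{g}=\mathbf{j}$, $\mathbf{h}=\mathbf{k}$, and $\mathbf{i}=\mathbf{l}$. My plan is to read everything off the $\mathbb{B}_1$-support, using Theorem \ref{T;Theorem3.5} together with the combinatorial description of the index set $\mathbb{P}_{\mathbf{g},\mathbf{h},\mathbf{i}}$ supplied by Notation \ref{N;Notation3.9} and Lemma \ref{L;Lemma2.3}.

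First I would observe that, because $\mathbb{P}_{\mathbf{g}, \mathbf{h},\mathbf{i}}\subseteq \mathbb{P}_{\mathbf{g},\mathbf{i}}$, the expansion in Equation \eqref{Eq;4} writes $B_{\mathbf{g}, \mathbf{h}, \mathbf{i}}$ as a sum of pairwise distinct elements of the $\F$-basis $\mathbb{B}_1$, each appearing with coefficient $\overline{1}\in\F^{\times}$. Consequently Theorem \ref{T;Theorem3.5} gives
\[
\mathrm{Supp}_{\mathbb{B}_1}(B_{\mathbf{g}, \mathbf{h}, \mathbf{i}})=\{E_\mathbf{g}^*A_\mathbf{m}E_\mathbf{i}^*:\mathbf{m}\in\mathbb{P}_{\mathbf{g}, \mathbf{h},\mathbf{i}}\},
\]
and likewise for the right-hand side. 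From $B_{\mathbf{g}, \mathbf{h}, \mathbf{i}}=B_{\mathbf{j}, \mathbf{k},\mathbf{l}}$ these two $\mathbb{B}_1$-supports must coincide.

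Next I would pick the distinguished summand $E_\mathbf{g}^*A_\mathbf{h}E_\mathbf{i}^*$ (which lies in the left support since $\mathbf{h}\in\mathbb{P}_{\mathbf{g},\mathbf{h},\mathbf{i}}$). By the support equality it equals some $E_\mathbf{j}^*A_\mathbf{m}E_\mathbf{l}^*$ with $\mathbf{m}\in\mathbb{P}_{\mathbf{j},\mathbf{k},\mathbf{l}}$, and the linear independence in Theorem \ref{T;Theorem3.5} forces the triples to agree, giving at once $\mathbf{g}=\mathbf{j}$ and $\mathbf{i}=\mathbf{l}$ (together with $\mathbf{h}=\mathbf{m}$). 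Performing the symmetric argument with $E_\mathbf{j}^*A_\mathbf{k}E_\mathbf{l}^*=E_\mathbf{g}^*A_\mathbf{k}E_\mathbf{i}^*$, which must likewise belong to the left support, yields $\mathbf{k}\in\mathbb{P}_{\mathbf{g},\mathbf{h},\mathbf{i}}$, while the original containment of $\mathbf{h}$ in the right support gives $\mathbf{h}\in\mathbb{P}_{\mathbf{g},\mathbf{k},\mathbf{i}}$.

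The main (mild) obstacle is then the identification $\mathbf{h}=\mathbf{k}$. Unwinding the definition in Notation \ref{N;Notation3.9}, the two memberships just established read $\mathbbm{g}\triangle\mathbbm{i}\subseteq\mathbbm{k}\subseteq\mathbbm{h}$ and $\mathbbm{g}\triangle\mathbbm{i}\subseteq\mathbbm{h}\subseteq\mathbbm{k}$, whence $\mathbbm{h}=\mathbbm{k}$. Lemma \ref{L;Lemma2.3} then upgrades this set equality to $\mathbf{h}=\mathbf{k}$, completing the proof.
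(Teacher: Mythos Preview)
Your proof is correct and follows essentially the same approach as the paper: the paper's terse proof cites Equations \eqref{Eq;4}, \eqref{Eq;2}, Theorem \ref{T;Theorem3.5}, and Lemma \ref{L;Lemma2.3}, which is precisely the toolkit you unpack. The only cosmetic difference is that the paper alludes to Equation \eqref{Eq;2} (presumably to extract $\mathbf{g}=\mathbf{j}$ and $\mathbf{i}=\mathbf{l}$ by multiplying by $E_\mathbf{g}^*$ and $E_\mathbf{i}^*$), whereas you reach the same conclusion by matching basis elements in the $\mathbb{B}_1$-support; both routes are equivalent and your explicit handling of the $\mathbf{h}=\mathbf{k}$ step via $\mathbbm{h}\subseteq\mathbbm{k}\subseteq\mathbbm{h}$ is exactly what the paper's citation of Lemma \ref{L;Lemma2.3} is meant to cover.
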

\begin{proof}
It suffices to check that $B_{\mathbf{g}, \mathbf{h}, \mathbf{i}}=B_{\mathbf{j}, \mathbf{k},\mathbf{l}}$ only if $\mathbf{g}=\mathbf{j}$, $\mathbf{h}=\mathbf{k}$, $\mathbf{i}=\mathbf{l}$. The desired lemma follows from combining Equations \eqref{Eq;4}, \eqref{Eq;2}, Theorem \ref{T;Theorem3.5}, Lemma \ref{L;Lemma2.3}.
\end{proof}
\begin{lem}\label{L;Lemma3.12}
$\mathbb{T}$ has an $\F$-linearly independent subset $\{B_{\mathbf{a},\mathbf{b},\mathbf{c}}: (\mathbf{a}, \mathbf{b}, \mathbf{c})\in\mathbb{P}\}$.
\end{lem}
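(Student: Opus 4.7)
The plan is to exhibit a triangularity in the expansion of each $B_{\mathbf{g},\mathbf{h},\mathbf{i}}$ with respect to the $\F$-basis $\mathbb{B}_1$ of Theorem \ref{T;Theorem3.5}, using the partial order $\preceq$ from Lemma \ref{L;Lemma2.3} on the middle index, and then extract linear independence from this triangularity in the usual way. The key observation is that, by Notation \ref{N;Notation3.9}, the element $\mathbf{j}$ belongs to $\mathbb{P}_{\mathbf{g},\mathbf{h},\mathbf{i}}$ exactly when $\mathbbm{g}\triangle\mathbbm{i}\subseteq\mathbbm{j}\subseteq\mathbbm{h}$; so among $\mathbf{j}\in\mathbb{P}_{\mathbf{g},\mathbf{i}}$ (which automatically satisfy $\mathbbm{g}\triangle\mathbbm{i}\subseteq\mathbbm{j}$), membership in $\mathbb{P}_{\mathbf{g},\mathbf{h},\mathbf{i}}$ reduces to the single inequality $\mathbf{j}\preceq\mathbf{h}$.

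First I would assume a linear relation
\begin{equation*}
\sum_{(\mathbf{g},\mathbf{h},\mathbf{i})\in\mathbb{P}} c_{\mathbf{g},\mathbf{h},\mathbf{i}}B_{\mathbf{g},\mathbf{h},\mathbf{i}}=O
\end{equation*}
with coefficients $c_{\mathbf{g},\mathbf{h},\mathbf{i}}\in\F$, substitute in Equation \eqref{Eq;4}, and swap the order of summation. Since $\mathbb{P}_{\mathbf{g},\mathbf{h},\mathbf{i}}\subseteq\mathbb{P}_{\mathbf{g},\mathbf{i}}$ by Notation \ref{N;Notation3.9}, every triple $(\mathbf{g},\mathbf{j},\mathbf{i})$ appearing after swapping lies in $\mathbb{P}$, so the swapped sum is an $\F$-linear combination of the elements of $\mathbb{B}_1$. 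Applying Theorem \ref{T;Theorem3.5} and matching coefficients yields
\begin{equation*}
\sum_{\mathbf{h}\in\mathbb{P}_{\mathbf{g},\mathbf{i}},\ \mathbf{j}\preceq\mathbf{h}}c_{\mathbf{g},\mathbf{h},\mathbf{i}}=\overline{0}\quad\text{for every }(\mathbf{g},\mathbf{j},\mathbf{i})\in\mathbb{P}.
\end{equation*}

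Next I would argue by contradiction: suppose some $c_{\mathbf{g},\mathbf{h},\mathbf{i}}\neq\overline{0}$, fix such a pair $(\mathbf{g},\mathbf{i})$, and choose $\mathbf{h}$ to be a $\preceq$-maximal element of the finite nonempty set $\{\mathbf{h}'\in\mathbb{P}_{\mathbf{g},\mathbf{i}}:c_{\mathbf{g},\mathbf{h}',\mathbf{i}}\neq\overline{0}\}$ (such a maximum exists because $\preceq$ is a partial order on this finite set by Lemma \ref{L;Lemma2.3}). Specializing the displayed coefficient equation to $\mathbf{j}=\mathbf{h}$, which is admissible since $(\mathbf{g},\mathbf{h},\mathbf{i})\in\mathbb{P}$, the sum runs over all $\mathbf{h}'\in\mathbb{P}_{\mathbf{g},\mathbf{i}}$ with $\mathbf{h}\preceq\mathbf{h}'$; by the maximality of $\mathbf{h}$, every such $\mathbf{h}'$ other than $\mathbf{h}$ itself contributes zero, so the equation collapses to $c_{\mathbf{g},\mathbf{h},\mathbf{i}}=\overline{0}$, a contradiction.

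The main (and only) subtlety will be the bookkeeping that lets us reindex the double sum and identify the support condition as precisely $\mathbf{j}\preceq\mathbf{h}$; once that identification is made, the maximality argument is completely routine. No further machinery (such as the transpose $\alpha_T$, Lemma \ref{L;Lemma3.10}, or Lemma \ref{L;Lemma3.11}) is needed for linear independence, though Lemma \ref{L;Lemma3.11} guarantees that the indexed family $\{B_{\mathbf{a},\mathbf{b},\mathbf{c}}:(\mathbf{a},\mathbf{b},\mathbf{c})\in\mathbb{P}\}$ has $|\mathbb{P}|$ pairwise distinct members, consistent with the conclusion.
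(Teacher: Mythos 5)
Your proof is correct and rests on the same key observation as the paper's: the expansion of each $B_{\mathbf{g},\mathbf{h},\mathbf{i}}$ in the $\F$-basis $\mathbb{B}_1$ via Equation \eqref{Eq;4} is triangular with respect to the partial order $\preceq$ on the middle index (with coefficient $\overline{1}$ on the top term $E_\mathbf{g}^*A_\mathbf{h}E_\mathbf{i}^*$), so a $\preceq$-maximal element argument forces all coefficients in any dependence relation to vanish. Your organization is a bit more streamlined than the paper's: by working directly with the coefficient equations extracted from Theorem \ref{T;Theorem3.5}, you avoid the preliminary localization $L\mapsto E_\mathbf{g}^*LE_\mathbf{h}^*$, unify the two cases $i=1$ and $i>1$, and bypass the explicit citations of Lemmas \ref{L;Lemma3.10} and \ref{L;Lemma3.11}; in fact your triangularity argument independently reproves Lemma \ref{L;Lemma3.11}. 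One small wording slip: you write that ``such a maximum exists,'' but what exists (and what your argument actually uses) is a $\preceq$-\emph{maximal} element of a nonempty finite poset; a greatest element need not exist.
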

\begin{proof}
Set $\mathbb{U}=\{B_{\mathbf{a},\mathbf{b},\mathbf{c}}: (\mathbf{a}, \mathbf{b}, \mathbf{c})\in\mathbb{P}\}$. Lemma \ref{L;Lemma3.10} implies that $M\neq O$ for any $M\in\mathbb{U}$. Let $L$ be a nonzero $\F$-linear combination of the matrices in $\mathbb{U}$. Assume that $L=O$. If $M\in\mathbb{U}$, let $c_M$ be the coefficient of $M$ in $L$. Then there is $N\in\mathbb{U}$ such that $c_N\in\F^\times$.
Equations \eqref{Eq;4} and \eqref{Eq;2} show that $N=E_\mathbf{g}^*NE_\mathbf{h}^*$ for some $\mathbf{g}, \mathbf{h}\in\mathbb{E}$. So $\mathbb{V}=\{A: A\!\in\!\mathbb{U}, c_A\!\in\!\F^\times, A\!=\!E_\mathbf{g}^*AE_\mathbf{h}^*\}\neq\varnothing$. By Lemma \ref{L;Lemma3.11}, there are $i\!\in\!\mathbb{N}$ and pairwise distinct $\mathbf{j}_{(1)}, \mathbf{j}_{(2)}, \ldots, \mathbf{j}_{(i)}\!\in\!\mathbb{P}_{\mathbf{g}, \mathbf{h}}$ such that $\mathbb{V}\!=\!\{B_{\mathbf{g}, \mathbf{j}_{(1)}, \mathbf{h}}, B_{\mathbf{g}, \mathbf{j}_{(2)}, \mathbf{h}}, \ldots, B_{\mathbf{g}, \mathbf{j}_{(i)}, \mathbf{h}}\}$.

Assume that $i=1$. Equations \eqref{Eq;4} and \eqref{Eq;2} give $O=E_\mathbf{g}^*LE_\mathbf{h}^*=cB_{\mathbf{g}, \mathbf{j}_{(1)}, \mathbf{h}}$ for some $c\in\F^\times$. This is a contradiction by Lemma \ref{L;Lemma3.10}. Assume further that $i>1$. By Lemma \ref{L;Lemma2.3}, there is no loss to assume that $\mathbf{j}_{(1)}$ is maximal in $\{\mathbf{j}_{(2)}, \mathbf{j}_{(3)}, \ldots, \mathbf{j}_{(i)}\}$ with respect to the partial order $\preceq$. As $E_\mathbf{g}^*LE_\mathbf{h}^*=O$, $B_{\mathbf{g}, \mathbf{j}_{(1)}, \mathbf{h}}$ is an $\F$-linear combination of the matrices in $\{B_{\mathbf{g}, \mathbf{j}_{(2)},\mathbf{h}}, B_{\mathbf{g}, \mathbf{j}_{(3)},\mathbf{h}}, \ldots, B_{\mathbf{g}, \mathbf{j}_{(i)},\mathbf{h}}\}$. So $\mathbf{j}_{(1)}\!\preceq\!\mathbf{k}\!\in\!\{\mathbf{j}_{(2)}, \mathbf{j}_{(3)}, \ldots, \mathbf{j}_{(i)}\}$ by Equation \eqref{Eq;4} and Theorem \ref{T;Theorem3.5}. So $\mathbf{j}_{(1)}\!\!\in\!\!\{\mathbf{j}_{(2)}, \mathbf{j}_{(3)}, \ldots, \mathbf{j}_{(i)}\}$ by the choice of $\mathbf{j}_{(1)}$. This is a contradiction. These contradictions give $L\!\neq\!O$. The desired lemma follows.
\end{proof}
We are now ready to present the second $\F$-basis of $\mathbb{T}$ and an additional notation.
\begin{thm}\label{T;Theorem3.13}
$\mathbb{T}$ has an $\F$-basis $\{B_{\mathbf{a},\mathbf{b},\mathbf{c}}: (\mathbf{a}, \mathbf{b}, \mathbf{c})\in\mathbb{P}\}$ with cardinality $2^{2n_1}5^{n_2}$.
\end{thm}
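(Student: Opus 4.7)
The plan is to combine the linear independence established in Lemma \ref{L;Lemma3.12} with a cardinality/dimension count drawn from Theorem \ref{T;Theorem3.5} (equivalently Lemma \ref{L;Lemma3.4}). All the substantive work has already been carried out, so what remains is really a bookkeeping argument.

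First I would invoke Lemma \ref{L;Lemma3.11} to conclude that the map $(\mathbf{a},\mathbf{b},\mathbf{c})\mapsto B_{\mathbf{a},\mathbf{b},\mathbf{c}}$ is injective on $\mathbb{P}$. Hence the indexed set $\{B_{\mathbf{a},\mathbf{b},\mathbf{c}}:(\mathbf{a},\mathbf{b},\mathbf{c})\in\mathbb{P}\}$ has cardinality exactly $|\mathbb{P}|$. Next I would quote Theorem \ref{T;Theorem3.5}, which asserts that $\{E_\mathbf{a}^*A_\mathbf{b}E_\mathbf{c}^*:(\mathbf{a},\mathbf{b},\mathbf{c})\in\mathbb{P}\}$ is an $\F$-basis of $\mathbb{T}$ with cardinality $2^{2n_1}5^{n_2}$; in particular $|\mathbb{P}|=2^{2n_1}5^{n_2}$ and the $\F$-dimension of $\mathbb{T}$ is $2^{2n_1}5^{n_2}$.

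Now Lemma \ref{L;Lemma3.12} tells us that $\{B_{\mathbf{a},\mathbf{b},\mathbf{c}}:(\mathbf{a},\mathbf{b},\mathbf{c})\in\mathbb{P}\}$ is an $\F$-linearly independent subset of $\mathbb{T}$ of size $|\mathbb{P}|=\dim_\F\mathbb{T}$. A linearly independent set of size equal to the dimension of the ambient space must span it, so this set is an $\F$-basis of $\mathbb{T}$ of cardinality $2^{2n_1}5^{n_2}$, completing the proof.

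The proof is essentially formal once Lemmas \ref{L;Lemma3.11} and \ref{L;Lemma3.12} are in hand; the only potential pitfall is the one-line verification that $|\mathbb{P}|$ equals both the cardinality of $\{B_{\mathbf{a},\mathbf{b},\mathbf{c}}:(\mathbf{a},\mathbf{b},\mathbf{c})\in\mathbb{P}\}$ and the $\F$-dimension of $\mathbb{T}$, so that the dimension argument closes. There is no real obstacle here — the genuine difficulty was already absorbed into the proof of Lemma \ref{L;Lemma3.12}, where maximality of $\mathbf{j}_{(1)}$ under $\preceq$ and the expansion \eqref{Eq;4} were used to rule out a nontrivial linear relation.
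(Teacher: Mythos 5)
Your proof is correct and matches the paper's own argument exactly: the paper also deduces Theorem \ref{T;Theorem3.13} by combining Theorem \ref{T;Theorem3.5} (which fixes $|\mathbb{P}|=\dim_\F\mathbb{T}=2^{2n_1}5^{n_2}$) with Lemma \ref{L;Lemma3.12}. Your extra appeal to Lemma \ref{L;Lemma3.11} to pin down injectivity of $(\mathbf{a},\mathbf{b},\mathbf{c})\mapsto B_{\mathbf{a},\mathbf{b},\mathbf{c}}$ is a harmless bit of added care rather than a departure.
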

\begin{proof}
The desired theorem follows from applying Theorem \ref{T;Theorem3.5} and Lemma \ref{L;Lemma3.12}.
\end{proof}
\begin{nota}\label{N;Notation3.14}
Let $\mathbb{B}_2=\{B_{\mathbf{a},\mathbf{b},\mathbf{c}}: (\mathbf{a}, \mathbf{b}, \mathbf{c})\in\mathbb{P}\}$. If $M\in\mathbb{T}$ and $(\mathbf{g}, \mathbf{h}, \mathbf{i})\in\mathbb{P}$, notice that $\mathrm{Supp}_{\mathbb{B}_2}(M)$ is defined and write $c_{\mathbf{g},\mathbf{h},\mathbf{i}}(M)$ for $c_{\mathbb{B}_2}(M, \ B_{\mathbf{g}, \mathbf{h}, \mathbf{i}})$ by Theorem \ref{T;Theorem3.13}.
\end{nota}
We next investigate the structure constants of $\mathbb{B}_2$ in $\mathbb{T}$. We begin with a notation.
\begin{nota}\label{N;Notation3.15}
Assume that $\mathbf{g}, \mathbf{h}, \mathbf{i}, \mathbf{j}, \mathbf{k}, \mathbf{l}\in\mathbb{E}$. If $\mathbbm{l}=\mathbbm{g}\cup\mathbbm{h}$, set $\mathbf{g}\cup\mathbf{h}=\mathbf{l}$ by Lemma \ref{L;Lemma2.3}. If $\mathbbm{l}=\mathbbm{g}\cap\mathbbm{h}$, set $\mathbf{g}\cap\mathbf{h}=\mathbf{l}$ by Lemma \ref{L;Lemma2.3}. So $\mathbf{g}\cup\mathbf{h}=\mathbf{h}\cup\mathbf{g}$ and $\mathbf{g}\cap\mathbf{h}=\mathbf{h}\cap\mathbf{g}$ by Lemma \ref{L;Lemma2.3}. According to Lemma \ref{L;Lemma2.3}, $\mathbf{g}\cap\mathbf{h}\cap\mathbf{i}=(\mathbf{g}\cap\mathbf{h})\cap\mathbf{i}=\mathbf{g}\cap(\mathbf{h}\cap\mathbf{i})$ and
$[\mathbf{g}, \mathbf{h}, \mathbf{i}, \mathbf{j}, \mathbf{k}]=\mathbf{l}$ if $\mathbbm{l}=(\mathbbm{g}\triangle\mathbbm{k})\cup((\mathbbm{g}\cap\mathbbm{k})^\circ\setminus\mathbbm{i})\cup
((\mathbbm{h}\cup\mathbbm{j})\cap(\mathbbm{g}\cap\mathbbm{i}\cap\mathbbm{k})^\circ)$. For example, if $n=|\mathbb{U}_1|=2$, $|\mathbb{U}_2|=3$, and $\mathbf{g}=(0,1)$, then $\mathbf{g}\cup\mathbf{1}=[\mathbf{g},\mathbf{1},\mathbf{1},\mathbf{g},\mathbf{1}]=\mathbf{1}$ and $\mathbf{g}\cap\mathbf{1}=\mathbf{g}$.
\end{nota}
\begin{lem}\label{L;Lemma3.16}
Assume that $\mathbf{g}, \mathbf{h}, \mathbf{i}, \mathbf{j}, \mathbf{k}\!\in\!\mathbb{E}$. Then $[\mathbf{g}, \mathbf{h}, \mathbf{i}, \mathbf{j}, \mathbf{k}]\!\in\!\mathbb{P}_{\mathbf{g},\mathbf{k}}$. Moreover, if $\mathbf{l}, \mathbf{m}, \mathbf{q}\!\in\!\mathbb{E}$,
$\mathbf{h}\!\in\!\mathbb{P}_{\mathbf{g},\mathbf{i}}$, $\mathbf{j}\!\in\!\mathbb{P}_{\mathbf{i}, \mathbf{k}}$, $\mathbf{l}\!\in\!\mathbb{P}_{\mathbf{g}, \mathbf{h}, \mathbf{i}}$, $\mathbf{m}\!\in\!\mathbb{P}_{\mathbf{i},\mathbf{j},\mathbf{k}}$, $\mathbf{q}\!\in\!\mathbb{P}_{\mathbf{g},\mathbf{k}}\cap\mathbb{P}_{\mathbf{l},\mathbf{m}}$, then $\mathbf{q}\!\in\!\mathbb{P}_{\mathbf{g}, [\mathbf{g}, \mathbf{h}, \mathbf{i}, \mathbf{j}, \mathbf{k}], \mathbf{k}}$.
\end{lem}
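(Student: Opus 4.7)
The plan is to unwind each occurrence of $\mathbb{P}_{\ast,\ast}$ and $\mathbb{P}_{\ast,\ast,\ast}$ via Notations \ref{N;Notation3.1} and \ref{N;Notation3.9} into explicit containments among the subsets $\mathbbm{g},\mathbbm{h},\mathbbm{i},\mathbbm{j},\mathbbm{k},\mathbbm{l},\mathbbm{m},\mathbbm{q}$ of $[1,n]$, and then to verify both assertions by elementary subset chasing, invoking Lemma \ref{L;Lemma2.3} whenever I need to pass between $n$-tuples and subsets of $[1,n]$.

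For the first assertion I set $\mathbf{l}_{0}=[\mathbf{g},\mathbf{h},\mathbf{i},\mathbf{j},\mathbf{k}]$ and read off $\mathbbm{l}_{0}$ from Notation \ref{N;Notation3.15}. The lower bound $\mathbbm{g}\triangle\mathbbm{k}\subseteq\mathbbm{l}_{0}$ is immediate, and for the upper bound I note that the second summand of $\mathbbm{l}_{0}$ lies in $(\mathbbm{g}\cap\mathbbm{k})^\circ$ and the third summand lies in $(\mathbbm{g}\cap\mathbbm{i}\cap\mathbbm{k})^\circ\subseteq(\mathbbm{g}\cap\mathbbm{k})^\circ$. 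This yields $\mathbbm{l}_{0}\subseteq(\mathbbm{g}\triangle\mathbbm{k})\cup(\mathbbm{g}\cap\mathbbm{k})^\circ$, which by Notation \ref{N;Notation3.1} is exactly $\mathbf{l}_{0}\in\mathbb{P}_{\mathbf{g},\mathbf{k}}$.

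For the main claim the goal becomes $\mathbbm{g}\triangle\mathbbm{k}\subseteq\mathbbm{q}\subseteq\mathbbm{l}_{0}$, the lower half being part of the hypothesis $\mathbf{q}\in\mathbb{P}_{\mathbf{g},\mathbf{k}}$. For the upper half I would pick $e\in\mathbbm{q}$. The containment $\mathbbm{q}\subseteq(\mathbbm{g}\triangle\mathbbm{k})\cup(\mathbbm{g}\cap\mathbbm{k})^\circ$ already places $e$ in the first summand of $\mathbbm{l}_{0}$ unless $e\in(\mathbbm{g}\cap\mathbbm{k})^\circ$; and if further $e\notin\mathbbm{i}$ then $e$ sits in the second summand. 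The only remaining case is $e\in(\mathbbm{g}\cap\mathbbm{i}\cap\mathbbm{k})^\circ$, and this is where the hypothesis $\mathbf{q}\in\mathbb{P}_{\mathbf{l},\mathbf{m}}$ becomes essential: it gives $\mathbbm{q}\subseteq(\mathbbm{l}\triangle\mathbbm{m})\cup(\mathbbm{l}\cap\mathbbm{m})^\circ\subseteq\mathbbm{l}\cup\mathbbm{m}\subseteq\mathbbm{h}\cup\mathbbm{j}$, where the last inclusion uses $\mathbbm{l}\subseteq\mathbbm{h}$ from $\mathbf{l}\in\mathbb{P}_{\mathbf{g},\mathbf{h},\mathbf{i}}$ and $\mathbbm{m}\subseteq\mathbbm{j}$ from $\mathbf{m}\in\mathbb{P}_{\mathbf{i},\mathbf{j},\mathbf{k}}$. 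Hence $e\in(\mathbbm{h}\cup\mathbbm{j})\cap(\mathbbm{g}\cap\mathbbm{i}\cap\mathbbm{k})^\circ$, the third summand of $\mathbbm{l}_{0}$.

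The main obstacle is purely bookkeeping: there are eight subsets of $[1,n]$ to track and a short case split according to whether $e$ lies in $\mathbbm{i}$, but no nontrivial combinatorial identity is needed beyond the elementary equality $(\mathbbm{l}\triangle\mathbbm{m})\cup(\mathbbm{l}\cap\mathbbm{m})=\mathbbm{l}\cup\mathbbm{m}$, so I expect the proof to be short once the definitions of $\mathbb{P}_{\ast,\ast}$, $\mathbb{P}_{\ast,\ast,\ast}$, and $[\mathbf{g},\mathbf{h},\mathbf{i},\mathbf{j},\mathbf{k}]$ are unfolded correctly.
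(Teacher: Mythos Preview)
Your proposal is correct and follows essentially the same approach as the paper's proof: both unfold the definitions of $\mathbb{P}_{\ast,\ast}$, $\mathbb{P}_{\ast,\ast,\ast}$, and $[\mathbf{g},\mathbf{h},\mathbf{i},\mathbf{j},\mathbf{k}]$ and reduce to elementary subset containments, the crux being the chain $\mathbbm{q}\subseteq\mathbbm{l}\cup\mathbbm{m}\subseteq\mathbbm{h}\cup\mathbbm{j}$ in the case $e\in(\mathbbm{g}\cap\mathbbm{i}\cap\mathbbm{k})^\circ$. The paper phrases this as a direct computation of $\mathbbm{q}\cap(\mathbbm{g}\cap\mathbbm{i}\cap\mathbbm{k})^\circ$ rather than your element-by-element case split, but the underlying argument is identical.
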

\begin{proof}
As $(\mathbbm{g}\triangle\mathbbm{k})\cup((\mathbbm{g}\cap\mathbbm{k})^\circ\setminus\mathbbm{i})\cup
((\mathbbm{h}\cup\mathbbm{j})\cap(\mathbbm{g}\cap\mathbbm{i}\cap\mathbbm{k})^\circ)\subseteq(\mathbbm{g}\triangle\mathbbm{k})
\cup(\mathbbm{g}\cap\mathbbm{k})^\circ$, the first statement follows. Hence $\mathbb{P}_{\mathbf{g}, [\mathbf{g}, \mathbf{h}, \mathbf{i}, \mathbf{j}, \mathbf{k}], \mathbf{k}}$ is defined. As $\mathbf{l}\in\mathbb{P}_{\mathbf{g}, \mathbf{h}, \mathbf{i}}$, $\mathbf{m}\!\in\!\mathbb{P}_{\mathbf{i},\mathbf{j},\mathbf{k}}$, $\mathbf{q}\in\mathbb{P}_{\mathbf{l},\mathbf{m}}$, notice that
$\mathbbm{q}\cap(\mathbbm{g}\cap\mathbbm{i}\cap\mathbbm{k})^\circ\subseteq(\mathbbm{l}\cup\mathbbm{m})\cap(\mathbbm{g}
\cap\mathbbm{i}\cap\mathbbm{k})^\circ\subseteq(\mathbbm{h}\cup\mathbbm{j})\cap(\mathbbm{g}\cap\mathbbm{i}\cap\mathbbm{k})^\circ$. Therefore $\mathbbm{q}\cap(\mathbbm{g}\cap\mathbbm{k})^\circ\subseteq((\mathbbm{g}\cap\mathbbm{k})^\circ\setminus\mathbbm{i})
\cup((\mathbbm{h}\cup\mathbbm{j})\cap(\mathbbm{g}\cap\mathbbm{i}\cap\mathbbm{k})^\circ)$. As $\mathbf{q}\in \mathbb{P}_{\mathbf{g}, \mathbf{k}}$, $\mathbbm{g}\triangle\mathbbm{k}\subseteq\mathbbm{q}$ and
$\mathbbm{q}\subseteq(\mathbbm{g}\triangle\mathbbm{k})\cup((\mathbbm{g}\cap\mathbbm{k})^\circ\setminus\mathbbm{i})\cup
((\mathbbm{h}\cup\mathbbm{j})\cap(\mathbbm{g}\cap\mathbbm{i}\cap\mathbbm{k})^\circ)$. The desired lemma follows.
\end{proof}
\begin{lem}\label{L;Lemma3.17}
Assume that $\mathbf{g}, \mathbf{h},\mathbf{i}, \mathbf{j}, \mathbf{k}\in\mathbb{E}$, $\mathbf{h}\!\in\!\mathbb{P}_{\mathbf{g}, \mathbf{i}}$, $\mathbf{j}\!\in\!\mathbb{P}_{\mathbf{i}, \mathbf{k}}$. Then $[\mathbf{g}, \mathbf{h}, \mathbf{i}, \mathbf{j}, \mathbf{k}]\!\in\!\mathbb{P}_{\mathbf{g}, \mathbf{k}}$ and
$$\mathrm{Supp}_{\mathbb{B}_1}(B_{\mathbf{g}, \mathbf{h}, \mathbf{i}}B_{\mathbf{i}, \mathbf{j}, \mathbf{k}})\subseteq\{E_\mathbf{g}^*A_\mathbf{a}E_\mathbf{k}^*: \mathbf{a}\in\mathbb{P}_{\mathbf{g}, [\mathbf{g}, \mathbf{h}, \mathbf{i}, \mathbf{j}, \mathbf{k}], \mathbf{k}}\}=\mathrm{Supp}_{\mathbb{B}_1}(B_{\mathbf{g}, [\mathbf{g}, \mathbf{h}, \mathbf{i}, \mathbf{j}, \mathbf{k}], \mathbf{k}}).$$
\end{lem}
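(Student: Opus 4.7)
The first assertion $[\mathbf{g},\mathbf{h},\mathbf{i},\mathbf{j},\mathbf{k}]\in\mathbb{P}_{\mathbf{g},\mathbf{k}}$ is simply the opening sentence of Lemma \ref{L;Lemma3.16}, so only the support identity requires real work. The plan is to expand $B_{\mathbf{g},\mathbf{h},\mathbf{i}}B_{\mathbf{i},\mathbf{j},\mathbf{k}}$ into the basis $\mathbb{B}_1$ by using Equation \eqref{Eq;4}, collapsing the repeated idempotent via Equation \eqref{Eq;2}, and then applying Lemma \ref{L;Lemma3.7} term by term.

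Concretely, I would write
\[ B_{\mathbf{g},\mathbf{h},\mathbf{i}}B_{\mathbf{i},\mathbf{j},\mathbf{k}} = \sum_{\mathbf{l}\in\mathbb{P}_{\mathbf{g},\mathbf{h},\mathbf{i}}} \sum_{\mathbf{m}\in\mathbb{P}_{\mathbf{i},\mathbf{j},\mathbf{k}}} E_\mathbf{g}^*A_\mathbf{l} E_\mathbf{i}^* A_\mathbf{m} E_\mathbf{k}^*, \]
and then, for each pair $(\mathbf{l},\mathbf{m})$, use Lemma \ref{L;Lemma3.7} to further expand the summand as $\sum_{\mathbf{q}\in\mathbb{P}_{\mathbf{g},\mathbf{k}}} \overline{p_{\mathbf{l},\mathbf{i},\mathbf{m}}^{\mathbf{g},\mathbf{q},\mathbf{k}}}\, E_\mathbf{g}^*A_\mathbf{q} E_\mathbf{k}^*$. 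The ``moreover'' half of Lemma \ref{L;Lemma3.7} restricts the surviving indices to $\mathbf{q}\in\mathbb{P}_{\mathbf{g},\mathbf{k}}\cap\mathbb{P}_{\mathbf{l},\mathbf{m}}$, which is exactly the hypothesis list needed to feed into the second half of Lemma \ref{L;Lemma3.16}. That lemma promotes each such surviving $\mathbf{q}$ to $\mathbf{q}\in\mathbb{P}_{\mathbf{g},[\mathbf{g},\mathbf{h},\mathbf{i},\mathbf{j},\mathbf{k}],\mathbf{k}}$, yielding the desired containment
\[ \mathrm{Supp}_{\mathbb{B}_1}(B_{\mathbf{g},\mathbf{h},\mathbf{i}}B_{\mathbf{i},\mathbf{j},\mathbf{k}}) \subseteq \{E_\mathbf{g}^*A_\mathbf{a} E_\mathbf{k}^* : \mathbf{a} \in \mathbb{P}_{\mathbf{g},[\mathbf{g},\mathbf{h},\mathbf{i},\mathbf{j},\mathbf{k}],\mathbf{k}}\}. \]

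The remaining equality with $\mathrm{Supp}_{\mathbb{B}_1}(B_{\mathbf{g},[\mathbf{g},\mathbf{h},\mathbf{i},\mathbf{j},\mathbf{k}],\mathbf{k}})$ is just an unfolding of definitions: by Equation \eqref{Eq;4} together with Theorem \ref{T;Theorem3.5}, the matrix $B_{\mathbf{g},[\mathbf{g},\mathbf{h},\mathbf{i},\mathbf{j},\mathbf{k}],\mathbf{k}}$ is the sum of the pairwise distinct $\mathbb{B}_1$-basis vectors $E_\mathbf{g}^*A_\mathbf{a} E_\mathbf{k}^*$ indexed by $\mathbf{a}\in\mathbb{P}_{\mathbf{g},[\mathbf{g},\mathbf{h},\mathbf{i},\mathbf{j},\mathbf{k}],\mathbf{k}}$, each carrying coefficient $\overline{1}\in\F^\times$, so its $\mathbb{B}_1$-support is exactly that set. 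The only delicate point in the whole argument is verifying the hypothesis list of Lemma \ref{L;Lemma3.16}'s ``moreover'' clause before invoking it; however, $\mathbf{h}\in\mathbb{P}_{\mathbf{g},\mathbf{i}}$ and $\mathbf{j}\in\mathbb{P}_{\mathbf{i},\mathbf{k}}$ come from the theorem's assumptions, the memberships $\mathbf{l}\in\mathbb{P}_{\mathbf{g},\mathbf{h},\mathbf{i}}$ and $\mathbf{m}\in\mathbb{P}_{\mathbf{i},\mathbf{j},\mathbf{k}}$ come from the summation ranges of Equation \eqref{Eq;4}, and $\mathbf{q}\in\mathbb{P}_{\mathbf{g},\mathbf{k}}\cap\mathbb{P}_{\mathbf{l},\mathbf{m}}$ is delivered by Lemma \ref{L;Lemma3.7}, so the bookkeeping is automatic.
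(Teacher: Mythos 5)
Your proof is correct and follows essentially the same route as the paper: expand via Equation \eqref{Eq;4}, invoke Lemma \ref{L;Lemma3.7} (including its ``moreover'' clause) to locate surviving indices in $\mathbb{P}_{\mathbf{g},\mathbf{k}}\cap\mathbb{P}_{\mathbf{l},\mathbf{m}}$, pass to $\mathbb{P}_{\mathbf{g},[\mathbf{g},\mathbf{h},\mathbf{i},\mathbf{j},\mathbf{k}],\mathbf{k}}$ via the second part of Lemma \ref{L;Lemma3.16}, and read off the final support equality from Equation \eqref{Eq;4} and Theorem \ref{T;Theorem3.5}. Your observation that any cancellation across the $(\mathbf{l},\mathbf{m})$-sum can only shrink the support is implicitly what makes the containment (rather than equality) the right thing to claim, and you have the hypothesis bookkeeping right.
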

\begin{proof}
The leftmost containment follows from combining Equation \eqref{Eq;4}, Lemmas \ref{L;Lemma3.7}, and \ref{L;Lemma3.16}.
The desired lemma follows from Equation \eqref{Eq;4} and Theorem \ref{T;Theorem3.5}.
\end{proof}
\begin{nota}\label{N;Notation3.18}
Assume that $\mathbf{y}, \mathbf{z}\in\mathbb{X}$ and $\mathbf{g}, \mathbf{h}\in\mathbb{E}$. Notice that $[1, n]$ is a union of the pairwise disjoint subsets $\mathbbm{g}\cap\mathbbm{h}$, $\mathbbm{g}\setminus \mathbbm{h}$, and $[1,n]\setminus\mathbbm{g}$. According to Lemma \ref{L;Lemma2.3}, there is a unique $\mathbf{w}\in\mathbb{X}$ such that $\mathbf{w}_i=\mathbf{x}_i$, $\mathbf{w}_j=\mathbf{z}_j$, and $\mathbf{w}_k=\mathbf{y}_k$ for any $i\in\mathbbm{g}\cap\mathbbm{h}$, $j\in\mathbbm{g}\setminus\mathbbm{h}$, and $k\in[1,n]\setminus\mathbbm{g}$. This unique element $\mathbf{w}$ in $\mathbb{X}$
is denoted by $[\mathbf{g}, \mathbf{h}; \mathbf{y}, \mathbf{z}]$.
\end{nota}
\begin{lem}\label{L;Lemma3.19}
Assume that $\mathbf{y}, \mathbf{z}\in\mathbb{X}$ and $\mathbf{g}, \mathbf{h}, \mathbf{i}, \mathbf{j}, \mathbf{k}, \mathbf{l}, \mathbf{m}\in\mathbb{E}$.
Assume that $\mathbf{h}\in\mathbb{P}_{\mathbf{g}, \mathbf{i}}$, $\mathbf{j}\in\mathbb{P}_{\mathbf{i}, \mathbf{k}}$, $\mathbf{l}\in\mathbb{P}_{\mathbf{g}, \mathbf{h}, \mathbf{i}}$, $\mathbf{m}\in\mathbb{P}_{\mathbf{i},\mathbf{j},\mathbf{k}}$.
Then $\mathbf{y}R_\mathbf{l}\cap\mathbf{x}R_\mathbf{i}\cap\mathbf{z}R_\mathbf{m}\subseteq[\mathbf{h}, \mathbf{j}; \mathbf{y}, \mathbf{z}]R_{\mathbf{h}\cap\mathbf{i}\cap\mathbf{j}}$.
\end{lem}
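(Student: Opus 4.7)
The plan is to verify the set containment pointwise: pick an arbitrary $\mathbf{u}$ in the triple intersection $\mathbf{y}R_{\mathbf{l}}\cap\mathbf{x}R_{\mathbf{i}}\cap\mathbf{z}R_{\mathbf{m}}$, set $\mathbf{w}=[\mathbf{h},\mathbf{j};\mathbf{y},\mathbf{z}]$, and show coordinate by coordinate that $(\mathbf{w},\mathbf{u})\in R_{\mathbf{h}\cap\mathbf{i}\cap\mathbf{j}}$. Since $\mathfrak{S}$ is the direct product of the trivial schemes $\mathfrak{Tri}(\mathbb{U}_r)$, membership in each relation decomposes completely into coordinate-wise statements: for every $r\in[1,n]$, $\mathbf{u}\in\mathbf{y}R_{\mathbf{l}}$ is equivalent to $\mathbf{u}_r=\mathbf{y}_r$ if $r\notin\mathbbm{l}$ and $\mathbf{u}_r\ne\mathbf{y}_r$ if $r\in\mathbbm{l}$, with analogous statements for $\mathbf{u}\in\mathbf{x}R_{\mathbf{i}}$ and $\mathbf{u}\in\mathbf{z}R_{\mathbf{m}}$. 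Likewise, the target $(\mathbf{w},\mathbf{u})\in R_{\mathbf{h}\cap\mathbf{i}\cap\mathbf{j}}$ becomes $\mathbf{u}_r=\mathbf{w}_r$ if $r\notin\mathbbm{h}\cap\mathbbm{i}\cap\mathbbm{j}$ and $\mathbf{u}_r\ne\mathbf{w}_r$ otherwise.

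Unpacking Notation \ref{N;Notation3.18}, $\mathbf{w}_r=\mathbf{x}_r$ on $\mathbbm{h}\cap\mathbbm{j}$, $\mathbf{w}_r=\mathbf{z}_r$ on $\mathbbm{h}\setminus\mathbbm{j}$, and $\mathbf{w}_r=\mathbf{y}_r$ on $[1,n]\setminus\mathbbm{h}$. I would split the check into three cases according to which piece of this partition contains $r$, subdividing the first case by whether $r\in\mathbbm{i}$. On $\mathbbm{h}\cap\mathbbm{j}$ with $r\in\mathbbm{i}$, one has $\mathbf{u}_r\ne\mathbf{x}_r=\mathbf{w}_r$ from $\mathbf{u}\in\mathbf{x}R_{\mathbf{i}}$, which matches the target since then $r\in\mathbbm{h}\cap\mathbbm{i}\cap\mathbbm{j}$. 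On $\mathbbm{h}\cap\mathbbm{j}$ with $r\notin\mathbbm{i}$, one has $\mathbf{u}_r=\mathbf{x}_r=\mathbf{w}_r$, which is also correct. On $\mathbbm{h}\setminus\mathbbm{j}$ the target is always $\mathbf{u}_r=\mathbf{z}_r$, and this is forced by $\mathbf{u}\in\mathbf{z}R_{\mathbf{m}}$ provided $r\notin\mathbbm{m}$. On $[1,n]\setminus\mathbbm{h}$ the target is $\mathbf{u}_r=\mathbf{y}_r$, forced by $\mathbf{u}\in\mathbf{y}R_{\mathbf{l}}$ provided $r\notin\mathbbm{l}$.

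The crucial structural inputs are the containments $\mathbbm{l}\subseteq\mathbbm{h}$ and $\mathbbm{m}\subseteq\mathbbm{j}$, which are immediate from $\mathbf{l}\in\mathbb{P}_{\mathbf{g},\mathbf{h},\mathbf{i}}$ and $\mathbf{m}\in\mathbb{P}_{\mathbf{i},\mathbf{j},\mathbf{k}}$ via Notation \ref{N;Notation3.9}. These are precisely what close the two remaining cases: on $\mathbbm{h}\setminus\mathbbm{j}$ one has $r\notin\mathbbm{j}\supseteq\mathbbm{m}$; on $[1,n]\setminus\mathbbm{h}$ one has $r\notin\mathbbm{h}\supseteq\mathbbm{l}$. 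I do not expect a genuine obstacle here; the main effort is keeping the bookkeeping straight, since six coordinate-wise conditions (the three hypotheses on $\mathbf{u}$, the definition of $\mathbf{w}$, and the definition of $R_{\mathbf{h}\cap\mathbf{i}\cap\mathbf{j}}$) must be reconciled simultaneously. Organizing the case split along the partition $(\mathbbm{h}\cap\mathbbm{j})\sqcup(\mathbbm{h}\setminus\mathbbm{j})\sqcup([1,n]\setminus\mathbbm{h})$ reduces each case to a one-line verification using the trivial relations of Lemma \ref{L;Lemma2.2}.
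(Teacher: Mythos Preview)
Your proposal is correct and follows essentially the same approach as the paper: a coordinate-by-coordinate verification split along the partition $(\mathbbm{h}\cap\mathbbm{j})\sqcup(\mathbbm{h}\setminus\mathbbm{j})\sqcup([1,n]\setminus\mathbbm{h})$, subdividing the first piece by membership in $\mathbbm{i}$, and invoking $\mathbbm{m}\subseteq\mathbbm{j}$ and $\mathbbm{l}\subseteq\mathbbm{h}$ from Notation~\ref{N;Notation3.9} to close the last two cases. The only cosmetic difference is that the paper names the element of the triple intersection $\mathbf{w}$ rather than introducing a separate symbol for $[\mathbf{h},\mathbf{j};\mathbf{y},\mathbf{z}]$.
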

\begin{proof}
Assume that $\mathbf{y}R_\mathbf{l}\!\cap\!\mathbf{x}R_\mathbf{i}\cap\mathbf{z}R_\mathbf{m}\neq\varnothing$. Pick $\mathbf{w}\in\mathbf{y}R_\mathbf{l}\!\cap\!\mathbf{x}R_\mathbf{i}\cap\mathbf{z}R_\mathbf{m}$. If $q\in(\mathbbm{h}\cap\mathbbm{j})\setminus\mathbbm{i}$, then $\mathbf{w}_q=\mathbf{x}_q=[\mathbf{h},\mathbf{j}; \mathbf{y}, \mathbf{z}]_q$. If $q\in\mathbbm{h}\cap\mathbbm{i}\cap\mathbbm{j}$, then $\mathbf{w}_q\!\neq\! \mathbf{x}_q\!=\![\mathbf{h},\mathbf{j}; \mathbf{y}, \mathbf{z}]_q$. If $q\in\mathbbm{h}\setminus\mathbbm{j}$, then $\mathbf{w}_q\!=\!\mathbf{z}_q\!=\![\mathbf{h},\mathbf{j}; \mathbf{y}, \mathbf{z}]_q$ as $\mathbf{m}\!\in\!\mathbb{P}_{\mathbf{i},\mathbf{j},\mathbf{k}}$. If $q\in[1,n]\setminus\mathbbm{h}$, then $\mathbf{w}_q=\mathbf{y}_q=[\mathbf{h},\mathbf{j}; \mathbf{y}, \mathbf{z}]_q$ as $\mathbf{l}\in\mathbb{P}_{\mathbf{g}, \mathbf{h}, \mathbf{i}}$. Therefore $\mathbf{w}\in[\mathbf{h}, \mathbf{j}; \mathbf{y}, \mathbf{z}]R_{\mathbf{h}\cap\mathbf{i}\cap\mathbf{j}}$. The desired lemma follows.
\end{proof}
\begin{lem}\label{L;Lemma3.20}
Assume that $\mathbf{y}, \mathbf{z}\in\mathbb{X}$, $\mathbf{g}, \mathbf{h}, \mathbf{i}, \mathbf{j}, \mathbf{k}\in\mathbb{E}$,
$\mathbf{x}\in\mathbf{y}R_\mathbf{g}\cap\mathbf{z}R_\mathbf{k}$. Assume that $\mathbf{h}\in\mathbb{P}_{\mathbf{g}, \mathbf{i}}$ and $\mathbf{j}\in\mathbb{P}_{\mathbf{i}, \mathbf{k}}$. Then $[\mathbf{h}, \mathbf{j};\mathbf{y},\mathbf{z}]R_{\mathbf{h}\cap\mathbf{i}\cap\mathbf{j}}\subseteq\mathbf{x}R_\mathbf{i}$.
\end{lem}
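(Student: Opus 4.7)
The plan is to pick an arbitrary $\mathbf{v}\in[\mathbf{h},\mathbf{j};\mathbf{y},\mathbf{z}]R_{\mathbf{h}\cap\mathbf{i}\cap\mathbf{j}}$, set $\mathbf{w}=[\mathbf{h},\mathbf{j};\mathbf{y},\mathbf{z}]$, and verify coordinate-by-coordinate that $\{q:\mathbf{v}_q\neq\mathbf{x}_q\}=\mathbbm{i}$, which by Lemma \ref{L;Lemma2.3} is equivalent to $\mathbf{v}\in\mathbf{x}R_\mathbf{i}$. The definition of $R_{\mathbf{h}\cap\mathbf{i}\cap\mathbf{j}}$ gives $\mathbf{v}_q\neq\mathbf{w}_q$ precisely when $q\in\mathbbm{h}\cap\mathbbm{i}\cap\mathbbm{j}$, while Notation \ref{N;Notation3.18} determines $\mathbf{w}_q$ as $\mathbf{x}_q,\mathbf{z}_q,\mathbf{y}_q$ on $\mathbbm{h}\cap\mathbbm{j}$, $\mathbbm{h}\setminus\mathbbm{j}$, $[1,n]\setminus\mathbbm{h}$ respectively. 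The hypotheses $\mathbf{x}\in\mathbf{y}R_\mathbf{g}$ and $\mathbf{x}\in\mathbf{z}R_\mathbf{k}$ encode $\mathbf{y}_q\neq\mathbf{x}_q\Leftrightarrow q\in\mathbbm{g}$ and $\mathbf{z}_q\neq\mathbf{x}_q\Leftrightarrow q\in\mathbbm{k}$.

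I would then split into four regions of $[1,n]$, using each time the containments coming from $\mathbf{h}\in\mathbb{P}_{\mathbf{g},\mathbf{i}}$ (so $\mathbbm{g}\triangle\mathbbm{i}\subseteq\mathbbm{h}$) and $\mathbf{j}\in\mathbb{P}_{\mathbf{i},\mathbf{k}}$ (so $\mathbbm{i}\triangle\mathbbm{k}\subseteq\mathbbm{j}$). First, if $q\in\mathbbm{h}\cap\mathbbm{i}\cap\mathbbm{j}$, then $\mathbf{v}_q\neq\mathbf{w}_q=\mathbf{x}_q$, as needed. Second, if $q\in(\mathbbm{h}\cap\mathbbm{j})\setminus\mathbbm{i}$, then $\mathbf{v}_q=\mathbf{w}_q=\mathbf{x}_q$, again as needed. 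Third, if $q\in\mathbbm{h}\setminus\mathbbm{j}$, then $q\notin\mathbbm{i}\triangle\mathbbm{k}$, so $q$ lies in $\mathbbm{i}\cap\mathbbm{k}$ or in $[1,n]\setminus(\mathbbm{i}\cup\mathbbm{k})$; in both subcases $\mathbf{v}_q=\mathbf{w}_q=\mathbf{z}_q$ agrees or disagrees with $\mathbf{x}_q$ exactly according to $q\in\mathbbm{i}$. Fourth, if $q\in[1,n]\setminus\mathbbm{h}$, then $q\notin\mathbbm{g}\triangle\mathbbm{i}$, so $q$ lies in $\mathbbm{g}\cap\mathbbm{i}$ or in $[1,n]\setminus(\mathbbm{g}\cup\mathbbm{i})$; in both subcases $\mathbf{v}_q=\mathbf{w}_q=\mathbf{y}_q$ agrees or disagrees with $\mathbf{x}_q$ exactly according to $q\in\mathbbm{i}$.

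Assembling these four cases gives the characterization $\mathbf{v}_q\neq\mathbf{x}_q\Leftrightarrow q\in\mathbbm{i}$, hence $\mathbf{v}\in\mathbf{x}R_\mathbf{i}$. The only real obstacle is the bookkeeping: one must be careful in the third and fourth cases to invoke the correct containment ($\mathbbm{i}\triangle\mathbbm{k}\subseteq\mathbbm{j}$ or $\mathbbm{g}\triangle\mathbbm{i}\subseteq\mathbbm{h}$) to rule out the ``mixed'' possibilities for $q$ that would break the desired equivalence. Everything else is a direct translation between the $\triangle$-type subset conditions defining $\mathbb{P}_{\mathbf{g},\mathbf{i}}$ and $\mathbb{P}_{\mathbf{i},\mathbf{k}}$ and the coordinate-comparison description of the relations $R_\mathbf{g},R_\mathbf{i},R_\mathbf{k}$ in the factorial scheme supplied by Lemma \ref{L;Lemma2.2}.
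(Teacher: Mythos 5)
Your proof is correct and follows essentially the same coordinate-by-coordinate case analysis as the paper's, using the same decomposition of $[1,n]$ into $\mathbbm{h}\cap\mathbbm{j}$, $\mathbbm{h}\setminus\mathbbm{j}$, and $[1,n]\setminus\mathbbm{h}$ and the same two containments $\mathbbm{g}\triangle\mathbbm{i}\subseteq\mathbbm{h}$ and $\mathbbm{i}\triangle\mathbbm{k}\subseteq\mathbbm{j}$ to control the ``mixed'' subcases. The only cosmetic difference is that the paper expands your third and fourth cases into separate listed subcases ($(\mathbbm{h}\cap\mathbbm{k})\setminus\mathbbm{j}$, $\mathbbm{h}\setminus(\mathbbm{j}\cup\mathbbm{k})$, $\mathbbm{g}\setminus\mathbbm{h}$, $[1,n]\setminus(\mathbbm{g}\cup\mathbbm{h})$), and the coordinate description of $R_\mathbf{g}$, $R_\mathbf{i}$, $R_\mathbf{k}$ you use is really just the definition of the direct product and the trivial scheme rather than Lemma~\ref{L;Lemma2.2}; neither affects the validity of the argument.
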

\begin{proof}
Pick $\mathbf{w}\in[\mathbf{h}, \mathbf{j};\mathbf{y},\mathbf{z}]R_{\mathbf{h}\cap\mathbf{i}\cap\mathbf{j}}$.  If $q\in(\mathbbm{h}\cap\mathbbm{j})\setminus\mathbbm{i}$, then $\mathbf{w}_q=[\mathbf{h}, \mathbf{j}; \mathbf{y}, \mathbf{z}]_q=\mathbf{x}_q$. If $q\in\mathbbm{h}\cap\mathbbm{i}\cap\mathbbm{j}$, then $\mathbf{w}_q\neq[\mathbf{h}, \mathbf{j}; \mathbf{y}, \mathbf{z}]_q=\mathbf{x}_q$. As $\mathbf{j}\in\mathbb{P}_{\mathbf{i}, \mathbf{k}}$, it is obvious to see that $\mathbbm{i}\triangle\mathbbm{k}\subseteq\mathbbm{j}$.
If $q\in(\mathbbm{h}\cap\mathbbm{k})\setminus\mathbbm{j}$, then $\mathbf{w}_q=[\mathbf{h}, \mathbf{j}; \mathbf{y},\mathbf{z}]_q=\mathbf{z}_q\neq \mathbf{x}_q$. If $q\in\mathbbm{h}\setminus(\mathbbm{j}\cup\mathbbm{k})$, then $\mathbf{w}_q=[\mathbf{h}, \mathbf{j}; \mathbf{y},\mathbf{z}]_q=\mathbf{z}_q=\mathbf{x}_q$. As $\mathbf{h}\in\mathbb{P}_{\mathbf{g}, \mathbf{i}}$, it is obvious to see that $\mathbbm{g}\triangle\mathbbm{i}\subseteq\mathbbm{h}$.
If $q\in\mathbbm{g}\setminus\mathbbm{h}$, then $\mathbf{w}_q\!=\![\mathbf{h}, \mathbf{j}; \mathbf{y},\mathbf{z}]_q\!=\!\mathbf{y}_q\!\neq\! \mathbf{x}_q$. For the remaining case $q\in[1,n]\setminus(\mathbbm{g}\cup\mathbbm{h})$, $\mathbf{w}_q=[\mathbf{h}, \mathbf{j}; \mathbf{y},\mathbf{z}]_q=\mathbf{y}_q=\mathbf{x}_q$. Therefore $\mathbf{w}\in\mathbf{x}R_{\mathbf{i}}$. The desired lemma follows.
\end{proof}
\begin{lem}\label{L;Lemma3.21}
Assume that $\mathbf{w}, \mathbf{y}, \mathbf{z}\in\mathbb{X}$, $\mathbf{g}, \mathbf{h}, \mathbf{i}, \mathbf{j}, \mathbf{k}\in\mathbb{E}$,
$\mathbf{x}\in\mathbf{y}R_\mathbf{g}\cap\mathbf{z}R_\mathbf{k}$. Assume that $\mathbf{w}\!\in\![\mathbf{h}, \mathbf{j};\mathbf{y},\mathbf{z}]R_{\mathbf{h}\cap\mathbf{i}\cap\mathbf{j}}$, $\mathbf{h}\!\in\!\mathbb{P}_{\mathbf{g}, \mathbf{i}}$, $\mathbf{j}\!\in\!\mathbb{P}_{\mathbf{i}, \mathbf{k}}$. Then there is $\mathbf{l}\!\in\!\mathbb{P}_{\mathbf{g}, \mathbf{h}, \mathbf{i}}$ such that $\mathbf{w}\in\mathbf{y}R_\mathbf{l}$.
\end{lem}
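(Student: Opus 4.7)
The plan is to identify the required $\mathbf{l}$ explicitly and then verify the two set-theoretic containments that place it in $\mathbb{P}_{\mathbf{g},\mathbf{h},\mathbf{i}}$. Since $\{R_\mathbf{a}:\mathbf{a}\in\mathbb{E}\}$ partitions $\mathbb{X}\times\mathbb{X}$, there is a unique $\mathbf{l}\in\mathbb{E}$ with $\mathbf{w}\in\mathbf{y}R_\mathbf{l}$; the factorial-scheme structure together with Lemma \ref{L;Lemma2.3} characterises it as the element satisfying $\mathbbm{l}=\{q\in[1,n]:\mathbf{y}_q\neq\mathbf{w}_q\}$. With this $\mathbf{l}$ fixed, what remains is to check that $\mathbbm{g}\triangle\mathbbm{i}\subseteq\mathbbm{l}\subseteq\mathbbm{h}$.

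For $\mathbbm{l}\subseteq\mathbbm{h}$, I would suppose $q\in[1,n]\setminus\mathbbm{h}$. Then Notation \ref{N;Notation3.18} gives $[\mathbf{h},\mathbf{j};\mathbf{y},\mathbf{z}]_q=\mathbf{y}_q$, and $q\notin\mathbbm{h}\cap\mathbbm{i}\cap\mathbbm{j}$, so the hypothesis $\mathbf{w}\in[\mathbf{h},\mathbf{j};\mathbf{y},\mathbf{z}]R_{\mathbf{h}\cap\mathbf{i}\cap\mathbf{j}}$ forces $\mathbf{w}_q=[\mathbf{h},\mathbf{j};\mathbf{y},\mathbf{z}]_q=\mathbf{y}_q$, hence $q\notin\mathbbm{l}$.

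For $\mathbbm{g}\triangle\mathbbm{i}\subseteq\mathbbm{l}$, I would fix $q\in\mathbbm{g}\triangle\mathbbm{i}$; the containment $\mathbbm{g}\triangle\mathbbm{i}\subseteq\mathbbm{h}$ coming from $\mathbf{h}\in\mathbb{P}_{\mathbf{g},\mathbf{i}}$ gives $q\in\mathbbm{h}$. I would then split into four subcases according to whether $q\in\mathbbm{g}\setminus\mathbbm{i}$ or $q\in\mathbbm{i}\setminus\mathbbm{g}$, and whether $q\in\mathbbm{j}$ or not. In each subcase the explicit description of $[\mathbf{h},\mathbf{j};\mathbf{y},\mathbf{z}]_q$ in Notation \ref{N;Notation3.18} together with $\mathbf{w}\in[\mathbf{h},\mathbf{j};\mathbf{y},\mathbf{z}]R_{\mathbf{h}\cap\mathbf{i}\cap\mathbf{j}}$ pins down $\mathbf{w}_q$, and the equivalences $\mathbf{y}_q\neq\mathbf{x}_q\Leftrightarrow q\in\mathbbm{g}$ and $\mathbf{z}_q\neq\mathbf{x}_q\Leftrightarrow q\in\mathbbm{k}$ supplied by $\mathbf{x}\in\mathbf{y}R_\mathbf{g}\cap\mathbf{z}R_\mathbf{k}$ then yield $\mathbf{y}_q\neq\mathbf{w}_q$, so $q\in\mathbbm{l}$.

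The only subcase requiring an extra step is $q\in\mathbbm{g}\setminus\mathbbm{i}$ with $q\notin\mathbbm{j}$: here $[\mathbf{h},\mathbf{j};\mathbf{y},\mathbf{z}]_q=\mathbf{z}_q$ and $q\notin\mathbbm{h}\cap\mathbbm{i}\cap\mathbbm{j}$, forcing $\mathbf{w}_q=\mathbf{z}_q$, and one must show $\mathbf{y}_q\neq\mathbf{z}_q$. For this I would invoke $\mathbf{j}\in\mathbb{P}_{\mathbf{i},\mathbf{k}}$, which gives $\mathbbm{i}\triangle\mathbbm{k}\subseteq\mathbbm{j}$; combined with $q\notin\mathbbm{i}$ and $q\notin\mathbbm{j}$ this forces $q\notin\mathbbm{k}$, whence $\mathbf{z}_q=\mathbf{x}_q\neq\mathbf{y}_q$. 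The main obstacle is nothing conceptual; it is merely organising the four-way case split cleanly so that each branch invokes exactly the right piece of $\mathbf{h}\in\mathbb{P}_{\mathbf{g},\mathbf{i}}$, $\mathbf{j}\in\mathbb{P}_{\mathbf{i},\mathbf{k}}$, and the scheme-membership of $\mathbf{x}$.
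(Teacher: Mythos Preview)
Your approach is correct and is essentially the paper's argument run in the opposite direction: the paper first writes down an $\mathbf{l}$ with $\mathbbm{l}=(\mathbbm{g}\triangle\mathbbm{i})\cup(\mathbbm{g}\cap\mathbbm{h}\cap\mathbbm{i}\cap\{a:\mathbf{w}_a\neq\mathbf{y}_a\})$, so that $\mathbf{l}\in\mathbb{P}_{\mathbf{g},\mathbf{h},\mathbf{i}}$ is immediate, and then verifies $\mathbf{w}\in\mathbf{y}R_\mathbf{l}$ by the same coordinate-wise case analysis you outline; you instead take $\mathbf{l}$ from $\mathbf{w}\in\mathbf{y}R_\mathbf{l}$ and then verify $\mathbbm{g}\triangle\mathbbm{i}\subseteq\mathbbm{l}\subseteq\mathbbm{h}$.

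One small correction to your commentary: the subcase $q\in\mathbbm{i}\setminus\mathbbm{g}$ with $q\notin\mathbbm{j}$ also requires the ``extra step'' of invoking $\mathbf{j}\in\mathbb{P}_{\mathbf{i},\mathbf{k}}$. There $\mathbf{w}_q=[\mathbf{h},\mathbf{j};\mathbf{y},\mathbf{z}]_q=\mathbf{z}_q$ and $\mathbf{y}_q=\mathbf{x}_q$, so you need $\mathbf{z}_q\neq\mathbf{x}_q$, i.e.\ $q\in\mathbbm{k}$; this follows because $\mathbbm{i}\setminus\mathbbm{k}\subseteq\mathbbm{i}\triangle\mathbbm{k}\subseteq\mathbbm{j}$ and $q\in\mathbbm{i}\setminus\mathbbm{j}$. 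Your final sentence already anticipates that each branch uses some piece of the hypotheses, so this does not affect the validity of the plan.
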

\begin{proof}
By Lemma \ref{L;Lemma2.3}, let $\mathbf{l}\!\in\!\mathbb{E}$ and $\mathbbm{l}\!=\!(\mathbbm{g}\triangle\mathbbm{i})\cup(\mathbbm{g}\cap\mathbbm{h}\cap\mathbbm{i}\cap\{a: a\!\in\! [1, n], \mathbf{w}_a\!\neq\!\mathbf{y}_a\})$. So $\mathbf{l}\in\mathbb{P}_{\mathbf{g}, \mathbf{h}, \mathbf{i}}$ as $\mathbf{h}\!\in\!\mathbb{P}_{\mathbf{g}, \mathbf{i}}$. As $\mathbf{h}\in\mathbb{P}_{\mathbf{g}, \mathbf{i}}$,
notice that $\mathbbm{g}\triangle\mathbbm{i}\subseteq\mathbbm{h}\subseteq\mathbbm{g}\cup\mathbbm{i}$. If
$q\in(\mathbbm{h}\cap\mathbbm{j})\setminus\mathbbm{g}$, then $\mathbf{w}_q\neq[\mathbf{h},\mathbf{j};\mathbf{y}, \mathbf{z}]_q=\mathbf{x}_q=\mathbf{y}_q$. If $q\in(\mathbbm{g}\cap\mathbbm{h}\cap\mathbbm{j})\setminus\mathbbm{i}$,
then $\mathbf{w}_q=[\mathbf{h},\mathbf{j};\mathbf{y}, \mathbf{z}]_q=\mathbf{x}_q\neq \mathbf{y}_q$. If $q\in\mathbbm{g}\cap\mathbbm{h}\cap\mathbbm{i}\cap\mathbbm{j}$, then $\mathbf{w}_q\neq[\mathbf{h},\mathbf{j};\mathbf{y}, \mathbf{z}]_q=\mathbf{x}_q\neq \mathbf{y}_q$. As $\mathbf{j}\in\mathbb{P}_{\mathbf{i}, \mathbf{k}}$, notice that
$\mathbbm{i}\triangle\mathbbm{k}\subseteq\mathbbm{j}$. If $q\!\in\!\mathbbm{h}\setminus(\mathbbm{j}\cup\mathbbm{k})$, then
$\mathbf{w}_q\!=\![\mathbf{h}, \mathbf{j}; \mathbf{y}, \mathbf{z}]_q=\mathbf{z}_q=\mathbf{x}_q\neq\mathbf{y}_q$. If $q\in(\mathbbm{h}\cap\mathbbm{k})\setminus(\mathbbm{g}\cup\mathbbm{j})$, then $\mathbf{w}_q\!\!=\!\![\mathbf{h}, \mathbf{j}; \mathbf{y}, \mathbf{z}]_q\!\!=\!\!\mathbf{z}_q\!\!\neq\!\!\mathbf{x}_q\!\!=\!\!\mathbf{y}_q$. If $q\!\in\!(\mathbbm{g}\!\cap\!\mathbbm{h}\!\cap\!\mathbbm{k})\!\setminus\!\mathbbm{j}$,
then $\mathbf{w}_q\!\!=\!\![\mathbf{h}, \mathbf{j}; \mathbf{y}, \mathbf{z}]_q\!\!=\!\!\mathbf{z}_q\!\!\neq\!\!\mathbf{x}_q\!\!\neq\!\!\mathbf{y}_q$. If
$q\!\in\![1,n]\!\setminus\!\mathbbm{h}$, then $\mathbf{w}_q\!=\![\mathbf{h},\mathbf{j};\mathbf{y},\mathbf{z}]_q\!=\!\mathbf{y}_q$.
So $\mathbf{w}\!\in\!\mathbf{y}R_\mathbf{l}$. The desired lemma follows.
\end{proof}
\begin{lem}\label{L;Lemma3.22}
Assume that $\mathbf{g}, \mathbf{h}, \mathbf{i}, \mathbf{j}, \mathbf{k}\in\mathbb{E}$. Then $[\mathbf{g}, \mathbf{h}, \mathbf{i}, \mathbf{j}, \mathbf{k}]\in\mathbb{P}_{\mathbf{g}, \mathbf{k}}$. Moreover, if $\mathbf{w},\mathbf{y}, \mathbf{z}\in\mathbb{X}$,
$\mathbf{x}\in\mathbf{y}R_\mathbf{g}\cap\mathbf{z}R_\mathbf{k}$, $\mathbf{w}\in [\mathbf{h}, \mathbf{j};\mathbf{y}, \mathbf{z}]R_{\mathbf{h}\cap\mathbf{i}\cap\mathbf{j}}$, $\mathbf{h}\in\mathbb{P}_{\mathbf{g}, \mathbf{i}}$, $\mathbf{j}\in\mathbb{P}_{\mathbf{i}, \mathbf{k}}$, and there is $\mathbf{l}\in\mathbb{P}_{\mathbf{g}, [\mathbf{g}, \mathbf{h}, \mathbf{i}, \mathbf{j}, \mathbf{k}], \mathbf{k}}$ such that $\mathbf{z}\in\mathbf{y}R_\mathbf{l}$, then there is also $\mathbf{m}\in\mathbb{P}_{\mathbf{i}, \mathbf{j}, \mathbf{k}}$ such that $\mathbf{w}\in\mathbf{z}R_\mathbf{m}$.
\end{lem}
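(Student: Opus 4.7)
The first claim, $[\mathbf{g},\mathbf{h},\mathbf{i},\mathbf{j},\mathbf{k}]\in\mathbb{P}_{\mathbf{g},\mathbf{k}}$, is already the first assertion of Lemma~\ref{L;Lemma3.16}, so the substantive task is to produce the required $\mathbf{m}\in\mathbb{P}_{\mathbf{i},\mathbf{j},\mathbf{k}}$. The plan is to define $\mathbf{m}$ as the only natural candidate: by Lemma~\ref{L;Lemma2.3} let $\mathbf{m}\in\mathbb{E}$ be the unique element with $\mathbbm{m}=\{a\in[1,n]:\mathbf{w}_a\neq\mathbf{z}_a\}$. Then $\mathbf{w}\in\mathbf{z}R_\mathbf{m}$ holds by construction (this uses only that each coordinate scheme is trivial, so the $\mathbf{m}_a$-entry in $R_\mathbf{m}$ is determined by whether $\mathbf{w}_a=\mathbf{z}_a$ or not). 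The whole proof therefore reduces to verifying $\mathbbm{i}\triangle\mathbbm{k}\subseteq\mathbbm{m}\subseteq\mathbbm{j}$.

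For the lower containment, I would first invoke Lemma~\ref{L;Lemma3.20} to conclude $\mathbf{w}\in\mathbf{x}R_\mathbf{i}$, so that $\mathbbm{i}=\{a:\mathbf{w}_a\neq\mathbf{x}_a\}$, while $\mathbbm{k}=\{a:\mathbf{z}_a\neq\mathbf{x}_a\}$ from $\mathbf{x}\in\mathbf{z}R_\mathbf{k}$. A coordinate $q\in\mathbbm{i}\setminus\mathbbm{k}$ then satisfies $\mathbf{w}_q\neq\mathbf{x}_q=\mathbf{z}_q$, and symmetrically for $q\in\mathbbm{k}\setminus\mathbbm{i}$; hence $\mathbbm{i}\triangle\mathbbm{k}\subseteq\mathbbm{m}$.

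For the upper containment $\mathbbm{m}\subseteq\mathbbm{j}$ I would argue by contrapositive. Fix $q\notin\mathbbm{j}$; since $\mathbf{j}\in\mathbb{P}_{\mathbf{i},\mathbf{k}}$ gives $\mathbbm{i}\triangle\mathbbm{k}\subseteq\mathbbm{j}$, we have $q\notin\mathbbm{i}\triangle\mathbbm{k}$, so either $q\notin\mathbbm{i}\cup\mathbbm{k}$ (where $\mathbf{w}_q=\mathbf{x}_q=\mathbf{z}_q$ is immediate) or $q\in\mathbbm{i}\cap\mathbbm{k}$. In the latter case $q\notin\mathbbm{h}\cap\mathbbm{i}\cap\mathbbm{j}$, so the hypothesis $\mathbf{w}\in[\mathbf{h},\mathbf{j};\mathbf{y},\mathbf{z}]R_{\mathbf{h}\cap\mathbf{i}\cap\mathbf{j}}$ forces $\mathbf{w}_q=[\mathbf{h},\mathbf{j};\mathbf{y},\mathbf{z}]_q$. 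I would then split on whether $q\in\mathbbm{h}$: if $q\in\mathbbm{h}\setminus\mathbbm{j}$ the definition of $[\mathbf{h},\mathbf{j};\mathbf{y},\mathbf{z}]$ gives $[\mathbf{h},\mathbf{j};\mathbf{y},\mathbf{z}]_q=\mathbf{z}_q$, so $\mathbf{w}_q=\mathbf{z}_q$ at once.

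The delicate sub-case is $q\in(\mathbbm{i}\cap\mathbbm{k})\setminus(\mathbbm{h}\cup\mathbbm{j})$: here $[\mathbf{h},\mathbf{j};\mathbf{y},\mathbf{z}]_q=\mathbf{y}_q$, so one must show $\mathbf{y}_q=\mathbf{z}_q$, and this is precisely where the hypothesis on $\mathbf{l}$ enters (so this is the main obstacle). Observe first that $q\in\mathbbm{i}\setminus\mathbbm{h}$ together with $\mathbbm{g}\triangle\mathbbm{i}\subseteq\mathbbm{h}$ (from $\mathbf{h}\in\mathbb{P}_{\mathbf{g},\mathbf{i}}$) forces $q\in\mathbbm{g}$; thus $q\in\mathbbm{g}\cap\mathbbm{i}\cap\mathbbm{k}$ but $q\notin\mathbbm{h}\cup\mathbbm{j}$. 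Checking each of the three pieces of $\mathbbm{[\mathbf{g},\mathbf{h},\mathbf{i},\mathbf{j},\mathbf{k}]}=(\mathbbm{g}\triangle\mathbbm{k})\cup((\mathbbm{g}\cap\mathbbm{k})^\circ\setminus\mathbbm{i})\cup((\mathbbm{h}\cup\mathbbm{j})\cap(\mathbbm{g}\cap\mathbbm{i}\cap\mathbbm{k})^\circ)$ rules $q$ out of all of them. Since $\mathbf{z}\in\mathbf{y}R_\mathbf{l}$ gives $\mathbbm{l}=\{a:\mathbf{y}_a\neq\mathbf{z}_a\}$ and $\mathbf{l}\in\mathbb{P}_{\mathbf{g},[\mathbf{g},\mathbf{h},\mathbf{i},\mathbf{j},\mathbf{k}],\mathbf{k}}$ yields $\mathbbm{l}\subseteq\mathbbm{[\mathbf{g},\mathbf{h},\mathbf{i},\mathbf{j},\mathbf{k}]}$, we conclude $q\notin\mathbbm{l}$, i.e.\ $\mathbf{y}_q=\mathbf{z}_q$. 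This closes the last sub-case and therefore the proof.
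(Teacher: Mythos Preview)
Your proof is correct and follows essentially the same coordinate-by-coordinate case analysis as the paper, with the same crucial use of the hypothesis $\mathbbm{l}\subseteq\mathbbm{[\mathbf{g},\mathbf{h},\mathbf{i},\mathbf{j},\mathbf{k}]}$ at the case $q\in(\mathbbm{i}\cap\mathbbm{k})\setminus(\mathbbm{h}\cup\mathbbm{j})$. The only organizational difference is that the paper defines $\mathbbm{m}=(\mathbbm{i}\triangle\mathbbm{k})\cup(\mathbbm{i}\cap\mathbbm{j}\cap\mathbbm{k}\cap\{a:\mathbf{w}_a\neq\mathbf{z}_a\})$ up front (so $\mathbf{m}\in\mathbb{P}_{\mathbf{i},\mathbf{j},\mathbf{k}}$ is immediate) and then verifies $\mathbf{w}\in\mathbf{z}R_\mathbf{m}$ over a full partition of $[1,n]$, whereas you take the dual route and in addition invoke Lemma~\ref{L;Lemma3.20} to dispose of the lower containment and several cases at once, which trims the casework.
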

\begin{proof}
By Lemma \ref{L;Lemma2.3}, let $\mathbf{m}\!\in\!\mathbb{E}$ and $\mathbbm{m}\!=\!(\mathbbm{i}\triangle\mathbbm{k})\!\cup\!(\mathbbm{i}\cap\mathbbm{j}\cap\mathbbm{k}\cap\{a: a\!\in\![1, n], \mathbf{w}_a\!\neq\!\mathbf{z}_a\})$. So $\mathbf{m}\in\mathbb{P}_{\mathbf{i}, \mathbf{j}, \mathbf{k}}$ as $\mathbf{j}\in\mathbb{P}_{\mathbf{i}, \mathbf{k}}$. If $q\in(\mathbbm{h}\cap\mathbbm{i}\cap\mathbbm{j})\setminus\mathbbm{k}$, then
$\mathbf{w}_q\neq[\mathbf{h}, \mathbf{j};\mathbf{y}, \mathbf{z}]_q=\mathbf{x}_q=\mathbf{z}_q$. If $q\in\mathbbm{h}\cap\mathbbm{i}\cap\mathbbm{j}\cap\mathbbm{k}$, then $\mathbf{w}_q\neq[\mathbf{h},\mathbf{j};\mathbf{y},\mathbf{z}]_q=\mathbf{x}_q\neq \mathbf{z}_q$. As $\mathbf{j}\in\mathbb{P}_{\mathbf{i}, \mathbf{k}}$, it is obvious to see that $\mathbbm{i}\triangle\mathbbm{k}\subseteq\mathbbm{j}\subseteq\mathbbm{i}\cup\mathbbm{k}$. If $q\in(\mathbbm{h}\cap\mathbbm{j})\!\setminus\!\mathbbm{i}$, then $\mathbf{w}_q\!=\![\mathbf{h},\mathbf{j};\mathbf{y}, \mathbf{z}]_q=\mathbf{x}_q\!\neq\!\mathbf{z}_q$. If $q\in\mathbbm{h}\setminus\mathbbm{j}$, then $\mathbf{w}_q=[\mathbf{h}, \mathbf{j}; \mathbf{y}, \mathbf{z}]_q=\mathbf{z}_q$. As $\mathbf{h}\in\mathbb{P}_{\mathbf{g}, \mathbf{i}}$, it is obvious to see that $\mathbbm{g}\triangle\mathbbm{i}\subseteq\mathbbm{h}\subseteq\mathbbm{g}\cup\mathbbm{i}$. As $[\mathbf{g}, \mathbf{h}, \mathbf{i}, \mathbf{j}, \mathbf{k}]\in\mathbb{P}_{\mathbf{g}, \mathbf{k}}$ and $\mathbf{l}\in\mathbb{P}_{\mathbf{g},[\mathbf{g}, \mathbf{h}, \mathbf{i}, \mathbf{j}, \mathbf{k}], \mathbf{k}}$ by Lemma \ref{L;Lemma3.16}, it is obvious to see that $\mathbbm{g}\triangle\mathbbm{k}\subseteq\mathbbm{l}\subseteq(\mathbbm{g}\triangle
\mathbbm{k})\cup((\mathbbm{g}\cap\mathbbm{k})^\circ\setminus\mathbbm{i})\cup((\mathbbm{h}\cup\mathbbm{j})
\cap(\mathbbm{g}\cap\mathbbm{i}\cap\mathbbm{k})^\circ)$. If $q\in\mathbbm{g}\setminus(\mathbbm{h}\cup\mathbbm{k})$, then $\mathbf{w}_q=[\mathbf{h},\mathbf{j};\mathbf{y},\mathbf{z}]_q=\mathbf{y}_q\neq\mathbf{z}_q$. If $q\in(\mathbbm{g}\cap\mathbbm{k})\setminus(\mathbbm{h}\cup\mathbbm{j})$, then $\mathbf{w}_q=[\mathbf{h}, \mathbf{j};\mathbf{y}, \mathbf{z}]_q=\mathbf{y}_q=\mathbf{z}_q$. If $q\in(\mathbbm{g}\cap\mathbbm{j}\cap\mathbbm{k})\setminus\mathbbm{h}$, then
$\mathbf{w}_q=[\mathbf{h},\mathbf{j};\mathbf{y},\mathbf{z}]_q=\mathbf{y}_q\neq\mathbf{x}_q\neq\mathbf{z}_q$. If $q\in\mathbbm{k}\setminus(\mathbbm{g}\cup\mathbbm{h})$, then $\mathbf{w}_q=[\mathbf{h},\mathbf{j};\mathbf{y}, \mathbf{z}]_q=\mathbf{y}_q\neq\mathbf{z}_q$. For the remaining case $q\in [1,n]\setminus(\mathbbm{g}\cup\mathbbm{h}\cup\mathbbm{k})$,
$\mathbf{w}_q=[\mathbf{h}, \mathbf{j};\mathbf{y}, \mathbf{z}]_q=\mathbf{y}_q=\mathbf{z}_q$. Therefore $\mathbf{w}\in\mathbf{z}R_\mathbf{m}$. The desired lemma follows.
\end{proof}
We are now ready to list the structure constants of $\mathbb{B}_2$ in $\mathbb{T}$ and another corollary.
\begin{thm}\label{T;Theorem3.23}
Assume that $\mathbf{g}, \mathbf{h}, \mathbf{i}, \mathbf{j}, \mathbf{k}, \mathbf{l}, \mathbf{m},\mathbf{q}, \mathbf{r}\!\in\!\mathbb{E}$, $\mathbf{h}\in\mathbb{P}_{\mathbf{g}, \mathbf{i}}$, $\mathbf{k}\!\in\!\mathbb{P}_{\mathbf{j}, \mathbf{l}}$, $\mathbf{q}\in\mathbb{P}_{\mathbf{m}, \mathbf{r}}$. Then
$$[\mathbf{g}, \mathbf{h}, \mathbf{i}, \mathbf{k}, \mathbf{l}]\in\mathbb{P}_{\mathbf{g}, \mathbf{l}}\ \text{and}\ c_{\mathbf{m}, \mathbf{q}, \mathbf{r}}(B_{\mathbf{g}, \mathbf{h}, \mathbf{i}}B_{\mathbf{j},\mathbf{k},\mathbf{l}})=\delta_{\mathbf{i}, \mathbf{j}}\delta_{\mathbf{m},\mathbf{g}}\delta_{\mathbf{q}, [\mathbf{g}, \mathbf{h}, \mathbf{i}, \mathbf{k}, \mathbf{l}]}\delta_{\mathbf{r},\mathbf{l}}\overline{k_{\mathbf{h}\cap\mathbf{i}\cap\mathbf{k}}}.$$
\end{thm}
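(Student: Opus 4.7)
The statement decomposes into the membership $[\mathbf{g},\mathbf{h},\mathbf{i},\mathbf{k},\mathbf{l}] \in \mathbb{P}_{\mathbf{g},\mathbf{l}}$, which is immediate from Lemma \ref{L;Lemma3.16}, and the coefficient formula itself. My plan is to prove the closed form
\[B_{\mathbf{g},\mathbf{h},\mathbf{i}}B_{\mathbf{i},\mathbf{k},\mathbf{l}} \;=\; \overline{k_{\mathbf{h}\cap\mathbf{i}\cap\mathbf{k}}}\,B_{\mathbf{g},[\mathbf{g},\mathbf{h},\mathbf{i},\mathbf{k},\mathbf{l}],\mathbf{l}}\]
in $\mathbb{T}$ (when $\mathbf{i} = \mathbf{j}$; otherwise the product vanishes) and then read the coefficient directly from the uniqueness of $\mathbb{B}_2$-expansions.

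When $\mathbf{i} \neq \mathbf{j}$, Equations \eqref{Eq;4} and \eqref{Eq;2} sandwich the product between $E_\mathbf{i}^*E_\mathbf{j}^* = O$, so $B_{\mathbf{g},\mathbf{h},\mathbf{i}}B_{\mathbf{j},\mathbf{k},\mathbf{l}} = O$; this supplies the $\delta_{\mathbf{i},\mathbf{j}}$ factor. For $\mathbf{i} = \mathbf{j}$, Lemma \ref{L;Lemma3.17} already pins the $\mathbb{B}_1$-support of the product inside $\{E_\mathbf{g}^*A_{\mathbf{q}_1}E_\mathbf{l}^* : \mathbf{q}_1 \in \mathbb{P}_{\mathbf{g},[\mathbf{g},\mathbf{h},\mathbf{i},\mathbf{k},\mathbf{l}],\mathbf{l}}\}$, reducing the task to evaluating $c^{\mathbf{g},\mathbf{q}_1,\mathbf{l}}(B_{\mathbf{g},\mathbf{h},\mathbf{i}}B_{\mathbf{i},\mathbf{k},\mathbf{l}})$ for each such $\mathbf{q}_1$.

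Expanding both $B$'s via Equation \eqref{Eq;4} and applying Lemma \ref{L;Lemma3.7} term by term converts this coefficient into $\overline{\sum_{\mathbf{l}_1,\mathbf{m}_1}p_{\mathbf{l}_1,\mathbf{i},\mathbf{m}_1}^{\mathbf{g},\mathbf{q}_1,\mathbf{l}}}$, summed over $\mathbf{l}_1 \in \mathbb{P}_{\mathbf{g},\mathbf{h},\mathbf{i}}$ and $\mathbf{m}_1 \in \mathbb{P}_{\mathbf{i},\mathbf{k},\mathbf{l}}$. Fixing witnesses $\mathbf{y},\mathbf{z}$ with $\mathbf{x} \in \mathbf{y}R_\mathbf{g}\cap\mathbf{z}R_\mathbf{l}$ and $\mathbf{z} \in \mathbf{y}R_{\mathbf{q}_1}$, each summand equals $|\mathbf{y}R_{\mathbf{l}_1}\cap\mathbf{x}R_\mathbf{i}\cap\mathbf{z}R_{\mathbf{m}_1}|$, and the disjointness of the pieces as $(\mathbf{l}_1,\mathbf{m}_1)$ vary collapses the double sum into the cardinality of a single set $W$. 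Lemma \ref{L;Lemma3.19} gives $W \subseteq [\mathbf{h},\mathbf{k};\mathbf{y},\mathbf{z}]R_{\mathbf{h}\cap\mathbf{i}\cap\mathbf{k}}$, while Lemmas \ref{L;Lemma3.20}, \ref{L;Lemma3.21}, and \ref{L;Lemma3.22} (the last invoking the assumption $\mathbf{q}_1 \in \mathbb{P}_{\mathbf{g},[\mathbf{g},\mathbf{h},\mathbf{i},\mathbf{k},\mathbf{l}],\mathbf{l}}$) deliver the reverse containment. Hence $|W| = k_{\mathbf{h}\cap\mathbf{i}\cap\mathbf{k}}$ uniformly in $\mathbf{q}_1$, which establishes the displayed identity.

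Theorem \ref{T;Theorem3.13} and Lemma \ref{L;Lemma3.11} then show that the $\mathbb{B}_2$-expansion of a scalar multiple of $B_{\mathbf{g},[\mathbf{g},\mathbf{h},\mathbf{i},\mathbf{k},\mathbf{l}],\mathbf{l}}$ is concentrated at the unique triple $(\mathbf{g},[\mathbf{g},\mathbf{h},\mathbf{i},\mathbf{k},\mathbf{l}],\mathbf{l})$, producing the product of Kronecker deltas together with the scalar $\overline{k_{\mathbf{h}\cap\mathbf{i}\cap\mathbf{k}}}$. The main obstacle is the assembly step showing that the double union over $(\mathbf{l}_1,\mathbf{m}_1)$ exhausts $[\mathbf{h},\mathbf{k};\mathbf{y},\mathbf{z}]R_{\mathbf{h}\cap\mathbf{i}\cap\mathbf{k}}$ without waste---this is exactly what the battery of Lemmas \ref{L;Lemma3.19}--\ref{L;Lemma3.22} was tailored for, so once they are in hand the final computation is routine.
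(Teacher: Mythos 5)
Your proposal follows the paper's proof essentially step for step: Lemma~\ref{L;Lemma3.16} for the membership claim, vanishing via Equations~\eqref{Eq;4} and~\eqref{Eq;2} when $\mathbf{i}\neq\mathbf{j}$, Lemma~\ref{L;Lemma3.17} to localize the $\mathbb{B}_1$-support, Lemma~\ref{L;Lemma3.7} to convert coefficients into intersection numbers, the witness points $\mathbf{y},\mathbf{z}$ together with Lemmas~\ref{L;Lemma3.19}--\ref{L;Lemma3.22} to identify the double sum with $k_{\mathbf{h}\cap\mathbf{i}\cap\mathbf{k}}$, and Theorem~\ref{T;Theorem3.13} to read off the $\mathbb{B}_2$-coefficient. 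This is the same argument as the paper's, and it is correct.
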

\begin{proof}
Notice that $[\mathbf{g}, \mathbf{h}, \mathbf{i}, \mathbf{k}, \mathbf{l}]\in\mathbb{P}_{\mathbf{g}, \mathbf{l}}$ by Lemma \ref{L;Lemma3.16}. Assume that $\mathbf{i}=\mathbf{j}, \mathbf{m}\!=\!\mathbf{g}, \mathbf{r}\!=\!\mathbf{l}$ by combining Equations \eqref{Eq;4}, \eqref{Eq;2}, Theorem \ref{T;Theorem3.13}. Assume that $\mathbf{q}=[\mathbf{g}, \mathbf{h}, \mathbf{i},\mathbf{k}, \mathbf{l}]$ and $\mathbf{s}\in\mathbb{P}_{\mathbf{g}, \mathbf{q}, \mathbf{l}}$. Then there are $\mathbf{y}, \mathbf{z}\in\mathbb{X}$ such that $\mathbf{x}\in\mathbf{y}R_\mathbf{g}\cap\mathbf{z}R_\mathbf{l}$ and $\mathbf{y}\!\in\!\mathbf{z}R_\mathbf{s}$. Hence 
$$
c^{\mathbf{g}, \mathbf{s}, \mathbf{l}}(B_{\mathbf{g}, \mathbf{h}, \mathbf{i}}B_{\mathbf{i}, \mathbf{k}, \mathbf{l}})=\sum_{\mathbf{t}\in\mathbb{P}_{\mathbf{g}, \mathbf{h}, \mathbf{i}}}\sum_{\mathbf{u}\in\mathbb{P}_{\mathbf{i}, \mathbf{k}, \mathbf{l}}}\overline{p_{\mathbf{t},\mathbf{i},\mathbf{u}}^{\mathbf{g}, \mathbf{s}, \mathbf{l}}}=\sum_{\mathbf{t}\in\mathbb{P}_{\mathbf{g}, \mathbf{h}, \mathbf{i}}}\sum_{\mathbf{u}\in\mathbb{P}_{\mathbf{i}, \mathbf{k}, \mathbf{l}}}\overline{|\mathbf{y}R_\mathbf{t}\cap\mathbf{x}R_\mathbf{i}\cap\mathbf{z}R_\mathbf{u}|}=
\overline{k_{\mathbf{h}\cap\mathbf{i}\cap\mathbf{k}}}
$$
by combining Lemmas \ref{L;Lemma3.7}, \ref{L;Lemma3.19}, \ref{L;Lemma3.20}, \ref{L;Lemma3.21}, \ref{L;Lemma3.22}. By Lemma \ref{L;Lemma3.17} and Equation \eqref{Eq;4}, $B_{\mathbf{g},\mathbf{h}, \mathbf{i}}B_{\mathbf{i}, \mathbf{k}, \mathbf{l}}\!\!=\!\!\overline{k_{\mathbf{h}\cap\mathbf{i}\cap\mathbf{k}}}B_{\mathbf{g}, [\mathbf{g}, \mathbf{h}, \mathbf{i}, \mathbf{k}, \mathbf{l}], \mathbf{l}}$. The desired lemma follows from Theorem \ref{T;Theorem3.13}.
\end{proof}
\begin{cor}\label{C;Corollary3.24}
Assume that $\mathbf{g}, \mathbf{h}, \mathbf{i}, \mathbf{j}\!\in\!\mathbb{E}$. Then the commutative $\F$-subalgebra $E_\mathbf{g}^*\mathbb{T}E_\mathbf{g}^*$ of $\mathbb{T}$ has an $\F$-basis $\{B_{\mathbf{g}, \mathbf{a}, \mathbf{g}}:\  (\mathbf{g}, \mathbf{a}, \mathbf{g})\in\mathbb{P}\}$. Moreover, if $\mathbf{h}, \mathbf{i}, \mathbf{j}\in\mathbb{P}_{\mathbf{g}, \mathbf{g}}$, then
$$\mathbf{h}\cup\mathbf{i}\in\mathbb{P}_{\mathbf{g}, \mathbf{g}}\ \text{and}\ c_{\mathbf{g}, \mathbf{j}, \mathbf{g}}(B_{\mathbf{g}, \mathbf{h}, \mathbf{g}}B_{\mathbf{g}, \mathbf{i}, \mathbf{g}})=\delta_{\mathbf{j}, \mathbf{h}\cup\mathbf{i}}\overline{k_{\mathbf{h}\cap\mathbf{i}}}.$$
\end{cor}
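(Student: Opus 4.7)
The plan is to deduce Corollary \ref{C;Corollary3.24} directly from Lemma \ref{L;Lemma2.14}, Theorem \ref{T;Theorem3.13}, and Theorem \ref{T;Theorem3.23}; the only substantive work is simplifying the operator $[\cdot,\cdot,\cdot,\cdot,\cdot]$ of Notation \ref{N;Notation3.15} in the degenerate case in which its first, third, and fifth arguments all equal $\mathbf{g}$.

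For the first assertion, I would note that $\mathfrak{S}$ is a symmetric triply regular scheme by Lemma \ref{L;Lemma2.4}. Hence Lemma \ref{L;Lemma2.14} yields that $E_\mathbf{g}^*\mathbb{T}E_\mathbf{g}^*$ is a commutative $\F$-subalgebra of $\mathbb{T}$ of $\F$-dimension $|\{\mathbf{a}\in\mathbb{E}: p_{\mathbf{g},\mathbf{g}}^\mathbf{a}\neq 0\}|$, which by Notation \ref{N;Notation3.1} coincides with $|\{\mathbf{a}: (\mathbf{g},\mathbf{a},\mathbf{g})\in\mathbb{P}\}|$. I would then observe that $\{B_{\mathbf{g},\mathbf{a},\mathbf{g}}: (\mathbf{g},\mathbf{a},\mathbf{g})\in\mathbb{P}\}$ is $\F$-linearly independent as a subset of the basis $\mathbb{B}_2$ from Theorem \ref{T;Theorem3.13}, and that each $B_{\mathbf{g},\mathbf{a},\mathbf{g}}$ lies in $E_\mathbf{g}^*\mathbb{T}E_\mathbf{g}^*$ because Equations \eqref{Eq;4} and \eqref{Eq;2} yield $E_\mathbf{g}^*B_{\mathbf{g},\mathbf{a},\mathbf{g}}E_\mathbf{g}^*=B_{\mathbf{g},\mathbf{a},\mathbf{g}}$. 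Matching the cardinality with the $\F$-dimension furnishes the desired basis.

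For the product formula, I would apply Theorem \ref{T;Theorem3.23} by replacing its tuple $(\mathbf{g},\mathbf{h},\mathbf{i},\mathbf{j},\mathbf{k},\mathbf{l})$ with $(\mathbf{g},\mathbf{h},\mathbf{g},\mathbf{g},\mathbf{i},\mathbf{g})$ and its tuple $(\mathbf{m},\mathbf{q},\mathbf{r})$ with $(\mathbf{g},\mathbf{j},\mathbf{g})$; the three preconditions reduce to $\mathbf{h},\mathbf{i},\mathbf{j}\in\mathbb{P}_{\mathbf{g},\mathbf{g}}$, which are exactly the hypotheses of the moreover clause. The key simplification is the following: because $\mathbf{h},\mathbf{i}\in\mathbb{P}_{\mathbf{g},\mathbf{g}}$ forces $\mathbbm{h},\mathbbm{i}\subseteq\mathbbm{g}^\circ$ via Lemma \ref{L;Lemma2.5}, we immediately get $\mathbbm{h}\cup\mathbbm{i}\subseteq\mathbbm{g}^\circ$, so $\mathbf{h}\cup\mathbf{i}\in\mathbb{P}_{\mathbf{g},\mathbf{g}}$. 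Unwinding Notation \ref{N;Notation3.15}, the summands $\mathbbm{g}\triangle\mathbbm{g}$ and $\mathbbm{g}^\circ\setminus\mathbbm{g}$ vanish, so $[\mathbf{g},\mathbf{h},\mathbf{g},\mathbf{i},\mathbf{g}]$ has support $(\mathbbm{h}\cup\mathbbm{i})\cap\mathbbm{g}^\circ=\mathbbm{h}\cup\mathbbm{i}$; similarly $\mathbbm{h}\cap\mathbbm{g}\cap\mathbbm{i}=\mathbbm{h}\cap\mathbbm{i}$. Substituting back into Theorem \ref{T;Theorem3.23} yields $c_{\mathbf{g},\mathbf{j},\mathbf{g}}(B_{\mathbf{g},\mathbf{h},\mathbf{g}}B_{\mathbf{g},\mathbf{i},\mathbf{g}})=\delta_{\mathbf{j},\mathbf{h}\cup\mathbf{i}}\overline{k_{\mathbf{h}\cap\mathbf{i}}}$.

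The main obstacle is purely notational: carefully aligning the symbols of Theorem \ref{T;Theorem3.23} with those of the corollary and verifying the two set-theoretic identities above from the containments $\mathbbm{h},\mathbbm{i}\subseteq\mathbbm{g}^\circ$. No new ideas are required beyond invoking the earlier results.
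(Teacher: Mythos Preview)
Your proposal is correct and follows essentially the same route as the paper's proof: both obtain the basis from Lemmas \ref{L;Lemma2.4}, \ref{L;Lemma2.14}, Theorem \ref{T;Theorem3.13}, and Equations \eqref{Eq;4}, \eqref{Eq;2}, then specialize Theorem \ref{T;Theorem3.23} by computing $[\mathbf{g},\mathbf{h},\mathbf{g},\mathbf{i},\mathbf{g}]=\mathbf{h}\cup\mathbf{i}$ and $\mathbf{h}\cap\mathbf{g}\cap\mathbf{i}=\mathbf{h}\cap\mathbf{i}$ from $\mathbbm{h},\mathbbm{i}\subseteq\mathbbm{g}^\circ$. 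The only cosmetic difference is that the paper cites Lemma \ref{L;Lemma3.16} for $\mathbf{h}\cup\mathbf{i}\in\mathbb{P}_{\mathbf{g},\mathbf{g}}$ whereas you verify this directly from the containment $\mathbbm{h}\cup\mathbbm{i}\subseteq\mathbbm{g}^\circ$.
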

\begin{proof}
The first statement follows from combining Lemmas \ref{L;Lemma2.4}, \ref{L;Lemma2.14}, Theorem \ref{T;Theorem3.13}, Equations \eqref{Eq;4}, and \eqref{Eq;2}. Notice that $[\mathbf{g}, \mathbf{h}, \mathbf{g},\mathbf{i},\mathbf{g}]=\mathbf{h}\cup\mathbf{i}$ by Lemma \ref{L;Lemma2.3}. Therefore $\mathbf{h}\cup\mathbf{i}\in\mathbb{P}_{\mathbf{g},\mathbf{g}}$ by Lemma \ref{L;Lemma3.16}.
As $\mathbf{h}, \mathbf{i}\in\mathbb{P}_{\mathbf{g}, \mathbf{g}}$, notice that $\mathbf{h}\cap\mathbf{g}\cap\mathbf{i}=\mathbf{h}\cap\mathbf{i}$ by Lemma \ref{L;Lemma2.3}. The desired corollary
follows from the above discussion and Theorem \ref{T;Theorem3.23}.
\end{proof}
We conclude this section by presenting an example of Theorems \ref{T;Theorem3.13} and \ref{T;Theorem3.23}.
\begin{eg}\label{E;Example3.25}
Assume that $n=|\mathbb{U}_1|=2$ and $|\mathbb{U}_2|=3$. It is clear that $|\mathbb{E}|=4$. Assume that $\mathbf{g}=(0, 1)$ and $\mathbf{h}=(1,0)$. Hence $\mathbb{E}=\{\mathbf{0}, \mathbf{g}, \mathbf{h}, \mathbf{1}\}$
and $\mathbb{T}$ has an $\F$-basis containing exactly $B_{\mathbf{0},\mathbf{0},\mathbf{0}}$, $B_{\mathbf{0},\mathbf{g},\mathbf{g}}$, $B_{\mathbf{0},\mathbf{h},\mathbf{h}}$, $B_{\mathbf{0},\mathbf{1},\mathbf{1}}$, $B_{\mathbf{g},\mathbf{0},\mathbf{g}}$, $B_{\mathbf{g},\mathbf{g},\mathbf{0}}$, $B_{\mathbf{g},\mathbf{g},\mathbf{g}}$, $B_{\mathbf{g},\mathbf{h},\mathbf{1}}$, $B_{\mathbf{g},\mathbf{1},\mathbf{h}}$, $B_{\mathbf{g},\mathbf{1},\mathbf{1}}$, $B_{\mathbf{h},\mathbf{0},\mathbf{h}}$, $B_{\mathbf{h},\mathbf{g},\mathbf{1}}$, $B_{\mathbf{h},\mathbf{h},\mathbf{0}}$, $B_{\mathbf{h},\mathbf{1},\mathbf{g}}$, $B_{\mathbf{1},\mathbf{0},\mathbf{1}}$, $B_{\mathbf{1},\mathbf{g},\mathbf{h}}$, $B_{\mathbf{1},\mathbf{g},\mathbf{1}}$, $B_{\mathbf{1},\mathbf{h},\mathbf{g}}$, $B_{\mathbf{1},\mathbf{1},\mathbf{0}}$,
$B_{\mathbf{1},\mathbf{1},\mathbf{g}}$ by Theorem \ref{T;Theorem3.13}. According to Theorem \ref{T;Theorem3.23}, it is obvious that $B_{\mathbf{g},\mathbf{1}, \mathbf{1}}B_{\mathbf{1},\mathbf{g}, \mathbf{1}}=\overline{2}B_{\mathbf{g},\mathbf{1}, \mathbf{1}}$.
\end{eg}
\section{Algebraic structure of $\mathbb{T}$: Center}
In this section, we present an $\F$-basis of $\mathrm{Z}(\mathbb{T})$ and the structure constants of this $\F$-basis in $\mathrm{Z}(\mathbb{T})$. We first recall Notations \ref{N;Notation3.1}, \ref{N;Notation3.9}, \ref{N;Notation3.14}, \ref{N;Notation3.15} and give two notations.
\begin{nota}\label{N;Notation4.1}
Assume that $\mathbf{g}, \mathbf{h}, \mathbf{i}\in\mathbb{E}$. If $\mathbbm{i}=\mathbbm{g}\setminus\mathbbm{h}$, set $\mathbf{g}\setminus\mathbf{h}=\mathbf{i}$ by Lemma \ref{L;Lemma2.3}. If $\mathbbm{i}=\mathbbm{g}\triangle\mathbbm{h}$, set $\mathbf{g}\triangle\mathbf{h}=\mathbf{i}$ by Lemma \ref{L;Lemma2.3}. So $\mathbf{g}\triangle \mathbf{h}=\mathbf{h}\triangle \mathbf{g}\in\mathbb{P}_{\mathbf{g}, \mathbf{h}}=\mathbb{P}_{\mathbf{h}, \mathbf{g}}$. For example, if $n=|\mathbb{U}_1|=2$, $|\mathbb{U}_2|=3$, $\mathbf{g}=(0,1)$, and $\mathbf{h}=(1,0)$, then  $\mathbf{1}\setminus\mathbf{g}=\mathbf{g}\triangle\mathbf{1}=\mathbf{h}\setminus\mathbf{0}=\mathbf{h}$.
\end{nota}
\begin{nota}\label{N;Notation4.2}
Assume that $\mathbf{g}\in\mathbb{E}$ and $\mathbf{g}\in\mathbb{P}_{\mathbf{1}, \mathbf{1}}$. Then $\mathbf{g}\cap\mathbf{h}\in\mathbb{P}_{\mathbf{h}, \mathbf{h}}$ if $\mathbf{h}\in\mathbb{E}$. Write
\begin{align}\label{Eq;5}
C_\mathbf{g}=\sum_{\mathbf{h}\in\mathbb{E}}\overline{k_{\mathbf{g}\setminus\mathbf{h}}}B_{\mathbf{h},\mathbf{g}\cap\mathbf{h},\mathbf{h}}.
\end{align}
\end{nota}
The following lemmas investigate the objects contained in Notations \ref{N;Notation4.1} and \ref{N;Notation4.2}.
\begin{lem}\label{L;Lemma4.3}
Assume that $\mathbf{g}, \mathbf{h}, \mathbf{i}, \mathbf{j}\in\mathbb{E}$ and $\mathbf{g}\triangle\mathbf{i}\preceq\mathbf{h}$. Then $k_{\mathbf{j}\setminus\mathbf{i}}k_{\mathbf{h}\cap\mathbf{i}\cap\mathbf{j}}=k_{\mathbf{j}\setminus\mathbf{g}}k_{\mathbf{g}\cap\mathbf{h}\cap\mathbf{j}}$.
\end{lem}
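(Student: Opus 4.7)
The plan is to reduce the numerical identity to a set-theoretic equality by invoking the factorisation $k_\mathbf{g} = \prod_{q \in \mathbbm{g}}(|\mathbb{U}_q|-1)$ recorded in the discussion following Lemma \ref{L;Lemma2.2}. Since this product factorises over disjoint unions of the underlying index sets, the first step is to observe that the two pairs
$$(\mathbbm{j}\setminus\mathbbm{i},\ \mathbbm{h}\cap\mathbbm{i}\cap\mathbbm{j}) \quad \text{and} \quad (\mathbbm{j}\setminus\mathbbm{g},\ \mathbbm{g}\cap\mathbbm{h}\cap\mathbbm{j})$$
are each pairwise disjoint. Setting $L = (\mathbbm{j}\setminus\mathbbm{i}) \cup (\mathbbm{h}\cap\mathbbm{i}\cap\mathbbm{j})$ and $R = (\mathbbm{j}\setminus\mathbbm{g}) \cup (\mathbbm{g}\cap\mathbbm{h}\cap\mathbbm{j})$, both sides of the claimed identity collapse to products of $(|\mathbb{U}_q|-1)$ over $L$ and $R$ respectively, so the identity is equivalent to $L = R$.

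Next I would establish $L=R$. Since $L, R \subseteq \mathbbm{j}$, it suffices to fix $q \in \mathbbm{j}$ and show $q \in L$ if and only if $q \in R$. Unfolding the definitions gives
$$q \in L \iff q \notin \mathbbm{i}\ \text{or}\ q \in \mathbbm{h}, \qquad q \in R \iff q \notin \mathbbm{g}\ \text{or}\ q \in \mathbbm{h}.$$
If $q \in \mathbbm{h}$ both conditions hold automatically, so the only content is the case $q \notin \mathbbm{h}$: there $q \in L$ iff $q \notin \mathbbm{i}$ and $q \in R$ iff $q \notin \mathbbm{g}$, and these two conditions are equivalent precisely when $q \notin \mathbbm{g}\triangle\mathbbm{i}$. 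But the hypothesis $\mathbbm{g}\triangle\mathbbm{i} \subseteq \mathbbm{h}$ is exactly the contrapositive statement that $q \notin \mathbbm{h}$ forces $q \notin \mathbbm{g}\triangle\mathbbm{i}$, which concludes the argument.

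The main obstacle is essentially pure bookkeeping; the key conceptual point is recognising that the hypothesis only constrains the coordinates lying in $\mathbbm{j}$ but outside $\mathbbm{h}$, which is precisely where $L$ and $R$ need to agree, with the remaining coordinates either contributing trivially (outside $\mathbbm{j}$) or to both sides simultaneously (inside $\mathbbm{h} \cap \mathbbm{j}$).
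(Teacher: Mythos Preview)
Your proof is correct and follows essentially the same approach as the paper: both recall the product formula $k_\mathbf{g}=\prod_{q\in\mathbbm{g}}(|\mathbb{U}_q|-1)$, observe that each pair of index sets is disjoint, and reduce to the set-theoretic equality $(\mathbbm{j}\setminus\mathbbm{i})\cup(\mathbbm{h}\cap\mathbbm{i}\cap\mathbbm{j})=(\mathbbm{j}\setminus\mathbbm{g})\cup(\mathbbm{g}\cap\mathbbm{h}\cap\mathbbm{j})$. The only stylistic difference is that the paper verifies this equality by decomposing both sides into a common refinement of four pieces, whereas you argue element-wise; both are routine and yield the same result.
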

\begin{proof}
Recall that $k_\mathbf{0}=1$ and $k_\mathbf{g}=\prod_{k\in\mathbbm{g}}(|\mathbb{U}_k|-1)$ if $\mathbf{g}\in\mathbb{E}\setminus\{\mathbf{0}\}$. It is obvious that  $(\mathbbm{j}\setminus\mathbbm{i})\cap(\mathbbm{h}\cap\mathbbm{i}\cap\mathbbm{j})=
(\mathbbm{j}\setminus\mathbbm{g})\cap(\mathbbm{g}\cap\mathbbm{h}\cap\mathbbm{j})=\varnothing$. Check
$(\mathbbm{j}\setminus\mathbbm{i})\cup(\mathbbm{h}\cap\mathbbm{i}\cap\mathbbm{j})=
(\mathbbm{j}\setminus\mathbbm{g})\cup(\mathbbm{g}\cap\mathbbm{h}\cap\mathbbm{j})$. As $\mathbbm{g}\triangle\mathbbm{i}\subseteq\mathbbm{h}$, $(\mathbbm{j}\setminus\mathbbm{i})\cup(\mathbbm{h}\cap\mathbbm{i}\cap\mathbbm{j})=(\mathbbm{j}\setminus(\mathbbm{g}\cup\mathbbm{i}))\cup
((\mathbbm{g}\cap\mathbbm{j})\setminus\mathbbm{i})\cup((\mathbbm{i}\cap\mathbbm{j})\setminus\mathbbm{g})\cup
(\mathbbm{g}\cap\mathbbm{h}\cap\mathbbm{i}\cap\mathbbm{j})$. Hence $(\mathbbm{j}\setminus\mathbbm{i})\cup(\mathbbm{h}\cap\mathbbm{i}\cap\mathbbm{j})=(\mathbbm{j}\setminus\mathbbm{g})\cup
((\mathbbm{g}\cap\mathbbm{j})\setminus\mathbbm{i})\cup(\mathbbm{g}\cap\mathbbm{h}\cap\mathbbm{i}\cap\mathbbm{j})=
(\mathbbm{j}\setminus\mathbbm{g})\cup(\mathbbm{g}\cap\mathbbm{h}\cap\mathbbm{j})$ as $\mathbf{g}\triangle\mathbf{i}\preceq\mathbf{h}$. The desired lemma follows from the above discussion and Lemma \ref{L;Lemma2.3}.
\end{proof}
\begin{lem}\label{L;Lemma4.4}
Assume that $\mathbf{g}, \mathbf{h}, \mathbf{i}, \mathbf{j}\in\mathbb{E}$. Then $[\mathbf{g}, \mathbf{h}, \mathbf{i}, \mathbf{i}\cap\mathbf{j}, \mathbf{i}]=[\mathbf{g},\mathbf{g}\cap\mathbf{j},\mathbf{g}, \mathbf{h}, \mathbf{i}]$. Moreover, if $\mathbf{h}\in\mathbb{P}_{\mathbf{g}, \mathbf{i}}$ and $\mathbf{j}\in\mathbb{P}_{\mathbf{1},\mathbf{1}}$, then $\overline{k_{\mathbf{j}\setminus \mathbf{i}}}B_{\mathbf{g},\mathbf{h},\mathbf{i}}B_{\mathbf{i}, \mathbf{i}\cap \mathbf{j}, \mathbf{i}}=\overline{k_{\mathbf{j}\setminus \mathbf{g}}}B_{\mathbf{g}, \mathbf{g}\cap \mathbf{j},\mathbf{g}}B_{\mathbf{g}, \mathbf{h}, \mathbf{i}}$.
\end{lem}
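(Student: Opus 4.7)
The plan is to treat the two statements separately, using the structure constants in Theorem \ref{T;Theorem3.23} and the product identity in Lemma \ref{L;Lemma4.3} to reduce everything to a set-theoretic check.

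For the first statement, I would unpack both sides via Notation \ref{N;Notation3.15}. For the left side, substituting the 5-tuple $(\mathbf{g},\mathbf{h},\mathbf{i},\mathbf{i}\cap\mathbf{j},\mathbf{i})$ into the defining formula yields an $n$-tuple whose support is
$$(\mathbbm{g}\triangle\mathbbm{i})\cup((\mathbbm{g}\cap\mathbbm{i})^\circ\setminus\mathbbm{i})\cup((\mathbbm{h}\cup(\mathbbm{i}\cap\mathbbm{j}))\cap(\mathbbm{g}\cap\mathbbm{i})^\circ),$$
and the middle piece is empty since $(\mathbbm{g}\cap\mathbbm{i})^\circ\subseteq\mathbbm{i}$. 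For the right side, substituting $(\mathbf{g},\mathbf{g}\cap\mathbf{j},\mathbf{g},\mathbf{h},\mathbf{i})$ produces an analogous union whose middle piece $(\mathbbm{g}\cap\mathbbm{i})^\circ\setminus\mathbbm{g}$ is also empty. The remaining comparison reduces to showing
$$(\mathbbm{h}\cup(\mathbbm{i}\cap\mathbbm{j}))\cap(\mathbbm{g}\cap\mathbbm{i})^\circ=(\mathbbm{h}\cup(\mathbbm{g}\cap\mathbbm{j}))\cap(\mathbbm{g}\cap\mathbbm{i})^\circ,$$
and this follows because $(\mathbbm{g}\cap\mathbbm{i})^\circ\subseteq\mathbbm{g}\cap\mathbbm{i}$ forces the two set expressions involving $\mathbbm{j}$ to coincide after intersection. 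Invoking Lemma \ref{L;Lemma2.3} to convert back from supports to $n$-tuples gives the first statement.

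For the second statement, I would first verify that the relevant products are legal inputs for Theorem \ref{T;Theorem3.23}. The hypothesis $\mathbf{j}\in\mathbb{P}_{\mathbf{1},\mathbf{1}}$ forces $\mathbbm{j}\subseteq[1,n]^\circ$, from which it is immediate that $\mathbf{i}\cap\mathbf{j}\in\mathbb{P}_{\mathbf{i},\mathbf{i}}$ and $\mathbf{g}\cap\mathbf{j}\in\mathbb{P}_{\mathbf{g},\mathbf{g}}$, since $\mathbbm{i}\cap\mathbbm{j}\subseteq\mathbbm{i}^\circ$ and $\mathbbm{g}\cap\mathbbm{j}\subseteq\mathbbm{g}^\circ$. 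Applying Theorem \ref{T;Theorem3.23} to the two products yields
$$B_{\mathbf{g},\mathbf{h},\mathbf{i}}B_{\mathbf{i},\mathbf{i}\cap\mathbf{j},\mathbf{i}}=\overline{k_{\mathbf{h}\cap\mathbf{i}\cap\mathbf{j}}}\,B_{\mathbf{g},[\mathbf{g},\mathbf{h},\mathbf{i},\mathbf{i}\cap\mathbf{j},\mathbf{i}],\mathbf{i}},\qquad B_{\mathbf{g},\mathbf{g}\cap\mathbf{j},\mathbf{g}}B_{\mathbf{g},\mathbf{h},\mathbf{i}}=\overline{k_{\mathbf{g}\cap\mathbf{h}\cap\mathbf{j}}}\,B_{\mathbf{g},[\mathbf{g},\mathbf{g}\cap\mathbf{j},\mathbf{g},\mathbf{h},\mathbf{i}],\mathbf{i}},$$
after simplifying $\mathbf{h}\cap\mathbf{i}\cap(\mathbf{i}\cap\mathbf{j})=\mathbf{h}\cap\mathbf{i}\cap\mathbf{j}$ and $(\mathbf{g}\cap\mathbf{j})\cap\mathbf{g}\cap\mathbf{h}=\mathbf{g}\cap\mathbf{h}\cap\mathbf{j}$ via Lemma \ref{L;Lemma2.3}.

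Using the first statement, the two basis elements $B_{\mathbf{g},[\mathbf{g},\mathbf{h},\mathbf{i},\mathbf{i}\cap\mathbf{j},\mathbf{i}],\mathbf{i}}$ and $B_{\mathbf{g},[\mathbf{g},\mathbf{g}\cap\mathbf{j},\mathbf{g},\mathbf{h},\mathbf{i}],\mathbf{i}}$ coincide, so the desired identity collapses to the scalar equation $\overline{k_{\mathbf{j}\setminus\mathbf{i}}\,k_{\mathbf{h}\cap\mathbf{i}\cap\mathbf{j}}}=\overline{k_{\mathbf{j}\setminus\mathbf{g}}\,k_{\mathbf{g}\cap\mathbf{h}\cap\mathbf{j}}}$ in $\F_p$. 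Since $\mathbf{h}\in\mathbb{P}_{\mathbf{g},\mathbf{i}}$ gives $\mathbf{g}\triangle\mathbf{i}\preceq\mathbf{h}$, Lemma \ref{L;Lemma4.3} supplies exactly the integer identity $k_{\mathbf{j}\setminus\mathbf{i}}k_{\mathbf{h}\cap\mathbf{i}\cap\mathbf{j}}=k_{\mathbf{j}\setminus\mathbf{g}}k_{\mathbf{g}\cap\mathbf{h}\cap\mathbf{j}}$, and reducing modulo the characteristic completes the proof.

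The main obstacle is purely bookkeeping in the first statement: correctly unwinding the definition of $[\cdot,\cdot,\cdot,\cdot,\cdot]$ and spotting that, once intersected with $(\mathbbm{g}\cap\mathbbm{i})^\circ$, the positions $\mathbbm{i}\cap\mathbbm{j}$ and $\mathbbm{g}\cap\mathbbm{j}$ become indistinguishable. Once this set equality is in hand, everything else is a mechanical application of Theorem \ref{T;Theorem3.23} and Lemma \ref{L;Lemma4.3}.
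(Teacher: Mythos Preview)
Your proposal is correct and follows essentially the same approach as the paper's proof: both reduce the first statement to the set identity $(\mathbbm{h}\cup(\mathbbm{i}\cap\mathbbm{j}))\cap(\mathbbm{g}\cap\mathbbm{i})^\circ=((\mathbbm{g}\cap\mathbbm{j})\cup\mathbbm{h})\cap(\mathbbm{g}\cap\mathbbm{i})^\circ$ via Notation~\ref{N;Notation3.15} and Lemma~\ref{L;Lemma2.3}, and both derive the second statement by applying Theorem~\ref{T;Theorem3.23} to each product and invoking Lemma~\ref{L;Lemma4.3} for the scalar equality. Your version is slightly more explicit in verifying that $\mathbf{i}\cap\mathbf{j}\in\mathbb{P}_{\mathbf{i},\mathbf{i}}$ and $\mathbf{g}\cap\mathbf{j}\in\mathbb{P}_{\mathbf{g},\mathbf{g}}$ before applying Theorem~\ref{T;Theorem3.23}, which the paper leaves implicit.
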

\begin{proof}
Notice that $(\mathbbm{g}\triangle\mathbbm{i})\cup((\mathbbm{h}\cup(\mathbbm{i}\cap\mathbbm{j}))\cap(\mathbbm{g}\cap\mathbbm{i})^\circ)=
(\mathbbm{g}\triangle\mathbbm{i})\cup(((\mathbbm{g}\cap\mathbbm{j})\cup\mathbbm{h})\cap(\mathbbm{g}\cap\mathbbm{i})^\circ)$. The first statement thus follows from Lemma \ref{L;Lemma2.3}. By Lemma \ref{L;Lemma3.16}, it is obvious to see that $[\mathbf{g}, \mathbf{h}, \mathbf{i}, \mathbf{i}\cap\mathbf{j}, \mathbf{i}]\!=\![\mathbf{g},\mathbf{g}\cap\mathbf{j},\mathbf{g}, \mathbf{h}, \mathbf{i}]\!\in\!\mathbb{P}_{\mathbf{g}, \mathbf{i}}$. As $\mathbf{h}\in\mathbb{P}_{\mathbf{g}, \mathbf{i}}$, notice that
$\mathbbm{g}\triangle\mathbbm{i}\subseteq\mathbbm{h}$ and
\begin{align*}
\overline{k_{\mathbf{j}\setminus \mathbf{i}}}B_{\mathbf{g},\mathbf{h},\mathbf{i}}B_{\mathbf{i}, \mathbf{i}\cap \mathbf{j}, \mathbf{i}}&=\overline{k_{\mathbf{j}\setminus \mathbf{i}}k_{\mathbf{h}\cap\mathbf{i}\cap\mathbf{j}}}B_{\mathbf{g},[\mathbf{g}, \mathbf{h},\mathbf{i},\mathbf{i}\cap\mathbf{j},\mathbf{i}] ,\mathbf{i}}\\
&=\overline{k_{\mathbf{j}\setminus \mathbf{g}}k_{\mathbf{g}\cap\mathbf{h}\cap\mathbf{j}}}B_{\mathbf{g},[\mathbf{g}, \mathbf{g}\cap\mathbf{j},\mathbf{g},\mathbf{h},\mathbf{i}] ,\mathbf{i}}=\overline{k_{\mathbf{j}\setminus \mathbf{g}}}B_{\mathbf{g}, \mathbf{g}\cap \mathbf{j},\mathbf{g}}B_{\mathbf{g}, \mathbf{h}, \mathbf{i}}
\end{align*}
by an application of Theorem \ref{T;Theorem3.23} and Lemma \ref{L;Lemma4.3}. The desired lemma follows.
\end{proof}
\begin{lem}\label{L;Lemma4.5}
Assume that $\mathbf{g}\in\mathbb{E}$ and $\mathbf{g}\in\mathbb{P}_{\mathbf{1},\mathbf{1}}$. Then $c_{\mathbf{1}, \mathbf{g}, \mathbf{1}}(C_\mathbf{g})=\overline{1}$ and $C_\mathbf{g}\neq O$.
\end{lem}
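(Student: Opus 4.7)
The plan is to observe that the defining expression \eqref{Eq;5} of $C_{\mathbf{g}}$ is already an expansion in terms of the basis $\mathbb{B}_{2}$ of $\mathbb{T}$ from Theorem \ref{T;Theorem3.13}, so the coefficient $c_{\mathbf{1},\mathbf{g},\mathbf{1}}(C_{\mathbf{g}})$ can be read off the sum directly. First I would verify the implicit remark in Notation \ref{N;Notation4.2}: for every $\mathbf{h}\in\mathbb{E}$ one has $(\mathbf{h},\mathbf{g}\cap\mathbf{h},\mathbf{h})\in\mathbb{P}$, so that each summand $B_{\mathbf{h},\mathbf{g}\cap\mathbf{h},\mathbf{h}}$ is a genuine element of $\mathbb{B}_{2}$. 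This uses the hypothesis $\mathbf{g}\in\mathbb{P}_{\mathbf{1},\mathbf{1}}$: writing out the definition of $\mathbb{P}_{\mathbf{1},\mathbf{1}}$ shows $\mathbbm{g}\subseteq\{a\in[1,n]:|\mathbb{U}_{a}|>2\}$, and intersecting with $\mathbbm{h}$ gives $\mathbbm{g}\cap\mathbbm{h}\subseteq\mathbbm{h}^{\circ}$, which is precisely the condition for $\mathbf{g}\cap\mathbf{h}\in\mathbb{P}_{\mathbf{h},\mathbf{h}}$.

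Next I would use Lemma \ref{L;Lemma3.11} to note that for distinct $\mathbf{h},\mathbf{h}'\in\mathbb{E}$ the basis vectors $B_{\mathbf{h},\mathbf{g}\cap\mathbf{h},\mathbf{h}}$ and $B_{\mathbf{h}',\mathbf{g}\cap\mathbf{h}',\mathbf{h}'}$ are distinct elements of $\mathbb{B}_{2}$ (the outer indices already differ). Therefore \eqref{Eq;5} expresses $C_{\mathbf{g}}$ as an $\F$-linear combination of pairwise distinct basis vectors, and the coefficient of any $B_{\mathbf{i},\mathbf{j},\mathbf{i}}\in\mathbb{B}_{2}$ in $C_{\mathbf{g}}$ is exactly the scalar attached to the unique summand with outer index $\mathbf{i}$. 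Applied to $(\mathbf{i},\mathbf{j})=(\mathbf{1},\mathbf{g})$, the only contribution comes from $\mathbf{h}=\mathbf{1}$, for which Lemma \ref{L;Lemma2.3} gives $\mathbf{g}\cap\mathbf{1}=\mathbf{g}$ and $\mathbf{g}\setminus\mathbf{1}=\mathbf{0}$, so the attached scalar is $\overline{k_{\mathbf{0}}}=\overline{1}$. This yields $c_{\mathbf{1},\mathbf{g},\mathbf{1}}(C_{\mathbf{g}})=\overline{1}$.

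Finally, since $\overline{1}\in\F^{\times}$, the vector $B_{\mathbf{1},\mathbf{g},\mathbf{1}}$ lies in $\mathrm{Supp}_{\mathbb{B}_{2}}(C_{\mathbf{g}})$, so $\mathrm{Supp}_{\mathbb{B}_{2}}(C_{\mathbf{g}})\neq\varnothing$ and hence $C_{\mathbf{g}}\neq O$, as recorded at the start of Subsection 2.3. There is no genuine obstacle in this argument: the entire content is the bookkeeping observation that the $\mathbf{h}=\mathbf{1}$ term of \eqref{Eq;5} contributes exactly $\overline{1}\cdot B_{\mathbf{1},\mathbf{g},\mathbf{1}}$ and is linearly independent (by Lemma \ref{L;Lemma3.11}) from the remaining summands. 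The only subtle point is checking that $(\mathbf{h},\mathbf{g}\cap\mathbf{h},\mathbf{h})\in\mathbb{P}$ for all $\mathbf{h}$, and this is precisely where the hypothesis $\mathbf{g}\in\mathbb{P}_{\mathbf{1},\mathbf{1}}$ is used.
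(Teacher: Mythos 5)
Your proposal is correct and takes essentially the same route as the paper's proof: both read off the coefficient of $B_{\mathbf{1},\mathbf{g},\mathbf{1}}$ directly from the defining sum \eqref{Eq;5}, using $\mathbf{g}\cap\mathbf{1}=\mathbf{g}$, $\mathbf{g}\setminus\mathbf{1}=\mathbf{0}$, and the fact (Theorem \ref{T;Theorem3.13}) that the distinct summands are distinct elements of the $\F$-basis $\mathbb{B}_2$. You spell out more of the bookkeeping (in particular why each $(\mathbf{h},\mathbf{g}\cap\mathbf{h},\mathbf{h})\in\mathbb{P}$, which the paper has already recorded in Notation \ref{N;Notation4.2}), but the substance is identical.
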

\begin{proof}
As $\mathbf{g}\in\mathbb{P}_{\mathbf{1},\mathbf{1}}$, notice that $\mathbf{g}\setminus\mathbf{1}=\mathbf{0}$ and $\mathbf{g}\cap\mathbf{1}=\mathbf{g}$ by Lemma \ref{L;Lemma2.3}. Notice that $k_{\mathbf{g}\setminus\mathbf{1}}=1$. The desired lemma follows from Equation \eqref{Eq;5} and Theorem \ref{T;Theorem3.13}.
\end{proof}
\begin{lem}\label{L;Lemma4.6}
Assume that $\mathbf{g}, \mathbf{h}\in\mathbb{E}$ and $\mathbf{g}, \mathbf{h}\in\mathbb{P}_{\mathbf{1}, \mathbf{1}}$. Then $C_\mathbf{\mathbf{g}}=C_\mathbf{h}$ if and only if $\mathbf{g}=\mathbf{h}$. In particular, $\mathrm{Z}(\mathbb{T})$ has an $\F$-linearly independent subset $\{C_\mathbf{a}: (\mathbf{1}, \mathbf{a}, \mathbf{1})\in\mathbb{P}\}$.
\end{lem}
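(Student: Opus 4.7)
The plan is to establish the lemma in three movements: centrality of each $C_\mathbf{g}$, the distinctness of the $C_\mathbf{g}$, and then their $\F$-linear independence. I work throughout in the $\F$-basis $\mathbb{B}_2=\{B_{\mathbf{a},\mathbf{b},\mathbf{c}}:(\mathbf{a},\mathbf{b},\mathbf{c})\in\mathbb{P}\}$ of $\mathbb{T}$ provided by Theorem \ref{T;Theorem3.13}, and I use Lemma \ref{L;Lemma4.5} as the distinguishing coefficient: $c_{\mathbf{1},\mathbf{g},\mathbf{1}}(C_\mathbf{g})=\overline{1}$.

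To place $C_\mathbf{g}$ into $\mathrm{Z}(\mathbb{T})$, it suffices to check commutation with each $B_{\mathbf{a},\mathbf{b},\mathbf{c}}\in\mathbb{B}_2$. Combining \eqref{Eq;5} with Theorem \ref{T;Theorem3.23}, only the $\mathbf{k}=\mathbf{a}$ summand of $C_\mathbf{g}$ survives in $C_\mathbf{g} B_{\mathbf{a},\mathbf{b},\mathbf{c}}$ and only the $\mathbf{k}=\mathbf{c}$ summand survives in $B_{\mathbf{a},\mathbf{b},\mathbf{c}}C_\mathbf{g}$, giving
\[
C_\mathbf{g} B_{\mathbf{a},\mathbf{b},\mathbf{c}}=\overline{k_{\mathbf{g}\setminus\mathbf{a}}}\,B_{\mathbf{a},\mathbf{g}\cap\mathbf{a},\mathbf{a}}B_{\mathbf{a},\mathbf{b},\mathbf{c}}\quad\text{and}\quad B_{\mathbf{a},\mathbf{b},\mathbf{c}}C_\mathbf{g}=\overline{k_{\mathbf{g}\setminus\mathbf{c}}}\,B_{\mathbf{a},\mathbf{b},\mathbf{c}}B_{\mathbf{c},\mathbf{g}\cap\mathbf{c},\mathbf{c}}.
\]
Lemma \ref{L;Lemma4.4}, applied under the substitution $(\mathbf{g},\mathbf{h},\mathbf{i},\mathbf{j})\mapsto(\mathbf{a},\mathbf{b},\mathbf{c},\mathbf{g})$ whose hypotheses $\mathbf{b}\in\mathbb{P}_{\mathbf{a},\mathbf{c}}$ and $\mathbf{g}\in\mathbb{P}_{\mathbf{1},\mathbf{1}}$ are given, equates these two expressions and so delivers centrality.

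For the ``if and only if'', only the forward direction needs work. Extract the coefficient $c_{\mathbf{1},\mathbf{g},\mathbf{1}}$ from both sides of $C_\mathbf{g}=C_\mathbf{h}$: by Lemma \ref{L;Lemma4.5} the left side is $\overline{1}$, while expanding $C_\mathbf{h}$ via \eqref{Eq;5} and using Lemma \ref{L;Lemma3.11} to identify basis elements shows the summand $\overline{k_{\mathbf{h}\setminus\mathbf{k}}}B_{\mathbf{k},\mathbf{h}\cap\mathbf{k},\mathbf{k}}$ can match $B_{\mathbf{1},\mathbf{g},\mathbf{1}}$ only if $\mathbf{k}=\mathbf{1}$ and $\mathbf{h}\cap\mathbf{1}=\mathbf{g}$, i.e. $\mathbf{h}=\mathbf{g}$. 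Hence $c_{\mathbf{1},\mathbf{g},\mathbf{1}}(C_\mathbf{h})=\overline{0}$ whenever $\mathbf{g}\neq\mathbf{h}$, forcing $\mathbf{g}=\mathbf{h}$. Linear independence is then immediate: if $\sum_{\mathbf{a}\in\mathbb{P}_{\mathbf{1},\mathbf{1}}}\lambda_\mathbf{a}C_\mathbf{a}=O$, applying $c_{\mathbf{1},\mathbf{g},\mathbf{1}}$ to each side yields $\lambda_\mathbf{g}=\overline{0}$ for every admissible $\mathbf{g}$.

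The main obstacle is the centrality verification; the trick is to notice that Theorem \ref{T;Theorem3.23} isolates a single summand on each side and that Lemma \ref{L;Lemma4.4} was built precisely to match the surviving $\overline{k}$-scalars and the product indices $\mathbf{g}\cap\mathbf{a}$ versus $\mathbf{g}\cap\mathbf{c}$. After that, distinctness and $\F$-linear independence reduce to the one-line coefficient reading supported by Lemmas \ref{L;Lemma4.5} and \ref{L;Lemma3.11}.
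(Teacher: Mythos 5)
Your proof is correct and follows essentially the same route as the paper: centrality via Equation \eqref{Eq;5}, Theorem \ref{T;Theorem3.23} (to isolate the surviving summand), and Lemma \ref{L;Lemma4.4}; distinctness and linear independence via the $c_{\mathbf{1},\mathbf{g},\mathbf{1}}$-coefficient read-off supplied by Lemma \ref{L;Lemma4.5} and Theorem \ref{T;Theorem3.13}. You simply spell out the centrality computation and the coefficient extraction more explicitly than the paper's compressed citation chain.
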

\begin{proof}
Assume that $C_\mathbf{g}=C_\mathbf{h}$. Hence $\{B_{\mathbf{1}, \mathbf{g},\mathbf{1}}, B_{\mathbf{1}, \mathbf{h},\mathbf{1}}\}\subseteq\mathrm{Supp}_{\mathbb{B}_2}(C_\mathbf{g})\cap\mathrm{Supp}_{\mathbb{B}_2}(C_\mathbf{h})$ by Lemma \ref{L;Lemma4.5}.
Hence $\mathbf{g}=\mathbf{h}$ by Equation \eqref{Eq;5} and Theorem \ref{T;Theorem3.13}. The first statement follows. For the remaining statement, set $\mathbb{U}=\{C_\mathbf{a}: (\mathbf{1}, \mathbf{a}, \mathbf{1})\in\mathbb{P}\}$. So $\mathbb{U}\subseteq\mathrm{Z}(\mathbb{T})$ by combining Equation \eqref{Eq;5}, Lemma \ref{L;Lemma4.4}, Theorems \ref{T;Theorem3.23}, and \ref{T;Theorem3.13}.
Moreover, consider
\begin{align}\label{Eq;6}
\sum_{M\in\mathbb{U}}c_MM=O,
\end{align}
where $c_M\!\in\!\F$ if $M\!\in\!\mathbb{U}$. If $M\!\in\! \mathbb{U}$, then $c_M=\overline{0}$ by combining Equations \eqref{Eq;6}, \eqref{Eq;5}, the first statement, Lemma \ref{L;Lemma4.5}, and Theorem \ref{T;Theorem3.13}. The desired lemma follows.
\end{proof}
\begin{lem}\label{L;Lemma4.7}
Assume that $M\in\mathrm{Z}(\mathbb{T})$. Then $\mathrm{Supp}_{\mathbb{B}_2}(M)\subseteq\{B_{\mathbf{a}, \mathbf{b}, \mathbf{a}}: (\mathbf{a}, \mathbf{b}, \mathbf{a})\in\mathbb{P}\}$.
\end{lem}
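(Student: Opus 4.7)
The plan is to test centrality of $M$ against the dual idempotents $E_\mathbf{g}^*$, which already sit inside $\mathbb{T}$, and exploit the fact that these idempotents act as left/right ``projectors'' onto $\mathbb{B}_2$. Concretely, I first establish the one-sided multiplication rules
$$
E_\mathbf{k}^* B_{\mathbf{g}, \mathbf{h}, \mathbf{i}} = \delta_{\mathbf{k}, \mathbf{g}} B_{\mathbf{g}, \mathbf{h}, \mathbf{i}}, \qquad B_{\mathbf{g}, \mathbf{h}, \mathbf{i}} E_\mathbf{k}^* = \delta_{\mathbf{k}, \mathbf{i}} B_{\mathbf{g}, \mathbf{h}, \mathbf{i}},
$$
which follow directly from the definition of $B_{\mathbf{g}, \mathbf{h}, \mathbf{i}}$ in Equation \eqref{Eq;4} together with Equation \eqref{Eq;2}.

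Next I expand $M = \sum_{(\mathbf{a},\mathbf{b},\mathbf{c}) \in \mathbb{P}} c_{\mathbf{a},\mathbf{b},\mathbf{c}}(M) B_{\mathbf{a},\mathbf{b},\mathbf{c}}$ in the basis $\mathbb{B}_2$ guaranteed by Theorem \ref{T;Theorem3.13}, and use $M \in \mathrm{Z}(\mathbb{T})$ to enforce $E_\mathbf{g}^* M = M E_\mathbf{g}^*$ for every $\mathbf{g} \in \mathbb{E}$. Applying the two multiplication rules above term-by-term gives
$$
E_\mathbf{g}^* M = \sum_{(\mathbf{g}, \mathbf{b}, \mathbf{c}) \in \mathbb{P}} c_{\mathbf{g}, \mathbf{b}, \mathbf{c}}(M)\, B_{\mathbf{g}, \mathbf{b}, \mathbf{c}}, \qquad M E_\mathbf{g}^* = \sum_{(\mathbf{a}, \mathbf{b}, \mathbf{g}) \in \mathbb{P}} c_{\mathbf{a}, \mathbf{b}, \mathbf{g}}(M)\, B_{\mathbf{a}, \mathbf{b}, \mathbf{g}}.
$$
So $E_\mathbf{g}^* M$ is a combination of basis vectors whose first index is $\mathbf{g}$, while $M E_\mathbf{g}^*$ is a combination of basis vectors whose third index is $\mathbf{g}$.

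Finally, suppose for contradiction that some $c_{\mathbf{a}, \mathbf{b}, \mathbf{c}}(M) \neq \overline{0}$ with $\mathbf{a} \neq \mathbf{c}$. Taking $\mathbf{g} = \mathbf{a}$, the basis element $B_{\mathbf{a}, \mathbf{b}, \mathbf{c}}$ appears in $E_\mathbf{a}^* M$ but, since its third index $\mathbf{c}$ differs from $\mathbf{a}$, it cannot appear in $M E_\mathbf{a}^*$. This contradicts the $\F$-linear independence of $\mathbb{B}_2$ from Theorem \ref{T;Theorem3.13}. Hence $c_{\mathbf{a}, \mathbf{b}, \mathbf{c}}(M) = \overline{0}$ whenever $\mathbf{a} \neq \mathbf{c}$, which is exactly the asserted inclusion. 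No real obstacle arises: the entire argument is a clean sandwich computation with the $E_\mathbf{g}^*$'s, and the required absorption identities are immediate from Equations \eqref{Eq;4} and \eqref{Eq;2}.
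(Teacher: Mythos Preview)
Your proof is correct and takes essentially the same approach as the paper: both test centrality of $M$ against the dual idempotents $E_\mathbf{g}^*$ and exploit the orthogonality relation \eqref{Eq;2}. The paper's version is marginally more compact---it sandwiches $M$ between $E_\mathbf{g}^*$ and $E_\mathbf{h}^*$ for distinct $\mathbf{g},\mathbf{h}$ and observes $E_\mathbf{g}^* M E_\mathbf{h}^* = M E_\mathbf{g}^* E_\mathbf{h}^* = O$ directly---whereas you compare the one-sided products $E_\mathbf{g}^* M$ and $M E_\mathbf{g}^*$ term-by-term in the basis $\mathbb{B}_2$, but the underlying mechanism is identical.
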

\begin{proof}
As $M\in\mathrm{Z}(\mathbb{T})$, Theorem \ref{T;Theorem3.13} implies that $M$ is an $\F$-linear combination of the matrices in $\mathbb{B}_2$. Assume that $\mathrm{Supp}_{\mathbb{B}_2}(M)\not\subseteq\{B_{\mathbf{a}, \mathbf{b}, \mathbf{a}}: (\mathbf{a}, \mathbf{b}, \mathbf{a})\in\mathbb{P}\}$. By combining Equations \eqref{Eq;3}, \eqref{Eq;4}, and \eqref{Eq;2}, there are distinct $\mathbf{g}, \mathbf{h}\in\mathbb{E}$ such that $E_\mathbf{g}^*ME_\mathbf{h}^*\neq O$. As $M\in\mathrm{Z}(\mathbb{T})$, notice that $O\neq E_\mathbf{g}^*ME_\mathbf{h}^*=ME_\mathbf{g}^*E_\mathbf{h}^*=O$ by Equation \eqref{Eq;2}. This is an obvious contradiction. The desired lemma follows from this contradiction.
\end{proof}
\begin{lem}\label{L;Lemma4.8}
Assume that $\mathbf{g}\!\in\!\mathbb{E}$ and $\mathbf{g}\in\mathbb{P}_{\mathbf{1},\mathbf{1}}$. Assume that $M\!\in\!\mathrm{Z}(\mathbb{T})$, $c_{\mathbf{1}, \mathbf{g}, \mathbf{1}}(M)\!=\!\overline{1}$, $\mathrm{Supp}_{\mathbb{B}_2}(E_\mathbf{1}^*M)=\{B_{\mathbf{1}, \mathbf{g}, \mathbf{1}}\}$. Then $M=C_\mathbf{g}$.
\end{lem}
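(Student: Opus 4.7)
The plan is to show that any central element that agrees with $C_{\mathbf{g}}$ after multiplication by $E_{\mathbf{1}}^*$ must equal $C_{\mathbf{g}}$, by exploiting centrality against the right-multipliers $B_{\mathbf{h},\mathbf{1}\setminus\mathbf{h},\mathbf{1}}$ for each $\mathbf{h}\in\mathbb{E}$. My first move is to verify that $C_{\mathbf{g}}\in\mathrm{Z}(\mathbb{T})$ (this is contained in the proof of Lemma \ref{L;Lemma4.6}) and that $E_{\mathbf{1}}^*C_{\mathbf{g}}=B_{\mathbf{1},\mathbf{g},\mathbf{1}}$: by Equation \eqref{Eq;5} and Equation \eqref{Eq;2}, only the summand with $\mathbf{h}=\mathbf{1}$ survives, and $k_{\mathbf{g}\setminus\mathbf{1}}=1$ using $\mathbf{g}\in\mathbb{P}_{\mathbf{1},\mathbf{1}}$. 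Setting $N=M-C_{\mathbf{g}}$, the hypothesis $\mathrm{Supp}_{\mathbb{B}_2}(E_{\mathbf{1}}^*M)=\{B_{\mathbf{1},\mathbf{g},\mathbf{1}}\}$ together with $c_{\mathbf{1},\mathbf{g},\mathbf{1}}(M)=\overline{1}$ then yields $N\in\mathrm{Z}(\mathbb{T})$ and $E_{\mathbf{1}}^*N=O$.

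Next I reduce to proving the claim: any $N\in\mathrm{Z}(\mathbb{T})$ with $E_{\mathbf{1}}^*N=O$ is zero. By Lemma \ref{L;Lemma4.7}, $N=\sum_{(\mathbf{h},\mathbf{b},\mathbf{h})\in\mathbb{P}}c_{\mathbf{h},\mathbf{b},\mathbf{h}}(N)\,B_{\mathbf{h},\mathbf{b},\mathbf{h}}$. For a fixed $\mathbf{h}\in\mathbb{E}$, observe that $(\mathbf{h},\mathbf{1}\setminus\mathbf{h},\mathbf{1})\in\mathbb{P}$, so $B_{\mathbf{h},\mathbf{1}\setminus\mathbf{h},\mathbf{1}}\in\mathbb{B}_2$ is available. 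Using centrality and $E_{\mathbf{1}}^*N=O$,
\[
NB_{\mathbf{h},\mathbf{1}\setminus\mathbf{h},\mathbf{1}}=B_{\mathbf{h},\mathbf{1}\setminus\mathbf{h},\mathbf{1}}N=B_{\mathbf{h},\mathbf{1}\setminus\mathbf{h},\mathbf{1}}E_{\mathbf{1}}^*N=O.
\]
On the other hand, Theorem \ref{T;Theorem3.23} shows that only the $\mathbf{a}=\mathbf{h}$ summands of $N$ survive when right-multiplied by $B_{\mathbf{h},\mathbf{1}\setminus\mathbf{h},\mathbf{1}}$, giving
\[
O=\sum_{\mathbbm{b}\subseteq\mathbbm{h}^\circ}c_{\mathbf{h},\mathbf{b},\mathbf{h}}(N)\,\overline{k_{\mathbf{b}\cap\mathbf{h}\cap(\mathbf{1}\setminus\mathbf{h})}}\,B_{\mathbf{h},[\mathbf{h},\mathbf{b},\mathbf{h},\mathbf{1}\setminus\mathbf{h},\mathbf{1}],\mathbf{1}}.
\]
Here the relevant intersection is empty, so the scalar factor equals $\overline{1}$, and a direct check of Notation \ref{N;Notation3.15} identifies $[\mathbf{h},\mathbf{b},\mathbf{h},\mathbf{1}\setminus\mathbf{h},\mathbf{1}]$ with the element whose underlying subset is $\mathbbm{b}\cup(\mathbbm{1}\setminus\mathbbm{h})$. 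Because distinct $\mathbf{b}$'s with $\mathbbm{b}\subseteq\mathbbm{h}^\circ$ produce distinct such subsets, Theorem \ref{T;Theorem3.13} gives linear independence of the resulting $B_{\mathbf{h},\,\cdot\,,\mathbf{1}}$'s, forcing $c_{\mathbf{h},\mathbf{b},\mathbf{h}}(N)=\overline{0}$ for every admissible $\mathbf{b}$. Running $\mathbf{h}$ over $\mathbb{E}$ shows $N=O$, hence $M=C_{\mathbf{g}}$.

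The main technical hurdle is the identification $[\mathbf{h},\mathbf{b},\mathbf{h},\mathbf{1}\setminus\mathbf{h},\mathbf{1}]=\mathbf{b}\cup(\mathbf{1}\setminus\mathbf{h})$ from the unwieldy defining formula of Notation \ref{N;Notation3.15}; the key simplifications are that $(\mathbbm{h}\cap\mathbbm{1})^\circ\setminus\mathbbm{h}=\varnothing$, that $(\mathbbm{1}\setminus\mathbbm{h})\cap\mathbbm{h}^\circ=\varnothing$, and that the constraint $\mathbbm{b}\subseteq\mathbbm{h}^\circ$ makes the residual intersection with $\mathbbm{h}^\circ$ equal $\mathbbm{b}$ itself. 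Once this bracket is in hand and $\overline{k_{\mathbf{0}}}=\overline{1}$ is noted, the remainder is just bookkeeping through the $\mathbb{B}_2$-basis.
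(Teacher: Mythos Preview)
Your proof is correct and follows essentially the same route as the paper: both arguments test centrality against the multiplier $B_{\mathbf{h},\mathbf{1}\setminus\mathbf{h},\mathbf{1}}$ (which equals the paper's $B_{\mathbf{h},\mathbf{h}\triangle\mathbf{1},\mathbf{1}}$) for each $\mathbf{h}\in\mathbb{E}$, and use the bracket identity $[\mathbf{h},\mathbf{b},\mathbf{h},\mathbf{1}\setminus\mathbf{h},\mathbf{1}]=\mathbf{b}\cup(\mathbf{1}\setminus\mathbf{h})$ together with the vanishing scalar $\overline{k_{\mathbf{0}}}=\overline{1}$ to read off the $(\mathbf{h},\mathbf{b},\mathbf{h})$-coefficients via linear independence in $\mathbb{B}_2$. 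The only cosmetic difference is that you first subtract $C_{\mathbf{g}}$ and show the remainder $N$ is zero, whereas the paper works with $M$ directly and compares $B_{\mathbf{h},\mathbf{h}\triangle\mathbf{1},\mathbf{1}}M$ with $MB_{\mathbf{h},\mathbf{h}\triangle\mathbf{1},\mathbf{1}}$ to identify each coefficient $c_{\mathbf{h},\mathbf{g}\cap\mathbf{h},\mathbf{h}}(M)=\overline{k_{\mathbf{g}\setminus\mathbf{h}}}$ explicitly.
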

\begin{proof}
Pick $\mathbf{h}\in\mathbb{E}$. Then $\mathbf{g}\cap(\mathbf{h}\triangle\mathbf{1})\cap\mathbf{1}=\mathbf{g}\setminus\mathbf{h}$ and $[\mathbf{h}, \mathbf{h}\triangle\mathbf{1}, \mathbf{1}, \mathbf{g}, \mathbf{1}]=(\mathbf{g}\cap\mathbf{h})\cup(\mathbf{h}\triangle\mathbf{1})$ by Lemma \ref{L;Lemma2.3}. As $c_{\mathbf{1}, \mathbf{g}, \mathbf{1}}(M)=\overline{1}$ and $\mathrm{Supp}_{\mathbb{B}_2}(E_\mathbf{1}^*M)=\{B_{\mathbf{1}, \mathbf{g}, \mathbf{1}}\}$, notice that the combination of Equations \eqref{Eq;4}, \eqref{Eq;2}, Lemma \ref{L;Lemma4.7}, \ and Theorem \ref{T;Theorem3.23} implies that
\begin{align}\label{Eq;7}
B_{\mathbf{h}, \mathbf{h}\triangle\mathbf{1}, \mathbf{1}}M\!=\!B_{\mathbf{h}, \mathbf{h}\triangle\mathbf{1}, \mathbf{1}}E_\mathbf{1}^*M=c_{\mathbf{1}, \mathbf{g}, \mathbf{1}}(M)B_{\mathbf{h}, \mathbf{h}\triangle \mathbf{1}, \mathbf{1}}B_{\mathbf{1},\mathbf{g},\mathbf{1}}=\overline{k_{\mathbf{g}\setminus \mathbf{h}}}B_{\mathbf{h},(\mathbf{g}\cap\mathbf{h})\cup(\mathbf{h}\triangle\mathbf{1}), \mathbf{1}}.
\end{align}

For any $\mathbf{i}\in\mathbb{E}$ and $\mathbf{i}\in\mathbb{P}_{\mathbf{h}, \mathbf{h}}$, Lemma \ref{L;Lemma2.3} implies that $\mathbf{h}\cap \mathbf{i}\cap(\mathbf{h}\triangle\mathbf{1})=\mathbf{0}$ and $[\mathbf{h}, \mathbf{i}, \mathbf{h}, \mathbf{h}\triangle\mathbf{1},\mathbf{1}]\!=\!(\mathbf{h}\triangle\mathbf{1})\cup\mathbf{i}$. It is clear that $\mathbf{i}\!\preceq\!\mathbf{h}\!\cap\!\mathbf{1}$ for any $\mathbf{i}\in\mathbb{E}$ and $\mathbf{i}\in\mathbb{P}_{\mathbf{h}, \mathbf{h}}$. The combination of Equations \eqref{Eq;4}, \eqref{Eq;2}, Lemma \ref{L;Lemma4.7}, \ and Theorem \ref{T;Theorem3.23} implies that
\begin{align}\label{Eq;8}
MB_{\mathbf{h}, \mathbf{h}\triangle \mathbf{1}, \mathbf{1}}=\sum_{\mathbf{i}\in\mathbb{P}_{\mathbf{h}, \mathbf{h}}}c_{\mathbf{h},\mathbf{i},\mathbf{h}}(M)B_{\mathbf{h},\mathbf{i},\mathbf{h}}B_{\mathbf{h}, \mathbf{h}\triangle \mathbf{1}, \mathbf{1}}=\sum_{\mathbf{i}\in\mathbb{P}_{\mathbf{h}, \mathbf{h}}}c_{\mathbf{h},\mathbf{i},\mathbf{h}}(M)B_{\mathbf{h},(\mathbf{h}\triangle\mathbf{1})\cup\mathbf{i},\mathbf{1}}.
\end{align}

For any $\mathbf{i}\in\mathbb{E}$ and $\mathbf{i}\in\mathbb{P}_{\mathbf{h}, \mathbf{h}}$, it is obvious that $\mathbf{g}\cap\mathbf{h}\preceq\mathbf{h}\cap\mathbf{1}$ and $\mathbf{i}\preceq\mathbf{h}\cap\mathbf{1}$. Hence  $(\mathbf{g}\cap\mathbf{h})\cap(\mathbf{h}\triangle\mathbf{1})=\mathbf{i}\cap(\mathbf{h}\triangle\mathbf{1})=\mathbf{0}$ for any $\mathbf{i}\in\mathbb{E}$ and $\mathbf{i}\in\mathbb{P}_{\mathbf{h}, \mathbf{h}}$. For any $\mathbf{i}\in\mathbb{E}$ and $\mathbf{i}\in\mathbb{P}_{\mathbf{h}, \mathbf{h}}$, Theorem \ref{T;Theorem3.13} and Lemma \ref{L;Lemma2.3} imply that $B_{\mathbf{h},(\mathbf{g}\cap\mathbf{h})\cup (\mathbf{h}\triangle\mathbf{1}),\mathbf{1}}=B_{\mathbf{h}, \mathbf{i}\cup(\mathbf{h}\triangle\mathbf{1}),\mathbf{1}}$ if and only if $\mathbf{i}\!=\!\mathbf{g}\cap\mathbf{h}$.
As $M\!\in\!\mathrm{Z}(\mathbb{T})$, the combination of Equations \eqref{Eq;7}, \eqref{Eq;8}, and Theorem \ref{T;Theorem3.13} gives $c_{\mathbf{h}, \mathbf{g}\cap\mathbf{h}, \mathbf{h}}(M)=\overline{k_{\mathbf{g}\!\setminus\!\mathbf{h}}}$ and $c_{\mathbf{h}, \mathbf{i}, \mathbf{h}}(M)\!=\!\overline{0}$ if $\mathbf{i}\in\mathbb{E}$ and $\mathbf{i}\in\mathbb{P}_{\mathbf{h},\mathbf{h}}\setminus\{\mathbf{g}\cap\mathbf{h}\}$. As $\mathbf{h}$ is chosen from $\mathbb{E}$ arbitrarily, the desired lemma follows from Equation \eqref{Eq;5}.
\end{proof}
\begin{lem}\label{L;Lemma4.9}
Assume that $M\in\mathrm{Z}(\mathbb{T})$. Then $M$ is an $\F$-linear combination of the matrices in $\{C_\mathbf{a}\!:\! (\mathbf{1}, \mathbf{a}, \mathbf{1})\in\mathbb{P}\}$. In particular, $\mathrm{Z}(\mathbb{T})$ has an $\F$-basis $\{C_\mathbf{a}\!:\! (\mathbf{1}, \mathbf{a}, \mathbf{1})\!\in\!\mathbb{P}\}$.
\end{lem}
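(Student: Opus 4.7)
The plan is to express an arbitrary $M\in\mathrm{Z}(\mathbb{T})$ as an explicit $\F$-linear combination of the $C_\mathbf{a}$'s, using the $E_\mathbf{1}^*$-block of $M$ as the source of coefficients, and then invoke Lemma \ref{L;Lemma4.6} for the basis claim. First, I would observe that Equation \eqref{Eq;5} together with Equations \eqref{Eq;4}, \eqref{Eq;2} gives $E_\mathbf{1}^*C_\mathbf{b}=B_{\mathbf{1},\mathbf{b},\mathbf{1}}$ for each $\mathbf{b}\in\mathbb{P}_{\mathbf{1},\mathbf{1}}$, since only the summand with $\mathbf{h}=\mathbf{1}$ survives (and $k_{\mathbf{b}\setminus\mathbf{1}}=k_\mathbf{0}=1$, while $\mathbf{b}\cap\mathbf{1}=\mathbf{b}$ by Notation \ref{N;Notation4.1}).

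Next I would form $N=M-\sum_{\mathbf{b}\in\mathbb{P}_{\mathbf{1},\mathbf{1}}}c_{\mathbf{1},\mathbf{b},\mathbf{1}}(M)C_\mathbf{b}$. By Lemma \ref{L;Lemma4.6} each $C_\mathbf{b}$ lies in $\mathrm{Z}(\mathbb{T})$, so $N\in\mathrm{Z}(\mathbb{T})$. The previous paragraph ensures $E_\mathbf{1}^*N=O$, and it remains to prove $N=O$; together with Lemma \ref{L;Lemma4.6} this yields both assertions.

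To show $N=O$, I would fix an arbitrary $\mathbf{h}\in\mathbb{E}$ and compare $B_{\mathbf{h},\mathbf{h}\triangle\mathbf{1},\mathbf{1}}N$ with $NB_{\mathbf{h},\mathbf{h}\triangle\mathbf{1},\mathbf{1}}$, exploiting centrality. The left product equals $B_{\mathbf{h},\mathbf{h}\triangle\mathbf{1},\mathbf{1}}E_\mathbf{1}^*N=O$ by Equations \eqref{Eq;4} and \eqref{Eq;2}. For the right product, Lemma \ref{L;Lemma4.7} allows me to expand $N=\sum_{\mathbf{i}\in\mathbb{P}_{\mathbf{h},\mathbf{h}}}c_{\mathbf{h},\mathbf{i},\mathbf{h}}(N)B_{\mathbf{h},\mathbf{i},\mathbf{h}}+\cdots$, where the other summands vanish against $B_{\mathbf{h},\mathbf{h}\triangle\mathbf{1},\mathbf{1}}$ thanks to Equation \eqref{Eq;2}. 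Theorem \ref{T;Theorem3.23} then gives $B_{\mathbf{h},\mathbf{i},\mathbf{h}}B_{\mathbf{h},\mathbf{h}\triangle\mathbf{1},\mathbf{1}}=\overline{k_{\mathbf{i}\cap\mathbf{h}\cap(\mathbf{h}\triangle\mathbf{1})}}B_{\mathbf{h},[\mathbf{h},\mathbf{i},\mathbf{h},\mathbf{h}\triangle\mathbf{1},\mathbf{1}],\mathbf{1}}=B_{\mathbf{h},(\mathbf{h}\triangle\mathbf{1})\cup\mathbf{i},\mathbf{1}}$, because $\mathbbm{h}\cap(\mathbbm{h}\triangle[1,n])=\varnothing$ and (since $\mathbbm{i}\subseteq\mathbbm{h}^\circ$) the $[\cdot,\cdot,\cdot,\cdot,\cdot]$-bracket simplifies to $\mathbbm{h}\triangle[1,n]\cup\mathbbm{i}$ by Notation \ref{N;Notation3.15} and Lemma \ref{L;Lemma2.3}.

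Thus $NB_{\mathbf{h},\mathbf{h}\triangle\mathbf{1},\mathbf{1}}=\sum_{\mathbf{i}\in\mathbb{P}_{\mathbf{h},\mathbf{h}}}c_{\mathbf{h},\mathbf{i},\mathbf{h}}(N)B_{\mathbf{h},(\mathbf{h}\triangle\mathbf{1})\cup\mathbf{i},\mathbf{1}}$. Since the map $\mathbf{i}\mapsto(\mathbf{h}\triangle\mathbf{1})\cup\mathbf{i}$ is injective on $\mathbb{P}_{\mathbf{h},\mathbf{h}}$ (the two pieces are disjoint, so Lemma \ref{L;Lemma2.3} applies) and each $B_{\mathbf{h},(\mathbf{h}\triangle\mathbf{1})\cup\mathbf{i},\mathbf{1}}$ belongs to $\mathbb{B}_2$, Theorem \ref{T;Theorem3.13} forces $c_{\mathbf{h},\mathbf{i},\mathbf{h}}(N)=\overline{0}$ for every $\mathbf{i}\in\mathbb{P}_{\mathbf{h},\mathbf{h}}$. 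As $\mathbf{h}\in\mathbb{E}$ is arbitrary, Lemma \ref{L;Lemma4.7} gives $N=O$, completing the span statement; combining with the linear independence in Lemma \ref{L;Lemma4.6} delivers the basis. The main obstacle is verifying that the $B_{\mathbf{h},(\mathbf{h}\triangle\mathbf{1})\cup\mathbf{i},\mathbf{1}}$ that appear on the right are genuinely distinct members of $\mathbb{B}_2$ with coefficient $\overline{1}$, which is precisely where the careful simplification of $[\mathbf{h},\mathbf{i},\mathbf{h},\mathbf{h}\triangle\mathbf{1},\mathbf{1}]$ and the vanishing of $\mathbf{i}\cap\mathbf{h}\cap(\mathbf{h}\triangle\mathbf{1})$ enter.
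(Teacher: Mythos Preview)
Your proof is correct and follows essentially the same approach as the paper. The paper organizes the argument slightly differently by first isolating Lemma \ref{L;Lemma4.8} (reducing to the case where $E_\mathbf{1}^*M$ has a one-element support and identifying that element as $C_\mathbf{g}$), whereas you subtract off all the $C_\mathbf{b}$'s at once to make $E_\mathbf{1}^*N=O$ and then run the identical centrality computation with $B_{\mathbf{h},\mathbf{h}\triangle\mathbf{1},\mathbf{1}}$ (this is exactly Equation \eqref{Eq;8} of the paper) to conclude $N=O$; the key technical step is the same in both.
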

\begin{proof}
By combining Equations \eqref{Eq;5}, \eqref{Eq;4}, \eqref{Eq;2}, Lemmas \ref{L;Lemma4.7}, \ref{L;Lemma4.5}, \ref{L;Lemma4.6}, Theorem \ref{T;Theorem3.13}, there is an $\F$-linear combination $L$ of the matrices in $\{C_\mathbf{a}: (\mathbf{1}, \mathbf{a}, \mathbf{1})\in\mathbb{P}\}$ such that $L+M\in\mathrm{Z}(\mathbb{T})$, $c_{\mathbf{1}, \mathbf{g}, \mathbf{1}}(L+M)=\overline{1}$, and $\mathrm{Supp}_{\mathbb{B}_2}(E_\mathbf{1}^*(L+M))=\{B_{\mathbf{1}, \mathbf{g}, \mathbf{1}}\}$ for some $\mathbf{g}\in\mathbb{E}$ and $\mathbf{g}\in\mathbb{P}_{\mathbf{1}, \mathbf{1}}$.
The desired lemma follows from using Lemmas \ref{L;Lemma4.8} and \ref{L;Lemma4.6}.
\end{proof}
We are now ready to give the desired $\F$-basis of $\mathrm{Z}(\mathbb{T})$ and an additional notation.
\begin{thm}\label{T;Center}
$\mathrm{Z}(\mathbb{T})$ has an $\F$-basis $\{C_\mathbf{a}: (\mathbf{1}, \mathbf{a}, \mathbf{1})\in\mathbb{P}\}$ with cardinality $2^{n_2}$.
\end{thm}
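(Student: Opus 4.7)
The main work is already done by the preceding lemmas; only the cardinality claim remains to be isolated. The plan is to first invoke Lemma~\ref{L;Lemma4.9}, which gives that $\{C_\mathbf{a}:(\mathbf{1},\mathbf{a},\mathbf{1})\in\mathbb{P}\}$ is simultaneously $\F$-linearly independent (established in Lemma~\ref{L;Lemma4.6}) and spans $\mathrm{Z}(\mathbb{T})$. With that in hand, what remains is to count the indexing set.

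To count, I would unpack the condition $(\mathbf{1},\mathbf{a},\mathbf{1})\in\mathbb{P}$ using Notation~\ref{N;Notation3.1} (equivalently Lemma~\ref{L;Lemma2.5}): the condition reads
\[
\mathbbm{1}\triangle\mathbbm{1}\subseteq\mathbbm{a}\subseteq(\mathbbm{1}\triangle\mathbbm{1})\cup(\mathbbm{1}\cap\mathbbm{1})^{\circ}.
\]
Since $\mathbbm{1}=[1,n]$, this collapses to $\varnothing\subseteq\mathbbm{a}\subseteq[1,n]^{\circ}$, i.e.\ $\mathbbm{a}\subseteq\{b:b\in[1,n],\,|\mathbb{U}_b|>2\}$. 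By Lemma~\ref{L;Lemma2.3}, the tuple $\mathbf{a}$ is uniquely determined by $\mathbbm{a}$, so the number of valid $\mathbf{a}$ equals the number of subsets of $\{b:b\in[1,n],\,|\mathbb{U}_b|>2\}$. Recalling that $n_2=|\{b:b\in[1,n],\,|\mathbb{U}_b|>2\}|$, this count is $2^{n_2}$.

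Combining the two paragraphs, $\{C_\mathbf{a}:(\mathbf{1},\mathbf{a},\mathbf{1})\in\mathbb{P}\}$ is an $\F$-basis of $\mathrm{Z}(\mathbb{T})$ of cardinality $2^{n_2}$, which is what the theorem asserts. There is no real obstacle here: the substantive content lives in Lemmas~\ref{L;Lemma4.7}--\ref{L;Lemma4.9} (showing each central element is supported on diagonal blocks $B_{\mathbf{h},\mathbf{i},\mathbf{h}}$ and is then determined by its $B_{\mathbf{1},\mathbf{g},\mathbf{1}}$-coefficients), and the only fresh step is the bijection between $\mathbb{P}_{\mathbf{1},\mathbf{1}}$ and the power set of $[1,n]^{\circ}$. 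The proof should therefore be a one-line citation of Lemma~\ref{L;Lemma4.9} followed by the subset-counting argument above.
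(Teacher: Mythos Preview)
Your proposal is correct and follows essentially the same approach as the paper: invoke Lemma~\ref{L;Lemma4.9} for the basis claim, then count $\mathbb{P}_{\mathbf{1},\mathbf{1}}$ by identifying it via Lemma~\ref{L;Lemma2.3} with the power set of $[1,n]^\circ$, which has cardinality $2^{n_2}$. The paper's proof is precisely this one-line citation followed by the same subset-counting argument you describe.
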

\begin{proof}
By Lemma \ref{L;Lemma4.9}, it is enough to check that $|\{C_\mathbf{a}: (\mathbf{1}, \mathbf{a}, \mathbf{1})\in\mathbb{P}\}|=2^{n_2}$. Notice that $|\{C_\mathbf{a}: (\mathbf{1}, \mathbf{a}, \mathbf{1})\in\mathbb{P}\}|=|\{\mathbf{a}: (\mathbf{1}, \mathbf{a}, \mathbf{1})\in\mathbb{P}\}|=|\{\mathbbm{a}: \mathbbm{a}\subseteq[1,n]^\circ\}|$ by Lemmas \ref{L;Lemma4.9} and \ref{L;Lemma2.3}. The desired theorem follows since $n_2=|\{a: a\in[1, n], |\mathbb{U}_a|>2\}|$.
\end{proof}
\begin{nota}\label{N;Notation4.11}
Let $\mathbb{B}_3=\{C_\mathbf{a}: (\mathbf{1}, \mathbf{a}, \mathbf{1})\in\mathbb{P}\}$. If $M\in\mathrm{Z}(\mathbb{T})$ and $(\mathbf{1}, \mathbf{g}, \mathbf{1})\in\mathbb{P}$, notice that $\mathrm{Supp}_{\mathbb{B}_3}(M)$ is defined and write $c_\mathbf{\mathbf{g}}(M)$ for $c_{\mathbb{B}_3}(M, C_\mathbf{g})$ by Theorem \ref{T;Center}.
\end{nota}
We next investigate the structure constants of $\mathbb{B}_3$ in $\mathrm{Z}(\mathbb{T})$. We first list a lemma.
\begin{lem}\label{L;Lemma4.12}
Assume that $\mathbf{g}, \mathbf{h}, \mathbf{i}\in\mathbb{E}$. Then $k_{\mathbf{g}\setminus \mathbf{i}}k_{\mathbf{h}\setminus \mathbf{i}}k_{\mathbf{g}\cap \mathbf{h}\cap \mathbf{i}}=k_{(\mathbf{g}\cup \mathbf{h})\setminus \mathbf{i}}k_{\mathbf{g}\cap \mathbf{h}}$.
\end{lem}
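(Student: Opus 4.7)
The plan is to reduce the identity to a routine multiplicativity computation using the explicit formula $k_\mathbf{g} = \prod_{a \in \mathbbm{g}}(|\mathbb{U}_a|-1)$, which is valid for all $\mathbf{g} \in \mathbb{E}$ (interpreting the empty product as $1 = k_{\mathbf{0}}$). This formula shows that $k_\bullet$ depends only on the underlying subset $\mathbbm{\bullet} \subseteq [1,n]$ and is multiplicative across disjoint unions of such subsets.

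First I would prove the auxiliary identity
$$k_\mathbf{a} \, k_\mathbf{b} \;=\; k_{\mathbf{a} \cap \mathbf{b}} \cdot k_{\mathbf{a} \cup \mathbf{b}}$$
for arbitrary $\mathbf{a}, \mathbf{b} \in \mathbb{E}$, which is immediate from the product formula: both sides expand to $\prod_{c \in \mathbbm{a} \cap \mathbbm{b}}(|\mathbb{U}_c|-1)^{2} \cdot \prod_{c \in \mathbbm{a} \triangle \mathbbm{b}}(|\mathbb{U}_c|-1)$.

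Next I would specialize this identity to $\mathbf{a} = \mathbf{g} \setminus \mathbf{i}$ and $\mathbf{b} = \mathbf{h} \setminus \mathbf{i}$. Combining the set-theoretic identities
$$(\mathbbm{g}\setminus\mathbbm{i}) \cap (\mathbbm{h}\setminus\mathbbm{i}) = (\mathbbm{g}\cap\mathbbm{h})\setminus\mathbbm{i}, \qquad (\mathbbm{g}\setminus\mathbbm{i}) \cup (\mathbbm{h}\setminus\mathbbm{i}) = (\mathbbm{g}\cup\mathbbm{h})\setminus\mathbbm{i}$$
with Lemma \ref{L;Lemma2.3} to pass back to $n$-tuples, I obtain
$$k_{\mathbf{g}\setminus\mathbf{i}} \, k_{\mathbf{h}\setminus\mathbf{i}} \;=\; k_{(\mathbf{g}\cap\mathbf{h})\setminus\mathbf{i}} \cdot k_{(\mathbf{g}\cup\mathbf{h})\setminus\mathbf{i}}.$$
Finally, since $(\mathbbm{g}\cap\mathbbm{h})\setminus\mathbbm{i}$ and $\mathbbm{g}\cap\mathbbm{h}\cap\mathbbm{i}$ partition $\mathbbm{g}\cap\mathbbm{h}$, the multiplicativity of $k_\bullet$ on disjoint unions yields $k_{(\mathbf{g}\cap\mathbf{h})\setminus\mathbf{i}} \cdot k_{\mathbf{g}\cap\mathbf{h}\cap\mathbf{i}} = k_{\mathbf{g}\cap\mathbf{h}}$. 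Multiplying the previous display by $k_{\mathbf{g}\cap\mathbf{h}\cap\mathbf{i}}$ then delivers the claim.

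There is no real obstacle in this argument; it is elementary Boolean bookkeeping on subsets of $[1,n]$ combined with the explicit valency formula. The only point requiring mild care is the translation between $n$-tuples in $\mathbb{E}$ and their supports via Lemma \ref{L;Lemma2.3}, so that the union, intersection, and set-difference operations on elements of $\mathbb{E}$ (as introduced in Notations \ref{N;Notation3.15} and \ref{N;Notation4.1}) line up precisely with the corresponding operations on subsets of $[1,n]$.
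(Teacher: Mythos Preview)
Your proof is correct and rests on the same ingredient as the paper's: the product formula $k_\mathbf{g}=\prod_{a\in\mathbbm{g}}(|\mathbb{U}_a|-1)$ and its multiplicativity over disjoint unions. The only difference is organizational: you first isolate the clean auxiliary identity $k_\mathbf{a}k_\mathbf{b}=k_{\mathbf{a}\cap\mathbf{b}}k_{\mathbf{a}\cup\mathbf{b}}$ and then specialize, whereas the paper decomposes each of $k_{\mathbf{g}\setminus\mathbf{i}}$, $k_{\mathbf{h}\setminus\mathbf{i}}$, $k_{\mathbf{g}\cap\mathbf{h}}$, and $k_{(\mathbf{g}\cup\mathbf{h})\setminus\mathbf{i}}$ directly into the common building blocks $k_{\mathbf{g}\setminus(\mathbf{h}\cup\mathbf{i})}$, $k_{\mathbf{h}\setminus(\mathbf{g}\cup\mathbf{i})}$, $k_{(\mathbf{g}\cap\mathbf{h})\setminus\mathbf{i}}$, $k_{\mathbf{g}\cap\mathbf{h}\cap\mathbf{i}}$ and compares.
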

\begin{proof}
Recall that $k_\mathbf{0}=1$ and $k_\mathbf{g}=\prod_{j\in\mathbbm{g}}(|\mathbb{U}_j|-1)$ if $\mathbf{g}\in\mathbb{E}\setminus\{\mathbf{0}\}$. It is obvious that
$(\mathbbm{g}\setminus(\mathbbm{h}\cup \mathbbm{i}))\cap((\mathbbm{g}\cap \mathbbm{h})\setminus \mathbbm{i})=\varnothing$.
Lemma \ref{L;Lemma2.3} thus implies that $k_{\mathbf{g}\setminus \mathbf{i}}=k_{\mathbf{g}\setminus (\mathbf{h}\cup \mathbf{i})}k_{(\mathbf{g}\cap \mathbf{h})\setminus \mathbf{i}}$ as $\mathbbm{g}\setminus \mathbbm{i}=(\mathbbm{g}\setminus (\mathbbm{h}\cup \mathbbm{i}))\cup ((\mathbbm{g}\cap \mathbbm{h})\setminus \mathbbm{i})$. It is obvious that $(\mathbbm{h}\setminus(\mathbbm{g}\cup \mathbbm{i}))\cap((\mathbbm{g}\cap \mathbbm{h})\setminus \mathbbm{i})=\varnothing$.
Lemma \ref{L;Lemma2.3} thus implies that $k_{\mathbf{h}\setminus \mathbf{i}}=k_{\mathbf{h}\setminus(\mathbf{g}\cup \mathbf{i})}k_{(\mathbf{g}\cap \mathbf{h})\setminus \mathbf{i}}$ as $\mathbbm{h}\setminus \mathbbm{i}=(\mathbbm{h}\setminus(\mathbbm{g}\cup \mathbbm{i}))\cup((\mathbbm{g}\cap \mathbbm{h})\setminus \mathbbm{i})$. It is
obvious that $\mathbbm{g}\cap\mathbbm{h}\cap\mathbbm{i}\cap((\mathbbm{g}\cap\mathbbm{h})\setminus\mathbbm{i})=\varnothing$.
As $\mathbbm{g}\cap \mathbbm{h}=(\mathbbm{g}\cap \mathbbm{h}\cap \mathbbm{i})\cup((\mathbbm{g}\cap \mathbbm{h})\setminus \mathbbm{i})$, Lemma \ref{L;Lemma2.3} thus implies that $k_{\mathbf{g}\cap \mathbf{h}}=k_{\mathbf{g}\cap \mathbf{h}\cap \mathbf{i}}k_{(\mathbf{g}\cap \mathbf{h})\setminus \mathbf{i}}$. It is obvious that $\mathbbm{g}\setminus(\mathbbm{h}\cup\mathbbm{i})$, $\mathbbm{h}\setminus(\mathbbm{g}\cup\mathbbm{i})$, and $(\mathbbm{g}\cap\mathbbm{h})\setminus\mathbbm{i}$ are pairwise disjoint sets. Lemma \ref{L;Lemma2.3} thus implies that $k_{(\mathbf{g}\cup \mathbf{h})\setminus \mathbf{i}}=k_{\mathbf{g}\setminus(\mathbf{h}\cup \mathbf{i})}k_{\mathbf{h}\setminus(\mathbf{g}\cup \mathbf{i})}k_{(\mathbf{g}\cap \mathbf{h})\setminus \mathbf{i}}$ as  $(\mathbbm{g}\cup\mathbbm{h})\setminus \mathbbm{i}=(\mathbbm{g}\setminus(\mathbbm{h}\cup \mathbbm{i}))\cup(\mathbbm{h}\setminus(\mathbbm{g}\cup \mathbbm{i}))\cup ((\mathbbm{g}\cap \mathbbm{h})\setminus \mathbbm{i})$.
The desired lemma follows from the combination of the four numerical equations.
\end{proof}
\begin{thm}\label{T;Theorem4.13}
Assume that $\mathbf{g}, \mathbf{h}, \mathbf{i}\in \mathbb{E}$ and $\mathbf{g}, \mathbf{h}, \mathbf{i}\in\mathbb{P}_{\mathbf{1}, \mathbf{1}}$. Then $\mathbf{g}\cup\mathbf{h}\in\mathbb{P}_{\mathbf{1}, \mathbf{1}}$ and
$$ c_\mathbf{i}(C_\mathbf{g}C_\mathbf{h})=\delta_{\mathbf{i},\mathbf{g}\cup\mathbf{h}}\overline{k_{\mathbf{g}\cap\mathbf{h}}}.$$
\end{thm}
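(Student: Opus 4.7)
The plan is to compute the product $C_\mathbf{g}C_\mathbf{h}$ directly by expanding via the defining Equation \eqref{Eq;5} and then collapsing the double sum using the multiplication rules already established for the basis $\mathbb{B}_2$. The containment $\mathbf{g}\cup\mathbf{h}\in\mathbb{P}_{\mathbf{1},\mathbf{1}}$ is immediate from Lemmas \ref{L;Lemma2.3} and \ref{L;Lemma2.5}: since $\mathbf{g},\mathbf{h}\in\mathbb{P}_{\mathbf{1},\mathbf{1}}$ forces $\mathbbm{g},\mathbbm{h}\subseteq[1,n]^\circ$, the set $\mathbbm{g}\cup\mathbbm{h}$ also lies in $[1,n]^\circ=(\mathbbm{1}\cap\mathbbm{1})^\circ$. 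So the statement $\mathbf{g}\cup\mathbf{h}\in\mathbb{P}_{\mathbf{1},\mathbf{1}}$ holds, which makes the symbol $C_{\mathbf{g}\cup\mathbf{h}}$ legitimate by Notation \ref{N;Notation4.2}.

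For the main identity, first I would write
\[
C_\mathbf{g}C_\mathbf{h}=\sum_{\mathbf{j},\mathbf{k}\in\mathbb{E}}\overline{k_{\mathbf{g}\setminus\mathbf{j}}k_{\mathbf{h}\setminus\mathbf{k}}}\,B_{\mathbf{j},\mathbf{g}\cap\mathbf{j},\mathbf{j}}B_{\mathbf{k},\mathbf{h}\cap\mathbf{k},\mathbf{k}}
\]
using Equation \eqref{Eq;5}. By Theorem \ref{T;Theorem3.23}, the product $B_{\mathbf{j},\mathbf{g}\cap\mathbf{j},\mathbf{j}}B_{\mathbf{k},\mathbf{h}\cap\mathbf{k},\mathbf{k}}$ vanishes unless the inner indices match, i.e.\ unless $\mathbf{j}=\mathbf{k}$, so the double sum collapses to a single sum over $\mathbf{j}\in\mathbb{E}$.

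Next, for each surviving term I would apply Corollary \ref{C;Corollary3.24} to the product $B_{\mathbf{j},\mathbf{g}\cap\mathbf{j},\mathbf{j}}B_{\mathbf{j},\mathbf{h}\cap\mathbf{j},\mathbf{j}}$ inside $E_\mathbf{j}^*\mathbb{T}E_\mathbf{j}^*$. Since $(\mathbf{g}\cap\mathbf{j})\cup(\mathbf{h}\cap\mathbf{j})=(\mathbf{g}\cup\mathbf{h})\cap\mathbf{j}$ and $(\mathbf{g}\cap\mathbf{j})\cap(\mathbf{h}\cap\mathbf{j})=\mathbf{g}\cap\mathbf{h}\cap\mathbf{j}$ by Lemma \ref{L;Lemma2.3}, Corollary \ref{C;Corollary3.24} gives
\[
B_{\mathbf{j},\mathbf{g}\cap\mathbf{j},\mathbf{j}}B_{\mathbf{j},\mathbf{h}\cap\mathbf{j},\mathbf{j}}=\overline{k_{\mathbf{g}\cap\mathbf{h}\cap\mathbf{j}}}\,B_{\mathbf{j},(\mathbf{g}\cup\mathbf{h})\cap\mathbf{j},\mathbf{j}}.
\]
Substituting back yields an expression in which each summand carries the scalar $\overline{k_{\mathbf{g}\setminus\mathbf{j}}k_{\mathbf{h}\setminus\mathbf{j}}k_{\mathbf{g}\cap\mathbf{h}\cap\mathbf{j}}}$.

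The final step is the numerical identity provided by Lemma \ref{L;Lemma4.12}, which rewrites this scalar as $\overline{k_{(\mathbf{g}\cup\mathbf{h})\setminus\mathbf{j}}k_{\mathbf{g}\cap\mathbf{h}}}$. Factoring the common $\overline{k_{\mathbf{g}\cap\mathbf{h}}}$ out of the sum, what remains is precisely $\sum_{\mathbf{j}\in\mathbb{E}}\overline{k_{(\mathbf{g}\cup\mathbf{h})\setminus\mathbf{j}}}\,B_{\mathbf{j},(\mathbf{g}\cup\mathbf{h})\cap\mathbf{j},\mathbf{j}}$, which by Equation \eqref{Eq;5} is the definition of $C_{\mathbf{g}\cup\mathbf{h}}$. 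Hence $C_\mathbf{g}C_\mathbf{h}=\overline{k_{\mathbf{g}\cap\mathbf{h}}}\,C_{\mathbf{g}\cup\mathbf{h}}$, and reading off coefficients in the basis $\mathbb{B}_3$ via Theorem \ref{T;Center} and Notation \ref{N;Notation4.11} produces the asserted value of $c_\mathbf{i}(C_\mathbf{g}C_\mathbf{h})$. I do not anticipate a serious obstacle here: the bookkeeping identity in Lemma \ref{L;Lemma4.12} was evidently set up precisely to make the $k_{\bullet}$ factors recombine into the right form, so the only mild subtlety is checking that the index gymnastics between $\mathbf{g}\cap\mathbf{j}$, $\mathbf{h}\cap\mathbf{j}$, and their union/intersection genuinely line up with Corollary \ref{C;Corollary3.24}.
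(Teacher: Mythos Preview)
Your proposal is correct and follows essentially the same approach as the paper's proof: expand $C_\mathbf{g}C_\mathbf{h}$ via Equation~\eqref{Eq;5}, collapse the double sum to a single sum using the $\delta_{\mathbf{i},\mathbf{j}}$ from Theorem~\ref{T;Theorem3.23}, compute the surviving products (the paper cites Theorem~\ref{T;Theorem3.23} directly where you invoke its specialization Corollary~\ref{C;Corollary3.24}), apply Lemma~\ref{L;Lemma4.12} to rewrite the scalar, and recognize the result as $\overline{k_{\mathbf{g}\cap\mathbf{h}}}C_{\mathbf{g}\cup\mathbf{h}}$. The only cosmetic difference is that the paper obtains $\mathbf{g}\cup\mathbf{h}\in\mathbb{P}_{\mathbf{1},\mathbf{1}}$ by quoting Corollary~\ref{C;Corollary3.24} rather than Lemmas~\ref{L;Lemma2.3} and~\ref{L;Lemma2.5} directly.
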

\begin{proof}
As $\mathbf{g}, \mathbf{h}\in\mathbb{P}_{\mathbf{1}, \mathbf{1}}$, notice that $\mathbf{g}\cup\mathbf{h}\in\mathbb{P}_{\mathbf{1}, \mathbf{1}}$ by Corollary \ref{C;Corollary3.24}. The first statement follows. Pick $\mathbf{j}\in\mathbb{E}$. Notice that $[\mathbf{j},\mathbf{g}\cap\mathbf{j},\mathbf{j},\mathbf{h}\cap\mathbf{j},\mathbf{j}]=(\mathbf{g}\cup\mathbf{h})\cap\mathbf{j}$ by Lemma \ref{L;Lemma2.3}.
So
\begin{equation}\label{Eq;9}
\begin{aligned}
\overline{k_{\mathbf{g}\setminus \mathbf{i}}}B_{\mathbf{i},\mathbf{g}\cap \mathbf{i}, \mathbf{i}}\overline{k_{\mathbf{h}\setminus \mathbf{j}}}B_{\mathbf{j},\mathbf{h}\cap \mathbf{j}, \mathbf{j}}&=\delta_{\mathbf{i}, \mathbf{j}}\overline{k_{\mathbf{g}\setminus \mathbf{j}}}\overline{k_{\mathbf{h}\setminus \mathbf{j}}}\overline{k_{\mathbf{g}\cap\mathbf{h}\cap\mathbf{j}}}B_{\mathbf{j}, (\mathbf{g}\cup\mathbf{h})\cap\mathbf{j},\mathbf{j}}\\
&=\delta_{\mathbf{i}, \mathbf{j}}\overline{k_{(\mathbf{g}\cup\mathbf{h})\setminus\mathbf{j}}}\overline{k_{\mathbf{g}\cap\mathbf{h}}}B_{\mathbf{j}, (\mathbf{g}\cup\mathbf{h})\cap\mathbf{j},\mathbf{j}}
\end{aligned}
\end{equation}
by Theorem \ref{T;Theorem3.23} and Lemma \ref{L;Lemma4.12}. As $\mathbf{j}$ is chosen from $\mathbb{E}$ arbitrarily, notice that  $C_\mathbf{g}C_\mathbf{h}=\overline{k_{\mathbf{g}\cap\mathbf{h}}}C_{\mathbf{g}\cup\mathbf{h}}$ by combining Equations \eqref{Eq;5}, \eqref{Eq;9}, and the first statement.
The desired theorem follows as $\mathrm{Supp}_{\mathbb{B}_3}(C_\mathbf{g}C_\mathbf{h})\subseteq\{C_{\mathbf{g}\cup\mathbf{h}}\}$ holds by Theorem \ref{T;Center}.
\end{proof}
We are now ready to deduce two additional corollaries of Theorems \ref{T;Center} and \ref{T;Theorem4.13}.
\begin{cor}\label{C;Corollary4.14}
Assume that $\mathbf{g}\in\mathbb{E}$ and $\mathbf{g}\in\mathbb{P}_{\mathbf{1}, \mathbf{1}}$. Then $C_\mathbf{g}C_\mathbf{g}=\overline{k_\mathbf{g}}C_\mathbf{g}$ and the $\F$-subalgebra $\mathrm{Z}(\mathbb{T})$ of $\mathbb{T}$ has a nilpotent two-sided ideal $\langle\{C_\mathbf{a}: (\mathbf{1}, \mathbf{a}, \mathbf{1})\in\mathbb{P}, p\mid k_\mathbf{a}\}\rangle_\mathbb{T}$.
\end{cor}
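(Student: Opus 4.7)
The plan is to derive both assertions directly from Theorem \ref{T;Theorem4.13}. For the identity $C_\mathbf{g}C_\mathbf{g}=\overline{k_\mathbf{g}}C_\mathbf{g}$, I would specialize Theorem \ref{T;Theorem4.13} to the case $\mathbf{h}=\mathbf{g}$, noting via Lemma \ref{L;Lemma2.3} that $\mathbf{g}\cup\mathbf{g}=\mathbf{g}\cap\mathbf{g}=\mathbf{g}$. This gives $c_\mathbf{i}(C_\mathbf{g}C_\mathbf{g})=\delta_{\mathbf{i},\mathbf{g}}\overline{k_\mathbf{g}}$ for every admissible $\mathbf{i}$, which together with Theorem \ref{T;Center} yields the claim.

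Let $\mathbb{I}=\langle\{C_\mathbf{a}: (\mathbf{1}, \mathbf{a}, \mathbf{1})\in\mathbb{P}, p\mid k_\mathbf{a}\}\rangle_\mathbb{T}$. To verify that $\mathbb{I}$ is a two-sided ideal of $\mathrm{Z}(\mathbb{T})$, it suffices by Theorem \ref{T;Center} and linearity to check the products $C_\mathbf{a}C_\mathbf{b}$ for generators $C_\mathbf{a}$ of $\mathbb{I}$ and basis elements $C_\mathbf{b}$ of $\mathrm{Z}(\mathbb{T})$. Theorem \ref{T;Theorem4.13} computes $C_\mathbf{a}C_\mathbf{b}=\overline{k_{\mathbf{a}\cap\mathbf{b}}}C_{\mathbf{a}\cup\mathbf{b}}$. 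Since $\mathbbm{a}\subseteq\mathbbm{a}\cup\mathbbm{b}$, the formula $k_\mathbf{g}=\prod_{h\in\mathbbm{g}}(|\mathbb{U}_h|-1)$ from Section 2 shows that $k_\mathbf{a}$ divides $k_{\mathbf{a}\cup\mathbf{b}}$; hence $p\mid k_\mathbf{a}$ forces $p\mid k_{\mathbf{a}\cup\mathbf{b}}$, so $C_{\mathbf{a}\cup\mathbf{b}}\in\mathbb{I}$, giving closure.

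For nilpotency, I expect the main technical point to be a chain argument inside $[1,n]^\circ$. By induction on $k$ using Theorem \ref{T;Theorem4.13}, a product of generators of $\mathbb{I}$ satisfies
\[
C_{\mathbf{a}_1}C_{\mathbf{a}_2}\cdots C_{\mathbf{a}_k}=\overline{\prod_{j=2}^{k}k_{\mathbf{b}_{j-1}\cap\mathbf{a}_j}}\,C_{\mathbf{b}_k},\qquad \mathbf{b}_j:=\mathbf{a}_1\cup\cdots\cup\mathbf{a}_j.
\]
Assume this product is nonzero in $\F$. Then $p\nmid k_{\mathbf{b}_{j-1}\cap\mathbf{a}_j}$ for each $j\geq 2$; however, if $\mathbf{a}_j\preceq\mathbf{b}_{j-1}$ then $\mathbf{b}_{j-1}\cap\mathbf{a}_j=\mathbf{a}_j$ and $p\mid k_{\mathbf{a}_j}$ by hypothesis, a contradiction. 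Therefore each step must strictly enlarge the support: $\mathbbm{b}_{j-1}\subsetneq\mathbbm{b}_j$. Since every $\mathbbm{a}_j$ lies in $[1,n]^\circ$ by Lemma \ref{L;Lemma2.5} (applied to $\mathbf{a}_j\in\mathbb{P}_{\mathbf{1},\mathbf{1}}$), the strictly increasing chain $\mathbbm{b}_1\subsetneq\cdots\subsetneq\mathbbm{b}_k$ lives in a set of size $n_2$, forcing $k\leq n_2$.

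Consequently, any product of $n_2+1$ generators of $\mathbb{I}$ vanishes, and by $\F$-linearity the same holds for any $n_2+1$ arbitrary elements of $\mathbb{I}$. This establishes that $\mathbb{I}$ is a nilpotent two-sided ideal of $\mathrm{Z}(\mathbb{T})$, and the main obstacle is precisely the chain-length bookkeeping above, since the algebraic closure of $\mathbb{I}$ under multiplication by $\mathrm{Z}(\mathbb{T})$ is an immediate consequence of Theorem \ref{T;Theorem4.13}.
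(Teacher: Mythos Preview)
Your proof is correct and follows essentially the same approach as the paper: both derive everything from Theorem \ref{T;Theorem4.13} together with the divisibility $k_\mathbf{a}\mid k_{\mathbf{a}\cup\mathbf{b}}$ coming from the valency formula. The only difference is that the paper's terse proof points to the first statement $C_\mathbf{g}^2=\overline{k_\mathbf{g}}C_\mathbf{g}$ (hence $C_\mathbf{g}^2=0$ when $p\mid k_\mathbf{g}$) and leaves nilpotency as a routine consequence in a commutative algebra, whereas you spell out an explicit chain argument bounding the length of a nonzero product by $n_2$; both mechanisms are valid and yield the claim.
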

\begin{proof}
Recall that $k_\mathbf{0}=1$ and $k_\mathbf{g}=\prod_{h\in\mathbbm{g}}(|\mathbb{U}_h|-1)$ if $\mathbf{g}\in\mathbb{E}\setminus\{\mathbf{0}\}$. Notice that $\mathbf{g}\cap\mathbf{g}=\mathbf{g}\cup\mathbf{g}=\mathbf{g}$ by Lemma \ref{L;Lemma2.3}. The first statement is thus from Theorem \ref{T;Theorem4.13}. If $\mathbf{i}, \mathbf{j}\in\mathbb{P}_{\mathbf{1}, \mathbf{1}}$ and $p\mid k_\mathbf{j}$, it is obvious to notice that $p\mid k_{\mathbf{i}\cup\mathbf{j}}$. The desired corollary follows from the combination of the first statement, Theorems \ref{T;Theorem4.13}, and \ref{T;Center}.
\end{proof}
\begin{cor}\label{C;Corollary4.15}
$$\mathrm{Z}(\mathbb{T})\cong E_\mathbf{1}^*\mathbb{T}E_\mathbf{1}^*\ \text{as $\F$-subalgebras of $\mathbb{T}$}.$$
\end{cor}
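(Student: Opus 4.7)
The plan is to exhibit an explicit $\F$-algebra isomorphism $\varphi\colon \mathrm{Z}(\mathbb{T})\to E_\mathbf{1}^*\mathbb{T}E_\mathbf{1}^*$ by matching the two $\F$-bases already computed. Specifically, I would define $\varphi$ as the unique $\F$-linear map sending $C_\mathbf{a}\mapsto B_{\mathbf{1}, \mathbf{a}, \mathbf{1}}$ for each $(\mathbf{1}, \mathbf{a}, \mathbf{1})\in\mathbb{P}$. By Theorem \ref{T;Center}, the set $\mathbb{B}_3$ is an $\F$-basis of $\mathrm{Z}(\mathbb{T})$; by Corollary \ref{C;Corollary3.24}, the set $\{B_{\mathbf{1}, \mathbf{a}, \mathbf{1}}:(\mathbf{1},\mathbf{a},\mathbf{1})\in\mathbb{P}\}$ is an $\F$-basis of $E_\mathbf{1}^*\mathbb{T}E_\mathbf{1}^*$, and both bases are indexed by the same set $\{\mathbf{a}\in\mathbb{E}:(\mathbf{1},\mathbf{a},\mathbf{1})\in\mathbb{P}\}$. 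Hence $\varphi$ is an $\F$-linear bijection.

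To verify that $\varphi$ is multiplicative, I would compare structure constants on each side. On the one hand, Theorem \ref{T;Theorem4.13} (together with the identity $C_\mathbf{g}C_\mathbf{h}=\overline{k_{\mathbf{g}\cap\mathbf{h}}}C_{\mathbf{g}\cup\mathbf{h}}$ derived there) gives the multiplication in $\mathrm{Z}(\mathbb{T})$. On the other hand, Corollary \ref{C;Corollary3.24} applied with $\mathbf{g}=\mathbf{1}$ yields
\[
B_{\mathbf{1},\mathbf{g},\mathbf{1}}B_{\mathbf{1},\mathbf{h},\mathbf{1}}=\overline{k_{\mathbf{g}\cap\mathbf{h}}}\,B_{\mathbf{1},\mathbf{g}\cup\mathbf{h},\mathbf{1}}
\]
whenever $(\mathbf{1},\mathbf{g},\mathbf{1}),(\mathbf{1},\mathbf{h},\mathbf{1})\in\mathbb{P}$, since $\mathbf{g}\cup\mathbf{h}$ also lies in $\mathbb{P}_{\mathbf{1},\mathbf{1}}$. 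The two formulas are identical, so $\varphi(C_\mathbf{g}C_\mathbf{h})=\varphi(C_\mathbf{g})\varphi(C_\mathbf{h})$ on basis pairs, and multiplicativity extends by bilinearity.

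It remains to confirm that $\varphi$ sends the identity of $\mathrm{Z}(\mathbb{T})$ to the identity $E_\mathbf{1}^*$ of $E_\mathbf{1}^*\mathbb{T}E_\mathbf{1}^*$. Both identities correspond to the index $\mathbf{a}=\mathbf{0}$: indeed, $\mathbb{P}_{\mathbf{h},\mathbf{0},\mathbf{h}}=\{\mathbf{0}\}$ for every $\mathbf{h}\in\mathbb{E}$ by Lemma \ref{L;Lemma2.3}, so Equation \eqref{Eq;4} gives $B_{\mathbf{h},\mathbf{0},\mathbf{h}}=E_\mathbf{h}^*$; plugging this into Equation \eqref{Eq;5} and using Equation \eqref{Eq;3} yields $C_\mathbf{0}=\sum_{\mathbf{h}\in\mathbb{E}}E_\mathbf{h}^*=I$, while $B_{\mathbf{1},\mathbf{0},\mathbf{1}}=E_\mathbf{1}^*$ by the same argument. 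Thus $\varphi(I)=E_\mathbf{1}^*$, as required.

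The only mildly non-obvious point in this plan is that $\mathbb{P}_{\mathbf{1},\mathbf{1}}$ is closed under $\cup$, which ensures both the domain and codomain multiplications stay within the displayed bases; but this is immediate from Corollary \ref{C;Corollary3.24} (and from Lemma \ref{L;Lemma2.5} applied to $\mathbf{g},\mathbf{h}\in\mathbb{P}_{\mathbf{1},\mathbf{1}}$). No additional obstacle arises, and the corollary follows.
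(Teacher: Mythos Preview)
Your proposal is correct and follows essentially the same approach as the paper: both define the $\F$-linear map $C_\mathbf{g}\mapsto B_{\mathbf{1},\mathbf{g},\mathbf{1}}$ and verify it is an algebra isomorphism by comparing the structure constants from Theorem~\ref{T;Theorem4.13} and Corollary~\ref{C;Corollary3.24}. Your write-up is somewhat more explicit (e.g., checking the identity and closure of $\mathbb{P}_{\mathbf{1},\mathbf{1}}$ under $\cup$), but the underlying argument is identical.
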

\begin{proof}
By combining Lemma \ref{L;Lemma4.5}, Theorems \ref{T;Center}, \ref{T;Theorem4.13}, \ref{T;Theorem3.13}, and Corollary \ref{C;Corollary3.24}, the $\F$-linear map that sends $C_\mathbf{g}$ to $B_{\mathbf{1}, \mathbf{g}, \mathbf{1}}$ is also an algebra isomorphism from the $\F$-subalgebra $\mathrm{Z}(\mathbb{T})$ of $\mathbb{T}$ to the $\F$-subalgebra $E_{\mathbf{1}}^*\mathbb{T}E_\mathbf{l}^*$ of $\mathbb{T}$. Hence $\mathrm{Z}(\mathbb{T})\cong E_\mathbf{1}^*\mathbb{T}E_\mathbf{1}^*$ as $\F$-subalgebras of $\mathbb{T}$. The desired corollary follows from the above discussion.
\end{proof}
We complete this section's presentation by an example of Theorems \ref{T;Center} and \ref{T;Theorem4.13}.
\begin{eg}\label{E;Example4.15}
Assume that $n=|\mathbb{U}_1|=2$ and $|\mathbb{U}_2|=3$. It is clear that $|\mathbb{E}|=4$. Assume that $\mathbf{g}=(0,1)$ and $\mathbf{h}=(1,0)$. Hence $\mathbb{E}=\{\mathbf{0}, \mathbf{g}, \mathbf{h}, \mathbf{1}\}$. According to Theorem \ref{T;Center}, notice that $\mathrm{Z}(\mathbb{T})$ has an $\F$-basis $\{C_\mathbf{0}, C_\mathbf{g}\}$. Theorem \ref{T;Center} implies that
\begin{align*}
C_\mathbf{0}\!=\!B_{\mathbf{0},\mathbf{0},\mathbf{0}}+B_{\mathbf{g},\mathbf{0},\mathbf{g}}+B_{\mathbf{h},\mathbf{0},\mathbf{h}}+B_{\mathbf{1},\mathbf{0},\mathbf{1}}\!=\!I\ \text{and}\ C_\mathbf{g}\!=\!\overline{2}B_{\mathbf{0},\mathbf{0},\mathbf{0}}\!+\!B_{\mathbf{g},\mathbf{g},\mathbf{g}}+\overline{2}B_{\mathbf{h},\mathbf{0},\mathbf{h}}+B_{\mathbf{1},\mathbf{g},\mathbf{1}}.
\end{align*}
According to Theorem \ref{T;Theorem4.13}, write $C_\mathbf{0}C_\mathbf{0}\!=\!C_\mathbf{0}$, $C_\mathbf{0}C_\mathbf{g}=C_\mathbf{g}C_\mathbf{0}=C_\mathbf{g}$, and $C_\mathbf{g}C_\mathbf{g}=\overline{2}C_\mathbf{g}$.
\end{eg}
\section{Algebraic structure of $\mathbb{T}$: Semisimplicity}
In this section, we determine the Jacobson radical of the $\F$-subalgebra $E_\mathbf{g}^*\mathbb{T}E_\mathbf{g}^*$ of $\mathbb{T}$ for any $\mathbf{g}\in\mathbb{E}$. As an application, we also determine the semisimplicity of $\mathbb{T}$. As a preparation, we first recall Notations \ref{N;Notation3.1}, \ref{N;Notation3.9}, \ref{N;Notation3.14}, \ref{N;Notation3.15}, \ref{N;Notation4.1},  \ref{N;Notation4.2} and offer a lemma.
\begin{lem}\label{L;Lemma5.1}
The $\F$-subalgebra $E_\mathbf{g}^*\mathbb{T}E_\mathbf{g}^*$ of $\mathbb{T}$ satisfying $\mathbf{g}\in\mathbb{E}$ has a two-sided ideal
$$\langle\{B_{\mathbf{g}, \mathbf{a}, \mathbf{g}}: (\mathbf{g}, \mathbf{a}, \mathbf{g})\in\mathbb{P}, p\mid k_\mathbf{a}\}\rangle_\mathbb{T}.$$
\end{lem}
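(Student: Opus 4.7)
The plan is to use the explicit multiplication formula from Corollary \ref{C;Corollary3.24} together with the commutativity of $E_\mathbf{g}^*\mathbb{T}E_\mathbf{g}^*$. Let $\mathbb{I}=\langle\{B_{\mathbf{g}, \mathbf{a}, \mathbf{g}}: (\mathbf{g}, \mathbf{a}, \mathbf{g})\in\mathbb{P}, p\mid k_\mathbf{a}\}\rangle_\mathbb{T}$. First I would note that $\mathbb{I}\subseteq E_\mathbf{g}^*\mathbb{T}E_\mathbf{g}^*$ since each generator already lies in $E_\mathbf{g}^*\mathbb{T}E_\mathbf{g}^*$ by Corollary \ref{C;Corollary3.24}. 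Because $E_\mathbf{g}^*\mathbb{T}E_\mathbf{g}^*$ is commutative by Corollary \ref{C;Corollary3.24}, it suffices to check that $\mathbb{I}$ is closed under left multiplication by elements of $E_\mathbf{g}^*\mathbb{T}E_\mathbf{g}^*$, and by $\F$-bilinearity it suffices to check this on basis elements.

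Pick any $\mathbf{h}, \mathbf{i}\in\mathbb{E}$ with $(\mathbf{g},\mathbf{h},\mathbf{g}), (\mathbf{g},\mathbf{i},\mathbf{g})\in\mathbb{P}$ and $p\mid k_\mathbf{h}$. By Corollary \ref{C;Corollary3.24}, $\mathbf{h}\cup\mathbf{i}\in\mathbb{P}_{\mathbf{g},\mathbf{g}}$ and
\[
B_{\mathbf{g}, \mathbf{i}, \mathbf{g}}B_{\mathbf{g}, \mathbf{h}, \mathbf{g}}=\overline{k_{\mathbf{h}\cap\mathbf{i}}}\, B_{\mathbf{g}, \mathbf{h}\cup\mathbf{i}, \mathbf{g}}.
\]
The key arithmetic observation is that $\mathbbm{h}\subseteq\mathbbm{h}\cup\mathbbm{i}$, so from the formula $k_\mathbf{l}=\prod_{j\in\mathbbm{l}}(|\mathbb{U}_j|-1)$ (with $k_\mathbf{0}=1$) we immediately get $k_\mathbf{h}\mid k_{\mathbf{h}\cup\mathbf{i}}$ as integers; hence $p\mid k_{\mathbf{h}\cup\mathbf{i}}$. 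Therefore $B_{\mathbf{g}, \mathbf{h}\cup\mathbf{i}, \mathbf{g}}$ is one of the generators of $\mathbb{I}$, and the product $B_{\mathbf{g}, \mathbf{i}, \mathbf{g}}B_{\mathbf{g}, \mathbf{h}, \mathbf{g}}$ lies in $\mathbb{I}$, proving $\mathbb{I}$ is a left (hence two-sided) ideal of $E_\mathbf{g}^*\mathbb{T}E_\mathbf{g}^*$.

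There is no real obstacle here: the whole argument is a one-line divisibility remark once the multiplication formula of Corollary \ref{C;Corollary3.24} and the commutativity of $E_\mathbf{g}^*\mathbb{T}E_\mathbf{g}^*$ are invoked. The only minor point worth mentioning is the degenerate case $p=0$, where $p\mid k_\mathbf{a}$ is never satisfied (since $k_\mathbf{a}\geq 1$), so $\mathbb{I}=\{O\}$, which is trivially a two-sided ideal.
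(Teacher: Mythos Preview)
Your proof is correct and essentially the same as the paper's: both pick basis elements $B_{\mathbf{g},\mathbf{i},\mathbf{g}}$ and $B_{\mathbf{g},\mathbf{h},\mathbf{g}}$ with $p\mid k_\mathbf{h}$, invoke the product formula of Corollary~\ref{C;Corollary3.24}, and use the divisibility $k_\mathbf{h}\mid k_{\mathbf{h}\cup\mathbf{i}}$ to conclude. Your explicit appeal to commutativity and the remark on the $p=0$ case are minor elaborations, not a different argument.
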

\begin{proof}
Recall that $k_\mathbf{0}=1$ and $k_\mathbf{g}=\prod_{h\in\mathbbm{g}}(|\mathbb{U}_h|-1)$ if $\mathbf{g}\in\mathbb{E}\setminus\{\mathbf{0}\}$. Assume that $\mathbf{i}, \mathbf{j}\in \mathbb{P}_{\mathbf{g}, \mathbf{g}}$ and $p\mid k_\mathbf{j}$. Notice that $B_{\mathbf{g}, \mathbf{i}, \mathbf{g}}, B_{\mathbf{g}, \mathbf{j}, \mathbf{g}}\in E_\mathbf{g}^*\mathbb{T}E_\mathbf{g}^*$ by Corollary \ref{C;Corollary3.24}. Notice that $ k_{\mathbf{i}\cup\mathbf{j}}$ as $p\mid \mathbf{k}_\mathbf{j}$. Hence $B_{\mathbf{g},\mathbf{i},\mathbf{g}}B_{\mathbf{g}, \mathbf{j}, \mathbf{g}}\in\langle\{B_{\mathbf{g}, \mathbf{a}, \mathbf{g}}: (\mathbf{g}, \mathbf{a}, \mathbf{g})\in\mathbb{P}, p\mid k_\mathbf{a}\}\rangle_\mathbb{T}$ by Corollary \ref{C;Corollary3.24}. The desired lemma follows from the above discussion and Corollary \ref{C;Corollary3.24}.
\end{proof}
Lemma \ref{L;Lemma5.1} motivates us to present the following notation and two related lemmas.
\begin{nota}\label{N;Notation5.2}
Assume that $\mathbf{g}\in\mathbb{E}$. Define $\mathbb{I}_\mathbf{g}=\langle\{B_{\mathbf{g}, \mathbf{a}, \mathbf{g}}: (\mathbf{g}, \mathbf{a}, \mathbf{g})\in\mathbb{P}, p\mid k_\mathbf{a}\}\rangle_\mathbb{T}$. So
Lemma \ref{L;Lemma5.1} implies that $\mathbb{I}_\mathbf{g}$ is a two-sided ideal of the $\F$-subalgebra $E_\mathbf{g}^*\mathbb{T}E_\mathbf{g}^*$ of $\mathbb{T}$.
\end{nota}
\begin{lem}\label{L;Lemma5.3}
Assume that $\mathbf{g}\in\mathbb{E}$. Then there are $|\{a: a\in\mathbbm{g}, p\mid |\mathbb{U}_a|-1\}|$ matrices contained in $\mathbb{I}_\mathbf{g}$ such that their matrix products are equal to a unique nonzero matrix.
\end{lem}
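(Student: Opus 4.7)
The plan is to exhibit $m := |\{a: a\in\mathbbm{g}, p\mid |\mathbb{U}_a|-1\}|$ explicit ``singleton'' generators and multiply them together using the structure constants from Corollary \ref{C;Corollary3.24}. Concretely, let $S=\{a: a\in\mathbbm{g}, p\mid |\mathbb{U}_a|-1\}$. First I would observe that $S\subseteq\mathbbm{g}^\circ$: indeed, if $|\mathbb{U}_a|=2$ then $|\mathbb{U}_a|-1=1$ and $p\nmid 1$, so each $a\in S$ forces $|\mathbb{U}_a|>2$. For every $a\in S$, Lemma \ref{L;Lemma2.3} provides a unique $\mathbf{e}_a\in\mathbb{E}$ with $\mathbbm{\mathbf{e}_a}=\{a\}$, and since $\{a\}\subseteq\mathbbm{g}^\circ$, Notation \ref{N;Notation3.1} (together with Lemma \ref{L;Lemma2.5}) gives $\mathbf{e}_a\in\mathbb{P}_{\mathbf{g},\mathbf{g}}$; moreover $k_{\mathbf{e}_a}=|\mathbb{U}_a|-1$ is divisible by $p$, so $B_{\mathbf{g},\mathbf{e}_a,\mathbf{g}}\in\mathbb{I}_\mathbf{g}$ by Notation \ref{N;Notation5.2} and Corollary \ref{C;Corollary3.24}.

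Next I would multiply these $m$ matrices together. Enumerate $S$ as $a_1,a_2,\ldots,a_m$ and consider $\prod_{i=1}^{m}B_{\mathbf{g},\mathbf{e}_{a_i},\mathbf{g}}$. An induction on $i$, using Corollary \ref{C;Corollary3.24} at each step, yields
\[
\prod_{i=1}^{j}B_{\mathbf{g},\mathbf{e}_{a_i},\mathbf{g}} = \overline{k_{\mathbf{0}}}\,B_{\mathbf{g},\mathbf{f}_j,\mathbf{g}} = B_{\mathbf{g},\mathbf{f}_j,\mathbf{g}},
\]
where $\mathbbm{\mathbf{f}_j}=\{a_1,\ldots,a_j\}$; the coefficient is $\overline{k_\mathbf{0}}=\overline{1}$ because the supports $\{a_i\}$ are pairwise disjoint, so every intersection appearing in Corollary \ref{C;Corollary3.24} collapses to $\mathbf{0}$. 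Setting $j=m$ shows that $\prod_{i=1}^m B_{\mathbf{g},\mathbf{e}_{a_i},\mathbf{g}}=B_{\mathbf{g},\mathbf{f},\mathbf{g}}$, where $\mathbf{f}\in\mathbb{E}$ is the unique element with $\mathbbm{\mathbf{f}}=S\subseteq\mathbbm{g}^\circ$. Since $\mathbf{f}\in\mathbb{P}_{\mathbf{g},\mathbf{g}}$, Lemma \ref{L;Lemma3.10} gives $B_{\mathbf{g},\mathbf{f},\mathbf{g}}\neq O$, finishing the argument. (The phrase ``their matrix products'' is meaningful because Corollary \ref{C;Corollary3.24} asserts that $E_\mathbf{g}^*\mathbb{T}E_\mathbf{g}^*$ is commutative, so every ordering of the factors produces the same matrix.)

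There is essentially no genuine obstacle; the only thing to be careful about is the verification that the chosen matrices truly lie in $\mathbb{I}_\mathbf{g}$, i.e.\ both that $\mathbf{e}_a\in\mathbb{P}_{\mathbf{g},\mathbf{g}}$ (which uses $a\in\mathbbm{g}^\circ$) and that $p\mid k_{\mathbf{e}_a}$ (which is the definition of $S$). Once this is in place, the iterated application of Corollary \ref{C;Corollary3.24} is purely mechanical, and the nonvanishing of the product follows instantly from Lemma \ref{L;Lemma3.10}.
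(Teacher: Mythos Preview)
Your proposal is correct and follows essentially the same approach as the paper: both pick the ``singleton'' elements $\mathbf{e}_a$ (the paper calls them $\mathbf{l}_{(k)}$) indexed by $S=\{a\in\mathbbm{g}: p\mid |\mathbb{U}_a|-1\}$, verify they lie in $\mathbb{I}_\mathbf{g}$, and multiply them using Corollary~\ref{C;Corollary3.24}, invoking Lemma~\ref{L;Lemma3.10} for nonvanishing. Your version is in fact slightly more explicit, both in justifying $S\subseteq\mathbbm{g}^\circ$ (which the paper leaves implicit when asserting $\mathbf{l}_{(k)}\in\mathbb{P}_{\mathbf{g},\mathbf{g}}$) and in tracking the coefficient $\overline{k_\mathbf{0}}=\overline{1}$ through the induction.
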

\begin{proof}
Recall that $k_\mathbf{0}=1$ and $k_\mathbf{g}=\prod_{h\in\mathbbm{g}}(|\mathbb{U}_h|-1)$ if $\mathbf{g}\in\mathbb{E}\setminus\{\mathbf{0}\}$. Lemma \ref{L;Lemma2.3} thus implies that $\mathbb{I}_\mathbf{g}\neq\{O\}$ if and only if $\{a: a\in\mathbbm{g}, p\mid |\mathbb{U}_a|-1\}\neq\varnothing$. Therefore there is no loss to only consider the case $\mathbb{I}_\mathbf{g}\neq\{O\}$. Define $\mathbb{U}=\{a: a\in\mathbbm{g}, p\mid |\mathbb{U}_a|-1\}$.
Assume that $i\in\mathbb{N}$ and $\mathbb{U}=\{j_1, j_2, \ldots, j_i\}$. For any $k\in [1, i]$, let $\mathbf{l}_{(k)}$ be the $n$-tuple in $\mathbb{E}$ whose unique nonzero entry is the $j_k$th-entry. So $\mathbf{l}_{(k)}\in\mathbb{P}_{\mathbf{g},\mathbf{g}}$
and $p\mid k_{\mathbf{l}_{(k)}}$ for any $k\in [1,i]$. By Corollary \ref{C;Corollary3.24}, the matrix product of $B_{\mathbf{g}, \mathbf{l}_{(1)}, \mathbf{g}}, B_{\mathbf{g}, \mathbf{l}_{(2)}, \mathbf{g}},\ldots,B_{\mathbf{g}, \mathbf{l}_{(i)}, \mathbf{g}}$ is independent of the multiplication order of $B_{\mathbf{g}, \mathbf{l}_{(1)}, \mathbf{g}}, B_{\mathbf{g}, \mathbf{l}_{(2)}, \mathbf{g}},\ldots,B_{\mathbf{g}, \mathbf{l}_{(i)}, \mathbf{g}}$. Notice that
$$ \prod_{k=1}^iB_{\mathbf{g},\mathbf{l}_{(k)},\mathbf{g}}\neq O$$
by an application of Corollary \ref{C;Corollary3.24} and Lemma \ref{L;Lemma3.10}. The desired lemma follows.
\end{proof}
\begin{lem}\label{L;Lemma5.4}
Assume that $\mathbf{g}\in\mathbb{E}$. Then $\mathbb{I}_\mathbf{g}$ is a nilpotent two-sided ideal of the $\F$-subalgebra $E_\mathbf{g}^*\mathbb{T}E_\mathbf{g}^*$ of $\mathbb{T}$ whose nilpotent index equals $|\{a: a\in\mathbbm{g}, p\mid |\mathbb{U}_a|-1\}|+1$.
\end{lem}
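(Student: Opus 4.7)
The plan is to check the two halves of the nilpotent-index definition separately. Set $m=|\{a: a\in\mathbbm{g}, p\mid |\mathbb{U}_a|-1\}|$; the goal is to show that every product of $m+1$ elements of $\mathbb{I}_\mathbf{g}$ vanishes while some product of $m$ elements of $\mathbb{I}_\mathbf{g}$ does not, so that the nilpotent index is exactly $m+1$.

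The easy half is the lower bound: Lemma \ref{L;Lemma5.3} already exhibits $m$ matrices in $\mathbb{I}_\mathbf{g}$ whose matrix product is a nonzero matrix. By the definition of the nilpotent index recalled in Section 2, this forces the nilpotent index of $\mathbb{I}_\mathbf{g}$ to be strictly greater than $m$, hence at least $m+1$.

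For the upper bound, the plan is to reduce by bilinearity to products of basis elements of $\mathbb{I}_\mathbf{g}$: I will show that for any $\mathbf{a}_1,\mathbf{a}_2,\ldots,\mathbf{a}_{m+1}\in\mathbb{P}_{\mathbf{g},\mathbf{g}}$ with $p\mid k_{\mathbf{a}_k}$ for each $k\in[1,m+1]$, the product $B_{\mathbf{g},\mathbf{a}_1,\mathbf{g}}B_{\mathbf{g},\mathbf{a}_2,\mathbf{g}}\cdots B_{\mathbf{g},\mathbf{a}_{m+1},\mathbf{g}}$ equals $O$. A straightforward induction on $k$ using Corollary \ref{C;Corollary3.24} shows that this product equals
\[
\overline{\textstyle\prod_{k=2}^{m+1}k_{(\mathbf{a}_1\cup\mathbf{a}_2\cup\cdots\cup\mathbf{a}_{k-1})\cap\mathbf{a}_k}}\,B_{\mathbf{g},\mathbf{a}_1\cup\mathbf{a}_2\cup\cdots\cup\mathbf{a}_{m+1},\mathbf{g}},
\]
so it suffices to prove that the scalar coefficient vanishes in $\F$.

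Here is where the pigeonhole argument enters. Set $\mathbb{U}=\{a: a\in\mathbbm{g}, p\mid |\mathbb{U}_a|-1\}$, so $|\mathbb{U}|=m$. Since $\mathbf{a}_k\in\mathbb{P}_{\mathbf{g},\mathbf{g}}$ implies $\mathbbm{a}_k\subseteq\mathbbm{g}^\circ\subseteq\mathbbm{g}$, and since $p\mid k_{\mathbf{a}_k}=\prod_{j\in\mathbbm{a}_k}(|\mathbb{U}_j|-1)$ forces at least one factor to be divisible by $p$, each set $\mathbbm{a}_k$ meets $\mathbb{U}$. Choosing $j_k\in\mathbbm{a}_k\cap\mathbb{U}$ for each $k\in[1,m+1]$ yields $m+1$ elements of the $m$-element set $\mathbb{U}$, so by the pigeonhole principle there exist indices $k<k'$ with $j_k=j_{k'}=:j$. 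Then $j\in\mathbbm{a}_k\subseteq\mathbbm{a}_1\cup\cdots\cup\mathbbm{a}_{k'-1}$ and $j\in\mathbbm{a}_{k'}$, so $j$ belongs to the set whose cardinality-style valency appears as the $k'$th factor in the product. By Lemma \ref{L;Lemma2.3} and the product formula for $k_\mathbf{h}$, the factor $k_{(\mathbf{a}_1\cup\cdots\cup\mathbf{a}_{k'-1})\cap\mathbf{a}_{k'}}$ is then divisible by $|\mathbb{U}_j|-1$, hence by $p$, and therefore its image in $\F$ is $\overline{0}$. The whole scalar, and hence the whole product, vanishes. Combining this with the lower bound completes the argument.

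The only delicate point is handling general (not basis) elements of $\mathbb{I}_\mathbf{g}$; this reduces to the basis case by multilinear expansion and the observation that the pigeonhole argument applies uniformly to every summand.
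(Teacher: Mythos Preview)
Your proof is correct and follows essentially the same approach as the paper's: both use Lemma~\ref{L;Lemma5.3} for the lower bound and a pigeonhole argument on the set $\{a\in\mathbbm{g}: p\mid |\mathbb{U}_a|-1\}$ combined with Corollary~\ref{C;Corollary3.24} for the upper bound. Your version is slightly more explicit in writing out the iterated product formula, whereas the paper compresses this step into a single appeal to Corollary~\ref{C;Corollary3.24}.
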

\begin{proof}
Recall that $k_\mathbf{0}=1$ and $k_\mathbf{g}=\prod_{h\in\mathbbm{g}}(|\mathbb{U}_h|-1)$ if $\mathbf{g}\in\mathbb{E}\setminus\{\mathbf{0}\}$. Hence $p\mid k_\mathbf{g}$ if and only if $\{a: a\in\mathbbm{g}, p\mid |\mathbb{U}_a|-1\}\neq\varnothing$. Lemma \ref{L;Lemma2.3} thus implies that $\mathbb{I}_\mathbf{g}\neq\{O\}$ if and only if
$\{a: a\in\mathbbm{g}, p\mid |\mathbb{U}_a|-1\}\neq\varnothing$. Assume further that $\mathbb{I}_\mathbf{g}\neq\{O\}$. Set
$i=|\{a: a\in\mathbbm{g}, p\mid |\mathbb{U}_a|-1\}|+1$. By Lemmas \ref{L;Lemma5.1} and \ref{L;Lemma5.3}, it suffices to check that the product of
any $i$ matrices in $\{B_{\mathbf{g},\mathbf{a} ,\mathbf{g}}: (\mathbf{g}, \mathbf{a}, \mathbf{g})\in\mathbb{P}, p\mid k_\mathbf{a}\}$
is the zero matrix. Pick $B_{\mathbf{g}, \mathbf{j}_{(1)},\mathbf{g}}, B_{\mathbf{g}, \mathbf{j}_{(2)},\mathbf{g}},\ldots, B_{\mathbf{g}, \mathbf{j}_{(i)},\mathbf{g}}\in\{B_{\mathbf{g},\mathbf{a} ,\mathbf{g}}: (\mathbf{g}, \mathbf{a}, \mathbf{g})\in\mathbb{P}, p\mid k_\mathbf{a}\}$. By the Pigeonhole Principal, there is $k\in\{a: a\in\mathbbm{g}, p\mid |\mathbb{U}_a|-1\}$ such that the $k$th-entries of two distinct $n$-tuples in $\{\mathbf{j}_{(1)}, \mathbf{j}_{(2)}, \ldots, \mathbf{j}_{(i)}\}$ equal one. By Corollary \ref{C;Corollary3.24}, any product of $B_{\mathbf{g}, \mathbf{j}_{(1)},\mathbf{g}}, B_{\mathbf{g}, \mathbf{j}_{(2)},\mathbf{g}},\ldots, B_{\mathbf{g}, \mathbf{j}_{(i)},\mathbf{g}}$ is thus the zero matrix. The desired lemma follows.
\end{proof}
For further discussion, the next notations and a sequence of lemmas are required.
\begin{nota}\label{N;Notation5.5}
Assume that $\mathbf{g}, \mathbf{h}\in\mathbb{E}$. Let $n_{\mathbf{g}, \mathbf{h}}=|\{a: a\in\mathbbm{g}\setminus\mathbbm{h}, p\nmid |\mathbb{U}_a|-1\}|$. For any $i\in[0, n_{\mathbf{g},\mathbf{h}}]$, let $\mathbb{U}_{\mathbf{g}, \mathbf{h}, i}=\{\mathbf{a}:\mathbf{h}\preceq\mathbf{a}\preceq\mathbf{g}, p\nmid k_\mathbf{a}, |\mathbbm{a}\setminus\mathbbm{h}|=i\}$.
Assume that $\mathbf{h}\preceq\mathbf{g}$, $p\nmid k_\mathbf{h}$, $i, j\in [0, n_{\mathbf{g}, \mathbf{h}}]$, $i\neq j$. As
$k_\mathbf{0}=1$ and $k_\mathbf{g}=\prod_{k\in\mathbbm{g}}(|\mathbb{U}_k|-1)$ if $\mathbf{g}\in\mathbb{E}\setminus\{\mathbf{0}\}$,
notice that $\mathbb{U}_{\mathbf{g}, \mathbf{h}, 0}=\{\mathbf{h}\}$ and $\mathbb{U}_{\mathbf{g}, \mathbf{h}, i}\neq\varnothing=\mathbb{U}_{\mathbf{g}, \mathbf{h}, i}\cap\mathbb{U}_{\mathbf{g}, \mathbf{h}, j}$ by Lemma \ref{L;Lemma2.3}. For example, if $p\neq n=\!|\mathbb{U}_1|\!=2$, $\!|\mathbb{U}_2|=3$, and $\mathbf{g}=(0, 1)$, then $n_{\mathbf{1}, \mathbf{g}}=1$, $\mathbb{U}_{\mathbf{1}, \mathbf{g}, 0}=\{\mathbf{g}\}$, $\mathbb{U}_{\mathbf{1}, \mathbf{g}, 1}=\{\mathbf{1}\}$.
\end{nota}
\begin{nota}\label{N;Notation5.6}
Assume that $\mathbf{g}, \mathbf{h}\in\mathbb{E}$. Set $\mathbf{g}\mathbf{h}=[\mathbf{g},\mathbf{1}, \mathbf{1}, \mathbf{1}, \mathbf{h}]$. Notice that $\mathbf{g}\mathbf{h}=\mathbf{h}\mathbf{g}$.
Assume that $\mathbf{i}\in\mathbb{E}$. Lemma \ref{L;Lemma2.5} implies that $\mathbf{i}\in\mathbb{P}_{\mathbf{g}, \mathbf{h}}$ if and only if $\mathbf{g}\triangle\mathbf{h}\preceq\mathbf{i}\preceq\mathbf{g}\mathbf{h}$. Recall that $k_\mathbf{0}=1$ and $k_\mathbf{g}=\prod_{j\in\mathbbm{g}}(|\mathbb{U}_j|-1)$ if $\mathbf{g}\in\mathbb{E}\setminus\{\mathbf{0}\}$. If
$p\nmid k_\mathbf{g}$ or $p\nmid k_\mathbf{h}$, notice that $p\nmid k_{\mathbf{g}\cap\mathbf{h}}$. Assume further that $\mathbf{h}\!\in\!
\mathbb{P}_{\mathbf{g},\mathbf{i}}$ and $p\nmid k_\mathbf{h}$. Hence $\mathbf{g}\triangle\mathbf{i}\preceq\mathbf{h}\preceq\mathbf{g}\mathbf{i}$. Define
\begin{align}\label{Eq;10}
D_{\mathbf{g}, \mathbf{h}, \mathbf{i}}=\sum_{k=0}^{n_{\mathbf{g}\mathbf{i}, \mathbf{h}}}\sum_{\mathbf{l}\in\mathbb{U}_{\mathbf{g}\mathbf{i}, \mathbf{h}, k}}(\overline{-1})^k\overline{k_{\mathbf{i}\cap\mathbf{l}}}^{-1}B_{\mathbf{g}, \mathbf{l}, \mathbf{i}}.
\end{align}
As $\mathbb{P}_{\mathbf{g}, \mathbf{i}}\!=\!\mathbb{P}_{\mathbf{i}, \mathbf{g}}$ and $\mathbf{g}\mathbf{i}\!=\!\mathbf{i}\mathbf{g}$, Equation \eqref{Eq;10} and Lemma \ref{L;Lemma3.10} show that $D_{\mathbf{i}, \mathbf{h}, \mathbf{g}}$ is defined.
\end{nota}
\begin{lem}\label{L;Lemma5.7}
Assume that $\mathbf{g}, \mathbf{h}, \mathbf{i}\in\mathbb{E}$, $\mathbf{h}\in\mathbb{P}_{\mathbf{g},\mathbf{i}}$, $p\nmid k_\mathbf{h}$. Then $\overline{k_{\mathbf{i}\setminus\mathbf{g}}}D_{\mathbf{g}, \mathbf{h}, \mathbf{i}}^T=\overline{k_{\mathbf{g}\setminus\mathbf{i}}}D_{\mathbf{i}, \mathbf{h}, \mathbf{g}}$. Moreover, $O\notin\{D_{\mathbf{g}, \mathbf{h}, \mathbf{i}}, D_{\mathbf{i}, \mathbf{h}, \mathbf{g}}\}$. In particular, if $\mathbf{g}=\mathbf{i}$, then $D_{\mathbf{g}, \mathbf{h}, \mathbf{g}}\in E_\mathbf{g}^*\mathbb{T}E_\mathbf{g}^*\ \setminus\ \{O\}$.
\end{lem}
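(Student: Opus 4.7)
The plan is to exploit the definition \eqref{Eq;10} of $D_{\mathbf{g}, \mathbf{h}, \mathbf{i}}$ together with the transpose identity $B_{\mathbf{g}, \mathbf{l}, \mathbf{i}}^T=B_{\mathbf{i}, \mathbf{l}, \mathbf{g}}$ from Lemma~\ref{L;Lemma3.10} to reduce the first assertion to a scalar equation involving valencies. Since $\mathbf{g}\mathbf{i}=\mathbf{i}\mathbf{g}$, the index sets $\mathbb{U}_{\mathbf{g}\mathbf{i}, \mathbf{h}, k}$ and $\mathbb{U}_{\mathbf{i}\mathbf{g}, \mathbf{h}, k}$ coincide, so transposing \eqref{Eq;10} termwise reduces the first claim to checking, for every $\mathbf{l}$ appearing in the sum, the identity $k_{\mathbf{i}\setminus\mathbf{g}}\cdot k_{\mathbf{g}\cap\mathbf{l}}=k_{\mathbf{g}\setminus\mathbf{i}}\cdot k_{\mathbf{i}\cap\mathbf{l}}$ in $\mathbb{N}$.

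To establish this scalar identity I would use the containment chain $\mathbf{g}\triangle\mathbf{i}\preceq\mathbf{h}\preceq\mathbf{l}\preceq\mathbf{g}\mathbf{i}$ that is forced by $\mathbf{h}\in\mathbb{P}_{\mathbf{g}, \mathbf{i}}$ and by $\mathbf{l}\in\mathbb{U}_{\mathbf{g}\mathbf{i}, \mathbf{h}, k}$. The resulting inclusions $\mathbbm{i}\setminus\mathbbm{g}\subseteq\mathbbm{l}$ and $\mathbbm{g}\setminus\mathbbm{i}\subseteq\mathbbm{l}$ give the disjoint unions $\mathbbm{i}\cap\mathbbm{l}=(\mathbbm{i}\setminus\mathbbm{g})\cup(\mathbbm{g}\cap\mathbbm{i}\cap\mathbbm{l})$ and $\mathbbm{g}\cap\mathbbm{l}=(\mathbbm{g}\setminus\mathbbm{i})\cup(\mathbbm{g}\cap\mathbbm{i}\cap\mathbbm{l})$, so that the product formula $k_\mathbf{g}=\prod_{a\in\mathbbm{g}}(|\mathbb{U}_a|-1)$ yields $k_{\mathbf{i}\cap\mathbf{l}}=k_{\mathbf{i}\setminus\mathbf{g}}\cdot k_{\mathbf{g}\cap\mathbf{i}\cap\mathbf{l}}$ and $k_{\mathbf{g}\cap\mathbf{l}}=k_{\mathbf{g}\setminus\mathbf{i}}\cdot k_{\mathbf{g}\cap\mathbf{i}\cap\mathbf{l}}$, from which the required identity is immediate. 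Invertibility of $\overline{k_{\mathbf{i}\cap\mathbf{l}}}$ and $\overline{k_{\mathbf{g}\cap\mathbf{l}}}$ in $\F$ is automatic from $p\nmid k_\mathbf{l}$, which is part of the definition of $\mathbb{U}_{\mathbf{g}\mathbf{i}, \mathbf{h}, k}$.

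For the nonvanishing, the observation is that $\mathbf{l}=\mathbf{h}$ lies in $\mathbb{U}_{\mathbf{g}\mathbf{i}, \mathbf{h}, 0}=\{\mathbf{h}\}$ (using $p\nmid k_\mathbf{h}$), so $B_{\mathbf{g}, \mathbf{h}, \mathbf{i}}$ appears in $D_{\mathbf{g}, \mathbf{h}, \mathbf{i}}$ with coefficient $\overline{k_{\mathbf{i}\cap\mathbf{h}}}^{-1}\in\F^\times$; the latter is well-defined since $\mathbbm{i}\cap\mathbbm{h}\subseteq\mathbbm{h}$ forces $k_{\mathbf{i}\cap\mathbf{h}}$ to divide $k_\mathbf{h}$. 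Linear independence of $\mathbb{B}_2$ in Theorem~\ref{T;Theorem3.13} then forces $D_{\mathbf{g}, \mathbf{h}, \mathbf{i}}\neq O$; the same argument with the roles of $\mathbf{g}$ and $\mathbf{i}$ interchanged (admissible because $\mathbb{P}_{\mathbf{g}, \mathbf{i}}=\mathbb{P}_{\mathbf{i}, \mathbf{g}}$ and $\mathbf{g}\mathbf{i}=\mathbf{i}\mathbf{g}$) gives $D_{\mathbf{i}, \mathbf{h}, \mathbf{g}}\neq O$.

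When $\mathbf{g}=\mathbf{i}$ one computes $\mathbbm{\mathbf{g}\mathbf{g}}=\mathbbm{g}^\circ$, so every $\mathbf{l}$ in the defining sum satisfies $\mathbbm{l}\subseteq\mathbbm{g}^\circ$, which is exactly the condition $\mathbf{l}\in\mathbb{P}_{\mathbf{g}, \mathbf{g}}$; consequently each $B_{\mathbf{g}, \mathbf{l}, \mathbf{g}}$ belongs to $E_\mathbf{g}^*\mathbb{T}E_\mathbf{g}^*$ by Corollary~\ref{C;Corollary3.24}, and the previous paragraph supplies the nonvanishing. I do not expect a genuine obstacle; the only delicate step is the combinatorial bookkeeping of the three disjoint blocks $\mathbbm{i}\setminus\mathbbm{g}$, $\mathbbm{g}\setminus\mathbbm{i}$, and $\mathbbm{g}\cap\mathbbm{i}\cap\mathbbm{l}$ that together exhaust $(\mathbbm{g}\cup\mathbbm{i})\cap\mathbbm{l}$.
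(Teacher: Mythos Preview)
Your proposal is correct and follows essentially the same approach as the paper: both transpose \eqref{Eq;10} termwise via Lemma~\ref{L;Lemma3.10}, reduce to the scalar identity $k_{\mathbf{i}\cap\mathbf{l}}=k_{\mathbf{i}\setminus\mathbf{g}}\,k_{\mathbf{g}\cap\mathbf{i}\cap\mathbf{l}}$ and $k_{\mathbf{g}\cap\mathbf{l}}=k_{\mathbf{g}\setminus\mathbf{i}}\,k_{\mathbf{g}\cap\mathbf{i}\cap\mathbf{l}}$ coming from the chain $\mathbf{g}\triangle\mathbf{i}\preceq\mathbf{h}\preceq\mathbf{l}\preceq\mathbf{g}\mathbf{i}$, and read off nonvanishing from the $\mathbf{l}=\mathbf{h}$ summand together with Theorem~\ref{T;Theorem3.13}. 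Your write-up is in fact a bit more explicit than the paper's in justifying the invertibility of the coefficients and the membership in $E_\mathbf{g}^*\mathbb{T}E_\mathbf{g}^*$.
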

\begin{proof}
Recall that $k_\mathbf{0}=1$ and $k_\mathbf{g}=\prod_{j\in\mathbbm{g}}(|\mathbb{U}_j|-1)$ if $\mathbf{g}\in\mathbb{E}\setminus\{\mathbf{0}\}$. As $\mathbf{h}\in\mathbb{P}_{\mathbf{g},\mathbf{i}}$, notice that
$\mathbf{g}\triangle\mathbf{i}\preceq\mathbf{h}\preceq\mathbf{l}\preceq\mathbf{g}\mathbf{i}$ and $k_{\mathbf{i}\cap\mathbf{l}}=k_{\mathbf{i}\setminus\mathbf{g}}k_{\mathbf{g}\cap\mathbf{i}\cap\mathbf{l}}$ if $k\in[0, n_{\mathbf{g}\mathbf{i}, \mathbf{h}}]$ and $\mathbf{l}\in\mathbb{U}_{\mathbf{g}\mathbf{i}, \mathbf{h}, k}$. Similarly, $k_{\mathbf{g}\cap\mathbf{l}}\!=\!k_{\mathbf{g}\setminus\mathbf{i}}k_{\mathbf{g}\cap\mathbf{i}\cap\mathbf{l}}$ if $k\!\in\![0, n_{\mathbf{i}\mathbf{g}, \mathbf{h}}]$ and $\mathbf{l}\!\in\!\mathbb{U}_{\mathbf{i}\mathbf{g}, \mathbf{h}, k}$. The desired lemma follows from combining Equation \eqref{Eq;10}, Lemma \ref{L;Lemma3.10}, Theorem \ref{T;Theorem3.13}, and Corollary \ref{C;Corollary3.24}.
\end{proof}
\begin{lem}\label{L;Lemma5.8}
Assume that $\mathbf{g}, \mathbf{h}, \mathbf{i}, \mathbf{j}, \mathbf{k}, \mathbf{l}, \mathbf{m}\in\mathbb{E}$ and $\mathbf{j}\in\mathbb{P}_{\mathbf{i}, \mathbf{k}}$. Assume that there is some
$\mathbb{U}\subseteq(\mathbbm{h}\cap\mathbbm{i}\cap\mathbbm{k})^\circ\setminus\mathbbm{j}$ satisfying the equation $\mathbbm{l}=\mathbbm{j}\cup((\mathbbm{i}\cap\mathbbm{k}\cap\mathbbm{m})^\circ\setminus(\mathbbm{h}\cup\mathbbm{j}))\cup\mathbb{U}$. Then $\mathbf{i}\triangle\mathbf{k}\preceq\mathbf{j}\preceq\mathbf{l}\preceq\mathbf{i}\mathbf{k}$ and $\mathbf{l}\!\in\!\mathbb{P}_{\mathbf{i},\mathbf{k}}$. Moreover, if $\mathbf{j}\preceq\mathbf{m}$, then $[\mathbf{g}, \mathbf{h}, \mathbf{i}, \mathbf{l}, \mathbf{k}]=[\mathbf{g}, \mathbf{h}, \mathbf{i}, \mathbf{m}, \mathbf{k}]$.
\end{lem}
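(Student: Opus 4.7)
The plan is to derive each assertion by unpacking the combinatorial definitions in Notations \ref{N;Notation3.15} and \ref{N;Notation5.6} and then invoking Lemma \ref{L;Lemma2.3} at the very end to pass from set identities back to $n$-tuple equalities.

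First I would verify the chain $\mathbf{i}\triangle\mathbf{k}\preceq\mathbf{j}\preceq\mathbf{l}\preceq\mathbf{i}\mathbf{k}$, from which the membership $\mathbf{l}\in\mathbb{P}_{\mathbf{i},\mathbf{k}}$ drops out by the reformulation recorded in Notation \ref{N;Notation5.6}. The hypothesis $\mathbf{j}\in\mathbb{P}_{\mathbf{i},\mathbf{k}}$ gives $\mathbbm{i}\triangle\mathbbm{k}\subseteq\mathbbm{j}\subseteq(\mathbbm{i}\triangle\mathbbm{k})\cup(\mathbbm{i}\cap\mathbbm{k})^\circ$, while $\mathbbm{j}\subseteq\mathbbm{l}$ is visible from the defining formula for $\mathbbm{l}$. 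For $\mathbf{l}\preceq\mathbf{i}\mathbf{k}$, the three sets whose union is $\mathbbm{l}$ each sit inside $(\mathbbm{i}\triangle\mathbbm{k})\cup(\mathbbm{i}\cap\mathbbm{k})^\circ$, using $\mathbb{U}\subseteq(\mathbbm{h}\cap\mathbbm{i}\cap\mathbbm{k})^\circ\subseteq(\mathbbm{i}\cap\mathbbm{k})^\circ$ and $(\mathbbm{i}\cap\mathbbm{k}\cap\mathbbm{m})^\circ\subseteq(\mathbbm{i}\cap\mathbbm{k})^\circ$.

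For the final equality I would expand both sides via Notation \ref{N;Notation3.15}. The first two summands of the expanded expressions for $[\mathbf{g},\mathbf{h},\mathbf{i},\mathbf{l},\mathbf{k}]$ and $[\mathbf{g},\mathbf{h},\mathbf{i},\mathbf{m},\mathbf{k}]$ coincide, so the desired identity reduces to
$$
(\mathbbm{h}\cup\mathbbm{l})\cap T \;=\; (\mathbbm{h}\cup\mathbbm{m})\cap T, \qquad T:=(\mathbbm{g}\cap\mathbbm{i}\cap\mathbbm{k})^\circ.
$$
Since $\mathbb{U}\subseteq\mathbbm{h}$, the contribution of $\mathbb{U}$ to $\mathbbm{l}$ is absorbed once we union with $\mathbbm{h}$, and after intersecting with $T$ and noting $(\mathbbm{i}\cap\mathbbm{k}\cap\mathbbm{m})^\circ\cap T=(\mathbbm{g}\cap\mathbbm{i}\cap\mathbbm{k}\cap\mathbbm{m})^\circ=\mathbbm{m}\cap T$, one obtains
$$
(\mathbbm{h}\cup\mathbbm{l})\cap T \;=\; (\mathbbm{h}\cap T)\cup(\mathbbm{j}\cap T)\cup(\mathbbm{g}\cap\mathbbm{i}\cap\mathbbm{k}\cap\mathbbm{m})^\circ.
$$
Comparing with $(\mathbbm{h}\cup\mathbbm{m})\cap T=(\mathbbm{h}\cap T)\cup(\mathbbm{g}\cap\mathbbm{i}\cap\mathbbm{k}\cap\mathbbm{m})^\circ$, the required equality reduces to the inclusion $\mathbbm{j}\cap T\subseteq\mathbbm{m}\cap T$, which is immediate from the hypothesis $\mathbf{j}\preceq\mathbf{m}$. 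Lemma \ref{L;Lemma2.3} then identifies the two $n$-tuples.

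The main obstacle is purely organizational: tracking the hypothesis $\mathbb{U}\subseteq(\mathbbm{h}\cap\mathbbm{i}\cap\mathbbm{k})^\circ\setminus\mathbbm{j}$ so that the mysterious summand $\mathbb{U}$ inside $\mathbbm{l}$ becomes innocuous after unioning with $\mathbbm{h}$, and recognizing that restricting to $T$ causes the various ``${}^\circ$'' pieces on both sides to collapse to the common subset $(\mathbbm{g}\cap\mathbbm{i}\cap\mathbbm{k}\cap\mathbbm{m})^\circ$. Everything else is routine distribution of unions and intersections, with $\mathbf{j}\preceq\mathbf{m}$ used precisely once to discard the leftover $\mathbbm{j}\cap T$.
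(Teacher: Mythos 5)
Your proof is correct and follows essentially the same route as the paper: the paper also reduces the last assertion to the single set identity $(\mathbbm{h}\cup\mathbbm{l})\cap(\mathbbm{g}\cap\mathbbm{i}\cap\mathbbm{k})^\circ=(\mathbbm{h}\cup\mathbbm{m})\cap(\mathbbm{g}\cap\mathbbm{i}\cap\mathbbm{k})^\circ$ before invoking Lemma \ref{L;Lemma2.3}, and handles the chain $\mathbf{i}\triangle\mathbf{k}\preceq\mathbf{j}\preceq\mathbf{l}\preceq\mathbf{i}\mathbf{k}$ the same way. You have simply filled in the straightforward set-theoretic details that the paper leaves implicit.
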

\begin{proof}
As $\mathbf{j}\in\mathbb{P}_{\mathbf{i},\mathbf{k}}$, notice that $\mathbbm{j}=(\mathbbm{i}\triangle\mathbbm{k})\cup(\mathbbm{i}\cap\mathbbm{j}\cap\mathbbm{k})=(\mathbbm{i}\triangle\mathbbm{k})\cup(\mathbbm{i}\cap\mathbbm{j}\cap\mathbbm{k})^\circ$.
As $\mathbbm{l}\subseteq(\mathbbm{i}\triangle\mathbbm{k})\cup(\mathbbm{i}\cap\mathbbm{k})^\circ$, notice that $\mathbf{l}\preceq\mathbf{i}\mathbf{k}$ by Lemma \ref{L;Lemma2.3}. Hence $\mathbf{i}\triangle\mathbf{k}\preceq\mathbf{j}\preceq\mathbf{l}\preceq\mathbf{i}\mathbf{k}$. In particular, notice that $\mathbf{l}\in \mathbb{P}_{\mathbf{i}, \mathbf{k}}$.
For the remaining statement, $\mathbf{j}\preceq\mathbf{m}$ implies that $(\mathbbm{h}\cup\mathbbm{l})\cap(\mathbbm{g}\cap\mathbbm{i}\cap\mathbbm{k})^\circ\!=\!(\mathbbm{h}\!\cup\mathbbm{m})\!\cap\!(\mathbbm{g}\cap\mathbbm{i}\cap\mathbbm{k})^\circ$.
Therefore $[\mathbf{g}, \mathbf{h},\mathbf{i}, \mathbf{l}, \mathbf{k}]=[\mathbf{g}, \mathbf{h},\mathbf{i}, \mathbf{m}, \mathbf{k}]$ by applying Lemma \ref{L;Lemma2.3}. The desired lemma follows from the above discussion.
\end{proof}
\begin{lem}\label{L;Lemma5.9}
Assume that $\mathbf{g}, \mathbf{h}, \mathbf{i}, \mathbf{j}, \mathbf{k}, \mathbf{l}, \mathbf{m}\in\mathbb{E}$ and $\mathbf{i}\setminus\mathbf{g}\preceq\mathbf{h}$. Assume that $\mathbf{j}\in\mathbb{P}_{\mathbf{i},\mathbf{k}}$ and  $\mathbf{j}\preceq\mathbf{l}\cap\mathbf{m}$. Then $\mathbf{l}\in\mathbb{P}_{\mathbf{i}, \mathbf{k}}$ and $[\mathbf{g}, \mathbf{h}, \mathbf{i}, \mathbf{l}, \mathbf{k}]=[\mathbf{g}, \mathbf{h}, \mathbf{i}, \mathbf{m}, \mathbf{k}]$ if and only if there is $\mathbb{U}\subseteq (\mathbbm{h}\cap\mathbbm{i}\cap\mathbbm{k})^\circ\setminus\mathbbm{j}$ that satisfies the equation $\mathbbm{l}=\mathbbm{j}\cup((\mathbbm{i}\cap\mathbbm{k}\cap\mathbbm{m})^\circ\setminus(\mathbbm{h}\cup\mathbbm{j}))\cup\mathbb{U}$.
\end{lem}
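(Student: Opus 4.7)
The plan is to treat the two directions separately. For the \emph{if} direction, observe that the hypothesis $\mathbf{j}\preceq\mathbf{l}\cap\mathbf{m}$ gives in particular $\mathbf{j}\preceq\mathbf{m}$, so the existence of a set $\mathbb{U}\subseteq(\mathbbm{h}\cap\mathbbm{i}\cap\mathbbm{k})^\circ\setminus\mathbbm{j}$ with the stated formula for $\mathbbm{l}$ fits exactly into the hypotheses of Lemma \ref{L;Lemma5.8}. I would therefore quote Lemma \ref{L;Lemma5.8} verbatim to conclude $\mathbf{l}\in\mathbb{P}_{\mathbf{i},\mathbf{k}}$ and $[\mathbf{g},\mathbf{h},\mathbf{i},\mathbf{l},\mathbf{k}]=[\mathbf{g},\mathbf{h},\mathbf{i},\mathbf{m},\mathbf{k}]$, with no use yet of the hypothesis $\mathbf{i}\setminus\mathbf{g}\preceq\mathbf{h}$.

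For the \emph{only if} direction, I would set $\mathbb{U}=\mathbbm{l}\setminus(\mathbbm{j}\cup((\mathbbm{i}\cap\mathbbm{k}\cap\mathbbm{m})^\circ\setminus(\mathbbm{h}\cup\mathbbm{j})))$ and verify two things: first, that $(\mathbbm{i}\cap\mathbbm{k}\cap\mathbbm{m})^\circ\setminus(\mathbbm{h}\cup\mathbbm{j})\subseteq\mathbbm{l}$ (so that the decomposition $\mathbbm{l}=\mathbbm{j}\cup((\mathbbm{i}\cap\mathbbm{k}\cap\mathbbm{m})^\circ\setminus(\mathbbm{h}\cup\mathbbm{j}))\cup\mathbb{U}$ holds, using also that $\mathbf{j}\preceq\mathbf{l}$); and second, that $\mathbb{U}\subseteq(\mathbbm{h}\cap\mathbbm{i}\cap\mathbbm{k})^\circ\setminus\mathbbm{j}$. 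Both checks proceed by picking an index $q$ from the relevant set and chasing its membership in $\mathbbm{g},\mathbbm{h},\mathbbm{i},\mathbbm{k},\mathbbm{l},\mathbbm{m}$.

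The key algebraic identity to exploit is that $[\mathbf{g},\mathbf{h},\mathbf{i},\mathbf{l},\mathbf{k}]=[\mathbf{g},\mathbf{h},\mathbf{i},\mathbf{m},\mathbf{k}]$ translates, via Notation \ref{N;Notation3.15} and Lemma \ref{L;Lemma2.3}, into the set equality
\[
(\mathbbm{h}\cup\mathbbm{l})\cap(\mathbbm{g}\cap\mathbbm{i}\cap\mathbbm{k})^\circ=(\mathbbm{h}\cup\mathbbm{m})\cap(\mathbbm{g}\cap\mathbbm{i}\cap\mathbbm{k})^\circ.
\]
The hypothesis $\mathbf{i}\setminus\mathbf{g}\preceq\mathbf{h}$, i.e.\ $\mathbbm{i}\setminus\mathbbm{g}\subseteq\mathbbm{h}$, is precisely what is needed to upgrade an index $q\in\mathbbm{i}\setminus\mathbbm{h}$ automatically into $q\in\mathbbm{g}\cap\mathbbm{i}$, and hence into $(\mathbbm{g}\cap\mathbbm{i}\cap\mathbbm{k})^\circ$ once $q\in(\mathbbm{i}\cap\mathbbm{k})^\circ$ has been established; this lets me transfer membership information between $\mathbbm{l}$ and $\mathbbm{m}$ through the above displayed equality.

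The main obstacle will be the second containment $\mathbb{U}\subseteq(\mathbbm{h}\cap\mathbbm{i}\cap\mathbbm{k})^\circ\setminus\mathbbm{j}$: for $q\in\mathbbm{l}\setminus\mathbbm{j}$ not lying in $(\mathbbm{i}\cap\mathbbm{k}\cap\mathbbm{m})^\circ\setminus(\mathbbm{h}\cup\mathbbm{j})$, I must rule out the case $q\notin\mathbbm{h}$. Here I would argue by contradiction: assuming $q\notin\mathbbm{h}$, the condition $\mathbf{l}\in\mathbb{P}_{\mathbf{i},\mathbf{k}}$ (equivalently $\mathbbm{l}\subseteq(\mathbbm{i}\triangle\mathbbm{k})\cup(\mathbbm{i}\cap\mathbbm{k})^\circ$ together with $\mathbbm{j}\supseteq\mathbbm{i}\triangle\mathbbm{k}$) forces $q\in(\mathbbm{i}\cap\mathbbm{k})^\circ$; then $\mathbf{i}\setminus\mathbf{g}\preceq\mathbf{h}$ places $q\in(\mathbbm{g}\cap\mathbbm{i}\cap\mathbbm{k})^\circ$, after which the displayed equality above and $q\notin\mathbbm{h}$ deliver $q\in\mathbbm{m}$, contradicting the definition of $\mathbb{U}$. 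Once this is in place, the first containment is a quick symmetric application of the same transfer principle, and combining both completes the proof.
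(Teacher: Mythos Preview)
Your proposal is correct and follows essentially the same route as the paper: both invoke Lemma~\ref{L;Lemma5.8} for the \emph{if} direction, and for the \emph{only if} direction both reduce the bracket equality to $(\mathbbm{h}\cup\mathbbm{l})\cap(\mathbbm{g}\cap\mathbbm{i}\cap\mathbbm{k})^\circ=(\mathbbm{h}\cup\mathbbm{m})\cap(\mathbbm{g}\cap\mathbbm{i}\cap\mathbbm{k})^\circ$ and then use $\mathbf{i}\setminus\mathbf{g}\preceq\mathbf{h}$ to transfer information on $(\mathbbm{i}\cap\mathbbm{k})^\circ$. The only stylistic difference is that the paper first upgrades this to the set equality $(\mathbbm{h}\cup\mathbbm{l})\cap(\mathbbm{i}\cap\mathbbm{k})^\circ=(\mathbbm{h}\cup\mathbbm{m})\cap(\mathbbm{i}\cap\mathbbm{k})^\circ$ and reads off the decomposition in one stroke, whereas you carry out two element-chases; the underlying argument is the same.
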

\begin{proof}
By Lemma \ref{L;Lemma5.8}, it suffices to check that $\mathbf{l}\in\mathbb{P}_{\mathbf{i}, \mathbf{k}}$ and $[\mathbf{g}, \mathbf{h}, \mathbf{i}, \mathbf{l}, \mathbf{k}]\!=\![\mathbf{g}, \mathbf{h}, \mathbf{i}, \mathbf{m}, \mathbf{k}]$ imply that $\mathbbm{l}=\mathbbm{j}\cup((\mathbbm{i}\cap\mathbbm{k}\cap\mathbbm{m})^\circ\setminus(\mathbbm{h}\cup\mathbbm{j}))\cup\mathbb{U}$ and
$\mathbb{U}\subseteq (\mathbbm{h}\cap\mathbbm{i}\cap\mathbbm{k})^\circ\setminus\mathbbm{j}$.
As $\mathbf{i}\setminus\mathbf{g}\preceq\mathbf{h}$, notice that $(\mathbbm{h}\cup\mathbbm{l})\cap((\mathbbm{i}\cap\mathbbm{k})^\circ\setminus\mathbbm{g})=(\mathbbm{i}\cap\mathbbm{k})^\circ\setminus\mathbbm{g}=(\mathbbm{h}\cup\mathbbm{m})\cap((\mathbbm{i}\cap\mathbbm{k})^\circ
\setminus\mathbbm{g})$. So
$(\mathbbm{h}\cup\mathbbm{l})\cap(\mathbbm{i}\cap\mathbbm{k})^\circ\!=\!(\mathbbm{h}\cup\mathbbm{m})\cap(\mathbbm{i}\cap\mathbbm{k})^\circ$ if and only if
$(\mathbbm{h}\cup\mathbbm{l})\cap(\mathbbm{g}\cap\mathbbm{i}\cap\mathbbm{k})^\circ=(\mathbbm{h}\cup\mathbbm{m})\cap(\mathbbm{g}\cap\mathbbm{i}\cap\mathbbm{k})^\circ$.

Assume that $\mathbf{l}\in\mathbb{P}_{\mathbf{i}, \mathbf{k}}$. As $\mathbf{j}\in\mathbb{P}_{\mathbf{i}, \mathbf{k}}$ and $\mathbf{j}\preceq\mathbf{l}$, $(\mathbbm{i}\triangle\mathbbm{k})\subseteq\mathbbm{j}\subseteq\mathbbm{l}\subseteq(\mathbbm{i}\triangle\mathbbm{k})\cup(\mathbbm{i}\cap\mathbbm{k})^\circ$.
Assume that $[\mathbf{g}, \mathbf{h}, \mathbf{i}, \mathbf{l}, \mathbf{k}]=[\mathbf{g}, \mathbf{h}, \mathbf{i}, \mathbf{m}, \mathbf{k}]$. The above discussion and Lemma \ref{L;Lemma2.3} give
$(\mathbbm{h}\cup\mathbbm{l})\cap(\mathbbm{i}\cap\mathbbm{k})^\circ=(\mathbbm{h}\cup\mathbbm{m})\cap(\mathbbm{i}\cap\mathbbm{k})^\circ$. As $\mathbf{j}\preceq\mathbf{m}$ and
$(\mathbbm{h}\cup\mathbbm{l})\cap(\mathbbm{i}\cap\mathbbm{k})^\circ=(\mathbbm{h}\cup\mathbbm{m})\cap(\mathbbm{i}\cap\mathbbm{k})^\circ$, it is obvious to see that
$\mathbbm{l}=\mathbbm{j}\cup((\mathbbm{i}\cap\mathbbm{k}\cap\mathbbm{m})^\circ\setminus(\mathbbm{h}\cup\mathbbm{j}))\cup\mathbb{U}$ and $\mathbb{U}\subseteq (\mathbbm{h}\cap\mathbbm{i}\cap\mathbbm{k})^\circ\setminus\mathbbm{j}$ by a direct computation. The desired lemma follows from the above discussion.
\end{proof}
\begin{lem}\label{L;Lemma5.10}
Assume that $\mathbf{g}, \mathbf{h}, \mathbf{i}, \mathbf{j}, \mathbf{k}, \mathbf{l},\mathbf{m}\in\mathbb{E}$, $\mathbf{g}\triangle\mathbf{i}\preceq\mathbf{h}$, $\mathbf{i}\triangle\mathbf{k}\preceq\mathbf{j}$. Assume that
there is some $\mathbb{U}\subseteq(\mathbbm{h}\cap\mathbbm{i}\cap\mathbbm{k})^\circ\setminus\mathbbm{j}$ satisfying the equation $\mathbbm{l}=\mathbbm{j}\cup((\mathbbm{i}\cap\mathbbm{k}\cap\mathbbm{m})^\circ\setminus(\mathbbm{h}\cup\mathbbm{j}))\cup\mathbb{U}$. Assume that $p\!\nmid\! k_\mathbf{h}k_\mathbf{j}k_\mathbf{m}$. Then $p\!\nmid\! k_{\mathbf{l}}$. Moreover, $p\!\nmid\! k_{[\mathbf{g},\mathbf{h},\mathbf{i}, \mathbf{l}, \mathbf{k}]}$. In particular, $p\!\nmid\! k_{[\mathbf{g}, \mathbf{h}, \mathbf{i}, \mathbf{j}, \mathbf{k}]}$.
\end{lem}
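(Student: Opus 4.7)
The plan is to exploit the multiplicative formula $k_\mathbf{g}=\prod_{a\in\mathbbm{g}}(|\mathbb{U}_a|-1)$: checking $p\nmid k_\mathbf{g}$ reduces to verifying $p\nmid |\mathbb{U}_a|-1$ for each $a\in\mathbbm{g}$, and, conversely, the hypothesis $p\nmid k_\mathbf{h}k_\mathbf{j}k_\mathbf{m}$ makes $p\nmid|\mathbb{U}_a|-1$ available for any index lying in $\mathbbm{h}\cup\mathbbm{j}\cup\mathbbm{m}$. The whole lemma will then reduce to elementary set-theoretic containments, aided by the two assumptions $\mathbf{g}\triangle\mathbf{i}\preceq\mathbf{h}$ and $\mathbf{i}\triangle\mathbf{k}\preceq\mathbf{j}$.

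First I would prove $p\nmid k_\mathbf{l}$. By the given decomposition $\mathbbm{l}=\mathbbm{j}\cup((\mathbbm{i}\cap\mathbbm{k}\cap\mathbbm{m})^\circ\setminus(\mathbbm{h}\cup\mathbbm{j}))\cup\mathbb{U}$, it is enough to note that the three pieces are contained in $\mathbbm{j}$, $\mathbbm{m}$, and $\mathbbm{h}$ respectively (the last by the hypothesis $\mathbb{U}\subseteq(\mathbbm{h}\cap\mathbbm{i}\cap\mathbbm{k})^\circ\setminus\mathbbm{j}$). Since $p\nmid k_\mathbf{h}k_\mathbf{j}k_\mathbf{m}$, every index in $\mathbbm{l}$ satisfies $p\nmid|\mathbb{U}_a|-1$, giving $p\nmid k_\mathbf{l}$.

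Next I would prove $p\nmid k_{[\mathbf{g},\mathbf{h},\mathbf{i},\mathbf{l},\mathbf{k}]}$. Set $\mathbf{q}=[\mathbf{g},\mathbf{h},\mathbf{i},\mathbf{l},\mathbf{k}]$; Notation \ref{N;Notation3.15} expands $\mathbbm{q}$ as $(\mathbbm{g}\triangle\mathbbm{k})\cup((\mathbbm{g}\cap\mathbbm{k})^\circ\setminus\mathbbm{i})\cup((\mathbbm{h}\cup\mathbbm{l})\cap(\mathbbm{g}\cap\mathbbm{i}\cap\mathbbm{k})^\circ)$. Using the triangle inclusion $\mathbbm{g}\triangle\mathbbm{k}\subseteq(\mathbbm{g}\triangle\mathbbm{i})\cup(\mathbbm{i}\triangle\mathbbm{k})$ together with the two hypotheses, the first piece lies in $\mathbbm{h}\cup\mathbbm{j}$; the second piece lies in $\mathbbm{g}\setminus\mathbbm{i}\subseteq\mathbbm{g}\triangle\mathbbm{i}\subseteq\mathbbm{h}$; and the third piece is manifestly contained in $\mathbbm{h}\cup\mathbbm{l}$. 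Hence $\mathbbm{q}\subseteq\mathbbm{h}\cup\mathbbm{j}\cup\mathbbm{l}$, and combining with the first step (which handles $\mathbbm{l}$) every index in $\mathbbm{q}$ satisfies $p\nmid|\mathbb{U}_a|-1$, yielding $p\nmid k_\mathbf{q}$.

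For the final \emph{in particular} assertion, I would specialize to $\mathbf{l}=\mathbf{m}=\mathbf{j}$ and $\mathbb{U}=\varnothing$. Since $(\mathbbm{i}\cap\mathbbm{k}\cap\mathbbm{j})^\circ\subseteq\mathbbm{j}\subseteq\mathbbm{h}\cup\mathbbm{j}$, the middle piece $(\mathbbm{i}\cap\mathbbm{k}\cap\mathbbm{j})^\circ\setminus(\mathbbm{h}\cup\mathbbm{j})$ is empty, so the decomposition hypothesis holds trivially with $\mathbbm{l}=\mathbbm{j}$, and $p\nmid k_\mathbf{h}k_\mathbf{j}k_\mathbf{m}$ collapses to $p\nmid k_\mathbf{h}k_\mathbf{j}$, which is granted. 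Applying the previous step then gives $p\nmid k_{[\mathbf{g},\mathbf{h},\mathbf{i},\mathbf{j},\mathbf{k}]}$. I do not expect any genuine obstacle: every claim reduces to bookkeeping of set containments and the multiplicativity of $k_{\bullet}$; the only mild care needed is in tracking the $\circ$-superscripts (which only restrict, never enlarge, the relevant index sets) when invoking the decomposition of $\mathbf{q}$.
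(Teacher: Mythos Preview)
Your proposal is correct and follows essentially the same approach as the paper: both arguments reduce the claims to showing that the relevant index sets $\mathbbm{l}$ and $\mathbbm{q}=[\mathbf{g},\mathbf{h},\mathbf{i},\mathbf{l},\mathbf{k}]$ are contained in $\mathbbm{h}\cup\mathbbm{j}\cup\mathbbm{m}$, and then invoke the multiplicativity of $k_\bullet$. The only cosmetic difference is that you place $(\mathbbm{g}\cap\mathbbm{k})^\circ\setminus\mathbbm{i}$ inside $\mathbbm{h}$ via $\mathbbm{g}\setminus\mathbbm{i}\subseteq\mathbbm{g}\triangle\mathbbm{i}$, whereas the paper places it inside $\mathbbm{j}$ via $\mathbbm{k}\setminus\mathbbm{i}\subseteq\mathbbm{i}\triangle\mathbbm{k}$; both containments are valid and the specialization for the final assertion is identical.
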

\begin{proof}
Recall that $k_\mathbf{0}=1$ and $k_\mathbf{g}=\prod_{q\in\mathbbm{g}}(|\mathbb{U}_q|-1)$ if $\mathbf{g}\in\mathbb{E}\setminus\{\mathbf{0}\}$. As $\mathbf{i}\triangle\mathbf{k}\preceq\mathbf{j}$ and $p\nmid k_\mathbf{h}k_\mathbf{j}k_\mathbf{m}$, it is obvious that $p\nmid k_\mathbf{l}$. Notice that $\mathbbm{g}\setminus\mathbbm{k}=(\mathbbm{g}\setminus(\mathbbm{i}\cup\mathbbm{k}))\cup((\mathbbm{g}\cap\mathbbm{i})\setminus\mathbbm{k})$ and
$\mathbbm{k}\setminus\mathbbm{g}=(\mathbbm{k}\setminus(\mathbbm{g}\cup\mathbbm{i}))\cup((\mathbbm{i}\cap\mathbbm{k})\setminus\mathbbm{g})$.
As $\mathbf{g}\triangle\mathbf{i}\preceq\mathbf{h}$, $\mathbf{i}\triangle\mathbf{k}\preceq\mathbf{j}$, and $p\nmid k_\mathbf{h}k_\mathbf{j}$,
notice that $\mathbbm{g}\triangle\mathbbm{k}\subseteq\mathbbm{h}\cup\mathbbm{j}$ and $p\nmid |\mathbb{U}_r|-1$ for any $r\in \mathbbm{g}\triangle\mathbbm{k}$. As $\mathbf{i}\triangle\mathbf{k}\preceq\mathbf{j}$ and $p\nmid k_\mathbf{j}$, notice that $(\mathbbm{g}\cap\mathbbm{k})^\circ\setminus\mathbbm{i}\subseteq\mathbbm{j}$ and $p\nmid |\mathbb{U}_r|-1$ for any $r\in(\mathbbm{g}\cap\mathbbm{k})^\circ\setminus\mathbbm{i}$. As $p\nmid k_\mathbf{h}k_\mathbf{l}$, notice that $p\nmid |\mathbb{U}_r|-1$ for any $r\in(\mathbbm{h}\cup\mathbbm{l})\cap(\mathbbm{g}\cap\mathbbm{i}\cap\mathbbm{k})^\circ$. This thus implies that $p\nmid k_{[\mathbf{g},\mathbf{h},\mathbf{i}, \mathbf{l}, \mathbf{k}]}$. For the remaining statement, it is obvious that $\mathbf{j}$ satisfies the assumptions of $\mathbf{l}$ by changing $\mathbf{m}, \mathbb{U}$ to $\mathbf{j}, \varnothing$. The desired lemma follows from the shown statement $p\!\nmid\! k_{[\mathbf{g},\mathbf{h},\mathbf{i}, \mathbf{l}, \mathbf{k}]}$.
\end{proof}
\begin{lem}\label{L;Lemma5.11}
Assume that $\mathbf{g}, \mathbf{h}, \mathbf{i}, \mathbf{j}, \mathbf{k}, \mathbf{l},\mathbf{m}\in\mathbb{E}$, $\mathbf{i}\setminus\mathbf{g}\preceq\mathbf{h}$, $\mathbf{j}, \mathbf{l}\in\mathbb{P}_{\mathbf{i}, \mathbf{k}}$, $\mathbf{j}\preceq\mathbf{l}\cap\mathbf{m}$. Assume that $[\mathbf{g},\mathbf{h}, \mathbf{i}, \mathbf{l}, \mathbf{k}]=[\mathbf{g},\mathbf{h}, \mathbf{i}, \mathbf{m}, \mathbf{k}]$. Then $k_{(\mathbf{i}\cap\mathbf{k}\cap\mathbf{m})\setminus(\mathbf{h}\cup\mathbf{j})}k_{\mathbf{j}\cap\mathbf{k}}k_{\mathbf{h}\cap\mathbf{i}\cap\mathbf{l}}
=k_{\mathbf{h}\cap\mathbf{i}\cap\mathbf{j}}k_{\mathbf{k}\cap\mathbf{l}}$.
\end{lem}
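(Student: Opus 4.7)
The plan is to apply Lemma \ref{L;Lemma5.9} to get an explicit disjoint decomposition of $\mathbbm{l}$, then intersect it with $\mathbbm{h}\cap\mathbbm{i}$ and with $\mathbbm{k}$ to decompose both sides of the claimed identity into the same product of $k$-values.

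First I would invoke Lemma \ref{L;Lemma5.9}: the hypotheses $\mathbf{i}\setminus\mathbf{g}\preceq\mathbf{h}$, $\mathbf{j}\in\mathbb{P}_{\mathbf{i},\mathbf{k}}$, $\mathbf{j}\preceq\mathbf{l}\cap\mathbf{m}$, $\mathbf{l}\in\mathbb{P}_{\mathbf{i},\mathbf{k}}$, together with the equation $[\mathbf{g},\mathbf{h},\mathbf{i},\mathbf{l},\mathbf{k}]=[\mathbf{g},\mathbf{h},\mathbf{i},\mathbf{m},\mathbf{k}]$, yield some $\mathbb{U}\subseteq(\mathbbm{h}\cap\mathbbm{i}\cap\mathbbm{k})^\circ\setminus\mathbbm{j}$ with
\[
\mathbbm{l}=\mathbbm{j}\cup\bigl((\mathbbm{i}\cap\mathbbm{k}\cap\mathbbm{m})^\circ\setminus(\mathbbm{h}\cup\mathbbm{j})\bigr)\cup\mathbb{U}.
\]
A quick check of memberships shows that these three pieces are pairwise disjoint: the first sits inside $\mathbbm{j}$, the second avoids $\mathbbm{h}\cup\mathbbm{j}$, and $\mathbb{U}$ sits inside $\mathbbm{h}\setminus\mathbbm{j}$.

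Second, I would intersect this decomposition with $\mathbbm{h}\cap\mathbbm{i}$: the first piece contributes $\mathbbm{h}\cap\mathbbm{i}\cap\mathbbm{j}$, the middle piece disappears because it avoids $\mathbbm{h}$, and the last piece contributes $\mathbb{U}$ since $\mathbb{U}\subseteq\mathbbm{h}\cap\mathbbm{i}$. Intersecting instead with $\mathbbm{k}$, all three pieces survive (using $\mathbb{U}\subseteq\mathbbm{k}$ and that the middle piece is already inside $\mathbbm{k}$). This produces the two disjoint-union decompositions
\[
\mathbbm{h}\cap\mathbbm{i}\cap\mathbbm{l}=(\mathbbm{h}\cap\mathbbm{i}\cap\mathbbm{j})\sqcup\mathbb{U},
\]
\[
\mathbbm{k}\cap\mathbbm{l}=(\mathbbm{j}\cap\mathbbm{k})\sqcup\bigl((\mathbbm{i}\cap\mathbbm{k}\cap\mathbbm{m})^\circ\setminus(\mathbbm{h}\cup\mathbbm{j})\bigr)\sqcup\mathbb{U}.
\]

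Third, because $k_\mathbf{a}=\prod_{q\in\mathbbm{a}}(|\mathbb{U}_q|-1)$ and the factor $|\mathbb{U}_q|-1$ equals $1$ whenever $q\notin[1,n]^\circ$, the decoration $^\circ$ on the middle piece can be dropped without affecting its $k$-value. Converting the two disjoint decompositions into products yields
\[
k_{\mathbf{h}\cap\mathbf{i}\cap\mathbf{l}}=k_{\mathbf{h}\cap\mathbf{i}\cap\mathbf{j}}\prod_{q\in\mathbb{U}}(|\mathbb{U}_q|-1),
\]
\[
k_{\mathbf{k}\cap\mathbf{l}}=k_{\mathbf{j}\cap\mathbf{k}}\,k_{(\mathbf{i}\cap\mathbf{k}\cap\mathbf{m})\setminus(\mathbf{h}\cup\mathbf{j})}\prod_{q\in\mathbb{U}}(|\mathbb{U}_q|-1).
\]
Substituting these into the two sides of the target equation produces the common expression $k_{\mathbf{h}\cap\mathbf{i}\cap\mathbf{j}}\,k_{\mathbf{j}\cap\mathbf{k}}\,k_{(\mathbf{i}\cap\mathbf{k}\cap\mathbf{m})\setminus(\mathbf{h}\cup\mathbf{j})}\prod_{q\in\mathbb{U}}(|\mathbb{U}_q|-1)$, finishing the proof.

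The main obstacle is purely the bookkeeping in the second step: confirming that the three pieces of the $\mathbbm{l}$-decomposition behave correctly under the two intersections, and that the $^\circ$ annotation on the middle piece is immaterial when taking $k$-values. Once those disjoint-union decompositions are in hand, the algebraic identity collapses to a single multiplication.
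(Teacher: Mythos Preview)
Your proposal is correct and follows essentially the same route as the paper: invoke Lemma \ref{L;Lemma5.9} to write $\mathbbm{l}$ as the disjoint union of $\mathbbm{j}$, $(\mathbbm{i}\cap\mathbbm{k}\cap\mathbbm{m})^\circ\setminus(\mathbbm{h}\cup\mathbbm{j})$, and $\mathbb{U}$, then intersect with $\mathbbm{h}\cap\mathbbm{i}$ and with $\mathbbm{k}$ to obtain the two disjoint decompositions, and read off the multiplicative identity for the $k$-values. Your write-up is in fact slightly more explicit than the paper's in justifying disjointness and in handling the $^\circ$ decoration.
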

\begin{proof}
Recall that $k_\mathbf{0}=1$ and $k_\mathbf{g}=\prod_{q\in\mathbbm{g}}(|\mathbb{U}_q|-1)$ if $\mathbf{g}\in\mathbb{E}\setminus\{\mathbf{0}\}$. Lemma \ref{L;Lemma5.9} thus implies that $\mathbbm{l}=\mathbbm{j}\cup((\mathbbm{i}\cap\mathbbm{k}\cap\mathbbm{m})^\circ\setminus(\mathbbm{h}\cup\mathbbm{j}))\cup\mathbb{U}$ for some $\mathbb{U}\subseteq(\mathbbm{h}\cap\mathbbm{i}\cap\mathbbm{k})^\circ\setminus\mathbbm{j}$. Notice that $\mathbbm{h}\cap\mathbbm{i}\cap\mathbbm{l}=\mathbbm{h}\cap\mathbbm{i}\cap(\mathbbm{j}\cup((\mathbbm{i}\cap\mathbbm{k}\cap\mathbbm{m})^\circ\setminus(\mathbbm{h}\cup\mathbbm{j}))\cup\mathbb{U})=(\mathbbm{h}
\cap\mathbbm{i}\cap\mathbbm{j})\cup\mathbb{U}$ and $\mathbbm{k}\cap\mathbbm{l}=\mathbbm{k}\cap(\mathbbm{j}\cup((\mathbbm{i}\cap\mathbbm{k}\cap\mathbbm{m})^\circ\setminus(\mathbbm{h}\cup\mathbbm{j}))\cup\mathbb{U})
=(\mathbbm{j}\cap\mathbbm{k})\cup((\mathbbm{i}\cap\mathbbm{k}\cap\mathbbm{m})^\circ\setminus(\mathbbm{h}\cup\mathbbm{j}))\cup\mathbb{U}$. As $(\mathbbm{h}\cap\mathbbm{i}\cap\mathbbm{j})\cap\mathbb{U}=\varnothing$ and $\mathbbm{j}\cap\mathbbm{k}$, $(\mathbbm{i}\cap\mathbbm{k}\cap\mathbbm{m})^\circ\setminus(\mathbbm{h}\cup\mathbbm{j})$, $\mathbb{U}$ are pairwise
distinct sets, it is obvious that $k_{(\mathbf{i}\cap\mathbf{k}\cap\mathbf{m})\setminus(\mathbf{h}\cup\mathbf{j})}k_{\mathbf{j}\cap\mathbf{k}}k_{\mathbf{h}\cap\mathbf{i}\cap\mathbf{l}}=k_{\mathbf{h}\cap\mathbf{i}\cap\mathbf{j}}k_{\mathbf{k}\cap\mathbf{l}}$.
The desired lemma follows.
\end{proof}
\begin{lem}\label{L;Lemma5.12}
Assume that $\mathbf{g}, \mathbf{h}, \mathbf{i}, \mathbf{j}, \mathbf{k}, \mathbf{l}, \mathbf{m}\in\mathbb{E}$, $\mathbf{g}\triangle\mathbf{i}\preceq\mathbf{h}$, $\mathbf{j}\in\mathbb{P}_{\mathbf{i},\mathbf{k}}$,
$\mathbf{j}\preceq\mathbf{l}\preceq\mathbf{i}\mathbf{k}$, $\mathbf{j}\preceq\mathbf{m}$, $p\nmid k_{\mathbf{h}}k_{\mathbf{j}}k_{\mathbf{m}}$. Then $p\nmid k_\mathbf{l}$ and $[\mathbf{g},\mathbf{h}, \mathbf{i}, \mathbf{l}, \mathbf{k}]=[\mathbf{g},\mathbf{h}, \mathbf{i}, \mathbf{m}, \mathbf{k}]$ if and only if there is $\mathbb{U}\subseteq (\mathbbm{h}\cap\mathbbm{i}\cap\mathbbm{k})^\circ\setminus\mathbbm{j}$ that satisfies the equation
$\mathbbm{l}=\mathbbm{j}\cup((\mathbbm{i}\cap\mathbbm{k}\cap\mathbbm{m})^\circ\setminus(\mathbbm{h}\cup\mathbbm{j}))\cup\mathbb{U}$. Moreover, if $\mathbf{h}\in\mathbb{P}_{\mathbf{g}, \mathbf{i}}$, then $B_{\mathbf{g}, \mathbf{h}, \mathbf{i}}D_{\mathbf{i}, \mathbf{j}, \mathbf{k}}\neq O$ only if $(\mathbbm{h}\cap\mathbbm{i}\cap\mathbbm{k})^\circ\subseteq\mathbbm{j}$.
\end{lem}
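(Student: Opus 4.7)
The plan is to first establish the biconditional using Lemmas \ref{L;Lemma5.8}, \ref{L;Lemma5.9}, \ref{L;Lemma5.10}, and then use that biconditional to parameterize the expansion of $B_{\mathbf{g},\mathbf{h},\mathbf{i}}D_{\mathbf{i},\mathbf{j},\mathbf{k}}$ so that a binomial-style cancellation forces the product to vanish whenever $(\mathbbm{h}\cap\mathbbm{i}\cap\mathbbm{k})^\circ\not\subseteq\mathbbm{j}$.

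For the biconditional, I note that the three preceding lemmas are set up to supply exactly the three features involved: the set description of $\mathbbm{l}$, membership $\mathbf{l}\in\mathbb{P}_{\mathbf{i},\mathbf{k}}$ with equality $[\mathbf{g},\mathbf{h},\mathbf{i},\mathbf{l},\mathbf{k}]=[\mathbf{g},\mathbf{h},\mathbf{i},\mathbf{m},\mathbf{k}]$, and the divisibility $p\nmid k_\mathbf{l}$. For the forward direction, $\mathbf{j}\in\mathbb{P}_{\mathbf{i},\mathbf{k}}$ together with $\mathbf{j}\preceq\mathbf{l}\preceq\mathbf{i}\mathbf{k}$ forces $\mathbf{l}\in\mathbb{P}_{\mathbf{i},\mathbf{k}}$, and $\mathbf{g}\triangle\mathbf{i}\preceq\mathbf{h}$ supplies the hypothesis $\mathbf{i}\setminus\mathbf{g}\preceq\mathbf{h}$ needed for Lemma \ref{L;Lemma5.9}, which yields the set equation for $\mathbbm{l}$. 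For the reverse direction, Lemma \ref{L;Lemma5.8} recovers $\mathbf{l}\in\mathbb{P}_{\mathbf{i},\mathbf{k}}$ and the equality of the two $[\,\cdot\,]$-expressions, while Lemma \ref{L;Lemma5.10}, using $p\nmid k_\mathbf{h}k_\mathbf{j}k_\mathbf{m}$, delivers $p\nmid k_\mathbf{l}$.

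For the ``moreover'' part, I would expand via Equation \eqref{Eq;10} and Theorem \ref{T;Theorem3.23} to get
$$B_{\mathbf{g},\mathbf{h},\mathbf{i}}D_{\mathbf{i},\mathbf{j},\mathbf{k}}=\sum_{s=0}^{n_{\mathbf{i}\mathbf{k},\mathbf{j}}}\sum_{\mathbf{l}\in\mathbb{U}_{\mathbf{i}\mathbf{k},\mathbf{j},s}}(\overline{-1})^{s}\,\overline{k_{\mathbf{k}\cap\mathbf{l}}}^{-1}\,\overline{k_{\mathbf{h}\cap\mathbf{i}\cap\mathbf{l}}}\,B_{\mathbf{g},[\mathbf{g},\mathbf{h},\mathbf{i},\mathbf{l},\mathbf{k}],\mathbf{k}},$$
and then partition the index set according to the value of $[\mathbf{g},\mathbf{h},\mathbf{i},\mathbf{l},\mathbf{k}]$, picking in each nonempty class a representative $\mathbf{m}$ with $\mathbf{j}\preceq\mathbf{m}$ and $p\nmid k_\mathbf{m}$. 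By the biconditional, the class of $\mathbf{m}$ is parameterized by subsets $\mathbb{U}\subseteq S:=(\mathbbm{h}\cap\mathbbm{i}\cap\mathbbm{k})^\circ\setminus\mathbbm{j}$ via $\mathbbm{l}=\mathbbm{j}\cup((\mathbbm{i}\cap\mathbbm{k}\cap\mathbbm{m})^\circ\setminus(\mathbbm{h}\cup\mathbbm{j}))\cup\mathbb{U}$, while Lemma \ref{L;Lemma5.11} makes the ratio $k_{\mathbf{h}\cap\mathbf{i}\cap\mathbf{l}}/k_{\mathbf{k}\cap\mathbf{l}}$ constant on the class. Writing $s=t+|\mathbb{U}|$ with $t=|(\mathbbm{i}\cap\mathbbm{k}\cap\mathbbm{m})^\circ\setminus(\mathbbm{h}\cup\mathbbm{j})|$ fixed by the class, the coefficient of $B_{\mathbf{g},[\mathbf{g},\mathbf{h},\mathbf{i},\mathbf{m},\mathbf{k}],\mathbf{k}}$ factors as a nonzero scalar times $\sum_{\mathbb{U}\subseteq S}(\overline{-1})^{|\mathbb{U}|}=(\overline{1}+\overline{-1})^{|S|}=\overline{0}^{|S|}$. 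When $(\mathbbm{h}\cap\mathbbm{i}\cap\mathbbm{k})^\circ\not\subseteq\mathbbm{j}$, $S$ is nonempty, so this sum vanishes; every coefficient is therefore zero and $B_{\mathbf{g},\mathbf{h},\mathbf{i}}D_{\mathbf{i},\mathbf{j},\mathbf{k}}=O$ by Theorem \ref{T;Theorem3.13}. This is the contrapositive of the desired implication.

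The main obstacle will be the bookkeeping in the partition step: one must verify that every $\mathbf{l}\in\mathbb{U}_{\mathbf{i}\mathbf{k},\mathbf{j},s}$ appears in exactly one class with a legitimate representative $\mathbf{m}$ (so that the biconditional's hypothesis $p\nmid k_\mathbf{h}k_\mathbf{j}k_\mathbf{m}$ applies), and that the constant produced by Lemma \ref{L;Lemma5.11} does not depend on which $\mathbf{m}$ represents the class. The algebraic manipulation with Equation \eqref{Eq;10} and Theorem \ref{T;Theorem3.23} is then routine, and the final vanishing is the elementary identity $(\overline{1}+\overline{-1})^{|S|}=\overline{0}$ for $|S|\geq 1$.
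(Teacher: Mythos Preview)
Your proposal is correct and follows essentially the same approach as the paper: the biconditional is obtained from Lemmas \ref{L;Lemma5.8}, \ref{L;Lemma5.9}, \ref{L;Lemma5.10} exactly as you outline, and the ``moreover'' part is proved by expanding $B_{\mathbf{g},\mathbf{h},\mathbf{i}}D_{\mathbf{i},\mathbf{j},\mathbf{k}}$ via Equation \eqref{Eq;10} and Theorem \ref{T;Theorem3.23}, using the biconditional together with Lemma \ref{L;Lemma5.11} to reduce each coefficient to a constant times $\sum_{s=0}^{q}(\overline{-1})^{s}\overline{\binom{q}{s}}=\overline{0}$ with $q=|(\mathbbm{h}\cap\mathbbm{i}\cap\mathbbm{k})^\circ\setminus\mathbbm{j}|>0$. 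The only cosmetic difference is that the paper frames the second part as a contradiction (assume the product is nonzero, pick one $\mathbf{m}$ with $B_{\mathbf{g},[\mathbf{g},\mathbf{h},\mathbf{i},\mathbf{m},\mathbf{k}],\mathbf{k}}$ in the support, and show its coefficient is $\overline{0}$), whereas you compute every coefficient directly; this sidesteps your bookkeeping concern about representatives, but the underlying computation is identical.
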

\begin{proof}
As $\mathbf{j}\in\mathbb{P}_{\mathbf{i},\mathbf{k}}$ and $\mathbf{j}\preceq\mathbf{l}\preceq\mathbf{i}\mathbf{k}$, it is obvious that $\mathbf{l}\in\mathbb{P}_{\mathbf{i},\mathbf{k}}$. The first statement thus follows from Lemmas \ref{L;Lemma5.9} and \ref{L;Lemma5.10}. For the remaining statement, assume that $B_{\mathbf{g}, \mathbf{h}, \mathbf{i}}D_{\mathbf{i}, \mathbf{j}, \mathbf{k}}\neq O$ and $(\mathbbm{h}\cap\mathbbm{i}\cap\mathbbm{k})^\circ\not\subseteq\mathbbm{j}$. Assume that $\mathbf{j}\preceq\mathbf{m}\preceq\mathbf{i}\mathbf{k}$. As $p\nmid k_\mathbf{m}$, notice that
$B_{\mathbf{i}, \mathbf{m}, \mathbf{k}}\in\mathrm{Supp}_{\mathbb{B}_2}(D_{\mathbf{i}, \mathbf{j}, \mathbf{k}})$ by Lemma \ref{L;Lemma5.7} and Equation \eqref{Eq;10}. As $B_{\mathbf{g}, \mathbf{h}, \mathbf{i}}D_{\mathbf{i}, \mathbf{j}, \mathbf{k}}\neq O$, assume further that $B_{\mathbf{g}, [\mathbf{g},\mathbf{h}, \mathbf{i},\mathbf{m}, \mathbf{k}], \mathbf{k}}\in\mathrm{Supp}_{\mathbb{B}_2}(B_{\mathbf{g},\mathbf{h},\mathbf{i}}D_{\mathbf{i}, \mathbf{j}, \mathbf{k}})$ by Theorem \ref{T;Theorem3.23}. Write $q$ for $|(\mathbbm{h}\cap\mathbbm{i}\cap\mathbbm{k})^\circ\setminus\mathbbm{j}|$. Notice that $q>0$ as $(\mathbbm{h}\cap\mathbbm{i}\cap\mathbbm{k})^\circ\not\subseteq\mathbbm{j}$. Set $r=|(\mathbbm{i}\cap\mathbbm{k}\cap\mathbbm{m})^\circ\setminus(\mathbbm{h}\cup\mathbbm{j})|$. By combining
the first statement, Equation \eqref{Eq;10}, Theorem \ref{T;Theorem3.23}, and Lemma \ref{L;Lemma5.11},
$$
c_{\mathbf{g}, [\mathbf{g},\mathbf{h},\mathbf{i},\mathbf{m},\mathbf{k}], \mathbf{k}}(B_{\mathbf{g},\mathbf{h},\mathbf{i}}D_{\mathbf{i},\mathbf{j},\mathbf{k}})=
\sum_{s=0}^q(\overline{-1})^{r+s}\overline{k_{\mathbf{h}\cap\mathbf{i}\cap\mathbf{j}}}(\overline{k_{(\mathbf{i}\cap\mathbf{k}\cap\mathbf{m})\setminus(\mathbf{h}\cup\mathbf{j})}}\overline{k_{\mathbf{j}\cap\mathbf{k}}})^{-1}
\overline{{q\choose s}}=\overline{0},$$ which is absurd as $B_{\mathbf{g}, [\mathbf{g},\mathbf{h}, \mathbf{i},\mathbf{m}, \mathbf{k}], \mathbf{k}}\!\!\in\!\mathrm{Supp}_{\mathbb{B}_2}\!(\!B_{\mathbf{g},\mathbf{h},\mathbf{i}}D_{\mathbf{i}, \mathbf{j}, \mathbf{k}})$. The desired lemma follows.
\end{proof}
\begin{lem}\label{L;Lemma5.13}
Assume that $\mathbf{g}, \mathbf{h}, \mathbf{i}\in\mathbb{E}$. Assume that $\mathbf{h}, \mathbf{i}\in\mathbb{P}_{\mathbf{g}, \mathbf{g}}$ and $p\nmid k_\mathbf{h}k_\mathbf{i}$. Then
\[B_{\mathbf{g},\mathbf{h} ,\mathbf{g}}D_{\mathbf{g},\mathbf{i} ,\mathbf{g}}=D_{\mathbf{g},\mathbf{i} ,\mathbf{g}}B_{\mathbf{g},\mathbf{h} ,\mathbf{g}}=\begin{cases}
\overline{k_\mathbf{h}}D_{\mathbf{g},\mathbf{i} ,\mathbf{g}}, &\ \text{if}\ \mathbf{h}\preceq\mathbf{i},\\
O, &\ \text{otherwise}.
\end{cases}\]
\end{lem}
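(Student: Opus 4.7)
The plan is to handle the three cases (vanishing versus scalar multiple, left versus right multiplication) in three steps: first dispose of the case $\mathbf{h}\not\preceq\mathbf{i}$ via the ``only if'' half of Lemma \ref{L;Lemma5.12}; second, compute the left product directly when $\mathbf{h}\preceq\mathbf{i}$; and finally deduce the identity for the right product by a transpose argument.

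For Step~1 I specialize Lemma \ref{L;Lemma5.12} by taking its seven variables $(\mathbf{g},\mathbf{h},\mathbf{i},\mathbf{j},\mathbf{k},\mathbf{l},\mathbf{m})$ to be $(\mathbf{g},\mathbf{h},\mathbf{g},\mathbf{i},\mathbf{g},\mathbf{i},\mathbf{i})$. The hypotheses reduce to $\mathbf{g}\triangle\mathbf{g}=\mathbf{0}\preceq\mathbf{h}$, $\mathbf{i}\in\mathbb{P}_{\mathbf{g},\mathbf{g}}$, $\mathbf{i}\preceq\mathbf{i}\preceq\mathbf{g}\mathbf{g}$, $\mathbf{i}\preceq\mathbf{i}$, and $p\nmid k_\mathbf{h}k_\mathbf{i}k_\mathbf{i}$, each of which is immediate from our standing assumptions (the containment $\mathbf{i}\preceq\mathbf{g}\mathbf{g}$ is exactly the right-hand inequality in the characterization of $\mathbb{P}_{\mathbf{g},\mathbf{g}}$ recorded in Notation \ref{N;Notation5.6}). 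Since in addition $\mathbf{h}\in\mathbb{P}_{\mathbf{g},\mathbf{g}}$, the ``only if'' clause of Lemma \ref{L;Lemma5.12} yields that $B_{\mathbf{g},\mathbf{h},\mathbf{g}}D_{\mathbf{g},\mathbf{i},\mathbf{g}}\neq O$ forces $(\mathbbm{h}\cap\mathbbm{g})^\circ\subseteq\mathbbm{i}$. The hypothesis $\mathbf{h}\in\mathbb{P}_{\mathbf{g},\mathbf{g}}$ also gives $\mathbbm{h}\subseteq\mathbbm{g}^\circ$, whence $(\mathbbm{h}\cap\mathbbm{g})^\circ=\mathbbm{h}^\circ=\mathbbm{h}$, so the condition simplifies to $\mathbf{h}\preceq\mathbf{i}$. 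Consequently $B_{\mathbf{g},\mathbf{h},\mathbf{g}}D_{\mathbf{g},\mathbf{i},\mathbf{g}}=O$ whenever $\mathbf{h}\not\preceq\mathbf{i}$.

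For Step~2, assume $\mathbf{h}\preceq\mathbf{i}$ and expand $D_{\mathbf{g},\mathbf{i},\mathbf{g}}$ via Equation \eqref{Eq;10} as an $\F$-linear combination of terms $B_{\mathbf{g},\mathbf{l},\mathbf{g}}$ indexed by $\mathbf{l}$ with $\mathbf{i}\preceq\mathbf{l}\preceq\mathbf{g}\mathbf{g}$ and $p\nmid k_\mathbf{l}$. For each such $\mathbf{l}$, transitivity $\mathbf{h}\preceq\mathbf{i}\preceq\mathbf{l}$ and Lemma \ref{L;Lemma2.3} give $\mathbf{h}\cap\mathbf{l}=\mathbf{h}$ and $\mathbf{h}\cup\mathbf{l}=\mathbf{l}$, so Corollary \ref{C;Corollary3.24} collapses $B_{\mathbf{g},\mathbf{h},\mathbf{g}}B_{\mathbf{g},\mathbf{l},\mathbf{g}}$ to $\overline{k_\mathbf{h}}B_{\mathbf{g},\mathbf{l},\mathbf{g}}$. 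Pulling this common scalar $\overline{k_\mathbf{h}}$ through the whole sum yields $B_{\mathbf{g},\mathbf{h},\mathbf{g}}D_{\mathbf{g},\mathbf{i},\mathbf{g}}=\overline{k_\mathbf{h}}D_{\mathbf{g},\mathbf{i},\mathbf{g}}$.

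For Step~3 I observe that $B_{\mathbf{g},\mathbf{h},\mathbf{g}}^T=B_{\mathbf{g},\mathbf{h},\mathbf{g}}$ by Lemma \ref{L;Lemma3.10}, and $D_{\mathbf{g},\mathbf{i},\mathbf{g}}^T=D_{\mathbf{g},\mathbf{i},\mathbf{g}}$ by Lemma \ref{L;Lemma5.7} applied with $\mathbf{g}$ in the role of both its first and third arguments (so $\overline{k_\mathbf{0}}D_{\mathbf{g},\mathbf{i},\mathbf{g}}^T=\overline{k_\mathbf{0}}D_{\mathbf{g},\mathbf{i},\mathbf{g}}$, and $\overline{k_\mathbf{0}}=\overline{1}$). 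Taking transposes in the identity produced by Steps~1 and~2 therefore converts $B_{\mathbf{g},\mathbf{h},\mathbf{g}}D_{\mathbf{g},\mathbf{i},\mathbf{g}}$ into $D_{\mathbf{g},\mathbf{i},\mathbf{g}}B_{\mathbf{g},\mathbf{h},\mathbf{g}}$ while leaving the right-hand side (either $\overline{k_\mathbf{h}}D_{\mathbf{g},\mathbf{i},\mathbf{g}}$ or $O$) unchanged, completing the proof. The main obstacle is the bookkeeping in Step~1, namely correctly instantiating the seven variables of Lemma \ref{L;Lemma5.12} in our three-variable setting and then unwinding $(\mathbbm{h}\cap\mathbbm{g})^\circ$ to $\mathbbm{h}$; once that is done, Steps~2 and~3 are essentially formal.
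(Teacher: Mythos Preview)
Your proof is correct and follows essentially the same approach as the paper: Lemma~\ref{L;Lemma5.12} for the vanishing case and Equation~\eqref{Eq;10} with Corollary~\ref{C;Corollary3.24} for the case $\mathbf{h}\preceq\mathbf{i}$. The only cosmetic difference is in Step~3: you use the transpose identities of Lemmas~\ref{L;Lemma3.10} and~\ref{L;Lemma5.7}, whereas the paper simply invokes the commutativity of $E_\mathbf{g}^*\mathbb{T}E_\mathbf{g}^*$ already recorded in Corollary~\ref{C;Corollary3.24}.
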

\begin{proof}
If $\mathbf{h}\preceq\mathbf{i}$, then $B_{\mathbf{g},\mathbf{h} ,\mathbf{g}}D_{\mathbf{g},\mathbf{i} ,\mathbf{g}}=D_{\mathbf{g},\mathbf{i} ,\mathbf{g}}B_{\mathbf{g},\mathbf{h} ,\mathbf{g}}=\overline{k_\mathbf{h}}D_{\mathbf{g},\mathbf{i} ,\mathbf{g}}$ by Equation \eqref{Eq;10} and Corollary \ref{C;Corollary3.24}. Otherwise, notice that $\mathbbm{h}^\circ=\mathbbm{h}\!\not\subseteq\!\mathbbm{i}$ and $B_{\mathbf{g},\mathbf{h} ,\mathbf{g}}D_{\mathbf{g},\mathbf{i} ,\mathbf{g}}\!=\!D_{\mathbf{g},\mathbf{i} ,\mathbf{g}}B_{\mathbf{g},\mathbf{h} ,\mathbf{g}}=O$ by an application of Lemma \ref{L;Lemma5.12} and Corollary \ref{C;Corollary3.24}. The desired lemma follows.
\end{proof}
\begin{lem}\label{L;Lemma5.14}
Assume that $\mathbf{g}, \mathbf{h}, \mathbf{i}\in\mathbb{E}$. Assume that $\mathbf{h}, \mathbf{i}\in\mathbb{P}_{\mathbf{g}, \mathbf{g}}$ and $p\nmid k_\mathbf{h}k_\mathbf{i}$. Then
$$D_{\mathbf{g}, \mathbf{h},\mathbf{g}}D_{\mathbf{g}, \mathbf{i},\mathbf{g}}=D_{\mathbf{g}, \mathbf{i},\mathbf{g}}D_{\mathbf{g}, \mathbf{h},\mathbf{g}}=\delta_{\mathbf{h},\mathbf{i}}D_{\mathbf{g}, \mathbf{h},\mathbf{g}}.$$
\end{lem}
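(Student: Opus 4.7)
The plan is to expand $D_{\mathbf{g}, \mathbf{h}, \mathbf{g}}$ via its defining formula \eqref{Eq;10} and then multiply on the right by $D_{\mathbf{g}, \mathbf{i}, \mathbf{g}}$ so that Lemma \ref{L;Lemma5.13} can be applied termwise; the commutativity assertion will follow by symmetry after observing that both products collapse to the same form. First I will note that, since $\mathbf{h}, \mathbf{i} \in \mathbb{P}_{\mathbf{g}, \mathbf{g}}$, the relations $\mathbbm{h}, \mathbbm{i} \subseteq \mathbbm{g}^\circ$ hold, and that $\mathbf{g}\mathbf{g} = \mathbf{g}$ and $\mathbf{g} \cap \mathbf{l} = \mathbf{l}$ for every $\mathbf{l} \preceq \mathbf{g}$, so equation \eqref{Eq;10} reads
\[
D_{\mathbf{g}, \mathbf{h}, \mathbf{g}} = \sum_{k=0}^{n_{\mathbf{g}, \mathbf{h}}} \sum_{\mathbf{l} \in \mathbb{U}_{\mathbf{g}, \mathbf{h}, k}} (\overline{-1})^k \overline{k_{\mathbf{l}}}^{-1} B_{\mathbf{g}, \mathbf{l}, \mathbf{g}}.
\]
Each summation index $\mathbf{l}$ automatically satisfies $\mathbf{l} \in \mathbb{P}_{\mathbf{g}, \mathbf{g}}$ and $p \nmid k_\mathbf{l}$, so Lemma \ref{L;Lemma5.13} gives $B_{\mathbf{g}, \mathbf{l}, \mathbf{g}} D_{\mathbf{g}, \mathbf{i}, \mathbf{g}} = \overline{k_\mathbf{l}} D_{\mathbf{g}, \mathbf{i}, \mathbf{g}}$ whenever $\mathbf{l} \preceq \mathbf{i}$, and $O$ otherwise.

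Substituting yields
\[
D_{\mathbf{g}, \mathbf{h}, \mathbf{g}} D_{\mathbf{g}, \mathbf{i}, \mathbf{g}} = \Bigl( \sum_{k=0}^{n_{\mathbf{g}, \mathbf{h}}} \sum_{\substack{\mathbf{l} \in \mathbb{U}_{\mathbf{g}, \mathbf{h}, k}\\ \mathbf{l} \preceq \mathbf{i}}} (\overline{-1})^k \Bigr) D_{\mathbf{g}, \mathbf{i}, \mathbf{g}},
\]
so the problem reduces to evaluating the scalar sum. I will split into three cases. If $\mathbf{h} \not\preceq \mathbf{i}$, no summation index survives and the scalar is $\overline{0}$. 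If $\mathbf{h} = \mathbf{i}$, only $\mathbf{l} = \mathbf{h}$ survives (with $k=0$) and the scalar is $\overline{1}$. If $\mathbf{h} \precneqq \mathbf{i}$, set $m = |\mathbbm{i} \setminus \mathbbm{h}| > 0$; because $\mathbbm{l} \subseteq \mathbbm{i} \subseteq \mathbbm{g}^\circ$ forces $p \nmid k_\mathbf{l}$ automatically, the indices $\mathbf{l}$ with $\mathbf{h} \preceq \mathbf{l} \preceq \mathbf{i}$ and $|\mathbbm{l}\setminus\mathbbm{h}| = k$ biject with $k$-subsets of $\mathbbm{i}\setminus\mathbbm{h}$, so the scalar equals $\sum_{k=0}^m \binom{m}{k}(\overline{-1})^k = \overline{0}$ by the binomial theorem. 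In all cases the scalar equals $\delta_{\mathbf{h}, \mathbf{i}}$, giving $D_{\mathbf{g}, \mathbf{h}, \mathbf{g}} D_{\mathbf{g}, \mathbf{i}, \mathbf{g}} = \delta_{\mathbf{h}, \mathbf{i}} D_{\mathbf{g}, \mathbf{h}, \mathbf{g}}$.

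For the reverse product $D_{\mathbf{g}, \mathbf{i}, \mathbf{g}} D_{\mathbf{g}, \mathbf{h}, \mathbf{g}}$, the same argument with the roles of $\mathbf{h}$ and $\mathbf{i}$ interchanged gives $\delta_{\mathbf{i}, \mathbf{h}} D_{\mathbf{g}, \mathbf{i}, \mathbf{g}}$, which of course equals $\delta_{\mathbf{h}, \mathbf{i}} D_{\mathbf{g}, \mathbf{h}, \mathbf{g}}$. The main subtlety will be verifying in the mixed case $\mathbf{h} \precneqq \mathbf{i}$ that every $\mathbf{l}$ with $\mathbf{h} \preceq \mathbf{l} \preceq \mathbf{i}$ automatically lands in some $\mathbb{U}_{\mathbf{g}, \mathbf{h}, k}$ (using that $\mathbf{l} \preceq \mathbf{i} \preceq \mathbf{g}$ and that the divisibility $p \nmid k_\mathbf{i}$ transfers to $\mathbf{l}$); once this is in hand the binomial collapse is immediate, and no other case analysis is required.
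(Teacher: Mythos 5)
Your proof is essentially correct, and it takes a genuinely different route from the paper's. Before discussing the difference, one inaccuracy should be flagged. You write that ``$\mathbf{g}\mathbf{g}=\mathbf{g}$,'' but unwinding Notations \ref{N;Notation3.15} and \ref{N;Notation5.6} gives $\mathbbm{gg}=(\mathbbm{g}\triangle\mathbbm{g})\cup(\mathbbm{g}\cap\mathbbm{g})^\circ=\mathbbm{g}^\circ$, which differs from $\mathbbm{g}$ whenever some $a\in\mathbbm{g}$ has $|\mathbb{U}_a|=2$. Consequently your displayed formula should use $n_{\mathbf{g}\mathbf{g},\mathbf{h}}$ and $\mathbb{U}_{\mathbf{g}\mathbf{g},\mathbf{h},k}$, not $n_{\mathbf{g},\mathbf{h}}$ and $\mathbb{U}_{\mathbf{g},\mathbf{h},k}$; indeed, with your version the summation range can include $\mathbf{l}$ with $\mathbbm{l}\not\subseteq\mathbbm{g}^\circ$, for which $\mathbf{l}\notin\mathbb{P}_{\mathbf{g},\mathbf{g}}$ and $B_{\mathbf{g},\mathbf{l},\mathbf{g}}$ is not even defined, so your parenthetical ``each summation index $\mathbf{l}$ automatically satisfies $\mathbf{l}\in\mathbb{P}_{\mathbf{g},\mathbf{g}}$'' is false as stated. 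The slip is harmless in the end because after multiplying by $D_{\mathbf{g},\mathbf{i},\mathbf{g}}$ Lemma \ref{L;Lemma5.13} kills every $\mathbf{l}\not\preceq\mathbf{i}$, and $\mathbbm{i}\subseteq\mathbbm{g}^\circ$ makes the surviving index sets coincide; but the formula as you wrote it is not Equation \eqref{Eq;10}, and you should state it with $\mathbf{g}\mathbf{g}$.

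With that fixed, your argument works and is not the one the paper uses. You expand one factor via Equation \eqref{Eq;10}, apply Lemma \ref{L;Lemma5.13} termwise to collapse the product to a scalar multiple of $D_{\mathbf{g},\mathbf{i},\mathbf{g}}$, and then evaluate that scalar directly, using the alternating-sum binomial identity $\sum_{k=0}^{m}\binom{m}{k}(-1)^k=0$ for $m>0$ to get $\delta_{\mathbf{h},\mathbf{i}}$; commutativity then falls out by swapping $\mathbf{h}$ and $\mathbf{i}$. The paper instead first establishes commutativity independently, via Lemma \ref{L;Lemma5.7} (which places $D_{\mathbf{g},\mathbf{h},\mathbf{g}}$ in $E_\mathbf{g}^*\mathbb{T}E_\mathbf{g}^*$) together with Corollary \ref{C;Corollary3.24} (that algebra is commutative), and then exploits it: the product is nonzero only if $\mathbf{h}\preceq\mathbf{i}$ (from one ordering) and only if $\mathbf{i}\preceq\mathbf{h}$ (from the other), hence only if $\mathbf{h}=\mathbf{i}$, after which idempotency is a one-term computation. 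The paper's route is shorter and avoids the binomial collapse entirely; yours is more computational but self-contained in the sense that it does not call on the commutativity of $E_\mathbf{g}^*\mathbb{T}E_\mathbf{g}^*$ as an input.
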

\begin{proof}
Notice that $D_{\mathbf{g}, \mathbf{h},\mathbf{g}}D_{\mathbf{g}, \mathbf{i},\mathbf{g}}\!\!=\!\!D_{\mathbf{g}, \mathbf{i},\mathbf{g}}D_{\mathbf{g}, \mathbf{h},\mathbf{g}}$ by Lemma \ref{L;Lemma5.7} and Corollary \ref{C;Corollary3.24}. By Equation \eqref{Eq;10} and Lemma \ref{L;Lemma5.13}, notice that $D_{\mathbf{g}, \mathbf{h},\mathbf{g}}D_{\mathbf{g}, \mathbf{i},\mathbf{g}}\!\!=\!\!D_{\mathbf{g}, \mathbf{i},\mathbf{g}}D_{\mathbf{g}, \mathbf{h},\mathbf{g}}\neq O$ only if $\mathbf{h}\preceq\mathbf{i}\preceq\mathbf{h}$. By Lemma \ref{L;Lemma2.3}, $D_{\mathbf{g}, \mathbf{h},\mathbf{g}}D_{\mathbf{g}, \mathbf{i},\mathbf{g}}\!\!=\!\!D_{\mathbf{g}, \mathbf{i},\mathbf{g}}D_{\mathbf{g}, \mathbf{h},\mathbf{g}}\!\neq\!O$ only if $\mathbf{h}\!=\!\mathbf{i}$. The desired lemma follows as $D_{\mathbf{g}, \mathbf{h}, \mathbf{g}}D_{\mathbf{g}, \mathbf{h}, \mathbf{g}}\!=\!D_{\mathbf{g}, \mathbf{h}, \mathbf{g}}$ holds
by Equation \eqref{Eq;10} and Lemma \ref{L;Lemma5.13}.
\end{proof}
We are now ready to give the first main result of this section and some corollaries.
\begin{thm}\label{T;Theorem5.15}
Assume that $\mathbf{g}\in\mathbb{E}$. Then the $\F$-subalgebra $E_\mathbf{g}^*\mathbb{T}E_\mathbf{g}^*$ of $\mathbb{T}$ satisfies the formula $E_\mathbf{g}^*\mathbb{T}E_\mathbf{g}^*/\mathbb{I}_\mathbf{g}\cong 2^{n_{\mathbf{g}\mathbf{g},\mathbf{0}}}\mathrm{M}_1(\F)$ as $\F$-algebras. Moreover, $\mathrm{Rad}(E_\mathbf{g}^*\mathbb{T}E_\mathbf{g}^*)=\mathbb{I}_\mathbf{g}$. In particular, the nilpotent index of $\mathrm{Rad}(E_\mathbf{g}^*\mathbb{T}E_\mathbf{g}^*)$ equals $|\{a: a\in\mathbbm{g}, p\mid |\mathbb{U}_a|-1\}|+1$.
\end{thm}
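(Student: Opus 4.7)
The strategy is to produce an $\F$-basis of the quotient $E_\mathbf{g}^*\mathbb{T}E_\mathbf{g}^*/\mathbb{I}_\mathbf{g}$ consisting of pairwise orthogonal idempotents, from which both the displayed decomposition and the radical statement will follow easily. The natural candidates are the images of the matrices $D_{\mathbf{g},\mathbf{h},\mathbf{g}}$ for those $\mathbf{h}\in\mathbb{P}_{\mathbf{g},\mathbf{g}}$ with $p\nmid k_\mathbf{h}$; Lemma \ref{L;Lemma5.14} already says that inside $E_\mathbf{g}^*\mathbb{T}E_\mathbf{g}^*$ they satisfy $D_{\mathbf{g},\mathbf{h},\mathbf{g}}D_{\mathbf{g},\mathbf{i},\mathbf{g}}=\delta_{\mathbf{h},\mathbf{i}}D_{\mathbf{g},\mathbf{h},\mathbf{g}}$.

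First I would compute the $\F$-dimension of the quotient. Corollary \ref{C;Corollary3.24} gives the $\F$-basis $\{B_{\mathbf{g},\mathbf{a},\mathbf{g}}:(\mathbf{g},\mathbf{a},\mathbf{g})\in\mathbb{P}\}$ of $E_\mathbf{g}^*\mathbb{T}E_\mathbf{g}^*$, and by definition $\mathbb{I}_\mathbf{g}$ is spanned by those basis elements with $p\mid k_\mathbf{a}$, so the cosets of the remaining basis elements give an $\F$-basis of the quotient. Using that $\mathbf{a}\in\mathbb{P}_{\mathbf{g},\mathbf{g}}$ is equivalent to $\mathbbm{a}\subseteq\mathbbm{g}^\circ$, that $p\nmid k_\mathbf{a}$ is equivalent to $p\nmid |\mathbb{U}_j|-1$ for every $j\in\mathbbm{a}$, and a short check that the support of $\mathbf{g}\mathbf{g}$ in the sense of Notation \ref{N;Notation5.6} equals $\mathbbm{g}^\circ$, the count comes out to $2^{n_{\mathbf{g}\mathbf{g},\mathbf{0}}}$.

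Next I would verify that the images of the $D_{\mathbf{g},\mathbf{h},\mathbf{g}}$ in the quotient are nonzero. By Equation \eqref{Eq;10} and Notation \ref{N;Notation5.5}, every $B_{\mathbf{g},\mathbf{l},\mathbf{g}}$ appearing in $D_{\mathbf{g},\mathbf{h},\mathbf{g}}$ satisfies $p\nmid k_\mathbf{l}$, while the $k=0$ summand contributes the term $\overline{k_{\mathbf{g}\cap\mathbf{h}}}^{-1}B_{\mathbf{g},\mathbf{h},\mathbf{g}}$ with unit coefficient (since $\mathbf{g}\cap\mathbf{h}=\mathbf{h}$ because $\mathbbm{h}\subseteq\mathbbm{g}$, and $p\nmid k_\mathbf{h}$). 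Hence the image of $D_{\mathbf{g},\mathbf{h},\mathbf{g}}$ in the quotient is a nonzero $\F$-linear combination of basis cosets. Pairwise orthogonal nonzero idempotents in an $\F$-algebra are always $\F$-linearly independent, so the $2^{n_{\mathbf{g}\mathbf{g},\mathbf{0}}}$ images form an $\F$-basis of the $2^{n_{\mathbf{g}\mathbf{g},\mathbf{0}}}$-dimensional quotient. The standard observation that a commutative $\F$-algebra (commutativity by Corollary \ref{C;Corollary3.24}) with an $\F$-basis of pairwise orthogonal idempotents is a direct sum of copies of $\F$ then yields $E_\mathbf{g}^*\mathbb{T}E_\mathbf{g}^*/\mathbb{I}_\mathbf{g}\cong 2^{n_{\mathbf{g}\mathbf{g},\mathbf{0}}}\mathrm{M}_1(\F)$ as $\F$-algebras.

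For the radical, Lemma \ref{L;Lemma5.4} gives that $\mathbb{I}_\mathbf{g}$ is nilpotent, so $\mathbb{I}_\mathbf{g}\subseteq\mathrm{Rad}(E_\mathbf{g}^*\mathbb{T}E_\mathbf{g}^*)$ by the definition of the Jacobson radical; conversely, the quotient just computed is semisimple, so Lemmas \ref{L;Lemma2.6} and \ref{L;Lemma2.7} force $\mathrm{Rad}(E_\mathbf{g}^*\mathbb{T}E_\mathbf{g}^*)/\mathbb{I}_\mathbf{g}=\mathrm{Rad}(E_\mathbf{g}^*\mathbb{T}E_\mathbf{g}^*/\mathbb{I}_\mathbf{g})=\{0\}$, giving the opposite containment. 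The nilpotent index is then read off directly from Lemma \ref{L;Lemma5.4}. I expect the only mildly delicate step to be the dimension bookkeeping---matching $|\{\mathbf{a}\in\mathbb{P}_{\mathbf{g},\mathbf{g}}:p\nmid k_\mathbf{a}\}|$ against $2^{n_{\mathbf{g}\mathbf{g},\mathbf{0}}}$ by carefully unwinding the relevant notations---while the remaining pieces are essentially routine given Lemmas \ref{L;Lemma5.4}, \ref{L;Lemma5.7}, and \ref{L;Lemma5.14}.
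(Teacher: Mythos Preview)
Your proposal is correct and follows essentially the same approach as the paper: you use the images of the $D_{\mathbf{g},\mathbf{h},\mathbf{g}}$ (for $\mathbf{h}\in\mathbb{P}_{\mathbf{g},\mathbf{g}}$ with $p\nmid k_\mathbf{h}$) as a basis of pairwise orthogonal idempotents of the quotient via Lemma~\ref{L;Lemma5.14}, match the count against $2^{n_{\mathbf{g}\mathbf{g},\mathbf{0}}}$, and then invoke Lemmas~\ref{L;Lemma5.4} and~\ref{L;Lemma2.7} for the radical and its nilpotent index. The paper's proof is terser but relies on exactly the same ingredients in the same order.
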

\begin{proof}
Set $\mathbb{U}=\{D_{\mathbf{g}, \mathbf{a}, \mathbf{g}}+\mathbb{I}_\mathbf{g}:(\mathbf{g}, \mathbf{a}, \mathbf{g})\in\mathbb{P}, p\nmid k_\mathbf{a}\}$. Then $O+\mathbb{I}_\mathbf{g}\notin\mathbb{U}$ by combining Lemma \ref{L;Lemma5.7}, Equation \eqref{Eq;10}, and Theorem \ref{T;Theorem3.13}. Corollary \ref{C;Corollary3.24} and Lemma \ref{L;Lemma5.14} thus imply that $\mathbb{U}$ is an $\F$-basis of $E_\mathbf{g}^*\mathbb{T}E_\mathbf{g}^*/\mathbb{I}_\mathbf{g}$. Lemmas \ref{L;Lemma5.14} and \ref{L;Lemma2.3} also imply that $|\mathbb{U}|\!=\!|\{a: (\mathbf{g}, \mathbf{a}, \mathbf{g})\!\in\!\mathbb{P}, p\!\nmid\! k_\mathbf{a}\}|\!=\!2^{n_{\mathbf{g}\mathbf{g},\mathbf{0}}}$. The first statement is from Lemma \ref{L;Lemma5.14}. The desired theorem follows from combining the first statement, Lemmas \ref{L;Lemma2.7}, \ref{L;Lemma5.4}.
\end{proof}
\begin{cor}\label{C;Corollary5.16}
Assume that $\mathbf{g}, \mathbf{h}\in\mathbb{E}$, $\mathbf{h}\!\in\!\mathbb{P}_{\mathbf{g},\mathbf{g}}$, $p\nmid k_\mathbf{h}$, $\mathbbm{Irr}_{\mathbf{g},\mathbf{h}}=\langle\{D_{\mathbf{g},\mathbf{h},\mathbf{g}}+\mathbb{I}_\mathbf{g}\}\rangle_{\mathbb{T}/\mathbb{I}_\mathbf{g}}$. Then $\mathbbm{Irr}_{\mathbf{g},\mathbf{h}}$ is a module of the $\F$-subalgebra $E_\mathbf{g}^*\mathbb{T}E_\mathbf{g}^*$ of $\mathbb{T}$ via the left multiplication. Moreover, $\{\mathbbm{Irr}_{\mathbf{g},\mathbf{a}}\!:\! (\mathbf{g}, \mathbf{a}, \mathbf{g})\!\in\!\mathbb{P}, p\!\nmid\! k_\mathbf{a}\}$ is a complete set of distinct representatives of all isomorphism classes of
the irreducible modules of the $\F$-subalgebra $E_\mathbf{g}^*\mathbb{T}E_\mathbf{g}^*$ of $\mathbb{T}$.
\end{cor}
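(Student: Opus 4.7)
The plan is to derive the corollary from Theorem \ref{T;Theorem5.15} by identifying the primitive idempotents of the semisimple quotient algebra. By Theorem \ref{T;Theorem5.15} we know that $\mathbb{I}_\mathbf{g}=\mathrm{Rad}(E_\mathbf{g}^*\mathbb{T}E_\mathbf{g}^*)$ and $E_\mathbf{g}^*\mathbb{T}E_\mathbf{g}^*/\mathbb{I}_\mathbf{g}\cong 2^{n_{\mathbf{g}\mathbf{g},\mathbf{0}}}\mathrm{M}_1(\F)$ as $\F$-algebras; hence Lemma \ref{L;Lemma2.8} ensures that $E_\mathbf{g}^*\mathbb{T}E_\mathbf{g}^*$ has exactly $2^{n_{\mathbf{g}\mathbf{g},\mathbf{0}}}$ pairwise nonisomorphic irreducible modules. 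From the proof of Theorem \ref{T;Theorem5.15} the set $\{D_{\mathbf{g},\mathbf{a},\mathbf{g}}+\mathbb{I}_\mathbf{g}:(\mathbf{g},\mathbf{a},\mathbf{g})\in\mathbb{P},\,p\nmid k_\mathbf{a}\}$ is an $\F$-basis of the quotient of cardinality $2^{n_{\mathbf{g}\mathbf{g},\mathbf{0}}}$ consisting, by Lemma \ref{L;Lemma5.14}, of pairwise orthogonal idempotents.

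I would first verify that each $\mathbbm{Irr}_{\mathbf{g},\mathbf{h}}$ is a one-dimensional irreducible module of $E_\mathbf{g}^*\mathbb{T}E_\mathbf{g}^*$. Because the listed classes span $E_\mathbf{g}^*\mathbb{T}E_\mathbf{g}^*/\mathbb{I}_\mathbf{g}$ and Lemma \ref{L;Lemma5.14} yields $(D_{\mathbf{g},\mathbf{a},\mathbf{g}}+\mathbb{I}_\mathbf{g})(D_{\mathbf{g},\mathbf{h},\mathbf{g}}+\mathbb{I}_\mathbf{g})=\delta_{\mathbf{a},\mathbf{h}}(D_{\mathbf{g},\mathbf{h},\mathbf{g}}+\mathbb{I}_\mathbf{g})$, the $\F$-line $\mathbbm{Irr}_{\mathbf{g},\mathbf{h}}$ is preserved by left multiplication by every element of the quotient. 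Since $\mathbb{I}_\mathbf{g}$ acts as zero, this invariance lifts to an action of $E_\mathbf{g}^*\mathbb{T}E_\mathbf{g}^*$ itself, and the basis statement above forces $D_{\mathbf{g},\mathbf{h},\mathbf{g}}+\mathbb{I}_\mathbf{g}\neq 0$, so $\mathbbm{Irr}_{\mathbf{g},\mathbf{h}}$ is a one-dimensional, hence irreducible, module of $E_\mathbf{g}^*\mathbb{T}E_\mathbf{g}^*$.

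To finish I would check pairwise non-isomorphism, from which completeness follows by the count recorded above. For distinct indices $\mathbf{h},\mathbf{i}$ in the indexing set, Lemma \ref{L;Lemma5.14} shows that $D_{\mathbf{g},\mathbf{h},\mathbf{g}}$ acts as the identity on $\mathbbm{Irr}_{\mathbf{g},\mathbf{h}}$ and as zero on $\mathbbm{Irr}_{\mathbf{g},\mathbf{i}}$, ruling out any nonzero $E_\mathbf{g}^*\mathbb{T}E_\mathbf{g}^*$-homomorphism between them. This produces $2^{n_{\mathbf{g}\mathbf{g},\mathbf{0}}}$ mutually nonisomorphic irreducible modules, matching the count from Lemma \ref{L;Lemma2.8}, so the listed collection is a complete set of representatives. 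No real obstacle is anticipated: all the combinatorial arithmetic has already been absorbed into Theorem \ref{T;Theorem5.15} and Lemma \ref{L;Lemma5.14}, and what remains is the standard translation of primitive orthogonal idempotents of a commutative semisimple $\F$-algebra into its one-dimensional simple modules.
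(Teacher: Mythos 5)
Your argument is correct and follows essentially the same route the paper uses: you derive the module structure from the orthogonality relations in Lemma \ref{L;Lemma5.14} (equivalently Lemma \ref{L;Lemma5.13}), use the $\F$-basis of the quotient extracted from the proof of Theorem \ref{T;Theorem5.15}, distinguish the one-dimensional modules by the primitive idempotents $D_{\mathbf{g},\mathbf{a},\mathbf{g}}+\mathbb{I}_\mathbf{g}$, and close the count via Lemma \ref{L;Lemma2.8}. The only cosmetic difference is that you lean on the basis identified inside the proof of Theorem \ref{T;Theorem5.15} to deduce invariance under all of $E_\mathbf{g}^*\mathbb{T}E_\mathbf{g}^*$, whereas the paper invokes Lemma \ref{L;Lemma5.13} together with Corollary \ref{C;Corollary3.24} to act directly by the $B_{\mathbf{g},\mathbf{a},\mathbf{g}}$; the two are interchangeable.
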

\begin{proof}
The first statement follows from combining Lemmas \ref{L;Lemma5.13}, \ref{L;Lemma5.1}, Corollary \ref{C;Corollary3.24}. The desired corollary follows from combining Theorem \ref{T;Theorem5.15}, Lemmas \ref{L;Lemma5.14}, \ref{L;Lemma2.8}.
\end{proof}
\begin{cor}\label{C;Corollary5.17}
Assume that $\mathbf{g}\in\mathbb{E}$. Then an irreducible module of the $\F$-subalgebra $E_\mathbf{g}^*\mathbb{T}E_\mathbf{g}^*$ of $\mathbb{T}$ is also an absolutely irreducible module of the $\F$\!-\!subalgebra $E_\mathbf{g}^*\mathbb{T}E_\mathbf{g}^*$ of $\mathbb{T}$. In particular, the set of all irreducible modules of the $\F$-subalgebra $E_\mathbf{g}^*\mathbb{T}E_\mathbf{g}^*$ of $\mathbb{T}$ is exactly the set of all absolutely irreducible
modules of the $\F$-subalgebra $E_\mathbf{g}^*\mathbb{T}E_\mathbf{g}^*$ of $\mathbb{T}$.
\end{cor}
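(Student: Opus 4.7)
The plan is to deduce this corollary directly from Theorem \ref{T;Theorem5.15} via Lemma \ref{L;Lemma2.9}. Recall that Theorem \ref{T;Theorem5.15} simultaneously establishes the equality $\mathrm{Rad}(E_\mathbf{g}^*\mathbb{T}E_\mathbf{g}^*)=\mathbb{I}_\mathbf{g}$ and the $\F$-algebra isomorphism $E_\mathbf{g}^*\mathbb{T}E_\mathbf{g}^*/\mathbb{I}_\mathbf{g}\cong 2^{n_{\mathbf{g}\mathbf{g},\mathbf{0}}}\mathrm{M}_1(\F)$. So the Wedderburn-Artin decomposition of the quotient $E_\mathbf{g}^*\mathbb{T}E_\mathbf{g}^*/\mathrm{Rad}(E_\mathbf{g}^*\mathbb{T}E_\mathbf{g}^*)$ is already exhibited in the form $\bigoplus_{i=1}^{g}\mathrm{M}_{h_i}(\F)$ required in the hypothesis of Lemma \ref{L;Lemma2.9}, with every matrix block taken over $\F$ itself rather than over a proper division-$\F$-algebra extension.

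Lemma \ref{L;Lemma2.9} applied to the $\F$-algebra $E_\mathbf{g}^*\mathbb{T}E_\mathbf{g}^*$ then yields the first assertion of the corollary: every irreducible module of $E_\mathbf{g}^*\mathbb{T}E_\mathbf{g}^*$ is automatically an absolutely irreducible module of $E_\mathbf{g}^*\mathbb{T}E_\mathbf{g}^*$.

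For the ``in particular'' clause, I combine the first assertion with the standard observation, recorded in the excerpt immediately after the definition of an absolutely irreducible module (and just preceding Lemma \ref{L;Lemma2.8}), that an absolutely irreducible module is always an irreducible module; hence the two classes of modules of $E_\mathbf{g}^*\mathbb{T}E_\mathbf{g}^*$ coincide. I do not anticipate any genuine obstacle here, since all the structural content was already packaged inside Theorem \ref{T;Theorem5.15}; the corollary is essentially a one-line invocation of Lemma \ref{L;Lemma2.9} followed by the trivial containment between the two classes of modules.
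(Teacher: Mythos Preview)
Your proposal is correct and follows essentially the same approach as the paper's own proof, which simply cites Theorem \ref{T;Theorem5.15} and Lemma \ref{L;Lemma2.9}. Your write-up merely unpacks those two citations in slightly more detail, including the routine observation that absolutely irreducible implies irreducible for the ``in particular'' clause.
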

\begin{proof}
The desired corollary follows from using Theorem \ref{T;Theorem5.15} and Lemma \ref{L;Lemma2.9}.
\end{proof}
\begin{cor}\label{C;Corollary5.18}
Assume that $\mathbf{g}\in\mathbb{E}$. Then an irreducible module of the $\F$-subalgebra $E_\mathbf{g}^*\mathbb{T}E_\mathbf{g}^*$ of $\mathbb{T}$ is also a self-contragredient module of the $\F$-subalgebra $E_\mathbf{g}^*\mathbb{T}E_\mathbf{g}^*$ of $\mathbb{T}$ with respect to $\alpha_T$. In particular, all irreducible modules of the $\F$\!-\!subalgebra $E_\mathbf{g}^*\mathbb{T}E_\mathbf{g}^*$ of $\mathbb{T}$ are self-contragredient modules of the $\F$-subalgebra $E_\mathbf{g}^*\mathbb{T}E_\mathbf{g}^*$ of $\mathbb{T}$ with respect to $\alpha_T$.
\end{cor}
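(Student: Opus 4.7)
The plan is to deduce the corollary from three facts that the paper has already set up: (i) by Corollary \ref{C;Corollary5.16}, every irreducible module of $E_\mathbf{g}^*\mathbb{T}E_\mathbf{g}^*$ is one-dimensional, namely some $\mathbbm{Irr}_{\mathbf{g},\mathbf{h}}$; (ii) $\alpha_T$ sends $E_\mathbf{g}^*\mathbb{T}E_\mathbf{g}^*$ to itself because Equation \eqref{Eq;1} gives $(E_\mathbf{g}^*)^T=E_\mathbf{g}^*$, so it restricts to an algebra anti-automorphism of this subalgebra; (iii) this restriction is in fact the identity map. Given these, for a one-dimensional module the contragredient acts by the same scalars as the original, so it must be isomorphic to the original.

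I would carry out step (iii) first, since it is the only statement that still needs checking. Combine Corollary \ref{C;Corollary3.24}, which supplies the $\F$-basis $\{B_{\mathbf{g},\mathbf{a},\mathbf{g}}:(\mathbf{g},\mathbf{a},\mathbf{g})\in\mathbb{P}\}$ of $E_\mathbf{g}^*\mathbb{T}E_\mathbf{g}^*$, with Lemma \ref{L;Lemma3.10}, which specializes to $B_{\mathbf{g},\mathbf{a},\mathbf{g}}^T=B_{\mathbf{g},\mathbf{a},\mathbf{g}}$ whenever the two outer indices coincide. Extending $\F$-linearly, $\alpha_T$ acts as the identity on all of $E_\mathbf{g}^*\mathbb{T}E_\mathbf{g}^*$.

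Then I would fix an irreducible module $\mathbb{U}=\F v$ of $E_\mathbf{g}^*\mathbb{T}E_\mathbf{g}^*$ and let $v^*\in\mathrm{Hom}_\F(\mathbb{U},\F)$ be the dual basis element. For each $M\in E_\mathbf{g}^*\mathbb{T}E_\mathbf{g}^*$ write $Mv=\lambda_M v$ with $\lambda_M\in\F$. Unwinding the definition of the $\alpha_T$-action yields $(M\cdot v^*)(v)=v^*(\alpha_T(M)v)=v^*(Mv)=\lambda_M$, whence $M\cdot v^*=\lambda_M v^*$. Thus the $\F$-linear bijection $v\mapsto v^*$ is an isomorphism $\mathbb{U}\cong \mathbb{U}^{\alpha_T}$ of $E_\mathbf{g}^*\mathbb{T}E_\mathbf{g}^*$-modules, which gives the first assertion; the ``in particular'' statement then follows immediately by combining with Corollary \ref{C;Corollary5.16}. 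I do not expect a serious obstacle: once the commutativity of the subalgebra and the transpose-invariance of its basis $\mathbb{B}_2\cap E_\mathbf{g}^*\mathbb{T}E_\mathbf{g}^*$ have been recorded, the whole argument collapses to the one-line observation that $\alpha_T$ restricts to the identity on $E_\mathbf{g}^*\mathbb{T}E_\mathbf{g}^*$.
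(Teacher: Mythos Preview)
Your proposal is correct and follows essentially the same approach as the paper: both rely on Corollary~\ref{C;Corollary5.16} for the one-dimensionality of the irreducibles and on Lemma~\ref{L;Lemma3.10} (the paper additionally invokes Lemma~\ref{L;Lemma5.7} for the $D$-elements in its mixed $B/D$ basis) to see that transpose fixes a basis of $E_\mathbf{g}^*\mathbb{T}E_\mathbf{g}^*$. Your version is in fact slightly more direct, since by working with the $B$-basis of Corollary~\ref{C;Corollary3.24} you make explicit that $\alpha_T$ restricts to the identity on $E_\mathbf{g}^*\mathbb{T}E_\mathbf{g}^*$, after which self-contragredience of any one-dimensional module is immediate.
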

\begin{proof}
The combination of Corollary \ref{C;Corollary3.24}, Equation \eqref{Eq;10}, and Theorem \ref{T;Theorem3.23} implies that $\{B_{\mathbf{g}, \mathbf{a}, \mathbf{g}}: (\mathbf{g}, \mathbf{a}, \mathbf{g})\in\mathbb{P}, p\mid k_\mathbf{a}\}\cup\{D_{\mathbf{g}, \mathbf{a}, \mathbf{g}}: (\mathbf{g}, \mathbf{a}, \mathbf{g})\in\mathbb{P}, p\nmid k_\mathbf{a}\}$ is an $\F$-basis of the $\F$-subalgebra $E_\mathbf{g}^*\mathbb{T}E_\mathbf{g}^*$ of $\mathbb{T}$. Then the desired corollary follows from combining Corollary \ref{C;Corollary5.16}, Lemmas \ref{L;Lemma3.10}, \ref{L;Lemma5.1}, \ref{L;Lemma5.7}, \ref{L;Lemma2.3}, \ref{L;Lemma5.14}, and the above discussion.
\end{proof}
\begin{cor}\label{C;Corollary5.19}
Assume that $\mathbf{g}\in\mathbb{E}$. Then the $\F$-subalgebra $E_\mathbf{g}^*\mathbb{T}E_\mathbf{g}^*$ of $\mathbb{T}$ is a semisimple $\F$-subalgebra of $\mathbb{T}$ if and only if $\mathbf{g}=\mathbf{0}$ or $\mathbf{g}\neq\mathbf{0}$ and $p\nmid\prod_{h\in\mathbbm{g}}(|\mathbb{U}_h|-1)$.
\end{cor}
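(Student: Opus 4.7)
The plan is to combine Lemma \ref{L;Lemma2.6} with Theorem \ref{T;Theorem5.15}. By Lemma \ref{L;Lemma2.6}, the $\F$-subalgebra $E_\mathbf{g}^*\mathbb{T}E_\mathbf{g}^*$ of $\mathbb{T}$ is a semisimple $\F$-subalgebra of $\mathbb{T}$ if and only if its Jacobson radical equals $\{O\}$. By Theorem \ref{T;Theorem5.15}, this radical coincides with $\mathbb{I}_\mathbf{g}$ and has nilpotent index $|\{a:a\in\mathbbm{g},\ p\mid |\mathbb{U}_a|-1\}|+1$. So the task reduces to deciding when $\mathbb{I}_\mathbf{g}=\{O\}$.

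Directly from the definition of nilpotent index given in Section 2, the ideal $\mathbb{I}_\mathbf{g}$ vanishes precisely when its nilpotent index equals $1$, that is, exactly when the set $\{a:a\in\mathbbm{g},\ p\mid |\mathbb{U}_a|-1\}$ is empty. I would then split into two cases. When $\mathbf{g}=\mathbf{0}$, Lemma \ref{L;Lemma2.3} forces $\mathbbm{g}=\varnothing$, so that indexing set is trivially empty and $\mathbb{I}_\mathbf{g}=\{O\}$. When $\mathbf{g}\neq\mathbf{0}$, the indexing set is empty precisely when $p\nmid |\mathbb{U}_h|-1$ for every $h\in\mathbbm{g}$, which by the primality of $p$ is equivalent to $p\nmid\prod_{h\in\mathbbm{g}}(|\mathbb{U}_h|-1)$.

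Assembling the two cases yields both directions of the claimed equivalence. There is no real obstacle to the proof: the corollary is essentially a repackaging of Theorem \ref{T;Theorem5.15} and Lemma \ref{L;Lemma2.6}, and the only bookkeeping step is translating the emptiness of $\{a:a\in\mathbbm{g},\ p\mid|\mathbb{U}_a|-1\}$ into the non-divisibility of the product $\prod_{h\in\mathbbm{g}}(|\mathbb{U}_h|-1)$, which is immediate from the prime factorization of that product.
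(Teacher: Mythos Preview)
Your proposal is correct and follows essentially the same approach as the paper, which simply cites Theorem \ref{T;Theorem5.15} and Lemma \ref{L;Lemma2.6}. You have merely spelled out in detail the bookkeeping step of translating the nilpotent-index condition from Theorem \ref{T;Theorem5.15} into the divisibility condition in the statement.
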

\begin{proof}
The desired corollary follows from using Theorem \ref{T;Theorem5.15} and Lemma \ref{L;Lemma2.6}.
\end{proof}
We are now ready to get the second main result of this section and some corollaries.
\begin{thm}\label{T;Semisimplicity}
$\mathbb{T}$ is a semisimple $\F$-algebra if and only if $\mathbb{S}$ is a $p'$-valenced scheme.
\end{thm}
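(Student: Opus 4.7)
The ``only if'' direction is immediate from Lemma~\ref{L;Lemma2.12}.

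For the ``if'' direction, assume $\mathfrak{S}$ is $p'$-valenced, so $p\nmid k_\mathbf{a}$ for every $\mathbf{a}\in\mathbb{E}$. Under this hypothesis the defining set of the ideal $\mathbb{I}_\mathbf{g}$ of Notation~\ref{N;Notation5.2} is empty, giving $\mathbb{I}_\mathbf{g}=\{O\}$. Theorem~\ref{T;Theorem5.15} then yields $\mathrm{Rad}(E_\mathbf{g}^*\mathbb{T}E_\mathbf{g}^*)=\{O\}$ for every $\mathbf{g}\in\mathbb{E}$, and Lemma~\ref{L;Lemma2.6} upgrades this to $E_\mathbf{g}^*\mathrm{Rad}(\mathbb{T})E_\mathbf{g}^*=\{O\}$ for every $\mathbf{g}\in\mathbb{E}$, so all ``diagonal'' Peirce pieces of the Jacobson radical already vanish.

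My next step is to exhibit a complete system of primitive orthogonal idempotents of $\mathbb{T}$. Once $\mathbb{I}_\mathbf{g}=\{O\}$, the proof of Theorem~\ref{T;Theorem5.15} together with Lemma~\ref{L;Lemma5.14} furnishes the refinement $E_\mathbf{g}^*=\sum_{\mathbf{h}\in\mathbb{P}_{\mathbf{g},\mathbf{g}}}D_{\mathbf{g},\mathbf{h},\mathbf{g}}$ inside $\mathbb{T}$. Summing over $\mathbf{g}$, $I=\sum_{\mathbf{g},\mathbf{h}}D_{\mathbf{g},\mathbf{h},\mathbf{g}}$ is a complete orthogonal idempotent decomposition of $\mathbb{T}$. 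For each $D:=D_{\mathbf{g},\mathbf{h},\mathbf{g}}$, the identity $D\mathbb{T}D=D(E_\mathbf{g}^*\mathbb{T}E_\mathbf{g}^*)D=\F\cdot D$ shows $D$ is primitive in $\mathbb{T}$ with one-dimensional endomorphism ring, so every Wedderburn block of $\mathbb{T}/\mathrm{Rad}(\mathbb{T})$ has the form $\mathrm{M}_{n_i}(\F)$ over $\F$ itself.

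To count the blocks, Corollary~\ref{C;Corollary4.15} combined with Corollary~\ref{C;Corollary5.19} at $\mathbf{g}=\mathbf{1}$ shows that $\mathrm{Z}(\mathbb{T})\cong E_\mathbf{1}^*\mathbb{T}E_\mathbf{1}^*$ is semisimple of $\F$-dimension $2^{n_2}$ (agreeing with Theorem~\ref{T;Center}). Because $\mathrm{Z}(\mathbb{T})\cap\mathrm{Rad}(\mathbb{T})\subseteq\mathrm{Rad}(\mathrm{Z}(\mathbb{T}))=\{O\}$ and primitive central idempotents lift through the nilpotent ideal $\mathrm{Rad}(\mathbb{T})$, the quotient $\mathbb{T}/\mathrm{Rad}(\mathbb{T})$ must have exactly $2^{n_2}$ Wedderburn blocks. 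The main obstacle I expect is establishing the matching dimension identity $\sum_{i=1}^{2^{n_2}}n_i^2=2^{2n_1}5^{n_2}=\dim\mathbb{T}$, which forces $\mathrm{Rad}(\mathbb{T})=\{O\}$; I would attack it by distributing the $\sum_{\mathbf{g}}|\mathbb{P}_{\mathbf{g},\mathbf{g}}|=2^{n_1}3^{n_2}$ primitive idempotents $D_{\mathbf{g},\mathbf{h},\mathbf{g}}$ among the $2^{n_2}$ blocks labeled by primitive central idempotents of $\mathrm{Z}(\mathbb{T})$ (themselves obtained from the basis $\{C_\mathbf{a}\}$ of Theorem~\ref{T;Center} by M{\"o}bius inversion on the Boolean lattice $\mathbb{P}_{\mathbf{1},\mathbf{1}}$ via the product rule of Theorem~\ref{T;Theorem4.13}) and then reading off the individual block sizes.
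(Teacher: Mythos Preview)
Your opening moves are the same as the paper's: the ``only if'' via Lemma~\ref{L;Lemma2.12}, and then, under the $p'$-valenced hypothesis, the vanishing $E_\mathbf{g}^*\mathrm{Rad}(\mathbb{T})E_\mathbf{g}^*=\mathrm{Rad}(E_\mathbf{g}^*\mathbb{T}E_\mathbf{g}^*)=\{O\}$ via Theorem~\ref{T;Theorem5.15} and Lemma~\ref{L;Lemma2.6}. After that you diverge into a counting argument, and here there are genuine gaps.

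First, the block count. From $\mathrm{Z}(\mathbb{T})\cong\F^{2^{n_2}}$ you conclude that $\mathbb{T}/\mathrm{Rad}(\mathbb{T})$ has exactly $2^{n_2}$ Wedderburn components. This is not justified: semisimplicity of $\mathrm{Z}(\mathbb{T})$ yields $2^{n_2}$ primitive \emph{central} idempotents of $\mathbb{T}$, hence $2^{n_2}$ indecomposable two-sided summands of $\mathbb{T}$ itself, but each such block may a priori contain several nonisomorphic simples. Equivalently, the natural map $\mathrm{Z}(\mathbb{T})\to\mathrm{Z}(\mathbb{T}/\mathrm{Rad}(\mathbb{T}))$ is injective (this you observe) but need not be surjective; ``central idempotents lift'' goes in the wrong direction for your purpose. (Think of upper-triangular $2\times 2$ matrices: $\mathrm{Z}(\mathbb{A})=\F\cdot I$ while $\mathbb{A}/\mathrm{Rad}(\mathbb{A})\cong\F\times\F$.) So at this stage you only know there are \emph{at least} $2^{n_2}$ simples, not exactly $2^{n_2}$.

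Second, even granting the block count, you explicitly leave the dimension identity $\sum n_i^2=2^{2n_1}5^{n_2}$ as the ``main obstacle'', with only a plan. That plan (M\"obius-inverting $\{C_\mathbf{a}\}$ to primitive central idempotents and then distributing the $D_{\mathbf{g},\mathbf{h},\mathbf{g}}$) amounts to redoing, inside the semisimple case, the Wedderburn analysis that the paper carries out later in Section~8; it is considerable additional work and not needed here.

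The paper bypasses all of this with a one-step ``bounce'' argument that you are perfectly set up for. Once all diagonal Peirce pieces of $\mathrm{Rad}(\mathbb{T})$ vanish, take $M\in\mathrm{Rad}(\mathbb{T})\setminus\{O\}$ and pick distinct $\mathbf{h},\mathbf{i}$ with $E_\mathbf{h}^*ME_\mathbf{i}^*\neq O$. Right-multiply by $B_{\mathbf{i},\,\mathbf{h}\triangle\mathbf{i},\,\mathbf{h}}$: using Theorem~\ref{T;Theorem3.23} and the fact (from $p'$-valencedness) that all structure constants $\overline{k_{\cdots}}$ appearing are units, one checks that the distinct $B_{\mathbf{h},\mathbf{k}_{(s)},\mathbf{i}}$ in $\mathrm{Supp}_{\mathbb{B}_2}(E_\mathbf{h}^*ME_\mathbf{i}^*)$ are sent to distinct $B_{\mathbf{h},[\mathbf{h},\mathbf{k}_{(s)},\mathbf{i},\mathbf{h}\triangle\mathbf{i},\mathbf{h}],\mathbf{h}}$ with nonzero coefficients, so the product is a nonzero element of $E_\mathbf{h}^*\mathrm{Rad}(\mathbb{T})E_\mathbf{h}^*=\{O\}$, a contradiction. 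This is a single Peirce computation, and it finishes the proof without any block-counting or dimension identity.
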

\begin{proof}
Assume that $\mathbb{S}$ is a $p'$-valenced scheme. So $p\nmid k_\mathbf{g}$ for any $\mathbf{g}\in\mathbb{E}$. Theorem \ref{T;Theorem5.15} shows that the $\F$-subalgebra $E_\mathbf{g}^*\mathbb{T}E_\mathbf{g}^*$ of $\mathbb{T}$ satisfies the equation $\mathrm{Rad}(E_\mathbf{g}^*\mathbb{T}E_\mathbf{g}^*)\!=\!\{O\}$ for any $\mathbf{g}\in\mathbb{E}$. Pick $M\in\mathrm{Rad}(\mathbb{T})$. Assume further that $M\neq O$. Lemma \ref{L;Lemma2.6} implies that $E_\mathbf{g}^*ME_\mathbf{g}^*=O$ for any $\mathbf{g}\in\mathbb{E}$. As $M\neq O$ and Equation \eqref{Eq;3} holds, there are distinct $\mathbf{h}, \mathbf{i}\in\mathbb{E}$ such that $E_\mathbf{h}^*ME_\mathbf{i}^*\neq O$. Then there are $j\in\mathbb{N}$ and pairwise distinct $\mathbf{k}_{(1)}, \mathbf{k}_{(2)}, \ldots, \mathbf{k}_{(j)}\!\in\!\mathbb{P}_{\mathbf{h}, \mathbf{i}}$ such that $\mathrm{Supp}_{\mathbb{B}_2}(E_\mathbf{h}^*ME_\mathbf{i}^*)\!=\!\{B_{\mathbf{h}, \mathbf{k}_{(1)},\mathbf{i}}, B_{\mathbf{h}, \mathbf{k}_{(2)},\mathbf{i}}, \ldots, B_{\mathbf{h}, \mathbf{k}_{(j)},\mathbf{i}}\}$.

As $\mathbf{k}_{(1)}, \mathbf{k}_{(2)}, \ldots, \mathbf{k}_{(j)}$ are pairwise distinct $n$-tuples in $\mathbb{P}_{\mathbf{h}, \mathbf{i}}$, Lemmas \ref{L;Lemma2.5} and \ref{L;Lemma2.3} imply that $\mathbf{h}\cap\mathbf{i}\cap\mathbf{k}_{(1)}, \mathbf{h}\cap\mathbf{i}\cap\mathbf{k}_{(2)},\ldots, \mathbf{h}\cap\mathbf{i}\cap\mathbf{k}_{(j)}$ are pairwise distinct
$n$-tuples. Lemma \ref{L;Lemma2.3} thus implies that $[\mathbf{h},\mathbf{k}_{(1)}, \mathbf{i}, \mathbf{h}\triangle\mathbf{i}, \mathbf{h}], [\mathbf{h},\mathbf{k}_{(2)}, \mathbf{i}, \mathbf{h}\triangle\mathbf{i}, \mathbf{h}], \ldots, [\mathbf{h},\mathbf{k}_{(j)}, \mathbf{i}, \mathbf{h}\triangle\mathbf{i}, \mathbf{h}]$ are pairwise distinct $n$-tuples. Moreover, Lemma \ref{L;Lemma2.5} also gives $\mathbf{h}\triangle\mathbf{i}\in\mathbb{P}_{\mathbf{h}, \mathbf{i}}$. Notice that
$$c_{\mathbf{h}, [\mathbf{h},\mathbf{k}_{(1)}, \mathbf{i}, \mathbf{h}\triangle\mathbf{i}, \mathbf{h}],\mathbf{h}}(E_\mathbf{h}^*ME_\mathbf{i}^*B_{\mathbf{i}, \mathbf{h}\triangle\mathbf{i},\mathbf{h}})
=c_{\mathbf{h}, \mathbf{k}_{(1)}, \mathbf{i}}(E_\mathbf{h}^*ME_\mathbf{i}^*)\overline{k_{(\mathbf{h}\triangle\mathbf{i})\cap\mathbf{i}\cap\mathbf{k}_{(1)}}}\in\mathbb{F}^\times$$
by Theorems \ref{T;Theorem3.23} and \ref{T;Theorem3.13}. Theorem \ref{T;Theorem3.13} thus implies that $E_\mathbf{h}^*ME_\mathbf{i}^*B_{\mathbf{i}, \mathbf{h}\triangle\mathbf{i},\mathbf{h}}\neq O$. Equation \eqref{Eq;4}
and Lemma \ref{L;Lemma2.6} thus show that $E_\mathbf{h}^*ME_\mathbf{i}^*B_{\mathbf{i}, \mathbf{h}\triangle\mathbf{i},\mathbf{h}}\!\in\!\mathrm{Rad}(E_\mathbf{h}^*\mathbb{T}E_\mathbf{h}^*)\!\setminus\!\{O\}$. This is a contradiction. The desired theorem follows from Lemmas \ref{L;Lemma2.6} and \ref{L;Lemma2.12}.
\end{proof}
\begin{cor}\label{C;Corollary5.21}
The following are equivalent: $\mathbb{S}$ is a $p'$-valenced scheme; $\mathbb{T}$ is a semisimple $\F$-algebra; $E_\mathbf{g}^*\mathbb{T}E_\mathbf{g}^*$ is a semisimple $\F$-subalgebra of $\mathbb{T}$ for any $\mathbf{g}\in\mathbb{E}$; $E_\mathbf{1}^*\mathbb{T}E_\mathbf{1}^*$ is a semisimple $\F$-subalgebra of $\mathbb{T}$. In particular, $\mathbb{T}$ is a semisimple $\F$-algebra if and only if $p\nmid\prod_{h=1}^n(|\mathbb{U}_h|-1)$.
\end{cor}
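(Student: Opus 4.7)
The plan is to establish Corollary \ref{C;Corollary5.21} as a straightforward synthesis of Theorem \ref{T;Semisimplicity}, Corollary \ref{C;Corollary5.19}, and Lemma \ref{L;Lemma2.6}, with essentially no new computation required. I will prove the equivalence by a short cyclic implication, and then derive the ``in particular'' clause.

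First, the equivalence (i) $\Leftrightarrow$ (ii) is precisely Theorem \ref{T;Semisimplicity}. For (ii) $\Rightarrow$ (iii), I would note that $E_\mathbf{g}^*$ is an idempotent of $\mathbb{T}$ for every $\mathbf{g}\in\mathbb{E}$ (by Equation \eqref{Eq;2}), so Lemma \ref{L;Lemma2.6} gives that each $E_\mathbf{g}^*\mathbb{T}E_\mathbf{g}^*$ inherits semisimplicity from $\mathbb{T}$. The implication (iii) $\Rightarrow$ (iv) is trivial by specializing to $\mathbf{g}=\mathbf{1}$.

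The last step (iv) $\Rightarrow$ (i) is where the content lies. Apply Corollary \ref{C;Corollary5.19} with $\mathbf{g}=\mathbf{1}$: since $n\in\mathbb{N}$ forces $\mathbf{1}\neq\mathbf{0}$, the semisimplicity of $E_\mathbf{1}^*\mathbb{T}E_\mathbf{1}^*$ is equivalent to $p\nmid\prod_{h\in\mathbbm{1}}(|\mathbb{U}_h|-1)=\prod_{h=1}^n(|\mathbb{U}_h|-1)$. Now recall that $k_\mathbf{0}=1$ and $k_\mathbf{g}=\prod_{h\in\mathbbm{g}}(|\mathbb{U}_h|-1)$ for any $\mathbf{g}\in\mathbb{E}\setminus\{\mathbf{0}\}$; since $\mathbbm{g}\subseteq [1,n]$, the integer $k_\mathbf{g}$ divides $\prod_{h=1}^n(|\mathbb{U}_h|-1)$. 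Thus $p\nmid\prod_{h=1}^n(|\mathbb{U}_h|-1)$ forces $p\nmid k_\mathbf{g}$ for every $\mathbf{g}\in\mathbb{E}$, which is exactly the $p'$-valenced condition on $\mathfrak{S}$.

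For the ``in particular'' statement, the equivalence (ii) $\Leftrightarrow$ (iv) combined with the application of Corollary \ref{C;Corollary5.19} to $\mathbf{g}=\mathbf{1}$ in the previous paragraph gives precisely that $\mathbb{T}$ is a semisimple $\F$-algebra if and only if $p\nmid\prod_{h=1}^n(|\mathbb{U}_h|-1)$. There is no real obstacle here; the only mild subtlety is to remember that the convention $n\in\mathbb{N}$ excludes $n=0$, so the special case ``$\mathbf{g}=\mathbf{0}$'' of Corollary \ref{C;Corollary5.19} never applies when $\mathbf{g}=\mathbf{1}$.
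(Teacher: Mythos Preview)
Your proof is correct and follows essentially the same approach as the paper: both combine Theorem \ref{T;Semisimplicity}, Lemma \ref{L;Lemma2.6}, and Corollary \ref{C;Corollary5.19}, with the key observation that $p\nmid k_{\mathbf{1}}$ is equivalent to $\mathfrak{S}$ being $p'$-valenced. You have simply made the cyclic chain of implications more explicit than the paper's terse summary.
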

\begin{proof}
Recall that $k_\mathbf{0}=1$ and $k_\mathbf{g}=\prod_{i\in\mathbbm{g}}(|\mathbb{U}_i|-1)$ if $\mathbf{g}\in\mathbb{E}\setminus\{\mathbf{0}\}$. For any $\mathbf{g}\in\mathbb{E}$, Corollary \ref{C;Corollary5.19} implies that $p\nmid k_\mathbf{g}$ if and only if $E_\mathbf{g}^*\mathbb{T}E_\mathbf{g}^*$ is a semisimple $\F$-subalgebra of $\mathbb{T}$. It is obvious that $p\nmid k_\mathbf{1}$ if and only if $\mathbb{S}$ is a $p'$-valenced scheme. The desired corollary follows from combining Lemma \ref{L;Lemma2.6}, Corollary \ref{C;Corollary5.19}, and Theorem \ref{T;Semisimplicity}.
\end{proof}
\begin{cor}\label{C;Corollary5.22}
$\mathbb{T}$ is a semisimple $\F$-algebra if and only if $\mathrm{Z}(\mathbb{T})$ is a semisimple $\F$-subalgebra of $\mathbb{T}$. More explicitly, the $\F$-subalgebra $\mathrm{Z}(\mathbb{T})$ of $\mathbb{T}$ satisfies the
formulas $\mathrm{Rad}(\mathrm{Z}(\mathbb{T}))\!\!=\!\!\langle\{C_\mathbf{a}:(\mathbf{1}, \mathbf{a}, \mathbf{1})\in\mathbb{P}, p\mid k_\mathbf{a}\}\rangle_\mathbb{T}$ and $\mathrm{Z}(\mathbb{T})/\mathrm{Rad}(\mathrm{Z}(\mathbb{T}))\cong 2^{n_{\mathbf{1}\mathbf{1},\mathbf{0}}}\mathrm{M}_1(\F)$ as $\F$-algebras. The nilpotent index of $\mathrm{Rad}(\mathrm{Z}(\mathbb{T}))$ equals $|\{a: a\in[1, n], p\mid |\mathbb{U}_a|-1\}|+1$.
\end{cor}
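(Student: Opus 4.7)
The plan is to derive the corollary by pulling back the conclusions of Theorem \ref{T;Theorem5.15} (specialised to $\mathbf{g}=\mathbf{1}$) through the algebra isomorphism of Corollary \ref{C;Corollary4.15}. That isomorphism, as constructed in the proof of Corollary \ref{C;Corollary4.15}, is the $\F$-linear map that sends $C_\mathbf{a}$ to $B_{\mathbf{1},\mathbf{a},\mathbf{1}}$ for each $\mathbf{a}\in\mathbb{E}$ with $(\mathbf{1},\mathbf{a},\mathbf{1})\in\mathbb{P}$. It therefore carries the generating set $\{C_\mathbf{a}:(\mathbf{1},\mathbf{a},\mathbf{1})\in\mathbb{P},\ p\mid k_\mathbf{a}\}$ bijectively onto the generating set $\{B_{\mathbf{1},\mathbf{a},\mathbf{1}}:(\mathbf{1},\mathbf{a},\mathbf{1})\in\mathbb{P},\ p\mid k_\mathbf{a}\}$ that defines $\mathbb{I}_\mathbf{1}$ in Notation \ref{N;Notation5.2}.

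First I would settle the semisimplicity equivalence. Since $\mathrm{Z}(\mathbb{T})\cong E_\mathbf{1}^*\mathbb{T}E_\mathbf{1}^*$ as $\F$-algebras by Corollary \ref{C;Corollary4.15}, the algebra $\mathrm{Z}(\mathbb{T})$ is semisimple precisely when $E_\mathbf{1}^*\mathbb{T}E_\mathbf{1}^*$ is semisimple, and Corollary \ref{C;Corollary5.21} identifies the latter condition with $\mathbb{T}$ being a semisimple $\F$-algebra.

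Next I would read off the remaining structural information from Theorem \ref{T;Theorem5.15} applied at $\mathbf{g}=\mathbf{1}$: it delivers $\mathrm{Rad}(E_\mathbf{1}^*\mathbb{T}E_\mathbf{1}^*)=\mathbb{I}_\mathbf{1}$, an $\F$-algebra isomorphism $E_\mathbf{1}^*\mathbb{T}E_\mathbf{1}^*/\mathbb{I}_\mathbf{1}\cong 2^{n_{\mathbf{1}\mathbf{1},\mathbf{0}}}\mathrm{M}_1(\F)$, and a nilpotent index of $|\{a:a\in\mathbbm{1},\ p\mid |\mathbb{U}_a|-1\}|+1$ for $\mathbb{I}_\mathbf{1}$. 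Transporting each of these statements along the isomorphism of Corollary \ref{C;Corollary4.15} and invoking the identification of generating sets from the first paragraph yields exactly the asserted expressions for $\mathrm{Rad}(\mathrm{Z}(\mathbb{T}))$ and $\mathrm{Z}(\mathbb{T})/\mathrm{Rad}(\mathrm{Z}(\mathbb{T}))$, while Lemma \ref{L;Lemma2.3} provides $\mathbbm{1}=[1,n]$, which rewrites the nilpotent index as $|\{a:a\in[1,n],\ p\mid |\mathbb{U}_a|-1\}|+1$. No step presents any real obstacle; the only point worth checking carefully is the compatibility of the two natural generating sets under the explicit isomorphism of Corollary \ref{C;Corollary4.15}, and this is immediate from the definitions of $C_\mathbf{g}$ and $\mathbb{I}_\mathbf{g}$ combined with Theorem \ref{T;Center}.
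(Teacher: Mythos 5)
Your proposal is correct and follows essentially the same route as the paper: both arguments rest on the explicit algebra isomorphism $\mathrm{Z}(\mathbb{T})\cong E_\mathbf{1}^*\mathbb{T}E_\mathbf{1}^*$ sending $C_\mathbf{a}\mapsto B_{\mathbf{1},\mathbf{a},\mathbf{1}}$ (Corollary \ref{C;Corollary4.15}), combined with Theorem \ref{T;Theorem5.15} applied at $\mathbf{g}=\mathbf{1}$ and Corollary \ref{C;Corollary5.21} for the semisimplicity equivalence. The paper's proof additionally cites Corollary \ref{C;Corollary4.14} and Theorems \ref{T;Theorem3.13}, \ref{T;Theorem4.13} to verify the nilpotent ideal structure directly within $\mathrm{Z}(\mathbb{T})$, whereas you transport it cleanly through the isomorphism via the matching generating sets; this is a mild streamlining of the same idea, not a different approach.
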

\begin{proof}
The first statement is from combining Lemma \ref{L;Lemma2.6}, Corollaries \ref{C;Corollary4.15}, and \ref{C;Corollary5.21}. The second statement is from combining Corollaries \ref{C;Corollary4.14}, \ref{C;Corollary4.15}, Theorems \ref{T;Theorem5.15}, \ref{T;Theorem3.13}, and \ref{T;Theorem4.13}. The desired corollary follows from Corollary \ref{C;Corollary4.15} and Theorem \ref{T;Theorem5.15}.
\end{proof}
We finish the discussion of this section by an example of Theorems \ref{T;Theorem5.15} and \ref{T;Semisimplicity}.
\begin{eg}\label{E;Example5.23}
Assume that $n=|\mathbb{U}_1|=2$ and $|\mathbb{U}_2|=3$. It is clear that $|\mathbb{E}|=4$. Assume that $\mathbf{g}=(0,1)$ and $\mathbf{h}=(1,0)$. Hence $\mathbb{E}=\{\mathbf{0}, \mathbf{g}, \mathbf{h}, \mathbf{1}\}$.
By Theorem \ref{T;Theorem5.15}, the $\F$-subalgebras $E_\mathbf{0}^*\mathbb{T}E_\mathbf{0}^*, E_\mathbf{h}^*\mathbb{T}E_\mathbf{h}^*$ of $\mathbb{T}$ satisfy the formula $E_\mathbf{0}^*\mathbb{T}E_\mathbf{0}^*\!\cong\! E_\mathbf{h}^*\mathbb{T}E_\mathbf{h}^*\!\cong\!\mathrm{M}_1(\F)$ as $\F$-algebras. If $p\neq 2$, Theorem \ref{T;Theorem5.15} implies that the $\F$-subalgebras $E_\mathbf{g}^*\mathbb{T}E_\mathbf{g}^*,E_\mathbf{1}^*\mathbb{T}E_\mathbf{1}^*$ of $\mathbb{T}$ satisfy the formula $E_\mathbf{g}^*\mathbb{T}E_\mathbf{g}^*\!\cong\!E_\mathbf{1}^*\mathbb{T}E_\mathbf{1}^*\!\cong\!2\mathrm{M}_1(\F)$ as $\F$-algebras. If $p=2$, Theorem \ref{T;Theorem5.15} also implies that the $\F$-subalgebras $E_\mathbf{g}^*\mathbb{T}E_\mathbf{g}^*,E_\mathbf{1}^*\mathbb{T}E_\mathbf{1}^*$ of $\mathbb{T}$ satisfy the formula
$E_\mathbf{g}^*\mathbb{T}E_\mathbf{g}^*/\mathrm{Rad}(E_\mathbf{g}^*\mathbb{T}E_\mathbf{g}^*)\cong E_\mathbf{1}^*\mathbb{T}E_\mathbf{1}^*/\mathrm{Rad}(E_\mathbf{1}^*\mathbb{T}E_\mathbf{1}^*)\cong \mathrm{M}_1(\F)$
as $\F$-algebras. According to Theorem \ref{T;Semisimplicity}, it is clear to see that $\mathbb{T}$ is a semisimple $\F$-algebra if and only if $p\neq 2$.
\end{eg}
\section{Algebraic structure of $\mathbb{T}$: Jacobson radical}
In this section, we determine $\mathrm{Rad}(\mathbb{T})$ and compute the nilpotent index of $\mathrm{Rad}(\mathbb{T})$. We first recall Notations \ref{N;Notation3.1}, \ref{N;Notation3.9}, \ref{N;Notation3.14}, \ref{N;Notation3.15}, \ref{N;Notation4.1} and introduce a preliminary lemma.
\begin{lem}\label{L;Lemma6.1}
$\mathbb{T}$ has a two-sided ideal $\langle\{B_{\mathbf{a}, \mathbf{b}, \mathbf{c}}: (\mathbf{a}, \mathbf{b}, \mathbf{c})\in\mathbb{P}, p\mid k_\mathbf{b}\}\rangle_\mathbb{T}$. In particular, the $\F$-dimension of $\langle\{B_{\mathbf{a}, \mathbf{b}, \mathbf{c}}\!: (\mathbf{a}, \mathbf{b}, \mathbf{c})\!\in\!\mathbb{P}, p\mid k_\mathbf{b}\}\rangle_\mathbb{T}$ is independent of the choice of $\mathbf{x}$.
\end{lem}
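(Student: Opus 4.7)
The plan is to deduce both statements directly from the product formula in Theorem~\ref{T;Theorem3.23} and the basis description of Theorem~\ref{T;Theorem3.13}. Write $\mathbb{J}$ for the $\F$-span in question. Since $\mathbb{B}_2$ is an $\F$-basis of $\mathbb{T}$, checking that $\mathbb{J}$ is a two-sided ideal reduces to verifying that $B_{\mathbf{g},\mathbf{h},\mathbf{i}}B_{\mathbf{j},\mathbf{k},\mathbf{l}}$ and $B_{\mathbf{j},\mathbf{k},\mathbf{l}}B_{\mathbf{g},\mathbf{h},\mathbf{i}}$ both lie in $\mathbb{J}$ whenever $B_{\mathbf{g},\mathbf{h},\mathbf{i}}$ is a generator of $\mathbb{J}$ (so $p\mid k_\mathbf{h}$) and $B_{\mathbf{j},\mathbf{k},\mathbf{l}}\in\mathbb{B}_2$ is arbitrary. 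By Lemma~\ref{L;Lemma3.10}, the anti-automorphism $\alpha_T$ fixes each middle index, so it preserves $\mathbb{J}$; transposing $B_{\mathbf{j},\mathbf{k},\mathbf{l}}B_{\mathbf{g},\mathbf{h},\mathbf{i}}$ yields $B_{\mathbf{i},\mathbf{h},\mathbf{g}}B_{\mathbf{l},\mathbf{k},\mathbf{j}}$, whose leading factor is still a generator of $\mathbb{J}$. Hence it suffices to handle the left-multiplication case.

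By Theorem~\ref{T;Theorem3.23}, the product $B_{\mathbf{g},\mathbf{h},\mathbf{i}}B_{\mathbf{j},\mathbf{k},\mathbf{l}}$ is $O$ unless $\mathbf{j}=\mathbf{i}$, in which case
\[
B_{\mathbf{g},\mathbf{h},\mathbf{i}}B_{\mathbf{i},\mathbf{k},\mathbf{l}}=\overline{k_{\mathbf{h}\cap\mathbf{i}\cap\mathbf{k}}}\,B_{\mathbf{g},\mathbf{m},\mathbf{l}},\qquad \mathbf{m}=[\mathbf{g},\mathbf{h},\mathbf{i},\mathbf{k},\mathbf{l}].
\]
Assuming the coefficient is nonzero (so $p\nmid k_{\mathbf{h}\cap\mathbf{i}\cap\mathbf{k}}$), it suffices to show that every $r\in\mathbbm{h}$ with $p\mid(|\mathbb{U}_r|-1)$ already lies in $\mathbbm{m}$; any such $r$ automatically satisfies $|\mathbb{U}_r|>2$, i.e.\ $r\in[1,n]^\circ$. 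Using the explicit description
\[
\mathbbm{m}=(\mathbbm{g}\triangle\mathbbm{l})\cup((\mathbbm{g}\cap\mathbbm{l})^\circ\setminus\mathbbm{i})\cup((\mathbbm{h}\cup\mathbbm{k})\cap(\mathbbm{g}\cap\mathbbm{i}\cap\mathbbm{l})^\circ)
\]
from Notation~\ref{N;Notation3.15}, together with $\mathbbm{g}\triangle\mathbbm{i}\subseteq\mathbbm{h}\subseteq\mathbbm{g}\cup\mathbbm{i}$ (from $\mathbf{h}\in\mathbb{P}_{\mathbf{g},\mathbf{i}}$) and $\mathbbm{i}\triangle\mathbbm{l}\subseteq\mathbbm{k}$ (from $\mathbf{k}\in\mathbb{P}_{\mathbf{i},\mathbf{l}}$), a short case analysis on whether $r$ lies in $\mathbbm{g},\mathbbm{i},\mathbbm{l}$ places $r$ in one of the three pieces of $\mathbbm{m}$, with the sole exception $r\in\mathbbm{i}\setminus(\mathbbm{g}\cup\mathbbm{l})$. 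In that exceptional case $\mathbbm{i}\triangle\mathbbm{l}\subseteq\mathbbm{k}$ forces $r\in\mathbbm{k}$, and then $r\in\mathbbm{h}\cap\mathbbm{i}\cap\mathbbm{k}$ contradicts $p\nmid k_{\mathbf{h}\cap\mathbf{i}\cap\mathbf{k}}$. Thus $r\in\mathbbm{m}$ throughout, giving $p\mid k_\mathbf{m}$ and $B_{\mathbf{g},\mathbf{m},\mathbf{l}}\in\mathbb{J}$.

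For the dimension-independence statement, Theorem~\ref{T;Theorem3.13} guarantees that the spanning set $\{B_{\mathbf{a},\mathbf{b},\mathbf{c}}:(\mathbf{a},\mathbf{b},\mathbf{c})\in\mathbb{P},\ p\mid k_\mathbf{b}\}$ of $\mathbb{J}$ is $\F$-linearly independent, hence an $\F$-basis of $\mathbb{J}$; its cardinality $|\{(\mathbf{a},\mathbf{b},\mathbf{c})\in\mathbb{P}:p\mid k_\mathbf{b}\}|$ depends only on $p$ and the sizes $|\mathbb{U}_1|,\ldots,|\mathbb{U}_n|$ and involves no reference to $\mathbf{x}$, which gives the claim. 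The main obstacle is the case analysis in the ideal argument: it is small, but it genuinely needs both containments $\mathbf{h}\in\mathbb{P}_{\mathbf{g},\mathbf{i}}$ and $\mathbf{k}\in\mathbb{P}_{\mathbf{i},\mathbf{l}}$, together with the $[1,n]^\circ$ restriction coming from $p\mid(|\mathbb{U}_r|-1)$, to track every relevant $r$ into $\mathbbm{m}$ whenever the valency coefficient survives.
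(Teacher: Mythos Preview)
Your proof is correct and follows essentially the same approach as the paper: reduce to basis elements via Theorem~\ref{T;Theorem3.13}, use the product formula of Theorem~\ref{T;Theorem3.23}, do a case analysis on a prime-divisor index $r$ in the generator's middle tuple to show that either the valency coefficient vanishes or the resulting middle tuple again has $p\mid k_{\mathbf{m}}$, and handle the other side with the transpose map of Lemma~\ref{L;Lemma3.10}. The only cosmetic difference is that the paper checks arbitrary\,$\cdot$\,generator first and then transposes, whereas you check generator\,$\cdot$\,arbitrary first; the dimension argument is identical.
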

\begin{proof}
Recall that $k_\mathbf{0}=1$ and $k_\mathbf{g}=\prod_{h\in\mathbbm{g}}(|\mathbb{U}_h|-1)$ if $\mathbf{g}\in\mathbb{E}\setminus\{\mathbf{0}\}$. Assume that $\mathbf{i}, \mathbf{j}, \mathbf{k}, \mathbf{l}, \mathbf{m},\mathbf{q}\in\mathbb{E}$, $\mathbf{j}\in\mathbb{P}_{\mathbf{i}, \mathbf{k}}$, $\mathbf{m}\in\mathbb{P}_{\mathbf{l}, \mathbf{q}}$, $p\mid k_\mathbf{m}$. If $\mathbf{k}\neq\mathbf{l}$,
notice that $B_{\mathbf{i}, \mathbf{j}, \mathbf{k}}B_{\mathbf{l}, \mathbf{m}, \mathbf{q}}=O$ by Equations \eqref{Eq;4} and \eqref{Eq;2}. Consider the case $\mathbf{k}=\mathbf{l}$.
As $\mathbf{j}\in\mathbb{P}_{\mathbf{i}, \mathbf{k}}$, it is obvious to see that $\mathbbm{j}=(\mathbbm{i}\triangle\mathbbm{k})\cup(\mathbbm{i}\cap\mathbbm{j}\cap\mathbbm{k})=(\mathbbm{i}\triangle\mathbbm{k})\cup(\mathbbm{i}\cap\mathbbm{j}\cap\mathbbm{k})^\circ$.
As $\mathbf{m}\in\mathbb{P}_{\mathbf{k}, \mathbf{q}}$, it is obvious to see that $\mathbbm{m}=(\mathbbm{k}\triangle\mathbbm{q})\cup(\mathbbm{k}\cap\mathbbm{m}\cap\mathbbm{q})=
(\mathbbm{k}\triangle\mathbbm{q})\cup(\mathbbm{k}\cap\mathbbm{m}\cap\mathbbm{q})^\circ$. As $p\mid k_\mathbf{m}$, there is $r\in\mathbbm{m}$ such that $p\mid |\mathbb{U}_r|-1$. If
$r\in(\mathbbm{k}\setminus(\mathbbm{i}\cup\mathbbm{q}))\cup((\mathbbm{k}\cap\mathbbm{m}\cap\mathbbm{q})^\circ\setminus\mathbbm{i})$, then $r\in\mathbbm{j}\cap\mathbbm{k}\cap\mathbbm{m}$ and $p\mid k_{\mathbf{j}\cap\mathbf{k}\cap\mathbf{m}}$. If
$r\in((\mathbbm{i}\cap\mathbbm{k})\setminus\mathbbm{q})\cup(\mathbbm{q}\setminus(\mathbbm{i}\cup\mathbbm{k}))\cup(\mathbbm{i}\cap\mathbbm{k}\cap\mathbbm{m}\cap\mathbbm{q})^\circ$,
then $r\in(\mathbbm{i}\triangle\mathbbm{q})\cup(\mathbbm{i}\cap\mathbbm{k}\cap\mathbbm{m}\cap\mathbbm{q})^\circ$ and $p\mid k_{[\mathbf{i}, \mathbf{j}, \mathbf{k}, \mathbf{m}, \mathbf{q}]}$.
If $r\in(\mathbbm{i}\cap\mathbbm{q})\setminus\mathbbm{k}$, notice that $r\in(\mathbbm{i}\cap\mathbbm{q})^\circ\setminus\mathbbm{k}$ and $p\mid k_{[\mathbf{i}, \mathbf{j}, \mathbf{k}, \mathbf{m}, \mathbf{q}]}$ as
$p\mid |\mathbb{U}_r|-1$. In conclusion, $p\mid k_{\mathbf{j}\cap\mathbf{k}\cap\mathbf{m}}k_{[\mathbf{i}, \mathbf{j}, \mathbf{k}, \mathbf{m}, \mathbf{q}]}$.
This thus implies that $B_{\mathbf{i}, \mathbf{j}, \mathbf{k}}B_{\mathbf{k}, \mathbf{m},\mathbf{q}}\in\langle\{B_{\mathbf{a}, \mathbf{b}, \mathbf{c}}: (\mathbf{a}, \mathbf{b}, \mathbf{c})\in\mathbb{P}, p\mid k_\mathbf{b}\}\rangle_\mathbb{T}$ by Theorem \ref{T;Theorem3.23}. Hence the above distinct cases show that $B_{\mathbf{i}, \mathbf{j}, \mathbf{k}}B_{\mathbf{l}, \mathbf{m}, \mathbf{q}}\in\langle\{B_{\mathbf{a}, \mathbf{b}, \mathbf{c}}: (\mathbf{a}, \mathbf{b}, \mathbf{c})\in\mathbb{P}, p\mid k_\mathbf{b}\}\rangle_\mathbb{T}$. If $M\in\langle\{B_{\mathbf{a}, \mathbf{b}, \mathbf{c}}: (\mathbf{a}, \mathbf{b}, \mathbf{c})\in\mathbb{P}, p\mid k_\mathbf{b}\}\rangle_\mathbb{T}$, then $M^T\in\langle\{B_{\mathbf{a}, \mathbf{b}, \mathbf{c}}: (\mathbf{a}, \mathbf{b}, \mathbf{c})\in\mathbb{P}, p\mid k_\mathbf{b}\}\rangle_\mathbb{T}$ by Lemma \ref{L;Lemma3.10}.
This thus shows that $B_{\mathbf{l}, \mathbf{m}, \mathbf{q}}B_{\mathbf{i}, \mathbf{j}, \mathbf{k}}\in\langle\{B_{\mathbf{a}, \mathbf{b}, \mathbf{c}}: (\mathbf{a}, \mathbf{b}, \mathbf{c})\in\mathbb{P}, p\mid k_\mathbf{b}\}\rangle_\mathbb{T}$ by the above discussion and Lemma \ref{L;Lemma3.10}. The first statement thus can be checked by the proved fact $B_{\mathbf{i}, \mathbf{j}, \mathbf{k}}B_{\mathbf{l}, \mathbf{m}, \mathbf{q}}, B_{\mathbf{l}, \mathbf{m}, \mathbf{q}}B_{\mathbf{i}, \mathbf{j}, \mathbf{k}}\in\langle\{B_{\mathbf{a}, \mathbf{b}, \mathbf{c}}: (\mathbf{a}, \mathbf{b}, \mathbf{c})\in\mathbb{P}, p\mid k_\mathbf{b}\}\rangle_\mathbb{T}$ and Theorem \ref{T;Theorem3.13}. The desired lemma follows from an  application of Theorem \ref{T;Theorem3.13}.
\end{proof}
Lemma \ref{L;Lemma6.1} motivates us to introduce the following notation and another lemma.
\begin{nota}\label{N;Notation6.2}
Let $\mathbb{I}=\langle\{B_{\mathbf{a}, \mathbf{b}, \mathbf{c}}: (\mathbf{a}, \mathbf{b}, \mathbf{c})\in\mathbb{P}, p\mid k_\mathbf{b}\}\rangle_\mathbb{T}$. So Lemma \ref{L;Lemma6.1} implies that $\mathbb{I}$ is a two-sided ideal of $\mathbb{T}$ whose $\F$-dimension is independent of the choice of $\mathbf{x}$.
\end{nota}
\begin{lem}\label{L;Lemma6.3}
There are $2|\{a: a\in[1,n], p\mid |\mathbb{U}_a|-1\}|$ matrices contained in $\mathbb{I}$ such that a particular matrix product of all these matrices is not equal to the zero matrix.
\end{lem}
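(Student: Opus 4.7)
The plan is to exhibit an explicit product of $2m$ matrices in $\mathbb{I}$, where $m = |\{a \in [1, n] : p \mid |\mathbb{U}_a| - 1\}|$, that evaluates to a nonzero element. If $m = 0$ the statement holds vacuously under the convention that the empty matrix product is the identity, so I will assume $m \geq 1$. Since $p \mid |\mathbb{U}_a| - 1$ forces $|\mathbb{U}_a| \geq p + 1 \geq 3$, each of the $m$ bad indices lies in $[1, n]^\circ$. Enumerate them as $j_1, \ldots, j_m$; let $\mathbf{l}_{(k)} \in \mathbb{E}$ be the unique tuple with $\mathbbm{l}_{(k)} = \{j_k\}$, let $\mathbf{b} \in \mathbb{E}$ be the unique tuple with $\mathbbm{b} = \{j_1, \ldots, j_m\}$, and put $\mathbf{b}_k := \mathbf{b} \setminus \mathbf{l}_{(k)}$, using Lemma \ref{L;Lemma2.3} and Notation \ref{N;Notation4.1} at each step. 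Observe that $\mathbbm{b}^\circ = \mathbbm{b}$ and $\mathbbm{b}_k^\circ = \mathbbm{b}_k$.

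For each $k \in [1, m]$ I would set $M_{2k-1} := B_{\mathbf{b}, \mathbf{l}_{(k)}, \mathbf{b}_k}$ and $M_{2k} := B_{\mathbf{b}_k, \mathbf{l}_{(k)}, \mathbf{b}}$, giving the back-and-forth walk $\mathbf{b} \to \mathbf{b}_1 \to \mathbf{b} \to \mathbf{b}_2 \to \mathbf{b} \to \cdots \to \mathbf{b}_m \to \mathbf{b}$ through the subconstituents. A direct check using Lemma \ref{L;Lemma2.5} (together with $\mathbf{b} \triangle \mathbf{b}_k = \mathbf{l}_{(k)}$ and $\mathbbm{l}_{(k)} \subseteq \mathbbm{b}^\circ$) shows that these $2m$ matrices are legitimate elements of the basis $\mathbb{B}_2$ from Theorem \ref{T;Theorem3.13}, and each lies in $\mathbb{I}$ because $p \mid k_{\mathbf{l}_{(k)}} = |\mathbb{U}_{j_k}| - 1$. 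I would then apply Theorem \ref{T;Theorem3.23} to each consecutive pair: the triple intersection $\mathbf{l}_{(k)} \cap \mathbf{b}_k \cap \mathbf{l}_{(k)} = \mathbf{0}$ forces the scalar to be $\overline{k_\mathbf{0}} = \overline{1}$, and a routine evaluation of $[\mathbf{b}, \mathbf{l}_{(k)}, \mathbf{b}_k, \mathbf{l}_{(k)}, \mathbf{b}]$ via Notation \ref{N;Notation3.15} (using $\mathbbm{b}^\circ = \mathbbm{b}$) collapses to $\mathbf{l}_{(k)}$, so $M_{2k-1} M_{2k} = B_{\mathbf{b}, \mathbf{l}_{(k)}, \mathbf{b}}$.

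These $m$ collapsed factors now lie inside the commutative subalgebra $E_\mathbf{b}^* \mathbb{T} E_\mathbf{b}^*$. A second iteration, this time through Corollary \ref{C;Corollary3.24} and exploiting the pairwise disjointness $\mathbbm{l}_{(i)} \cap \mathbbm{l}_{(j)} = \varnothing$ to keep every scalar equal to $\overline{1}$, reduces $\prod_{k=1}^m B_{\mathbf{b}, \mathbf{l}_{(k)}, \mathbf{b}}$ to $B_{\mathbf{b}, \mathbf{b}, \mathbf{b}}$, which is nonzero by Lemma \ref{L;Lemma3.10}. Thus $M_1 M_2 \cdots M_{2m} = B_{\mathbf{b}, \mathbf{b}, \mathbf{b}} \neq O$, and the $2m$ matrices together with this particular ordered product witness the claim.

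The main obstacle I anticipate is engineering the sequence so that the Theorem \ref{T;Theorem3.23} scalars $\overline{k_{\mathbf{h} \cap \mathbf{i} \cap \mathbf{k}}}$ never acquire any factor divisible by $p$, even though every factor must carry at least one bad coordinate in its middle in order to lie in $\mathbb{I}$. Monotone paths of the shape $\mathbf{0} \to \mathbf{l}_{(1)} \to \mathbf{l}_{(1)} \cup \mathbf{l}_{(2)} \to \cdots$ fail on the return leg because the accumulated middle of the partial product drags a divisible valency into the next multiplication. The back-and-forth walk above is designed precisely so that consecutive bad indices only interact through the vacuous intersection, which is exactly the cancellation needed at each junction, and the two-step products then descend into the diagonal block at $\mathbf{b}$ where Corollary \ref{C;Corollary3.24} finishes the job.
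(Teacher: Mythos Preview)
Your proof is correct and follows essentially the same strategy as the paper: factor each ``diagonal'' element $B_{\ast,\mathbf{l}_{(k)},\ast}$ as a product of two matrices in $\mathbb{I}$ via the back-and-forth walk, then multiply the resulting diagonal factors inside a commutative corner using Corollary~\ref{C;Corollary3.24}. The only cosmetic difference is your choice of anchor: you work at $\mathbf{b}$ (the union of the bad indices) and land on $B_{\mathbf{b},\mathbf{b},\mathbf{b}}$, whereas the paper anchors at $\mathbf{1}$, factoring $B_{\mathbf{1},\mathbf{l}_{(k)},\mathbf{1}}=B_{\mathbf{1},\mathbf{l}_{(k)},\mathbf{1}\setminus\mathbf{l}_{(k)}}B_{\mathbf{1}\setminus\mathbf{l}_{(k)},\mathbf{l}_{(k)},\mathbf{1}}$ and then citing Lemma~\ref{L;Lemma5.3} (with $\mathbf{g}=\mathbf{1}$) for nonvanishing of the $m$-fold product.
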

\begin{proof}
Recall that $k_\mathbf{0}=1$ and $k_\mathbf{g}=\prod_{h\in\mathbbm{g}}(|\mathbb{U}_h|-1)$ if $\mathbf{g}\in\mathbb{E}\setminus\{\mathbf{0}\}$. Lemma \ref{L;Lemma2.3} thus implies that $\mathbb{I}\neq\{O\}$ if and only if $\{a: a\in[1,n], p\mid |\mathbb{U}_a|-1\}\neq\varnothing$. Therefore there is no loss to only consider the case $\mathbb{I}\neq\{O\}$. Define $\mathbb{U}=\{a: a\in[1,n], p\mid |\mathbb{U}_a|-1\}$. Assume that $i\in\mathbb{N}$ and $\mathbb{U}=\{j_1, j_2, \ldots, j_i\}$. For any $k\in[1,i]$, let $\mathbf{l}_{(k)}$ be the $n$-tuples in $\mathbb{E}$ whose unique nonzero entry is the $j_k$th-entry. So $\mathbf{l}_{(k)}\in\mathbb{P}_{\mathbf{1}, \mathbf{1}}$ and $p\mid k_{\mathbf{l}_{(k)}}$ for any $k\in[1, i]$. By Corollary \ref{C;Corollary3.24}, the matrix product of $B_{\mathbf{1}, \mathbf{l}_{(1)}, \mathbf{1}}, B_{\mathbf{1}, \mathbf{l}_{(2)}, \mathbf{1}}, \ldots, B_{\mathbf{1}, \mathbf{l}_{(i)}, \mathbf{1}}$ is independent of the multiplication order of $B_{\mathbf{1}, \mathbf{l}_{(1)}, \mathbf{1}}, B_{\mathbf{1}, \mathbf{l}_{(2)}, \mathbf{1}}, \ldots, B_{\mathbf{1}, \mathbf{l}_{(i)}, \mathbf{1}}$. Notice that
\begin{align}\label{Eq;11}
\prod_{k=1}^iB_{\mathbf{1}, \mathbf{l}_{(k)}, \mathbf{1}}\neq O
\end{align}
by Corollary \ref{C;Corollary3.24} and Lemma \ref{L;Lemma3.10}. For any $k\in [1,i]$, notice that $\mathbf{l}_{(k)}\in\mathbb{P}_{\mathbf{1},\mathbf{1}\setminus\mathbf{l}_{(k)}}$ and $B_{\mathbf{1}, \mathbf{l}_{(k)}, \mathbf{1}\setminus\mathbf{l}_{(k)}}B_{\mathbf{1}\setminus\mathbf{l}_{(k)}, \mathbf{l}_{(k)}, \mathbf{1}}=B_{\mathbf{1}, \mathbf{l}_{(k)}, \mathbf{1}}$ by combining Lemmas \ref{L;Lemma2.5}, \ref{L;Lemma2.3}, \ref{L;Lemma3.10}, and Theorem \ref{T;Theorem3.23}. The desired lemma follows from the above discussion and Equation \eqref{Eq;11}.
\end{proof}
The following lemmas allow us to show that $\mathbb{I}$ is a nilpotent two-sided ideal of $\mathbb{T}$.
\begin{lem}\label{L;Lemma6.4}
Assume that $\mathbf{g}, \mathbf{h}, \mathbf{i}, \mathbf{j}, \mathbf{k}\in\mathbb{E}$, $\mathbf{h}\in\mathbb{P}_{\mathbf{g}, \mathbf{i}}$, $\mathbf{j}\in\mathbb{P}_{\mathbf{i}, \mathbf{k}}$. Assume that $p\nmid k_{\mathbf{h}\cap\mathbf{i}\cap\mathbf{j}}$.
Then $\{a: a\in\mathbbm{h}\cup\mathbbm{j}, p\mid |\mathbb{U}_a|-1\}\subseteq(\mathbbm{g}\triangle\mathbbm{k})\cup((\mathbbm{g}\cap\mathbbm{k})^\circ\setminus\mathbbm{i})\cup((\mathbbm{h}
\cup\mathbbm{j})\cap(\mathbbm{g}\cap\mathbbm{i}\cap\mathbbm{k})^\circ)$.
\end{lem}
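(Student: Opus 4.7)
The plan is to fix an arbitrary element $a\in\mathbbm{h}\cup\mathbbm{j}$ with $p\mid|\mathbb{U}_a|-1$ and argue by cases that $a$ belongs to one of the three pieces of the right-hand side. Before starting the case analysis I would record three standing facts: (i) since $\mathbf{h}\in\mathbb{P}_{\mathbf{g},\mathbf{i}}$, one has $\mathbbm{g}\triangle\mathbbm{i}\subseteq\mathbbm{h}\subseteq(\mathbbm{g}\triangle\mathbbm{i})\cup(\mathbbm{g}\cap\mathbbm{i})^\circ$; (ii) since $\mathbf{j}\in\mathbb{P}_{\mathbf{i},\mathbf{k}}$, one has $\mathbbm{i}\triangle\mathbbm{k}\subseteq\mathbbm{j}\subseteq(\mathbbm{i}\triangle\mathbbm{k})\cup(\mathbbm{i}\cap\mathbbm{k})^\circ$; and (iii) unpacking the definition $k_{\mathbf{m}}=\prod_{r\in\mathbbm{m}}(|\mathbb{U}_r|-1)$, the hypothesis $p\nmid k_{\mathbf{h}\cap\mathbf{i}\cap\mathbf{j}}$ says that no index in $\mathbbm{h}\cap\mathbbm{i}\cap\mathbbm{j}$ can have $p\mid|\mathbb{U}_r|-1$. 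Also, $p\mid|\mathbb{U}_a|-1$ forces $|\mathbb{U}_a|>2$, so that any time $a$ lies in a set of the form $\mathbbm{g}\cap\mathbbm{k}$ or $\mathbbm{g}\cap\mathbbm{i}\cap\mathbbm{k}$, it automatically sits in the corresponding ``circled'' version.

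The primary split is on whether $a\in\mathbbm{i}$. If $a\notin\mathbbm{i}$, then from (i) and (ii) one gets $\mathbbm{h}\subseteq\mathbbm{g}\cup\mathbbm{i}$ and $\mathbbm{j}\subseteq\mathbbm{i}\cup\mathbbm{k}$, so $a\in\mathbbm{g}\cup\mathbbm{k}$; now either $a\in\mathbbm{g}\triangle\mathbbm{k}$, putting $a$ in the first piece, or $a\in\mathbbm{g}\cap\mathbbm{k}$, in which case $|\mathbb{U}_a|>2$ places $a$ in the second piece $(\mathbbm{g}\cap\mathbbm{k})^\circ\setminus\mathbbm{i}$. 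If $a\in\mathbbm{i}$, then (iii) rules out $a\in\mathbbm{h}\cap\mathbbm{i}\cap\mathbbm{j}$, so exactly one of the following holds: (a)~$a\in\mathbbm{h}\cap\mathbbm{i}$ and $a\notin\mathbbm{j}$; (b)~$a\in\mathbbm{j}\cap\mathbbm{i}$ and $a\notin\mathbbm{h}$. In subcase (a), (i) forces $a\in\mathbbm{g}\cap\mathbbm{i}$, and $a\in\mathbbm{i}\setminus\mathbbm{j}$ together with (ii) forces $a\in\mathbbm{k}$; so $a\in\mathbbm{g}\cap\mathbbm{i}\cap\mathbbm{k}$, and since $|\mathbb{U}_a|>2$ and $a\in\mathbbm{h}\cup\mathbbm{j}$, this lands $a$ in the third piece. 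Subcase (b) is symmetric: (ii) gives $a\in\mathbbm{i}\cap\mathbbm{k}$ and $a\in\mathbbm{i}\setminus\mathbbm{h}$ combined with (i) gives $a\in\mathbbm{g}$, so again $a\in(\mathbbm{h}\cup\mathbbm{j})\cap(\mathbbm{g}\cap\mathbbm{i}\cap\mathbbm{k})^\circ$.

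The entire argument is a finite case check on the eight possible positions of $a$ with respect to the triple $(\mathbbm{g},\mathbbm{i},\mathbbm{k})$, using only the containments coming from $\mathbf{h}\in\mathbb{P}_{\mathbf{g},\mathbf{i}}$ and $\mathbf{j}\in\mathbb{P}_{\mathbf{i},\mathbf{k}}$ and the fact that $p\mid|\mathbb{U}_a|-1$ upgrades intersections with $\mathbbm{g}\cap\mathbbm{i}$, $\mathbbm{i}\cap\mathbbm{k}$, or $\mathbbm{g}\cap\mathbbm{i}\cap\mathbbm{k}$ to their circled versions. The main obstacle is not mathematical depth but purely bookkeeping: making sure each branch closes into exactly one of the three target pieces of the right-hand side without oversight, and that the hypothesis $p\nmid k_{\mathbf{h}\cap\mathbf{i}\cap\mathbf{j}}$ is invoked precisely where needed to eliminate the otherwise-problematic configuration $a\in\mathbbm{h}\cap\mathbbm{i}\cap\mathbbm{j}$.
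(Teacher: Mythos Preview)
Your overall plan matches the paper's approach exactly: fix $a\in\mathbbm{h}\cup\mathbbm{j}$ with $p\mid|\mathbb{U}_a|-1$ and do a finite case analysis using the containments from $\mathbf{h}\in\mathbb{P}_{\mathbf{g},\mathbf{i}}$, $\mathbf{j}\in\mathbb{P}_{\mathbf{i},\mathbf{k}}$, together with the observation that $p\mid|\mathbb{U}_a|-1$ upgrades intersections to their circled versions. The paper organises the cases by slicing $\mathbbm{h}\cup\mathbbm{j}$ into the six pieces coming from $\mathbbm{h}=(\mathbbm{g}\triangle\mathbbm{i})\cup(\mathbbm{g}\cap\mathbbm{h}\cap\mathbbm{i})^\circ$ and $\mathbbm{j}=(\mathbbm{i}\triangle\mathbbm{k})\cup(\mathbbm{i}\cap\mathbbm{j}\cap\mathbbm{k})^\circ$; your primary split on $a\in\mathbbm{i}$ versus $a\notin\mathbbm{i}$ is an equally valid (and arguably cleaner) way to organise the same check.

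There is, however, a small but genuine slip in your subcases (a) and (b). In subcase (a) you write ``(i) forces $a\in\mathbbm{g}\cap\mathbbm{i}$,'' but from $a\in\mathbbm{h}\cap\mathbbm{i}$ the upper bound $\mathbbm{h}\subseteq(\mathbbm{g}\triangle\mathbbm{i})\cup(\mathbbm{g}\cap\mathbbm{i})^\circ$ only yields $a\in(\mathbbm{i}\setminus\mathbbm{g})\cup(\mathbbm{g}\cap\mathbbm{i})^\circ$; the possibility $a\in\mathbbm{i}\setminus\mathbbm{g}$ is not excluded. Symmetrically, in subcase (b) the claim ``(ii) gives $a\in\mathbbm{i}\cap\mathbbm{k}$'' overlooks $a\in\mathbbm{i}\setminus\mathbbm{k}$. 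The fix is immediate: in subcase (a) your \emph{other} deduction ($a\in\mathbbm{i}\setminus\mathbbm{j}$ forces $a\in\mathbbm{k}$ via the lower bound $\mathbbm{i}\triangle\mathbbm{k}\subseteq\mathbbm{j}$) is correct, so if additionally $a\notin\mathbbm{g}$ then $a\in\mathbbm{k}\setminus\mathbbm{g}\subseteq\mathbbm{g}\triangle\mathbbm{k}$ and you land in the first piece; subcase (b) is handled symmetrically. With this one-line patch in each subcase your argument goes through and is essentially the paper's proof.
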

\begin{proof}
Recall that $k_\mathbf{0}=1$ and $k_\mathbf{g}=\prod_{\ell\in\mathbbm{g}}(|\mathbb{U}_\ell|-1)$ if $\mathbf{g}\in\mathbb{E}\setminus\{\mathbf{0}\}$. As $\mathbf{h}\in\mathbb{P}_{\mathbf{g}, \mathbf{i}}$, it is obvious to see that $\mathbbm{h}=(\mathbbm{g}\triangle\mathbbm{i})\cup(\mathbbm{g}\cap\mathbbm{h}\cap\mathbbm{i})=(\mathbbm{g}\triangle\mathbbm{i})\cup(\mathbbm{g}\cap\mathbbm{h}\cap\mathbbm{i})^\circ$.
As $\mathbf{j}\in\mathbb{P}_{\mathbf{i}, \mathbf{k}}$, it is obvious to see that $\mathbbm{j}=(\mathbbm{i}\triangle\mathbbm{k})\cup(\mathbbm{i}\cap\mathbbm{j}\cap\mathbbm{k})\!=\!(\mathbbm{i}\triangle\mathbbm{k})\cup(\mathbbm{i}\cap\mathbbm{j}\cap\mathbbm{k})^\circ$.
There is no loss to only consider the case $\{a: a\in\mathbbm{h}\cup\mathbbm{j}, p\mid |\mathbb{U}_a|-1\}\neq\varnothing$. Assume that $m\in\{a: a\in\mathbbm{h}\cup\mathbbm{j}, p\mid |\mathbb{U}_a|-1\}$. If $m\in(\mathbbm{g}\setminus(\mathbbm{i}\cup\mathbbm{k}))\cup((\mathbbm{g}\cap\mathbbm{i})\setminus\mathbbm{k})\cup((\mathbbm{g}\cap\mathbbm{h}\cap\mathbbm{i})^\circ\setminus\mathbbm{k})$, then $m\in\mathbbm{g}\setminus\mathbbm{k}$. If $m\in((\mathbbm{i}\cap\mathbbm{k})\setminus\mathbbm{g})\cup(\mathbbm{k}\setminus(\mathbbm{g}\cup\mathbbm{i}))
\cup((\mathbbm{i}\cap\mathbbm{j}\cap\mathbbm{k})^\circ\setminus\mathbbm{g})$, then $m\in\mathbbm{k}\setminus\mathbbm{g}$.
If $m\in(\mathbbm{g}\cap\mathbbm{k})\setminus\mathbbm{i}$, notice that $m\in(\mathbbm{g}\cap\mathbbm{k})^\circ\setminus\mathbbm{i}$ as $p\mid |\mathbb{U}_m|-1$.
As $p\nmid k_{\mathbf{h}\cap\mathbf{i}\cap\mathbf{j}}$, notice that $m\notin\mathbbm{i}\setminus(\mathbbm{g}\cup\mathbbm{k})$.
The case $m\in(\mathbbm{g}\cap\mathbbm{h}\cap\mathbbm{i}\cap\mathbbm{k})^\circ\cup(\mathbbm{g}\cap\mathbbm{i}\cap\mathbbm{j}\cap\mathbbm{k})^\circ$
is obvious. As $m$ is chosen from $\{a: a\in\mathbbm{h}\cup\mathbbm{j}, p\mid |\mathbb{U}_a|-1\}$ arbitrarily, the desired lemma follows.
\end{proof}
\begin{lem}\label{L;Lemma6.5}
Assume that $\mathbf{g}, \mathbf{h}, \mathbf{i}, \mathbf{j}, \mathbf{k}, \mathbf{l}, \mathbf{m}\in\mathbb{E}$, $\mathbf{h}\in\mathbb{P}_{\mathbf{g}, \mathbf{i}}$, $\mathbf{j}\in\mathbb{P}_{\mathbf{i}, \mathbf{k}}$, $\mathbf{l}\in\mathbb{P}_{\mathbf{k},\mathbf{m}}$. Assume that there is $q\in \mathbbm{h}\cap\mathbbm{j}\cap\mathbbm{l}$ such that
$p\mid |\mathbb{U}_q|-1$. Then $B_{\mathbf{g}, \mathbf{h}, \mathbf{i}}B_{\mathbf{i}, \mathbf{j}, \mathbf{k}}B_{\mathbf{k}, \mathbf{l}, \mathbf{m}}=O$.
\end{lem}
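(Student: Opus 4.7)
The plan is to unfold the triple product via two applications of Theorem \ref{T;Theorem3.23}, reducing the statement to a divisibility claim about certain values of $k_{(\cdot)}$, and then locate the ``bad index'' $q$ inside one of the intersections that controls the coefficient.

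First I would compute $(B_{\mathbf{g},\mathbf{h},\mathbf{i}}B_{\mathbf{i},\mathbf{j},\mathbf{k}})B_{\mathbf{k},\mathbf{l},\mathbf{m}}$. By Theorem \ref{T;Theorem3.23}, $B_{\mathbf{g},\mathbf{h},\mathbf{i}}B_{\mathbf{i},\mathbf{j},\mathbf{k}}=\overline{k_{\mathbf{h}\cap\mathbf{i}\cap\mathbf{j}}}\,B_{\mathbf{g},[\mathbf{g},\mathbf{h},\mathbf{i},\mathbf{j},\mathbf{k}],\mathbf{k}}$, and then multiplying by $B_{\mathbf{k},\mathbf{l},\mathbf{m}}$ produces a scalar multiple of a single $\mathbb{B}_2$-basis element whose coefficient equals
\[
\overline{k_{\mathbf{h}\cap\mathbf{i}\cap\mathbf{j}}}\cdot\overline{k_{[\mathbf{g},\mathbf{h},\mathbf{i},\mathbf{j},\mathbf{k}]\cap\mathbf{k}\cap\mathbf{l}}}.
\]
By Theorem \ref{T;Theorem3.13} it therefore suffices to prove $p\mid k_{\mathbf{h}\cap\mathbf{i}\cap\mathbf{j}}\cdot k_{[\mathbf{g},\mathbf{h},\mathbf{i},\mathbf{j},\mathbf{k}]\cap\mathbf{k}\cap\mathbf{l}}$.

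Now I use the index $q$. Since $p\mid |\mathbb{U}_q|-1$ and $|\mathbb{U}_q|\geq 2$, we must have $p\geq 2$ and $|\mathbb{U}_q|\geq 3$, so $q\in[1,n]^\circ$. Split on whether $q\in\mathbbm{i}$. If $q\in\mathbbm{i}$, then $q\in\mathbbm{h}\cap\mathbbm{i}\cap\mathbbm{j}$ (using the hypothesis $q\in\mathbbm{h}\cap\mathbbm{j}$), and the recipe $k_{\mathbf{g}}=\prod_{r\in\mathbbm{g}}(|\mathbb{U}_r|-1)$ immediately gives $p\mid k_{\mathbf{h}\cap\mathbf{i}\cap\mathbf{j}}$, killing the coefficient.

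If instead $q\notin\mathbbm{i}$, I would unwind the two membership conditions $\mathbf{h}\in\mathbb{P}_{\mathbf{g},\mathbf{i}}$ and $\mathbf{j}\in\mathbb{P}_{\mathbf{i},\mathbf{k}}$ (equivalently $\mathbbm{h}\subseteq(\mathbbm{g}\triangle\mathbbm{i})\cup(\mathbbm{g}\cap\mathbbm{i})^\circ$ and $\mathbbm{j}\subseteq(\mathbbm{i}\triangle\mathbbm{k})\cup(\mathbbm{i}\cap\mathbbm{k})^\circ$). Because the ``interior'' parts $(\mathbbm{g}\cap\mathbbm{i})^\circ$ and $(\mathbbm{i}\cap\mathbbm{k})^\circ$ both lie inside $\mathbbm{i}$, the assumption $q\notin\mathbbm{i}$ forces $q\in\mathbbm{g}\setminus\mathbbm{i}$ and $q\in\mathbbm{k}\setminus\mathbbm{i}$. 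Since also $q\in[1,n]^\circ$, we obtain $q\in(\mathbbm{g}\cap\mathbbm{k})^\circ\setminus\mathbbm{i}$, which by Notation \ref{N;Notation3.15} is contained in $\mathbbm{[\mathbf{g},\mathbf{h},\mathbf{i},\mathbf{j},\mathbf{k}]}$. Together with $q\in\mathbbm{k}\cap\mathbbm{l}$ (the first from the just-shown containment, the second from the hypothesis), this gives $q\in\mathbbm{[\mathbf{g},\mathbf{h},\mathbf{i},\mathbf{j},\mathbf{k}]}\cap\mathbbm{k}\cap\mathbbm{l}$ and therefore $p\mid k_{[\mathbf{g},\mathbf{h},\mathbf{i},\mathbf{j},\mathbf{k}]\cap\mathbf{k}\cap\mathbf{l}}$, again killing the coefficient.

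The only potential obstacle is the bookkeeping in the second case: one must correctly read off from $\mathbbm{h}\subseteq(\mathbbm{g}\triangle\mathbbm{i})\cup(\mathbbm{g}\cap\mathbbm{i})^\circ$ that an index outside $\mathbbm{i}$ but inside $\mathbbm{h}$ is automatically in $\mathbbm{g}$, and similarly for $\mathbf{j}$; and then match the output against the precise three-term union defining $[\mathbf{g},\mathbf{h},\mathbf{i},\mathbf{j},\mathbf{k}]$ in Notation \ref{N;Notation3.15}. Once these are verified, the lemma is immediate from Theorems \ref{T;Theorem3.23} and \ref{T;Theorem3.13}.
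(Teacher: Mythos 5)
Your proof is correct and takes essentially the same approach as the paper: apply Theorem~\ref{T;Theorem3.23} and split on whether $q\in\mathbbm{i}$. The paper handles the case $q\notin\mathbbm{i}$ more cheaply by observing $q\in\mathbbm{j}\cap\mathbbm{k}\cap\mathbbm{l}$ directly, so $p\mid k_{\mathbf{j}\cap\mathbf{k}\cap\mathbf{l}}$ and already $B_{\mathbf{i},\mathbf{j},\mathbf{k}}B_{\mathbf{k},\mathbf{l},\mathbf{m}}=O$, avoiding your bookkeeping with $[\mathbf{g},\mathbf{h},\mathbf{i},\mathbf{j},\mathbf{k}]$.
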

\begin{proof}
Recall that $k_\mathbf{0}=1$ and $k_\mathbf{g}=\prod_{q\in\mathbbm{g}}(|\mathbb{U}_q|-1)$ if $\mathbf{g}\in\mathbb{E}\setminus\{\mathbf{0}\}$. As $\mathbf{j}\in\mathbb{P}_{\mathbf{i}, \mathbf{k}}$, it is obvious to see that
$\mathbbm{j}=(\mathbbm{i}\triangle\mathbbm{k})\cup(\mathbbm{i}\cap\mathbbm{j}\cap\mathbbm{k})=(\mathbbm{i}\triangle\mathbbm{k})\cup(\mathbbm{i}\cap\mathbbm{j}\cap\mathbbm{k})^\circ$. If
$q\in(\mathbbm{i}\setminus\mathbbm{k})\cup(\mathbbm{i}\cap\mathbbm{j}\cap\mathbbm{k})^\circ$, then $q\in\mathbbm{h}\cap\mathbbm{i}\cap\mathbbm{j}$ and $p\mid k_{\mathbf{h}\cap\mathbf{i}\cap\mathbf{j}}$.
If $q\in\mathbbm{k}\setminus\mathbbm{i}$, then $q\in\mathbbm{j}\cap\mathbbm{k}\cap\mathbbm{l}$ and $p\mid k_{\mathbf{j}\cap\mathbf{k}\cap\mathbf{l}}$. In conclusion, $p\mid k_{\mathbf{h}\cap\mathbf{i}\cap\mathbf{j}}k_{\mathbf{j}\cap\mathbf{k}\cap\mathbf{l}}$. The desired lemma follows from Theorem \ref{T;Theorem3.23}.
\end{proof}
\begin{lem}\label{L;Lemma6.6}
Assume that $g\in\mathbb{N}\setminus[1,2]$ and $\mathbf{i}_{(h)}, \mathbf{j}_{(h)}, \mathbf{k}_{(h)}\in\mathbb{E}$ for any $h\in[1, g]$. Assume that $\mathbf{j}_{(h)}\in\mathbb{P}_{\mathbf{i}_{(h)}, \mathbf{k}_{(h)}}$ for any $h\in[1, g]$. Assume that there are pairwise distinct $\mathbf{l}, \mathbf{m}, \mathbf{q}\!\in\!\{\mathbf{j}_{(1)}, \mathbf{j}_{(2)},\ldots, \mathbf{j}_{(g)}\}$ such that $p\mid |\mathbb{U}_r|\!-\!1$ for some $r\!\in\!\mathbbm{l}\cap\mathbbm{m}\cap\mathbbm{q}$. Then
$$B_{\mathbf{i}_{(1)}, \mathbf{j}_{(1)}, \mathbf{k}_{(1)}}B_{\mathbf{i}_{(2)}, \mathbf{j}_{(2)}, \mathbf{k}_{(2)}}\cdots B_{\mathbf{i}_{(g)}, \mathbf{j}_{(g)}, \mathbf{k}_{(g)}}=O.$$
\end{lem}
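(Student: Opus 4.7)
The plan is to reduce the $g$-factor product to a three-factor product and then invoke Lemma \ref{L;Lemma6.5}, with Lemma \ref{L;Lemma6.4} serving as the tool to track the bad element $r$ through partial products. I first dispose of a trivial reduction: if $\mathbf{k}_{(h)}\neq\mathbf{i}_{(h+1)}$ for some $h$, then Equations \eqref{Eq;4} and \eqref{Eq;2} place $E^*_{\mathbf{k}_{(h)}}E^*_{\mathbf{i}_{(h+1)}}=O$ inside the product, so the product is already $O$. Hence I assume $\mathbf{k}_{(h)}=\mathbf{i}_{(h+1)}$ for all $h\in[1,g-1]$. Using the pairwise distinctness of $\mathbf{l},\mathbf{m},\mathbf{q}$, I select positions $s_1<s_2<s_3$ in $[1,g]$ with $r\in\mathbbm{j}_{(s_i)}$ for $i=1,2,3$; since $2\leq s_2\leq g-1$, I may split
\[
P=L\cdot B_{\mathbf{i}_{(s_2)},\mathbf{j}_{(s_2)},\mathbf{k}_{(s_2)}}\cdot R,\quad L=\prod_{h=1}^{s_2-1}B_{\mathbf{i}_{(h)},\mathbf{j}_{(h)},\mathbf{k}_{(h)}},\quad R=\prod_{h=s_2+1}^{g}B_{\mathbf{i}_{(h)},\mathbf{j}_{(h)},\mathbf{k}_{(h)}},
\]
with both $L$ and $R$ nonempty.

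The central technical step is a tracking claim. Iterating Theorem \ref{T;Theorem3.23} on the partial products gives $\prod_{t=1}^{h}B_{\mathbf{i}_{(t)},\mathbf{j}_{(t)},\mathbf{k}_{(t)}}=c_h\cdot B_{\mathbf{i}_{(1)},\mathbf{w}_h,\mathbf{k}_{(h)}}$, where $\mathbf{w}_1=\mathbf{j}_{(1)}$ and $\mathbf{w}_{h+1}=[\mathbf{i}_{(1)},\mathbf{w}_h,\mathbf{k}_{(h)},\mathbf{j}_{(h+1)},\mathbf{k}_{(h+1)}]$. I claim by induction on $h$ that whenever $c_h\neq\overline{0}$, every position $s\leq h$ with $r\in\mathbbm{j}_{(s)}$ contributes $r\in\mathbbm{w}_h$. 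The base case $h=1$ is immediate. For the inductive step, $c_{h+1}\neq\overline{0}$ forces $p\nmid k_{\mathbf{w}_h\cap\mathbf{k}_{(h)}\cap\mathbf{j}_{(h+1)}}$; the inductive hypothesis (if $s\leq h$) or the choice $s=h+1$ gives $r\in\mathbbm{w}_h\cup\mathbbm{j}_{(h+1)}$; and then Lemma \ref{L;Lemma6.4}, whose parabolic hypotheses $\mathbf{w}_h\in\mathbb{P}_{\mathbf{i}_{(1)},\mathbf{k}_{(h)}}$ and $\mathbf{j}_{(h+1)}\in\mathbb{P}_{\mathbf{k}_{(h)},\mathbf{k}_{(h+1)}}$ follow from Lemma \ref{L;Lemma3.16} and the standing assumption $\mathbf{k}_{(h)}=\mathbf{i}_{(h+1)}$, deposits $r$ in $\mathbbm{w}_{h+1}$. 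A symmetric tracking result for $R=c_R\cdot B_{\mathbf{k}_{(s_2)},\mathbf{v},\mathbf{k}_{(g)}}$ then follows by running the same induction from the right, or equivalently by transposing via Lemma \ref{L;Lemma3.10}.

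To assemble the conclusion, write $L=c_L\cdot B_{\mathbf{i}_{(1)},\mathbf{u},\mathbf{k}_{(s_2-1)}}$. If either $c_L=\overline{0}$ or $c_R=\overline{0}$, then $L=O$ or $R=O$ because the relevant $B$-matrix is nonzero by Lemma \ref{L;Lemma3.10}, and so $P=O$. Otherwise the two tracking claims give $r\in\mathbbm{u}\cap\mathbbm{j}_{(s_2)}\cap\mathbbm{v}$ with $p\mid|\mathbb{U}_r|-1$; Lemma \ref{L;Lemma6.5}, applied to the three factors $B_{\mathbf{i}_{(1)},\mathbf{u},\mathbf{k}_{(s_2-1)}}$, $B_{\mathbf{k}_{(s_2-1)},\mathbf{j}_{(s_2)},\mathbf{k}_{(s_2)}}$, $B_{\mathbf{k}_{(s_2)},\mathbf{v},\mathbf{k}_{(g)}}$, forces their product to vanish, yielding $P=c_Lc_R\cdot O=O$. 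The main technical obstacle is the bookkeeping for the inductive step, namely ensuring the parabolic condition $\mathbf{w}_h\in\mathbb{P}_{\mathbf{i}_{(1)},\mathbf{k}_{(h)}}$ persists so that Lemma \ref{L;Lemma6.4} applies at each stage; once Lemma \ref{L;Lemma3.16} is invoked this is routine, and the remainder of the argument is a clean assembly.
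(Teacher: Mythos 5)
Your proposal is correct and takes essentially the same approach as the paper: assume (for contradiction, or equivalently by working out the coefficients) that the partial products are nonzero, collapse the factors to the left and right of a middle bad index into single $B$-matrices via Theorem \ref{T;Theorem3.23}, use Lemma \ref{L;Lemma6.4} to track that the bad coordinate $r$ survives into the collapsed middle indices, and conclude with Lemma \ref{L;Lemma6.5}. The paper compresses your explicit inductive tracking claim into the phrase ``by combining Theorem \ref{T;Theorem3.23}, Lemmas \ref{L;Lemma6.4}, \ref{L;Lemma6.5},'' so your proof is the same argument with the bookkeeping written out.
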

\begin{proof}
Assume that the desired equation does not hold. As $\mathbf{l}, \mathbf{m}, \mathbf{q}$ are pairwise distinct, there is no loss to require that the subscripts of $\mathbf{l}, \mathbf{m}, \mathbf{q}$ increase strictly. So $\mathbf{m}=\mathbf{j}_{(s)}$ for some $s\in [2, g-1]$. By combining Theorem \ref{T;Theorem3.23}, Lemmas \ref{L;Lemma6.4}, \ref{L;Lemma6.5}, there are $t\in\mathbb{F}^\times$ and $\mathbf{u}, \mathbf{v}\in\mathbb{E}$ such that $\mathbf{u}\!\in\!\mathbb{P}_{\mathbf{i}_{(1)},\mathbf{i}_{(s)}}$, $\mathbf{v}\!\in\!\mathbb{P}_{\mathbf{k}_{(s)},\mathbf{k}_{(g)}}$, $r\!\in\!\mathbbm{m}\cap\mathbbm{u}\cap\mathbbm{v}$, and
$$B_{\mathbf{i}_{(1)}, \mathbf{j}_{(1)}, \mathbf{k}_{(1)}}B_{\mathbf{i}_{(2)}, \mathbf{j}_{(2)}, \mathbf{k}_{(2)}}\cdots B_{\mathbf{i}_{(g)}, \mathbf{j}_{(g)}, \mathbf{k}_{(g)}}=tB_{\mathbf{i}_{(1)},\mathbf{u} ,\mathbf{i}_{(s)}}B_{\mathbf{i}_{(s)},\mathbf{m} ,\mathbf{k}_{(s)}}B_{\mathbf{k}_{(s)},\mathbf{v} ,\mathbf{k}_{(g)}}=O.$$
This is a clear contradiction. The desired lemma follows from this contradiction.
\end{proof}
\begin{lem}\label{L;Lemma6.7}
Assume that $g\in\mathbb{N}$. Assume that $\mathbb{U}$ is a set and the cardinality of $\mathbb{U}$ is equal to $g$. Assume that $\mathbb{V}_1, \mathbb{V}_2,\ldots,\mathbb{V}_{2g+1}$ is a sequence of nonempty subsets of $\mathbb{U}$. Then there are pairwise distinct $h,i,j\in[1, 2g+1]$ such that $\mathbb{V}_h\cap\mathbb{V}_i\cap\mathbb{V}_j\neq\varnothing$.
\end{lem}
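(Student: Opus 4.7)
The plan is to argue by contradiction via a double-counting/pigeonhole argument on incidences between the elements of $\mathbb{U}$ and the sets $\mathbb{V}_1, \mathbb{V}_2, \ldots, \mathbb{V}_{2g+1}$.

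First, I would assume that no such triple $(h, i, j)$ exists, i.e.\ that $\mathbb{V}_h \cap \mathbb{V}_i \cap \mathbb{V}_j = \varnothing$ for every choice of pairwise distinct $h, i, j \in [1, 2g+1]$. The key observation is that this assumption imposes, for every single element $u \in \mathbb{U}$, the constraint $|\{k : k \in [1, 2g+1], u \in \mathbb{V}_k\}| \leq 2$, because any three indices $k$ containing a common $u$ would give a forbidden triple.

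Next, I would compute the total incidence count $\sum_{k=1}^{2g+1} |\mathbb{V}_k|$ in two ways. On the one hand, by swapping the order of summation,
\[
\sum_{k=1}^{2g+1} |\mathbb{V}_k| \;=\; \sum_{u \in \mathbb{U}} |\{k : k \in [1, 2g+1], u \in \mathbb{V}_k\}| \;\leq\; 2|\mathbb{U}| \;=\; 2g
\]
using the constraint from the previous step together with $|\mathbb{U}| = g$. On the other hand, since each $\mathbb{V}_k$ is nonempty, $|\mathbb{V}_k| \geq 1$ for every $k \in [1, 2g+1]$, so $\sum_{k=1}^{2g+1} |\mathbb{V}_k| \geq 2g+1$. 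These two bounds are incompatible, yielding the desired contradiction and completing the proof.

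This proof is essentially a one-shot pigeonhole argument, so there is no real obstacle; the only thing to watch carefully is the use of \emph{pairwise distinct} indices (so the contradiction comes from assuming no three indices contain a common element, not just from non-disjointness of two). No earlier machinery from the paper is needed beyond set-theoretic bookkeeping.
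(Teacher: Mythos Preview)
Your proof is correct. It differs from the paper's approach: the paper argues by induction on $g$, using pigeonhole to find an element $k$ lying in two of the sets $\mathbb{V}_\ell, \mathbb{V}_m$, and then either a third set contains $k$ (done), or the remaining $2g-1$ sets are nonempty subsets of $\mathbb{U}\setminus\{k\}$ and the inductive hypothesis applies. Your double-counting argument is more direct and avoids the induction entirely; it also makes the tightness of the bound $2g+1$ transparent (each element can ``absorb'' at most two sets). The paper's inductive proof is slightly more constructive in spirit, peeling off a pair of sets at each step, but both are equally valid and self-contained for this lemma.
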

\begin{proof}
Work by induction on $g$. If $g=1$, then $\mathbb{V}_1\cap\mathbb{V}_2\cap\mathbb{V}_3=\mathbb{U}\neq\varnothing$. The base case is  checked. Assume that $g>1$ and any sequence of $2g-1$ nonempty subsets of a set with cardinality $g-1$ has a nonempty intersection of three members. By the Pigeonhole Principle, there are $k\in\mathbb{U}$ and distinct $\ell, m\in[1, 2g+1]$ such that $k\in\mathbb{V}_\ell\cap\mathbb{V}_m$. There is no loss to let $\ell<m$. If $k\in\mathbb{V}_q$ for some $q\in[1, 2g+1]\setminus\{\ell, m\}$, then the desired inequality holds. Otherwise, notice that there is a sequence of the subsets $\mathbb{V}_1, \mathbb{V}_2, \ldots, \mathbb{V}_{\ell-1}, \mathbb{V}_{\ell+1}, \ldots, \mathbb{V}_{m-1}, \mathbb{V}_{m+1},\ldots, \mathbb{V}_{2g+1}$ of $\mathbb{U}\setminus\{k\}$. Hence the desired lemma follows from the above discussion and the inductive hypothesis.
\end{proof}
\begin{lem}\label{L;Lemma6.8}
The matrix product of any $2|\{a: a\in[1,n], p\mid |\mathbb{U}_a|-1\}|+1$ matrices contained in $\mathbb{I}$ is equal to the zero matrix. In particular, $\mathbb{I}$ is a nilpotent two-sided ideal of $\mathbb{T}$ and
the nilpotent index of $\mathbb{I}$ is equal to $2|\{a: a\in[1,n], p\mid |\mathbb{U}_a|-1\}|+1$.
\end{lem}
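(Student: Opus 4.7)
Write $m = |\{a : a \in [1,n],\ p \mid |\mathbb{U}_a|-1\}|$; the plan is to establish separately the upper bound (every product of $2m+1$ matrices in $\mathbb{I}$ is $O$) and the lower bound (some product of $2m$ matrices in $\mathbb{I}$ is not $O$), which together give nilpotency of $\mathbb{I}$ with nilpotent index exactly $2m+1$.

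First I would handle the degenerate case $m = 0$: then $p \nmid k_\mathbf{b}$ for every $\mathbf{b} \in \mathbb{E}$ by the formula $k_\mathbf{b} = \prod_{h\in\mathbbm{b}}(|\mathbb{U}_h|-1)$, hence $\mathbb{I} = \{O\}$, and the claim is immediate with nilpotent index $1$. Assuming $m \geq 1$, for the upper bound I would first note that the spanning set $\{B_{\mathbf{a},\mathbf{b},\mathbf{c}} : (\mathbf{a},\mathbf{b},\mathbf{c}) \in \mathbb{P},\ p \mid k_\mathbf{b}\}$ is $\F$-linearly independent (being a subset of $\mathbb{B}_2$ from Theorem \ref{T;Theorem3.13}) and hence an $\F$-basis of $\mathbb{I}$. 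By multilinearity it suffices to annihilate every product of $2m+1$ such basis elements $B_{\mathbf{i}_{(h)}, \mathbf{j}_{(h)}, \mathbf{k}_{(h)}}$ where $p \mid k_{\mathbf{j}_{(h)}}$. For each $h$ choose $r_h \in \mathbbm{j}_{(h)}$ with $p \mid |\mathbb{U}_{r_h}|-1$, and set $\mathbb{V}_h = \mathbbm{j}_{(h)} \cap \mathbb{U}$ where $\mathbb{U} = \{a : p \mid |\mathbb{U}_a|-1\}$; each $\mathbb{V}_h$ is then a nonempty subset of the $m$-element set $\mathbb{U}$. Lemma \ref{L;Lemma6.7} supplies three pairwise distinct indices $h_1, h_2, h_3$ and some $r \in \mathbbm{j}_{(h_1)} \cap \mathbbm{j}_{(h_2)} \cap \mathbbm{j}_{(h_3)}$ with $p \mid |\mathbb{U}_r|-1$, and Lemma \ref{L;Lemma6.6} (whose hypothesis $g \geq 3$ holds since $2m+1 \geq 3$) then forces the product to vanish.

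For the lower bound, Lemma \ref{L;Lemma6.3} already exhibits $2m$ matrices in $\mathbb{I}$ with a particular product that is not $O$, so the nilpotent index is at least $2m+1$; combined with the upper bound it equals $2m+1$.

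The most delicate step is the appeal to Lemma \ref{L;Lemma6.6}: its statement asks for three pairwise distinct $n$-tuples in $\{\mathbf{j}_{(1)}, \ldots, \mathbf{j}_{(2m+1)}\}$, whereas Lemma \ref{L;Lemma6.7} only produces three pairwise distinct index positions. I expect to resolve this by reinspecting the proof of Lemma \ref{L;Lemma6.6}: grouping the product into an initial segment up to position $h_1$, the middle factor at position $h_2$, and a final segment from position $h_3$, the two flanking segments collapse via Theorem \ref{T;Theorem3.23} and Lemma \ref{L;Lemma6.4} into single basis matrices whose middle indices still contain $r$ (because $r$ lies in $\mathbbm{j}_{(h_1)}$ and $\mathbbm{j}_{(h_3)}$ respectively), whereupon Lemma \ref{L;Lemma6.5} annihilates the resulting triple product; this argument only requires distinctness of the three index positions, not of the corresponding $n$-tuple values.
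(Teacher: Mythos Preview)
Your proposal is correct and follows the same route as the paper: handle $m=0$ trivially, reduce by multilinearity to products of $2m+1$ basis matrices, apply Lemma~\ref{L;Lemma6.7} to the sets $\mathbb{V}_h=\mathbbm{j}_{(h)}\cap\mathbb{U}$ to locate three positions sharing a bad coordinate $r$, then invoke Lemma~\ref{L;Lemma6.6} to annihilate, and cite Lemma~\ref{L;Lemma6.3} for sharpness of the index. On the point you flag as delicate you are in fact more careful than the paper itself: the paper's proof of Lemma~\ref{L;Lemma6.8} passes from Lemma~\ref{L;Lemma6.7} (which yields three distinct \emph{indices}) directly to ``pairwise distinct $\mathbf{r},\mathbf{s},\mathbf{t}\in\{\mathbf{l}_{(1)},\ldots\}$'' (distinct \emph{tuples}) without comment, so your observation that the mechanism behind Lemma~\ref{L;Lemma6.6} only requires three distinct positions is a genuine clarification rather than a gap in your argument.
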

\begin{proof}
Recall that $k_\mathbf{0}=1$ and $k_\mathbf{g}=\prod_{h\in\mathbbm{g}}(|\mathbb{U}_h|-1)$ if $\mathbf{g}\in\mathbb{E}\setminus\{\mathbf{0}\}$. Assume that $\mathbb{U}=\{a: a\in[1,n], p\mid |\mathbb{U}_a|-1\}$ and $i=|\mathbb{U}|$. If $i=0$, notice that $\mathrm{Rad}(\mathbb{T})=\mathbb{I}=\{O\}$ by Corollary \ref{C;Corollary5.21} and Lemma \ref{L;Lemma2.6}. So there is no loss to assume further that $i>0$. For any $j\in [1, 2i+1]$, assume that $\mathbf{k}_{(j)}, \mathbf{l}_{(j)}, \mathbf{m}_{(j)}\in\mathbb{E}$, $\mathbf{l}_{(j)}\in\mathbb{P}_{\mathbf{k}_{(j)},\mathbf{m}_{(j)}}$,
$p\mid k_{\mathbf{l}_{(j)}}$. For any $\mathbf{q}\in\{\mathbf{l}_{(1)}, \mathbf{l}_{(2)}, \ldots, \mathbf{l}_{(2i+1)}\}$,
let $\mathbb{V}_\mathbf{q}=\{a: a\in\mathbbm{q}, p\mid |\mathbb{U}_a|-1\}$. As $p\mid k_\mathbf{q}$, notice that
$\varnothing\neq\mathbb{V}_\mathbf{q}\subseteq\mathbb{U}$. Lemma \ref{L;Lemma6.7} thus implies that $\varnothing\neq\mathbb{V}_\mathbf{r}\cap\mathbb{V}_\mathbf{s}\cap\mathbb{V}_\mathbf{t}\subseteq\mathbbm{r}\cap\mathbbm{s}\cap\mathbbm{t}$ for some pairwise distinct $\mathbf{r}, \mathbf{s}, \mathbf{t}\in\{\mathbf{l}_{(1)}, \mathbf{l}_{(2)},\ldots, \mathbf{l}_{(2i+1)}\}$.
Hence Lemma \ref{L;Lemma6.6} implies that
\begin{align}\label{Eq;12}
B_{\mathbf{k}_{(1)}, \mathbf{l}_{(1)}, \mathbf{m}_{(1)}}B_{\mathbf{k}_{(2)}, \mathbf{l}_{(2)}, \mathbf{m}_{(2)}}\cdots B_{\mathbf{k}_{(2i+1)}, \mathbf{l}_{(2i+1)}, \mathbf{m}_{(2i+1)}}=O.
\end{align}
As Theorem \ref{T;Theorem3.13} implies that $\mathbb{I}$ has an $\F$-basis $\{B_{\mathbf{a}, \mathbf{b}, \mathbf{c}}: (\mathbf{a}, \mathbf{b}, \mathbf{c})\in\mathbb{P}, p\mid k_\mathbf{b}\}$, the desired lemma follows from combining Equation \eqref{Eq;12}, Lemmas \ref{L;Lemma6.1}, and \ref{L;Lemma6.3}.
\end{proof}
We are now ready to deduce the main result of this section and another corollary.
\begin{thm}\label{T;Jacobson}
Assume that $M\in\mathrm{Rad}(\mathbb{T})$. Then $M\in\mathbb{I}$. In particular, $\mathrm{Rad}(\mathbb{T})=\mathbb{I}$. Moreover, the nilpotent index of $\mathrm{Rad}(\mathbb{T})$ is equal to $2|\{a: a\in[1,n], p\mid |\mathbb{U}_a|-1\}|+1$.
\end{thm}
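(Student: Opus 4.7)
The plan is to establish the equality $\mathrm{Rad}(\mathbb{T})=\mathbb{I}$, after which the stated nilpotent index follows immediately from Lemma \ref{L;Lemma6.8}. The containment $\mathbb{I}\subseteq\mathrm{Rad}(\mathbb{T})$ is automatic since Lemma \ref{L;Lemma6.8} asserts that $\mathbb{I}$ is a nilpotent two-sided ideal of $\mathbb{T}$, and the Jacobson radical contains every such ideal by definition.

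For the reverse containment, I would take $M\in\mathrm{Rad}(\mathbb{T})$ and argue by contradiction, supposing $M\notin\mathbb{I}$. Decomposing $M=\sum_{\mathbf{a},\mathbf{b}\in\mathbb{E}}E_\mathbf{a}^*ME_\mathbf{b}^*$ via Equation \eqref{Eq;3} and using that $\mathrm{Rad}(\mathbb{T})$ is two-sided, at least one piece $E_\mathbf{a}^*ME_\mathbf{b}^*$ lies in $\mathrm{Rad}(\mathbb{T})\setminus\mathbb{I}$. In the diagonal case $\mathbf{a}=\mathbf{b}$, Lemma \ref{L;Lemma2.6} together with Theorem \ref{T;Theorem5.15} yield $E_\mathbf{a}^*ME_\mathbf{a}^*\in E_\mathbf{a}^*\mathrm{Rad}(\mathbb{T})E_\mathbf{a}^*=\mathrm{Rad}(E_\mathbf{a}^*\mathbb{T}E_\mathbf{a}^*)=\mathbb{I}_\mathbf{a}\subseteq\mathbb{I}$, a contradiction. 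Hence the essential case is $\mathbf{a}\neq\mathbf{b}$, and I replace $M$ by this off-diagonal piece.

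In the off-diagonal case, pick $\mathbf{k}\in\mathbb{P}_{\mathbf{a},\mathbf{b}}$ with $p\nmid k_\mathbf{k}$ and $c_{\mathbf{a},\mathbf{k},\mathbf{b}}(M)\neq\overline{0}$; such a $\mathbf{k}$ must exist because $\mathbb{I}$ has $\mathbb{B}_2$-basis $\{B_{\mathbf{a},\mathbf{b}',\mathbf{b}}:p\mid k_{\mathbf{b}'}\}$ and $M\notin\mathbb{I}$. The key preliminary observation, a short set-theoretic check, is that the inclusions $\mathbbm{a}^\circ\setminus\mathbbm{b}\subseteq\mathbbm{a}\setminus\mathbbm{b}\subseteq\mathbbm{a}\triangle\mathbbm{b}\subseteq\mathbbm{k}$ combined with $p\nmid k_\mathbf{k}$ force $p\nmid k_{\mathbf{b}\setminus\mathbf{a}}$ and $p\nmid k_{\mathbf{a}^\circ\setminus\mathbf{b}}$ simultaneously. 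I then right-multiply by $B_{\mathbf{b},\mathbf{b}\triangle\mathbf{a},\mathbf{a}}$: the product lies in $\mathrm{Rad}(\mathbb{T})\cap E_\mathbf{a}^*\mathbb{T}E_\mathbf{a}^*=\mathbb{I}_\mathbf{a}$, and Theorem \ref{T;Theorem3.23} evaluates it to $\overline{k_{\mathbf{b}\setminus\mathbf{a}}}\sum_{\mathbf{b}'}c_{\mathbf{a},\mathbf{b}',\mathbf{b}}(M)B_{\mathbf{a},\mathbf{l}(\mathbf{b}'),\mathbf{a}}$, where $\mathbbm{l}(\mathbf{b}')=(\mathbbm{a}^\circ\setminus\mathbbm{b})\cup(\mathbbm{b}'\cap(\mathbbm{a}\cap\mathbbm{b})^\circ)$.

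The delicate point will be twofold. First, I need $\mathbf{b}'\mapsto\mathbf{l}(\mathbf{b}')$ injective on $\mathbb{P}_{\mathbf{a},\mathbf{b}}$, which follows from the parametrization $\mathbbm{b}'=(\mathbbm{a}\triangle\mathbbm{b})\cup(\mathbbm{b}'\cap(\mathbbm{a}\cap\mathbbm{b})^\circ)$ (from Notation \ref{N;Notation5.6} applied to the pair $(\mathbf{a},\mathbf{b})$): $\mathbf{b}'$ is recoverable from $\mathbbm{b}'\cap(\mathbbm{a}\cap\mathbbm{b})^\circ$, and so from $\mathbf{l}(\mathbf{b}')$ as the first summand $\mathbbm{a}^\circ\setminus\mathbbm{b}$ is fixed. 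Second, and more subtly, the target $B_{\mathbf{a},\mathbf{l}(\mathbf{k}),\mathbf{a}}$ must sit outside $\mathbb{I}_\mathbf{a}$; here the preliminary observation is essential, for the disjoint decomposition $\mathbbm{l}(\mathbf{k})=(\mathbbm{a}^\circ\setminus\mathbbm{b})\cup(\mathbbm{k}\cap(\mathbbm{a}\cap\mathbbm{b})^\circ)$ gives $k_{\mathbf{l}(\mathbf{k})}=k_{\mathbf{a}^\circ\setminus\mathbf{b}}\cdot k_{\mathbf{k}\cap(\mathbf{a}\cap\mathbf{b})^\circ}$, each factor coprime to $p$. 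Combining these with the basis description of $\mathbb{I}_\mathbf{a}$ from Theorem \ref{T;Theorem5.15}, the coefficient $\overline{k_{\mathbf{b}\setminus\mathbf{a}}}c_{\mathbf{a},\mathbf{k},\mathbf{b}}(M)\in\F^\times$ of $B_{\mathbf{a},\mathbf{l}(\mathbf{k}),\mathbf{a}}$ in $MB_{\mathbf{b},\mathbf{b}\triangle\mathbf{a},\mathbf{a}}$ — isolated from other summands by injectivity and Theorem \ref{T;Theorem3.13} — cannot vanish, contradicting $MB_{\mathbf{b},\mathbf{b}\triangle\mathbf{a},\mathbf{a}}\in\mathbb{I}_\mathbf{a}$. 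This establishes $\mathrm{Rad}(\mathbb{T})\subseteq\mathbb{I}$, and the nilpotent-index assertion then follows from Lemma \ref{L;Lemma6.8}.
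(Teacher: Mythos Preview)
Your proposal is correct and follows essentially the same approach as the paper: reduce to a block $E_\mathbf{a}^*ME_\mathbf{b}^*$, right-multiply by $B_{\mathbf{b},\mathbf{a}\triangle\mathbf{b},\mathbf{a}}$, and use Theorem~\ref{T;Theorem3.23} together with the identification $\mathrm{Rad}(E_\mathbf{a}^*\mathbb{T}E_\mathbf{a}^*)=\mathbb{I}_\mathbf{a}$ from Theorem~\ref{T;Theorem5.15} to obtain a contradiction. The only cosmetic differences are that the paper treats the diagonal and off-diagonal cases uniformly (since $B_{\mathbf{a},\mathbf{0},\mathbf{a}}=E_\mathbf{a}^*$), and that the paper first strips off the $\mathbb{I}$-supported part of $E_\mathbf{a}^*ME_\mathbf{b}^*$ via Lemma~\ref{L;Lemma6.8}, whereas you keep all terms and rely on the injectivity of $\mathbf{b}'\mapsto\mathbf{l}(\mathbf{b}')$ to isolate the single coefficient attached to $\mathbf{k}$.
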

\begin{proof}
Recall that $k_\mathbf{0}=1$ and $k_\mathbf{g}=\prod_{h\in\mathbbm{g}}(|\mathbb{U}_h|-1)$ if $\mathbf{g}\in\mathbb{E}\setminus\{\mathbf{0}\}$. Assume that $M\in\mathrm{Rad}(\mathbb{T})\setminus\mathbb{I}$. Equation \eqref{Eq;3} thus implies that $E_\mathbf{i}^*ME_\mathbf{j}^*\in\mathrm{Rad}(\mathbb{T})\setminus\mathbb{I}$ for some $\mathbf{i}, \mathbf{j}\in\mathbb{E}$. By Lemma \ref{L;Lemma6.8}, there is no loss to require that
$\mathrm{Supp}_{\mathbb{B}_2}(E_\mathbf{i}^*ME_\mathbf{j}^*)\cap\mathbb{I}=\varnothing$. By combining Equations \eqref{Eq;4}, \eqref{Eq;2}, Theorem \ref{T;Theorem3.13}, there are $k\!\in\!\mathbb{N}$ and pairwise
distinct $\mathbf{l}_{(1)}, \mathbf{l}_{(2)}, \ldots, \mathbf{l}_{(k)}\!\in\!\mathbb{P}_{\mathbf{i}, \mathbf{j}}$ such that $\mathrm{Supp}_{\mathbb{B}_2}(E_\mathbf{i}^*ME_\mathbf{j}^*)\!=\!\{B_{\mathbf{i},\mathbf{l}_{(1)},\mathbf{j}}, B_{\mathbf{i},\mathbf{l}_{(2)},\mathbf{j}},\ldots, B_{\mathbf{i},\mathbf{l}_{(k)},\mathbf{j}}\}$ and
$p\nmid k_{\mathbf{l}_{(m)}}$ for any $m\in [1, k]$. As $\mathbf{l}_{(1)}, \mathbf{l}_{(2)}, \ldots, \mathbf{l}_{(k)}$ are pairwise distinct $n$-tuples in $\mathbb{P}_{\mathbf{i},\mathbf{j}}$, Lemmas \ref{L;Lemma2.5} and \ref{L;Lemma2.3} imply that $\mathbf{i}\cap\mathbf{j}\cap\mathbf{l}_{(1)}, \mathbf{i}\cap\mathbf{j}\cap\mathbf{l}_{(2)},\ldots,\mathbf{i}\cap\mathbf{j}\cap\mathbf{l}_{(k)}$ are pairwise distinct $n$-tuples.
So $[\mathbf{i},\mathbf{l}_{(1)}, \mathbf{j}, \mathbf{i}\triangle\mathbf{j}, \mathbf{i}], [\mathbf{i},\mathbf{l}_{(2)}, \mathbf{j}, \mathbf{i}\triangle\mathbf{j}, \mathbf{i}],\ldots, [\mathbf{i},\mathbf{l}_{(k)}, \mathbf{j}, \mathbf{i}\triangle\mathbf{j}, \mathbf{i}]$
are pairwise distinct $n$-tuples by Lemma \ref{L;Lemma2.3}. As $\mathbf{l}_{(m)}\in\mathbb{P}_{\mathbf{i}, \mathbf{j}}$ and $p\nmid k_{\mathbf{l}_{(m)}}$ for any $m\in [1, k]$, notice that $p\nmid k_{[\mathbf{i},\mathbf{l}_{(m)}, \mathbf{j}, \mathbf{i}\triangle\mathbf{j}, \mathbf{i}]}$ for any $m\!\in\![1, k]$. Lemma \ref{L;Lemma2.5} shows that $\mathbf{i}\triangle\mathbf{j}\!\in\!\mathbb{P}_{\mathbf{i}, \mathbf{j}}$. Notice that
$$ c_{\mathbf{i},[\mathbf{i},\mathbf{l}_{(1)}, \mathbf{j}, \mathbf{i}\triangle\mathbf{j}, \mathbf{i}],\mathbf{i}}(E_\mathbf{i}^*ME_\mathbf{j}^*B_{\mathbf{j}, \mathbf{i}\triangle\mathbf{j}, \mathbf{i}})=c_{\mathbf{i}, \mathbf{l}_{(1)}, \mathbf{j}}(E_\mathbf{i}^*ME_\mathbf{j}^*)\overline{k_{(\mathbf{i}\triangle\mathbf{j})\cap\mathbf{j}\cap\mathbf{l}_{(1)}}}\in\F^\times$$
by Theorems \ref{T;Theorem3.23} and \ref{T;Theorem3.13}. Theorem \ref{T;Theorem3.13} thus implies that $E_\mathbf{i}^*ME_\mathbf{j}^*B_{\mathbf{j}, \mathbf{i}\triangle\mathbf{j}, \mathbf{i}}\neq O$. Equation \eqref{Eq;4} and Lemma \ref{L;Lemma2.6} thus imply that $E_\mathbf{i}^*ME_\mathbf{j}^*B_{\mathbf{j}, \mathbf{i}\triangle\mathbf{j}, \mathbf{i}}\!\in\!\mathrm{Rad}(E_\mathbf{i}^*\mathbb{T}E_\mathbf{i}^*)\setminus\{O\}$. This is an obvious contradiction by Theorems \ref{T;Theorem5.15} and \ref{T;Theorem3.13}. So the first statement follows. The desired theorem follows from the first statement and Lemma \ref{L;Lemma6.8}.
\end{proof}
\begin{cor}\label{C;Corollary6.10}
The $\F$-dimension of $\mathrm{Rad}(\mathbb{T})$ is independent of the choice of $\mathbf{x}$.
\end{cor}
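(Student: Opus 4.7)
The proof is a very short synthesis of two results already established in the paper, so the plan is essentially assembly rather than invention. First I would invoke Theorem \ref{T;Jacobson}, which identifies $\mathrm{Rad}(\mathbb{T})$ with the two-sided ideal $\mathbb{I}=\langle\{B_{\mathbf{a},\mathbf{b},\mathbf{c}}:(\mathbf{a},\mathbf{b},\mathbf{c})\in\mathbb{P},\,p\mid k_{\mathbf{b}}\}\rangle_{\mathbb{T}}$ from Notation \ref{N;Notation6.2}. In particular,
$$\dim_{\F}\mathrm{Rad}(\mathbb{T})=\dim_{\F}\mathbb{I}.$$

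Next I would appeal to Lemma \ref{L;Lemma6.1}, which already asserts that $\dim_{\F}\mathbb{I}$ is independent of the choice of $\mathbf{x}$. The underlying reason, implicit in the proof of Lemma \ref{L;Lemma6.1} via Theorem \ref{T;Theorem3.13}, is that the spanning set $\{B_{\mathbf{a},\mathbf{b},\mathbf{c}}:(\mathbf{a},\mathbf{b},\mathbf{c})\in\mathbb{P},\,p\mid k_{\mathbf{b}}\}$ is $\F$-linearly independent, so $\dim_{\F}\mathbb{I}=|\{(\mathbf{a},\mathbf{b},\mathbf{c})\in\mathbb{P}:p\mid k_{\mathbf{b}}\}|$, and this counting set is defined purely in terms of $\mathbb{P}$ and the valencies $k_{\mathbf{b}}$, neither of which depends on the base point $\mathbf{x}$.

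Combining these two observations yields
$$\dim_{\F}\mathrm{Rad}(\mathbb{T})=|\{(\mathbf{a},\mathbf{b},\mathbf{c})\in\mathbb{P}:p\mid k_{\mathbf{b}}\}|,$$
which manifestly does not depend on $\mathbf{x}$. There is no real obstacle here; the whole corollary is a one-line consequence, and the only thing to be careful about is to cite Theorem \ref{T;Jacobson} before Lemma \ref{L;Lemma6.1} so that the identification $\mathrm{Rad}(\mathbb{T})=\mathbb{I}$ is in hand. Accordingly my written proof would simply consist of these two citations followed by the displayed equality above.
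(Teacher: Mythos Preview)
Your proposal is correct and follows exactly the same approach as the paper's own proof, which simply cites Theorem \ref{T;Jacobson} and Lemma \ref{L;Lemma6.1}. Your additional explanation making the dimension count explicit via Theorem \ref{T;Theorem3.13} is accurate and only elaborates on what the paper leaves implicit.
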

\begin{proof}
The desired corollary follows from applying Theorem \ref{T;Jacobson} and Lemma \ref{L;Lemma6.1}.
\end{proof}
We are now ready to close this section by presenting an example of Theorem \ref{T;Jacobson}.
\begin{eg}\label{E;Example6.11}
Assume that $n=|\mathbb{U}_1|=2$ and $|\mathbb{U}_2|=3$. It is clear that $|\mathbb{E}|=4$. Assume that $\mathbf{g}=(0, 1)$ and $\mathbf{h}=(1,0)$. Hence $\mathbb{E}=\{\mathbf{0}, \mathbf{g}, \mathbf{h}, \mathbf{1}\}$. Assume that $p=2$ by Theorem \ref{T;Jacobson}. By Theorems \ref{T;Jacobson} and \ref{T;Theorem3.13}, an $\F$-basis of $\mathrm{Rad}(\mathbb{T})$ contains
exactly $B_{\mathbf{0}, \mathbf{g}, \mathbf{g}}$, $B_{\mathbf{0}, \mathbf{1}, \mathbf{1}}$, $B_{\mathbf{g}, \mathbf{g}, \mathbf{0}}$,
$B_{\mathbf{g}, \mathbf{g}, \mathbf{g}}$, $B_{\mathbf{g}, \mathbf{1}, \mathbf{h}}$, $B_{\mathbf{g}, \mathbf{1}, \mathbf{1}}$,
$B_{\mathbf{h},\mathbf{g}, \mathbf{1}}$, $B_{\mathbf{h}, \mathbf{1}, \mathbf{g}}$, $B_{\mathbf{1}, \mathbf{g}, \mathbf{h}}$,
$B_{\mathbf{1}, \mathbf{g}, \mathbf{1}}$, $B_{\mathbf{1}, \mathbf{1}, \mathbf{0}}$, $B_{\mathbf{1}, \mathbf{1}, \mathbf{g}}$.
By Theorem \ref{T;Jacobson}, it is obvious that the nilpotent index of $\mathrm{Rad}(\mathbb{T})$ is equal to three.
\end{eg}
\section{Algebraic structure of $\mathbb{T}$: Quotient $\F$-algebra}
In this section, we present an $\F$-basis of $\mathbb{T}/\mathrm{Rad}(\mathbb{T})$. Moreover, we determine the structure constants of this $\F$-basis in $\mathbb{T}/\mathrm{Rad}(\mathbb{T})$. As a preparation, we first recall Notations \ref{N;Notation3.1}, \ref{N;Notation3.9}, \ref{N;Notation3.15}, \ref{N;Notation4.1}, \ref{N;Notation5.5}, \ref{N;Notation5.6} and begin our presentation with three lemmas. 
\begin{lem}\label{L;Lemma7.1}
Assume that $\mathbf{g}, \mathbf{h}, \mathbf{i}, \mathbf{j}, \mathbf{k}, \mathbf{l}\!\in\!\mathbb{E}$, $\mathbf{h}\in\mathbb{P}_{\mathbf{g}, \mathbf{i}}$, $\mathbf{k}\in\mathbb{P}_{\mathbf{j}, \mathbf{l}}$. Assume that $p\nmid k_\mathbf{h}k_\mathbf{k}$. Then $D_{\mathbf{g}, \mathbf{h}, \mathbf{i}}=D_{\mathbf{j}, \mathbf{k}, \mathbf{l}}$ if and only if $\mathbf{g}=\mathbf{j}$, $\mathbf{h}=\mathbf{k}$, and
$\mathbf{i}=\mathbf{l}$.
\end{lem}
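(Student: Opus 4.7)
The ``if'' direction is immediate from the well-definedness of $D_{\cdot,\cdot,\cdot}$ in Notation~\ref{N;Notation5.6}, so the plan focuses on the converse. Assume $D_{\mathbf{g}, \mathbf{h}, \mathbf{i}} = D_{\mathbf{j}, \mathbf{k}, \mathbf{l}}$. My first move is to pin down $\mathbf{g} = \mathbf{j}$ and $\mathbf{i} = \mathbf{l}$ by sandwiching this equation with $E_\mathbf{g}^*$ on the left and $E_\mathbf{i}^*$ on the right. Expanding $D_{\mathbf{j}, \mathbf{k}, \mathbf{l}}$ via Equation~\eqref{Eq;10} and applying Equations~\eqref{Eq;4} and~\eqref{Eq;2} termwise yields $E_\mathbf{g}^* B_{\mathbf{j}, \mathbf{m}, \mathbf{l}} E_\mathbf{i}^* = \delta_{\mathbf{g},\mathbf{j}}\delta_{\mathbf{i},\mathbf{l}} B_{\mathbf{j}, \mathbf{m}, \mathbf{l}}$ for every index $\mathbf{m}$ appearing in the sum; the identical computation on the left-hand side gives $E_\mathbf{g}^* D_{\mathbf{g}, \mathbf{h}, \mathbf{i}} E_\mathbf{i}^* = D_{\mathbf{g}, \mathbf{h}, \mathbf{i}}$. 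Since $D_{\mathbf{g}, \mathbf{h}, \mathbf{i}} \neq O$ by Lemma~\ref{L;Lemma5.7}, both Kronecker factors must equal $\overline{1}$, so $\mathbf{g} = \mathbf{j}$ and $\mathbf{i} = \mathbf{l}$.

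With $\mathbf{g}, \mathbf{i}$ fixed, I would then extract $\mathbf{h} = \mathbf{k}$ by a $\mathbb{B}_2$-support comparison. Reading off Equation~\eqref{Eq;10} via Theorem~\ref{T;Theorem3.13}, and noting that each coefficient $(\overline{-1})^s \overline{k_{\mathbf{i}\cap\mathbf{m}}}^{-1}$ is a unit of $\F$ (because $k_{\mathbf{i}\cap\mathbf{m}}$ divides $k_\mathbf{m}$ and $p \nmid k_\mathbf{m}$ on each $\mathbb{U}_{\mathbf{g}\mathbf{i}, \mathbf{h}, s}$), I would establish
\[
\mathrm{Supp}_{\mathbb{B}_2}(D_{\mathbf{g}, \mathbf{h}, \mathbf{i}}) = \{B_{\mathbf{g}, \mathbf{m}, \mathbf{i}} : \mathbf{h} \preceq \mathbf{m} \preceq \mathbf{g}\mathbf{i}, \ p \nmid k_\mathbf{m}\}.
\]
The standing hypothesis $p \nmid k_\mathbf{h}$, together with $\mathbf{h} \preceq \mathbf{g}\mathbf{i}$ from Notation~\ref{N;Notation5.6}, shows that $\mathbf{h}$ itself belongs to this index set and is its unique $\preceq$-minimum. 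The analogous statement for $D_{\mathbf{g}, \mathbf{k}, \mathbf{i}}$ combined with Lemma~\ref{L;Lemma2.3} then forces $\mathbf{h} = \mathbf{k}$.

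I anticipate the mild book-keeping burden to lie in the first step, where one has to unwind the double sum in~\eqref{Eq;10} carefully against the orthogonality in~\eqref{Eq;2} while being sure to preserve nonvanishing; the second step is then a clean partial-order argument once the support description is in hand.
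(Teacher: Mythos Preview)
Your proposal is correct and follows essentially the same approach as the paper: the paper's proof cites precisely Equations~\eqref{Eq;10}, \eqref{Eq;4}, \eqref{Eq;2}, Theorem~\ref{T;Theorem3.13}, and Lemma~\ref{L;Lemma2.3}, which encode exactly your two-step argument of first isolating $\mathbf{g}=\mathbf{j}$, $\mathbf{i}=\mathbf{l}$ via the idempotent sandwich, then extracting $\mathbf{h}=\mathbf{k}$ as the $\preceq$-minimum of the $\mathbb{B}_2$-support. Your write-up simply spells out in detail what the paper compresses into a one-line citation.
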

\begin{proof}
Only check that $D_{\mathbf{g}, \mathbf{h}, \mathbf{i}}=D_{\mathbf{j}, \mathbf{k}, \mathbf{l}}$ only if $\mathbf{g}=\mathbf{j}$, $\mathbf{h}=\mathbf{k}$,
$\mathbf{i}=\mathbf{l}$. The desired lemma follows from combining Equations \eqref{Eq;10}, \eqref{Eq;4}, \eqref{Eq;2}, Theorem \ref{T;Theorem3.13}, Lemma \ref{L;Lemma2.3}.
\end{proof}
\begin{lem}\label{L;Lemma7.2}
$\mathbb{T}$ has an $\F$-linearly independent subset $\{D_{\mathbf{a}, \mathbf{b}, \mathbf{c}}: (\mathbf{a},\mathbf{b}, \mathbf{c})\in\mathbb{P}, p\nmid k_\mathbf{b}\}$.
\end{lem}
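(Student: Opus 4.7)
The plan is a triangular linear-independence argument parallel in spirit to the proof of Lemma \ref{L;Lemma3.12}. First, the problem decouples into blocks indexed by pairs $(\mathbf{g},\mathbf{i})\in\mathbb{E}\times\mathbb{E}$: by Equation \eqref{Eq;10} together with Equations \eqref{Eq;4} and \eqref{Eq;2}, for any $(\mathbf{g},\mathbf{h},\mathbf{i})\in\mathbb{P}$ with $p\nmid k_\mathbf{h}$ we have $E_\mathbf{g}^*D_{\mathbf{g},\mathbf{h},\mathbf{i}}E_\mathbf{i}^*=D_{\mathbf{g},\mathbf{h},\mathbf{i}}$. Given an $\F$-linear combination $L$ of $\{D_{\mathbf{a}, \mathbf{b}, \mathbf{c}}: (\mathbf{a},\mathbf{b}, \mathbf{c})\in\mathbb{P}, p\nmid k_\mathbf{b}\}$ equal to $O$, sandwiching $L$ between $E_\mathbf{g}^*$ and $E_\mathbf{i}^*$ for each pair $(\mathbf{g},\mathbf{i})$ reduces the claim to showing that, for each fixed $(\mathbf{g},\mathbf{i})$, the family $\mathbb{W}_{\mathbf{g},\mathbf{i}}=\{D_{\mathbf{g},\mathbf{h},\mathbf{i}}:\mathbf{h}\in\mathbb{P}_{\mathbf{g},\mathbf{i}},\ p\nmid k_\mathbf{h}\}$ is $\F$-linearly independent.

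Within the block $(\mathbf{g},\mathbf{i})$, the key input is the support structure of $D_{\mathbf{g},\mathbf{h},\mathbf{i}}$ in the basis $\mathbb{B}_2$ provided by Theorem \ref{T;Theorem3.13}. Reading off Equation \eqref{Eq;10} and the definition of $\mathbb{U}_{\mathbf{g}\mathbf{i},\mathbf{h},k}$ from Notation \ref{N;Notation5.5}, one sees that
$$\mathrm{Supp}_{\mathbb{B}_2}(D_{\mathbf{g},\mathbf{h},\mathbf{i}})\subseteq\{B_{\mathbf{g},\mathbf{l},\mathbf{i}}:\mathbf{h}\preceq\mathbf{l}\preceq\mathbf{g}\mathbf{i},\ p\nmid k_\mathbf{l}\},$$
and the coefficient $c_{\mathbf{g},\mathbf{h},\mathbf{i}}(D_{\mathbf{g},\mathbf{h},\mathbf{i}})$ (arising from the $k=0$ term with $\mathbf{l}=\mathbf{h}$) equals $\overline{k_{\mathbf{i}\cap\mathbf{h}}}^{-1}$. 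Since $\mathbf{i}\cap\mathbf{h}\preceq\mathbf{h}$ implies $k_{\mathbf{i}\cap\mathbf{h}}\mid k_\mathbf{h}$, the hypothesis $p\nmid k_\mathbf{h}$ forces $p\nmid k_{\mathbf{i}\cap\mathbf{h}}$, so $\overline{k_{\mathbf{i}\cap\mathbf{h}}}^{-1}\in\F^\times$.

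Suppose, for contradiction, that $\sum_{\mathbf{h}}c_{\mathbf{g},\mathbf{h},\mathbf{i}}D_{\mathbf{g},\mathbf{h},\mathbf{i}}=O$ is a nontrivial $\F$-linear relation within $\mathbb{W}_{\mathbf{g},\mathbf{i}}$, where the sum ranges over indices with $c_{\mathbf{g},\mathbf{h},\mathbf{i}}\in\F^\times$. By Lemma \ref{L;Lemma2.3}, the finite set of such indices admits a $\preceq$-minimal element $\mathbf{h}_0$. For any other index $\mathbf{h}$ in the relation, minimality of $\mathbf{h}_0$ gives $\mathbf{h}\not\preceq\mathbf{h}_0$, so $B_{\mathbf{g},\mathbf{h}_0,\mathbf{i}}\notin\mathrm{Supp}_{\mathbb{B}_2}(D_{\mathbf{g},\mathbf{h},\mathbf{i}})$ by the support containment above. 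Extracting the $B_{\mathbf{g},\mathbf{h}_0,\mathbf{i}}$-coefficient of the zero sum via Theorem \ref{T;Theorem3.13} therefore yields $c_{\mathbf{g},\mathbf{h}_0,\mathbf{i}}\,\overline{k_{\mathbf{i}\cap\mathbf{h}_0}}^{-1}=\overline{0}$, contradicting $c_{\mathbf{g},\mathbf{h}_0,\mathbf{i}}\in\F^\times$. The lemma follows.

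There is no serious obstacle: once the block decomposition via the dual idempotents is recorded, the whole argument is governed by the triangularity of Equation \eqref{Eq;10} with respect to $\preceq$ and the invertibility of the diagonal entries $\overline{k_{\mathbf{i}\cap\mathbf{h}}}^{-1}$. The only point requiring a touch of care is verifying $p\nmid k_{\mathbf{i}\cap\mathbf{h}}$, which follows from $\mathbf{i}\cap\mathbf{h}\preceq\mathbf{h}$ and the divisibility identity $k_\mathbf{h}=k_{\mathbf{i}\cap\mathbf{h}}k_{\mathbf{h}\setminus\mathbf{i}}$ coming from Lemma \ref{L;Lemma2.3}.
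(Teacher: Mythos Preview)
Your proof is correct and follows essentially the same approach as the paper: block-decompose via the dual idempotents $E_\mathbf{g}^*(\cdot)E_\mathbf{i}^*$, then run a triangularity argument in $\mathbb{B}_2$ based on Equation~\eqref{Eq;10} by choosing a $\preceq$-minimal index among those with nonzero coefficient. The only cosmetic difference is that you make the ``diagonal'' coefficient $\overline{k_{\mathbf{i}\cap\mathbf{h}}}^{-1}$ explicit and read off the contradiction directly, whereas the paper phrases the same step as ``by Equation~\eqref{Eq;10} and Theorem~\ref{T;Theorem3.13} some $\mathbf{k}\preceq\mathbf{j}_{(1)}$ must appear among the other indices.''
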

\begin{proof}
Set $\mathbb{U}=\{D_{\mathbf{a}, \mathbf{b}, \mathbf{c}}: (\mathbf{a},\mathbf{b}, \mathbf{c})\in\mathbb{P}, p\nmid k_\mathbf{b}\}$. It is obvious to see that $D_{\mathbf{0}, \mathbf{0},\mathbf{0}}\in\mathbb{U}$. Lemma \ref{L;Lemma5.7} thus implies that $M\neq O$ for any $M\!\in\!\mathbb{U}$. Let $L$ be a nonzero $\F$-linear combination of the matrices in $\mathbb{U}$. Assume that $L=O$. If $M\in\mathbb{U}$, let $c_M$ be the coefficient of $M$ in $L$. There is $N\in\mathbb{U}$ such that $c_N\in\F^\times$. The combination of Equations \eqref{Eq;10}, \eqref{Eq;4}, and \eqref{Eq;2} thus shows that $N=E_\mathbf{g}^*NE_\mathbf{h}^*$ for some $\mathbf{g}, \mathbf{h}\in\mathbb{E}$. So $\mathbb{V}\!=\!\{A: A\!\in\!\mathbb{U}, c_A\in\F^\times, A=E_\mathbf{g}^*AE_\mathbf{h}^*\}\neq\varnothing$. By Lemma \ref{L;Lemma7.1}, there are $i\in\mathbb{N}$ and pairwise distinct $\mathbf{j}_{(1)}, \mathbf{j}_{(2)}, \ldots, \mathbf{j}_{(i)}\!\!\in\!\!\mathbb{P}_{\mathbf{g}, \mathbf{h}}$ such that $\mathbb{V}\!=\!\{D_{\mathbf{g},\mathbf{j}_{(1)}, \mathbf{h}}, D_{\mathbf{g},\mathbf{j}_{(2)}, \mathbf{h}},\ldots,D_{\mathbf{g},\mathbf{j}_{(i)}, \mathbf{h}}\}$.

Assume that $i=1$. The combination of Equations \eqref{Eq;10}, \eqref{Eq;4}, and \eqref{Eq;2} gives $O=E_\mathbf{g}^*LE_\mathbf{h}^*=cD_{\mathbf{g}, \mathbf{j}_{(1)}, \mathbf{h}}$ for some $c\in\F^\times$.
This is a contradiction by Lemma \ref{L;Lemma5.7}. Assume further that $i>1$. By Lemma \ref{L;Lemma2.3}, there is no loss to
assume that $\mathbf{j}_{(1)}$ is minimal in $\{\mathbf{j}_{(1)}, \mathbf{j}_{(2)},\ldots, \mathbf{j}_{(i)}\}$ with respect to the partial order $\preceq$. As $E_\mathbf{g}^*LE_\mathbf{h}^*=O$, $D_{\mathbf{g}, \mathbf{j}_{(1)}, \mathbf{h}}$ is an $\F$-linear combination of the matrices in $\{D_{\mathbf{g},\mathbf{j}_{(2)} ,\mathbf{h}},D_{\mathbf{g},\mathbf{j}_{(3)} ,\mathbf{h}},\ldots, D_{\mathbf{g},\mathbf{j}_{(i)} ,\mathbf{h}}\}$. By Equation \eqref{Eq;10} and Theorem \ref{T;Theorem3.13}, there is $\mathbf{k}\in\{\mathbf{j}_{(2)}, \mathbf{j}_{(3)},\ldots, \mathbf{j}_{(i)}\}$ such that $\mathbf{k}\preceq\mathbf{j}_{(1)}$.
This implies that $\mathbf{j}_{(1)}\in\{\mathbf{j}_{(2)}, \mathbf{j}_{(3)},\ldots, \mathbf{j}_{(i)}\}$ by the choice of $\mathbf{j}_{(1)}$. This is a contradiction. These contradictions yield $L\neq O$. The desired lemma follows.
\end{proof}
\begin{lem}\label{L;Lemma7.3}
$|\{B_{\mathbf{a}, \mathbf{b}, \mathbf{c}}\!:\! (\mathbf{a},\mathbf{b}, \mathbf{c})\!\in\!\mathbb{P}, p\mid k_\mathbf{b}\}\cup\{D_{\mathbf{a}, \mathbf{b}, \mathbf{c}}\!:\! (\mathbf{a},\mathbf{b}, \mathbf{c})\in\mathbb{P}, p\nmid k_\mathbf{b}\}|=2^{2n_1}5^{n_2}$. Moreover, $\mathbb{T}$ has an $\F$-basis $\{B_{\mathbf{a}, \mathbf{b}, \mathbf{c}}\!:\!\  (\mathbf{a},\mathbf{b}, \mathbf{c})\!\in\!\mathbb{P}, p\!\mid\! k_\mathbf{b}\}\!\cup\!\{D_{\mathbf{a}, \mathbf{b}, \mathbf{c}}\!:\! (\mathbf{a},\mathbf{b}, \mathbf{c})\!\in\!\mathbb{P}, p\!\nmid\! k_\mathbf{b}\}$.
\end{lem}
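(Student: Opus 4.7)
The plan is to use the known $\F$-basis $\mathbb{B}_2=\{B_{\mathbf{a},\mathbf{b},\mathbf{c}}:(\mathbf{a},\mathbf{b},\mathbf{c})\in\mathbb{P}\}$ of Theorem~\ref{T;Theorem3.13} as a yardstick and exhibit the candidate set as a block-triangular change of basis of $\mathbb{B}_2$. The key observation, obtained by unwinding Equation~\eqref{Eq;10} together with Notation~\ref{N;Notation5.5}, is that every $B_{\mathbf{g},\mathbf{l},\mathbf{i}}$ in the $\mathbb{B}_2$-expansion of $D_{\mathbf{g},\mathbf{h},\mathbf{i}}$ satisfies $p\nmid k_\mathbf{l}$, since the inner summation runs over $\mathbf{l}\in\mathbb{U}_{\mathbf{g}\mathbf{i},\mathbf{h},k}$ and this condition is already built into $\mathbb{U}_{\mathbf{g}\mathbf{i},\mathbf{h},k}$. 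Hence, writing $\mathbb{P}^{(p)}=\{(\mathbf{a},\mathbf{b},\mathbf{c})\in\mathbb{P}:p\mid k_\mathbf{b}\}$ and $\mathbb{P}^{(p')}=\mathbb{P}\setminus\mathbb{P}^{(p)}$, the $B$-part of the candidate set is already the ``$\mathbb{P}^{(p)}$-piece'' of $\mathbb{B}_2$, while the $D$-part lies entirely in $\langle\{B_{\mathbf{a},\mathbf{b},\mathbf{c}}:(\mathbf{a},\mathbf{b},\mathbf{c})\in\mathbb{P}^{(p')}\}\rangle_\mathbb{T}$.

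First I would establish the cardinality claim. Since $|\mathbb{P}|=2^{2n_1}5^{n_2}$ by Theorem~\ref{T;Theorem3.5} and the partition $\mathbb{P}=\mathbb{P}^{(p)}\sqcup\mathbb{P}^{(p')}$ is disjoint by construction, it suffices to show that (i) the $B$-part is indexed faithfully by $\mathbb{P}^{(p)}$, which is Lemma~\ref{L;Lemma3.11}; (ii) the $D$-part is indexed faithfully by $\mathbb{P}^{(p')}$, which is Lemma~\ref{L;Lemma7.1}; and (iii) no $B_{\mathbf{a},\mathbf{b},\mathbf{c}}$ from the $B$-part equals any $D_{\mathbf{a}',\mathbf{b}',\mathbf{c}'}$ from the $D$-part. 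For (iii), expanding both sides into $\mathbb{B}_2$ via Equation~\eqref{Eq;10} and invoking the $\F$-linear independence of $\mathbb{B}_2$ from Theorem~\ref{T;Theorem3.13} would force $\mathbf{b}$ to equal some $\mathbf{l}\in\mathbb{U}_{\mathbf{a}'\mathbf{c}',\mathbf{b}',k}$, contradicting $p\mid k_\mathbf{b}$.

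Next I would verify $\F$-linear independence of the union, after which the basis property follows from the dimension match $\dim_\F\mathbb{T}=2^{2n_1}5^{n_2}$ of Theorem~\ref{T;Theorem3.5}. A vanishing relation
\begin{equation*}
\sum_{(\mathbf{a},\mathbf{b},\mathbf{c})\in\mathbb{P}^{(p)}}c_{\mathbf{a},\mathbf{b},\mathbf{c}}B_{\mathbf{a},\mathbf{b},\mathbf{c}}+\sum_{(\mathbf{a},\mathbf{b},\mathbf{c})\in\mathbb{P}^{(p')}}d_{\mathbf{a},\mathbf{b},\mathbf{c}}D_{\mathbf{a},\mathbf{b},\mathbf{c}}=O
\end{equation*}
expands, through Equation~\eqref{Eq;10}, into coefficients supported on disjoint halves of $\mathbb{B}_2$; by Theorem~\ref{T;Theorem3.13} the two summands must vanish separately. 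A second use of Theorem~\ref{T;Theorem3.13} then kills every $c_{\mathbf{a},\mathbf{b},\mathbf{c}}$, while Lemma~\ref{L;Lemma7.2} kills every $d_{\mathbf{a},\mathbf{b},\mathbf{c}}$.

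No serious obstacle is anticipated: once the ``$p\nmid k_\mathbf{l}$'' restriction hidden in $\mathbb{U}_{\mathbf{g}\mathbf{i},\mathbf{h},k}$ is spotted, the $\mathbb{B}_2$-coordinates split cleanly into the two halves along which the candidate set is triangular, and every remaining claim is a direct citation of Theorem~\ref{T;Theorem3.13}, Theorem~\ref{T;Theorem3.5}, Lemma~\ref{L;Lemma3.11}, Lemma~\ref{L;Lemma7.1}, or Lemma~\ref{L;Lemma7.2}.
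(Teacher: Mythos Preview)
Your proposal is correct and follows essentially the same approach as the paper: both arguments hinge on the observation (from Equation~\eqref{Eq;10} and Notation~\ref{N;Notation5.5}) that each $D_{\mathbf{g},\mathbf{h},\mathbf{i}}$ expands in $\mathbb{B}_2$ only over $B_{\mathbf{g},\mathbf{l},\mathbf{i}}$ with $p\nmid k_\mathbf{l}$, so the $B$-part and the $D$-part live in complementary coordinate subspaces of $\mathbb{B}_2$, after which Theorem~\ref{T;Theorem3.13} and Lemma~\ref{L;Lemma7.2} finish. Your write-up is simply more explicit about the cardinality count (invoking Lemmas~\ref{L;Lemma3.11} and~\ref{L;Lemma7.1}) than the paper's terse citation of Lemma~\ref{L;Lemma7.2} and Theorem~\ref{T;Theorem3.13}.
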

\begin{proof}
The first statement follows from Lemma \ref{L;Lemma7.2} and Theorem \ref{T;Theorem3.13}. By the first statement and Theorem \ref{T;Theorem3.13}, it suffices to check that $\mathbb{T}$ has an $\F$-linearly independent subset $\{B_{\mathbf{a}, \mathbf{b}, \mathbf{c}}:(\mathbf{a},\mathbf{b}, \mathbf{c})\in\mathbb{P}, p\mid k_\mathbf{b}\}\cup\{D_{\mathbf{a}, \mathbf{b}, \mathbf{c}}: (\mathbf{a},\mathbf{b}, \mathbf{c})\in\mathbb{P}, p\nmid k_\mathbf{b}\}$. The desired lemma follows from combining Equation \eqref{Eq;10}, Theorem \ref{T;Theorem3.13}, and Lemma \ref{L;Lemma7.2}.
\end{proof}
Lemma \ref{L;Lemma7.3} allows us to deduce the first main result of this section and a notation.
\begin{thm}\label{T;Theorem7.4}
$\mathbb{T}/\mathrm{Rad}(\mathbb{T})$ has an $\F$-basis $\{D_{\mathbf{a}, \mathbf{b}, \mathbf{c}}+\mathrm{Rad}(\mathbb{T}):(\mathbf{a}, \mathbf{b}, \mathbf{c})\in\mathbb{P}, p\nmid k_\mathbf{b}\}$. In particular, the $\F$-dimension of $\mathbb{T}/\mathrm{Rad}(\mathbb{T})$ is also independent of the choice of $\mathbf{x}$.
\end{thm}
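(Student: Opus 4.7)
The plan is to deduce Theorem 7.4 by combining Lemma 7.3 with the identification of $\mathrm{Rad}(\mathbb{T})$ provided by Theorem \ref{T;Jacobson}. The key observation is that the mixed $\F$-basis exhibited in Lemma \ref{L;Lemma7.3} is already adapted to the filtration $\mathrm{Rad}(\mathbb{T}) \subseteq \mathbb{T}$: it splits cleanly into the $B_{\mathbf{a},\mathbf{b},\mathbf{c}}$ with $p\mid k_\mathbf{b}$ on the one hand, and the $D_{\mathbf{a},\mathbf{b},\mathbf{c}}$ with $p\nmid k_\mathbf{b}$ on the other, and the former span exactly $\mathrm{Rad}(\mathbb{T})$.

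First, I would invoke Lemma \ref{L;Lemma7.3} to write each element of $\mathbb{T}$ uniquely as an $\F$-linear combination of the matrices in $\{B_{\mathbf{a},\mathbf{b},\mathbf{c}}:(\mathbf{a},\mathbf{b},\mathbf{c})\in\mathbb{P},\,p\mid k_\mathbf{b}\}\cup\{D_{\mathbf{a},\mathbf{b},\mathbf{c}}:(\mathbf{a},\mathbf{b},\mathbf{c})\in\mathbb{P},\,p\nmid k_\mathbf{b}\}$. By Theorem \ref{T;Jacobson} and the definition of $\mathbb{I}$ in Notation \ref{N;Notation6.2}, the subset $\{B_{\mathbf{a},\mathbf{b},\mathbf{c}}:(\mathbf{a},\mathbf{b},\mathbf{c})\in\mathbb{P},\,p\mid k_\mathbf{b}\}$ is an $\F$-basis of $\mathrm{Rad}(\mathbb{T})=\mathbb{I}$. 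Consequently, every coset $M+\mathrm{Rad}(\mathbb{T})$ is an $\F$-linear combination of the cosets $D_{\mathbf{a},\mathbf{b},\mathbf{c}}+\mathrm{Rad}(\mathbb{T})$ with $(\mathbf{a},\mathbf{b},\mathbf{c})\in\mathbb{P}$ and $p\nmid k_\mathbf{b}$, so these cosets span $\mathbb{T}/\mathrm{Rad}(\mathbb{T})$.

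For $\F$-linear independence, I would argue by contradiction: suppose some nonzero $\F$-linear combination $L$ of the $D_{\mathbf{a},\mathbf{b},\mathbf{c}}$ with $p\nmid k_\mathbf{b}$ lies in $\mathrm{Rad}(\mathbb{T})$. Then $L$ is simultaneously an $\F$-linear combination of the $D$'s with $p\nmid k_\mathbf{b}$ and of the $B$'s with $p\mid k_\mathbf{b}$, contradicting the $\F$-linear independence of the combined set guaranteed by Lemma \ref{L;Lemma7.3}. Hence the given family of cosets is an $\F$-basis of $\mathbb{T}/\mathrm{Rad}(\mathbb{T})$.

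Finally, for the independence of the $\F$-dimension of $\mathbb{T}/\mathrm{Rad}(\mathbb{T})$ from $\mathbf{x}$, I would observe that $\dim_\F \mathbb{T}/\mathrm{Rad}(\mathbb{T}) = \dim_\F \mathbb{T} - \dim_\F \mathrm{Rad}(\mathbb{T})$. By Theorem \ref{T;Theorem3.5} (or equivalently Theorem \ref{T;Theorem3.13}), $\dim_\F\mathbb{T}=2^{2n_1}5^{n_2}$, which only depends on the cardinalities $|\mathbb{U}_1|,\ldots,|\mathbb{U}_n|$. By Corollary \ref{C;Corollary6.10}, $\dim_\F\mathrm{Rad}(\mathbb{T})$ is likewise independent of $\mathbf{x}$. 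Therefore the difference is independent of $\mathbf{x}$, completing the proof. I do not expect any real obstacle here: once Theorem \ref{T;Jacobson} and Lemma \ref{L;Lemma7.3} are in hand, the statement reduces to a direct quotient-by-basis argument.
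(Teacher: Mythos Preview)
Your proposal is correct and follows essentially the same approach as the paper: combine Lemma~\ref{L;Lemma7.3} with Theorem~\ref{T;Jacobson} (and the definition of $\mathbb{I}$) so that the mixed basis of $\mathbb{T}$ splits into a basis of $\mathrm{Rad}(\mathbb{T})$ and a set whose cosets form a basis of the quotient. The only cosmetic difference is that the paper deduces the independence of $\dim_\F\mathbb{T}/\mathrm{Rad}(\mathbb{T})$ directly from the first statement (the basis cardinality is a combinatorial count on $\mathbb{P}$), whereas you go via $\dim_\F\mathbb{T}-\dim_\F\mathrm{Rad}(\mathbb{T})$ and Corollary~\ref{C;Corollary6.10}; both are fine.
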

\begin{proof}
The first statement follows from combining Theorem \ref{T;Jacobson}, Lemmas \ref{L;Lemma6.1}, and \ref{L;Lemma7.3}. So the desired theorem follows from an application of the first statement.
\end{proof}
\begin{nota}\label{N;Notation7.5}
Let $\mathbb{B}_4=\{D_{\mathbf{a}, \mathbf{b}, \mathbf{c}}+\mathrm{Rad}(\mathbb{T}):(\mathbf{a},\mathbf{b}, \mathbf{c})\in\mathbb{P}, p\nmid k_\mathbf{b}\}$. If $M\in\mathbb{T}/\mathrm{Rad}(\mathbb{T})$, $(\mathbf{g}, \mathbf{h}, \mathbf{i})\in\mathbb{P}$, and an additional condition $p\nmid k_\mathbf{h}$ holds, notice that $\mathrm{Supp}_{\mathbb{B}_4}(M)$ is defined and write $c_{\mathbf{g}, \mathbf{i}}^{\mathbf{h}}(M)$ for $c_{\mathbb{B}_4}(M, D_{\mathbf{g}, \mathbf{h}, \mathbf{i}}\!+\!\mathrm{Rad}(\mathbb{T}))$ as a deduction of Theorem \ref{T;Theorem7.4}.
\end{nota}
We next use some lemmas to investigate the structure constants of $\mathbb{B}_4$ in $\mathbb{T}\!/\mathrm{Rad}(\mathbb{T})$.
\begin{lem}\label{L;Lemma7.6}
Assume that $\mathbf{g}, \mathbf{h}, \mathbf{i}, \mathbf{j}, \mathbf{k}\!\in\!\mathbb{E}$, $\mathbf{g}\triangle\mathbf{i}\!\preceq\!\mathbf{h}$, $\mathbf{i}\triangle\mathbf{k}\!\preceq\!\mathbf{j}$, $p\nmid k_\mathbf{h}k_\mathbf{j}$, $(\mathbbm{h}\cap\mathbbm{i}\cap\mathbbm{k})^\circ\!\!\subseteq\!\!\mathbbm{j}$. Then $p\!\nmid\! k_{[\mathbf{g}, \mathbf{h}, \mathbf{i}, \mathbf{j}, \mathbf{k}]}$. Moreover, $\mathbf{i}\mathbf{k}\setminus\mathbf{j}\!=\!\mathbf{g}\mathbf{k}\setminus[\mathbf{g}, \mathbf{h}, \mathbf{i}, \mathbf{j}, \mathbf{k}]$
and $n_{\mathbf{i}\mathbf{k}, \mathbf{j}}\!=\!n_{\mathbf{g}\mathbf{k}, [\mathbf{g}, \mathbf{h}, \mathbf{i}, \mathbf{j}, \mathbf{k}]}$
if $\mathbf{j}\!\in\!\mathbb{P}_{\mathbf{i}, \mathbf{k}}$.
\end{lem}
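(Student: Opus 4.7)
The plan is to treat the three claims of the lemma one at a time, each of which reduces either to a direct invocation of Lemma~\ref{L;Lemma5.10} or to a purely set-theoretic identity that combines the three combinatorial hypotheses $\mathbf{g}\triangle\mathbf{i}\preceq\mathbf{h}$, $\mathbf{i}\triangle\mathbf{k}\preceq\mathbf{j}$, and $(\mathbbm{h}\cap\mathbbm{i}\cap\mathbbm{k})^\circ\subseteq\mathbbm{j}$.

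For the first assertion $p\nmid k_{[\mathbf{g},\mathbf{h},\mathbf{i},\mathbf{j},\mathbf{k}]}$, I would apply the ``in particular'' part of Lemma~\ref{L;Lemma5.10}. Specialising the auxiliary variables by setting $\mathbf{l}=\mathbf{m}=\mathbf{j}$ and $\mathbb{U}=\varnothing$, the structural hypothesis $\mathbbm{l}=\mathbbm{j}\cup((\mathbbm{i}\cap\mathbbm{k}\cap\mathbbm{m})^\circ\setminus(\mathbbm{h}\cup\mathbbm{j}))\cup\mathbb{U}$ degenerates to $\mathbbm{j}=\mathbbm{j}$, and the divisibility hypothesis $p\nmid k_{\mathbf{h}}k_{\mathbf{j}}k_{\mathbf{m}}$ becomes exactly the given $p\nmid k_{\mathbf{h}}k_{\mathbf{j}}$. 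Lemma~\ref{L;Lemma5.10} then yields the claim.

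For the set equality $\mathbf{i}\mathbf{k}\setminus\mathbf{j}=\mathbf{g}\mathbf{k}\setminus[\mathbf{g},\mathbf{h},\mathbf{i},\mathbf{j},\mathbf{k}]$ under the extra hypothesis $\mathbf{j}\in\mathbb{P}_{\mathbf{i},\mathbf{k}}$, I would first unfold Notations~\ref{N;Notation5.6} and~\ref{N;Notation3.15}: since $\mathbbm{1}=[1,n]$, we get $\mathbbm{i}\mathbf{k}=(\mathbbm{i}\triangle\mathbbm{k})\cup(\mathbbm{i}\cap\mathbbm{k})^\circ$, $\mathbbm{g}\mathbf{k}=(\mathbbm{g}\triangle\mathbbm{k})\cup(\mathbbm{g}\cap\mathbbm{k})^\circ$, and $\mathbbm{[\mathbf{g},\mathbf{h},\mathbf{i},\mathbf{j},\mathbf{k}]}=(\mathbbm{g}\triangle\mathbbm{k})\cup((\mathbbm{g}\cap\mathbbm{k})^\circ\setminus\mathbbm{i})\cup((\mathbbm{h}\cup\mathbbm{j})\cap(\mathbbm{g}\cap\mathbbm{i}\cap\mathbbm{k})^\circ)$. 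Using $\mathbbm{i}\triangle\mathbbm{k}\subseteq\mathbbm{j}$, the left-hand side simplifies to $(\mathbbm{i}\cap\mathbbm{k})^\circ\setminus\mathbbm{j}$, while a direct computation shows the right-hand side equals $(\mathbbm{g}\cap\mathbbm{i}\cap\mathbbm{k})^\circ\setminus(\mathbbm{h}\cup\mathbbm{j})$. The inclusion $\supseteq$ between these two sets is immediate. For $\subseteq$, fix $a\in(\mathbbm{i}\cap\mathbbm{k})^\circ\setminus\mathbbm{j}$; if $a\in\mathbbm{h}$ then $a\in(\mathbbm{h}\cap\mathbbm{i}\cap\mathbbm{k})^\circ\subseteq\mathbbm{j}$, contradicting $a\notin\mathbbm{j}$, so $a\notin\mathbbm{h}$; and if $a\notin\mathbbm{g}$ then $a\in\mathbbm{i}\setminus\mathbbm{g}\subseteq\mathbbm{g}\triangle\mathbbm{i}\subseteq\mathbbm{h}$, a contradiction, so $a\in\mathbbm{g}$. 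Hence $a\in(\mathbbm{g}\cap\mathbbm{i}\cap\mathbbm{k})^\circ\setminus(\mathbbm{h}\cup\mathbbm{j})$, finishing the set equality.

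The third claim $n_{\mathbf{i}\mathbf{k},\mathbf{j}}=n_{\mathbf{g}\mathbf{k},[\mathbf{g},\mathbf{h},\mathbf{i},\mathbf{j},\mathbf{k}]}$ then follows instantly from the second, since by Notation~\ref{N;Notation5.5} each $n_{\bullet,\bullet}$ is simply the cardinality of those $a$ in the corresponding set-difference satisfying $p\nmid|\mathbb{U}_a|-1$, and the two set-differences have been shown to coincide. The main obstacle here is really just bookkeeping in the set-theoretic simplification: one has to carefully track where each of the three hypotheses enters (the first two $\preceq$-relations force the cancellations of $\mathbbm{g}\triangle\mathbbm{k}$-type pieces, while $(\mathbbm{h}\cap\mathbbm{i}\cap\mathbbm{k})^\circ\subseteq\mathbbm{j}$ is precisely what rules out an obstruction in the non-trivial inclusion), but no genuinely new idea beyond the manipulation of $\circ$-superscripts seems necessary.
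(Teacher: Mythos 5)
Your proposal is correct and follows essentially the same route as the paper's proof: the first assertion is obtained by specialising Lemma~\ref{L;Lemma5.10} via $\mathbf{l}=\mathbf{m}=\mathbf{j}$ and $\mathbb{U}=\varnothing$, and the set identity is established by simplifying both $\mathbf{i}\mathbf{k}\setminus\mathbf{j}$ and $\mathbf{g}\mathbf{k}\setminus[\mathbf{g},\mathbf{h},\mathbf{i},\mathbf{j},\mathbf{k}]$ to $(\mathbbm{i}\cap\mathbbm{k})^\circ\setminus\mathbbm{j}$ (equivalently $(\mathbbm{g}\cap\mathbbm{i}\cap\mathbbm{k})^\circ\setminus(\mathbbm{h}\cup\mathbbm{j})$), with $\mathbf{g}\triangle\mathbf{i}\preceq\mathbf{h}$ and $(\mathbbm{h}\cap\mathbbm{i}\cap\mathbbm{k})^\circ\subseteq\mathbbm{j}$ supplying exactly the cancellations you identify. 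Your two-inclusion presentation of the key set equality is a minor reorganisation of the paper's direct computation, not a different argument.
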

\begin{proof}
As $\mathbf{i}\triangle\mathbf{k}\preceq\mathbf{j}$ and the remaining assumptions in Lemma \ref{L;Lemma5.10} are satisfied by changing
$\mathbf{m}, \mathbb{U}$ to $\mathbf{j},\varnothing$, the first statement is from Lemma \ref{L;Lemma5.10}. For the remaining equations,
notice that $\mathbbm{j}=(\mathbbm{i}\triangle\mathbbm{k})\cup(\mathbbm{i}\cap\mathbbm{j}\cap\mathbbm{k})=
(\mathbbm{i}\triangle\mathbbm{k})\cup(\mathbbm{i}\cap\mathbbm{j}\cap\mathbbm{k})^\circ$ as $\mathbf{j}\in\mathbb{P}_{\mathbf{i}, \mathbf{k}}$. So
$((\mathbbm{i}\triangle\mathbbm{k})\cup(\mathbbm{i}\cap\mathbbm{k})^\circ)\setminus\mathbbm{j}=(\mathbbm{i}\cap\mathbbm{k})^\circ
\setminus\mathbbm{j}$. As $\mathbf{g}\triangle\mathbf{i}\preceq\mathbf{h}$ and $(\mathbbm{h}\cap\mathbbm{i}\cap\mathbbm{k})^\circ\subseteq\mathbbm{j}$, notice that $(\mathbbm{g}\triangle\mathbbm{k})
\cup((\mathbbm{g}\cap\mathbbm{k})^\circ\setminus\mathbbm{i})\cup((\mathbbm{h}\cup\mathbbm{j})\cap
(\mathbbm{g}\cap\mathbbm{i}\cap\mathbbm{k})^\circ)=(\mathbbm{g}\triangle\mathbbm{k})
\cup((\mathbbm{g}\cap\mathbbm{k})^\circ\setminus\mathbbm{i})\cup(\mathbbm{g}\cap\mathbbm{i}\cap\mathbbm{j}\cap\mathbbm{k})^\circ$
and $((\mathbbm{g}\triangle\mathbbm{k})\cup(\mathbbm{g}\cap\mathbbm{k})^\circ)\setminus((\mathbbm{g}\triangle\mathbbm{k})
\cup((\mathbbm{g}\cap\mathbbm{k})^\circ\setminus\mathbbm{i})\cup(\mathbbm{g}\cap\mathbbm{i}\cap\mathbbm{j}\cap\mathbbm{k})^\circ)=(\mathbbm{i}\cap\mathbbm{k})^\circ
\setminus\mathbbm{j}$. This implies that $\mathbf{i}\mathbf{k}\setminus\mathbf{j}=\mathbf{g}\mathbf{k}\setminus[\mathbf{g}, \mathbf{h}, \mathbf{i}, \mathbf{j}, \mathbf{k}]$ by Lemma \ref{L;Lemma2.3}. The desired lemma follows.
\end{proof}
\begin{lem}\label{L;Lemma7.7}
Assume that $\mathbf{g}, \mathbf{h}, \mathbf{i}, \mathbf{j}, \mathbf{k}, \mathbf{l}\!\in\!\mathbb{E}$, $\mathbf{g}\triangle\mathbf{i}\preceq\mathbf{h}$, $\mathbf{i}\triangle\mathbf{k}\!\preceq\!\mathbf{j}$, $p\!\nmid\! k_\mathbf{h}k_\mathbf{j}$, $m\!\in\![0, n_{\mathbf{i}\mathbf{k},\mathbf{j}}]$, $\mathbf{l}\!\in\!\mathbb{U}_{\mathbf{i}\mathbf{k}, \mathbf{j}, m}$, $(\mathbbm{h}\cap\mathbbm{i}\cap\mathbbm{k})^\circ\subseteq\mathbbm{j}$. Then $m\!\in\![0, n_{\mathbf{g}\mathbf{k}, [\mathbf{g}, \mathbf{h}, \mathbf{i}, \mathbf{j}, \mathbf{k}]}]$ and $[\mathbf{g},\mathbf{h},\mathbf{i}, \mathbf{l}, \mathbf{k}]\!\in\!\mathbb{U}_{\mathbf{g}\mathbf{k},[\mathbf{g}, \mathbf{h},\mathbf{i},\mathbf{j},\mathbf{k}], m}$.
\end{lem}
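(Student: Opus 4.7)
The plan is to split the two assertions of the lemma and dispose of them using Lemmas \ref{L;Lemma7.6}, \ref{L;Lemma5.10}, and \ref{L;Lemma3.16} combined with a careful set-theoretic bookkeeping. First, to get $m\in[0,n_{\mathbf{g}\mathbf{k},[\mathbf{g},\mathbf{h},\mathbf{i},\mathbf{j},\mathbf{k}]}]$, I would observe that $\mathbf{l}\in\mathbb{U}_{\mathbf{i}\mathbf{k},\mathbf{j},m}$ forces $\mathbf{j}\preceq\mathbf{l}\preceq\mathbf{i}\mathbf{k}$, which together with $\mathbf{i}\triangle\mathbf{k}\preceq\mathbf{j}$ gives $\mathbf{j}\in\mathbb{P}_{\mathbf{i},\mathbf{k}}$. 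All hypotheses of Lemma \ref{L;Lemma7.6} are met, so in particular $n_{\mathbf{i}\mathbf{k},\mathbf{j}}=n_{\mathbf{g}\mathbf{k},[\mathbf{g},\mathbf{h},\mathbf{i},\mathbf{j},\mathbf{k}]}$ and $m$ inherits the desired bound.

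For the second assertion, I would verify the three defining conditions of $\mathbb{U}_{\mathbf{g}\mathbf{k},[\mathbf{g},\mathbf{h},\mathbf{i},\mathbf{j},\mathbf{k}],m}$ for $[\mathbf{g},\mathbf{h},\mathbf{i},\mathbf{l},\mathbf{k}]$. The containment $[\mathbf{g},\mathbf{h},\mathbf{i},\mathbf{l},\mathbf{k}]\preceq\mathbf{g}\mathbf{k}$ is Lemma \ref{L;Lemma3.16}, and $[\mathbf{g},\mathbf{h},\mathbf{i},\mathbf{j},\mathbf{k}]\preceq[\mathbf{g},\mathbf{h},\mathbf{i},\mathbf{l},\mathbf{k}]$ is pure monotonicity in the fourth slot of $[\cdot,\cdot,\cdot,\cdot,\cdot]$ (the slot enters only through $\mathbbm{h}\cup\mathbbm{j}$ vs.~$\mathbbm{h}\cup\mathbbm{l}$ in Notation \ref{N;Notation3.15}, and $\mathbbm{j}\subseteq\mathbbm{l}$). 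The $p$-coprimality will be extracted from Lemma \ref{L;Lemma5.10} with its $\mathbf{m}$ taken to be our $\mathbf{l}$. The necessary decomposition of $\mathbbm{l}$ I would produce by setting
\[
\mathbb{U}':=(\mathbbm{l}\setminus\mathbbm{j})\cap\mathbbm{h},\qquad \mathbb{V}':=(\mathbbm{l}\setminus\mathbbm{j})\setminus\mathbbm{h},
\]
and checking, using $\mathbf{i}\triangle\mathbf{k}\preceq\mathbf{j}\preceq\mathbf{l}\preceq\mathbf{i}\mathbf{k}$ (so that $\mathbbm{l}\setminus\mathbbm{j}\subseteq(\mathbbm{i}\cap\mathbbm{k}\cap\mathbbm{l})^\circ\setminus\mathbbm{j}$), that $\mathbb{V}'=(\mathbbm{i}\cap\mathbbm{k}\cap\mathbbm{l})^\circ\setminus(\mathbbm{h}\cup\mathbbm{j})$ and $\mathbb{U}'\subseteq(\mathbbm{h}\cap\mathbbm{i}\cap\mathbbm{k})^\circ\setminus\mathbbm{j}$; the remaining $p$-hypothesis $p\nmid k_{\mathbf{h}}k_{\mathbf{j}}k_{\mathbf{l}}$ is given by assumption together with $p\nmid k_\mathbf{l}$ coming from $\mathbf{l}\in\mathbb{U}_{\mathbf{i}\mathbf{k},\mathbf{j},m}$.

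For the cardinality condition, a direct computation from Notation \ref{N;Notation3.15} simplifies
\[
[\mathbf{g},\mathbf{h},\mathbf{i},\mathbf{l},\mathbf{k}]\setminus[\mathbf{g},\mathbf{h},\mathbf{i},\mathbf{j},\mathbf{k}]\;=\;(\mathbbm{l}\setminus(\mathbbm{h}\cup\mathbbm{j}))\cap(\mathbbm{g}\cap\mathbbm{i}\cap\mathbbm{k})^\circ,
\]
and the hypothesis $\mathbf{g}\triangle\mathbf{i}\preceq\mathbf{h}$ (equivalently $\mathbbm{i}\setminus\mathbbm{g}\subseteq\mathbbm{h}$) lets me drop the $\mathbbm{g}$-requirement, so this right-hand side collapses to $\mathbb{V}'$. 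Since $\mathbbm{l}\setminus\mathbbm{j}=\mathbb{U}'\sqcup\mathbb{V}'$ has cardinality $m$, everything hinges on showing $\mathbb{U}'=\varnothing$, and that is where the standing hypothesis $(\mathbbm{h}\cap\mathbbm{i}\cap\mathbbm{k})^\circ\subseteq\mathbbm{j}$ intervenes crucially: it forces $(\mathbbm{h}\cap\mathbbm{i}\cap\mathbbm{k})^\circ\setminus\mathbbm{j}=\varnothing$, whence $\mathbb{U}'=\varnothing$ and $|\mathbb{V}'|=m$. I expect this final step --- recognising that the hypothesis is exactly what kills the ``$\mathbf{h}$-portion'' of $\mathbbm{l}\setminus\mathbbm{j}$ so the index $m$ is preserved under the passage from $(\mathbf{i},\mathbf{j},\mathbf{l})$ to $(\mathbf{g},[\mathbf{g},\mathbf{h},\mathbf{i},\mathbf{j},\mathbf{k}],[\mathbf{g},\mathbf{h},\mathbf{i},\mathbf{l},\mathbf{k}])$ --- to be the only genuinely delicate point; every other step is routine bookkeeping with Notation \ref{N;Notation3.15}.
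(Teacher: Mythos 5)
Your proposal is correct and takes essentially the same route as the paper: establish $[\mathbf{g},\mathbf{h},\mathbf{i},\mathbf{j},\mathbf{k}]\preceq[\mathbf{g},\mathbf{h},\mathbf{i},\mathbf{l},\mathbf{k}]\preceq\mathbf{g}\mathbf{k}$ via Lemma \ref{L;Lemma3.16}, the $p$-coprimality of $k_{[\mathbf{g},\mathbf{h},\mathbf{i},\mathbf{l},\mathbf{k}]}$ via Lemma \ref{L;Lemma5.10} (the paper routes this through Lemma \ref{L;Lemma7.6}, which is derived from the same Lemma \ref{L;Lemma5.10}), the bound on $m$ via the $n_{\mathbf{i}\mathbf{k},\mathbf{j}}=n_{\mathbf{g}\mathbf{k},[\mathbf{g},\mathbf{h},\mathbf{i},\mathbf{j},\mathbf{k}]}$ identity from Lemma \ref{L;Lemma7.6}, and the cardinality condition by showing the difference set equals $\mathbbm{l}\setminus\mathbbm{j}$. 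Your $\mathbb{U}'\sqcup\mathbb{V}'$ decomposition simply makes explicit the same bookkeeping the paper performs when it rewrites $\mathbbm{[\mathbf{g},\mathbf{h},\mathbf{i},\mathbf{j},\mathbf{k}]}$ and $\mathbbm{[\mathbf{g},\mathbf{h},\mathbf{i},\mathbf{l},\mathbf{k}]}$ and observes $(\mathbbm{g}\cap\mathbbm{i}\cap\mathbbm{k}\cap\mathbbm{l})^\circ\setminus\mathbbm{j}=(\mathbbm{i}\cap\mathbbm{k}\cap\mathbbm{l})^\circ\setminus\mathbbm{j}$.
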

\begin{proof}
As $\mathbf{l}\in\mathbb{U}_{\mathbf{i}\mathbf{k}, \mathbf{j}, m}$, notice that $\mathbf{j}\preceq\mathbf{l}\preceq\mathbf{i}\mathbf{k}$, $p\nmid k_\mathbf{l}$, and $|\mathbbm{l}\setminus\mathbbm{j}|=m$.
Lemma \ref{L;Lemma3.16} thus implies that $[\mathbf{g}, \mathbf{h},\mathbf{i},\mathbf{j},\mathbf{k}]\preceq[\mathbf{g}, \mathbf{h},\mathbf{i},\mathbf{l},\mathbf{k}]\preceq\mathbf{g}\mathbf{k}$. As $\mathbf{i}\triangle\mathbf{k}\preceq\mathbf{j}\preceq\mathbf{l}\preceq\mathbf{i}\mathbf{k}$, notice that $\mathbf{j},\mathbf{l}\in\mathbb{P}_{\mathbf{i}, \mathbf{k}}$ and $\mathbf{i}\triangle\mathbf{k}\preceq\mathbf{l}$. As $p\nmid k_\mathbf{h}k_\mathbf{l}$ and $(\mathbbm{h}\cap\mathbbm{i}\cap\mathbbm{k})^\circ\subseteq\mathbbm{j}\subseteq\mathbbm{l}$, notice that $p\nmid k_{[\mathbf{g},\mathbf{h},\mathbf{i}, \mathbf{l}, \mathbf{k}]}$ by Lemma \ref{L;Lemma7.6}. As $\mathbf{j}, \mathbf{l}\in\mathbb{P}_{\mathbf{i}, \mathbf{k}}$, notice that $\mathbbm{j}=(\mathbbm{i}\triangle\mathbbm{k})\cup(\mathbbm{i}\cap\mathbbm{j}\cap\mathbbm{k})=
(\mathbbm{i}\triangle\mathbbm{k})\cup(\mathbbm{i}\cap\mathbbm{j}\cap\mathbbm{k})^\circ$ and $\mathbbm{l}=(\mathbbm{i}\triangle\mathbbm{k})\cup(\mathbbm{i}\cap\mathbbm{k}\cap\mathbbm{l})=
(\mathbbm{i}\triangle\mathbbm{k})\cup(\mathbbm{i}\cap\mathbbm{k}\cap\mathbbm{l})^\circ$. So $\mathbbm{l}\setminus\mathbbm{j}=(\mathbbm{i}\cap\mathbbm{k}\cap\mathbbm{l})^\circ\setminus\mathbbm{j}$. Notice that $(\mathbbm{g}\triangle\mathbbm{k})
\cup((\mathbbm{g}\cap\mathbbm{k})^\circ\setminus\mathbbm{i})\cup((\mathbbm{h}\cup\mathbbm{j})\cap
(\mathbbm{g}\cap\mathbbm{i}\cap\mathbbm{k})^\circ)=(\mathbbm{g}\triangle\mathbbm{k})
\cup((\mathbbm{g}\cap\mathbbm{k})^\circ\setminus\mathbbm{i})\cup(\mathbbm{g}\cap\mathbbm{i}\cap\mathbbm{j}\cap\mathbbm{k})^\circ$ and
$(\mathbbm{g}\triangle\mathbbm{k})
\cup((\mathbbm{g}\cap\mathbbm{k})^\circ\setminus\mathbbm{i})\cup((\mathbbm{h}\cup\mathbbm{l})\cap
(\mathbbm{g}\cap\mathbbm{i}\cap\mathbbm{k})^\circ)=(\mathbbm{g}\triangle\mathbbm{k})
\cup((\mathbbm{g}\cap\mathbbm{k})^\circ\setminus\mathbbm{i})\cup(\mathbbm{g}\cap\mathbbm{i}\cap\mathbbm{k}\cap\mathbbm{l})^\circ$ as
$(\mathbbm{h}\cap\mathbbm{i}\cap\mathbbm{k})^\circ\subseteq\mathbbm{j}\subseteq\mathbbm{l}$. Notice that $(\mathbbm{g}\cap\mathbbm{i}\cap\mathbbm{k}\cap\mathbbm{l})^\circ\setminus\mathbbm{j}=(\mathbbm{i}\cap\mathbbm{k}\cap\mathbbm{l})^\circ\setminus\mathbbm{j}$ as $\mathbf{g}\triangle\mathbf{i}\preceq\mathbf{h}$ and $(\mathbbm{h}\cap\mathbbm{i}\cap\mathbbm{k})^\circ\subseteq\mathbbm{j}$. The desired lemma follows from applying Lemma \ref{L;Lemma7.6} again.
\end{proof}
\begin{lem}\label{L;Lemma7.8}
Assume that $\mathbf{g}, \mathbf{h}, \mathbf{i}, \mathbf{j}, \mathbf{k}, \mathbf{l}, \mathbf{m}\!\!\in\!\!\mathbb{E}$, $\mathbf{g}\triangle\mathbf{i}\!\preceq\!\mathbf{h}$, $\mathbf{i}\triangle\mathbf{k}\!\!\preceq\!\!\mathbf{j}$, $p\!\nmid\! k_\mathbf{h}k_\mathbf{j}$, $q\!\!\in\!\![0, n_{\mathbf{i}\mathbf{k},\mathbf{j}}]$, $\mathbf{l}, \mathbf{m}\!\!\in\!\!\mathbb{U}_{\mathbf{i}\mathbf{k}, \mathbf{j}, q}$,
$(\mathbbm{h}\cap\mathbbm{i}\cap\mathbbm{k})^\circ\!\!\subseteq\!\!\mathbbm{j}$. Then $\mathbf{l}\!=\!\mathbf{m}$ if and only if $[\mathbf{g},\mathbf{h},\mathbf{i}, \mathbf{l}, \mathbf{k}]\!\!=\!\![\mathbf{g},\mathbf{h},\mathbf{i}, \mathbf{m}, \mathbf{k}]$.
Moreover, the map that sends $\mathbf{r}$ to $[\mathbf{g},\mathbf{h},\mathbf{i}, \mathbf{r}, \!\mathbf{k}]$ is injective from
$\mathbb{U}_{\mathbf{i}\mathbf{k}, \mathbf{j}, q}$ to $\mathbb{U}_{\mathbf{g}\mathbf{k}, [\mathbf{g},\mathbf{h},\mathbf{i}, \mathbf{j}, \mathbf{k}], q}$.
\end{lem}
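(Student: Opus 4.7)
The plan is to reduce the first statement to a direct application of Lemma \ref{L;Lemma5.9} and then collapse its conclusion using the two extra hypotheses $\mathbf{i}\triangle\mathbf{k}\preceq\mathbf{j}$ and $(\mathbbm{h}\cap\mathbbm{i}\cap\mathbbm{k})^\circ\subseteq\mathbbm{j}$, which are precisely what is needed to kill the indeterminacy in Lemma \ref{L;Lemma5.9}. The "only if" direction is automatic, so I would focus on the "if" direction.

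First I would check that the hypotheses of Lemma \ref{L;Lemma5.9} apply. From $\mathbf{g}\triangle\mathbf{i}\preceq\mathbf{h}$ we get $\mathbf{i}\setminus\mathbf{g}\preceq\mathbf{h}$; from $\mathbf{l},\mathbf{m}\in\mathbb{U}_{\mathbf{i}\mathbf{k},\mathbf{j},q}$ together with Notations \ref{N;Notation5.5}, \ref{N;Notation5.6} we get $\mathbf{i}\triangle\mathbf{k}\preceq\mathbf{j}\preceq\mathbf{l}\cap\mathbf{m}\preceq\mathbf{i}\mathbf{k}$, so in particular $\mathbf{j}\in\mathbb{P}_{\mathbf{i},\mathbf{k}}$. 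Assuming the bracket equality, Lemma \ref{L;Lemma5.9} then yields some $\mathbb{U}\subseteq(\mathbbm{h}\cap\mathbbm{i}\cap\mathbbm{k})^\circ\setminus\mathbbm{j}$ with $\mathbbm{l}=\mathbbm{j}\cup((\mathbbm{i}\cap\mathbbm{k}\cap\mathbbm{m})^\circ\setminus(\mathbbm{h}\cup\mathbbm{j}))\cup\mathbb{U}$. The hypothesis $(\mathbbm{h}\cap\mathbbm{i}\cap\mathbbm{k})^\circ\subseteq\mathbbm{j}$ forces $\mathbb{U}=\varnothing$ at once.

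Next I would simplify the remaining term $(\mathbbm{i}\cap\mathbbm{k}\cap\mathbbm{m})^\circ\setminus(\mathbbm{h}\cup\mathbbm{j})$. On the one hand, any element of $(\mathbbm{i}\cap\mathbbm{k}\cap\mathbbm{m})^\circ\cap\mathbbm{h}$ lies in $(\mathbbm{h}\cap\mathbbm{i}\cap\mathbbm{k})^\circ\subseteq\mathbbm{j}$, so removing $\mathbbm{h}$ is superfluous, giving $(\mathbbm{i}\cap\mathbbm{k}\cap\mathbbm{m})^\circ\setminus\mathbbm{j}$. On the other hand, since $\mathbf{m}\preceq\mathbf{i}\mathbf{k}$ and $\mathbbm{i}\triangle\mathbbm{k}\subseteq\mathbbm{j}$, one checks that $\mathbbm{m}\setminus\mathbbm{j}\subseteq(\mathbbm{i}\cap\mathbbm{k})^\circ$, whence $(\mathbbm{i}\cap\mathbbm{k}\cap\mathbbm{m})^\circ\setminus\mathbbm{j}=\mathbbm{m}\setminus\mathbbm{j}$. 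Combining, $\mathbbm{l}=\mathbbm{j}\cup(\mathbbm{m}\setminus\mathbbm{j})=\mathbbm{m}$, and Lemma \ref{L;Lemma2.3} forces $\mathbf{l}=\mathbf{m}$.

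Finally, for the "Moreover" claim, Lemma \ref{L;Lemma7.7} guarantees that the map $\mathbf{r}\mapsto[\mathbf{g},\mathbf{h},\mathbf{i},\mathbf{r},\mathbf{k}]$ does send $\mathbb{U}_{\mathbf{i}\mathbf{k},\mathbf{j},q}$ into $\mathbb{U}_{\mathbf{g}\mathbf{k},[\mathbf{g},\mathbf{h},\mathbf{i},\mathbf{j},\mathbf{k}],q}$, so the map is well defined, and the equivalence just proved is exactly its injectivity. The only real obstacle is the bookkeeping in the second paragraph: turning the complicated union from Lemma \ref{L;Lemma5.9} into a clean statement about $\mathbbm{m}\setminus\mathbbm{j}$ requires both the $(\mathbbm{h}\cap\mathbbm{i}\cap\mathbbm{k})^\circ\subseteq\mathbbm{j}$ hypothesis and the containment $\mathbbm{m}\setminus\mathbbm{j}\subseteq(\mathbbm{i}\cap\mathbbm{k})^\circ$, and once both are in hand the identification is essentially forced.
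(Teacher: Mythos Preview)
Your proof is correct, but it takes a different route from the paper. The paper does not invoke Lemma~\ref{L;Lemma5.9}; instead it works directly from the definition of the bracket. Writing $\mathbbm{l}=(\mathbbm{i}\triangle\mathbbm{k})\cup(\mathbbm{i}\cap\mathbbm{k}\cap\mathbbm{l})^\circ$ and similarly for $\mathbbm{m}$, the paper observes that the bracket equality together with $(\mathbbm{h}\cap\mathbbm{i}\cap\mathbbm{k})^\circ\subseteq\mathbbm{j}\subseteq\mathbbm{l}\cap\mathbbm{m}$ forces $(\mathbbm{g}\cap\mathbbm{i}\cap\mathbbm{k}\cap\mathbbm{l})^\circ=(\mathbbm{g}\cap\mathbbm{i}\cap\mathbbm{k}\cap\mathbbm{m})^\circ$, and then uses $\mathbf{g}\triangle\mathbf{i}\preceq\mathbf{h}$ to show that the parts outside $\mathbbm{g}$ already coincide with $(\mathbbm{i}\cap\mathbbm{k})^\circ\setminus\mathbbm{g}$, giving $(\mathbbm{i}\cap\mathbbm{k}\cap\mathbbm{l})^\circ=(\mathbbm{i}\cap\mathbbm{k}\cap\mathbbm{m})^\circ$ and hence $\mathbf{l}=\mathbf{m}$. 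Your argument instead packages the bracket comparison into Lemma~\ref{L;Lemma5.9}, uses the hypothesis $(\mathbbm{h}\cap\mathbbm{i}\cap\mathbbm{k})^\circ\subseteq\mathbbm{j}$ to kill the free subset $\mathbb{U}$, and then simplifies the remaining union term to $\mathbbm{m}\setminus\mathbbm{j}$. Your route is arguably cleaner since it reuses a structural lemma already in hand, while the paper's direct approach makes the role of the hypothesis $\mathbf{g}\triangle\mathbf{i}\preceq\mathbf{h}$ (in splitting off the non-$\mathbbm{g}$ part) more transparent. Both use Lemma~\ref{L;Lemma7.7} for the well-definedness of the map in the ``Moreover'' clause.
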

\begin{proof}
By Lemma \ref{L;Lemma7.7}, it is enough to check that $[\mathbf{g},\mathbf{h},\mathbf{i}, \mathbf{l}, \mathbf{k}]=[\mathbf{g},\mathbf{h},\mathbf{i}, \mathbf{m}, \mathbf{k}]$ only if $\mathbf{l}=\mathbf{m}$. As $\mathbf{l}, \mathbf{m}\in\mathbb{U}_{\mathbf{i}\mathbf{k},\mathbf{j}, q}$, notice that $\mathbbm{l}=(\mathbbm{i}\triangle\mathbbm{k})\cup
(\mathbbm{i}\cap\mathbbm{k}\cap\mathbbm{l})=(\mathbbm{i}\triangle\mathbbm{k})\cup
(\mathbbm{i}\cap\mathbbm{k}\cap\mathbbm{l})^\circ$, $\mathbbm{m}=(\mathbbm{i}\triangle\mathbbm{k})\cup
(\mathbbm{i}\cap\mathbbm{k}\cap\mathbbm{m})=(\mathbbm{i}\triangle\mathbbm{k})\cup
(\mathbbm{i}\cap\mathbbm{k}\cap\mathbbm{m})^\circ$, and $(\mathbbm{h}\cap\mathbbm{i}\cap\mathbbm{k})^\circ\subseteq\mathbbm{j}\subseteq\mathbbm{l}\cap\mathbbm{m}$.
So the equation $[\mathbf{g},\mathbf{h},\mathbf{i}, \mathbf{l}, \mathbf{k}]=[\mathbf{g},\mathbf{h},\mathbf{i}, \mathbf{m}, \mathbf{k}]$
shows that $(\mathbbm{g}\cap\mathbbm{i}\cap\mathbbm{k}\cap\mathbbm{l})^\circ=(\mathbbm{g}\cap\mathbbm{i}\cap\mathbbm{k}\cap\mathbbm{m})^\circ$. Moreover, notice that $(\mathbbm{i}\cap\mathbbm{k}\cap\mathbbm{l})^\circ\setminus\mathbbm{g}=
(\mathbbm{i}\cap\mathbbm{k})^\circ\setminus\mathbbm{g}=(\mathbbm{i}\cap\mathbbm{k}\cap\mathbbm{m})^\circ\setminus\mathbbm{g}$
as $\mathbf{g}\triangle\mathbf{i}\preceq\mathbf{h}$. So $(\mathbbm{i}\cap\mathbbm{k}\cap\mathbbm{l})^\circ=(\mathbbm{i}\cap\mathbbm{k}\cap\mathbbm{m})^\circ$ and $\mathbf{l}=\mathbf{m}$ by Lemma \ref{L;Lemma2.3}. The desired lemma follows.
\end{proof}
\begin{lem}\label{L;Lemma7.9}
Assume that $\mathbf{g}, \mathbf{h}, \mathbf{i}, \mathbf{j}, \mathbf{k}\in\mathbb{E}$, $\mathbf{g}\triangle\mathbf{i}\preceq\mathbf{h}$, $\mathbf{j}\in\mathbb{P}_{\mathbf{i}, \mathbf{k}}$, $p\nmid k_\mathbf{h}k_\mathbf{j}$,
$\ell\in[0, n_{\mathbf{i}\mathbf{k},\mathbf{j}}]$, $(\mathbbm{h}\cap\mathbbm{i}\cap\mathbbm{k})^\circ\subseteq\mathbbm{j}$. Then $|\mathbb{U}_{\mathbf{i}\mathbf{k}, \mathbf{j},\ell}|=|\mathbb{U}_{\mathbf{g}\mathbf{k}, [\mathbf{g},\mathbf{h},\mathbf{i}, \mathbf{j}, \mathbf{k}], \ell}|>0$. Moreover, the map that sends $\mathbf{m}$ to $[\mathbf{g},\mathbf{h},\mathbf{i}, \mathbf{m}, \mathbf{k}]$ is bijective from $\mathbb{U}_{\mathbf{i}\mathbf{k}, \mathbf{j},\ell}$ to $\mathbb{U}_{\mathbf{g}\mathbf{k}, [\mathbf{g},\mathbf{h},\mathbf{i}, \mathbf{j}, \mathbf{k}], \ell}$. In particular, for any $\mathbf{q}\in\mathbb{U}_{\mathbf{g}\mathbf{k}, [\mathbf{g},\mathbf{h},\mathbf{i}, \mathbf{j}, \mathbf{k}], \ell}$, there is some $\mathbf{r}\in\mathbb{U}_{\mathbf{i}\mathbf{k}, \mathbf{j},\ell}$ that satisfies the equation $\mathbf{q}=[\mathbf{g},\mathbf{h},\mathbf{i}, \mathbf{r}, \mathbf{k}]$.
\end{lem}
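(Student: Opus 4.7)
The plan is to combine the preparatory work already done in Lemmas~\ref{L;Lemma7.6}, \ref{L;Lemma7.7}, \ref{L;Lemma7.8} with an explicit construction of a preimage. First I would note that Lemma~\ref{L;Lemma7.7} already ensures that the assignment $\mathbf{m}\mapsto[\mathbf{g},\mathbf{h},\mathbf{i},\mathbf{m},\mathbf{k}]$ is a well-defined map from $\mathbb{U}_{\mathbf{i}\mathbf{k},\mathbf{j},\ell}$ into $\mathbb{U}_{\mathbf{g}\mathbf{k},[\mathbf{g},\mathbf{h},\mathbf{i},\mathbf{j},\mathbf{k}],\ell}$, and Lemma~\ref{L;Lemma7.8} already shows this map is injective. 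The nonvanishing statement $|\mathbb{U}_{\mathbf{i}\mathbf{k},\mathbf{j},\ell}|>0$ follows straight from Notation~\ref{N;Notation5.5} once we observe that $\mathbf{j}\preceq\mathbf{i}\mathbf{k}$ (because $\mathbf{j}\in\mathbb{P}_{\mathbf{i},\mathbf{k}}$) and $p\nmid k_\mathbf{j}$. Hence the whole lemma reduces to proving surjectivity, from which the equality of cardinalities, the bijectivity and the ``in particular'' statement all follow immediately.

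For surjectivity, my plan is to write down an explicit inverse. Given $\mathbf{q}\in\mathbb{U}_{\mathbf{g}\mathbf{k},[\mathbf{g},\mathbf{h},\mathbf{i},\mathbf{j},\mathbf{k}],\ell}$, I will let $S=\mathbbm{q}\setminus\mathbbm{[\mathbf{g},\mathbf{h},\mathbf{i},\mathbf{j},\mathbf{k}]}$ and define $\mathbf{r}\in\mathbb{E}$ by $\mathbbm{r}=\mathbbm{j}\cup S$, using Lemma~\ref{L;Lemma2.3}. The cornerstone will be Lemma~\ref{L;Lemma7.6}, which gives $\mathbbm{i}\mathbbm{k}\setminus\mathbbm{j}=\mathbbm{g}\mathbbm{k}\setminus\mathbbm{[\mathbf{g},\mathbf{h},\mathbf{i},\mathbf{j},\mathbf{k}]}$ and consequently $S\subseteq(\mathbbm{i}\cap\mathbbm{k})^\circ\setminus\mathbbm{j}$. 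A slightly finer inspection, using $[\mathbf{g},\mathbf{h},\mathbf{i},\mathbf{j},\mathbf{k}]\preceq\mathbf{q}\preceq\mathbf{g}\mathbf{k}$ and the explicit formula for $\mathbbm{[\mathbf{g},\mathbf{h},\mathbf{i},\mathbf{j},\mathbf{k}]}$ in Notation~\ref{N;Notation3.15}, gives the refinement $S\subseteq(\mathbbm{g}\cap\mathbbm{i}\cap\mathbbm{k})^\circ\setminus(\mathbbm{h}\cup\mathbbm{j})$, which will be decisive in the verification step.

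Next I would verify that $\mathbf{r}$ lies in $\mathbb{U}_{\mathbf{i}\mathbf{k},\mathbf{j},\ell}$. The containment $\mathbbm{j}\subseteq\mathbbm{r}\subseteq\mathbbm{i}\mathbbm{k}$ is immediate from the displayed form of $\mathbbm{r}$ together with Lemma~\ref{L;Lemma7.6}. The size condition $|\mathbbm{r}\setminus\mathbbm{j}|=|S|=\ell$ is automatic. The condition $p\nmid k_\mathbf{r}$ follows because $\mathbbm{r}\subseteq\mathbbm{j}\cup\mathbbm{q}$, where $p\nmid k_\mathbf{j}$ and $p\nmid k_\mathbf{q}$ are both assumed.

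The main obstacle will be showing $[\mathbf{g},\mathbf{h},\mathbf{i},\mathbf{r},\mathbf{k}]=\mathbf{q}$, but the refined containment above makes this manageable. Substituting $\mathbbm{r}=\mathbbm{j}\cup S$ into Notation~\ref{N;Notation3.15} and distributing, the only piece that changes compared with $\mathbbm{[\mathbf{g},\mathbf{h},\mathbf{i},\mathbf{j},\mathbf{k}]}$ is the summand $(\mathbbm{h}\cup\mathbbm{r})\cap(\mathbbm{g}\cap\mathbbm{i}\cap\mathbbm{k})^\circ$, and using $S\subseteq(\mathbbm{g}\cap\mathbbm{i}\cap\mathbbm{k})^\circ$ this simplifies to $((\mathbbm{h}\cup\mathbbm{j})\cap(\mathbbm{g}\cap\mathbbm{i}\cap\mathbbm{k})^\circ)\cup S$. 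Hence $\mathbbm{[\mathbf{g},\mathbf{h},\mathbf{i},\mathbf{r},\mathbf{k}]}=\mathbbm{[\mathbf{g},\mathbf{h},\mathbf{i},\mathbf{j},\mathbf{k}]}\cup S=\mathbbm{q}$, and Lemma~\ref{L;Lemma2.3} yields $[\mathbf{g},\mathbf{h},\mathbf{i},\mathbf{r},\mathbf{k}]=\mathbf{q}$, completing the proof. Alternatively, once $\mathbf{r}$ is shown to lie in $\mathbb{U}_{\mathbf{i}\mathbf{k},\mathbf{j},\ell}$, one can derive the equation $[\mathbf{g},\mathbf{h},\mathbf{i},\mathbf{r},\mathbf{k}]=\mathbf{q}$ by invoking Lemma~\ref{L;Lemma5.9} (choosing $\mathbf{m}=\mathbf{q}$ and the explicit set $S$); either route delivers the result.
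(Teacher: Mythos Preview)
Your main argument is correct but takes a different route from the paper. The paper proceeds by a pure counting argument: after checking $[\mathbf{g},\mathbf{h},\mathbf{i},\mathbf{j},\mathbf{k}]\preceq\mathbf{g}\mathbf{k}$ and $p\nmid k_{[\mathbf{g},\mathbf{h},\mathbf{i},\mathbf{j},\mathbf{k}]}$, it observes that $|\mathbb{U}_{\mathbf{i}\mathbf{k},\mathbf{j},\ell}|=\binom{n_{\mathbf{i}\mathbf{k},\mathbf{j}}}{\ell}$ and $|\mathbb{U}_{\mathbf{g}\mathbf{k},[\mathbf{g},\mathbf{h},\mathbf{i},\mathbf{j},\mathbf{k}],\ell}|=\binom{n_{\mathbf{g}\mathbf{k},[\mathbf{g},\mathbf{h},\mathbf{i},\mathbf{j},\mathbf{k}]}}{\ell}$, then invokes the equality $n_{\mathbf{i}\mathbf{k},\mathbf{j}}=n_{\mathbf{g}\mathbf{k},[\mathbf{g},\mathbf{h},\mathbf{i},\mathbf{j},\mathbf{k}]}$ from Lemma~\ref{L;Lemma7.6} to conclude the two finite sets have the same cardinality; bijectivity then follows from the injectivity in Lemma~\ref{L;Lemma7.8}. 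You instead exhibit an explicit preimage $\mathbf{r}$ with $\mathbbm{r}=\mathbbm{j}\cup S$ and verify $[\mathbf{g},\mathbf{h},\mathbf{i},\mathbf{r},\mathbf{k}]=\mathbf{q}$ directly. The paper's route is shorter and avoids the set-theoretic bookkeeping; yours gives the inverse map in closed form, which is extra information though not needed downstream.

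One caveat: your ``alternative route'' via Lemma~\ref{L;Lemma5.9} does not go through as stated. That lemma requires $\mathbf{j}\preceq\mathbf{m}$, and with $\mathbf{m}=\mathbf{q}$ this can fail: any element of $\mathbbm{i}\setminus(\mathbbm{g}\cup\mathbbm{k})$ lies in $\mathbbm{i}\triangle\mathbbm{k}\subseteq\mathbbm{j}$ but not in $\mathbbm{q}\subseteq(\mathbbm{g}\triangle\mathbbm{k})\cup(\mathbbm{g}\cap\mathbbm{k})^\circ$. This does not affect your primary argument, which is complete on its own.
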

\begin{proof}
Recall that $k_\mathbf{0}=1$ and $k_\mathbf{g}=\prod_{s\in\mathbbm{g}}(|\mathbb{U}_s|-1)$ if $\mathbf{g}\in\mathbb{E}\setminus\{\mathbf{0}\}$. As $\mathbf{j}\in\mathbb{P}_{\mathbf{i}, \mathbf{k}}$, it is obvious to see that
$\mathbf{i}\triangle\mathbf{k}\preceq\mathbf{j}\preceq\mathbf{i}\mathbf{k}$. This implies that $p\nmid k_{[\mathbf{g},\mathbf{h},\mathbf{i}, \mathbf{j}, \mathbf{k}]}$ by Lemma \ref{L;Lemma7.6}. Notice that $[\mathbf{g},\mathbf{h},\mathbf{i}, \mathbf{j}, \mathbf{k}]\!\preceq\!\mathbf{g}\mathbf{k}$ by Lemma \ref{L;Lemma3.16}.
Hence Lemmas \ref{L;Lemma2.3} and \ref{L;Lemma7.6} imply that
$$|\mathbb{U}_{\mathbf{i}\mathbf{k}, \mathbf{j},\ell}|={n_{\mathbf{i}\mathbf{k}, \mathbf{j}}\choose \ell}={n_{\mathbf{g}\mathbf{k}, [\mathbf{g},\mathbf{h},\mathbf{i}, \mathbf{j}, \mathbf{k}]}\choose \ell}=|\mathbb{U}_{\mathbf{g}\mathbf{k}, [\mathbf{g},\mathbf{h},\mathbf{i}, \mathbf{j}, \mathbf{k}], \ell}|>0.$$
The desired lemma follows from the above displayed inequality and Lemma \ref{L;Lemma7.8}.
\end{proof}
\begin{lem}\label{L;Lemma7.10}
Assume that $\mathbf{g}, \mathbf{h}, \mathbf{i}, \mathbf{j}, \mathbf{k}, \mathbf{l}\in\mathbb{E}$,
$\mathbf{i}\setminus\mathbf{g}\preceq\mathbf{h}$, $\mathbf{i}\triangle\mathbf{k}\preceq\mathbf{j}$,
$\mathbf{l}\in\mathbb{P}_{\mathbf{i}, \mathbf{k}}$. Assume that $(\mathbbm{h}\cap\mathbbm{i}\cap\mathbbm{k})^\circ\subseteq\mathbbm{j}\subseteq\mathbbm{l}$. Then $k_{\mathbf{h}\cap\mathbf{i}}=k_{\mathbf{h}\cap\mathbf{i}\cap\mathbf{l}}$. Moreover, $k_{\mathbf{k}\cap\mathbf{l}}=k_{\mathbf{k}\cap[\mathbf{g},\mathbf{h},\mathbf{i},\mathbf{l}, \mathbf{k}]}$.
\end{lem}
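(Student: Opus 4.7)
The plan is to reduce both equalities to statements about the ``circle'' positions. Recall that $k_\mathbf{a}=\prod_{s\in\mathbbm{a}}(|\mathbb{U}_s|-1)$, where each $s\in\mathbbm{a}$ with $|\mathbb{U}_s|=2$ contributes the trivial factor $1$. Consequently, $k_\mathbf{a}=k_\mathbf{b}$ whenever $\mathbbm{a}^\circ=\mathbbm{b}^\circ$ (by Lemma \ref{L;Lemma2.3}), and I will prove each of the two required equalities by establishing precisely this stronger agreement of the ``$\circ$''-parts.

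First I would handle $k_{\mathbf{h}\cap\mathbf{i}}=k_{\mathbf{h}\cap\mathbf{i}\cap\mathbf{l}}$. Since $\mathbbm{h}\cap\mathbbm{i}\cap\mathbbm{l}\subseteq\mathbbm{h}\cap\mathbbm{i}$, the inclusion $(\mathbbm{h}\cap\mathbbm{i}\cap\mathbbm{l})^\circ\subseteq(\mathbbm{h}\cap\mathbbm{i})^\circ$ is automatic; the converse is the key point. Pick $s\in(\mathbbm{h}\cap\mathbbm{i})^\circ$. If $s\in\mathbbm{k}$, then $s\in(\mathbbm{h}\cap\mathbbm{i}\cap\mathbbm{k})^\circ\subseteq\mathbbm{j}\subseteq\mathbbm{l}$ by hypothesis. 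If $s\notin\mathbbm{k}$, then $s\in\mathbbm{i}\setminus\mathbbm{k}\subseteq\mathbbm{i}\triangle\mathbbm{k}\subseteq\mathbbm{j}\subseteq\mathbbm{l}$ by the assumption $\mathbf{i}\triangle\mathbf{k}\preceq\mathbf{j}$. In either case $s\in\mathbbm{l}$, so $s\in(\mathbbm{h}\cap\mathbbm{i}\cap\mathbbm{l})^\circ$, giving the first equality.

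For the second equality, write $\mathbf{m}=[\mathbf{g},\mathbf{h},\mathbf{i},\mathbf{l},\mathbf{k}]$, so by Notation \ref{N;Notation3.15}
\[
\mathbbm{k}\cap\mathbbm{m}=(\mathbbm{k}\setminus\mathbbm{g})\cup((\mathbbm{g}\cap\mathbbm{k})^\circ\setminus\mathbbm{i})\cup\bigl((\mathbbm{h}\cup\mathbbm{l})\cap(\mathbbm{g}\cap\mathbbm{i}\cap\mathbbm{k})^\circ\bigr).
\]
Since $\mathbf{l}\in\mathbb{P}_{\mathbf{i},\mathbf{k}}$, using $\mathbbm{i}\triangle\mathbbm{k}\subseteq\mathbbm{l}\subseteq(\mathbbm{i}\triangle\mathbbm{k})\cup(\mathbbm{i}\cap\mathbbm{k})^\circ$ gives $\mathbbm{k}\cap\mathbbm{l}=(\mathbbm{k}\setminus\mathbbm{i})\cup(\mathbbm{l}\cap(\mathbbm{i}\cap\mathbbm{k})^\circ)$. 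I would then take the ``$\circ$'' of both sides and split by membership in $\mathbbm{g}$; after collecting the common piece $(\mathbbm{k}\setminus(\mathbbm{g}\cup\mathbbm{i}))^\circ\cup((\mathbbm{g}\cap\mathbbm{k})^\circ\setminus\mathbbm{i})$, the equality $(\mathbbm{k}\cap\mathbbm{l})^\circ=(\mathbbm{k}\cap\mathbbm{m})^\circ$ reduces to two containments: $(\mathbbm{i}\cap\mathbbm{k})^\circ\setminus\mathbbm{g}\subseteq\mathbbm{l}$ and $\mathbbm{h}\cap(\mathbbm{g}\cap\mathbbm{i}\cap\mathbbm{k})^\circ\subseteq\mathbbm{l}$.

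Both containments follow from the same circle of ideas used in the first part. If $s\in(\mathbbm{i}\cap\mathbbm{k})^\circ\setminus\mathbbm{g}$, then $s\in\mathbbm{i}\setminus\mathbbm{g}\subseteq\mathbbm{h}$ by $\mathbf{i}\setminus\mathbf{g}\preceq\mathbf{h}$, so $s\in(\mathbbm{h}\cap\mathbbm{i}\cap\mathbbm{k})^\circ\subseteq\mathbbm{j}\subseteq\mathbbm{l}$. If $s\in\mathbbm{h}\cap(\mathbbm{g}\cap\mathbbm{i}\cap\mathbbm{k})^\circ$, then directly $s\in(\mathbbm{h}\cap\mathbbm{i}\cap\mathbbm{k})^\circ\subseteq\mathbbm{j}\subseteq\mathbbm{l}$. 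The main obstacle is purely bookkeeping: one must be careful decomposing $(\mathbbm{k}\cap\mathbbm{l})^\circ$ and $(\mathbbm{k}\cap\mathbbm{m})^\circ$ into matching pieces, particularly since the non-$\circ$ components of $\mathbbm{k}\setminus\mathbbm{i}$ and $\mathbbm{k}\setminus\mathbbm{g}$ differ and must be shown to disappear after applying the $\circ$ operator in tandem.
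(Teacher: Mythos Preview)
Your proposal is correct and follows essentially the same route as the paper: both arguments establish $(\mathbbm{h}\cap\mathbbm{i})^\circ=(\mathbbm{h}\cap\mathbbm{i}\cap\mathbbm{l})^\circ$ and $(\mathbbm{k}\cap\mathbbm{l})^\circ=(\mathbbm{k}\cap[\mathbf{g},\mathbf{h},\mathbf{i},\mathbf{l},\mathbf{k}])^\circ$ by decomposing along membership in $\mathbbm{g}$, $\mathbbm{i}$, $\mathbbm{k}$ and invoking the hypotheses $\mathbf{i}\setminus\mathbf{g}\preceq\mathbf{h}$, $\mathbf{i}\triangle\mathbf{k}\preceq\mathbf{j}$, and $(\mathbbm{h}\cap\mathbbm{i}\cap\mathbbm{k})^\circ\subseteq\mathbbm{j}\subseteq\mathbbm{l}$ at exactly the same places. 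The only difference is presentational: you argue pointwise via a case split on an element $s$, whereas the paper writes out the same containments at the level of sets.
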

\begin{proof}
Recall that $k_\mathbf{0}=1$ and $k_\mathbf{g}=\prod_{m\in\mathbbm{g}}(|\mathbb{U}_m|-1)$ if $\mathbf{g}\in\mathbb{E}\setminus\{\mathbf{0}\}$. As $\mathbf{i}\triangle\mathbf{k}\preceq\mathbf{j}$ and
$(\mathbbm{h}\cap\mathbbm{i}\cap\mathbbm{k})^\circ\subseteq\mathbbm{j}\subseteq\mathbbm{l}$, notice that $(\mathbbm{h}\cap\mathbbm{i})^\circ=((\mathbbm{h}\cap\mathbbm{i})^\circ\setminus\mathbbm{k})\cup(\mathbbm{h}\cap\mathbbm{i}\cap\mathbbm{k})^\circ
\subseteq\mathbbm{j}\subseteq\mathbbm{l}$. Hence $(\mathbbm{h}\cap\mathbbm{i})^\circ=(\mathbbm{h}\cap\mathbbm{i}\cap\mathbbm{l})^\circ$ and $k_{\mathbf{h}\cap\mathbf{i}}=k_{\mathbf{h}\cap\mathbf{i}\cap\mathbf{l}}$ by Lemma \ref{L;Lemma2.3}. As $\mathbf{l}\in\mathbb{P}_{\mathbf{i}, \mathbf{k}}$,
notice that $\mathbbm{l}=(\mathbbm{i}\triangle\mathbbm{k})\cup(\mathbbm{i}\cap\mathbbm{k}\cap\mathbbm{l})=
(\mathbbm{i}\triangle\mathbbm{k})\cup(\mathbbm{i}\cap\mathbbm{k}\cap\mathbbm{l})^\circ$. So $\mathbbm{k}^\circ\setminus(\mathbbm{g}\cup\mathbbm{l})=(\mathbbm{i}\cap\mathbbm{k})^\circ\setminus
(\mathbbm{g}\cup(\mathbbm{i}\cap\mathbbm{k}\cap\mathbbm{l})^\circ)$. As $\mathbf{i}\setminus\mathbf{g}\preceq\mathbf{h}$ and
$(\mathbbm{h}\cap\mathbbm{i}\cap\mathbbm{k})^\circ\subseteq\mathbbm{j}\subseteq\mathbbm{l}$, $\mathbbm{k}^\circ\setminus(\mathbbm{g}\cup\mathbbm{l})=\varnothing$ and $\mathbbm{k}^\circ\setminus\mathbbm{g}=(\mathbbm{k}\cap\mathbbm{l})^\circ\setminus\mathbbm{g}$. As $\mathbf{i}\triangle\mathbf{k}\preceq\mathbf{j}\preceq\mathbf{l}$, notice that $(\mathbbm{g}\cap\mathbbm{k})^\circ\setminus\mathbbm{i}=(\mathbbm{g}\cap\mathbbm{j}\cap\mathbbm{k})^\circ\setminus\mathbbm{i}=
(\mathbbm{g}\cap\mathbbm{k}\cap\mathbbm{l})^\circ\setminus\mathbbm{i}$. This thus implies that $(\mathbbm{k}\cap\mathbbm{l})^\circ=(\mathbbm{k}^\circ\setminus\mathbbm{g})\cup((\mathbbm{g}\cap\mathbbm{k})^\circ\setminus\mathbbm{i})
\cup(\mathbbm{g}\cap\mathbbm{i}\cap\mathbbm{k}\cap\mathbbm{l})^\circ$. As $(\mathbbm{h}\cap\mathbbm{i}\cap\mathbbm{k})^\circ\subseteq\mathbbm{l}$, notice that $(\mathbbm{k}\cap\mathbbm{l})^\circ=(\mathbbm{k}^\circ\setminus\mathbbm{g})\cup((\mathbbm{g}\cap\mathbbm{k})^\circ\setminus\mathbbm{i})
\cup((\mathbbm{h}\cup\mathbbm{l})\cap(\mathbbm{g}\cap\mathbbm{i}\cap\mathbbm{k})^\circ)$. This thus yields the other equation $k_{\mathbf{k}\cap\mathbf{l}}=k_{\mathbf{k}\cap[\mathbf{g},\mathbf{h},\mathbf{i},\mathbf{l}, \mathbf{k}]}$ by Lemma \ref{L;Lemma2.3}. The desired lemma follows.
\end{proof}
\begin{lem}\label{L;Lemma7.11}
Assume that $\mathbf{g}, \mathbf{h}, \mathbf{i}, \mathbf{j}, \mathbf{k}\in\mathbb{E}$, $\mathbf{h}\in\mathbb{P}_{\mathbf{g}, \mathbf{i}}$, $\mathbf{j}\in\mathbb{P}_{\mathbf{i},\mathbf{k}}$, $p\nmid k_\mathbf{h}k_\mathbf{j}$. Assume that $(\mathbbm{h}\cap\mathbbm{i}\cap\mathbbm{k})^\circ\subseteq\mathbbm{j}$. Then $[\mathbf{g}, \mathbf{h}, \mathbf{i}, \mathbf{j}, \mathbf{k}]\in\mathbb{P}_{\mathbf{g}, \mathbf{k}}$, $p\nmid k_{[\mathbf{g}, \mathbf{h}, \mathbf{i}, \mathbf{j}, \mathbf{k}]}$, $B_{\mathbf{g}, \mathbf{h}, \mathbf{i}}D_{\mathbf{i},\mathbf{j},\mathbf{k}}=\overline{k_{\mathbf{h}\cap\mathbf{i}}}D_{\mathbf{g}, [\mathbf{g}, \mathbf{h}, \mathbf{i}, \mathbf{j}, \mathbf{k}],\mathbf{k}}$.
\end{lem}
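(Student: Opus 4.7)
The first two conclusions are immediate from earlier results: since $\mathbf{j}\in\mathbb{P}_{\mathbf{i},\mathbf{k}}$ gives $\mathbf{i}\triangle\mathbf{k}\preceq\mathbf{j}$, Lemma \ref{L;Lemma3.16} yields $[\mathbf{g},\mathbf{h},\mathbf{i},\mathbf{j},\mathbf{k}]\in\mathbb{P}_{\mathbf{g},\mathbf{k}}$, and Lemma \ref{L;Lemma7.6} (whose hypotheses $\mathbf{g}\triangle\mathbf{i}\preceq\mathbf{h}$ and $\mathbf{i}\triangle\mathbf{k}\preceq\mathbf{j}$ come from $\mathbf{h}\in\mathbb{P}_{\mathbf{g},\mathbf{i}}$ and $\mathbf{j}\in\mathbb{P}_{\mathbf{i},\mathbf{k}}$) delivers $p\nmid k_{[\mathbf{g},\mathbf{h},\mathbf{i},\mathbf{j},\mathbf{k}]}$.

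For the matrix identity, the plan is to expand $D_{\mathbf{i},\mathbf{j},\mathbf{k}}$ via Equation \eqref{Eq;10}, apply Theorem \ref{T;Theorem3.23} to each summand $B_{\mathbf{g},\mathbf{h},\mathbf{i}}B_{\mathbf{i},\mathbf{l},\mathbf{k}}$, and then match the result against the definition of $D_{\mathbf{g},[\mathbf{g},\mathbf{h},\mathbf{i},\mathbf{j},\mathbf{k}],\mathbf{k}}$. Concretely, for any $s\in[0,n_{\mathbf{i}\mathbf{k},\mathbf{j}}]$ and $\mathbf{l}\in\mathbb{U}_{\mathbf{i}\mathbf{k},\mathbf{j},s}$, one has $\mathbf{j}\preceq\mathbf{l}\preceq\mathbf{i}\mathbf{k}$, so in particular $\mathbf{l}\in\mathbb{P}_{\mathbf{i},\mathbf{k}}$ and $(\mathbbm{h}\cap\mathbbm{i}\cap\mathbbm{k})^\circ\subseteq\mathbbm{j}\subseteq\mathbbm{l}$. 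Theorem \ref{T;Theorem3.23} then gives
\[
B_{\mathbf{g},\mathbf{h},\mathbf{i}}B_{\mathbf{i},\mathbf{l},\mathbf{k}}=\overline{k_{\mathbf{h}\cap\mathbf{i}\cap\mathbf{l}}}\,B_{\mathbf{g},[\mathbf{g},\mathbf{h},\mathbf{i},\mathbf{l},\mathbf{k}],\mathbf{k}},
\]
and Lemma \ref{L;Lemma7.10} (whose hypotheses $\mathbf{i}\setminus\mathbf{g}\preceq\mathbf{h}$, $\mathbf{i}\triangle\mathbf{k}\preceq\mathbf{j}$, and $(\mathbbm{h}\cap\mathbbm{i}\cap\mathbbm{k})^\circ\subseteq\mathbbm{j}\subseteq\mathbbm{l}$ are all satisfied) reduces the scalars to $k_{\mathbf{h}\cap\mathbf{i}\cap\mathbf{l}}=k_{\mathbf{h}\cap\mathbf{i}}$ and $k_{\mathbf{k}\cap\mathbf{l}}=k_{\mathbf{k}\cap[\mathbf{g},\mathbf{h},\mathbf{i},\mathbf{l},\mathbf{k}]}$. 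Hence each summand in the expansion of $B_{\mathbf{g},\mathbf{h},\mathbf{i}}D_{\mathbf{i},\mathbf{j},\mathbf{k}}$ becomes
\[
(\overline{-1})^{s}\,\overline{k_{\mathbf{k}\cap[\mathbf{g},\mathbf{h},\mathbf{i},\mathbf{l},\mathbf{k}]}}^{-1}\,\overline{k_{\mathbf{h}\cap\mathbf{i}}}\,B_{\mathbf{g},[\mathbf{g},\mathbf{h},\mathbf{i},\mathbf{l},\mathbf{k}],\mathbf{k}}.
\]

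The final step is a re-indexing: Lemma \ref{L;Lemma7.9} says that $\mathbf{l}\mapsto[\mathbf{g},\mathbf{h},\mathbf{i},\mathbf{l},\mathbf{k}]$ is a bijection from $\mathbb{U}_{\mathbf{i}\mathbf{k},\mathbf{j},s}$ onto $\mathbb{U}_{\mathbf{g}\mathbf{k},[\mathbf{g},\mathbf{h},\mathbf{i},\mathbf{j},\mathbf{k}],s}$ for each $s$, and Lemma \ref{L;Lemma7.6} gives $n_{\mathbf{i}\mathbf{k},\mathbf{j}}=n_{\mathbf{g}\mathbf{k},[\mathbf{g},\mathbf{h},\mathbf{i},\mathbf{j},\mathbf{k}]}$. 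Factoring out $\overline{k_{\mathbf{h}\cap\mathbf{i}}}$ and substituting $\mathbf{q}=[\mathbf{g},\mathbf{h},\mathbf{i},\mathbf{l},\mathbf{k}]$ turns the sum into exactly the expansion of $\overline{k_{\mathbf{h}\cap\mathbf{i}}}\,D_{\mathbf{g},[\mathbf{g},\mathbf{h},\mathbf{i},\mathbf{j},\mathbf{k}],\mathbf{k}}$ given by Equation \eqref{Eq;10}, which finishes the proof.

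The main obstacle is not any single calculation but the careful verification that every hypothesis needed to invoke Lemmas \ref{L;Lemma7.6}, \ref{L;Lemma7.9}, and \ref{L;Lemma7.10} actually holds at each summand; once one observes that $\mathbf{j}\preceq\mathbf{l}$ propagates the containment $(\mathbbm{h}\cap\mathbbm{i}\cap\mathbbm{k})^\circ\subseteq\mathbbm{j}$ to $\mathbbm{l}$ uniformly across the sum, the identity falls out cleanly from Theorem \ref{T;Theorem3.23} and a change of summation index.
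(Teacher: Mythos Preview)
Your proof is correct and follows essentially the same approach as the paper: expand $D_{\mathbf{i},\mathbf{j},\mathbf{k}}$ via Equation \eqref{Eq;10}, apply Theorem \ref{T;Theorem3.23} term by term, use Lemma \ref{L;Lemma7.10} to normalize the scalars, and then re-index via the bijection of Lemma \ref{L;Lemma7.9} together with $n_{\mathbf{i}\mathbf{k},\mathbf{j}}=n_{\mathbf{g}\mathbf{k},[\mathbf{g},\mathbf{h},\mathbf{i},\mathbf{j},\mathbf{k}]}$ from Lemma \ref{L;Lemma7.6}. The paper's presentation is the same computation displayed as a chain of equalities, and your verification of the hypotheses at each step is accurate.
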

\begin{proof}
Notice that $\mathbf{g}\triangle\mathbf{i}\preceq\mathbf{h}$ and $\mathbf{i}\triangle\mathbf{k}\preceq\mathbf{j}\preceq\mathbf{i}\mathbf{k}$ as $\mathbf{h}\in\mathbb{P}_{\mathbf{g}, \mathbf{i}}$ and $\mathbf{j}\in\mathbb{P}_{\mathbf{i},\mathbf{k}}$. Lemmas \ref{L;Lemma3.16} and \ref{L;Lemma7.6} thus imply that $[\mathbf{g}, \mathbf{h}, \mathbf{i}, \mathbf{j}, \mathbf{k}]\in\mathbb{P}_{\mathbf{g}, \mathbf{k}}$ and $p\nmid k_{[\mathbf{g}, \mathbf{h}, \mathbf{i}, \mathbf{j}, \mathbf{k}]}$. By Lemma \ref{L;Lemma5.7}, $D_{\mathbf{i}, \mathbf{j}, \mathbf{k}}\neq O$. By combining Equation \eqref{Eq;10}, Lemmas \ref{L;Lemma7.6}, \ref{L;Lemma7.9}, \ref{L;Lemma7.10}, and Theorem \ref{T;Theorem3.23},
\begin{align*}
B_{\mathbf{g}, \mathbf{h}, \mathbf{i}}D_{\mathbf{i}, \mathbf{j}, \mathbf{k}}&=\sum_{\ell=0}^{n_{\mathbf{i}\mathbf{k}, \mathbf{j}}}\sum_{\mathbf{m}\in\mathbb{U}_{\mathbf{i}\mathbf{k},\mathbf{j},\ell}}{(\overline{-1})^\ell\overline{k_{\mathbf{k}\cap\mathbf{m}}}}^{-1}B_{\mathbf{g}, \mathbf{h}, \mathbf{i}}B_{\mathbf{i}, \mathbf{m}, \mathbf{k}}\\
&=\sum_{\ell=0}^{n_{\mathbf{i}\mathbf{k}, \mathbf{j}}}\sum_{\mathbf{m}\in\mathbb{U}_{\mathbf{i}\mathbf{k},\mathbf{j},\ell}}(\overline{-1})^\ell
\overline{k_{\mathbf{h}\cap\mathbf{i}\cap\mathbf{m}}}(\overline{k_{\mathbf{k}\cap\mathbf{m}}})^{-1}B_{\mathbf{g}, [\mathbf{g}, \mathbf{h}, \mathbf{i}, \mathbf{m}, \mathbf{k}], \mathbf{k}}\\
&=\sum_{\ell=0}^{n_{\mathbf{g}\mathbf{k}, [\mathbf{g}, \mathbf{h}, \mathbf{i}, \mathbf{j}, \mathbf{k}]}}\sum_{\mathbf{m}\in\mathbb{U}_{\mathbf{g}\mathbf{k},[\mathbf{g}, \mathbf{h}, \mathbf{i}, \mathbf{j}, \mathbf{k}],\ell}}(\overline{-1})^\ell\overline{k_{\mathbf{h}\cap\mathbf{i}}}(\overline{k_{\mathbf{k}\cap\mathbf{m}}})^{-1}B_{\mathbf{g},\mathbf{m},\mathbf{k}}.
\end{align*}
So $B_{\mathbf{g}, \mathbf{h}, \mathbf{i}}D_{\mathbf{i}, \mathbf{j}, \mathbf{k}}=
\overline{k_{\mathbf{h}\cap\mathbf{i}}}D_{\mathbf{g},[\mathbf{g}, \mathbf{h}, \mathbf{i}, \mathbf{j}, \mathbf{k}], \mathbf{k}}$ by Equation \eqref{Eq;10}. The desired lemma follows.
\end{proof}
\begin{lem}\label{L;Lemma7.12}
Assume that $\mathbf{g}, \mathbf{h}, \mathbf{i}, \mathbf{j}, \mathbf{k}\in\mathbb{E}$, $\mathbf{i}\setminus\mathbf{g}\preceq\mathbf{h}$, $\mathbf{i}\setminus\mathbf{k}\preceq\mathbf{j}$. Then $(\mathbbm{g}\cap\mathbbm{i}\cap\mathbbm{j})^\circ\subseteq\mathbbm{h}$ and $(\mathbbm{h}\cap\mathbbm{i}\cap\mathbbm{k})^\circ\subseteq\mathbbm{j}$ if and only if $(\mathbbm{g}\cap\mathbbm{i})^\circ\setminus\mathbbm{h}=(\mathbbm{i}\cap\mathbbm{k})^\circ\setminus\mathbbm{j}=
(\mathbbm{g}\cap\mathbbm{i}\cap\mathbbm{k})^\circ\setminus(\mathbbm{h}\cup\mathbbm{j})$. In particular, $(\mathbbm{g}\cap\mathbbm{i}\cap\mathbbm{j})^\circ\subseteq\mathbbm{h}$ and $(\mathbbm{h}\cap\mathbbm{i}\cap\mathbbm{k})^\circ\subseteq\mathbbm{j}$ if and only if $(\mathbbm{g}\cap\mathbbm{i})^\circ\setminus\mathbbm{h}=(\mathbbm{i}\cap\mathbbm{k})^\circ\setminus\mathbbm{j}$.
\end{lem}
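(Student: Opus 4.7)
My plan is to treat Lemma 7.12 as a purely set-theoretic statement about subsets of $[1,n]$, exploiting the two standing hypotheses $\mathbbm{i}\setminus\mathbbm{g}\subseteq\mathbbm{h}$ and $\mathbbm{i}\setminus\mathbbm{k}\subseteq\mathbbm{j}$, which I will rewrite in the equivalent forms $\mathbbm{i}\setminus\mathbbm{h}\subseteq\mathbbm{g}$ and $\mathbbm{i}\setminus\mathbbm{j}\subseteq\mathbbm{k}$. No deeper facts about $\mathfrak{S}$ or $\mathbb{T}$ are needed; every containment collapses after tracking membership in $\mathbbm{g},\mathbbm{h},\mathbbm{i},\mathbbm{j},\mathbbm{k}$ and the condition $|\mathbb{U}_a|>2$ that defines the superscript $\circ$.

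For the forward direction of the main equivalence, I would fix $a\in(\mathbbm{g}\cap\mathbbm{i})^\circ\setminus\mathbbm{h}$ and split into two cases. If $a\in\mathbbm{j}$, then $a\in(\mathbbm{g}\cap\mathbbm{i}\cap\mathbbm{j})^\circ$, which by hypothesis lies in $\mathbbm{h}$; this contradicts $a\notin\mathbbm{h}$. Hence $a\notin\mathbbm{j}$, and then $a\in\mathbbm{i}\setminus\mathbbm{j}\subseteq\mathbbm{k}$, giving $a\in(\mathbbm{g}\cap\mathbbm{i}\cap\mathbbm{k})^\circ\setminus(\mathbbm{h}\cup\mathbbm{j})$. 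The reverse inclusion is immediate. This yields $(\mathbbm{g}\cap\mathbbm{i})^\circ\setminus\mathbbm{h}=(\mathbbm{g}\cap\mathbbm{i}\cap\mathbbm{k})^\circ\setminus(\mathbbm{h}\cup\mathbbm{j})$, and the symmetric argument (swap the pairs $(\mathbf{g},\mathbf{h})$ and $(\mathbf{k},\mathbf{j})$, which leaves the hypotheses invariant) gives $(\mathbbm{i}\cap\mathbbm{k})^\circ\setminus\mathbbm{j}=(\mathbbm{g}\cap\mathbbm{i}\cap\mathbbm{k})^\circ\setminus(\mathbbm{h}\cup\mathbbm{j})$.

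For the reverse direction of the main equivalence, I would contrapose: if some $a\in(\mathbbm{g}\cap\mathbbm{i}\cap\mathbbm{j})^\circ$ lies outside $\mathbbm{h}$, then $a\in(\mathbbm{g}\cap\mathbbm{i})^\circ\setminus\mathbbm{h}=(\mathbbm{g}\cap\mathbbm{i}\cap\mathbbm{k})^\circ\setminus(\mathbbm{h}\cup\mathbbm{j})$, forcing $a\notin\mathbbm{j}$, a contradiction. The second containment is handled symmetrically. For the final "In particular" statement, the forward implication is trivial from the triple equation just proved, and the reverse follows by the same contradiction method: assuming $(\mathbbm{g}\cap\mathbbm{i})^\circ\setminus\mathbbm{h}=(\mathbbm{i}\cap\mathbbm{k})^\circ\setminus\mathbbm{j}$, any $a\in(\mathbbm{g}\cap\mathbbm{i}\cap\mathbbm{j})^\circ\setminus\mathbbm{h}$ must simultaneously be in $\mathbbm{j}$ and outside $\mathbbm{j}$, and symmetrically for the other containment.

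There is no genuine obstacle; the only point requiring care is keeping straight which of the four hypotheses $\mathbbm{i}\setminus\mathbbm{g}\subseteq\mathbbm{h}$, $\mathbbm{i}\setminus\mathbbm{h}\subseteq\mathbbm{g}$, $\mathbbm{i}\setminus\mathbbm{k}\subseteq\mathbbm{j}$, $\mathbbm{i}\setminus\mathbbm{j}\subseteq\mathbbm{k}$ is invoked in each step, and making sure the $\circ$ superscript is handled correctly (it propagates harmlessly since intersecting with a subset of $[1,n]^\circ$ preserves the superscript). I would therefore present the argument compactly, relying on a single element-chase for the nontrivial inclusion and citing symmetry for the companion statement.
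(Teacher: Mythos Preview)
Your proposal is correct and follows essentially the same route as the paper: both treat the statement as pure set theory in $[1,n]$ and use the hypotheses $\mathbbm{i}\setminus\mathbbm{g}\subseteq\mathbbm{h}$, $\mathbbm{i}\setminus\mathbbm{k}\subseteq\mathbbm{j}$ together with the two $\circ$-containments to collapse the three sets to one another. The only cosmetic difference is that the paper records the forward direction as a chain of set identities $(\mathbbm{g}\cap\mathbbm{i})^\circ\setminus\mathbbm{h}=(\mathbbm{g}\cap\mathbbm{i})^\circ\setminus(\mathbbm{h}\cup\mathbbm{j})=(\mathbbm{g}\cap\mathbbm{i}\cap\mathbbm{k})^\circ\setminus(\mathbbm{h}\cup\mathbbm{j})$ rather than your explicit element chase, and handles the ``In particular'' clause by observing directly that the two-set equality is equivalent to the triple equality; the underlying reasoning is identical.
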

\begin{proof}
For one direction, assume that $(\mathbbm{g}\cap\mathbbm{i}\cap\mathbbm{j})^\circ\subseteq\mathbbm{h}$ and $(\mathbbm{h}\cap\mathbbm{i}\cap\mathbbm{k})^\circ\subseteq\mathbbm{j}$. Notice that $(\mathbbm{g}\cap\mathbbm{i})^\circ\setminus\mathbbm{h}=(\mathbbm{g}\cap\mathbbm{i})^\circ\setminus(\mathbbm{h}\cup\mathbbm{j})
=(\mathbbm{g}\cap\mathbbm{i}\cap\mathbbm{k})^\circ\setminus(\mathbbm{h}\cup\mathbbm{j})$ as $\mathbf{i}\setminus\mathbf{k}\preceq\mathbf{j}$. Notice that $(\mathbbm{i}\cap\mathbbm{k})^\circ\setminus\mathbbm{j}=
(\mathbbm{i}\cap\mathbbm{k})^\circ\setminus(\mathbbm{h}\cup\mathbbm{j})=(\mathbbm{g}\cap\mathbbm{i}\cap\mathbbm{k})^\circ\setminus(\mathbbm{h}\cup\mathbbm{j})$
as $\mathbf{i}\setminus\mathbf{g}\preceq\mathbf{h}$. For the other direction, assume that $(\mathbbm{g}\cap\mathbbm{i})^\circ\setminus\mathbbm{h}=(\mathbbm{i}\cap\mathbbm{k})^\circ\setminus\mathbbm{j}=
(\mathbbm{g}\cap\mathbbm{i}\cap\mathbbm{k})^\circ\setminus(\mathbbm{h}\cup\mathbbm{j})$. Then  $(\mathbbm{g}\cap\mathbbm{i}\cap\mathbbm{j})^\circ\subseteq\mathbbm{h}$ and $(\mathbbm{h}\cap\mathbbm{i}\cap\mathbbm{k})^\circ\subseteq\mathbbm{j}$. Notice that $(\mathbbm{g}\cap\mathbbm{i})^\circ\setminus\mathbbm{h}=(\mathbbm{i}\cap\mathbbm{k})^\circ\setminus\mathbbm{j}$ if and only if $(\mathbbm{g}\cap\mathbbm{i})^\circ\setminus\mathbbm{h}=(\mathbbm{i}\cap\mathbbm{k})^\circ\setminus\mathbbm{j}=
(\mathbbm{g}\cap\mathbbm{i}\cap\mathbbm{k})^\circ\setminus(\mathbbm{h}\cup\mathbbm{j})$. The desired lemma follows.
\end{proof}
\begin{lem}\label{L;Lemma7.13}
Assume that $\mathbf{g}, \mathbf{h}, \mathbf{i}, \mathbf{j}, \mathbf{k}\in\mathbb{E}$, $\mathbf{h}\in\mathbb{P}_{\mathbf{g}, \mathbf{i}}$, $\mathbf{j}\in\mathbb{P}_{\mathbf{i}, \mathbf{k}}$, $p\nmid k_\mathbf{h}k_\mathbf{j}$. Then the inequality $D_{\mathbf{g}, \mathbf{h}, \mathbf{i}}D_{\mathbf{i}, \mathbf{j}, \mathbf{k}}\neq O$ holds only if $(\mathbbm{g}\cap\mathbbm{i})^\circ \setminus\mathbbm{h}=(\mathbbm{i}\cap\mathbbm{k})^\circ\setminus\mathbbm{j}=
(\mathbbm{g}\cap\mathbbm{i}\cap\mathbbm{k})^\circ\setminus(\mathbbm{h}\cup\mathbbm{j})$.
\end{lem}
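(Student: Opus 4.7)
The plan is to reduce Lemma \ref{L;Lemma7.13} to Lemma \ref{L;Lemma7.12} by establishing separately the two containments $(\mathbbm{h}\cap\mathbbm{i}\cap\mathbbm{k})^\circ\subseteq\mathbbm{j}$ and $(\mathbbm{g}\cap\mathbbm{i}\cap\mathbbm{j})^\circ\subseteq\mathbbm{h}$. The hypotheses $\mathbf{h}\in\mathbb{P}_{\mathbf{g},\mathbf{i}}$ and $\mathbf{j}\in\mathbb{P}_{\mathbf{i},\mathbf{k}}$ give $\mathbf{i}\setminus\mathbf{g}\preceq\mathbf{h}$ and $\mathbf{i}\setminus\mathbf{k}\preceq\mathbf{j}$, which are exactly the premises of Lemma \ref{L;Lemma7.12}; once both containments are in hand, that lemma delivers the three-fold set equality stated in the conclusion.

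First I would establish $(\mathbbm{h}\cap\mathbbm{i}\cap\mathbbm{k})^\circ\subseteq\mathbbm{j}$ by a direct expansion. Writing $D_{\mathbf{g},\mathbf{h},\mathbf{i}}$ via Equation \eqref{Eq;10} gives
\[
D_{\mathbf{g},\mathbf{h},\mathbf{i}}D_{\mathbf{i},\mathbf{j},\mathbf{k}}=\sum_{\ell=0}^{n_{\mathbf{g}\mathbf{i},\mathbf{h}}}\sum_{\mathbf{l}\in\mathbb{U}_{\mathbf{g}\mathbf{i},\mathbf{h},\ell}}(\overline{-1})^{\ell}\overline{k_{\mathbf{i}\cap\mathbf{l}}}^{-1}B_{\mathbf{g},\mathbf{l},\mathbf{i}}D_{\mathbf{i},\mathbf{j},\mathbf{k}}.
\]
Each index $\mathbf{l}$ in the sum satisfies $\mathbf{h}\preceq\mathbf{l}$, so $(\mathbbm{l}\cap\mathbbm{i}\cap\mathbbm{k})^\circ\supseteq(\mathbbm{h}\cap\mathbbm{i}\cap\mathbbm{k})^\circ$. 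If the desired containment were to fail, then $(\mathbbm{l}\cap\mathbbm{i}\cap\mathbbm{k})^\circ\not\subseteq\mathbbm{j}$ for every such $\mathbf{l}$, and Lemma \ref{L;Lemma5.12} would force every summand $B_{\mathbf{g},\mathbf{l},\mathbf{i}}D_{\mathbf{i},\mathbf{j},\mathbf{k}}$ to vanish, contradicting $D_{\mathbf{g},\mathbf{h},\mathbf{i}}D_{\mathbf{i},\mathbf{j},\mathbf{k}}\neq O$.

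The symmetric containment $(\mathbbm{g}\cap\mathbbm{i}\cap\mathbbm{j})^\circ\subseteq\mathbbm{h}$ I would obtain by transposition. From $\mathbf{g}\triangle\mathbf{i}\preceq\mathbf{h}$ and $p\nmid k_\mathbf{h}$ one concludes $p\nmid k_{\mathbf{g}\setminus\mathbf{i}}k_{\mathbf{i}\setminus\mathbf{g}}$, and similarly $p\nmid k_{\mathbf{i}\setminus\mathbf{k}}k_{\mathbf{k}\setminus\mathbf{i}}$. Lemma \ref{L;Lemma5.7} then furnishes $c_1,c_2\in\F^\times$ with $D_{\mathbf{g},\mathbf{h},\mathbf{i}}^T=c_1D_{\mathbf{i},\mathbf{h},\mathbf{g}}$ and $D_{\mathbf{i},\mathbf{j},\mathbf{k}}^T=c_2D_{\mathbf{k},\mathbf{j},\mathbf{i}}$, so that
\[
(D_{\mathbf{g},\mathbf{h},\mathbf{i}}D_{\mathbf{i},\mathbf{j},\mathbf{k}})^T=c_1c_2\,D_{\mathbf{k},\mathbf{j},\mathbf{i}}D_{\mathbf{i},\mathbf{h},\mathbf{g}}.
\]
Hence $D_{\mathbf{k},\mathbf{j},\mathbf{i}}D_{\mathbf{i},\mathbf{h},\mathbf{g}}\neq O$, and applying the previous paragraph with the relabeling $(\mathbf{g},\mathbf{h},\mathbf{i},\mathbf{j},\mathbf{k})\mapsto(\mathbf{k},\mathbf{j},\mathbf{i},\mathbf{h},\mathbf{g})$ yields $(\mathbbm{j}\cap\mathbbm{i}\cap\mathbbm{g})^\circ\subseteq\mathbbm{h}$. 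Lemma \ref{L;Lemma7.12} then closes the argument.

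The main obstacle, such as it is, lies in the bookkeeping for the transposition step: one must check that the scalars in Lemma \ref{L;Lemma5.7} are genuinely units in $\F$, which is where the full strength of the hypothesis $p\nmid k_\mathbf{h}k_\mathbf{j}$ enters (through the divisibilities $k_{\mathbf{g}\triangle\mathbf{i}}\mid k_\mathbf{h}$ and $k_{\mathbf{i}\triangle\mathbf{k}}\mid k_\mathbf{j}$). All other verifications are automatic from $\mathbf{h}\in\mathbb{P}_{\mathbf{g},\mathbf{i}}$, $\mathbf{j}\in\mathbb{P}_{\mathbf{i},\mathbf{k}}$, and the definitions in Notation \ref{N;Notation5.6}.
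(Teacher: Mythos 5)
Your proof is correct and follows essentially the same route as the paper: expand one factor via Equation \eqref{Eq;10} to reduce to Lemma \ref{L;Lemma5.12} for the containment $(\mathbbm{h}\cap\mathbbm{i}\cap\mathbbm{k})^\circ\subseteq\mathbbm{j}$, get the symmetric containment $(\mathbbm{g}\cap\mathbbm{i}\cap\mathbbm{j})^\circ\subseteq\mathbbm{h}$ by transposing with Lemma \ref{L;Lemma5.7} (the scalars there are units precisely because $\mathbbm{g}\triangle\mathbbm{i}\subseteq\mathbbm{h}$ and $p\nmid k_\mathbf{h}$), and close with Lemma \ref{L;Lemma7.12}. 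The paper's one-line proof cites exactly the same three ingredients; yours is a faithful expansion of it.
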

\begin{proof}
If $D_{\mathbf{g}, \mathbf{h}, \mathbf{i}}D_{\mathbf{i}, \mathbf{j}, \mathbf{k}}\neq O$, then
$(\mathbbm{g}\cap\mathbbm{i}\cap\mathbbm{j})^\circ\subseteq\mathbbm{h}$ and $(\mathbbm{h}\cap\mathbbm{i}\cap\mathbbm{k})^\circ\subseteq\mathbbm{j}$ by combining Equation \eqref{Eq;10}, Lemmas \ref{L;Lemma5.7}, \ref{L;Lemma5.12}. The desired lemma follows from Lemma \ref{L;Lemma7.12}.
\end{proof}
\begin{lem}\label{L;Lemma7.14}
Assume that $\mathbf{g}, \mathbf{h}, \mathbf{i}, \mathbf{j}, \mathbf{k},\mathbf{l}\in\mathbb{E}$, $\mathbf{h},\mathbf{l} \!\in\!\mathbb{P}_{\mathbf{g}, \mathbf{i}}$, $\mathbf{i}\setminus\mathbf{k}\preceq\mathbf{j}$, $\mathbf{h}\preceq\mathbf{l}$, $(\mathbbm{i}\cap\mathbbm{k}\cap\mathbbm{l})^\circ\!\!\subseteq\!\!\mathbbm{j}$. Assume that $(\mathbbm{g}\cap\mathbbm{i})^\circ \setminus\mathbbm{h}=(\mathbbm{i}\cap\mathbbm{k})^\circ\setminus\mathbbm{j}=(\mathbbm{g}\cap\mathbbm{i}\cap\mathbbm{k})^\circ\setminus(\mathbbm{h}\cup\mathbbm{j})$.
Then $\mathbf{h}=\mathbf{l}$. Moreover, if $\mathbf{j}\!\in\!\mathbb{P}_{\mathbf{i},\mathbf{k}}$ and $p\nmid k_\mathbf{h}k_\mathbf{j}$, then $[\mathbf{g}, \mathbf{h}, \mathbf{i},\mathbf{j}, \mathbf{k}]\in\mathbb{P}_{\mathbf{g},\mathbf{k}}$, $p\nmid k_{[\mathbf{g}, \mathbf{h},\mathbf{i},\mathbf{j},\mathbf{k}]}$, $D_{\mathbf{g}, \mathbf{h}, \mathbf{i}}D_{\mathbf{i}, \mathbf{j}, \mathbf{k}}=D_{\mathbf{g}, [\mathbf{g}, \mathbf{h}, \mathbf{i}, \mathbf{j}, \mathbf{k}], \mathbf{k}}$.
\end{lem}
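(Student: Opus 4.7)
The plan is to prove the set-theoretic equality $\mathbf{h}=\mathbf{l}$ by a direct squeeze argument on the excess $\mathbbm{l}\setminus\mathbbm{h}$, and then feed this first statement back as a vanishing criterion that reduces the product $D_{\mathbf{g},\mathbf{h},\mathbf{i}}D_{\mathbf{i},\mathbf{j},\mathbf{k}}$ to a single surviving summand whose scalar value is supplied by Lemma \ref{L;Lemma7.11}.

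For the first statement, I would use Lemma \ref{L;Lemma2.5} to sandwich each of $\mathbbm{h}$ and $\mathbbm{l}$ between $\mathbbm{g}\triangle\mathbbm{i}$ and $(\mathbbm{g}\triangle\mathbbm{i})\cup(\mathbbm{g}\cap\mathbbm{i})^\circ$. The inclusion $\mathbf{h}\preceq\mathbf{l}$ then places $\mathbbm{l}\setminus\mathbbm{h}$ inside $(\mathbbm{g}\cap\mathbbm{i})^\circ\setminus\mathbbm{h}$, which the assumed chain of equalities rewrites as $(\mathbbm{g}\cap\mathbbm{i}\cap\mathbbm{k})^\circ\setminus(\mathbbm{h}\cup\mathbbm{j})$, a set disjoint from $\mathbbm{j}$. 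On the other hand, each element of $\mathbbm{l}\setminus\mathbbm{h}$ automatically sits in $(\mathbbm{i}\cap\mathbbm{k}\cap\mathbbm{l})^\circ$, so the hypothesis $(\mathbbm{i}\cap\mathbbm{k}\cap\mathbbm{l})^\circ\subseteq\mathbbm{j}$ also places it inside $\mathbbm{j}$. These two containments together force $\mathbbm{l}\setminus\mathbbm{h}=\varnothing$, and Lemma \ref{L;Lemma2.3} upgrades this to $\mathbf{h}=\mathbf{l}$.

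For the remaining assertions, Lemma \ref{L;Lemma7.12} promotes the chain of set equalities to $(\mathbbm{h}\cap\mathbbm{i}\cap\mathbbm{k})^\circ\subseteq\mathbbm{j}$, whence Lemma \ref{L;Lemma7.11} directly yields $[\mathbf{g},\mathbf{h},\mathbf{i},\mathbf{j},\mathbf{k}]\in\mathbb{P}_{\mathbf{g},\mathbf{k}}$, $p\nmid k_{[\mathbf{g},\mathbf{h},\mathbf{i},\mathbf{j},\mathbf{k}]}$, together with the intermediate identity $B_{\mathbf{g},\mathbf{h},\mathbf{i}}D_{\mathbf{i},\mathbf{j},\mathbf{k}}=\overline{k_{\mathbf{h}\cap\mathbf{i}}}D_{\mathbf{g},[\mathbf{g},\mathbf{h},\mathbf{i},\mathbf{j},\mathbf{k}],\mathbf{k}}$. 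To obtain the full identity $D_{\mathbf{g},\mathbf{h},\mathbf{i}}D_{\mathbf{i},\mathbf{j},\mathbf{k}}=D_{\mathbf{g},[\mathbf{g},\mathbf{h},\mathbf{i},\mathbf{j},\mathbf{k}],\mathbf{k}}$, I will expand $D_{\mathbf{g},\mathbf{h},\mathbf{i}}$ via Equation \eqref{Eq;10} and distribute. For each $m\in[1,n_{\mathbf{g}\mathbf{i},\mathbf{h}}]$ and each $\mathbf{l}\in\mathbb{U}_{\mathbf{g}\mathbf{i},\mathbf{h},m}$, the vector $\mathbf{l}$ lies in $\mathbb{P}_{\mathbf{g},\mathbf{i}}$, satisfies $\mathbf{h}\preceq\mathbf{l}$ with $\mathbf{h}\neq\mathbf{l}$, and has $p\nmid k_\mathbf{l}$, so the contrapositive of the first statement forces $(\mathbbm{i}\cap\mathbbm{k}\cap\mathbbm{l})^\circ\not\subseteq\mathbbm{j}$; the ``moreover'' clause of Lemma \ref{L;Lemma5.12} then annihilates $B_{\mathbf{g},\mathbf{l},\mathbf{i}}D_{\mathbf{i},\mathbf{j},\mathbf{k}}$. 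Only the $m=0$ summand (where $\mathbb{U}_{\mathbf{g}\mathbf{i},\mathbf{h},0}=\{\mathbf{h}\}$) survives, and the prefactors $\overline{k_{\mathbf{i}\cap\mathbf{h}}}^{-1}$ and $\overline{k_{\mathbf{h}\cap\mathbf{i}}}$ arising from the two applied formulas cancel to yield the desired identity.

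The hard part will be arranging the reduction so that the first statement slots in cleanly as a vanishing criterion for the $m\geq 1$ terms: the chain of set equalities is hypothesized only for the fixed $\mathbf{h}$, while $\mathbf{l}$ varies over indices in the expansion, but because $\mathbf{h}$ stays put and the hypothesis $\mathbf{i}\setminus\mathbf{k}\preceq\mathbf{j}$ (supplied by $\mathbf{j}\in\mathbb{P}_{\mathbf{i},\mathbf{k}}$) is insensitive to $\mathbf{l}$, the contrapositive goes through for every such $\mathbf{l}$ and the recursion closes.
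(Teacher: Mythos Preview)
Your proof is correct and follows essentially the same route as the paper. For the first statement, the paper proves the two-sided inclusion $(\mathbbm{g}\cap\mathbbm{i})^\circ\setminus\mathbbm{l}=(\mathbbm{g}\cap\mathbbm{i})^\circ\setminus\mathbbm{h}$ and then reconstructs $\mathbbm{h}=\mathbbm{l}$ from $\mathbf{h},\mathbf{l}\in\mathbb{P}_{\mathbf{g},\mathbf{i}}$, whereas you more directly pin $\mathbbm{l}\setminus\mathbbm{h}$ inside both $\mathbbm{j}$ and its complement; these are minor rearrangements of the same squeeze. For the ``moreover'' part, the paper's one-line citation of ``Equation \eqref{Eq;10}, the first statement, Lemma \ref{L;Lemma7.11}'' is exactly the expansion-and-cancellation you spell out, and your explicit appeal to Lemma \ref{L;Lemma5.12} for the vanishing of the $m\geq 1$ summands fills in a step the paper leaves implicit.
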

\begin{proof}
As $(\mathbbm{g}\cap\mathbbm{i})^\circ \setminus\mathbbm{h}=(\mathbbm{i}\cap\mathbbm{k})^\circ\setminus\mathbbm{j}$ and $\mathbf{h}\preceq\mathbf{l}$, notice that $(\mathbbm{g}\cap\mathbbm{i}\cap\mathbbm{j})^\circ\subseteq\mathbbm{h}\subseteq\mathbbm{l}$.  So $(\mathbbm{g}\cap\mathbbm{i})^\circ\setminus\mathbbm{l}=(\mathbbm{g}\cap\mathbbm{i})^\circ\setminus(\mathbbm{j}\cup\mathbbm{l})=
(\mathbbm{g}\cap\mathbbm{i}\cap\mathbbm{k})^\circ\setminus(\mathbbm{j}\cup\mathbbm{l})\subseteq(\mathbbm{g}\cap\mathbbm{i}\cap\mathbbm{k})^\circ\setminus(\mathbbm{h}\cup\mathbbm{j})$
as $\mathbf{i}\setminus\mathbf{k}\preceq\mathbf{j}$ and $\mathbf{h}\preceq\mathbf{l}$. So $(\mathbbm{g}\cap\mathbbm{i})^\circ\setminus\mathbbm{l}\subseteq(\mathbbm{g}\cap\mathbbm{i}\cap\mathbbm{k})^\circ\setminus(\mathbbm{h}\cup\mathbbm{j})=(\mathbbm{g}\cap\mathbbm{i})^\circ\setminus\mathbbm{h}$. As $(\mathbbm{i}\cap\mathbbm{k}\cap\mathbbm{l})^\circ\!\subseteq\!\mathbbm{j}$, notice that
$(\mathbbm{g}\cap\mathbbm{i})^\circ\setminus\mathbbm{h}=(\mathbbm{g}\cap\mathbbm{i}\cap\mathbbm{k})^\circ\setminus(\mathbbm{h}\cup\mathbbm{j})=
(\mathbbm{g}\cap\mathbbm{i}\cap\mathbbm{k})^\circ\setminus(\mathbbm{h}\cup\mathbbm{j}\cup\mathbbm{l})\subseteq(\mathbbm{g}\cap\mathbbm{i})^\circ\setminus\mathbbm{l}$.
So $(\mathbbm{g}\cap\mathbbm{h}\cap\mathbbm{i})^\circ=(\mathbbm{g}\cap\mathbbm{i}\cap\mathbbm{l})^\circ$ and  $\mathbbm{h}=(\mathbbm{g}\triangle\mathbbm{i})\cup(\mathbbm{g}\cap\mathbbm{h}\cap\mathbbm{i})^\circ=
(\mathbbm{g}\triangle\mathbbm{i})\cup(\mathbbm{g}\cap\mathbbm{i}\cap\mathbbm{l})^\circ=\mathbbm{l}$ as $\mathbf{h},\mathbf{l} \in\mathbb{P}_{\mathbf{g}, \mathbf{i}}$. The first statement thus follows from Lemma \ref{L;Lemma2.3}. The combination of
Lemmas \ref{L;Lemma3.16}, \ref{L;Lemma7.12}, \ref{L;Lemma7.6} shows that $[\mathbf{g}, \mathbf{h}, \mathbf{i}, \mathbf{j}, \mathbf{k}]\in\mathbb{P}_{\mathbf{g}, \mathbf{k}}$ and $p\nmid k_{[\mathbf{g}, \mathbf{h}, \mathbf{i}, \mathbf{j}, \mathbf{k}]}$.
The desired lemma follows from combining Equation \eqref{Eq;10}, the first statement, Lemma \ref{L;Lemma7.11}.
\end{proof}
We are now ready to list the second main result of this section and two corollaries.
\begin{thm}\label{T;Theorem7.15}
Assume that $\mathbf{g}, \mathbf{h}, \mathbf{i}, \mathbf{j}, \mathbf{k}, \mathbf{l},\mathbf{m}, \mathbf{q}, \mathbf{r}\in\mathbb{E}$,
$\mathbf{h}\in\mathbb{P}_{\mathbf{g}, \mathbf{i}}$, $\mathbf{k}\in\mathbb{P}_{\mathbf{j}, \mathbf{l}}$, $\mathbf{q}\in\mathbb{P}_{\mathbf{m}, \mathbf{r}}$, $p\nmid k_\mathbf{h}k_\mathbf{k}k_\mathbf{q}$. Then $[\mathbf{g}, \mathbf{h}, \mathbf{i}, \mathbf{k}, \mathbf{l}]\in\mathbb{P}_{\mathbf{g}, \mathbf{l}}$ and $p\nmid k_{[\mathbf{g}, \mathbf{h}, \mathbf{i}, \mathbf{k}, \mathbf{l}]}$ if $\mathbf{i}=\mathbf{j}$ and $(\mathbbm{h}\cap\mathbbm{i}\cap\mathbbm{l})^\circ\subseteq\mathbbm{k}$.
Moreover, $c_{\mathbf{m},\mathbf{r}}^\mathbf{q}((D_{\mathbf{g}, \mathbf{h}, \mathbf{i}}\!+\!\mathrm{Rad}(\mathbb{T}))(D_{\mathbf{j}, \mathbf{k}, \mathbf{l}}\!+\!\mathrm{Rad}(\mathbb{T})))\!\!=\!\!\delta_{\mathbf{i}, \mathbf{j}}\delta_{\mathbf{m}, \mathbf{g}}\delta_{\mathbf{q}, \mathbf{[\mathbf{g}, \mathbf{h}, \mathbf{i},\mathbf{k},\mathbf{l}]}}\delta_{\mathbf{r},\mathbf{l}}\delta_{(\mathbbm{g}\cap\mathbbm{i})^\circ\setminus\mathbbm{h},(\mathbbm{i}\cap\mathbbm{l})^\circ\setminus\mathbbm{k}}.$
\end{thm}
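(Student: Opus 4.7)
Plan: The statement is a synthesis of the preparatory work of Section 7, and I would structure the proof in two halves matching the theorem's two claims. The first half is immediate: Lemma \ref{L;Lemma3.16} gives $[\mathbf{g},\mathbf{h},\mathbf{i},\mathbf{k},\mathbf{l}]\in \mathbb{P}_{\mathbf{g},\mathbf{l}}$ unconditionally, and Lemma \ref{L;Lemma7.6}, applied with the relabelling $(\mathbf{j},\mathbf{k})\mapsto(\mathbf{k},\mathbf{l})$ of its local variables, yields $p\nmid k_{[\mathbf{g},\mathbf{h},\mathbf{i},\mathbf{k},\mathbf{l}]}$ once the side conditions $\mathbf{h}\in \mathbb{P}_{\mathbf{g},\mathbf{i}}$, $\mathbf{k}\in \mathbb{P}_{\mathbf{i},\mathbf{l}}$, $p\nmid k_\mathbf{h}k_\mathbf{k}$, and $(\mathbbm{h}\cap\mathbbm{i}\cap\mathbbm{l})^\circ\subseteq \mathbbm{k}$ are noted.

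For the structure-constant identity I would proceed by casework on how $D_{\mathbf{g},\mathbf{h},\mathbf{i}}D_{\mathbf{j},\mathbf{k},\mathbf{l}}$ can fail to be zero. If $\mathbf{i}\ne \mathbf{j}$, Equations \eqref{Eq;10} and \eqref{Eq;2} immediately force the product to be $O$, matching the $\delta_{\mathbf{i},\mathbf{j}}$ factor on the right-hand side. If $\mathbf{i}=\mathbf{j}$ but $(\mathbbm{g}\cap \mathbbm{i})^\circ\setminus \mathbbm{h}\ne (\mathbbm{i}\cap \mathbbm{l})^\circ\setminus \mathbbm{k}$, Lemma \ref{L;Lemma7.13} gives $D_{\mathbf{g},\mathbf{h},\mathbf{i}}D_{\mathbf{i},\mathbf{k},\mathbf{l}}=O$, matching the vanishing of the last $\delta$-factor. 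In the remaining case $\mathbf{i}=\mathbf{j}$ together with that set-equality, the ``in particular'' half of Lemma \ref{L;Lemma7.12} (with its $(\mathbf{j},\mathbf{k})\mapsto(\mathbf{k},\mathbf{l})$) upgrades this equality to the full triple chain and implies both containments $(\mathbbm{g}\cap \mathbbm{i}\cap \mathbbm{k})^\circ\subseteq \mathbbm{h}$ and $(\mathbbm{h}\cap \mathbbm{i}\cap \mathbbm{l})^\circ\subseteq \mathbbm{k}$. All hypotheses of Lemma \ref{L;Lemma7.14} are then in place -- with its $\mathbf{l}$ taken to be our $\mathbf{h}$, so that $\mathbf{h}\preceq \mathbf{l}$ is automatic, and its $(\mathbf{j},\mathbf{k})\mapsto(\mathbf{k},\mathbf{l})$ -- and the lemma yields the clean identity $D_{\mathbf{g},\mathbf{h},\mathbf{i}}D_{\mathbf{i},\mathbf{k},\mathbf{l}}=D_{\mathbf{g},[\mathbf{g},\mathbf{h},\mathbf{i},\mathbf{k},\mathbf{l}],\mathbf{l}}$ in $\mathbb{T}$.

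Projecting this equation into $\mathbb{T}/\mathrm{Rad}(\mathbb{T})$ finishes the argument. The first half of the theorem, which now applies since the remaining-case hypotheses have just been secured, guarantees that $D_{\mathbf{g},[\mathbf{g},\mathbf{h},\mathbf{i},\mathbf{k},\mathbf{l}],\mathbf{l}}+\mathrm{Rad}(\mathbb{T})$ is a genuine element of the $\F$-basis $\mathbb{B}_4$ from Theorem \ref{T;Theorem7.4}; linear independence of $\mathbb{B}_4$ then reads off $c_{\mathbf{m},\mathbf{r}}^{\mathbf{q}}$ as exactly $\delta_{\mathbf{m},\mathbf{g}}\delta_{\mathbf{q},[\mathbf{g},\mathbf{h},\mathbf{i},\mathbf{k},\mathbf{l}]}\delta_{\mathbf{r},\mathbf{l}}$, which together with the $\delta_{\mathbf{i},\mathbf{j}}$ and set-equality factors established above reproduces the right-hand side of the theorem.

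The principal difficulty I anticipate is purely administrative rather than conceptual: one must keep straight the variable dictionaries between the theorem's notation and the local notations of Lemmas \ref{L;Lemma7.6}, \ref{L;Lemma7.12}, \ref{L;Lemma7.14}, and confirm at each step that every side condition (notably $\mathbf{i}\triangle \mathbf{l}\preceq \mathbf{k}$ extracted from $\mathbf{k}\in \mathbb{P}_{\mathbf{i},\mathbf{l}}$, together with $p\nmid k_\mathbf{h}k_\mathbf{k}$) survives each substitution. Once the dictionary is fixed and the three cases are dispatched separately, the chain of invocations is essentially forced and no further combinatorial input is required.
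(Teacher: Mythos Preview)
Your proposal is correct and follows essentially the same route as the paper's proof: both invoke Lemmas \ref{L;Lemma3.16} and \ref{L;Lemma7.6} for the first claim, then dispatch the structure-constant identity by reducing to $\mathbf{i}=\mathbf{j}$ via Equations \eqref{Eq;10}, \eqref{Eq;4}, \eqref{Eq;2}, applying Lemma \ref{L;Lemma7.13} for the vanishing case, using Lemma \ref{L;Lemma7.12} to extract $(\mathbbm{h}\cap\mathbbm{i}\cap\mathbbm{l})^\circ\subseteq\mathbbm{k}$, and finally invoking Lemma \ref{L;Lemma7.14} together with Theorem \ref{T;Theorem7.4}. Your explicit attention to the variable dictionaries and to instantiating Lemma \ref{L;Lemma7.14}'s auxiliary $\mathbf{l}$ as $\mathbf{h}$ is a helpful clarification of what the paper leaves implicit.
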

\begin{proof}
If $\mathbf{i}=\mathbf{j}$ and $(\mathbbm{h}\cap\mathbbm{i}\cap\mathbbm{l})^\circ\subseteq\mathbbm{k}$, notice that $[\mathbf{g}, \mathbf{h}, \mathbf{i}, \mathbf{k}, \mathbf{l}]\in\mathbb{P}_{\mathbf{g}, \mathbf{l}}$ and $p\nmid k_{[\mathbf{g}, \mathbf{h}, \mathbf{i}, \mathbf{k}, \mathbf{l}]}$ by Lemmas \ref{L;Lemma3.16} and \ref{L;Lemma7.6}. By combining Equations \eqref{Eq;10}, \eqref{Eq;4}, \eqref{Eq;2}, and Theorem \ref{T;Theorem3.13}, there is no loss to assume that $\mathbf{i}=\mathbf{j}$, $\mathbf{m}=\mathbf{g}$, and $\mathbf{r}=\mathbf{l}$. By Lemma \ref{L;Lemma7.13}, there is also no loss to add an additional assumption that $(\mathbbm{g}\cap\mathbbm{i})^\circ\setminus\mathbbm{h}=(\mathbbm{i}\cap\mathbbm{l})^\circ\setminus\mathbbm{k}$.
By Lemma \ref{L;Lemma7.12}, the equation
$(\mathbbm{g}\cap\mathbbm{i})^\circ\setminus\mathbbm{h}=(\mathbbm{i}\cap\mathbbm{l})^\circ\setminus\mathbbm{k}$ shows that
$(\mathbbm{h}\cap\mathbbm{i}\cap\mathbbm{l})^\circ\subseteq\mathbbm{k}$. So
$[\mathbf{g}, \mathbf{h}, \mathbf{i}, \mathbf{k}, \mathbf{l}]\in\mathbb{P}_{\mathbf{g}, \mathbf{l}}$,
$p\nmid k_{[\mathbf{g}, \mathbf{h}, \mathbf{i}, \mathbf{k}, \mathbf{l}]}$,
$(D_{\mathbf{g}, \mathbf{h}, \mathbf{i}}+\mathrm{Rad}(\mathbb{T}))(D_{\mathbf{i}, \mathbf{k}, \mathbf{l}}+\mathrm{Rad}(\mathbb{T}))=D_{\mathbf{g}, [\mathbf{g}, \mathbf{h}, \mathbf{i}, \mathbf{k}, \mathbf{l}], \mathbf{l}}+\mathrm{Rad}(\mathbb{T})$ by Lemma \ref{L;Lemma7.14}. The desired theorem follows from Theorem \ref{T;Theorem7.4} and
the above discussion.
\end{proof}
\begin{cor}\label{C;Corollary7.16}
The structure constants of $\mathbb{B}_4$ in $\mathbb{T}/\mathrm{Rad}(\mathbb{T})$ are contained in $\{\overline{0}, \overline{1}\}$.
\end{cor}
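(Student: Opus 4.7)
The plan is to read the statement off directly from Theorem \ref{T;Theorem7.15}. By Notation \ref{N;Notation7.5} and Theorem \ref{T;Theorem7.4}, a structure constant of $\mathbb{B}_4$ in $\mathbb{T}/\mathrm{Rad}(\mathbb{T})$ is, by definition, a coefficient of the form
$$c_{\mathbf{m},\mathbf{r}}^{\mathbf{q}}\bigl((D_{\mathbf{g},\mathbf{h},\mathbf{i}}+\mathrm{Rad}(\mathbb{T}))(D_{\mathbf{j},\mathbf{k},\mathbf{l}}+\mathrm{Rad}(\mathbb{T}))\bigr),$$
where $(\mathbf{g},\mathbf{h},\mathbf{i}), (\mathbf{j},\mathbf{k},\mathbf{l}), (\mathbf{m},\mathbf{q},\mathbf{r})\!\in\!\mathbb{P}$ and $p\nmid k_{\mathbf{h}}k_{\mathbf{k}}k_{\mathbf{q}}$. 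Hence the hypotheses of Theorem \ref{T;Theorem7.15} are exactly the hypotheses needed to invoke its formula, so the plan is simply to apply that formula.

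First I would fix arbitrary basis elements $D_{\mathbf{g},\mathbf{h},\mathbf{i}}+\mathrm{Rad}(\mathbb{T})$ and $D_{\mathbf{j},\mathbf{k},\mathbf{l}}+\mathrm{Rad}(\mathbb{T})$ of $\mathbb{B}_4$ and an arbitrary index $(\mathbf{m},\mathbf{q},\mathbf{r})\!\in\!\mathbb{P}$ with $p\nmid k_{\mathbf{q}}$. Next I would substitute into the closed form provided by Theorem \ref{T;Theorem7.15}, giving
$$c_{\mathbf{m},\mathbf{r}}^{\mathbf{q}}\bigl((D_{\mathbf{g},\mathbf{h},\mathbf{i}}+\mathrm{Rad}(\mathbb{T}))(D_{\mathbf{j},\mathbf{k},\mathbf{l}}+\mathrm{Rad}(\mathbb{T}))\bigr)=\delta_{\mathbf{i},\mathbf{j}}\delta_{\mathbf{m},\mathbf{g}}\delta_{\mathbf{q},[\mathbf{g},\mathbf{h},\mathbf{i},\mathbf{k},\mathbf{l}]}\delta_{\mathbf{r},\mathbf{l}}\delta_{(\mathbbm{g}\cap\mathbbm{i})^\circ\setminus\mathbbm{h},(\mathbbm{i}\cap\mathbbm{l})^\circ\setminus\mathbbm{k}}.$$
Finally I would observe that every Kronecker delta appearing above takes values in $\{\overline{0},\overline{1}\}\subseteq\F$, so their product is also in $\{\overline{0},\overline{1}\}$. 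Since $(\mathbf{g},\mathbf{h},\mathbf{i})$, $(\mathbf{j},\mathbf{k},\mathbf{l})$, $(\mathbf{m},\mathbf{q},\mathbf{r})$ were arbitrary indices in $\mathbb{P}$ subject only to the assumptions $p\nmid k_{\mathbf{h}}k_{\mathbf{k}}k_{\mathbf{q}}$ required for the three elements to lie in $\mathbb{B}_4$, this establishes the claim for every structure constant.

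There is essentially no obstacle: all the combinatorial content (namely the verification that the structure constant equals the displayed product of deltas, including the subtle fact that the indices $\mathbf{i}\!=\!\mathbf{j}$ and $(\mathbbm{h}\cap\mathbbm{i}\cap\mathbbm{l})^\circ\subseteq\mathbbm{k}$ automatically yield $[\mathbf{g},\mathbf{h},\mathbf{i},\mathbf{k},\mathbf{l}]\in\mathbb{P}_{\mathbf{g},\mathbf{l}}$ with $p\nmid k_{[\mathbf{g},\mathbf{h},\mathbf{i},\mathbf{k},\mathbf{l}]}$, so the coefficient is well-defined) has already been handled in the proof of Theorem \ref{T;Theorem7.15}. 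The only step here is to remark that a product of Kronecker deltas is itself a Kronecker delta in value, which is immediate. Hence the corollary is a one-line consequence of the preceding theorem.
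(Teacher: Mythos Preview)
Your proposal is correct and follows exactly the paper's approach: the paper's proof is the single sentence ``The desired corollary follows from a direct application of Theorem \ref{T;Theorem7.15},'' and you have simply unpacked that one-line argument by noting that the formula in Theorem \ref{T;Theorem7.15} expresses every structure constant as a product of Kronecker deltas.
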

\begin{proof}
The desired corollary follows from a direct application of Theorem \ref{T;Theorem7.15}.
\end{proof}
\begin{cor}\label{C;Corollary7.17}
Assume that $\mathbf{g}, \mathbf{h}, \mathbf{i}, \mathbf{j}, \mathbf{k}, \mathbf{l}\!\in\!\mathbb{E}$, $\mathbf{h}\in\mathbb{P}_{\mathbf{g}, \mathbf{i}}$, $\mathbf{j}\!\in\!\mathbb{P}_{\mathbf{i},\mathbf{k}}$, $\mathbf{l}\!\in\!\mathbb{P}_{\mathbf{g}, \mathbf{k}}$, $p\!\nmid\! k_\mathbf{h}k_\mathbf{j}k_\mathbf{l}$. Then
the equation $(D_{\mathbf{g}, \mathbf{h}, \mathbf{i}}+\mathrm{Rad}(\mathbb{T}))(D_{\mathbf{i}, \mathbf{j}, \mathbf{k}}+\mathrm{Rad}(\mathbb{T}))\!=\!D_{\mathbf{g}, \mathbf{l}, \mathbf{k}}+\mathrm{Rad}(\mathbb{T})$ holds only if
$$(\mathbbm{g}\cap\mathbbm{i})^\circ \setminus\mathbbm{h}=(\mathbbm{i}\cap\mathbbm{k})^\circ\setminus\mathbbm{j}=(\mathbbm{g}\cap\mathbbm{k})^\circ\setminus\mathbbm{l}
=(\mathbbm{g}\cap\mathbbm{i}\cap\mathbbm{k})^\circ\setminus(\mathbbm{h}\cup\mathbbm{j}).$$
\end{cor}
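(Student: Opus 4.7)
The plan is to combine Theorem \ref{T;Theorem7.15} (applied with a specific relabeling), Lemma \ref{L;Lemma7.13}, and an elementary set-theoretic unpacking of Notation \ref{N;Notation3.15}. First I would note that since $(\mathbf{g},\mathbf{l},\mathbf{k})\in\mathbb{P}$ and $p\nmid k_\mathbf{l}$, the element $D_{\mathbf{g},\mathbf{l},\mathbf{k}}+\mathrm{Rad}(\mathbb{T})$ belongs to the basis $\mathbb{B}_4$ delivered by Theorem \ref{T;Theorem7.4}. Hence the hypothesis is equivalent to saying that the coefficient $c_{\mathbf{g},\mathbf{k}}^\mathbf{l}\bigl((D_{\mathbf{g},\mathbf{h},\mathbf{i}}+\mathrm{Rad}(\mathbb{T}))(D_{\mathbf{i},\mathbf{j},\mathbf{k}}+\mathrm{Rad}(\mathbb{T}))\bigr)$ equals $\overline{1}$ and that all other coefficients in this product vanish; in particular the product is nonzero in $\mathbb{T}/\mathrm{Rad}(\mathbb{T})$, so $D_{\mathbf{g},\mathbf{h},\mathbf{i}}D_{\mathbf{i},\mathbf{j},\mathbf{k}}\neq O$.

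With $D_{\mathbf{g},\mathbf{h},\mathbf{i}}D_{\mathbf{i},\mathbf{j},\mathbf{k}}\neq O$ in hand, Lemma \ref{L;Lemma7.13} immediately supplies the first two claimed equalities $(\mathbbm{g}\cap\mathbbm{i})^\circ\setminus\mathbbm{h}=(\mathbbm{i}\cap\mathbbm{k})^\circ\setminus\mathbbm{j}=(\mathbbm{g}\cap\mathbbm{i}\cap\mathbbm{k})^\circ\setminus(\mathbbm{h}\cup\mathbbm{j})$. To identify $\mathbf{l}$ I would then invoke Theorem \ref{T;Theorem7.15} with the substitution that renames the theorem's second triple $(\mathbf{j},\mathbf{k},\mathbf{l})$ as the corollary's $(\mathbf{i},\mathbf{j},\mathbf{k})$; after setting $\mathbf{m}=\mathbf{g}$ and $\mathbf{r}=\mathbf{k}$, the coefficient formula there collapses to $\delta_{\mathbf{l},[\mathbf{g},\mathbf{h},\mathbf{i},\mathbf{j},\mathbf{k}]}\,\delta_{(\mathbbm{g}\cap\mathbbm{i})^\circ\setminus\mathbbm{h},(\mathbbm{i}\cap\mathbbm{k})^\circ\setminus\mathbbm{j}}$. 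Forcing this to equal $\overline{1}$ yields $\mathbf{l}=[\mathbf{g},\mathbf{h},\mathbf{i},\mathbf{j},\mathbf{k}]$.

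The remaining equality $(\mathbbm{g}\cap\mathbbm{k})^\circ\setminus\mathbbm{l}=(\mathbbm{g}\cap\mathbbm{i}\cap\mathbbm{k})^\circ\setminus(\mathbbm{h}\cup\mathbbm{j})$ is then a direct bookkeeping exercise using Notation \ref{N;Notation3.15}: expanding $\mathbbm{l}=(\mathbbm{g}\triangle\mathbbm{k})\cup((\mathbbm{g}\cap\mathbbm{k})^\circ\setminus\mathbbm{i})\cup((\mathbbm{h}\cup\mathbbm{j})\cap(\mathbbm{g}\cap\mathbbm{i}\cap\mathbbm{k})^\circ)$, observing that $(\mathbbm{g}\cap\mathbbm{k})^\circ$ is disjoint from $\mathbbm{g}\triangle\mathbbm{k}$, and using the disjoint partition $(\mathbbm{g}\cap\mathbbm{k})^\circ=((\mathbbm{g}\cap\mathbbm{k})^\circ\setminus\mathbbm{i})\cup(\mathbbm{g}\cap\mathbbm{i}\cap\mathbbm{k})^\circ$, one computes $(\mathbbm{g}\cap\mathbbm{k})^\circ\cap\mathbbm{l}=((\mathbbm{g}\cap\mathbbm{k})^\circ\setminus\mathbbm{i})\cup((\mathbbm{h}\cup\mathbbm{j})\cap(\mathbbm{g}\cap\mathbbm{i}\cap\mathbbm{k})^\circ)$ and then takes complements inside $(\mathbbm{g}\cap\mathbbm{k})^\circ$ to reach the desired equality.

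I do not anticipate any genuine obstacle: the whole argument is driven by Theorems \ref{T;Theorem7.4} and \ref{T;Theorem7.15} together with Lemma \ref{L;Lemma7.13}, and the only spot requiring mild care is the last set-theoretic manipulation, which amounts to tracking which summands in the definition of $[\mathbf{g},\mathbf{h},\mathbf{i},\mathbf{j},\mathbf{k}]$ actually meet $(\mathbbm{g}\cap\mathbbm{k})^\circ$.
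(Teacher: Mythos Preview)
Your proposal is correct and follows essentially the same approach as the paper: the paper's proof likewise deduces $D_{\mathbf{g},\mathbf{h},\mathbf{i}}D_{\mathbf{i},\mathbf{j},\mathbf{k}}\neq O$ and $\mathbf{l}=[\mathbf{g},\mathbf{h},\mathbf{i},\mathbf{j},\mathbf{k}]$ from Theorems \ref{T;Theorem7.4} and \ref{T;Theorem7.15}, then invokes Lemma \ref{L;Lemma7.13}. The only difference is that you spell out explicitly the set-theoretic computation yielding $(\mathbbm{g}\cap\mathbbm{k})^\circ\setminus\mathbbm{l}=(\mathbbm{g}\cap\mathbbm{i}\cap\mathbbm{k})^\circ\setminus(\mathbbm{h}\cup\mathbbm{j})$ from Notation \ref{N;Notation3.15}, which the paper leaves implicit.
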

\begin{proof}
The equation $(D_{\mathbf{g}, \mathbf{h}, \mathbf{i}}+\mathrm{Rad}(\mathbb{T}))(D_{\mathbf{i}, \mathbf{j}, \mathbf{k}}+\mathrm{Rad}(\mathbb{T}))\!=\!D_{\mathbf{g}, \mathbf{l}, \mathbf{k}}+\mathrm{Rad}(\mathbb{T})$ implies that
$D_{\mathbf{g}, \mathbf{h}, \mathbf{i}}D_{\mathbf{i}, \mathbf{j}, \mathbf{k}}\neq O$ and $\mathbf{l}=[\mathbf{g}, \mathbf{h}, \mathbf{i}, \mathbf{j}, \mathbf{k}]$ by Theorems \ref{T;Theorem7.4} and \ref{T;Theorem7.15}. The desired corollary follows from the above discussion and an application of Lemma \ref{L;Lemma7.13}.
\end{proof}
We accomplish this section's discussion by an example of Theorems \ref{T;Theorem7.4} and \ref{T;Theorem7.15}.
\begin{eg}\label{E;Example7.18}
Assume that $n=|\mathbb{U}_1|=2$ and $|\mathbb{U}_2|=3$. It is clear that $|\mathbb{E}|=4$. Assume that $\mathbf{g}=(0, 1)$ and $\mathbf{h}=(1,0)$. Hence $\mathbb{E}=\{\mathbf{0}, \mathbf{g}, \mathbf{h}, \mathbf{1}\}$. Assume that $p\neq2$. Then $\mathrm{Rad}(\mathbb{T})=\{O\}$ and $\mathbb{T}$ has an $\F$-basis containing exactly $D_{\mathbf{0}, \mathbf{0}, \mathbf{0}}$,
$D_{\mathbf{0}, \mathbf{g}, \mathbf{g}}$, $D_{\mathbf{0}, \mathbf{h}, \mathbf{h}}$, $D_{\mathbf{0}, \mathbf{1}, \mathbf{1}}$, $D_{\mathbf{g}, \mathbf{0}, \mathbf{g}}$, $D_{\mathbf{g}, \mathbf{g}, \mathbf{0}}$,
$D_{\mathbf{g}, \mathbf{g}, \mathbf{g}}$, $D_{\mathbf{g}, \mathbf{h},\mathbf{1}}$, $D_{\mathbf{g}, \mathbf{1},\mathbf{h}}$, $D_{\mathbf{g}, \mathbf{1},\mathbf{1}}$, $D_{\mathbf{h}, \mathbf{0},\mathbf{h}}$,
$D_{\mathbf{h}, \mathbf{g},\mathbf{1}}$, $D_{\mathbf{h}, \mathbf{h},\mathbf{0}}$, $D_{\mathbf{h}, \mathbf{1},\mathbf{g}}$, $D_{\mathbf{1}, \mathbf{0},\mathbf{1}}$, $D_{\mathbf{1}, \mathbf{g},\mathbf{h}}$,
$D_{\mathbf{1}, \mathbf{g},\mathbf{1}}$, $D_{\mathbf{1}, \mathbf{h},\mathbf{g}}$, $D_{\mathbf{1}, \mathbf{1},\mathbf{0}}$, $D_{\mathbf{1}, \mathbf{1},\mathbf{g}}$ by Theorems \ref{T;Jacobson} and \ref{T;Theorem7.4}.
Assume that $p\!=\!2$. Theorem \ref{T;Jacobson} thus implies that $\mathrm{Rad}(\mathbb{T})\!\neq\!\{O\}$. Theorem \ref{T;Theorem7.4} also implies that $\mathbb{T}/\mathrm{Rad}(\mathbb{T})$ has an $\F$-basis containing exactly $D_{\mathbf{0}, \mathbf{0},\mathbf{0}}\!+\!\mathrm{Rad}(\mathbb{T})$, $D_{\mathbf{0}, \mathbf{h},\mathbf{h}}\!+\!\mathrm{Rad}(\mathbb{T})$, $D_{\mathbf{g}, \mathbf{0},\mathbf{g}}\!+\!\mathrm{Rad}(\mathbb{T})$, $D_{\mathbf{g}, \mathbf{h},\mathbf{1}}\!+\!\mathrm{Rad}(\mathbb{T})$, $D_{\mathbf{h}, \mathbf{0},\mathbf{h}}\!+\!\mathrm{Rad}(\mathbb{T})$, $D_{\mathbf{h}, \mathbf{h},\mathbf{0}}\!+\!\mathrm{Rad}(\mathbb{T})$, $D_{\mathbf{1}, \mathbf{0},\mathbf{1}}\!+\!\mathrm{Rad}(\mathbb{T})$, $D_{\mathbf{1}, \mathbf{h},\mathbf{g}}\!+\!\mathrm{Rad}(\mathbb{T})$. By Theorem \ref{T;Theorem7.15}, $(D_{\mathbf{g}, \mathbf{h}, \mathbf{1}}\!+\!\mathrm{Rad}(\mathbb{T}))(D_{\mathbf{1}, \mathbf{h}, \mathbf{g}}\!+\!\mathrm{Rad}(\mathbb{T}))\!\!=\!\!D_{\mathbf{g},\mathbf{0},\mathbf{g}}\!+\!\mathrm{Rad}(\mathbb{T})$.
\end{eg}
\section{Algebraic structure of $\mathbb{T}$: Wedderburn-Artin decomposition}
In this section, we present the Wedderburn-Artin decomposition of $\mathbb{T}$. This means that we decompose the semisimple $\F$-algebra $\mathbb{T}/\mathrm{Rad}(\mathbb{T})$ into a direct sum of some full matrix $\F$-algebras. As an application, we also determine all Terwilliger $\F$-algebras of factorial schemes that are the symmetric $\F$-algebras or the Frobenius $\F$-algebras. We first recall Notations \ref{N;Notation3.1}, \ref{N;Notation3.9}, \ref{N;Notation3.14}, \ref{N;Notation3.15}, \ref{N;Notation4.1}, \ref{N;Notation5.5}, \ref{N;Notation5.6} and start with a notation.
\begin{nota}\label{N;Notation8.1}
Let $\mathbb{D}=\{(\mathbf{a},\mathbf{b}, \mathbf{c}): (\mathbf{a}, \mathbf{b}, \mathbf{c})\in\mathbb{P}, p\nmid k_\mathbf{b}\}$. So $\mathbb{D}\neq\varnothing$ as $(\mathbf{0},\mathbf{0},\mathbf{0})\in\mathbb{D}$.
Theorem \ref{T;Theorem7.4} implies that $\mathbb{T}/\mathrm{Rad}(\mathbb{T})$ has an $\F$-basis $\{D_{\mathbf{a},\mathbf{b},\mathbf{c}}+\mathrm{Rad}(\mathbb{T}):(\mathbf{a}, \mathbf{b}, \mathbf{c})\in\mathbb{D}\}$.
If $(\mathbf{g}, \mathbf{h}, \mathbf{i}), (\mathbf{j}, \mathbf{k}, \mathbf{l})\in\mathbb{D}$, write $(\mathbf{g}, \mathbf{h}, \mathbf{i})\sim(\mathbf{j}, \mathbf{k}, \mathbf{l})$ if and only if $(\mathbbm{g}\cap\mathbbm{i})^\circ\setminus\mathbbm{h}=(\mathbbm{j}\cap\mathbbm{l})^\circ\setminus\mathbbm{k}$. This implies that $\sim$ is an equivalence relation on $\mathbb{D}$. Hence there is $n_\sim\in\mathbb{N}$ such that $\mathbb{D}_1,\mathbb{D}_2,\ldots, \mathbb{D}_{n_\sim}$ are exactly all pairwise distinct equivalence classes of $\mathbb{D}$ with respect to $\sim$. Assume that $m\!\in\![1, n_\sim]$. Let $\mathbb{D}(m)\!\!=\!\!\{\mathbf{a}\!: \exists\ \mathbf{b}\!\in\!\mathbb{E}, (\mathbf{a}, \mathbf{b}, \mathbf{a})\!\in\!\mathbb{D}_m\}$ and $\mathbb{I}(m)\!=\!\langle\{D_{\mathbf{a}, \mathbf{b}, \mathbf{c}}\!+\!\mathrm{Rad}(\mathbb{T})\!: (\mathbf{a},\mathbf{b},\mathbf{c})\!\in\!\mathbb{D}_m\}\rangle_{\mathbb{T}/\mathrm{Rad}(\mathbb{T})}$.
The $\F$-dimension of $\mathbb{I}(m)$ is $|\mathbb{D}_m|$.
\end{nota}
The following sequence of lemmas aims at studying the objects in Notation \ref{N;Notation8.1}.
\begin{lem}\label{L;Lemma8.2}
Assume that $g\in[1,n_\sim]$. Then $\mathbb{I}(g)$ is a two-sided ideal of $\mathbb{T}/\mathrm{Rad}(\mathbb{T})$. Moreover, $\mathbb{T}/\mathrm{Rad}(\mathbb{T})$ is a direct sum of the $\F$-linear subspaces $\mathbb{I}(1), \mathbb{I}(2),\ldots, \mathbb{I}(n_\sim)$.
\end{lem}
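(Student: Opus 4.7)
The plan is to dispatch the two statements in sequence, both by leveraging the multiplication rule of the basis $\mathbb{B}_4$ of $\mathbb{T}/\mathrm{Rad}(\mathbb{T})$ from Theorems \ref{T;Theorem7.4} and \ref{T;Theorem7.15}, reinforced with Corollary \ref{C;Corollary7.17}.

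For the ideal property, I would fix $m\in[1,n_\sim]$ (using $m$ in place of the lemma's $g$ merely to free the letter $g$ for bold-face indices) and use $\F$-bilinearity to reduce to checking that $(D_{\mathbf{g},\mathbf{h},\mathbf{i}}+\mathrm{Rad}(\mathbb{T}))(D_{\mathbf{j},\mathbf{k},\mathbf{l}}+\mathrm{Rad}(\mathbb{T}))$ and $(D_{\mathbf{j},\mathbf{k},\mathbf{l}}+\mathrm{Rad}(\mathbb{T}))(D_{\mathbf{g},\mathbf{h},\mathbf{i}}+\mathrm{Rad}(\mathbb{T}))$ both lie in $\mathbb{I}(m)$ whenever $(\mathbf{g},\mathbf{h},\mathbf{i})\in\mathbb{D}_m$ and $(\mathbf{j},\mathbf{k},\mathbf{l})\in\mathbb{D}$. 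Theorem \ref{T;Theorem7.15} forces the first product to vanish outside the case $\mathbf{i}=\mathbf{j}$ together with $(\mathbbm{g}\cap\mathbbm{i})^\circ\setminus\mathbbm{h}=(\mathbbm{i}\cap\mathbbm{l})^\circ\setminus\mathbbm{k}$, in which case the product equals $D_{\mathbf{g},\mathbf{q},\mathbf{l}}+\mathrm{Rad}(\mathbb{T})$ with $\mathbf{q}=[\mathbf{g},\mathbf{h},\mathbf{i},\mathbf{k},\mathbf{l}]$. Corollary \ref{C;Corollary7.17} then furnishes the chain
$$(\mathbbm{g}\cap\mathbbm{i})^\circ\setminus\mathbbm{h}=(\mathbbm{i}\cap\mathbbm{l})^\circ\setminus\mathbbm{k}=(\mathbbm{g}\cap\mathbbm{l})^\circ\setminus\mathbbm{q},$$
so $(\mathbf{g},\mathbf{q},\mathbf{l})\sim(\mathbf{g},\mathbf{h},\mathbf{i})$ and thus $(\mathbf{g},\mathbf{q},\mathbf{l})\in\mathbb{D}_m$, placing the product in $\mathbb{I}(m)$. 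The other product is symmetric: if nonzero it forces $\mathbf{l}=\mathbf{g}$ and equals $D_{\mathbf{j},\mathbf{r},\mathbf{i}}+\mathrm{Rad}(\mathbb{T})$ with $\mathbf{r}=[\mathbf{j},\mathbf{k},\mathbf{l},\mathbf{h},\mathbf{i}]$, and applying Corollary \ref{C;Corollary7.17} again delivers $(\mathbbm{j}\cap\mathbbm{i})^\circ\setminus\mathbbm{r}=(\mathbbm{g}\cap\mathbbm{i})^\circ\setminus\mathbbm{h}$, so $(\mathbf{j},\mathbf{r},\mathbf{i})\sim(\mathbf{g},\mathbf{h},\mathbf{i})\in\mathbb{D}_m$ and the product again lands in $\mathbb{I}(m)$.

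The direct sum decomposition is then pure linear algebra once the basis is known. Because $\sim$ partitions $\mathbb{D}$ into the classes $\mathbb{D}_1,\ldots,\mathbb{D}_{n_\sim}$, the $\F$-basis $\{D_{\mathbf{a},\mathbf{b},\mathbf{c}}+\mathrm{Rad}(\mathbb{T}):(\mathbf{a},\mathbf{b},\mathbf{c})\in\mathbb{D}\}$ of $\mathbb{T}/\mathrm{Rad}(\mathbb{T})$ supplied by Theorem \ref{T;Theorem7.4} splits into the disjoint spanning sets defining the $\mathbb{I}(m)$; each such piece inherits $\F$-linear independence from the ambient basis, so every $\mathbb{I}(m)$ has that piece as its own $\F$-basis and the decomposition $\mathbb{T}/\mathrm{Rad}(\mathbb{T})=\bigoplus_{m=1}^{n_\sim}\mathbb{I}(m)$ of $\F$-linear spaces is immediate. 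The only genuine delicacy is in the ideal step: Corollary \ref{C;Corollary7.17} is stated as a necessary condition about the indices in a nonzero product, so I must be careful to invoke it against the correct nonvanishing case of Theorem \ref{T;Theorem7.15}; after that matching is made, the $\sim$-equivalence is a one-line verification and I anticipate no serious obstacle beyond bookkeeping.
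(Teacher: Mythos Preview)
Your proposal is correct and follows essentially the same approach as the paper: the paper's proof simply cites the partition of $\mathbb{D}$ together with Theorems~\ref{T;Theorem7.4}, \ref{T;Theorem7.15}, and Corollary~\ref{C;Corollary7.17}, and you have spelled out exactly the computation those citations encode. The only difference is level of detail, not substance.
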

\begin{proof}
It is obvious to see that $\{\mathbb{D}_1, \mathbb{D}_2,\ldots, \mathbb{D}_{n_\sim}\}$ forms a partition of $\mathbb{D}$. The desired lemma follows from the combination of Theorems \ref{T;Theorem7.4}, \ref{T;Theorem7.15}, and Corollary \ref{C;Corollary7.17}.
\end{proof}
\begin{lem}\label{L;Lemma8.3}
Assume that $g\!\in\![1,n_\sim]$, $\mathbf{h}, \mathbf{i}, \mathbf{j}, \mathbf{k}\!\in\!\mathbb{E}$, $(\mathbf{h}, \mathbf{i},\mathbf{k}),(\mathbf{h}, \mathbf{j}, \mathbf{k})\!\in\!\mathbb{D}_g$. Then $\mathbf{i}\!=\!\mathbf{j}$.
\end{lem}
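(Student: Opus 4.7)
The plan is to unpack the definitions and then read off the conclusion directly from the structure of $\mathbb{P}_{\mathbf{h},\mathbf{k}}$ combined with the equivalence relation $\sim$. First I would observe that $(\mathbf{h},\mathbf{i},\mathbf{k}),(\mathbf{h},\mathbf{j},\mathbf{k})\in\mathbb{D}_g\subseteq\mathbb{D}\subseteq\mathbb{P}$ gives $\mathbf{i},\mathbf{j}\in\mathbb{P}_{\mathbf{h},\mathbf{k}}$, so by Lemma \ref{L;Lemma2.5}
$$\mathbbm{h}\triangle\mathbbm{k}\subseteq\mathbbm{i},\mathbbm{j}\subseteq(\mathbbm{h}\triangle\mathbbm{k})\cup(\mathbbm{h}\cap\mathbbm{k})^\circ.$$
Next I would invoke the assumption $(\mathbf{h},\mathbf{i},\mathbf{k})\sim(\mathbf{h},\mathbf{j},\mathbf{k})$ from Notation \ref{N;Notation8.1}, which unpacks as the equality $(\mathbbm{h}\cap\mathbbm{k})^\circ\setminus\mathbbm{i}=(\mathbbm{h}\cap\mathbbm{k})^\circ\setminus\mathbbm{j}$, or equivalently $\mathbbm{i}\cap(\mathbbm{h}\cap\mathbbm{k})^\circ=\mathbbm{j}\cap(\mathbbm{h}\cap\mathbbm{k})^\circ$.

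Once these two pieces are in hand, the conclusion is immediate: the containment chain above tells us that both $\mathbbm{i}$ and $\mathbbm{j}$ admit the decompositions
$$\mathbbm{i}=(\mathbbm{h}\triangle\mathbbm{k})\cup\bigl(\mathbbm{i}\cap(\mathbbm{h}\cap\mathbbm{k})^\circ\bigr),\quad \mathbbm{j}=(\mathbbm{h}\triangle\mathbbm{k})\cup\bigl(\mathbbm{j}\cap(\mathbbm{h}\cap\mathbbm{k})^\circ\bigr),$$
since the part of $\mathbbm{i}$ (resp.~$\mathbbm{j}$) outside $\mathbbm{h}\triangle\mathbbm{k}$ is forced to lie in $(\mathbbm{h}\cap\mathbbm{k})^\circ$. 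The two optional pieces coincide by the equivalence relation, so $\mathbbm{i}=\mathbbm{j}$, and Lemma \ref{L;Lemma2.3} then yields $\mathbf{i}=\mathbf{j}$.

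There is no real obstacle here; the content of the lemma is essentially that once the endpoints $\mathbf{h},\mathbf{k}$ of a triple in $\mathbb{D}_g$ are fixed, the middle component is determined by the $\sim$-class, because $\mathbb{P}_{\mathbf{h},\mathbf{k}}$ has been constrained on exactly the set $(\mathbbm{h}\cap\mathbbm{k})^\circ$ where the equivalence relation already pins things down. The only thing to be careful about is making sure not to accidentally invoke the $p\nmid k_\mathbf{i}$, $p\nmid k_\mathbf{j}$ hypotheses (they are free consequences of $\mathbb{D}$-membership but are not needed for this particular conclusion) and to cite Lemma \ref{L;Lemma2.3} at the final step to pass from the set-theoretic equality $\mathbbm{i}=\mathbbm{j}$ to the tuple equality $\mathbf{i}=\mathbf{j}$.
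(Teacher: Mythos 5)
Your proof is correct and follows essentially the same route as the paper's: unpack $\mathbf{i},\mathbf{j}\in\mathbb{P}_{\mathbf{h},\mathbf{k}}$ via Lemma \ref{L;Lemma2.5}, use the $\sim$-equivalence to equate the pieces on $(\mathbbm{h}\cap\mathbbm{k})^\circ$, and combine the resulting decompositions $\mathbbm{i}=(\mathbbm{h}\triangle\mathbbm{k})\cup(\mathbbm{i}\cap(\mathbbm{h}\cap\mathbbm{k})^\circ)$ and likewise for $\mathbbm{j}$, then finish with Lemma \ref{L;Lemma2.3}. The paper writes the shared piece as $(\mathbbm{h}\cap\mathbbm{i}\cap\mathbbm{k})^\circ$, which is identical to your $\mathbbm{i}\cap(\mathbbm{h}\cap\mathbbm{k})^\circ$, so the two arguments coincide.
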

\begin{proof}
As $(\mathbf{h}, \mathbf{i},\mathbf{k}),(\mathbf{h}, \mathbf{j}, \mathbf{k})\in\mathbb{D}_g$, notice that   $(\mathbbm{h}\cap\mathbbm{k})^\circ\setminus\mathbbm{i}=(\mathbbm{h}\cap\mathbbm{k})^\circ\setminus\mathbbm{j}$ and $\mathbf{i}, \mathbf{j}\in\mathbb{P}_{\mathbf{h}, \mathbf{k}}$. This shows that $(\mathbbm{h}\cap\mathbbm{i}\cap\mathbbm{k})^\circ=(\mathbbm{h}\cap\mathbbm{j}\cap\mathbbm{k})^\circ$, $\mathbbm{i}=(\mathbbm{h}\triangle\mathbbm{k})\cup(\mathbbm{h}\cap\mathbbm{i}\cap\mathbbm{k})=
(\mathbbm{h}\triangle\mathbbm{k})\cup(\mathbbm{h}\cap\mathbbm{i}\cap\mathbbm{k})^\circ$, $\mathbbm{j}=(\mathbbm{h}\triangle\mathbbm{k})\cup(\mathbbm{h}\cap\mathbbm{j}\cap\mathbbm{k})=
(\mathbbm{h}\triangle\mathbbm{k})\cup(\mathbbm{h}\cap\mathbbm{j}\cap\mathbbm{k})^\circ$. These listed equations thus imply that $\mathbbm{i}=\mathbbm{j}$. The desired lemma follows from a direct application of Lemma \ref{L;Lemma2.3}.
\end{proof}
\begin{lem}\label{L;Lemma8.4}
Assume that $g\!\in\![1,n_\sim]$, $\mathbf{h}, \mathbf{i}, \mathbf{j}\in\mathbb{E}$, $(\mathbf{h}, \mathbf{i}, \mathbf{j})\in\mathbb{D}_g$. Then there are unique $\mathbf{k}, \mathbf{l}\in\mathbb{E}$ such that $\mathbb{D}_g$ contains
$(\mathbf{h}, \mathbf{k}, \mathbf{h})$ and $(\mathbf{j},\mathbf{l}, \mathbf{j})$. In particular, $\mathbf{h}, \mathbf{j}\in\mathbb{D}(g)$.
\end{lem}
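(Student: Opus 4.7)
The plan is to produce $\mathbf{k}$ and $\mathbf{l}$ explicitly using the ``defect set'' attached to the $\sim$-class, namely
$$A=(\mathbbm{h}\cap\mathbbm{j})^\circ\setminus\mathbbm{i},$$
and then read off the characterizing property of $\sim$. By the definition of $\sim$ in Notation \ref{N;Notation8.1}, the set $A$ depends only on the class $\mathbb{D}_g$. I will use Lemma \ref{L;Lemma2.3} to define $\mathbf{k},\mathbf{l}\in\mathbb{E}$ by
$$\mathbbm{k}=\mathbbm{h}^\circ\setminus A,\qquad \mathbbm{l}=\mathbbm{j}^\circ\setminus A,$$
and then verify that both $(\mathbf{h},\mathbf{k},\mathbf{h})$ and $(\mathbf{j},\mathbf{l},\mathbf{j})$ belong to $\mathbb{D}_g$.

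First I would check that $(\mathbf{h},\mathbf{k},\mathbf{h})\in\mathbb{P}$. By Lemma \ref{L;Lemma2.5} (as specialized via Notation \ref{N;Notation3.1}), this reduces to $\mathbbm{k}\subseteq\mathbbm{h}^\circ$, which is immediate from the definition of $\mathbf{k}$ because $A\subseteq(\mathbbm{h}\cap\mathbbm{j})^\circ\subseteq\mathbbm{h}^\circ$. Next I would confirm $(\mathbf{h},\mathbf{k},\mathbf{h})\sim(\mathbf{h},\mathbf{i},\mathbf{j})$: by construction $\mathbbm{h}^\circ\setminus\mathbbm{k}=A=(\mathbbm{h}\cap\mathbbm{j})^\circ\setminus\mathbbm{i}$, which is exactly the condition for $\sim$.

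Second I would verify $p\nmid k_\mathbf{k}$, so that $(\mathbf{h},\mathbf{k},\mathbf{h})\in\mathbb{D}_g$. Pick $q\in\mathbbm{k}$. If $q\notin\mathbbm{j}$, then $q\in\mathbbm{h}\setminus\mathbbm{j}\subseteq\mathbbm{h}\triangle\mathbbm{j}\subseteq\mathbbm{i}$ since $(\mathbf{h},\mathbf{i},\mathbf{j})\in\mathbb{P}$. If $q\in\mathbbm{j}$, then $q\in(\mathbbm{h}\cap\mathbbm{j})^\circ$ and $q\notin A$ force $q\in\mathbbm{i}$. In either case $q\in\mathbbm{i}$, and since $p\nmid k_\mathbf{i}=\prod_{r\in\mathbbm{i}}(|\mathbb{U}_r|-1)$ we get $p\nmid|\mathbb{U}_q|-1$. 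Taking the product over $q\in\mathbbm{k}$ gives $p\nmid k_\mathbf{k}$. The argument for $\mathbf{l}$ is entirely symmetric, using $\mathbbm{j}\setminus\mathbbm{h}\subseteq\mathbbm{i}$ in place of $\mathbbm{h}\setminus\mathbbm{j}\subseteq\mathbbm{i}$.

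For uniqueness, suppose $(\mathbf{h},\mathbf{k}',\mathbf{h})\in\mathbb{D}_g$. Then $\mathbbm{k}'\subseteq\mathbbm{h}^\circ$ and $\mathbbm{h}^\circ\setminus\mathbbm{k}'=A$, which determines $\mathbbm{k}'=\mathbbm{h}^\circ\setminus A=\mathbbm{k}$; Lemma \ref{L;Lemma2.3} then gives $\mathbf{k}'=\mathbf{k}$. The same reasoning applies to $\mathbf{l}$. Finally, the ``in particular'' clause is then automatic from the definition of $\mathbb{D}(g)$: the triples $(\mathbf{h},\mathbf{k},\mathbf{h})$ and $(\mathbf{j},\mathbf{l},\mathbf{j})$ exhibit $\mathbf{h}$ and $\mathbf{j}$ as elements of $\mathbb{D}(g)$. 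There is no deep obstacle here; the main point is identifying the single subset $A\subseteq[1,n]^\circ$ that parametrizes the $\sim$-class, and then the $p$-coprimality check is the only nontrivial step, resolved by the containment $\mathbbm{h}\triangle\mathbbm{j}\subseteq\mathbbm{i}$.
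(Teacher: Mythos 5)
Your proof is correct and is essentially the paper's argument in a slightly different packaging: your $\mathbbm{k}=\mathbbm{h}^\circ\setminus A$ coincides with the paper's explicit choice $\mathbbm{k}=(\mathbbm{h}\cap\mathbbm{i})^\circ$ (and similarly for $\mathbf{l}$), so both proofs produce the same elements. The only differences are cosmetic: the paper's formulation makes $p\nmid k_\mathbf{k}$ immediate (since $\mathbbm{k}\subseteq\mathbbm{i}$) and then verifies the $\sim$-equalities, whereas you reverse that emphasis, and the paper appeals to Lemma~\ref{L;Lemma8.3} for uniqueness where you argue it directly.
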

\begin{proof}
By Lemma \ref{L;Lemma2.3}, there are unique $\mathbf{k}, \mathbf{l}\in\mathbb{E}$ such that $\mathbbm{k}=(\mathbbm{h}\cap\mathbbm{i})^\circ$ and $\mathbbm{l}=(\mathbbm{i}\cap\mathbbm{j})^\circ$. Then $\mathbf{k}\in\mathbb{P}_{\mathbf{h}, \mathbf{h}}$ and $\mathbf{l}\in\mathbb{P}_{\mathbf{j}, \mathbf{j}}$. As $(\mathbf{h}, \mathbf{i}, \mathbf{j})\in\mathbb{D}_g$, notice that $\mathbf{i}\in\mathbb{P}_{\mathbf{h}, \mathbf{j}}$, $p\nmid k_\mathbf{i}$, and $p\nmid k_\mathbf{k}k_\mathbf{l}$. Therefore $\mathbbm{i}=(\mathbbm{h}\triangle\mathbbm{j})\cup(\mathbbm{h}\cap\mathbbm{i}\cap\mathbbm{j})=
(\mathbbm{h}\triangle\mathbbm{j})\cup(\mathbbm{h}\cap\mathbbm{i}\cap\mathbbm{j})^\circ$ and $(\mathbf{h}, \mathbf{k}, \mathbf{h}), (\mathbf{j},\mathbf{l}, \mathbf{j})\in\mathbb{D}$. Hence $\mathbbm{h}^\circ\setminus\mathbbm{k}=\mathbbm{h}^\circ\setminus\mathbbm{i}=(\mathbbm{h}\cap\mathbbm{j})^\circ\setminus\mathbbm{i}
=\mathbbm{j}^\circ\setminus\mathbbm{i}=\mathbbm{j}^\circ\setminus\mathbbm{l}$. Therefore $(\mathbf{h}, \mathbf{k}, \mathbf{h}), (\mathbf{j},\mathbf{l}, \mathbf{j})\in\mathbb{D}_g$. The uniqueness of $\mathbf{k}$ and $\mathbf{l}$ follows from Lemma \ref{L;Lemma8.3}. The desired lemma follows.
\end{proof}
\begin{lem}\label{L;Lemma8.5}
Assume that $g\in [1, n_\sim]$, $\mathbf{h}, \mathbf{i}, \mathbf{j}, \mathbf{k}\in\mathbb{E}$, $(\mathbf{h}, \mathbf{i}, \mathbf{h}), (\mathbf{j}, \mathbf{k}, \mathbf{j})\in\mathbb{D}_g$. Then there is a unique $\mathbf{l}\in\mathbb{E}$ such that $(\mathbf{h},\mathbf{l},\mathbf{j})\in\mathbb{D}_g$.
\end{lem}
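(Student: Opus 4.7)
The plan is to construct $\mathbf{l}$ directly from the common $\sim$-invariant of the two given triples and then verify all three conditions of membership in $\mathbb{D}_g$. Since $(\mathbf{h}, \mathbf{i}, \mathbf{h}), (\mathbf{j}, \mathbf{k}, \mathbf{j}) \in \mathbb{D}_g$, the common value
$$\mathbb{U} := (\mathbbm{h}\cap\mathbbm{h})^\circ\setminus\mathbbm{i} = \mathbbm{h}^\circ\setminus\mathbbm{i} = \mathbbm{j}^\circ\setminus\mathbbm{k} = (\mathbbm{j}\cap\mathbbm{j})^\circ\setminus\mathbbm{k}$$
is well defined. Since $\mathbf{i}\in\mathbb{P}_{\mathbf{h},\mathbf{h}}$ and $\mathbf{k}\in\mathbb{P}_{\mathbf{j},\mathbf{j}}$, Lemma \ref{L;Lemma2.5} gives $\mathbbm{i}\subseteq\mathbbm{h}^\circ$ and $\mathbbm{k}\subseteq\mathbbm{j}^\circ$, so that $\mathbbm{i}=\mathbbm{h}^\circ\setminus\mathbb{U}$ and $\mathbbm{k}=\mathbbm{j}^\circ\setminus\mathbb{U}$. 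In particular $\mathbb{U}\subseteq\mathbbm{h}^\circ\cap\mathbbm{j}^\circ=(\mathbbm{h}\cap\mathbbm{j})^\circ$.

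For \emph{existence}, I would use Lemma \ref{L;Lemma2.3} to pick the unique $\mathbf{l}\in\mathbb{E}$ with
$$\mathbbm{l} = (\mathbbm{h}\triangle\mathbbm{j})\cup\bigl((\mathbbm{h}\cap\mathbbm{j})^\circ\setminus\mathbb{U}\bigr).$$
Then by construction $\mathbbm{h}\triangle\mathbbm{j}\subseteq\mathbbm{l}\subseteq(\mathbbm{h}\triangle\mathbbm{j})\cup(\mathbbm{h}\cap\mathbbm{j})^\circ$, so Lemma \ref{L;Lemma2.5} yields $(\mathbf{h},\mathbf{l},\mathbf{j})\in\mathbb{P}$. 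Moreover, since $\mathbb{U}\subseteq(\mathbbm{h}\cap\mathbbm{j})^\circ$ and the two pieces of $\mathbbm{l}$ are disjoint from $(\mathbbm{h}\cap\mathbbm{j})^\circ$ and from each other in the obvious way, a direct computation shows $(\mathbbm{h}\cap\mathbbm{j})^\circ\setminus\mathbbm{l}=\mathbb{U}$, which is exactly the invariant condition ensuring $(\mathbf{h},\mathbf{l},\mathbf{j})\sim(\mathbf{h},\mathbf{i},\mathbf{h})$, hence $(\mathbf{h},\mathbf{l},\mathbf{j})\in\mathbb{D}_g$ once we know $p\nmid k_\mathbf{l}$.

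The main work, and the step I expect to be the principal technical obstacle, is verifying $p\nmid k_\mathbf{l}$. Since entries $a\in\mathbbm{l}$ with $|\mathbb{U}_a|=2$ contribute the factor $1$ to $k_\mathbf{l}$, it suffices to handle $a\in\mathbbm{l}\cap[1,n]^\circ$. Partition this set according to the decomposition of $\mathbbm{l}$. If $a\in(\mathbbm{h}\cap\mathbbm{j})^\circ\setminus\mathbb{U}$, then $a\in\mathbbm{h}^\circ\setminus\mathbb{U}=\mathbbm{i}$, so $p\nmid |\mathbb{U}_a|-1$ by $p\nmid k_\mathbf{i}$. If $a\in\mathbbm{h}^\circ\setminus\mathbbm{j}$, then $a\notin\mathbb{U}$ (because $\mathbb{U}\subseteq\mathbbm{j}^\circ$), so again $a\in\mathbbm{h}^\circ\setminus\mathbb{U}=\mathbbm{i}$ and $p\nmid|\mathbb{U}_a|-1$; the case $a\in\mathbbm{j}^\circ\setminus\mathbbm{h}$ is symmetric using $\mathbbm{k}$. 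Hence $p\nmid k_\mathbf{l}$, completing the existence proof.

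For \emph{uniqueness}, suppose $\mathbf{l}'\in\mathbb{E}$ also satisfies $(\mathbf{h},\mathbf{l}',\mathbf{j})\in\mathbb{D}_g$. Then $\mathbf{l}'\in\mathbb{P}_{\mathbf{h},\mathbf{j}}$, so Lemma \ref{L;Lemma2.5} forces $\mathbbm{l}'=(\mathbbm{h}\triangle\mathbbm{j})\cup\mathbb{V}$ for some $\mathbb{V}\subseteq(\mathbbm{h}\cap\mathbbm{j})^\circ$; and the equivalence $(\mathbf{h},\mathbf{l}',\mathbf{j})\sim(\mathbf{h},\mathbf{i},\mathbf{h})$ yields
$$(\mathbbm{h}\cap\mathbbm{j})^\circ\setminus\mathbb{V}=(\mathbbm{h}\cap\mathbbm{j})^\circ\setminus\mathbbm{l}'=\mathbb{U},$$
using $(\mathbbm{h}\cap\mathbbm{j})^\circ\cap(\mathbbm{h}\triangle\mathbbm{j})=\varnothing$. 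Since $\mathbb{U}\subseteq(\mathbbm{h}\cap\mathbbm{j})^\circ$, this forces $\mathbb{V}=(\mathbbm{h}\cap\mathbbm{j})^\circ\setminus\mathbb{U}$ and hence $\mathbbm{l}'=\mathbbm{l}$, so $\mathbf{l}'=\mathbf{l}$ by Lemma \ref{L;Lemma2.3}.
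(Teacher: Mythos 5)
Your construction of $\mathbf{l}$ matches the paper's exactly: your $\mathbbm{l}=(\mathbbm{h}\triangle\mathbbm{j})\cup((\mathbbm{h}\cap\mathbbm{j})^\circ\setminus\mathbb{U})$ equals the paper's $(\mathbbm{h}\triangle\mathbbm{j})\cup(\mathbbm{h}\cap\mathbbm{j}\cap\mathbbm{k})^\circ$ once you unwind $\mathbb{U}=\mathbbm{j}^\circ\setminus\mathbbm{k}$ and $\mathbbm{k}=(\mathbbm{j}\cap\mathbbm{k})^\circ$, and your verification that $p\nmid k_{\mathbf{l}}$ via the three-piece decomposition is a spelled-out version of the paper's terser computation. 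The only cosmetic difference is uniqueness: the paper appeals to Lemma \ref{L;Lemma8.3}, while you inline essentially the same argument directly, which is equally valid.
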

\begin{proof}
Recall that $k_\mathbf{0}=1$ and $k_\mathbf{m}=\prod_{q\in\mathbbm{m}}(|\mathbb{U}_q|-1)$ if $\mathbf{m}\in\mathbb{E}\setminus\{\mathbf{0}\}$. By Lemma \ref{L;Lemma2.3}, there is a unique $\mathbf{l}\in\mathbb{E}$ such that $\mathbbm{l}=(\mathbbm{h}\triangle\mathbbm{j})\cup(\mathbbm{h}\cap\mathbbm{j}\cap\mathbbm{k})^\circ$. This shows that $\mathbf{l}\in\mathbb{P}_{\mathbf{h}, \mathbf{j}}$. As $(\mathbf{h}, \mathbf{i}, \mathbf{h}), (\mathbf{j}, \mathbf{k}, \mathbf{j})\in\mathbb{D}_g$, notice that $\mathbbm{i}=(\mathbbm{h}\cap\mathbbm{i})=(\mathbbm{h}\cap\mathbbm{i})^\circ$, $\mathbbm{k}=(\mathbbm{j}\cap\mathbbm{k})=(\mathbbm{j}\cap\mathbbm{k})^\circ$, $\mathbbm{h}^\circ\setminus\mathbbm{i}=\mathbbm{j}^\circ\setminus\mathbbm{k}$, and $p\nmid k_\mathbf{i}k_\mathbf{k}$. Hence $\mathbbm{h}^\circ\setminus\mathbbm{j}=(\mathbbm{h}\cap\mathbbm{i})^\circ\setminus\mathbbm{j}$, $\mathbbm{j}^\circ\setminus\mathbbm{h}=(\mathbbm{j}\cap\mathbbm{k})^\circ\setminus\mathbbm{h}$, and $p\nmid k_\mathbf{l}$. So $(\mathbf{h}, \mathbf{l}, \mathbf{j})\in\mathbb{D}$. Then $(\mathbf{h}, \mathbf{l}, \mathbf{j})\in\mathbb{D}_g$ as $(\mathbbm{h}\cap\mathbbm{j})^\circ\setminus\mathbbm{l}=(\mathbbm{h}\cap\mathbbm{j})^\circ\setminus\mathbbm{k}=
\mathbbm{j}^\circ\setminus\mathbbm{k}=\mathbbm{h}^\circ\setminus\mathbbm{i}$. The uniqueness of $\mathbf{l}$ thus follows from Lemma \ref{L;Lemma8.3}. The desired lemma follows.
\end{proof}
The above lemma sequence motivates us to present the next lemma and a notation.
\begin{lem}\label{L;Lemma8.6}
Assume that $g\in[1, n_\sim]$. Then the map that sends $(\mathbf{h}, \mathbf{i}, \mathbf{j})$ to $(\mathbf{h}, \mathbf{j})$ is bijective from $\mathbb{D}_g$ to $\mathbb{D}(g)\times \mathbb{D}(g)$. In particular, the $\F$-dimension of $\mathbb{I}(g)$ is $|\mathbb{D}(g)|^2$.
\end{lem}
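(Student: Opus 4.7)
The plan is to build the bijection directly and transport the cardinality count to obtain the dimension formula. Define the map $\varphi\colon\mathbb{D}_g\to\mathbb{D}(g)\times\mathbb{D}(g)$ by $\varphi(\mathbf{h},\mathbf{i},\mathbf{j})=(\mathbf{h},\mathbf{j})$. First I would verify well-definedness: if $(\mathbf{h},\mathbf{i},\mathbf{j})\in\mathbb{D}_g$, then Lemma \ref{L;Lemma8.4} supplies $\mathbf{k},\mathbf{l}\in\mathbb{E}$ with $(\mathbf{h},\mathbf{k},\mathbf{h}),(\mathbf{j},\mathbf{l},\mathbf{j})\in\mathbb{D}_g$, which forces $\mathbf{h},\mathbf{j}\in\mathbb{D}(g)$ by Notation \ref{N;Notation8.1}, so the codomain is correct.

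Next I would establish injectivity: if $(\mathbf{h},\mathbf{i},\mathbf{j}),(\mathbf{h},\mathbf{i}',\mathbf{j})\in\mathbb{D}_g$ both map to $(\mathbf{h},\mathbf{j})$, Lemma \ref{L;Lemma8.3} immediately yields $\mathbf{i}=\mathbf{i}'$. For surjectivity, take any $(\mathbf{h},\mathbf{j})\in\mathbb{D}(g)\times\mathbb{D}(g)$; by definition of $\mathbb{D}(g)$ there are $\mathbf{k},\mathbf{l}\in\mathbb{E}$ with $(\mathbf{h},\mathbf{k},\mathbf{h}),(\mathbf{j},\mathbf{l},\mathbf{j})\in\mathbb{D}_g$, and Lemma \ref{L;Lemma8.5} then produces a (unique) $\mathbf{i}\in\mathbb{E}$ with $(\mathbf{h},\mathbf{i},\mathbf{j})\in\mathbb{D}_g$, giving the required preimage.

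Finally, from the bijection I would conclude $|\mathbb{D}_g|=|\mathbb{D}(g)\times\mathbb{D}(g)|=|\mathbb{D}(g)|^2$. Since Notation \ref{N;Notation8.1} already records that the $\F$-dimension of $\mathbb{I}(g)$ equals $|\mathbb{D}_g|$, combining these gives the stated dimension formula $|\mathbb{D}(g)|^2$. There is no real obstacle here: the whole proof is a bookkeeping assembly of Lemmas \ref{L;Lemma8.3}, \ref{L;Lemma8.4}, and \ref{L;Lemma8.5}, which precisely encode injectivity, codomain-containment, and surjectivity of $\varphi$, respectively.
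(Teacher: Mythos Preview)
Your proposal is correct and follows essentially the same approach as the paper's own proof, which simply cites Lemmas \ref{L;Lemma8.3}, \ref{L;Lemma8.4}, and \ref{L;Lemma8.5} without elaboration. You have merely made explicit how each lemma contributes (well-definedness, injectivity, surjectivity), which matches the intended argument exactly.
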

\begin{proof}
The desired lemma follows from the combination of Lemmas \ref{L;Lemma8.4}, \ref{L;Lemma8.3}, \ref{L;Lemma8.5}.
\end{proof}
\begin{nota}\label{N;Notation8.7}
Assume that $g\in[1, n_\sim]$. By Lemma \ref{L;Lemma8.6} and Theorem \ref{T;Theorem7.4}, there is a unique $D_{\mathbf{h},\mathbf{i}, \mathbf{j}}+\mathrm{Rad}(\mathbb{T})\in\{D_{\mathbf{a}, \mathbf{b}, \mathbf{c}}+\mathrm{Rad}(\mathbb{T}): (\mathbf{a}, \mathbf{b}, \mathbf{c})\in\mathbb{D}_g\}$ for any $\mathbf{h}, \mathbf{j}\in\mathbb{D}(g)$. This nonzero element in $\mathbb{T}/\mathrm{Rad}(\mathbb{T})$ is denoted by $D_{\mathbf{h},\mathbf{j}}(g)$ for any $\mathbf{h}, \mathbf{j}\in\mathbb{D}(g)$. For completeness, define $D_{\mathbf{h}, \mathbf{j}}(g)=O+\mathrm{Rad}(\mathbb{T})$ for any $\mathbf{h}, \mathbf{j}\in\mathbb{E}$ and $\{\mathbf{h}, \mathbf{j}\}\not\subseteq\mathbb{D}(g)$. By Lemma \ref{L;Lemma8.6} and Theorem \ref{T;Theorem7.4}, notice that $\mathbb{I}(g)$ has an $\F$-basis $\{D_{\mathbf{a}, \mathbf{b}}(g):\mathbf{a}, \mathbf{b}\in\mathbb{D}(g)\}$.
\end{nota}
The objects in Notation \ref{N;Notation8.7} allow us to present the following preliminary lemmas.
\begin{lem}\label{L;Lemma8.8}
$\mathbb{T}/\mathrm{Rad}(\mathbb{T})$ has an $\F$-basis $\{D_{\mathbf{b}, \mathbf{c}}(a)\!:\! a\!\in\![1, n_\sim], \mathbf{b}, \mathbf{c}\!\in\!\mathbb{D}(a)\}$. Moreover, if $g, h\!\in\![1, n_\sim]$,
$\mathbf{i}, \mathbf{j}, \mathbf{k},\mathbf{l}\!\in\!\mathbb{E}$, $\mathbf{i}, \mathbf{j}\!\in\!\mathbb{D}(g)$, $\mathbf{k}, \mathbf{l}\!\in\!\mathbb{D}(h)$, then $D_{\mathbf{i},\mathbf{j}}(g)D_{\mathbf{k},\mathbf{l}}(h)\!\!=\!\!\delta_{g, h}\delta_{\mathbf{j}, \mathbf{k}}D_{\mathbf{i},\mathbf{l}}(g)$.
\end{lem}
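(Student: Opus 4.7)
The plan is to treat Lemma 8.8 as a packaging result: almost everything has been done, and what remains is to translate Theorem \ref{T;Theorem7.15} and Corollary \ref{C;Corollary7.17} into the new $D_{\mathbf{b},\mathbf{c}}(a)$-notation. The argument naturally splits into (i) recognising the set as an $\F$-basis, and (ii) verifying the multiplication rule.

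For (i), I would argue as follows. Lemma \ref{L;Lemma8.2} gives the $\F$-linear space decomposition $\mathbb{T}/\mathrm{Rad}(\mathbb{T})=\bigoplus_{a=1}^{n_\sim}\mathbb{I}(a)$, and Notation \ref{N;Notation8.7} already observes that $\{D_{\mathbf{b},\mathbf{c}}(a):\mathbf{b},\mathbf{c}\in\mathbb{D}(a)\}$ is an $\F$-basis of $\mathbb{I}(a)$ (combining Lemma \ref{L;Lemma8.6} with Theorem \ref{T;Theorem7.4}). Taking the union over $a\in[1,n_\sim]$ of these disjoint bases yields the required $\F$-basis of $\mathbb{T}/\mathrm{Rad}(\mathbb{T})$.

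For (ii), I would unfold the definitions. Pick the representatives $(\mathbf{i},\mathbf{a},\mathbf{j})\in\mathbb{D}_g$ and $(\mathbf{k},\mathbf{b},\mathbf{l})\in\mathbb{D}_h$ determined by Lemma \ref{L;Lemma8.6}, so that $D_{\mathbf{i},\mathbf{j}}(g)=D_{\mathbf{i},\mathbf{a},\mathbf{j}}+\mathrm{Rad}(\mathbb{T})$ and $D_{\mathbf{k},\mathbf{l}}(h)=D_{\mathbf{k},\mathbf{b},\mathbf{l}}+\mathrm{Rad}(\mathbb{T})$. Apply Theorem \ref{T;Theorem7.15} to compute the product: it vanishes unless (a) the middle indices agree, i.e.\ $\mathbf{j}=\mathbf{k}$, and (b) the two invariants agree, i.e.\ $(\mathbbm{i}\cap\mathbbm{j})^\circ\setminus\mathbbm{a}=(\mathbbm{j}\cap\mathbbm{l})^\circ\setminus\mathbbm{b}$. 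The key observation is that $(\mathbbm{i}\cap\mathbbm{j})^\circ\setminus\mathbbm{a}$ is the defining invariant of the equivalence class $\mathbb{D}_g$ and $(\mathbbm{j}\cap\mathbbm{l})^\circ\setminus\mathbbm{b}$ is that of $\mathbb{D}_h$; since distinct classes carry distinct invariants by the very definition of $\sim$ in Notation \ref{N;Notation8.1}, condition (b) is equivalent to $g=h$. This gives the $\delta_{g,h}\delta_{\mathbf{j},\mathbf{k}}$ prefactor.

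When both deltas are satisfied, Theorem \ref{T;Theorem7.15} identifies the product with $D_{\mathbf{i},[\mathbf{i},\mathbf{a},\mathbf{j},\mathbf{b},\mathbf{l}],\mathbf{l}}+\mathrm{Rad}(\mathbb{T})$, so the remaining task is to verify $(\mathbf{i},[\mathbf{i},\mathbf{a},\mathbf{j},\mathbf{b},\mathbf{l}],\mathbf{l})\in\mathbb{D}_g$, which by Lemma \ref{L;Lemma8.6} and Notation \ref{N;Notation8.7} forces the product to equal $D_{\mathbf{i},\mathbf{l}}(g)$. Theorem \ref{T;Theorem7.15} already guarantees $[\mathbf{i},\mathbf{a},\mathbf{j},\mathbf{b},\mathbf{l}]\in\mathbb{P}_{\mathbf{i},\mathbf{l}}$ with $p\nmid k_{[\mathbf{i},\mathbf{a},\mathbf{j},\mathbf{b},\mathbf{l}]}$, so the triple lies in $\mathbb{D}$. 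The invariant match needed to place it in $\mathbb{D}_g$, namely $(\mathbbm{i}\cap\mathbbm{l})^\circ\setminus[\mathbf{i},\mathbf{a},\mathbf{j},\mathbf{b},\mathbf{l}]=(\mathbbm{i}\cap\mathbbm{j})^\circ\setminus\mathbbm{a}$, is precisely the content of Corollary \ref{C;Corollary7.17} applied to the nonzero product just obtained. I expect this last identification of invariants to be the only nontrivial step in the argument; everything else is a routine rewriting of Theorem \ref{T;Theorem7.15} in the new notation.
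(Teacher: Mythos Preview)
Your proposal is correct and follows essentially the same approach as the paper: part (i) is obtained from Theorem \ref{T;Theorem7.4}, Lemma \ref{L;Lemma8.2}, and Lemma \ref{L;Lemma8.6}, and part (ii) is read off from Theorem \ref{T;Theorem7.15}. The only cosmetic difference is in the last step: you explicitly invoke Corollary \ref{C;Corollary7.17} to verify that the resulting triple lies in $\mathbb{D}_g$, whereas the paper simply asserts that Theorem \ref{T;Theorem7.15} finishes the job (implicitly using uniqueness via Lemma \ref{L;Lemma8.6} together with the ideal property of $\mathbb{I}(g)$ from Lemma \ref{L;Lemma8.2} to pin down the middle index).
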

\begin{proof}
The first statement follows from combining Theorem \ref{T;Theorem7.4}, Lemmas \ref{L;Lemma8.2}, \ref{L;Lemma8.6}. By Lemma \ref{L;Lemma7.13}, there is no loss to assume that $g=h$. There are
unique $\mathbf{m}\in\mathbb{P}_{\mathbf{i}, \mathbf{j}}$, $\mathbf{q}\in\mathbb{P}_{\mathbf{k}, \mathbf{l}}$, $\mathbf{r}\in\mathbb{P}_{\mathbf{i},\mathbf{l}}$ such that $D_{\mathbf{i},\mathbf{j}}(g)=D_{\mathbf{i}, \mathbf{m}, \mathbf{j}}+\mathrm{Rad}(\mathbb{T})$, $D_{\mathbf{k},\mathbf{l}}(g)=D_{\mathbf{k}, \mathbf{q}, \mathbf{l}}+\mathrm{Rad}(\mathbb{T})$, and $D_{\mathbf{i},\mathbf{l}}(g)=D_{\mathbf{i}, \mathbf{r}, \mathbf{l}}+\mathrm{Rad}(\mathbb{T})$. Notice that all conditions in Theorem \ref{T;Theorem7.15} have already been satisfied. The desired lemma follows from an application of Theorem \ref{T;Theorem7.15}.
\end{proof}
\begin{lem}\label{L;Lemma8.9}
Assume that $g\in[1,n_\sim]$, $\mathbf{h}, \mathbf{i}\in\mathbb{E}$, $\mathbf{h}, \mathbf{i}\in\mathbb{D}(g)$. Then, via the left multiplication, $\langle\{D_{\mathbf{a}, \mathbf{h}}(g): \mathbf{a}\in\mathbb{D}(g)\}\rangle_{\mathbb{T}/\mathrm{Rad}(\mathbb{T})}$ is an irreducible $\mathbb{T}$-module. Moreover,
$\langle\{D_{\mathbf{a}, \mathbf{h}}(g)\!:\! \mathbf{a}\!\in\!\mathbb{D}(g)\}\rangle_{\mathbb{T}/\mathrm{Rad}(\mathbb{T})}\!\cong\!\langle\{D_{\mathbf{a}, \mathbf{i}}(g)\!:\! \mathbf{a}\!\in\!\mathbb{D}(g)\}\rangle_{\mathbb{T}/\mathrm{Rad}(\mathbb{T})}$ as irreducible $\mathbb{T}$-modules.
\end{lem}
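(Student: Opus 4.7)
The plan is to exploit the matrix-unit calculus for $\{D_{\mathbf{b},\mathbf{c}}(g):\mathbf{b},\mathbf{c}\in\mathbb{D}(g)\}$ recorded in Lemma~\ref{L;Lemma8.8}, which makes $\mathbb{I}(g)$ behave as a full matrix $\F$-algebra of rank $|\mathbb{D}(g)|$ and the subspace $N_{\mathbf{h}}:=\langle\{D_{\mathbf{a},\mathbf{h}}(g):\mathbf{a}\in\mathbb{D}(g)\}\rangle_{\mathbb{T}/\mathrm{Rad}(\mathbb{T})}$ behave as one of its ``columns''.

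First I would confirm that $N_{\mathbf{h}}$ is closed under the left $\mathbb{T}$-action on $\mathbb{T}/\mathrm{Rad}(\mathbb{T})$. As this action factors through $\mathbb{T}/\mathrm{Rad}(\mathbb{T})$, it suffices to test on each basis element of the quotient: Lemma~\ref{L;Lemma8.8} gives $D_{\mathbf{c},\mathbf{d}}(h)\cdot D_{\mathbf{a},\mathbf{h}}(g)=\delta_{g,h}\delta_{\mathbf{d},\mathbf{a}}D_{\mathbf{c},\mathbf{h}}(g)\in N_{\mathbf{h}}$. For irreducibility, any nonzero $v=\sum_{\mathbf{a}\in\mathbb{D}(g)}c_{\mathbf{a}}D_{\mathbf{a},\mathbf{h}}(g)$ has some coefficient $c_{\mathbf{b}}\in\F^{\times}$, and left multiplication by $D_{\mathbf{c},\mathbf{b}}(g)$ then extracts $c_{\mathbf{b}}D_{\mathbf{c},\mathbf{h}}(g)$ for every $\mathbf{c}\in\mathbb{D}(g)$; hence the cyclic $\mathbb{T}$-submodule generated by $v$ exhausts $N_{\mathbf{h}}$.

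For the second statement, the natural candidate for $N_{\mathbf{h}}\cong N_{\mathbf{i}}$ is the right-multiplication map $\phi\colon N_{\mathbf{h}}\to N_{\mathbf{i}}$ defined by $m\mapsto m\cdot D_{\mathbf{h},\mathbf{i}}(g)$. By Lemma~\ref{L;Lemma8.8} one has $\phi(D_{\mathbf{a},\mathbf{h}}(g))=D_{\mathbf{a},\mathbf{i}}(g)$, and the analogous right multiplication by $D_{\mathbf{i},\mathbf{h}}(g)$ furnishes a two-sided inverse; since left and right multiplications commute in $\mathbb{T}/\mathrm{Rad}(\mathbb{T})$, $\phi$ is automatically $\mathbb{T}$-linear and hence the required isomorphism of irreducible $\mathbb{T}$-modules. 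The argument poses no real difficulty beyond bookkeeping: the only subtlety is to verify that $D_{\mathbf{h},\mathbf{i}}(g)$ and $D_{\mathbf{i},\mathbf{h}}(g)$ are both well-defined nonzero elements of the quotient, which is exactly what Notation~\ref{N;Notation8.7} (together with Lemmas~\ref{L;Lemma8.5} and~\ref{L;Lemma8.6}) guarantees when $\mathbf{h},\mathbf{i}\in\mathbb{D}(g)$.
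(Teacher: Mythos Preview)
Your proposal is correct and follows essentially the same approach as the paper: both proofs rely on the matrix-unit relations of Lemma~\ref{L;Lemma8.8} (together with the fact that $\mathrm{Rad}(\mathbb{T})$ annihilates the quotient, which the paper encodes via Lemma~\ref{L;Lemma7.3} and Theorem~\ref{T;Jacobson}) to verify that $N_{\mathbf{h}}$ is a $\mathbb{T}$-submodule and irreducible, and then construct the isomorphism $D_{\mathbf{a},\mathbf{h}}(g)\mapsto D_{\mathbf{a},\mathbf{i}}(g)$. The only cosmetic difference is that you realise this map as right multiplication by $D_{\mathbf{h},\mathbf{i}}(g)$, which makes $\mathbb{T}$-linearity automatic, whereas the paper defines the map directly on basis elements and checks compatibility via Lemma~\ref{L;Lemma8.8}; the resulting map is the same.
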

\begin{proof}
The first statement follows from combining Lemmas \ref{L;Lemma7.3}, \ref{L;Lemma8.8}, Theorem \ref{T;Jacobson}. By the first statement and Lemma \ref{L;Lemma8.8}, the $\F$-linear map that sends $D_{\mathbf{j}, \mathbf{h}}(g)$ to $D_{\mathbf{j}, \mathbf{i}}(g)$ is also a $\mathbb{T}$-module isomorphism from the $\mathbb{T}$-module $\langle\{D_{\mathbf{a}, \mathbf{h}}(g): \mathbf{a}\in\mathbb{D}(g)\}\rangle_{\mathbb{T}/\mathrm{Rad}(\mathbb{T})}$ to the $\mathbb{T}$-module $\langle\{D_{\mathbf{a}, \mathbf{i}}(g): \mathbf{a}\in\mathbb{D}(g)\}\rangle_{\mathbb{T}/\mathrm{Rad}(\mathbb{T})}$. Hence the desired lemma follows.
\end{proof}
\begin{lem}\label{L;Lemma8.10}
Assume that $g\in[1, n_\sim]$. Then $\mathbb{I}(g)\cong \mathrm{M}_{|\mathbb{D}(g)|}(\F)$ as $\F$-algebras.
\end{lem}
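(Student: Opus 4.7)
The plan is to exhibit an explicit $\F$-algebra isomorphism from $\mathbb{I}(g)$ to $\mathrm{M}_{|\mathbb{D}(g)|}(\F)$ by matching the basis $\{D_{\mathbf{a},\mathbf{b}}(g):\mathbf{a},\mathbf{b}\in\mathbb{D}(g)\}$ of $\mathbb{I}(g)$ with the standard matrix-unit basis of $\mathrm{M}_{|\mathbb{D}(g)|}(\F)$. All the hard work has already been carried out in Notation \ref{N;Notation8.7} and Lemma \ref{L;Lemma8.8}, so this proof is a short assembly.

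First, I would fix any bijection $\phi$ between $\mathbb{D}(g)$ and $[1,|\mathbb{D}(g)|]$, and for each $\mathbf{a},\mathbf{b}\in\mathbb{D}(g)$ let $E_{\mathbf{a},\mathbf{b}}\in\mathrm{M}_{|\mathbb{D}(g)|}(\F)$ denote the matrix unit with entry $\overline{1}$ in row $\phi(\mathbf{a})$ and column $\phi(\mathbf{b})$ and zero elsewhere. By Notation \ref{N;Notation8.7}, $\{D_{\mathbf{a},\mathbf{b}}(g):\mathbf{a},\mathbf{b}\in\mathbb{D}(g)\}$ is an $\F$-basis of $\mathbb{I}(g)$, so there is a unique $\F$-linear map
\[
\psi:\mathbb{I}(g)\longrightarrow \mathrm{M}_{|\mathbb{D}(g)|}(\F),\qquad \psi(D_{\mathbf{a},\mathbf{b}}(g))=E_{\mathbf{a},\mathbf{b}}\quad\text{for all }\mathbf{a},\mathbf{b}\in\mathbb{D}(g).
\]
Since both sides have $\F$-dimension $|\mathbb{D}(g)|^2$ (Lemma \ref{L;Lemma8.6} on the left, definition on the right) and $\psi$ sends a basis bijectively to a basis, $\psi$ is an $\F$-linear isomorphism.

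Next I would verify multiplicativity. For $\mathbf{i},\mathbf{j},\mathbf{k},\mathbf{l}\in\mathbb{D}(g)$, Lemma \ref{L;Lemma8.8} (applied with $g=h$) gives
\[
D_{\mathbf{i},\mathbf{j}}(g)\,D_{\mathbf{k},\mathbf{l}}(g)=\delta_{\mathbf{j},\mathbf{k}}D_{\mathbf{i},\mathbf{l}}(g),
\]
while the standard matrix-unit calculation yields $E_{\mathbf{i},\mathbf{j}}E_{\mathbf{k},\mathbf{l}}=\delta_{\mathbf{j},\mathbf{k}}E_{\mathbf{i},\mathbf{l}}$. Hence $\psi(D_{\mathbf{i},\mathbf{j}}(g)D_{\mathbf{k},\mathbf{l}}(g))=\psi(D_{\mathbf{i},\mathbf{j}}(g))\psi(D_{\mathbf{k},\mathbf{l}}(g))$ on basis elements, and $\psi$ is multiplicative by $\F$-bilinear extension. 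Finally, the identity of $\mathbb{I}(g)$ is $\sum_{\mathbf{a}\in\mathbb{D}(g)}D_{\mathbf{a},\mathbf{a}}(g)$ (Lemma \ref{L;Lemma8.8} again), which maps to $\sum_{\mathbf{a}\in\mathbb{D}(g)}E_{\mathbf{a},\mathbf{a}}$, the identity of $\mathrm{M}_{|\mathbb{D}(g)|}(\F)$. Thus $\psi$ is the desired $\F$-algebra isomorphism.

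There is no genuine obstacle here: the content of the lemma is entirely packaged in the multiplication rule of Lemma \ref{L;Lemma8.8}, which itself rests on Theorem \ref{T;Theorem7.15} and Lemma \ref{L;Lemma8.6}. The only minor point worth checking carefully is that $\mathbb{I}(g)$ really is unital as an $\F$-algebra (it is a two-sided ideal of $\mathbb{T}/\mathrm{Rad}(\mathbb{T})$ by Lemma \ref{L;Lemma8.2}, so one needs to verify that $\sum_{\mathbf{a}\in\mathbb{D}(g)}D_{\mathbf{a},\mathbf{a}}(g)$ acts as a two-sided identity on every basis element of $\mathbb{I}(g)$), and this is immediate from Lemma \ref{L;Lemma8.8}.
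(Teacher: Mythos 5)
Your proof is correct and takes essentially the same approach as the paper: both send the basis element $D_{\mathbf{a},\mathbf{b}}(g)$ to a matrix unit and invoke Lemma \ref{L;Lemma8.8} for multiplicativity. The only cosmetic difference is that the paper works with $\mathrm{M}_{\mathbb{D}(g)}(\F)$ (rows and columns indexed by $\mathbb{D}(g)$) so that no bijection $\phi$ need be chosen; your extra check that $\sum_{\mathbf{a}\in\mathbb{D}(g)}D_{\mathbf{a},\mathbf{a}}(g)$ is a two-sided identity is a reasonable addition the paper leaves implicit.
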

\begin{proof}
It suffices to check that $\mathbb{I}(g)\cong\mathrm{M}_{\mathbb{D}(g)}(\F)$ as $\F$-algebras. For any $\mathbf{h}, \mathbf{i}\in\mathbb{D}(g)$, use $E_{\mathbf{h}, \mathbf{i}}$ to denote the $\{\overline{0}, \overline{1}\}$-matrix in $\mathrm{M}_{\mathbb{D}(g)}(\F)$ whose unique nonzero entry is the $(\mathbf{h}, \mathbf{i})$-entry. In particular, $E_{\mathbf{h},\mathbf{i}}E_{\mathbf{j},\mathbf{k}}=\delta_{\mathbf{i},\mathbf{j}}E_{\mathbf{h}, \mathbf{k}}$ for any $\mathbf{h}, \mathbf{i}, \mathbf{j}, \mathbf{k}\in\mathbb{D}(g)$. According to Lemma \ref{L;Lemma8.8}, the $\F$-linear map that sends $D_{\mathbf{h},\mathbf{i}}(g)$ to $E_{\mathbf{h}, \mathbf{i}}$ is an algebra isomorphism from the $\F$-subalgebra $\mathbb{I}(g)$ of $\mathbb{T}/\mathrm{Rad}(\mathbb{T})$ to $\mathrm{M}_{\mathbb{D}(g)}(\F)$. The desired lemma follows.
\end{proof}
We are now ready to get the first main result of this section and some corollaries.
\begin{thm}\label{T;Decomposition}
$$\mathbb{T}/\mathrm{Rad}(\mathbb{T})\cong\bigoplus_{g=1}^{n_\sim}\mathrm{M}_{|\mathbb{D}(g)|}(\F)\ \text{as $\F$-algebras.}$$
\end{thm}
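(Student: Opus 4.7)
The plan is to assemble the theorem from the structural lemmas already established in this section. By Lemma \ref{L;Lemma8.2}, the quotient $\F$-algebra $\mathbb{T}/\mathrm{Rad}(\mathbb{T})$ decomposes as an internal direct sum of the $\F$-linear subspaces $\mathbb{I}(1), \mathbb{I}(2),\ldots, \mathbb{I}(n_\sim)$, and each $\mathbb{I}(g)$ is simultaneously a two-sided ideal of $\mathbb{T}/\mathrm{Rad}(\mathbb{T})$. This already gives the underlying $\F$-linear space decomposition that the theorem requires.

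Next I would upgrade this to a decomposition as $\F$-algebras. The key observation is the multiplication formula $D_{\mathbf{i},\mathbf{j}}(g)D_{\mathbf{k},\mathbf{l}}(h)=\delta_{g,h}\delta_{\mathbf{j},\mathbf{k}}D_{\mathbf{i},\mathbf{l}}(g)$ recorded in Lemma \ref{L;Lemma8.8}. Taking $g\neq h$ in that formula, every product of a basis element of $\mathbb{I}(g)$ with a basis element of $\mathbb{I}(h)$ is zero, so the internal direct sum of Lemma \ref{L;Lemma8.2} is in fact an $\F$-algebra direct sum $\mathbb{T}/\mathrm{Rad}(\mathbb{T})\cong\bigoplus_{g=1}^{n_\sim}\mathbb{I}(g)$.

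Finally, Lemma \ref{L;Lemma8.10} identifies each summand: $\mathbb{I}(g)\cong\mathrm{M}_{|\mathbb{D}(g)|}(\F)$ as $\F$-algebras for each $g\in[1,n_\sim]$. Combining with the direct sum decomposition above yields the claimed isomorphism $\mathbb{T}/\mathrm{Rad}(\mathbb{T})\cong\bigoplus_{g=1}^{n_\sim}\mathrm{M}_{|\mathbb{D}(g)|}(\F)$.

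Because every ingredient is a direct citation, there is no genuine obstacle; the only item to verify carefully is that the orthogonality in Lemma \ref{L;Lemma8.8} actually gives the product decomposition and not merely the linear one, but this is immediate from the $\delta_{g,h}$ factor. The proof will therefore be very short, essentially one or two lines invoking Lemmas \ref{L;Lemma8.2}, \ref{L;Lemma8.8}, and \ref{L;Lemma8.10}.
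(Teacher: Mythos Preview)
Your proposal is correct and follows the same route as the paper, which simply cites Lemmas \ref{L;Lemma8.2} and \ref{L;Lemma8.10}. Your additional invocation of Lemma \ref{L;Lemma8.8} for orthogonality is harmless but unnecessary: since Lemma \ref{L;Lemma8.2} already states that each $\mathbb{I}(g)$ is a two-sided ideal and that the $\mathbb{I}(g)$'s give an $\F$-linear direct sum, the products $\mathbb{I}(g)\mathbb{I}(h)\subseteq\mathbb{I}(g)\cap\mathbb{I}(h)=\{O+\mathrm{Rad}(\mathbb{T})\}$ for $g\neq h$ follow automatically.
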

\begin{proof}
The desired theorem follows from an application of Lemmas \ref{L;Lemma8.2} and \ref{L;Lemma8.10}.
\end{proof}
\begin{cor}\label{C;Corollary8.12}
The following are equivalent: $\mathbb{S}$ is a $p'$-valenced scheme; $\mathbb{T}$ is a semisimple $\F$-algebra; $\mathbb{T}$ must satisfy the formula $\mathbb{T}\cong\bigoplus_{g=1}^{n_\sim}\mathrm{M}_{|\mathbb{D}(g)|}(\F)$ as $\mathbb{F}$-algebras.
\end{cor}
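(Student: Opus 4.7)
The plan is to assemble this corollary from two already-established results, arranging the three conditions in a short cycle of implications. The equivalence of the first two conditions, namely $\mathbb{S}$ being a $p'$-valenced scheme and $\mathbb{T}$ being a semisimple $\F$-algebra, is exactly the content of Corollary \ref{C;Corollary5.21}, so I can cite it directly without any additional argument.

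For the implication from $\mathbb{T}$ being semisimple to the stated Wedderburn--Artin formula, I would first invoke Lemma \ref{L;Lemma2.6} to convert semisimplicity of $\mathbb{T}$ into the equation $\mathrm{Rad}(\mathbb{T})=\{O\}$. Under this equation the canonical projection $\mathbb{T}\to\mathbb{T}/\mathrm{Rad}(\mathbb{T})$ is an isomorphism of $\F$-algebras, so composing with the isomorphism produced in Theorem \ref{T;Decomposition} gives exactly $\mathbb{T}\cong\bigoplus_{g=1}^{n_\sim}\mathrm{M}_{|\mathbb{D}(g)|}(\F)$ as $\F$-algebras.

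For the reverse implication, I would argue straight from the definition of semisimple $\F$-algebra recorded in Section~2: the direct-sum decomposition in (iii) fits that definition with $g=n_\sim$, $h_i=|\mathbb{D}(i)|$, and each division $\F$-algebra $\mathbb{K}_i$ taken to be $\F$ itself. Before invoking the definition, I only need to remark that $n_\sim\geq 1$ (since $(\mathbf{0},\mathbf{0},\mathbf{0})\in\mathbb{D}$ forces $\mathbb{D}\neq\varnothing$) and that $|\mathbb{D}(g)|\geq 1$ for every index $g$, both of which are built into Notation \ref{N;Notation8.1}. This closes the cycle.

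There is essentially no genuine obstacle: the corollary is a bookkeeping statement that packages Corollary \ref{C;Corollary5.21} with Theorem \ref{T;Decomposition}. The only place where one has to be mildly careful is checking that the definition of semisimple $\F$-algebra is met on the nose in the implication from (iii) to (ii), since that definition requires strictly positive integers $g$ and $h_i$; but this is immediate from the non-emptiness observations just mentioned.
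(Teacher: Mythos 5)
Your proposal is correct and takes essentially the same route as the paper, which simply cites Theorem \ref{T;Semisimplicity} together with Theorem \ref{T;Decomposition}; you have filled in the bookkeeping details explicitly. The only cosmetic difference is that you cite Corollary \ref{C;Corollary5.21} for the equivalence of the first two conditions where the paper cites Theorem \ref{T;Semisimplicity} directly, but since the corollary subsumes the theorem's content this is immaterial.
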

\begin{proof}
The desired corollary directly follows from using Theorems \ref{T;Semisimplicity} and \ref{T;Decomposition}.
\end{proof}
\begin{cor}\label{C;Corollary8.13}
Assume that $g, h\!\in\![1, n_\sim]$ and $\mathbbm{Irr}(g)$ denotes the unique irreducible $\mathbb{T}$-module up to isomorphism whose definition is in Lemma \ref{L;Lemma8.9}. Then $g=h$ if and only if $\mathbbm{Irr}(g)\!\cong\!\mathbbm{Irr}(h)$ as $\mathbb{T}$-modules. In particular, $\{\mathbbm{Irr}(a)\!:\! a\!\in\![1,n_\sim]\}$ is a complete set of distinct representatives of all isomorphic classes of the irreducible $\mathbb{T}$-modules.
\end{cor}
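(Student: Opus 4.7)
The plan is to combine the Wedderburn-Artin decomposition from Theorem~\ref{T;Decomposition} with Lemma~\ref{L;Lemma2.8} to count isomorphism classes of irreducible $\mathbb{T}$-modules, and then to distinguish the candidates $\mathbbm{Irr}(1), \mathbbm{Irr}(2), \ldots, \mathbbm{Irr}(n_\sim)$ by exhibiting, for each $g$, an element of $\mathbb{T}/\mathrm{Rad}(\mathbb{T})$ that acts non-trivially on $\mathbbm{Irr}(g)$ but annihilates every other $\mathbbm{Irr}(h)$.

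First I would observe that the irreducible $\mathbb{T}$-modules correspond bijectively, via the quotient map, to the irreducible $\mathbb{T}/\mathrm{Rad}(\mathbb{T})$-modules, because $\mathrm{Rad}(\mathbb{T})$ annihilates every irreducible $\mathbb{T}$-module. Applying Lemma~\ref{L;Lemma2.8} to the decomposition in Theorem~\ref{T;Decomposition} immediately yields that there are exactly $n_\sim$ isomorphism classes of irreducible $\mathbb{T}$-modules.

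Next I would establish the key non-isomorphism. Suppose $g \neq h$, and fix $\mathbf{h}_g \in \mathbb{D}(g)$ and $\mathbf{h}_h \in \mathbb{D}(h)$ as in the construction behind Lemma~\ref{L;Lemma8.9}. By the multiplication rule of Lemma~\ref{L;Lemma8.8}, the element $D_{\mathbf{h}_g, \mathbf{h}_g}(g)$ of $\mathbb{T}/\mathrm{Rad}(\mathbb{T})$ annihilates every spanning vector $D_{\mathbf{a}, \mathbf{h}_h}(h)$ of $\mathbbm{Irr}(h)$ (owing to the Kronecker factor $\delta_{g,h}$), yet satisfies $D_{\mathbf{h}_g, \mathbf{h}_g}(g) \cdot D_{\mathbf{h}_g, \mathbf{h}_g}(g) = D_{\mathbf{h}_g, \mathbf{h}_g}(g) \neq O + \mathrm{Rad}(\mathbb{T})$ inside $\mathbbm{Irr}(g)$. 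If $\phi: \mathbbm{Irr}(g) \to \mathbbm{Irr}(h)$ were a $\mathbb{T}$-module isomorphism, $\mathbb{T}$-equivariance would give
$$\phi(D_{\mathbf{h}_g, \mathbf{h}_g}(g)) = \phi(D_{\mathbf{h}_g, \mathbf{h}_g}(g) \cdot D_{\mathbf{h}_g, \mathbf{h}_g}(g)) = D_{\mathbf{h}_g, \mathbf{h}_g}(g) \cdot \phi(D_{\mathbf{h}_g, \mathbf{h}_g}(g)) = O + \mathrm{Rad}(\mathbb{T}),$$
contradicting the injectivity of $\phi$ combined with $D_{\mathbf{h}_g, \mathbf{h}_g}(g) \neq O + \mathrm{Rad}(\mathbb{T})$.

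Finally, the ``in particular'' statement is immediate: the $n_\sim$ modules $\mathbbm{Irr}(1), \ldots, \mathbbm{Irr}(n_\sim)$ are pairwise non-isomorphic and there are exactly $n_\sim$ isomorphism classes, so they exhaust all irreducible $\mathbb{T}$-modules up to isomorphism. The main technical hurdle is really just keeping the bookkeeping straight between $\mathbb{T}$-actions and $\mathbb{T}/\mathrm{Rad}(\mathbb{T})$-actions; once one notices that $D_{\mathbf{h}_g, \mathbf{h}_g}(g)$ is a nonzero idempotent and that Lemma~\ref{L;Lemma8.8} explicitly enforces the block-orthogonality between $\mathbb{I}(g)$ and $\mathbb{I}(h)$, the argument becomes routine.
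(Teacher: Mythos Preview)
Your proof is correct and follows essentially the same approach as the paper's, which invokes Lemmas~\ref{L;Lemma8.9} and~\ref{L;Lemma8.8} for the non-isomorphism and then Theorem~\ref{T;Decomposition} with Lemma~\ref{L;Lemma2.8} for the count. You simply make explicit the idempotent argument that the paper leaves implicit in the phrase ``follows from Lemmas~\ref{L;Lemma8.9} and~\ref{L;Lemma8.8}.''
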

\begin{proof}
The first statement follows from Lemmas \ref{L;Lemma8.9} and \ref{L;Lemma8.8}. The desired corollary follows from the combination of the first statement, Theorem \ref{T;Decomposition}, Lemma \ref{L;Lemma2.8}.
\end{proof}
\begin{cor}\label{C;Corollary8.14}
A $\mathbb{T}$-module is an irreducible $\mathbb{T}$-module if and only if this given $\mathbb{T}$-module is also an absolutely irreducible $\mathbb{T}$-module. In particular, the set of all irreducible $\mathbb{T}$-modules precisely equals the set of all
absolutely irreducible $\mathbb{T}$-modules.
\end{cor}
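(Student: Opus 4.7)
The plan is to invoke Lemma \ref{L;Lemma2.9} directly, since that lemma characterizes exactly the equivalence that Corollary \ref{C;Corollary8.14} asserts. Concretely, Lemma \ref{L;Lemma2.9} says that for an $\F$-algebra $\mathbb{A}$, every irreducible $\mathbb{A}$-module is absolutely irreducible if and only if $\mathbb{A}/\mathrm{Rad}(\mathbb{A})$ is isomorphic, as $\F$-algebras, to a direct sum of full matrix $\F$-algebras of the form $\mathrm{M}_{h_i}(\F)$ (rather than of the form $\mathrm{M}_{h_i}(\mathbb{K}_i)$ for some nontrivial division $\F$-algebra $\mathbb{K}_i$).

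First, I would recall Theorem \ref{T;Decomposition}, which establishes the Wedderburn--Artin decomposition
\[
\mathbb{T}/\mathrm{Rad}(\mathbb{T})\cong\bigoplus_{g=1}^{n_\sim}\mathrm{M}_{|\mathbb{D}(g)|}(\F)\ \text{as $\F$-algebras}.
\]
The key observation is that each summand on the right-hand side is a full matrix $\F$-algebra over $\F$ itself, with $|\mathbb{D}(g)|\in\mathbb{N}$ for every $g\in[1, n_\sim]$. This is precisely the hypothesis required by Lemma \ref{L;Lemma2.9}, applied to the $\F$-algebra $\mathbb{A}=\mathbb{T}$.

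Second, one direction of Corollary \ref{C;Corollary8.14} is trivial and does not even require the decomposition: by the definition of absolute irreducibility given right before Lemma \ref{L;Lemma2.8}, every absolutely irreducible $\mathbb{T}$-module is an irreducible $\mathbb{T}$-module. For the nontrivial direction, I would apply Lemma \ref{L;Lemma2.9} with the Wedderburn--Artin decomposition supplied by Theorem \ref{T;Decomposition} to conclude that every irreducible $\mathbb{T}$-module is an absolutely irreducible $\mathbb{T}$-module. Combining both directions yields the first statement, and the second statement about the two sets coinciding is an immediate reformulation.

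There is no substantial obstacle here: all the hard work has already been done in proving Theorem \ref{T;Decomposition} (in particular, in establishing that the endomorphism rings of the irreducible factors are $\F$ itself rather than a larger division algebra, which is what ultimately forces absolute irreducibility). The proof proposal therefore reduces to a careful citation of Theorem \ref{T;Decomposition} and Lemma \ref{L;Lemma2.9}, together with the trivial observation that absolute irreducibility implies irreducibility.
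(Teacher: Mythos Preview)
Your proposal is correct and matches the paper's own proof exactly: the paper's argument is simply ``The desired corollary follows from using Theorem \ref{T;Decomposition} and Lemma \ref{L;Lemma2.9}.'' Your additional remarks about the trivial direction and the role of the matrix summands being over $\F$ itself are fine elaborations, but the core citation is identical.
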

\begin{proof}
The desired corollary follows from using Theorem \ref{T;Decomposition} and Lemma \ref{L;Lemma2.9}.
\end{proof}
\begin{cor}\label{C;Corollary8.15}
An irreducible $\mathbb{T}$-module is also a self-contragredient $\mathbb{T}$-module with respect to $\alpha_T$. In particular, the set of all irreducible $\mathbb{T}$-modules is a nonempty finite set contained in the set of all self-contragredient $\mathbb{T}$-modules with respect to $\alpha_T$.
\end{cor}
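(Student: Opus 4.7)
By Corollary \ref{C;Corollary8.13}, it suffices to show that for each $g\in[1,n_\sim]$ the representative irreducible module $\mathbbm{Irr}(g)$ is self-contragredient with respect to $\alpha_T$; the finiteness clause of the second sentence then follows from that same corollary together with Theorem \ref{T;Decomposition}. The plan is to pass to the quotient $\mathbb{T}/\mathrm{Rad}(\mathbb{T})$, use the Wedderburn--Artin decomposition to isolate the matrix block $\mathbb{I}(g)$ corresponding to $\mathbbm{Irr}(g)$, and then exploit the uniqueness of the irreducible module of a simple matrix $\F$-algebra.

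First, since $\alpha_T$ is an algebra anti-automorphism of $\mathbb{T}$, it permutes the set of nilpotent two-sided ideals of $\mathbb{T}$ and therefore stabilises their sum $\mathrm{Rad}(\mathbb{T})$; hence it induces an algebra anti-automorphism $\overline{\alpha_T}$ on $\mathbb{T}/\mathrm{Rad}(\mathbb{T})$. I would then verify that $\overline{\alpha_T}$ preserves each summand $\mathbb{I}(g)$ of Theorem \ref{T;Decomposition}. By Lemma \ref{L;Lemma5.7}, $\alpha_T$ sends $D_{\mathbf{a},\mathbf{b},\mathbf{c}}$ to a nonzero scalar multiple of $D_{\mathbf{c},\mathbf{b},\mathbf{a}}$ (the scalar is nonzero thanks to $p\nmid k_\mathbf{b}$, which forces $p\nmid k_{\mathbf{a}\setminus\mathbf{c}}k_{\mathbf{c}\setminus\mathbf{a}}$). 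Since the defining condition $(\mathbbm{a}\cap\mathbbm{c})^\circ\setminus\mathbbm{b}$ of the equivalence relation $\sim$ (Notation \ref{N;Notation8.1}) is manifestly symmetric under swapping the first and third coordinates, one has $(\mathbf{a},\mathbf{b},\mathbf{c})\sim(\mathbf{c},\mathbf{b},\mathbf{a})$ for every $(\mathbf{a},\mathbf{b},\mathbf{c})\in\mathbb{D}$. Consequently $\overline{\alpha_T}$ sends the $\F$-basis $\{D_{\mathbf{a},\mathbf{b},\mathbf{c}}+\mathrm{Rad}(\mathbb{T}):(\mathbf{a},\mathbf{b},\mathbf{c})\in\mathbb{D}_g\}$ of $\mathbb{I}(g)$ bijectively to itself up to nonzero scalars, and in particular stabilises $\mathbb{I}(g)$ as an $\F$-linear subspace.

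The final step is to conclude the self-duality on each block. By Lemma \ref{L;Lemma8.10}, $\mathbb{I}(g)\cong\mathrm{M}_{|\mathbb{D}(g)|}(\F)$ is a simple $\F$-algebra, and $\mathbbm{Irr}(g)$ is its unique irreducible module up to isomorphism. The $\F$-linear dual $\mathbbm{Irr}(g)^{\overline{\alpha_T}|_{\mathbb{I}(g)}}$ is another $\mathbb{I}(g)$-module of the same $\F$-dimension whose irreducible summands (over the simple algebra $\mathbb{I}(g)$) must all be isomorphic to $\mathbbm{Irr}(g)$, forcing $\mathbbm{Irr}(g)^{\overline{\alpha_T}|_{\mathbb{I}(g)}}\cong\mathbbm{Irr}(g)$ as $\mathbb{I}(g)$-modules; lifting along the canonical projection $\mathbb{T}\twoheadrightarrow\mathbb{T}/\mathrm{Rad}(\mathbb{T})\twoheadrightarrow\mathbb{I}(g)$ then yields the desired $\mathbb{T}$-module isomorphism $\mathbbm{Irr}(g)^{\alpha_T}\cong\mathbbm{Irr}(g)$. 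The main obstacle is the verification that $\overline{\alpha_T}$ genuinely preserves each summand $\mathbb{I}(g)$; the rest is a standard consequence of the matrix-algebra structure, but this step hinges on both the explicit scalar formula in Lemma \ref{L;Lemma5.7} and the visible symmetry of $\sim$ in the first and third coordinates.
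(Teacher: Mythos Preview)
Your proof is correct and takes a genuinely different route from the paper. The paper builds an explicit $\mathbb{T}$-module isomorphism $\Upsilon:\mathbbm{Irr}(q)^{\alpha_T}\to\mathbbm{Irr}(q)$ by hand, sending the dual basis vector $\beta_{\mathbf{r}}$ to $\overline{k_{\mathbf{l}\setminus\mathbf{r}}}(\overline{k_{\mathbf{r}\setminus\mathbf{l}}})^{-1}D_{\mathbf{r},\mathbf{l}}(q)$, and then verifies the intertwining relation directly against the generators $B_{\mathbf{i},\mathbf{j},\mathbf{k}}$ and $D_{\mathbf{i},\mathbf{j},\mathbf{k}}$ of Lemma~\ref{L;Lemma7.3}; this forces a small combinatorial identity $k_{\mathbf{i}\setminus\mathbf{l}}k_{\mathbf{l}\setminus\mathbf{m}}k_{\mathbf{m}\setminus\mathbf{i}}=k_{\mathbf{i}\setminus\mathbf{m}}k_{\mathbf{l}\setminus\mathbf{i}}k_{\mathbf{m}\setminus\mathbf{l}}$ to be proved along the way. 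Your argument instead observes, via Lemma~\ref{L;Lemma5.7} and the symmetry of $\sim$ in the outer coordinates, that $\overline{\alpha_T}$ stabilises each Wedderburn block $\mathbb{I}(g)\cong\mathrm{M}_{|\mathbb{D}(g)|}(\F)$, and then invokes the uniqueness of the irreducible module of a simple matrix $\F$-algebra. This is more conceptual and avoids the explicit scalar bookkeeping; the paper's approach, by contrast, yields the isomorphism concretely, which could be useful if one later needed the actual map rather than its mere existence.
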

\begin{proof}
Recall that $k_\mathbf{0}=1$ and $k_\mathbf{g}=\prod_{h\in\mathbbm{g}}(|\mathbb{U}_h|-1)$ if $\mathbf{g}\in\mathbb{E}\setminus\{\mathbf{0}\}$. Let $\mathbf{i}, \mathbf{j}, \mathbf{k}, \mathbf{l},\mathbf{m}\in\mathbb{E}$ and $\mathbf{j}\in\mathbb{P}_{\mathbf{i}, \mathbf{k}}$. This thus implies that $\mathbbm{j}=(\mathbbm{i}\triangle\mathbbm{k})\cup(\mathbbm{i}\cap\mathbbm{j}\cap\mathbbm{k})=(\mathbbm{i}\triangle\mathbbm{k})\cup(\mathbbm{i}\cap\mathbbm{j}\cap\mathbbm{k})^\circ$.
This thus implies that $p\nmid k_{\mathbf{i}\setminus\mathbf{k}}k_{\mathbf{k}\setminus\mathbf{i}}$ if $p\nmid k_\mathbf{j}$.
Assume that $q\in [1, n_\sim]$ and $\mathbf{l}\in\mathbb{D}(q)$. This thus implies that $p\nmid k_{\mathbf{l}\setminus\mathbf{m}}k_{\mathbf{m}\setminus\mathbf{l}}$ if $\mathbf{m}\in\mathbb{D}(q)$.
As $\mathbbm{i}\setminus\mathbbm{l}=(\mathbbm{i}\setminus(\mathbbm{l}\cup\mathbbm{m}))\cup((\mathbbm{i}\cap\mathbbm{m})\setminus\mathbbm{l})$,
$\mathbbm{l}\setminus\mathbbm{m}=(\mathbbm{l}\setminus(\mathbbm{i}\cup\mathbbm{m}))\cup((\mathbbm{i}\cap\mathbbm{l})\setminus\mathbbm{m})$, $\mathbbm{m}\setminus\mathbbm{i}\!=\!(\mathbbm{m}\setminus(\mathbbm{i}\cup\mathbbm{l}))\!\cup\!((\mathbbm{l}\cap\mathbbm{m})\setminus\mathbbm{i})$, and $\mathbbm{i}\setminus(\mathbbm{l}\cup\mathbbm{m})$,
$(\mathbbm{i}\cap\mathbbm{m})\setminus\mathbbm{l}$, $\mathbbm{l}\setminus(\mathbbm{i}\cup\mathbbm{m})$, $(\mathbbm{i}\cap\mathbbm{l})\setminus\mathbbm{m}$, $\mathbbm{m}\setminus(\mathbbm{i}\cup\mathbbm{l})$,
$(\mathbbm{l}\cap\mathbbm{m})\setminus\mathbbm{i}$ are pairwise disjoint sets, it is clear to see that $k_{\mathbf{i}\setminus\mathbf{l}}k_{\mathbf{l}\setminus\mathbf{m}}k_{\mathbf{m}\setminus\mathbf{i}}\!=\!
k_{\mathbf{i}\setminus(\mathbf{l}\cup\mathbf{m})}k_{(\mathbf{i}\cap\mathbf{m})\setminus\mathbf{l}}k_{\mathbf{l}\setminus(\mathbf{i}\cup\mathbf{m})}k_{(\mathbf{i}\cap\mathbf{l})\setminus\mathbf{m}}
k_{\mathbf{m}\setminus(\mathbf{i}\cup\mathbf{l})}k_{(\mathbf{l}\cap\mathbf{m})\setminus\mathbf{i}}$. Similarly, it is clear to see that
$k_{\mathbf{i}\setminus\mathbf{m}}k_{\mathbf{l}\setminus\mathbf{i}}k_{\mathbf{m}\setminus\mathbf{l}}\!=\!k_{\mathbf{i}\setminus(\mathbf{l}\cup\mathbf{m})}
k_{(\mathbf{i}\cap\mathbf{m})\setminus\mathbf{l}}k_{\mathbf{l}\setminus(\mathbf{i}\cup\mathbf{m})}
k_{(\mathbf{i}\cap\mathbf{l})\setminus\mathbf{m}}k_{\mathbf{m}\setminus(\mathbf{i}\cup\mathbf{l})}k_{(\mathbf{l}\cap\mathbf{m})\setminus\mathbf{i}}$.
Hence $k_{\mathbf{i}\setminus\mathbf{l}}k_{\mathbf{l}\setminus\mathbf{m}}k_{\mathbf{m}\setminus\mathbf{i}}=k_{\mathbf{i}\setminus\mathbf{m}}k_{\mathbf{l}\setminus\mathbf{i}}k_{\mathbf{m}\setminus\mathbf{l}}$.
Set $\mathbb{U}=\langle\{D_{\mathbf{a},\mathbf{l}}(q): \mathbf{a}\in\mathbb{D}(q)\}\rangle_{\mathbb{T}/\mathrm{Rad}(\mathbb{T})}$.
Lemma \ref{L;Lemma8.9} thus implies that $\mathbb{U}$ is an irreducible $\mathbb{T}$-module via the left multiplication.
By Corollary \ref{C;Corollary8.13}, it suffices to check that $\mathbb{U}$ is a self-contragredient $\mathbb{T}$-module
with respect to $\alpha_T$.

For any $\mathbf{r}\in\mathbb{D}(q)$, there is a unique $\beta_\mathbf{r}\in\mathrm{Hom}_{\F}(\mathbb{U},\F)$ such that
$\beta_\mathbf{r}(D_{\mathbf{s},\mathbf{l}}(q))=\delta_{\mathbf{r}, \mathbf{s}}$ for any $ {s}\in\mathbb{D}(q)$.
This thus implies that $\{\beta_\mathbf{a}:\mathbf{a}\in\mathbb{D}(q)\}$ is also an $\F$-basis of $\mathbb{U}^{\alpha_T}$.
Let $\Upsilon$ denote the $\F$-linear isomorphism from $\mathbb{U}^{\alpha_T}$ to $\mathbb{U}$ that sends $\beta_\mathbf{r}$ to the element
$$\overline{k_{\mathbf{l}\setminus\mathbf{r}}}(\overline{k_{\mathbf{r}\setminus\mathbf{l}}})^{-1}D_{\mathbf{r}, \mathbf{l}}(q)$$
for any $\mathbf{r}\in\mathbb{D}(q)$. Assume that $\mathbf{m}\in\mathbb{D}(q)$. If $p\mid k_\mathbf{j}$, Lemma \ref{L;Lemma3.10} and Theorem \ref{T;Jacobson} imply that
$\Upsilon(B_{\mathbf{i}, \mathbf{j}, \mathbf{k}}\beta_\mathbf{m})=O+\mathrm{Rad}(\mathbb{T})=B_{\mathbf{i}, \mathbf{j}, \mathbf{k}}\Upsilon(\beta_\mathbf{m})$.
If $p\nmid k_\mathbf{j}$ and $(\mathbf{i},\mathbf{j},\mathbf{k})\in\mathbb{D}_t$ for some $t\in[1, n_\sim]\setminus\{q\}$,
then $\Upsilon(D_{\mathbf{i}, \mathbf{j}, \mathbf{k}}\beta_\mathbf{m})=O+\mathrm{Rad}(\mathbb{T})=D_{\mathbf{i}, \mathbf{j}, \mathbf{k}}\Upsilon(\beta_\mathbf{m})$ by combining Equations \eqref{Eq;10}, \eqref{Eq;2}, Lemmas \ref{L;Lemma5.7}, \ref{L;Lemma7.13}. If $p\nmid k_\mathbf{j}$ and $(\mathbf{i}, \mathbf{j}, \mathbf{k})\in\mathbb{D}_q$, then $\mathbf{i}\!\in\!\mathbb{D}(q)$ by Lemma \ref{L;Lemma8.4}. The combination of Equations \eqref{Eq;10}, \eqref{Eq;2}, Lemmas \ref{L;Lemma5.7}, \ref{L;Lemma8.9} thus gives
$$\Upsilon(D_{\mathbf{i}, \mathbf{j}, \mathbf{k}}\beta_\mathbf{m})\!=\!\delta_{\mathbf{k}, \mathbf{m}}\overline{k_{\mathbf{i}\setminus\mathbf{m}}}\overline{k_{\mathbf{l}\setminus\mathbf{i}}}(\overline{k_{\mathbf{i}\setminus\mathbf{l}}}\overline{k_{\mathbf{m}\setminus\mathbf{i}}})^{-1}D_{\mathbf{i},\mathbf{l}}(q)\!=\!\delta_{\mathbf{k}, \mathbf{m}}\overline{k_{\mathbf{l}\setminus\mathbf{m}}}(\overline{k_{\mathbf{m}\setminus\mathbf{l}}})^{-1}D_{\mathbf{i},\mathbf{l}}(q)\!=\!D_{\mathbf{i}, \mathbf{j}, \mathbf{k}}\Upsilon(\beta_\mathbf{m}).$$
As $\mathbf{i}, \mathbf{j}, \mathbf{k}$ and $\beta_\mathbf{m}$ are chosen arbitrarily, the above discussion and Lemma \ref{L;Lemma7.3} thus imply that $\mathbb{U}^{\alpha_T}\cong\mathbb{U}$ as irreducible $\mathbb{T}$-modules. So the desired corollary follows.
\end{proof}
\begin{rem}\label{R;Remark8.16}
In general, up to isomorphism, a Terwilliger $\F$-algebra of a scheme owns at least an irreducible self-contragredient module with respect to $\alpha_T$ (see \cite{J1}).
\end{rem}
We next present a required lemma and get the second main result of this section.
\begin{lem}\label{L;Lemma8.17}
$\mathbb{T}$ is a Frobenius $\F$-algebra if and only if $\mathbb{S}$ is a $p'$-valenced scheme.
\end{lem}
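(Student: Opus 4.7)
The plan is to establish the two directions separately. For the sufficient direction, if $\mathbb{S}$ is a $p'$-valenced scheme then Theorem \ref{T;Semisimplicity} shows that $\mathbb{T}$ is a semisimple $\F$-algebra, whence Lemma \ref{L;Lemma2.11} makes $\mathbb{T}$ a symmetric, and in particular a Frobenius, $\F$-algebra.

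The necessary direction I would argue by contrapositive. Assume $\mathbb{S}$ is not a $p'$-valenced scheme, so there is some $a\in[1,n]$ with $p\mid |\mathbb{U}_a|-1$. Since $p\geq 2$ and $|\mathbb{U}_a|-1\geq 1$, necessarily $|\mathbb{U}_a|\geq 3$, so $a\in[1,n]^\circ$. Let $\mathbf{e}_a\in\mathbb{E}$ be the $n$-tuple whose unique nonzero entry sits at position $a$, so $\overline{k_{\mathbf{e}_a}}=\overline{|\mathbb{U}_a|-1}=\overline{0}$. I would take $\mathbb{I}=\mathbb{T}B_{\mathbf{0},\mathbf{e}_a,\mathbf{e}_a}$ and aim to show that the $\F$-dimensions of $\mathbb{I}$ and of the right ideal $\mathrm{r}(\mathbb{I})$ of $\mathbb{T}$ strictly add up to more than the $\F$-dimension of $\mathbb{T}$, which by Lemma \ref{L;Lemma2.10} is incompatible with $\mathbb{T}$ being a Frobenius $\F$-algebra.

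For $\dim_\F(\mathbb{I})$, Theorem \ref{T;Theorem3.23} forces $B_{\mathbf{g},\mathbf{h},\mathbf{i}}B_{\mathbf{0},\mathbf{e}_a,\mathbf{e}_a}=O$ unless $\mathbf{i}=\mathbf{0}$, in which case $(\mathbf{g},\mathbf{h},\mathbf{0})\in\mathbb{P}$ forces $\mathbf{h}=\mathbf{g}$; a direct computation via Notation \ref{N;Notation3.15} then gives $[\mathbf{g},\mathbf{g},\mathbf{0},\mathbf{e}_a,\mathbf{e}_a]=\mathbf{g}\cup\mathbf{e}_a$, so $B_{\mathbf{g},\mathbf{g},\mathbf{0}}B_{\mathbf{0},\mathbf{e}_a,\mathbf{e}_a}=B_{\mathbf{g},\mathbf{g}\cup\mathbf{e}_a,\mathbf{e}_a}$. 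These are $2^n$ pairwise distinct members of $\mathbb{B}_2$, so $\dim_\F(\mathbb{I})=2^n$ by Theorem \ref{T;Theorem3.13}. For $\mathrm{r}(\mathbb{I})$ I would identify two independent contributions: firstly, Theorem \ref{T;Theorem3.23} guarantees $B_{\mathbf{g},\mathbf{g}\cup\mathbf{e}_a,\mathbf{e}_a}B_{\mathbf{d},\mathbf{e},\mathbf{f}}=O$ whenever $\mathbf{d}\neq\mathbf{e}_a$, so every basis element of $\mathbb{B}_2$ not of the form $B_{\mathbf{e}_a,\ast,\ast}$ lies in $\mathrm{r}(\mathbb{I})$, and counting the pairs $(\mathbf{e},\mathbf{f})$ with $(\mathbf{e}_a,\mathbf{e},\mathbf{f})\in\mathbb{P}$ (using $a\in[1,n]^\circ$) yields $\dim_\F(E^*_{\mathbf{e}_a}\mathbb{T})=3\cdot 2^{n-1}$; secondly, whenever $a\in\mathbbm{e}$ the structure-constant factor $\overline{k_{(\mathbf{g}\cup\mathbf{e}_a)\cap\mathbf{e}_a\cap\mathbf{e}}}=\overline{k_{\mathbf{e}_a}}$ vanishes, so $B_{\mathbf{e}_a,\mathbf{e},\mathbf{f}}\in\mathrm{r}(\mathbb{I})$, and a parallel enumeration gives exactly $2^n$ such triples (one per $\mathbf{f}\in\mathbb{E}$). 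The two families lie in complementary parts of $\mathbb{B}_2$ distinguished by first coordinate, hence are linearly independent, giving $\dim_\F(\mathrm{r}(\mathbb{I}))\geq\dim_\F(\mathbb{T})-3\cdot 2^{n-1}+2^n=\dim_\F(\mathbb{T})-2^{n-1}$.

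Combining, $\dim_\F(\mathbb{I})+\dim_\F(\mathrm{r}(\mathbb{I}))\geq 2^n+\dim_\F(\mathbb{T})-2^{n-1}=\dim_\F(\mathbb{T})+2^{n-1}$, which strictly exceeds $\dim_\F(\mathbb{T})$ since $n\geq 1$, yielding the desired contradiction with Lemma \ref{L;Lemma2.10}. The main obstacle is not conceptual but rather computational: one must carefully apply Theorem \ref{T;Theorem3.23} and Notation \ref{N;Notation3.15} to verify $[\mathbf{g},\mathbf{g},\mathbf{0},\mathbf{e}_a,\mathbf{e}_a]=\mathbf{g}\cup\mathbf{e}_a$ and to carry out the two enumerations over $\mathbb{P}$ underlying the counts $3\cdot 2^{n-1}$ and $2^n$; once these are in hand, the strict inequality follows immediately.
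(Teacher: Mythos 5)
Your proof is correct, and both directions rest on the same two ingredients as the paper (Theorem \ref{T;Semisimplicity} plus Lemma \ref{L;Lemma2.11} for sufficiency, Lemma \ref{L;Lemma2.10} for necessity), but the left ideal you feed into Lemma \ref{L;Lemma2.10} is different from the paper's and you land on the opposite side of the equality. The paper takes $\mathbb{U}=\langle\{B_{\mathbf{a},\mathbf{a},\mathbf{0}}:(\mathbf{a},\mathbf{a},\mathbf{0})\in\mathbb{P},\ p\mid k_{\mathbf{a}}\}\rangle_{\mathbb{T}}$, determines $\mathrm{r}(\mathbb{U})=\langle\{B_{\mathbf{a},\mathbf{b},\mathbf{c}}:(\mathbf{a},\mathbf{b},\mathbf{c})\in\mathbb{P},\ \mathbf{a}\neq\mathbf{0}\}\rangle_{\mathbb{T}}$ exactly, and shows $\dim_\F\mathbb{U}+\dim_\F\mathrm{r}(\mathbb{U})=2^{2n_1}5^{n_2}-2^{n-i}<\dim_\F\mathbb{T}$. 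You instead take the principal left ideal $\mathbb{T}B_{\mathbf{0},\mathbf{e}_a,\mathbf{e}_a}$, whose dimension $2^n$ falls out at once from $[\mathbf{g},\mathbf{g},\mathbf{0},\mathbf{e}_a,\mathbf{e}_a]=\mathbf{g}\cup\mathbf{e}_a$, and then show the sum strictly \emph{exceeds} $\dim_\F\mathbb{T}$. Since Lemma \ref{L;Lemma2.10} forces equality for a Frobenius algebra, either inequality is fatal. A mild advantage of your variant is that you only need a lower bound on $\dim_\F\mathrm{r}(\mathbb{I})$ rather than an exact count, so the enumeration is lighter; the paper's route pins down $\mathrm{r}(\mathbb{U})$ precisely, which is a bit heavier but yields a cleaner structural picture of the obstruction.
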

\begin{proof}
Recall that $k_\mathbf{0}=1$ and $k_\mathbf{g}=\prod_{h\in\mathbbm{g}}(|\mathbb{U}_h|-1)$ if $\mathbf{g}\in\mathbb{E}\setminus\{\mathbf{0}\}$. By Theorem \ref{T;Semisimplicity} and
Lemma \ref{L;Lemma2.11}, it suffices to check that $\mathbb{T}$ is a Frobenius $\F$-algebra only if $\mathbb{S}$ is a $p'$-valenced scheme. Assume that $\mathbb{T}$ is a Frobenius $\F$-algebra and $\mathbb{S}$ is not a $p'$-valenced scheme. Let $i=|\{a: a\in[1, n], p\mid |\mathbb{U}_a|-1\}|$. Then $i>0$. By Lemma \ref{L;Lemma2.3}, there is $\mathbf{j}\in\mathbb{E}$ such that $p\mid k_\mathbf{j}$.
Set $\mathbb{U}=\langle\{B_{\mathbf{a}, \mathbf{a}, \mathbf{0}}: (\mathbf{a}, \mathbf{a}, \mathbf{0})\in\mathbb{P}, p\mid k_\mathbf{a}\}\rangle_\mathbb{T}$. Notice that $\mathbb{U}\neq\{O\}$ as $(\mathbf{j}, \mathbf{j}, \mathbf{0})\!\in\!\mathbb{P}$. Moreover, Lemma \ref{L;Lemma6.1} shows that $\mathbb{U}$ is a left ideal of $\mathbb{T}$.

Lemmas \ref{L;Lemma2.1} and \ref{L;Lemma2.2} yield $\mathbb{P}_{\mathbf{0}, \mathbf{k}}=\mathbb{P}_{\mathbf{k}, \mathbf{0}}=\{\mathbf{k}\}$ for any $\mathbf{k}\in\mathbb{E}$. This thus implies that $|\{\mathbf{a}: (\mathbf{a},\mathbf{a},\mathbf{0})\in\mathbb{P}, p\mid k_\mathbf{a}\}|=2^n-2^{n-i}$ by Lemma \ref{L;Lemma2.3}. Hence the $\F$-dimension of $\mathbb{U}$ is equal to $2^n-2^{n-i}$ by Theorem \ref{T;Theorem3.13}. Assume further that $M\in\mathrm{r}(\mathbb{U})$. Notice that $B_{\mathbf{0},\mathbf{k}, \mathbf{k}}\notin\mathrm{Supp}_{\mathbb{B}_2}(M)$ for any $\mathbf{k}\in\mathbb{E}$. Otherwise, notice that $B_{\mathbf{j}, \mathbf{j}, \mathbf{0}}M\neq O$ by Theorems \ref{T;Theorem3.23} and \ref{T;Theorem3.13}. This thus contradicts the definition of $\mathrm{r}(\mathbb{U})$. Theorem \ref{T;Theorem3.13} thus forces that $\mathrm{r}(\mathbb{U})\subseteq
\langle\{B_{\mathbf{a}, \mathbf{b}, \mathbf{c}}: (\mathbf{a}, \mathbf{b}, \mathbf{c})\in\mathbb{P}, \mathbf{a}\neq\mathbf{0}\}\rangle_\mathbb{T}$. This containment thus implies that
$\mathrm{r}(\mathbb{U})=\langle\{B_{\mathbf{a}, \mathbf{b}, \mathbf{c}}: (\mathbf{a}, \mathbf{b}, \mathbf{c})\in\mathbb{P}, \mathbf{a}\neq\mathbf{0}\}\rangle_\mathbb{T}$ by Theorems
\ref{T;Theorem3.23} and \ref{T;Theorem3.13}. Recall that $n_1=|\{a: a\in[1,n], |\mathbb{U}_a|=2\}|$, $n_2=|\{a: a\in[1,n], |\mathbb{U}_a|>2\}|$, and $n=n_1+n_2$. Then $|\{(\mathbf{a}, \mathbf{b},\mathbf{c}): (\mathbf{a}, \mathbf{b}, \mathbf{c})\in\mathbb{P}, \mathbf{a}\neq\mathbf{0}\}|=2^{2n_1}5^{n_2}-2^n$ by combining Lemmas \ref{L;Lemma2.1}, \ref{L;Lemma2.2}, \ref{L;Lemma2.3}, and Theorem \ref{T;Theorem3.13}. Hence the $\F$-dimension of $\mathrm{r}(\mathbb{U})$ is equal to $2^{2n_1}5^{n_2}-2^n$ by Theorem \ref{T;Theorem3.13}. As the sum of the $\F$-dimensions of $\mathbb{U}$ and $\mathrm{r}(\mathbb{U})$ is equal to $2^{2n_1}5^{n_2}-2^{n-i}$, notice that $\mathbb{T}$ is not a Frobenius $\F$-algebra by Lemma \ref{L;Lemma2.10}. This is a contradiction. The desired lemma follows from the above discussion.
\end{proof}
\begin{thm}\label{T;Frobenius}
The following are equivalent: $\mathbb{S}$ is a $p'$-valenced scheme; $\mathbb{T}$ is a semisimple $\F$-algebra;
$E_\mathbf{g}^*\mathbb{T}E_\mathbf{g}^*$ is a semisimple $\F$-subalgebra of $\mathbb{T}$ for any $\mathbf{g}\in\mathbb{E}$; $E_\mathbf{1}^*\mathbb{T}E_\mathbf{1}^*$ is a semisimple $\F$-subalgebra of $\mathbb{T}$; $\mathrm{Z}(\mathbb{T})$ is a semisimple $\mathbb{F}$-subalgebra of $\mathbb{T}$; $\mathbb{T}$ is a symmetric $\F$-algebra; $\mathbb{T}$ is a Frobenius $\F$-algebra; $\mathbb{T}$ must satisfy the formula
$$\mathbb{T}\cong\bigoplus_{g=1}^{n_\sim}\mathrm{M}_{|\mathbb{D}(g)|}(\F)\ \text{ as $\mathbb{F}$-algebras}.$$
In particular, one of the above eight conditions holds if and only if $p\nmid \prod_{h=1}^n(|\mathbb{U}_h|-1)$.
\end{thm}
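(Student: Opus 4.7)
The plan is to assemble Theorem \ref{T;Frobenius} as a cycle of implications by invoking results already established in the excerpt, with only one genuinely new link to forge (the one involving the symmetric $\F$-algebra condition). I would first observe that Corollary \ref{C;Corollary5.21} already packages the equivalences among the $p'$-valenced condition, the semisimplicity of $\mathbb{T}$, the semisimplicity of every $E_\mathbf{g}^*\mathbb{T}E_\mathbf{g}^*$, and the semisimplicity of $E_\mathbf{1}^*\mathbb{T}E_\mathbf{1}^*$. Corollary \ref{C;Corollary5.22} extends this equivalence to the semisimplicity of $\mathrm{Z}(\mathbb{T})$, and Corollary \ref{C;Corollary8.12} further appends the Wedderburn--Artin decomposition $\mathbb{T}\cong\bigoplus_{g=1}^{n_\sim}\mathrm{M}_{|\mathbb{D}(g)|}(\F)$. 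This reduces the problem to inserting the symmetric and Frobenius conditions into the chain.

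Next I would close the cycle by chaining the following three implications. First, if $\mathbb{T}$ is semisimple then $\mathbb{T}$ is a symmetric $\F$-algebra: this is the concluding clause of Lemma \ref{L;Lemma2.11}, applied directly to $\mathbb{T}$. Second, if $\mathbb{T}$ is a symmetric $\F$-algebra then $\mathbb{T}$ is a Frobenius $\F$-algebra: this is the definition (a symmetric $\F$-algebra is one that is Frobenius via a nonsingular symmetric $\F$-linear functional), so the implication is immediate. Third, if $\mathbb{T}$ is a Frobenius $\F$-algebra then $\mathfrak{S}$ is a $p'$-valenced scheme: this is precisely the content of Lemma \ref{L;Lemma8.17}. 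Composing these three implications with the cycle of Corollaries \ref{C;Corollary5.21}, \ref{C;Corollary5.22}, \ref{C;Corollary8.12}, every one of the eight listed conditions implies every other.

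For the final ``in particular'' clause, I would simply note that $p\nmid\prod_{h=1}^n(|\mathbb{U}_h|-1)$ is exactly the statement $p\nmid k_\mathbf{1}$, which in turn is equivalent to $\mathfrak{S}$ being a $p'$-valenced scheme by the computation $k_\mathbf{1}=\prod_{h\in\mathbbm{1}}(|\mathbb{U}_h|-1)$ together with the fact that $k_\mathbf{g}\mid k_\mathbf{1}$ for every $\mathbf{g}\in\mathbb{E}$ (so divisibility of $k_\mathbf{1}$ by $p$ controls divisibility for every $\mathbf{g}$). This last equivalence has already been observed inside the proof of Corollary \ref{C;Corollary5.21}, so it can simply be quoted.

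There is essentially no hard step here: all the heavy lifting was already done in Sections 5, 6, 7 and in Lemma \ref{L;Lemma8.17}. The only subtlety is making sure that the cycle of implications actually passes through the symmetric $\F$-algebra condition, so I would be careful to write the chain explicitly, for example as $(1)\Rightarrow(2)\Rightarrow(6)\Rightarrow(7)\Rightarrow(1)$ combined with the already-proven equivalence $(1)\Leftrightarrow(3)\Leftrightarrow(4)\Leftrightarrow(5)\Leftrightarrow(8)$, so that nothing in the statement is orphaned.
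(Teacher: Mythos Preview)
Your proposal is correct and follows essentially the same approach as the paper: the paper's proof reads ``As $\mathbb{T}$ is a semisimple $\F$-algebra only if $\mathbb{T}$ is a symmetric $\F$-algebra, the desired theorem follows from the combination of Corollaries \ref{C;Corollary5.21}, \ref{C;Corollary8.12}, and Lemma \ref{L;Lemma8.17}.'' Your version is slightly more thorough in that you also cite Corollary \ref{C;Corollary5.22} to handle the $\mathrm{Z}(\mathbb{T})$ condition, which the paper's terse proof omits but which is indeed needed.
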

\begin{proof}
As $\mathbb{T}$ is a semisimple $\F$-algebra only if $\mathbb{T}$ is a symmetric $\F$-algebra, the desired theorem follows from the combination of Corollaries \ref{C;Corollary5.21}, \ref{C;Corollary8.12}, and Lemma \ref{L;Lemma8.17}.
\end{proof}
We end the presentation of this paper by an example of Theorems \ref{T;Decomposition} and \ref{T;Frobenius}.
\begin{eg}\label{E;Example8.19}
Assume that $n=|\mathbb{U}_1|=2$ and $|\mathbb{U}_2|=3$. It is clear that $|\mathbb{E}|=4$. Assume that $\mathbf{g}=(0, 1)$ and $\mathbf{h}=(1,0)$. Hence $\mathbb{E}=\{\mathbf{0}, \mathbf{g}, \mathbf{h}, \mathbf{1}\}$. Assume that $p\neq 2$. Notice that  $\mathbb{D}=\mathbb{P}$ and $n_\sim=2$. Moreover, the cardinalities of the equivalence classes of $\mathbb{D}$ with respect to $\sim$ are sixteen and four. By Theorem \ref{T;Frobenius}, $\mathbb{T}\cong \mathrm{M}_4(\F)\oplus\mathrm{M}_2(\F)$ as $\F$-algebras. Assume that $p=2$. Notice that $\mathbb{D}$ contains exactly $(\mathbf{0}, \mathbf{0}, \mathbf{0})$, $(\mathbf{0},\mathbf{h},\mathbf{h})$, $(\mathbf{g}, \mathbf{0}, \mathbf{g})$, $(\mathbf{g}, \mathbf{h}, \mathbf{1})$, $(\mathbf{h}, \mathbf{0}, \mathbf{h})$, $(\mathbf{h}, \mathbf{h}, \mathbf{0})$, $(\mathbf{1}, \mathbf{0}, \mathbf{1})$, $(\mathbf{1}, \mathbf{h}, \mathbf{g})$ and $n_\sim=2$. Moreover, all cardinalities of the equivalence classes of $\mathbb{D}$ with respect to $\sim$ are equal to four. A direct application of Theorem \ref{T;Decomposition} implies that $\mathbb{T}/\mathrm{Rad}(\mathbb{T})\cong2\mathrm{M}_2(\F)$ as $\F$-algebras.
\end{eg}
\subsection*{Disclosure statement} No relevant financial or nonfinancial interests are reported.
\subsection*{Data availability statement}All used data are contained in this submitted paper.

\end{document}